\newtheorem{theorem}{Theorem}[section]
\newtheorem{proposition}[theorem]{Proposition}
\newtheorem{lemma}[theorem]{Lemma}
\newtheorem{corollary}[theorem]{Corollary}
\theoremstyle{definition}
\newtheorem{definition}[theorem]{Definition}
\newtheorem{remark}[theorem]{Remark}
\newtheorem{example}[theorem]{Example}
\newtheorem{problem}[theorem]{Problem}
\newcounter{RomanNumber}
\newcommand{\tbcap}{\ensuremath{\cap\kern-0.54em|\kern0.36em}} 
\newcommand{\tbccap}{\ensuremath{\cap\kern-0.59em|\kern0.36em}}
\newcommand{\tbcccap}{\ensuremath{\cap\kern-0.639em|\kern0.36em}}
\newcommand{\tccap}{\ensuremath{\cap\kern-0.67em|\kern0.36em}}
\newcommand{\tcap}{\ensuremath{\cap\kern-0.7em|\kern0.36em}}
\newcommand{\tdcap}{\ensuremath{\cap\kern-0.72em|\kern0.36em}}
\newcommand{\ttcap}{\ensuremath{\cap\kern-0.74em|\kern0.36em}}
\newcommand{\tucap}{\ensuremath{\cap\kern-0.79em|\kern0.36em}} 
\newcommand{\testcap}{\mathrel{\vcenter{\offinterlineskip
\hbox{$\cap$}\vskip-1.8ex\hbox{$\kern0.2em | \kern0.15em$}}}}
\newcounter{bean}
\newcommand{\namedright}[3]{\ensuremath{#1\stackrel{#2}
 {\longrightarrow}#3}}
\newcommand{\nameddright}[5]{\ensuremath{#1\stackrel{#2}
 {\longrightarrow}#3\stackrel{#4}{\longrightarrow}#5}}
\newcommand{\larrow}{\relbar\!\!\relbar\!\!\rightarrow}
\newcommand{\llarrow}{\relbar\!\!\relbar\!\!\larrow}
\newcommand{\llnameddright}[5]{\ensuremath{#1\stackrel{#2}
 {\llarrow}#3\stackrel{#4}{\llarrow}#5}}
\newcommand{\qqed}{\hfill\Box}
\begin{document}
\begin{sloppypar}

\title{Comparison techniques on inert top cell attachments} 

\author{Ruizhi Huang} 
\address{Institute of Mathematics, Academy of Mathematics and Systems Science, 
 Chinese Academy of Sciences, Beijing 100190, China} 
\email{huangrz@amss.ac.cn} 
   \urladdr{https://sites.google.com/site/hrzsea/}
   \thanks{}
   

\subjclass[2010]{Primary 
55P35, 
57N65; 
Secondary 
55R10, 
57R65.  
}
\keywords{}
\date{}


\begin{abstract} 
We establish various criteria for the inertness of the top cell attachments of Poincar\'{e} duality complexes through nonzero degree maps, algebraic intersection theory and various types of homotopy fibrations. Many examples are provided, including specific surgery, homogeneous spaces and low dimensional manifolds. Additionally, we propose eight open problems. 
\end{abstract}

\maketitle

\setcounter{tocdepth}{1}
\tableofcontents
\newpage


\section{Introduction}
In 1982, F\'{e}lix-Halperin-Thomas \cite{FHT82} introduced the classical notion of rational inertness. A map $h: S^{n-1}\stackrel{}{\longrightarrow} X$ is called {\it rationally inert} if the inclusion map $X\stackrel{i}{\longrightarrow} X\cup_h CS^{n-1}$ of $X$ into the mapping cone of $h$ induces an epimorphism in rational homotopy groups, or equivalently, if the loop map $\Omega i$ has a right homotopy inverse after rationalization. Rationally inert maps have been widely studied in rational homotopy theory \cite{FT89, HL87, HL95, HeL96, Bub05} and have important applications, such as in the study of the growth of rational homotopy groups \cite{FHT07} and in solving a higher dimensional Whitehead's asphericity problem \cite{Ani86}.

Recently, Theriault \cite{The24a} introduced an integral generalization of rational inertness, which has stimulated fruitful progress in unstable homotopy theory. The map $h: S^{n-1}\stackrel{}{\longrightarrow} X$ is called {\it inert} if the loop map $\Omega X\stackrel{\Omega i}{\longrightarrow} \Omega(X\cup_h CS^{n-1})$ has a right homotopy inverse. In this situation, an important result by Beben-Theriault \cite{BT22} identifies the homotopy fibre of $i$ and proves a loop space decomposition of $X$ in terms of $S^{n-1}$ and $X\cup_h CS^{n-1}$. The result provides an explicit way to manage the homotopy information of an inert attaching map. Many interesting applications have arisen from this deep decomposition, such as in the study of the unstable homotopy of Poincar\'{e} duality complexes \cite{Hua22, HT22, Hua23a, Hua24, The24a, The24b}, and surprisingly, in addressing problems in rational homotopy theory by unstable homotopy techniques \cite{Che23a, Che23b, Hua23b, HT24b}. 

The inertness for the top cell of a Poincar\'{e} duality complex is of special interest. A fundamental result of Halperin and Lemaire \cite{HL87} shows that the attaching map for the top cell of any Poincar\'{e} duality complex is rationally inert unless its rational cohomology algebra is generated by a single element. The corresponding problem in integral and local contexts has recently received growing attention. Beben-Theriault \cite{BT14, BT22} demonstrated that the attaching maps for the top cells of certain highly connected Poincar\'{e} duality complexes are inert. For more systematic treatments, Theriault in \cite{The24a} showed that the connected sum operation preserves the inertness property, and in \cite{The24b} proved an elegant criterion of the inertness property for a large family of Poincar\'{e} duality complexes.

In this paper, we study the inertness property for the top cell of a Poincar\'{e} duality complex by comparison techniques. The results provide various criteria for inertness by comparing Poincar\'{e} duality complexes through nonzero degree maps, algebraic intersection theory and various kinds of homotopy fibrations. Many examples are given including a type of surgery, Stiefel manifolds, complete flag manifolds and low dimensional manifolds. Based on the results, we propose eight open problems. 

In the following we start with some conventions of notations and terminologies and then proceed to detail our main results. Occasionally, we state simplified versions of results for clarity, with complete versions available in the main text.

$\, $

\noindent{\bf Conventions.}
\begin{itemize}
\item Following \cite{The24a}, A map $h: A\stackrel{}{\longrightarrow} X$ of $CW$-complexes is called {\it inert} if the inclusion map $X\stackrel{i}{\longrightarrow} X\cup_h CA$ of $X$ into the mapping cone has a right homotopy inverse after looping. 
Let $A\stackrel{h}{\longrightarrow} X\stackrel{\varphi}{\longrightarrow} Y$ be a homotopy cofibration. The map $h$ is inert if and only if $\Omega \varphi$ has a right homotopy inverse. 
\item In this paper, all spaces are assumed to be connected, pointed, and have the homotopy type of a $CW$-complex. All Poincar\'{e} duality complexes are assumed to have a $CW$-structure with a single top cell. 
\item When applying localization, a space is supposed to be {\it nilpotent}, that is, its fundamental group is nilpotent and acts nilpotently on the higher homotopy groups. 
\item Denote by $1_X$ the identity map $X\stackrel{=}{\longrightarrow} X$.
\item Denote by $[M]$ the {\it fundamental class} of a Poincar\'{e} duality complex $M$, and by ${\rm dim}(M)$ the dimension of $M$. 
\item Let $M$ be an $n$-dimensional Poincar\'{e} duality complex with a single top cell. Denote by $M_0$ its $(n-1)$-skeleton. When the dimension of $M$ is implicit, we may refer to $M_0$ as the {\it lower skeleton} of $M$.  
\item Let $f: M\stackrel{}{\longrightarrow} N$ be a map between Poincar\'{e} duality complexes of the same dimension. Denote by $f_0: M_0\stackrel{}{\longrightarrow} N_0$ the restriction map of $f$ on the lower skeletons through the $CW$-approximation. 
\end{itemize}

$\, $

\noindent{\bf Inertness via nonzero degree maps.} 
The study of nonzero degree maps has been a classical topic in algebraic and geometric topology since the work of Brouwer \cite{Bro11} in 1911. It is closely connected to the geometry of Riemannian manifolds through various degree theorems \cite{Gro82}. Furthermore, extensive efforts \cite{Wan02, DW03, DW04} have been made to compute mapping degrees between manifolds. 

In our study, we establish fundamental criteria for inertness by comparing two Poincar\'{e} duality complexes of the same dimension through a nonzero degree map. This exploration highlights a novel connection between nonzero degree maps and homotopy theory. 

\begin{theorem}\label{inertdeg1thmintro}
Let $M$ and $N$ be two Poincar\'{e} duality complexes of the same dimension such that both $M$ and $N$ have a single top cell. Let $h_M$ and $h_N$ be the attaching maps of the top cells of $M$ and $N$, respectively. 

If there exists a degree one map $f: M\stackrel{}{\longrightarrow} N$, then the following hold:
\begin{itemize}
\item[(1).] suppose that the restriction map $f_0: M_0\stackrel{}{\longrightarrow} N_0$ on the lower skeletons is inert. Then $h_M$ is inert if and only if $h_N$ is inert; 
\item[(2).] suppose that $\Omega f$ has a right homotopy inverse. Then if $h_M$ is inert so is $h_N$;
\item[(3).] suppose that there is a homotopy cofibration $A\stackrel{a_0}{\longrightarrow} M_0 \stackrel{f_0}{\longrightarrow} N_0$ for some space $A$ and some inert map $a_0$. Then if $h_N$ is inert so is $h_M$.
\end{itemize}

Furthermore, suppose that $M$ and $N$ are nilpotent and at least one of them is simply connected. 
If there exists a degree $k$ map $f: M\stackrel{}{\longrightarrow} N$ with $k\neq 0$, then the three conclusions hold after localization away from all primes $p$ that divide $k$.
\end{theorem}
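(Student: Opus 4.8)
The plan is to set up the homotopy-theoretic framework relating $M$, $N$, and their lower skeletons, and then prove the three statements in turn. Write $S^{n-1} \xrightarrow{h_M} M_0 \xrightarrow{i_M} M$ and $S^{n-1} \xrightarrow{h_N} N_0 \xrightarrow{i_N} N$ for the homotopy cofibrations defining the top cell attachments, where $n = \dim(M) = \dim(N)$. A degree one map $f : M \to N$ restricts (after $CW$-approximation) to $f_0 : M_0 \to N_0$, and since $f$ has degree one it carries $[M]$ to $[N]$; the key structural input is that there is a homotopy commutative diagram with rows the two cofibrations and vertical maps $1_{S^{n-1}}$ (or a self-map of $S^{n-1}$ of degree one, hence homotopic to the identity), $f_0$, and $f$. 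Concretely, the composite $S^{n-1} \xrightarrow{h_M} M_0 \xrightarrow{f_0} N_0$ is null-homotopic precisely because the composite $S^{n-1} \xrightarrow{h_M} M_0 \xrightarrow{f_0} N_0 \xrightarrow{i_N} N$ equals (up to homotopy) $S^{n-1} \xrightarrow{h_M} M_0 \xrightarrow{i_M} M \xrightarrow{f} N$, which is null since $i_M \circ h_M \simeq *$; one then uses that the cofibre of $h_N$ is $N$ and a diagram chase (or the fact that a degree one map is a homology equivalence through dimension $n-1$) to extract the compatibility $f_0 \circ h_M \simeq h_N$ up to the relevant identification. This should be extracted cleanly from the degree one hypothesis; the honest obstruction is making precise that the map of cofibrations can be chosen so that the self-map of $S^{n-1}$ on the left is the identity and not merely a degree one self-map composed with $h_N$ — but since degree one self-maps of $S^{n-1}$ are homotopic to the identity, this is harmless.

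For part (1), assume $f_0$ is inert, i.e.\ $\Omega f_0$ has a right homotopy inverse, and apply the looped cofibration comparison. Loop the map of cofibrations to get a homotopy commutative diagram relating $\Omega M \xrightarrow{\Omega i_M} \Omega M'$ and $\Omega N \xrightarrow{\Omega i_N} \Omega N'$ where $M' = M \cup_{h_M} C S^{n-1}$, $N' = N \cup_{h_N} C S^{n-1}$ are the respective top cell cofibres (which one checks are homotopy equivalent to $M/[M]$-type objects, but the cleaner route is to simply view them as the next stage in the cofibration). The point is that $h_M$ inert means $\Omega i_M$ has a right inverse and similarly for $h_N$; using the right homotopy inverse of $\Omega f_0$ together with the Beben--Theriault-type identification of the homotopy fibre of $i_M$ (from \cite{BT22}, as cited in the introduction) one transports a right inverse for $\Omega i_M$ to one for $\Omega i_N$ and vice versa. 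I expect the mechanism is: the homotopy fibre of $i_M$ is built from $S^{n-1}$ and $M_0$ by a functorial construction, the map $f_0$ induces a map of these fibres compatible with the right inverses, and a right homotopy inverse of $\Omega f_0$ lets one go backwards. This is the technical heart and will likely invoke a lemma stated earlier in the paper (not in this excerpt) about inertness being detected on homotopy fibres of skeletal inclusions.

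For part (2), assume $\Omega f$ has a right homotopy inverse $s : \Omega N \to \Omega M$. If $h_M$ is inert, pick a right homotopy inverse $t : \Omega M' \to \Omega M$ of $\Omega i_M$. Then the composite $\Omega N' \xrightarrow{?} \Omega M' \xrightarrow{t} \Omega M \xrightarrow{\Omega f} \Omega N$ should furnish a right inverse of $\Omega i_N$, once one produces the dashed first arrow; this arrow comes from the functoriality of the top cell cofibre under the map of cofibrations, i.e.\ $f$ and $f_0$ (plus the degree one self-map of $S^{n-1}$) induce $f' : M' \to N'$ with $i_N \circ f \simeq f' \circ i_M$. A diagram chase then gives $\Omega i_N \circ (\Omega f \circ t \circ \Omega f') \simeq \Omega f \circ \Omega i_M \circ t \circ \Omega f' \simeq \Omega f \circ \Omega f' \simeq \Omega (f' \circ f)$; one needs $f' \circ f$ to still admit a section after looping, which follows since $\Omega f$ has a section and $\Omega f'$ — being induced on cofibres — does too, or more directly by reorganizing the composite so that $\Omega f \circ s \simeq 1$ is used. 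For part (3), assume there is a cofibration $A \xrightarrow{a_0} M_0 \xrightarrow{f_0} N_0$ with $a_0$ inert, so $\Omega f_0$ has a right homotopy inverse by definition of inertness of $a_0$ applied to this cofibration; this reduces part (3) to (the relevant direction of) part (1). So the genuinely new content is parts (1) and (2), and within those the main obstacle is the transport-of-right-inverse argument across the map of top cell fibrations, which I would handle by first proving a standalone lemma: given a map of homotopy cofibrations $S^{n-1} = S^{n-1}$, $g : X \to Y$, $\bar g : X' \to Y'$ with $\Omega g$ admitting a right homotopy inverse, the attaching map into $X$ is inert iff the one into $Y$ is. Finally, for the localized statement with a degree $k$ map, invert all primes dividing $k$: over $\mathbb{Z}[1/k]$ a degree $k$ map between nilpotent Poincar\'e duality complexes (one simply connected, so that localization behaves well and $H_*(-;\mathbb{Z}[1/k])$ satisfies Poincar\'e duality) becomes a degree one map in the appropriate sense — its effect on top homology is multiplication by the unit $k$ — and all the homotopy fibre and section arguments are performed in the category of $\mathbb{Z}[1/k]$-local spaces, where the simple connectivity of one of $M, N$ ensures the mapping cones and fibres localize correctly; the three conclusions then follow verbatim from the $p$-local (indeed $\mathbb{Z}[1/k]$-local) versions of (1)--(3).
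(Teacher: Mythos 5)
Your proposal rests on a misreading of the key hypothesis of part (1), and the error propagates. In the paper's convention, a map $g\colon A\to X$ is \emph{inert} when the inclusion of $X$ into the mapping cone of $g$ has a right homotopy inverse after looping; equivalently, for a homotopy cofibration $A\stackrel{g}{\longrightarrow}X\stackrel{\varphi}{\longrightarrow}C$, the map $\Omega\varphi$ has a right homotopy inverse. It does \emph{not} mean that $\Omega g$ has a right homotopy inverse. You write ``assume $f_0$ is inert, i.e.\ $\Omega f_0$ has a right homotopy inverse''; these two conditions are independent (the pinch map $S^2\vee S^4\to S^2$ admits a section after looping but is not inert, since its cofibre is $S^5$ and the induced map $S^2\to S^5$ is null; conversely a wedge summand inclusion $S^2\hookrightarrow S^2\vee S^2$ is inert although its loop has no right inverse). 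With the hypothesis you actually assume, the ``if and only if'' in (1) fails: Remark \ref{pushoutthm3} gives homotopy pushouts in which the horizontal map splits after looping, one vertical map splits after looping and the other does not. The same misreading undermines your part (3): inertness of $a_0$ in $A\to M_0\stackrel{f_0}{\longrightarrow}N_0$ does say exactly that $\Omega f_0$ has a right homotopy inverse, but that does not place you in the situation of part (1); the paper proves (3) by a separate, genuinely one-directional argument (extend the cofibration to $A\to M\to N$ via Lemma \ref{cone+pushoutlemma}, then apply Theorem \ref{pushoutthm2}, whose proof needs Mather's cube and the reduced homotopy action $\Omega Y\ltimes F\to F$), and Remark \ref{pushoutthm2rmk} records that the converse is false.

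Even for the correctly stated part (1) you have deferred the entire technical content (``I expect the mechanism is\ldots will likely invoke a lemma''). The actual mechanism is: a degree one map makes the square formed by $i_M$, $i_N$, $f_0$, $f$ a homotopy pushout (both rows of the extended cofibration diagram have cofibre $S^n$ and the induced self-map of $S^n$ is a degree one equivalence); one then takes the \emph{common cofibre} $C$ of the horizontal maps $f_0$ and $f$, so that inertness of $f_0$ gives a splitting $\Omega N_0\simeq\Omega C\times\Omega(\Omega C\ltimes M_0)$ by Theorem \ref{GTcofib}, and the naturality of that decomposition together with the half-smash analysis of Proposition \ref{loopltimeslemma} transfers the question to $\Omega N\simeq\Omega C\times\Omega(\Omega C\ltimes M)$, yielding the equivalence of the two inertness statements (this is Theorem \ref{pushoutthm}). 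Your proposed transport along the vertical top-cell cofibrations using a section of $\Omega f_0$ is the wrong configuration. Two smaller points: $f_0\circ h_M$ is not null-homotopic (only $i_N\circ f_0\circ h_M$ is), and for the degree $k$ case one cannot simply declare $f$ to be ``degree one after inverting $k$''; the paper replaces $N$ by the cofibre of $k\cdot h_N$, a local equivalence, so that the composite is honestly degree one and the integral argument applies verbatim.
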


In Theorem \ref{inertdeg1thmintro}, the proof of statement (2) is easy as pointed out in Remark \ref{pushoutthm3}. We include it in the theorem as it is fundamental and serves as a useful criterion. For instance, it implies that if $h_M$ is inert and $N$ is a homotopy retraction of $M$, then $h_N$ is inert. This indicates that the inertness of the top cell attachments for Poincar\'{e} duality complexes of a fix dimension is preserved under homotopy retraction. Furthermore, statement (2) can be viewed as a dual statement of statement (3). 

The proofs of statements (1) and (3) of Theorem \ref{inertdeg1thmintro} are more substantial. A nonzero degree map corresponds to a homotopy pushout in the integral or local category, and we may investigate the inertness property within the broader context of homotopy pushouts. A critical step involves applying Beben-Theriault's decomposition \cite{BT22} with its naturality, which allows us to reduce the study of inertness around homotopy pushouts to the study of inertness around homotopy pullbacks. The latter is more manageable because homotopy pullbacks are preserved by the loop functor.

Theorem \ref{inertdeg1thmintro} can be strengthened when $f$ is a finite cover. In fact, it can be shown that in this case $h_M$ is inert if and only if $h_N$ is inert; see Proposition \ref{coverprop} for more details. 

Before introducing criteria for inertness in other contexts, we discuss two applications of Theorem \ref{inertdeg1thmintro}: studying inertness via algebraic intersection numbers and via comparison with a twisted product of spheres.

$\, $

\noindent{\bf Inertness via intersection numbers.} 
Motivated by the notion of intersection number in geometric topology and algebraic geometry, we can define an algebraic version of the intersection number for general Poincar\'{e} duality complexes. A relevant case is as follows. 
Let $f: A\times B\longrightarrow M$ be a map between Poincar\'{e} duality complexes such that ${\rm dim}(A)+{\rm dim}(B)= {\rm dim}(M)$. 
Denote by $[A]^\ast:=(f_{\ast}([A]))^\ast$ the {\it Poincar\'{e} dual} of the image of the fundamental class of $A$ in $M$, called {\it the Poincar\'{e} dual of $A$ in $M$} for simplicity. Similarly for $B$. 
The {\it (algebraic) intersection number} of $A$ and $B$ in $M$ is the integer
\[\label{int=1eqintro}
A \testcap B:= \langle [A]^\ast \cup [B]^\ast, [M]\rangle\in \mathbb{Z}.
\]
When $A \testcap B\neq 0$ we say that $A$ and $B$ {\it essentially intersect} in $M$. 

Denote by $A\stackrel{f_A}{\longrightarrow} M$ and $B\stackrel{f_B}{\longrightarrow} M$ the restriction maps of $f$ on the two factors. When $f_A$ and $f_B$ are embeddings of smooth manifolds, the two embeddings are transversal to each other up to isotopy. A fundamental result in the intersection theory implies that $A \testcap B$ is equal to the {\it geometric intersection number} of the two embeddings. This justifies the terminology. Indeed, the algebraic intersection number can be defined for any $f_A$ and $f_B$ by the same formula. 

In the algebraic setting, it turns out that the algebraic intersection number is equal to the degree of the map $f$ under mild conditions. This correspondence allows us to apply Theorem \ref{inertdeg1thmintro} (1) to study the inertness of the top cell attachment of $M$ through the map $f$. 

To state the result, we call a cohomology class $z\in H^\ast(M;\mathbb{Z})$ {\it $f$-primitive} if $f^\ast(z)$ has no terms in $H^{+}(A;\mathbb{Z})\otimes H^{+}(B;\mathbb{Z})\subseteq H^{\ast}(A\times B;\mathbb{Z})$. 

 \begin{theorem}\label{pdtintthmintro}
Let 
\[
f: A\times B\stackrel{}{\longrightarrow} M
\]
be a map between simply connected Poincar\'{e} duality complexes such that ${\rm dim}(B)>{\rm dim}(A)>0$. 
Suppose that $A\testcap B\neq 0$, and the Poincar\'{e} dual $[A]^\ast$ is $f$-primitive. 

If the restriction map 
$
f_0: (A\times B)_0\stackrel{}{\longrightarrow} M_0
$ 
is inert, 
then the attaching map for the top cell of $M$ is inert after localization away from all primes $p$ that divide $A\testcap B$.

In particular, when $A\testcap B=\pm 1$ the attaching map for the top cell of $M$ is inert. 
\end{theorem}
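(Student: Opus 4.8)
The plan is to recognize $f$ as a map of nonzero degree equal, up to sign, to $A\testcap B$, and then to apply Theorem~\ref{inertdeg1thmintro}~(1) with $A\times B$ in the role of ``$M$'' there and $M$ in the role of ``$N$''. For this we need two things: that $\deg(f)=\pm(A\testcap B)$ --- the ``algebraic intersection number equals degree'' principle under the stated hypotheses --- and that the top cell of $A\times B$ is inert for purely formal reasons.

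\emph{Degree equals intersection number.} Write $n=\dim A+\dim B$ and let $\mu_A\in H^{\dim A}(A)$, $\mu_B\in H^{\dim B}(B)$, $\mu_M\in H^{n}(M)$ be the fundamental cohomology classes, all cohomology taken with $\mathbb{Z}$ coefficients. Since $\dim B>\dim A>0$, the K\"unneth theorem gives $H^{n}(A\times B)\cong H^{\dim A}(A)\otimes H^{\dim B}(B)\cong\mathbb{Z}$, generated by $[A\times B]^{\ast}=\pm\,\mu_A\otimes\mu_B$, and shows that in degree $\dim B$ the only summand of $H^{\ast}(A\times B)$ not contained in $H^{+}(A)\otimes H^{+}(B)$ is $H^{0}(A)\otimes H^{\dim B}(B)$ --- there is no $H^{\dim B}(A)\otimes H^{0}(B)$ term because $\dim B>\dim A$. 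Hence the $f$-primitivity of $[A]^{\ast}$ forces $f^{\ast}([A]^{\ast})=c\,(1\otimes\mu_B)$ for a single integer $c$, and restricting along the axis inclusion $\ast\times B\hookrightarrow A\times B$, whose composite with $f$ is $f_B$, identifies $c=\langle f_B^{\ast}([A]^{\ast}),[B]\rangle=\langle[A]^{\ast},(f_B)_{\ast}[B]\rangle=\langle[A]^{\ast}\cup[B]^{\ast},[M]\rangle=A\testcap B$, using that $(f_B)_{\ast}[B]$ is by definition the Poincar\'{e} dual of $[B]^{\ast}$. Similarly, restricting along $A\times\ast\hookrightarrow A\times B$ shows the coefficient $d$ of $\mu_A\otimes1$ in $f^{\ast}([B]^{\ast})\in H^{\dim A}(A\times B)$ is $\pm(A\testcap B)$. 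Since $[A]^{\ast}\cup[B]^{\ast}=(A\testcap B)\,\mu_M$ in $H^{n}(M)\cong\mathbb{Z}$, and since multiplication by the top class $\mu_B$ annihilates $H^{+}(B)$ so that all cross terms drop out,
\[
(A\testcap B)\,f^{\ast}(\mu_M)=f^{\ast}([A]^{\ast})\cup f^{\ast}([B]^{\ast})=\pm\,cd\,(\mu_A\otimes\mu_B)=\pm(A\testcap B)^{2}\,[A\times B]^{\ast}.
\]
Cancelling $A\testcap B\neq0$ and comparing with $f^{\ast}(\mu_M)=\deg(f)\,[A\times B]^{\ast}$ yields $\deg(f)=\pm(A\testcap B)$.

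\emph{Inertness of the top cell of $A\times B$, and conclusion.} In the product $CW$-structure, $A\times B$ has a single top cell and $(A\times B)_0\simeq A_0\times B\,\cup\,A\times B_0$, which contains both axes $A=A\times\ast$ and $B=\ast\times B$; write $\iota_A,\iota_B$ for these inclusions and $j\colon(A\times B)_0\hookrightarrow A\times B$. Since $\Omega(A\times B)\simeq\Omega A\times\Omega B$ and $\Omega j$ is a loop map, the composite $\Omega A\times\Omega B\xrightarrow{\Omega\iota_A\times\Omega\iota_B}\Omega(A\times B)_0\times\Omega(A\times B)_0\to\Omega(A\times B)_0$, whose last arrow is the loop multiplication, is carried by $\Omega j$ to the map $(\gamma,\delta)\mapsto(\gamma,\ast)\cdot(\ast,\delta)=(\gamma,\delta)$, i.e.\ to the identity. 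So $\Omega j$ has a right homotopy inverse and the top cell of $A\times B$ is inert. Finally, $A\times B$ and $M$ are simply connected --- hence nilpotent --- $n$-dimensional Poincar\'{e} duality complexes with a single top cell, and $f$ has nonzero degree $A\testcap B$. Applying the localized form of Theorem~\ref{inertdeg1thmintro}~(1) with $A\times B$ as ``$M$'' and $M$ as ``$N$'': the map $f_0$ is inert by hypothesis and remains inert after localization, so the top cell of $A\times B$ and the top cell of $M$ are inert or not together, away from the primes dividing $A\testcap B$. As the former is inert so is the latter after that localization, and when $A\testcap B=\pm1$ no prime is inverted, giving inertness of the top cell of $M$ integrally.

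The main obstacle is the cohomological bookkeeping in the second step: one must check that $\dim B>\dim A>0$ together with $f$-primitivity of $[A]^{\ast}$ really does eliminate every term that could perturb $f^{\ast}([A]^{\ast}\cup[B]^{\ast})$, and keep the K\"unneth signs straight. Once the equality of the degree with the intersection number is established, everything else follows formally from Theorem~\ref{inertdeg1thmintro} and the elementary product argument.
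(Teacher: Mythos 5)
Your proposal is correct and follows essentially the same route as the paper, which deduces the theorem from its general form (Theorem \ref{pdtintthm}) by combining the equality $\deg(f)=\pm\,A\testcap B$ (Lemma \ref{int=deglemma}), the inertness of the top cell of a product (Lemma \ref{pdt-inert-lemma}), and Theorem \ref{inertdeg1thmintro}~(1). Your direct cohomological computation of the degree is a clean specialization of Lemma \ref{int=deglemma} --- the hypothesis $\dim B>\dim A$ kills $f_A^{\ast}([A]^{\ast})$ outright, so the self-intersection condition of the general lemma is not needed --- and your loop-multiplication argument for the product is exactly the paper's Lemma \ref{pdt-inert-lemma}.
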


Theorem \ref{pdtintthmintro}, along with its general form Theorem \ref{pdtintthm}, shows how the inertness property can be inferred from an essential intersection. As mentioned earlier, the algebraic intersection is inspired by its geometric counterpart. In the smooth case, Theorem \ref{pdtintthmintro} suggests that we can investigate the inertness of the top cell attachment for a manifold based on the inertness of the top cell attachments for two submanifolds that essentially intersect. This approach is particularly applicable to manifolds with free actions and homogeneous spaces; see Section \ref{sec: int} for details. 
The underlying idea is consistent with the general philosophy of studying a manifold through its submanifolds and their interactions. An important related topic is Schubert calculus for intersection theory of flag manifolds; see the wonderful survey \cite{DZ22} by Duan-Zhao. 
Here, we can specifically prove the inertness property for Stiefel manifolds as concrete examples; a summary is provided in Theorem \ref{homthmintro} below. 

$\, $

\noindent{\bf Inertness via comparison with a twisted product of spheres.} 
The simplest examples of Poincar\'{e} duality complexes with inert top cell attachments could be products of spheres, which is a consequence of the classical Hilton-Milnor theorem. To study the inertness of the top cell attachment for a Poincar\'{e} duality complex, it is natural to compare it with a product of spheres. With Theorem \ref{inertdeg1thmintro} this suggests looking for a nonzero degree map between them.

To address a slightly general context, we can consider twisted product of spheres. 
Let $S^{n-m}\stackrel{}{\longrightarrow} D\stackrel{}{\longrightarrow} S^{m}$ be a homotopy fibration with a homotopy section. It can be shown that $D$ is a Poincar\'{e} duality complex such that $D_0\simeq S^{m} \vee S^{n-m}$. For such a homotopy fibration, the total space $D$ is called a {\it twisted product of spheres} and denoted by $S^{m}\widetilde{\times} S^{n-m}$. The product $S^{m}\times S^{n-m}$ is obviously a twisted product of spheres. It can also be shown that the top cell attachment for a twisted product of spheres is inert. Therefore, we can study the inertness property from a nonzero degree map onto a twisted product of spheres.

The following theorem, as a special case of Theorem \ref{detthm}, provides an example along this idea. It can be proved by Theorem \ref{inertdeg1thmintro} (3) combined with a cubic method developed by Theriault in \cite{The24b}.

\begin{theorem}\label{detthmintro}
Let 
\[
f: M\stackrel{}{\longrightarrow} S^{m}\widetilde{\times} S^{n-m}
\]
be a degree one map between simply connected Poincar\'{e} duality complexes such that
\[
f_0\simeq f_Y\vee 1_{S^{n-m}}: M_0\simeq Y\vee S^{n-m}\stackrel{}{\longrightarrow} S^{m} \vee S^{n-m}.
\]
If $\Omega f_Y$ has a right homotopy inverse, then the attaching map for the top cell of $M$ is inert.

Additionally, if the map $f$ is of degree $k$ with $k\neq 0$, then the attaching map for the top cell of $M$ is inert after localization away from all primes $p$ that divide $k$.
\end{theorem}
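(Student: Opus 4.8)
The plan is to deduce the theorem from Theorem~\ref{inertdeg1thmintro}(3), applied with target $N=D:=S^{m}\widetilde{\times}S^{n-m}$. Recall from the discussion of twisted products of spheres that $D$ is a Poincar\'{e} duality complex with $D_0\simeq S^{m}\vee S^{n-m}$ and that its top cell attaching map $h_D$ is inert. Hence, by Theorem~\ref{inertdeg1thmintro}(3), it is enough to produce a homotopy cofibration
\[
A\stackrel{a_0}{\longrightarrow}M_0\stackrel{f_0}{\longrightarrow}D_0
\]
with $a_0$ inert, for then $h_M$ is inert because $h_D$ is. For the statement concerning a degree $k$ map one first localizes away from every prime dividing $k$, after which $f$ has degree a unit and the argument below proves the local assertion exactly as in the final clause of Theorem~\ref{inertdeg1thmintro}; so I assume from now on that $f$ has degree one.

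First I would reduce the construction of $a_0$ to the wedge summand $Y$. Under the given identifications $M_0\simeq Y\vee S^{n-m}$, $D_0\simeq S^{m}\vee S^{n-m}$ and $f_0\simeq f_Y\vee 1_{S^{n-m}}$, it suffices to produce a homotopy cofibration $\bar A\stackrel{\bar a}{\longrightarrow}Y\stackrel{f_Y}{\longrightarrow}S^{m}$. Indeed, letting $a_0$ be the composite $\bar A\stackrel{\bar a}{\longrightarrow}Y\hookrightarrow Y\vee S^{n-m}$, its homotopy cofibre is $(Y\cup_{\bar a}C\bar A)\vee S^{n-m}\simeq S^{m}\vee S^{n-m}\simeq D_0$ with cofibre map exactly $f_Y\vee 1_{S^{n-m}}\simeq f_0$; and once this cofibration is at hand the inertness of $a_0$ is equivalent to the existence of a right homotopy inverse of $\Omega f_0$, which follows from the hypothesis on $\Omega f_Y$ together with the naturality of the loop-space splitting of a wedge (the Ganea fibrations $\Sigma(\Omega Y\wedge\Omega S^{n-m})\to Y\vee S^{n-m}\to Y\times S^{n-m}$ and $\Sigma(\Omega S^{m}\wedge\Omega S^{n-m})\to S^{m}\vee S^{n-m}\to S^{m}\times S^{n-m}$ are natural, so a right homotopy inverse of $\Omega f_Y$ induces one of $\Omega(f_Y\vee 1_{S^{n-m}})$).

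It therefore remains to realize $f_Y\colon Y\to S^{m}$ as the inclusion of $Y$ into the homotopy cofibre of some map $\bar a\colon\bar A\to Y$, equivalently to deloop the cofibration sequence of $f_Y$ one step to the left so that $\mathrm{cofib}(f_Y)$ appears as $\Sigma\bar A$ compatibly with its connecting map to $\Sigma Y$. (Since $f$ has degree one, $f_Y$ is a split surjection on homology, so there is no homological obstruction.) This is the technical heart of the proof, and it is where the cubic method of Theriault~\cite{The24b} is used. Writing $F\stackrel{j}{\longrightarrow}Y\stackrel{f_Y}{\longrightarrow}S^{m}$ for the homotopy fibration of $f_Y$, the chosen right homotopy inverse $s\colon\Omega S^{m}\to\Omega Y$ of $\Omega f_Y$ gives a homotopy equivalence $\Omega Y\simeq\Omega F\times\Omega S^{m}$; I would then assemble a homotopy-commutative cube out of this fibration, its looped section $s$, and the relevant cone and cofibre constructions, arrange that the bottom face is a homotopy pushout and the side faces are homotopy pullbacks, and apply Mather's cube lemmas to deduce that the top face is a homotopy pushout --- and that top face is precisely the square exhibiting the cofibration $\bar A\to Y\to S^{m}$. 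I expect this cube construction to be the main obstacle: its vertices and edges must be chosen so that the diagram is homotopy-commutative and so that exactly the faces needed for Mather's theorem have the pushout and pullback properties, and it is here --- and essentially only here --- that the right homotopy inverse of $\Omega f_Y$ does real work, since a bare map $f_Y$ with $\Omega f_Y$ split epimorphic gives no reason for $\mathrm{cofib}(f_Y)$ to desuspend in the required way. Granting the cube, the reduction above delivers the inert cofibration $A\to M_0\stackrel{f_0}{\longrightarrow}D_0$, and Theorem~\ref{inertdeg1thmintro}(3) then gives that $h_M$ is inert.
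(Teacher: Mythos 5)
Your reduction to the wedge summand is fine as bookkeeping, but the step it rests on is a genuine gap: you need to realize $f_Y\colon Y\to S^m$ as the cofibre projection of some map $\bar A\to Y$, and this does \emph{not} follow from $\Omega f_Y$ having a right homotopy inverse. If $\bar A\stackrel{\bar a}{\longrightarrow}Y\stackrel{f_Y}{\longrightarrow}S^m$ were a homotopy cofibration, the Puppe sequence would force $\ker(\bar a^\ast)=\mathrm{im}(f_Y^\ast)$ in cohomology, so $\bar a^\ast(f_Y^\ast(s_m))=0$ while $\bar a^\ast$ is injective in all degrees $\neq m$; hence $f_Y^\ast(s_m)\cup v$ would have to vanish for every $v\in H^{>0}(Y;\mathbb{Z})$. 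This fails for maps as innocuous as $\pi_1\colon S^m\times S^q\to S^m$ (where $\bar a^\ast(u_m v_q)=\bar a^\ast(u_m)\cup\bar a^\ast(v_q)=0$ contradicts injectivity in degree $m+q$), even though $\pi_1$ itself has a strict section. Nothing in the hypotheses of the theorem excludes such $Y$ and $f_Y$, and Mather's Cube Lemma cannot rescue this: it converts a homotopy pushout over a base into a homotopy pushout of homotopy fibres, but it does not desuspend $\mathrm{cofib}(f_Y)$ or manufacture a cofibration ending in $f_Y$. Your own closing remark concedes exactly this point, so the "technical heart" is not merely unwritten — it is false as stated.

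The paper's proof avoids this entirely by never asking $f_0$ to be a cofibre projection onto $D_0$. The only cofibration it uses is $Y\stackrel{i_1}{\longrightarrow}M_0\simeq Y\vee S^{n-m}\stackrel{q_2\circ f_0}{\longrightarrow}S^{n-m}$, whose cofibre is the \emph{fibre} sphere $S^{n-m}$ rather than all of $D_0$, and which exists for free from the wedge decomposition. This gives a homotopy pushout with $Y\to S^m$ and $M_0\to S^m\vee S^{n-m}$ as the other two sides; Mather's cube over the base $S^m$ (Theorem \ref{cubethm}) transports it to a homotopy pushout of homotopy fibres, producing a cofibration $E_Y\to E_0\to\Omega S^m\ltimes S^{n-m}$ and, by Lemma \ref{cone+pushoutlemma}, a compatible cofibration $E_Y\to E\to S^{n-m}$. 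Theorem \ref{pushoutthm2} is then applied to \emph{that} diagram (using that $i_S\simeq p_2$ has the section $j_2$), and Lemma \ref{loopphilemma} converts the resulting right homotopy inverse of $\Omega i_E$ back into one for $\Omega i_M$. If you want to keep your framework, you would have to replace the appeal to Theorem \ref{inertdeg1thmintro}(3) by the pushout-based criterion of Proposition \ref{detprop}/Theorem \ref{detthm}; as written, the proposal does not prove the theorem.
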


Let us discuss two special cases of Theorem \ref{detthmintro}. The first, stated in Corollary \ref{detcor}, can be summarized as follows: 
Suppose that $n>m+1> 2$ and $\pi_{n-1}(S^m)$ is a torsion group. 
Let $M$ be an $n$-dimensional simply connected Poincar\'{e} duality complex with $M_0\simeq Y\vee S^{n-m}$ for some complex $Y$. 
Assume there is a map $f_Y: Y\stackrel{}{\longrightarrow} S^{m}$ that {\it detects} the Poincar\'{e} dual of $S^{n-m}$ in $M$; meaning that the pullback of a generator of $H^m(S^m;\mathbb{Z})$ in $H^m(Y;\mathbb{Z})\subseteq H^{m}(M_0;\mathbb{Z})=H^m(M;\mathbb{Z})$ is the Poincar\'{e} dual of $[S^{n-m}]\in H_{n-m}(M_0;\mathbb{Z})=H_{n-m}(M;\mathbb{Z})$. 
If $\Omega f_Y$ has a right homotopy inverse, then the attaching map for the top cell of $M$ is inert after localization away from all primes $p$ that divide the order of $\pi_{n-1}(S^m)$.

For another special case, Theorem \ref{detthmintro} can be strengthened by the remarkable work \cite{BT14} of Beben-Theriault. 
Suppose that $M_0\simeq Y\vee S^{n-m}\simeq S^m\vee Z\vee S^{n-m}$ for some complex $Z$. If the intersection number of $S^m$ and $S^{n-m}$ in $M$ is $\pm 1$, one can construct a degree one map $M\stackrel{f}{\longrightarrow} D$ onto a Poincar\'{e} duality complex $D$ such that $H^\ast(D;\mathbb{Z})\cong H^\ast(S^m\times S^{n-m};\mathbb{Z})$. Note that $D$ does not necessarily fiber over a sphere. 
In this case, Beben-Theriault \cite{BT14} showed that the attaching map for the top cell of $M$ is inert; see Theorem \ref{exJthm} for an alternative proof using the comparison idea. In particular, this special case can be applied to show that the top cell attachments for most $(n-1)$-connected $2n$-dimensional and $(n-1)$-connected $(2n+1)$-dimensional Poincar\'{e} duality complexes are inert; see Example \ref{Huaex} below for illustrations. 

Theorem \ref{detthmintro} is different from the main result of \cite{The24b}. 
In the latter work, Theriault showed that if $M$ is $(m-1)$-connected with $n-m\geq m$ and $f_Y$ itself has a right homotopy inverse, then the attaching map for the top cell of $M$ is inert. In our context, we impose no restrictions on the connectivity of $M$ or the value of $m$, and we assume that $\Omega f_Y$ has a right homotopy inverse.   

The idea of comparing a Poincar\'{e} duality complex with a twisted product of spheres can be generalized. It is possible to compare a Poincar\'{e} duality complex with other candidates. For instance, consider a connected sum $M\#N$ such that the attaching map for the top cell of $N$ is inert. By comparing the connected sum $M\#N$ with $N$ through the canonical degree one projection $M\#N\stackrel{}{\longrightarrow} N$, we can prove that the attaching map for the top cell of $M\#N$ is inert. This reproduces a result of Theriault in \cite{The24a} by the comparing idea. As concrete applications, we show that many low dimensional manifolds satisfy the inertness property. See Section \ref{sec: ex1} for detailed proofs and Example \ref{Huaex} below for a summary of examples.

$\, $

\noindent{\bf Inertness via strict fibrations.} 
The comparison idea can be extended further. Besides comparing Poincar\'{e} duality complexes of the same dimension through nonzero degree maps, we can study the inertness property by comparing Poincar\'{e} duality complexes of different dimensions through fibrations. In particular, from the geometric structure of a strict fibration of  Poincar\'{e} duality complexes we are able to show that the inertness of the top cell attachment for the base space determines the inertness of the top cell attachment for the total space. 
\begin{theorem}[Theorem \ref{FEBthm} and Proposition \ref{FEBprop}]\label{FEBthmintro}
Let 
\[\label{FEBeqintro}
F\stackrel{}{\longrightarrow} E\stackrel{}{\longrightarrow} B
\]
 be a strict fibration of connected Poincar\'{e} duality complexes with a single top cell. Then the following hold:
\begin{itemize}
\item[(1).] if the attaching map for the top cell of $B$ is inert, then the attaching map for the top cell of $E$ is inert;
\item[(2).] if the fibration has a homotopy section, then the attaching map for the top cell of $E$ is inert.
\end{itemize}
Additionally, the assertions hold after localization at any set of primes.
\end{theorem}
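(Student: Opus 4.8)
The plan is to reduce everything to Theorem \ref{inertdeg1thmintro} (and its local version) together with Theorem \ref{detthmintro}-type comparison with a twisted product of spheres, using the Poincar\'{e} duality structure of a strict fibration. First I would recall the standard fact that for a strict fibration $F\to E\to B$ of Poincar\'{e} duality complexes with $\dim(F)=d$, $\dim(B)=b$, $\dim(E)=b+d$, the total space $E$ is itself a Poincar\'{e} duality complex and the fibre inclusion $F\hookrightarrow E$ together with a choice of local section over the top cell of $B$ determines a canonical map. The key geometric input is that the top cell of $E$ sits over the top cell of $B$: restricting the fibration to the top cell $e^b$ of $B$ gives a trivial piece $e^b\times F$ (the fibration is trivial over a cell), so $E$ is obtained from the total space $E_0'$ of the restricted fibration over $B_0$ by attaching $e^b\times F$. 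This lets me compare $E$ with the "model" where $B$ is replaced by $S^b$.

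For statement (2), when the fibration has a homotopy section $s: B\to E$, I would first treat the case where additionally $B=S^b$: then $E$ is a twisted product $S^b\widetilde{\times}F'$ in the generalized sense, and one shows directly (as in the discussion preceding Theorem \ref{detthmintro}, extending the sphere case by the Hilton-Milnor / James-type splitting coming from the section) that its top cell attachment is inert. For general $B$, I would use the section to produce a degree one map from $E$ onto the pullback fibration over the top cell, realized as a map $E\to B\times_{B_0}(\text{something})$; more cleanly, collapsing $B$ to $S^b$ via a degree one map $B\to S^b$ (which exists since $B$ has a single top cell, using the lower skeleton $B_0$) and pulling back the fibration, I get a commuting square and a degree one map $E\to E'$ where $E'\to S^b$ is a sectioned fibration over a sphere. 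Then Theorem \ref{inertdeg1thmintro} (1) applies once I check that the induced map on lower skeletons $E_0\to E'_0$ is inert — and here the section splits $E_0$ compatibly, so the map on lower skeletons is of the form $g\vee 1$ up to the relevant wedge decomposition coming from the trivialization over the cell, whose inertness follows from the explicit splitting.

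For statement (1), I would not assume a section. Instead I use the fibre bundle structure over the top cell directly: write $B = B_0 \cup_{h_B} e^b$ and $E = E_0' \cup_{\widetilde{h}} (e^b\times F)$ where $E_0'$ is the total space over $B_0$ and $\widetilde{h}: \partial e^b \times F = S^{b-1}\times F \to E_0'$ is built from $h_B$ and the clutching data. The hypothesis that $h_B$ is inert should translate, via Beben-Theriault's decomposition \cite{BT22} applied to $B_0 \to B$, into a loop space decomposition of $B$; pulling this back along the fibration and using that the loop functor preserves homotopy pullbacks, I would transport it to a statement about $\Omega E$ versus $\Omega(E_0')$ and the attaching data. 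The cleanest route is probably: the inertness of $h_B$ gives, by Theorem \ref{detthmintro}'s mechanism, a comparison of $B$ with $S^b\widetilde\times(\text{fibre of }i_B)$, and pulling the fibration back along this comparison reduces statement (1) to statement (2) for the resulting sectioned fibration plus an application of Theorem \ref{inertdeg1thmintro} (3), since the map on lower skeletons will be a cofibration with inert attaching map coming from the Beben-Theriault fibre.

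The main obstacle I anticipate is the bookkeeping of the attaching map $\widetilde{h}$ of the top cell $e^b\times F$ of $E$ and verifying that the maps induced on lower skeletons in the comparison squares are genuinely inert (of the expected wedge-sum-with-identity form), rather than merely having the right homology. This requires carefully using the trivialization of the fibration over the top cell of $B$ and the naturality of Beben-Theriault's decomposition; the localization statement at the end should then be automatic, since every tool invoked (Theorem \ref{inertdeg1thmintro}, Theorem \ref{detthmintro}, the Beben-Theriault decomposition) is available after localization at an arbitrary set of primes, provided one checks the spaces involved remain nilpotent, which holds as $B$, $E$, $F$ are simply connected (or at least nilpotent) in the cases of interest.
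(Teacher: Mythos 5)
Your opening observation for statement (1) --- that the fibration is trivial over the top cell of $B$, so that $\pi^{-1}(B_0)$ includes into the lower skeleton $E_0$ of $E$ --- is exactly the right starting point, and it is essentially all that statement (1) needs. The paper's proof restricts the fibration over $B_0$ to obtain a homotopy pullback square comparing $\pi^{-1}(B_0)\hookrightarrow E$ with $B_0\hookrightarrow B$, loops it (the loop functor preserves homotopy pullbacks), and applies Theorem \ref{pullbackthm} (1) to transport the right homotopy inverse of $\Omega i_B$ to one of $\Omega(\pi^{-1}(B_0)\hookrightarrow E)$; since that inclusion factors through $E_0$, a right homotopy inverse for $\Omega i_E$ follows at once. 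Your subsequent pivot to ``the cleanest route'' via Theorem \ref{detthmintro} is a wrong turn: $B$ is not comparable to a twisted product $S^{b}\widetilde{\times}(\text{fibre of }i_B)$ in the sense of that theorem (the homotopy fibre of $i_B$ is $\Omega B\ltimes S^{b-1}$ by Beben--Theriault, and there is no fibration of $B$ over $S^{b}$ in sight), and none of the degree-one or Beben--Theriault machinery is needed here.

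For statement (2) there is a genuine error in your construction: you cannot ``collapse $B$ to $S^{b}$ \ldots\ and pull back the fibration'' to produce a degree one map $E\to E'$ with $E'\to S^{b}$ a sectioned fibration, because pullback goes the other way --- one needs a fibration over $S^{b}$ to pull back along $B\to S^{b}$, whereas the given fibration lives over $B$, and there is no pushforward of a fibration along the pinch map. The paper's argument is instead elementary and avoids spheres entirely: the section $s:B\to E$ and the fibre inclusion $j:F\to E$ both factor through $E_0$ by cellular approximation (as $\dim B,\dim F<\dim E$), and the composite $\Omega B\times\Omega F\to\Omega E_0\times\Omega E_0\xrightarrow{\mu}\Omega E_0\xrightarrow{\Omega i_E}\Omega E$ is a weak equivalence because the sectioned fibration splits after looping; hence $\Omega i_E$ has a right homotopy inverse. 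The localization claim is then automatic, as both arguments run verbatim in the local category. I would encourage you to discard the comparison-with-a-twisted-product scaffolding and keep only the skeletal-restriction and loop-multiplication arguments.
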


Theorem \ref{FEBthmintro} provides a powerful criterion for inertness. For instance, according to the work of Beben-Theriault \cite{BT14}, it is known that the top cell attachments for most $(n-1)$-connected $2n$-dimensional and $(n-1)$-connected $(2n+1)$-dimensional Poincar\'{e} duality complexes are inert. 
Therefore, for any strict fibration over such a highly connected Poincar\'{e} duality complex with the fibre being a connected Poincar\'{e} duality complex, the top cell attachment for its total space is inert by Theorem \ref{FEBthmintro} (1). See Example \ref{Huaex} for more examples. 

The converse statement of Theorem \ref{FEBthmintro} (1) can be proved for specific fibrations. 
Indeed, we study certain spherical fibre bundles of manifolds and show that the inertness of top cell attachments for the total manifold and the base manifold are equivalent. 

\begin{theorem}[Theorem \ref{geoinertthm} and Proposition \ref{geoinertprop}]\label{geoinertthmintro}
Let $k=2$, $4$, or $8$. Let 
\[
   \diagram 
       S^{k-1}\rto^-{}\ddouble & E\rto^-{}\dto & M'\# M\dto^{p} \\ 
       S^{k-1}\rto^-{j} & N\rto^-{} & M
   \enddiagram 
\]
be a morphism of fibre bundles of connected oriented closed smooth manifolds, where $p$ is the canonical  pinch map. Suppose that the fibre inclusion $j$ is null homotopic and ${\rm dim}(M)\geq k+2$. Then the following hold:
\begin{itemize}
\item[(1).]
the attaching map for the top cell of $N$ is inert if and only if the attaching map for the top cell of $M$ is inert;
\item[(2).]
the attaching map for the top cell of $E$ is inert if and only if the attaching map for the top cell of $M'\# M$ is inert;
\item[(3).]
if the attaching map for the top cell of $M$ or $M'$ is inert, then the attaching map for the top cell of $E$ is inert.
\end{itemize}
Additionally, the assertions hold after localization at any set of primes.
\end{theorem}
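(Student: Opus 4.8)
The plan is to reduce everything to the comparison theorems already established, namely Theorem \ref{inertdeg1thmintro} and Theorem \ref{FEBthmintro}, by carefully analyzing the geometry of the two fibre bundles in the diagram. First I would observe that since the fibre inclusion $j\colon S^{k-1}\to N$ is null homotopic, the bundle $S^{k-1}\to N\to M$ admits a homotopy section; consequently $N$ is a twisted product in the sense of the sphere bundle having a section, and there is a homotopy cofibration $S^{k-1}\to N\to M$ (the cofibre of the fibre inclusion is the base, up to the usual dimension bookkeeping), or more precisely a pinch-type comparison map. The key structural point is the standard fact (Browder, or the clutching description) that for $k=2,4,8$ a linear $S^{k-1}$-bundle over a manifold $M$ with null-homotopic fibre inclusion, after removing a disc, is trivial, so $N$ is obtained from $S^{k-1}\times M_0$ by attaching a top cell, and the pinch map $N\to M$ collapsing the fibre over the top cell is a degree one map of Poincar\'e duality complexes. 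I would then identify $N_0$, the lower skeleton of $N$: it splits as $S^{k-1}\vee (\text{something built from }M_0)$ because the section gives a wedge summand $S^{k-1}$ and the bundle is trivial over $M_0$. This puts us exactly in the setting where Theorem \ref{inertdeg1thmintro}(1) applies to the degree one map $N\to M$ once we check that the restriction $N_0\to M_0$ is inert — but that restriction is (up to homotopy) the composite $S^{k-1}\vee M_0' \to M_0$ which collapses the sphere summand, and collapsing a wedge summand is always inert by the Hilton--Milnor theorem (equivalently, $\Omega$ of a wedge projection splits off a loop-sphere factor). So statement (1) follows: $h_N$ inert $\iff$ $h_M$ inert.

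For statement (2) I would run the same argument one dimension up in the "fibrewise connected sum" picture: the top square of the diagram is the fibrewise connected sum of the bundle $S^{k-1}\to N\to M$ with the trivial bundle $S^{k-1}\to S^{k-1}\times M'\to M'$ over $M'$, performed along a disc, and $p\colon M'\# M\to M$ is the pinch map. The morphism of bundles shows $E\to M'\# M$ is again a linear $S^{k-1}$-bundle with null-homotopic fibre inclusion (the fibre inclusion factors through that of $N$, which is null), so by exactly the reasoning of part (1), applied to the degree one map $E\to M'\# M$ with inert restriction on lower skeletons (again a wedge-summand collapse), we get $h_E$ inert $\iff$ $h_{M'\# M}$ inert. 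The only genuinely new ingredient here is checking that the fibre inclusion $S^{k-1}\to E$ is null homotopic: this should follow because it is homotopic to the fibre inclusion $S^{k-1}\to N$ composed with $N\to E$ (the latter existing from the bundle morphism), and $S^{k-1}\to N$ is null by hypothesis; alternatively one argues directly that $E$ restricted over a disc in $M'\# M$ is trivial.

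For statement (3), I would combine (2) with the already-proven preservation of inertness under connected sum: Theriault's result, reproved in this paper by the comparison method (the pinch map $M'\# M\to M$), says $h_{M'\# M}$ is inert whenever $h_M$ is inert, and by symmetry whenever $h_{M'}$ is inert. Composing with the equivalence in (2) gives the conclusion. Throughout, the localized versions come for free because every tool invoked — Theorem \ref{inertdeg1thmintro}, Theorem \ref{FEBthmintro}, the Hilton--Milnor splitting, the connected-sum statement — is stated to hold after localization at an arbitrary set of primes, and the geometric inputs (triviality of the bundle over a disc, null-homotopy of the fibre inclusion) are integral statements independent of localization.

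The main obstacle I anticipate is \emph{not} the homotopy-theoretic machinery but the geometric identification of the lower skeleton of the total spaces and the proof that the relevant restriction maps $N_0\to M_0$ and $E_0\to (M'\# M)_0$ are, up to homotopy equivalence of the source with a wedge, literally the collapse of a sphere summand. This requires knowing that the bundle is trivial over the $(n-1)$-skeleton of the base and that the section can be chosen compatibly with the $CW$-structure — here the restriction $k\in\{2,4,8\}$ and ${\rm dim}(M)\ge k+2$ is what guarantees, via the vanishing of the relevant obstruction and the Hopf-invariant-one phenomenon controlling $S^{k-1}$-bundles, that such a trivialization and section exist and behave well. Making this bundle-theoretic reduction precise, including the handle/disc decomposition showing the pinch maps have degree one and the correct effect on skeleta, is the technical heart of the argument; once it is in place, parts (1)--(3) are formal consequences of the comparison theorems.
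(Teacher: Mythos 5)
Your overall architecture (reduce (2) to (1), deduce (3) from (2) together with Theorem \ref{exsumthm}) matches the paper's, but the proof of the core statement (1) rests on structural claims that are false and on an application of a theorem whose hypotheses cannot be met. First, Theorem \ref{inertdeg1thmintro}(1) compares two Poincar\'{e} duality complexes \emph{of the same dimension} through a degree one map; here ${\rm dim}(N)={\rm dim}(M)+k-1\neq {\rm dim}(M)$, so there is no ``degree one map $N\to M$'' in the relevant sense and that theorem does not apply. Second, null-homotopy of the fibre inclusion does \emph{not} imply that the bundle admits a section, nor that it is trivial over $M_0$, nor that $N_0$ splits off an $S^{k-1}$ wedge summand. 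The Hopf bundle $S^1\to S^7\to \mathbb{C}P^3$ satisfies every hypothesis ($k=2$, ${\rm dim}(M)=6\geq k+2$, $\pi_1(S^7)=0$), yet it has no section, its restriction over $(\mathbb{C}P^3)_0\simeq \mathbb{C}P^2$ is the nontrivial Hopf bundle with total space $\simeq S^5$ rather than $S^1\times(\mathbb{C}P^3)_0$, and $(S^7)_0$ is contractible. What is actually available, and what the paper establishes in Lemma \ref{emblemma}, concerns only a single fibre: a null homotopy of $j$ extends it to a map $D^k\to N$, which the Whitney embedding theorem (using $n+k-1\geq 2k+1$, i.e.\ ${\rm dim}(M)\geq k+2$) upgrades to an embedding that can be isotoped to be local; this yields $p^{-1}(M_0)\cong N\#(S^{n-1}\times D^{k})$ together with a map $\kappa: N_0\to p^{-1}(M_0)$ whose composite back into $N_0$ is isotopic to the identity. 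Statement (1) then follows by combining this retraction with a homotopy \emph{pullback} comparison (Theorem \ref{pullbackthm}(3)) applied to the restriction of the bundle over $M_0\hookrightarrow M$ --- a comparison of complexes of different dimensions through a fibration, which is the opposite mechanism to the same-dimensional degree-one comparison you invoke.

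Your treatment of (2) has a second gap of the same flavour: to show that the fibre inclusion $S^{k-1}\to E$ is null homotopic you compose ``the fibre inclusion $S^{k-1}\to N$ with $N\to E$'', but the bundle morphism supplies a map $E\to N$, not $N\to E$; and knowing that the composite $S^{k-1}\to E\to N$ is null homotopic does not imply that $S^{k-1}\to E$ is (the implication runs the wrong way). The fallback remark that $E$ is trivial over a disk of $M'\#M$ is vacuous, since every bundle is trivial over a disk. The paper instead argues geometrically (via \cite[Lemma 3.1]{HT23} and the Whitney embedding theorem again) that the embedded disk bounding the fibre of $N$ can be chosen inside the preimage of a small disk of $M$, hence lifts to $E$, so the fibre inclusion of $E$ extends to a disk embedding and is null homotopic; (2) is then an application of (1) to the bundle $S^{k-1}\to E\to M'\#M$. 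Your derivation of (3) from (2) and Theorem \ref{exsumthm} is correct as stated.
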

Note that if $M'$ is a sphere, Theorem \ref{geoinertthmintro} (2) reduces to Theorem \ref{geoinertthmintro} (1). Additionally, under the condition that $j$ is null homotopic, the fibre sphere has to be an $H$-space and can only be $S^1$, $S^3$ or $S^7$ by Adams' solution to the famous Hopf invariant one problem. 

Theorem \ref{geoinertthmintro} (1) generalizes the geometric version of \cite[Theorem 1.2]{The24b}. In the latter work, Theriault considered a homotopy principal fibration $S^1\stackrel{j}{\longrightarrow} N\stackrel{}{\longrightarrow} M$, and showed that if $j$ is null homotopic and the attaching map for the top cell of $N$ is inert, then the attaching map for the top cell of $M$ is inert. In contrast, we consider a fibre bundle with a spherical fibre, which may not be principal, and show that the inertnesses for the attaching maps of the top cells of $M$ and $N$ are equivalent. 

In the sequel we will present two applications of Theorem \ref{geoinertthmintro}: Theorem \ref{gyration-inert-thmintro} for specific surgery and Theorem \ref{homthmintro} for complete flag manifolds. 

$\, $

\noindent{\bf Inertness via a surgery.} 
Surgery is a fundamental operation in geometric topology. It involves a cut-and-paste procedure to produce a new manifold from a given one. In general, the inertness property is not preserved by surgery. For example, $S^m\times S^{n-m}$ is cobordant to the sphere $S^{n}$. However, the attaching map is inert for the top cell of $S^m\times S^{n-m}$ but not inert for the top cell of $S^n$. Therefore, inertness is not a cobordism invariant property. Determining which surgeries preserve inertness is an interesting and challenging problem. 

Inspired by the work \cite{HT23} of Huang-Theriault and \cite{Dua22} of Duan, we can apply Theorem \ref{geoinertthmintro} to show that a certain type of surgery preserves the inertness of top cell attachments. 

Let $N$ be an $n$-dimensional connected oriented closed smooth manifold. Let $k\geq 2$. A $(k-1)$-surgery on the product manifold $N\times S^{k-1}$ along the sphere factor $S^{k-1}$ with a {\it framing} 
\[
\tau: S^{k-1}\stackrel{}{\longrightarrow} SO(n)
\]
produces a new orientable closed manifold, denoted by $\mathcal{G}^{\tau}(N)$. For a more precise definition of $\mathcal{G}^{\tau}(N)$, refer to Section \ref{sec: surgery}. 

This surgery is of interest in both algebraic and geometric topology. When $k=2$ and $\tau$ is trivial, Gonz\'{a}lez-Acu\~{n}a~\cite{GA75} referred  to $\mathcal{G}^{0}(N)$ as a \emph{gyration}, an object further studied in the context of toric topology by Gitler and L\'{o}pez de Medrano~\cite{GLdM13}. Additionally, when $k=2$, Duan \cite{Dua22} referred to $\mathcal{G}^{\tau}(N)$ as a \emph{suspension} and used it to study regular circle actions on manifolds, building on the work of Goldstein and Lininger \cite{GL71} on $6$-manifolds. Furthermore, Huang and Theriault \cite{HT23} proved a loop space decomposition for general $\mathcal{G}^{\tau}(N)$ and applied it to study the homotopy theory of manifolds after stabilizations.

Here, we study inertness around this surgery when $k=2$ or $4$ and prove the following theorem by combining Theorem \ref{geoinertthmintro} and results in \cite{HT23}.

\begin{theorem}\label{gyration-inert-thmintro} 
Let $N$ be an $n$-dimensional connected oriented closed smooth manifold. 
Suppose that the attaching map for the top cell of $N$ is inert. 
The following hold:
\begin{itemize}
\item[(1).]
if $k=2$, $n\equiv 0~{\rm mod}~2$ and $n\geq 4$, then the attaching map for the top cell of $\mathcal{G}^{\tau}(N)$ is inert after localization away from $2$;
\item[(2).]
if $k=2$, $n\equiv 0~{\rm mod}~4$ and $n\geq 4$, then the attaching map for the top cell of $\mathcal{G}^{0}(N)$ is inert;
\item[(3).] 
if $k=4$, $n\equiv 0~{\rm mod}~4$ and $n\geq 8$, then the attaching map for the top cell of $\mathcal{G}^{\tau}(N)$ is inert after localization away from $2$ and $3$.
\end{itemize}
\end{theorem}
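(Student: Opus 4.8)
The plan is to exhibit $\mathcal{G}^{\tau}(N)$ as the upper total space in a morphism of $S^{k-1}$-bundles of exactly the shape treated in Theorem~\ref{geoinertthmintro}, with the input manifold $N$ playing the role of the connected summand $M'$. Throughout I write $\mathbb{B}$ for $\mathbb{C}P^{n/2}$ when $k=2$ and for $\mathbb{H}P^{n/4}$ when $k=4$; the divisibility hypotheses on $n$ are precisely what is needed for $\mathbb{B}$ to exist, and then $\dim\mathbb{B}=n\ge k+2$ by the stated lower bounds on $n$.

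First I would recall from \cite{HT23} the handle-theoretic picture of the surgery. The standard splitting $S^{n+k-1}=\partial(D^{k}\times D^{n})=(D^{k}\times S^{n-1})\cup_{S^{n-1}\times S^{k-1}}(S^{k-1}\times D^{n})$ realises $S^{n+k-1}$ as the total space of the Hopf fibration $S^{k-1}\to S^{n+k-1}\to\mathbb{B}$, in which $S^{k-1}\times D^{n}$ is the preimage of a top cell; hence the restriction of this fibration over $\mathbb{B}\setminus\mathring{D}^{n}$ has total space $D^{k}\times S^{n-1}$. Pulling it back along the pinch map $p\colon N\#\mathbb{B}\to\mathbb{B}$, which collapses $N\setminus\mathring{D}^{n}$, produces the $S^{k-1}$-bundle over $N\#\mathbb{B}$ with total space $(N\setminus\mathring{D}^{n})\times S^{k-1}\cup_{S^{n-1}\times S^{k-1}}(D^{k}\times S^{n-1})$, which is exactly $\mathcal{G}^{0}(N)$. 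To pass from $\mathcal{G}^{\tau}(N)$ to $\mathcal{G}^{0}(N)$ I would invoke the loop space analysis of \cite{HT23}: the dependence of the surgery on the framing is controlled by the image of $\tau$ under the $J$-homomorphism, and since $\tau\in\pi_{k-1}(SO(n))$ the element $J(\tau)$ has order dividing $2$ when $k=2$ and dividing $24$ when $k=4$; thus after inverting $2$ (respectively $2$ and $3$) the framing is annihilated and $\mathcal{G}^{\tau}(N)\simeq\mathcal{G}^{0}(N)$ in the localised category, whereas for $k=2$ with $\tau$ trivial one may argue integrally --- this is what yields the integral statement~(2), for which the analysis of \cite{HT23} requires the stronger congruence $n\equiv0\bmod 4$.

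Granting this, in the relevant homotopy category one has a morphism of fibre bundles
\[
   \diagram
       S^{k-1}\rto^-{}\ddouble & \mathcal{G}^{\tau}(N)\rto^-{}\dto & N\#\mathbb{B}\dto^{p}\\
       S^{k-1}\rto^-{j} & S^{n+k-1}\rto^-{} & \mathbb{B},
   \enddiagram
\]
where $p$ is the canonical pinch map and $j\colon S^{k-1}\hookrightarrow S^{n+k-1}$ is null homotopic since $n+k-1>k-1$; all the spaces involved are closed manifolds with a single top cell. As the attaching map for the top cell of $N$ is inert by hypothesis, Theorem~\ref{geoinertthmintro}(3), applied with $M=\mathbb{B}$ and $M'=N$, shows that the attaching map for the top cell of $\mathcal{G}^{\tau}(N)$ is inert in the same category. (One could instead apply Theorem~\ref{geoinertthmintro}(2) together with the fact, recalled in the introduction, that connected sum with an arbitrary Poincar\'{e} duality complex preserves inertness of the top cell, so that the top cell of $N\#\mathbb{B}$ is inert.) Recording the primes inverted in the three cases then gives statements~(1)--(3).

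The anticipated main obstacle is the first step. Identifying $\mathcal{G}^{0}(N)$ with the Hopf pullback is routine once the handle decomposition is at hand; the delicate point is controlling the framing, that is, showing that inverting exactly the primes dividing the order of $J(\tau)$ suffices to replace $\mathcal{G}^{\tau}(N)$ by $\mathcal{G}^{0}(N)$ up to homotopy, and checking that the hypotheses of Theorem~\ref{geoinertthmintro} --- null homotopic fibre inclusion and $\dim\mathbb{B}\ge k+2$ --- are met. This is where the results of \cite{HT23} enter essentially, and it is the source both of the congruences imposed on $n$ and of the prime sets $\{2\}$ and $\{2,3\}$ appearing in (1) and (3).
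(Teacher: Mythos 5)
Your proposal is correct and follows essentially the same route as the paper: identify the gyration (up to a framing) with the pullback of the Hopf bundle $S^{k-1}\to S^{n+k-1}\to\mathbb{C}P^{n/2}$ or $\mathbb{H}P^{n/4}$ along the pinch map (the paper's Lemma \ref{bundlegyrationlemma}, quoting \cite{HT23}), dispose of the framing dependence by a $J$-homomorphism/localization argument (the paper's Proposition \ref{localgrationprop} with $\mathcal{P}_2=\{2\}$, $\mathcal{P}_4=\{2,3\}$), and conclude via Theorem \ref{geoinertthmintro} together with the connected-sum result Theorem \ref{exsumthm}. The only slight imprecision is asserting that the Hopf pullback is \emph{exactly} $\mathcal{G}^{0}(N)$ in all cases --- for $k=2$ and $n\equiv 2\bmod 4$ it is $\mathcal{G}^{\tau_1}(N)$ for a possibly nontrivial framing $\tau_1$ --- but since you only need the identification after inverting the relevant primes (and you correctly note that the integral statement (2) requires $n\equiv 0\bmod 4$), this does not affect the argument.
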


$\, $

\noindent{\bf Inertness for Stiefel manifolds and complete flag manifolds, and other examples collected.} 
Let $G$ be a simply connected compact Lie group with a maximal torus $T\cong T^\ell=S^1\times \cdots \times S^1$, where $\ell$ is the {\it rank} of $G$. The homogeneous manifold $G/T$ is known as the {\it complete flag manifold} of $G$. 
Theorem \ref{geoinertthmintro} can be applied to show that the top cell attachments of complete flag manifolds are inert at large primes. Here, we state the results specifically for simple Lie groups, while the conclusion for general compact Lie groups follows easily and is summarized in Theorem \ref{G/Tthm}.

To collect more concrete examples of homogeneous spaces, we include the results for Stiefel manifolds in the following theorem, which serve as examples of Theorem \ref{pdtintthmintro}. 
Let $V_{n,k}(\mathbb{R})=SO(n)/SO(n-k)$, $V_{n,k}(\mathbb{C})=U(n)/U(n-k)$ and $V_{n,k}(\mathbb{H})=Sp(n)/Sp(n-k)$ be the {\it real, complex and quaternionic Stiefel manifolds}, respectively. 

\begin{theorem}[Theorems \ref{VCHthm}, \ref{VRthm} and \ref{G/Tthm}]\label{homthmintro}
Let $k\geq 2$. Let $M$ be any homogeneous space in the following list 
\[
\begin{array}{llll} 
V_{2n+2, 2k-2}(\mathbb{R})& \quad\mbox{at}\ \ p\geq 3    \\
     V_{2n+2, 2k-1}(\mathbb{R}) & \quad\mbox{at}\ \ p\geq 3   & \quad V_{n,k}(\mathbb{C})   & \quad\mbox{at}\ \ p>n-1          \\
  V_{2n+1,2k}(\mathbb{R}) & \quad\mbox{at}\ \ p>2n-1    &\quad V_{n,k}(\mathbb{H})  & \quad\mbox{at}\ \ p>2n-1 \\   
  ~~~\\
   SU(n)/T^{n-1} & \quad\mbox{at}\  \ p\geq n\geq 3        &\quad F_4/T^4& \quad\mbox{at}\ \ p\geq 12 \\  
   Sp(n)/T^{n} & \quad\mbox{at}\ \ p\geq 2n\geq 4          & \quad E_6/T^6 & \quad\mbox{at}\ \ p\geq 12 \\ 
   Spin(n)/T^{\lfloor \frac{n}{2} \rfloor} & \quad\mbox{at}\ \ p\geq n-1\geq 4           & \quad E_7/T^7 & \quad\mbox{at}\ \ p\geq 18 \\
   G_2/T^2 & \quad\mbox{at}\ \ p\geq 6                         & \quad E_8/T^8 & \quad\mbox{at}\ \ p\geq 30.
\end{array} 
\]
Then the attaching map for the top cell of $M$ is inert after localization at any allowable prime $p$ in the list. 
\end{theorem}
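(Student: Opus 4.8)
The plan is to reduce Theorem~\ref{homthmintro} to the general criteria established above, treating the Stiefel manifolds via the intersection-number theorem and the complete flag manifolds via the strict-fibration theorem. In all cases the first task is to identify, for the homogeneous space $M$ in question, a suitable submanifold structure: either a product of two submanifolds that essentially intersect (for $V_{n,k}$), or a fibre bundle of Poincar\'{e} duality complexes whose base has known inert top cell (for $G/T$). The classical structure theory of these spaces supplies exactly such decompositions; the work is to check that the hypotheses of Theorem~\ref{pdtintthmintro} and Theorem~\ref{FEBthmintro} (or Theorem~\ref{geoinertthmintro}) are met after localizing at the indicated primes.

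For the Stiefel manifolds I would argue as follows. The fibration $V_{n-k+j,\,j}(\mathbb{F})\to V_{n,k}(\mathbb{F})\to V_{n,k-j}(\mathbb{F})$ exhibits $V_{n,k}(\mathbb{F})$ as built from sphere fibrations, and away from small primes each Stiefel manifold splits (in cohomology, and after localization in homotopy) as a product of spheres; this is the classical result of James and others. Concretely one wants to exhibit a degree one (or degree prime to $p$) map $A\times B\to V_{n,k}(\mathbb{F})$ where $A$ and $B$ are themselves products of spheres (hence have inert top cells, and inert lower skeleta, by Hilton--Milnor) and where the relevant Poincar\'{e} dual is $f$-primitive. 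The cohomology of $V_{n,k}(\mathbb{F})$ is an exterior algebra on generators in the appropriate degrees, so the intersection number of the two sphere-product factors is computed directly from the cup product of dual classes against the fundamental class; localizing away from the primes appearing in the relevant binomial/Weyl-type coefficients makes this intersection number a unit. Then Theorem~\ref{pdtintthmintro} (or its general form Theorem~\ref{pdtintthm}) applies. The primes listed ($p\geq 3$, $p>n-1$, $p>2n-1$) are precisely the ranges in which the requisite sphere splittings and the unitality of the intersection number hold.

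For the complete flag manifolds $G/T$, the key input is that $G/T$ fibres (strictly, as a Poincar\'{e} duality complex) over a flag manifold of smaller rank or over a space whose top cell is already known to be inert; iterating the fibrations $G/T^{\ell}\to G/T^{\ell}\times\cdots$ coming from a chain of subgroups, or better, using a fibration $G/T\to G/H$ with fibre $H/T$, lets one apply Theorem~\ref{FEBthmintro}~(1) provided the base has inert top cell after localization at $p$. Since $H^*(G/T;\mathbb{Q})$ is the coinvariant algebra of the Weyl group and is not generated by a single element once $\ell\geq 1$ in the interesting range, rational inertness holds by Halperin--Lemaire; the point is to upgrade this to $p$-local inertness for $p$ large. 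For $p$ larger than the torsion primes of $G$ (and larger than the relevant Weyl-group data), $G/T$ is $p$-locally a product of even spheres $\prod S^{2d_i-1}$ modulo the action, or more precisely $\Omega(G/T)$ splits $p$-locally, and one can arrange a strict fibration over a product of projective-type spaces whose top cells are handled by Theorem~\ref{geoinertthmintro} applied to the $S^1$- or $S^3$-bundles coming from the torus. The explicit bounds ($p\geq n$ for $SU(n)/T^{n-1}$, $p\geq 12$ for $F_4/T^4$, etc.) are exactly the primes exceeding all the degrees $d_i$ of the basic invariants, which is what makes the $p$-local sphere decomposition and the strict-fibration argument go through.

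The main obstacle, I expect, is verifying the $f$-primitivity condition in Theorem~\ref{pdtintthmintro} and, in the flag-manifold case, arranging the fibrations to be \emph{strict} fibrations of Poincar\'{e} duality complexes with a single top cell rather than mere homotopy fibrations. The $f$-primitivity is a statement about the absence of mixed terms in the cohomology pullback, which must be checked from the explicit ring structure and can be delicate when the two factors have overlapping degrees; the bound ${\rm dim}(B)>{\rm dim}(A)$ is used here. For the flag manifolds, the strictness and single-top-cell requirements force a careful choice of the intermediate subgroup $H$ (or the chain of such), and the localization must be uniform across all the fibrations in the chain; this is where the numerical prime bounds in the statement come from. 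The rest — computing intersection numbers against fundamental classes in exterior algebras, and citing the classical $p$-local sphere splittings of Stiefel manifolds and $G/T$ — is routine given the machinery already assembled, so I would organize the proof as a sequence of case checks, one per family in the table, each reduced to an application of Theorem~\ref{pdtintthmintro}, Theorem~\ref{FEBthmintro}, or Theorem~\ref{geoinertthmintro}.
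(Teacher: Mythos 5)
Your proposal assembles the right toolbox but misses the two concrete inputs that actually drive the proof, and in the flag manifold case your primary argument runs in the wrong direction. For the Stiefel manifolds, the paper does not apply Theorem \ref{pdtintthmintro} directly; it uses the sphere fibration $V_{n-1,k-1}(\mathbb{C})\to V_{n,k}(\mathbb{C})\to S^{2n-1}$ and produces a homotopy section after localization, then invokes Proposition \ref{FEBprop}/Corollary \ref{FMScor} and induction on $k$. The entire content of the prime bound $p>n-1$ is the Borel--Hirzebruch divisibility theorem: the generator of $\pi_{2n-1}(U(n))$ given by Bott periodicity pulls back $e_{2n-1}$ to $\pm(n-1)!$ times the fundamental cohomology class of $S^{2n-1}$, so composing with $U(n)\to V_{n,k}(\mathbb{C})$ gives a section up to degree $\pm(n-1)!$, which becomes an honest section exactly when $p>n-1$. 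Your appeal to ``the primes appearing in the relevant binomial/Weyl-type coefficients'' gestures at this but never identifies it, and without it you cannot derive the stated bounds; likewise you never name Harris's odd-primary equivalences $SO(2n+1)\simeq_p Sp(n)$, etc., which are what reduce the real Stiefel cases at $p\geq 3$ to the quaternionic ones and to products with spheres. If you insist on the intersection-theoretic route (which the paper's introduction does advertise as a viewpoint, and which Remark 8.3 notes is an alternative proof of Corollary \ref{FMScor}), you would still need Borel--Hirzebruch to make the intersection number a $p$-local unit, and you would need to exhibit the two factors explicitly; neither is done.

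For the complete flag manifolds the gap is structural. You propose to fibre $G/T\to G/H$ with fibre $H/T$ and apply Theorem \ref{FEBthmintro}(1), but that theorem concludes inertness of the \emph{total} space from inertness of the \emph{base}, so it would require you to already know the smaller flag manifold $G/H$ is inert --- there is no base case and the induction does not start. The paper's argument goes the other way: at a regular prime $p$ (Serre), $G$ is $p$-locally a product of spheres, so its top cell attachment is inert by Lemma \ref{pdt-inert-lemma}; one then descends along the principal bundle $T^\ell\to G\to G/T$ one circle factor at a time, using the \emph{equivalence} in Theorem \ref{geoinertthm} for circle bundles with null-homotopic fibre inclusion (total space inert if and only if base inert) to transfer inertness from $G$ down to $G/T$. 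You mention $S^1$-bundles from the torus in passing, but your main line of argument would fail as stated, and your claim that ``$G/T$ is $p$-locally a product of even spheres'' is not correct --- it is $G$, not $G/T$, that splits into (odd) spheres at regular primes.
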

It is evident that one can apply Theorems \ref{FEBthmintro} and \ref{geoinertthmintro} to produce from Theorems \ref{gyration-inert-thmintro} and \ref{homthmintro} more examples of manifolds satisfying the inertness property. 

In addition to Theorems \ref{gyration-inert-thmintro} and \ref{homthmintro}, we summarize other concrete and abstract examples of Poincar\'{e} duality complexes satisfying the inertness property in this paper. It should be noted that there are other known examples not covered here; for instance, interesting examples can be found in the influential works \cite{The24a, The24b} of Theriault.

\begin{example}\label{Huaex}
Let $M$ be a $CW$-complex with a single top cell having the homotopy type of any Poincar\'{e} duality complex in the following list: 
\begin{itemize}
\item[(1).] a twisted product $S^{m}\widetilde{\times} S^{n-m}$ of connected spheres; 
\item[(2).] an $(n-1)$-connected $2n$-dimensional Poincar\'{e} duality complex $N$ with $H^n(N;\mathbb{Z})=\mathbb{Z}^{\oplus d}$ and $n$, $d\geq 2$;
\item[(3).] an $(n-1)$-connected $(2n+1)$-dimensional Poincar\'{e} duality complex with $H^n(N;\mathbb{Z})=\mathbb{Z}^{\oplus d}$, $d\geq 1$ and $n\geq 2$;
\item[(4).] an aspherical orientable closed manifold of dimension $\geq 2$;
\item[(5).] an orientable closed $3$-manifold such that its fundamental group is not a free product of finite groups;
\item[(6).] a simply connected closed $6$-manifold $N$ such that $H^3(N;\mathbb{Q})\neq 0$;
\item[(7).] an orbit manifold $N/S^1$ for a simply connected closed smooth manifold $N$ with a free $S^1$-action and an inert top cell attachment;
\item[(8).] an orbit manifold $N/S^3$ for a $3$-connected closed smooth manifold $N$ with a free $S^3$-action and an inert top cell attachment;
\item[(9).] a connected sum $A\#B$ of two connected Poincar\'{e} duality complexes with $A$ or $B$ having an inert top cell attachment; 
\item[(10).] the total space of a fibration $F\stackrel{}{\longrightarrow} N\stackrel{}{\longrightarrow} B$ of connected Poincar\'{e} duality complexes with $B$ having an inert top cell attachment; 
\item[(11).] the total space of a homotopy fibration $F\stackrel{}{\longrightarrow} N\stackrel{}{\longrightarrow} B$ of connected Poincar\'{e} duality complexes of positive dimensions with a homotopy section; 
\item[(12).] a finite connected cover of a connected $n$-dimensional Poincar\'{e} duality complex with an inert top cell attachment and $n\geq 2$. 
\end{itemize}
Then the attaching map for the top cell of $M$ is inert. 

Here, (1) is derived from Lemma \ref{Dlemma}, (2-3) are illustrated in Example \ref{BTex1} and \ref{BTex2}, (4-6) are discussed in Subsection \ref{subsec: lowfolds}, (7-8) follow directly from Theorem \ref{geoinertthmintro}, and (9-12) are derived from Theorems \ref{exsumthm} and \ref{FEBthmintro} and Propositions \ref{FEBprop} and \ref{coverprop}, respectively.  
\end{example}

$\, $

\noindent{\bf Organization of the paper.} 
The main body of the paper consists of four parts. The flowchart on the next page illustrates the logical flow of the sections, and the accompanying diagram indicates where the theorems in the introduction are proved.

The first part, from Sections \ref{sec: pullback} to \ref{sec: pushout}, concerns the existence of right homotopy inverses, which is closely related to inertness. 
Section \ref{sec: pullback} explores the existence of right homotopy inverses around homotopy fibrations and homotopy pullbacks. 
Section \ref{sec: BT} reviews two loop space decompositions along with Mather's Cube Lemma for later use. 
Section \ref{sec: halfsmash} investigates the existence of right homotopy inverses around half-smash products. 
With these preparations, Section \ref{sec: pushout} addresses the existence of right homotopy inverses around homotopy cofibrations and homotopy pushouts. 

The second part, from Sections \ref{sec: deg1} to \ref{sec: ex1}, studies inertness by comparing two Poincar\'{e} duality complexes of the same dimension. 
Section \ref{sec: deg1} examines the inertness property via nonzero degree maps between Poincre\'{e} duality complexes and proves Theorem \ref{inertdeg1thmintro}. 
As an application of Theorem \ref{inertdeg1thmintro}, Section \ref{sec: int} investigates inertness through intersection theory and proves Theorem \ref{pdtintthmintro}. 
Section \ref{sec: degen} proves Part (2) of Theorem \ref{FEBthmintro}, focuses on Stiefel manifolds, and shows the corresponding results in Theorem \ref{homthmintro}. 
Section \ref{sec: det} applies Theorem \ref{inertdeg1thmintro} to study inertness by comparison with a twisted product of spheres and proves Theorem \ref{detthmintro}. 
A further comparison is discussed in Section \ref{sec: ex1}, with examples including Beben-Theriault's complexes, connected sums, and low dimensional manifolds. 

The third part, from Sections \ref{sec: det2} to \ref{sec: flag}, studies inertness by comparing two Poincar\'{e} duality complexes of different dimensions. 
Section \ref{sec: det2} examines the inert property around two types of fibrations and proves Part (1) of Theorems \ref{FEBthmintro} and \ref{geoinertthmintro}. 
For one application, Section \ref{sec: surgery} investigates a specific surgery and completes the proofs of Theorems \ref{geoinertthmintro} and \ref{gyration-inert-thmintro}.
For another application, Section \ref{sec: flag} explores complete flag manifolds and completes the proof of Theorem \ref{homthmintro}.

The fourth part, from Sections \ref{sec: emb} to \ref{sec: prob}, is written to inspire further research. 
Section \ref{sec: emb} discusses inertness around manifold embeddings. 
Section \ref{sec: prob} proposes eight open problems. 

$\, $

\noindent{\bf Acknowledgements.} The author would like to thank Prof. Stephen Theriault for sharing his work \cite{The24b} and providing related insights during the course of this project. 
He is also grateful to Prof. Jianfeng Lin for helpful conversations on low dimensional topology.
 
The author was supported in part by the National Natural Science Foundation of China (Grant nos. 12331003 and 12288201), the National Key R\&D Program of China (No. 2021YFA1002300) and the Youth Innovation Promotion Association of Chinese Academy Sciences.

\begin{center}
\begin{figure}[H]
\vspace*{0.05\textheight}
\begin{tikzpicture}[>=stealth,every node/.style={shape=rectangle,draw,rounded corners},]
    \node (c3) {Section \ref{sec: BT}};
    \node (c2) [right=of c3] {Section \ref{sec: pullback}};
    \node (c4) [below=of c2] {Section \ref{sec: halfsmash}};
    \node (c5) [below=of c4] {Section \ref{sec: pushout}};
    \node (c6) [below left=of c5] {Section \ref{sec: deg1}};
    \node (c9) [below=of c6] {Section \ref{sec: det}};
    \node (c7) [left=of c9] {Section \ref{sec: int}};
    \node (c8) [below=of c7] {Section \ref{sec: degen}}; 
    \node (c10) [left=of c7] {Section \ref{sec: ex1}};
    \node (c11) [below right=of c5] {Section \ref{sec: det2}};
    \node (c12) [below=of c11] {Section \ref{sec: surgery}};
    \node (c13) [right=of c12] {Section \ref{sec: flag}};
    \node (c14) [below left=of c12] {Section \ref{sec: emb}};
    \node (c15) [below=of c14] {Section \ref{sec: prob}};
    
    \node (part1start) [draw=none,above left=0.25cm of c3] {};
    \node (part1end) [draw=none,below right=0.25cm of c5] {};
    \node (part2start) [draw=none,above right=0.25cm of c6] {};
    \node (part2end) [draw=none,below left=4.05cm of c7] {};
    \node (part3start) [draw=none,above left=0.25cm of c11] {};
    \node (part3end) [draw=none,below right=0.25cm of c13] {};
    \node (part4start) [draw=none,below right=0.7cm of c9] {};
    \node (part4end) [draw=none,below right=0.48cm of c15] {};
    
    \draw[dashed, rounded corners] (part1start) rectangle (part1end);
    \node[draw=none, yshift=-2.5cm, xshift=1cm, text=black] at (part1start) {Part I};
    \draw[dashed, rounded corners] (part2start) rectangle (part2end);
    \node[draw=none, yshift=-4.3cm, xshift=-6cm, text=black] at (part2start) {Part II};
    \draw[dashed, rounded corners] (part3start) rectangle (part3end);
    \node[draw=none, yshift=-0.6cm, xshift=4.2cm, text=black] at (part3start) {Part III};
    \draw[dashed, rounded corners] (part4start) rectangle (part4end);
    \node[draw=none, yshift=-2.6cm, xshift=1.1cm, text=black] at (part4start) {Part IV};
    
    \draw[->] (c2) -- (c4);
    \draw[->] (c2) to[out=0,in=90] (c11);
    \draw[->] (c3) to[out=270,in=180]  (c4);
    \draw[->] (c4) -- (c5);
    \draw[->] (c5) to[out=180,in=90] (c10);
    \draw[->] (c5) to[out=180,in=90] (c6);
    \draw[->] (c5) -- (c14);
    \draw[->] (c6) to[out=180,in=90] (c7);
    \draw[->] (c6) -- (c9);
    \draw[->] (c7) -- (c8);
    \draw[->] (c11) -- (c12);
    \draw[->] (c11) to[out=0,in=90] (c13);
    \draw[->] (c12) to[out=270,in=0](c15);
    \draw[->] (c13) to[out=270,in=0] (c15);
    \draw[->] (c14) -- (c15);
\end{tikzpicture}
\vspace*{\fill}
\caption*{The logical flow of the main body of the paper}
\end{figure}
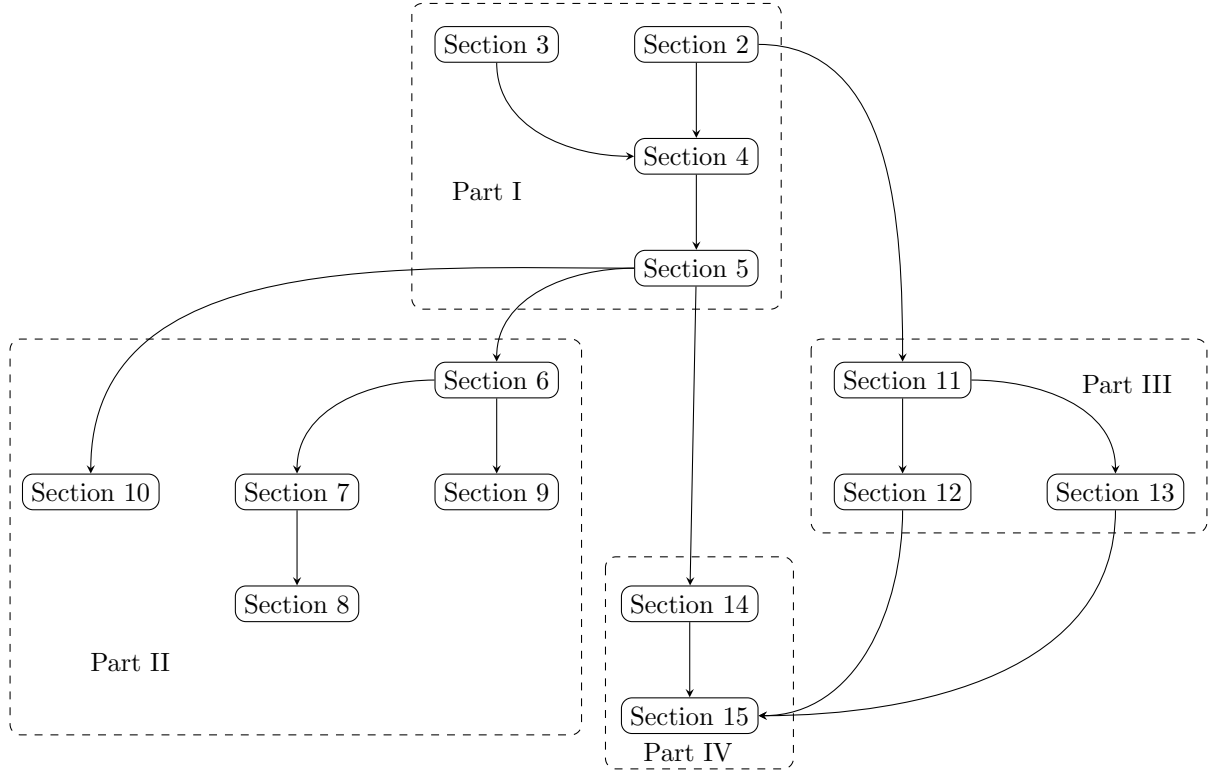
\end{center}

\begin{center}
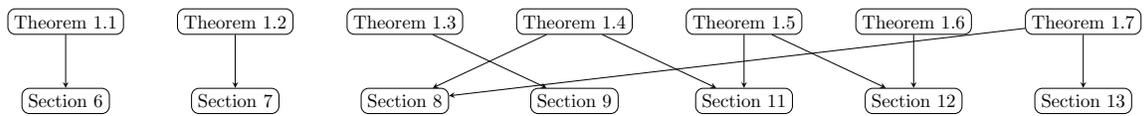
\begin{figure}[H]
\vspace*{0.05\textheight}
\resizebox{\textwidth}{!}{
\begin{tikzpicture}[>=stealth,every node/.style={shape=rectangle,draw,rounded corners},]
    \node (t1) {Theorem \ref{inertdeg1thmintro}};
    \node (t2) [right=of t1] {Theorem \ref{pdtintthmintro}};
    \node (t3) [right=of t2] {Theorem \ref{detthmintro}};
    \node (t4) [right=of t3] {Theorem \ref{FEBthmintro}};
    \node (t5) [right=of t4] {Theorem \ref{geoinertthmintro}};
    \node (t6) [right=of t5] {Theorem \ref{gyration-inert-thmintro}};
    \node (t7) [right=of t6] {Theorem \ref{homthmintro}};
    \node (S6) [below=of t1]{Section \ref{sec: deg1}};
    \node (S7) [below=of t2] {Section \ref{sec: int}};
    \node (S8) [below=of t3] {Section \ref{sec: degen}}; 
    \node (S9) [below=of t4] {Section \ref{sec: det}};
    \node (S11) [below=of t5] {Section \ref{sec: det2}};
    \node (S12) [below=of t6] {Section \ref{sec: surgery}};
    \node (S13) [below=of t7] {Section \ref{sec: flag}};
    \draw[->] (t1) -- (S6);
    \draw[->] (t2) -- (S7);
    \draw[->] (t3) -- (S9);
    \draw[->] (t4) -- (S8);
    \draw[->] (t4) -- (S11);
    \draw[->] (t5) -- (S11);
    \draw[->] (t5) -- (S12);
    \draw[->] (t6) -- (S12);
    \draw[->] (t7) -- (S8);
    \draw[->] (t7) -- (S13);
\end{tikzpicture}
}
\vspace*{\fill}
\caption*{The diagram indicating the sections where the theorems are proved}
\end{figure}
\end{center}

\newpage



\section{Homotopy fibrations and homotopy pullbacks}
\label{sec: pullback}
Since the concept of inertness is defined through the existence of right homotopy inverses, the first several sections are devoted to studying the existence of right homotopy inverses in various contexts. 

In this section, we explore the existence of right homotopy inverses around homotopy fibrations and homotopy pullbacks. As the loop functor behaves well with respect to homotopy fibrations, the arguments in this section are straightforward, and the results are standard. We include them here for the sake of clarity and completeness in the general study on the existence of right homotopy inverses. 

Since we will encounter strict fibrations and fibre bundles in the sequel, it is worth noting that a homotopy fibration refers to a map $p: X\stackrel{}{\longrightarrow} B$ of $CW$-complexes with its homotopy fibre $F$. This is typically organized into a sequence
\[
F\stackrel{}{\longrightarrow} X\stackrel{p}{\longrightarrow} B
\]
similar to strict fibrations. The terminology is reasonable and conventional, based on the classical fact that every homotopy fibration is homotopy equivalent to a strict fibration.  

In the following, Propositions \ref{RHIfibprop} and \ref{LHIfibprop} study the existence of right homotopy inverses without looping under compatible conditions between splittable homotopy fibrations. For their proofs, a basic fact will be used.

\begin{lemma}\label{eqnaturallemma}
Suppose that there is a homotopy commutative diagram
\[
\diagram
X' \dto^{\lambda_X}  \rto^{\varphi'}_{\simeq}   & Y'  \dto^{\lambda_Y} \\
X                              \rto^{\varphi}_{\simeq}    & Y,
\enddiagram
\]
where $\varphi'$ and $\varphi$ are homotopy equivalences. Then for any homotopy inverses $\varphi'^{-1}$ and $\varphi^{-1}$ of $\varphi'$ and $\varphi$, respectively, the diagram
\[
\diagram
Y' \dto^{\lambda_Y}  \rto^{\varphi'^{-1}}_{\simeq}   & X'  \dto^{\lambda_X} \\
Y                              \rto^{\varphi^{-1}}_{\simeq}    & X,
\enddiagram
\] 
homotopy commutes.
\end{lemma}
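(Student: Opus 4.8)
The plan is to prove this by a purely formal manipulation of homotopies, using only that $\varphi'$ and $\varphi$ possess two-sided homotopy inverses; nothing beyond the fact that composition of maps is compatible with the homotopy relation is needed. Equivalently, one may phrase the entire argument in the homotopy category: there the given square commutes and its horizontal arrows are isomorphisms, so inverting them produces a commuting square, and any chosen homotopy inverse is merely a representative of that inverse isomorphism. I will spell this out at the point-set level for concreteness.

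Concretely, write $H$ for the given homotopy $\lambda_Y\circ\varphi'\simeq\varphi\circ\lambda_X$. First I would post-compose on the right with $\varphi'^{-1}$ and use the right-inverse identity $\varphi'\circ\varphi'^{-1}\simeq 1_{Y'}$ together with the fact that composition passes to homotopy classes, obtaining $\lambda_Y\simeq\varphi\circ\lambda_X\circ\varphi'^{-1}$. Then I would pre-compose this relation with $\varphi^{-1}$ on the left and use the left-inverse identity $\varphi^{-1}\circ\varphi\simeq 1_X$, which yields $\varphi^{-1}\circ\lambda_Y\simeq\lambda_X\circ\varphi'^{-1}$. This is exactly the assertion that the second diagram homotopy commutes.

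There is no genuine obstacle here; the only subtlety worth flagging is that the argument invokes one defining identity of each homotopy inverse in an asymmetric way — the identity $\varphi'\circ\varphi'^{-1}\simeq 1$ for the source equivalence and $\varphi^{-1}\circ\varphi\simeq 1$ for the target equivalence — so the statement really does require $\varphi'^{-1}$ and $\varphi^{-1}$ to be honest two-sided homotopy inverses rather than merely one-sided ones, and the conclusion holds for \emph{any} such choices precisely because these are the only properties used. Everything else is routine bookkeeping with composites, and I would keep it brief.
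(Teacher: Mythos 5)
Your argument is correct and is essentially identical to the paper's proof: both insert $\varphi'\circ\varphi'^{-1}\simeq 1_{Y'}$, apply the given homotopy $\lambda_Y\circ\varphi'\simeq\varphi\circ\lambda_X$, and cancel $\varphi^{-1}\circ\varphi\simeq 1_X$, differing only in the order of presentation. Your remark about which one-sided identities are actually used is a fair observation but adds nothing beyond the paper's one-line chain of homotopies.
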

\begin{proof}
The lemma follows from the following series of homotopies
\[
\varphi^{-1}\circ \lambda_Y\simeq  \varphi^{-1}\circ \lambda_Y \circ \varphi' \circ \varphi'^{-1}\simeq \varphi^{-1}\circ \varphi \circ  \lambda_X  \circ \varphi'^{-1} \simeq  \lambda_X  \circ \varphi'^{-1}. 
\]
\end{proof}

\begin{proposition}\label{RHIfibprop} 
Suppose that there is a homotopy fibration diagram 
\begin{equation}\label{F'Ffibdiag}
\diagram 
      F'\rto^-{j'}\dto^{\lambda_F} & X'\rto^-{p'}\dto^{\lambda_X} & B'\dto^{\lambda_B}  \\ 
      F\rto^-{j} & X\rto^-{p} & B 
  \enddiagram
  \end{equation}
and both $p'$ and $p$ have right homotopy inverses $s'$ and $s$, 
respectively, such that there is a homotopy commutative diagram 
\begin{equation}\label{s'sdiag0}
\diagram 
      B'\rto^-{s'}\dto^{\lambda_B} &X'\dto^{\lambda_X} \\ 
      B\rto^-{s} &X.
  \enddiagram
  \end{equation} 
If $X'$ and $X$ are $H$-spaces and $\lambda_X$ is an $H$-map, then the following hold:
\begin{itemize}
\item[(1).] there are compatible splittings
\[
\xymatrix{
B'\times F'\ar[r]^<<<{\chi'}_{\simeq} \ar[d]^{\lambda_B\times \lambda_F} &  X' \ar[d]^{\lambda_X} \\
B\times F\ar[r]^{\chi}_{\simeq}   & X;
}
\]
\item[(2).] if both $\lambda_B$ and $\lambda_F$ have right homotopy inverses, then so does $\lambda_X$;
\item[(3).] if $\lambda_X$ has a right homotopy inverse, then so do $\lambda_B$ and $\lambda_F$.
\end{itemize}
\end{proposition}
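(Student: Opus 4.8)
The plan is to exploit the fact that the loop functor converts the homotopy fibrations in \eqref{F'Ffibdiag} into products once we have the right homotopy inverses $s'$ and $s$, and then to transport compatibility of sections along these splittings using Lemma \ref{eqnaturallemma}. First I would construct the splitting maps $\chi'$ and $\chi$ as in the classical argument: since $X'$ is an $H$-space with multiplication $\mu'$, define $\chi' = \mu'\circ(s'\times j')\colon B'\times F'\stackrel{}{\longrightarrow} X'$, and similarly $\chi = \mu\circ(s\times j)\colon B\times F\stackrel{}{\longrightarrow} X$. A Zeeman-type comparison argument (using that $p'\circ s'\simeq 1_{B'}$, $p'\circ j'$ is null, and the five lemma on appropriate homology or homotopy Serre spectral sequences — or directly the fact that $\chi'$ induces an isomorphism on homology because in the fibration $F'\stackrel{}{\longrightarrow} X'\stackrel{}{\longrightarrow} B'$ with a section the Serre spectral sequence collapses) shows $\chi'$ and $\chi$ are homotopy equivalences.

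The key point for (1) is then the homotopy commutativity of the outer square. We have
\[
\lambda_X\circ\chi' = \lambda_X\circ\mu'\circ(s'\times j')\simeq \mu\circ(\lambda_X\times\lambda_X)\circ(s'\times j') = \mu\circ(\lambda_X\circ s'\times \lambda_X\circ j')\simeq \mu\circ(s\circ\lambda_B\times j\circ\lambda_F)=\chi\circ(\lambda_B\times\lambda_F),
\]
where the first homotopy uses that $\lambda_X$ is an $H$-map, the penultimate homotopy uses the hypothesis \eqref{s'sdiag0} for the first factor and the commutativity of the left-hand square of \eqref{F'Ffibdiag} (namely $\lambda_X\circ j'\simeq j\circ\lambda_F$) for the second factor. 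This establishes the square in (1).

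For (2): if $t_B\colon B\stackrel{}{\longrightarrow} B'$ and $t_F\colon F\stackrel{}{\longrightarrow} F'$ are right homotopy inverses of $\lambda_B$ and $\lambda_F$, then $\chi'\circ(t_B\times t_F)\circ\chi^{-1}\colon X\stackrel{}{\longrightarrow} X'$ is a right homotopy inverse of $\lambda_X$; indeed $\lambda_X\circ\chi'\circ(t_B\times t_F)\circ\chi^{-1}\simeq \chi\circ(\lambda_B\times\lambda_F)\circ(t_B\times t_F)\circ\chi^{-1}\simeq\chi\circ\chi^{-1}\simeq 1_X$ by (1), where $\chi^{-1}$ is any homotopy inverse of $\chi$. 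For (3): conversely, suppose $u\colon X\stackrel{}{\longrightarrow} X'$ satisfies $\lambda_X\circ u\simeq 1_X$. Then $p'\circ\chi'^{-1}\circ u\circ\chi\circ(1_B\times\ast)\colon B\stackrel{}{\longrightarrow} B'$ should serve as a right homotopy inverse of $\lambda_B$: one chases around the square of (1), its inverted form supplied by Lemma \ref{eqnaturallemma}, together with the relations $p'\circ\chi'\simeq \mathrm{pr}_{B'}$ and $p\circ\chi\simeq\mathrm{pr}_{B}$ (projections onto the base factors) coming from $p'\circ s'\simeq 1$, $p'\circ j'\simeq\ast$. Similarly, composing with the inclusion $F\hookrightarrow B\times F$ and using that $\chi'$ restricted to $F'$ is $j'$ up to homotopy produces a right homotopy inverse of $\lambda_F$; here one also needs that the composite $F'\stackrel{j'}{\to}X'\stackrel{}{\to}X'/\mathrm{im}(s')\simeq F'$ is a homotopy equivalence, which again follows from the split fibration structure.

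The main obstacle I anticipate is the bookkeeping in part (3): one must carefully invert the square of (1) — this is exactly what Lemma \ref{eqnaturallemma} is designed for — and then verify that the candidate right homotopy inverses of $\lambda_B$ and $\lambda_F$ actually work, which requires knowing precisely how $\chi'$ and $\chi$ interact with the structure maps $p', j', p, j$ (i.e. that $\chi'$ realizes $X'$ as the product with $B'$ recovered by $p'$ and $F'$ recovered by the projection "killing the section"). Everything else is the standard $H$-space splitting of a split fibration plus naturality, so the content is concentrated there.
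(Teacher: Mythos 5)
Your proposal is correct and follows essentially the same route as the paper: $\chi'=\mu'\circ(s'\times j')$ and $\chi=\mu\circ(s\times j)$ are homotopy equivalences, compatibility comes from the $H$-map hypothesis together with \eqref{s'sdiag0} and the left square of \eqref{F'Ffibdiag}, part (2) is obtained by conjugating $t_B\times t_F$ by the splittings, and part (3) by inverting the square of (1) via Lemma \ref{eqnaturallemma}. The only cosmetic difference is in (3), where the paper takes $t_B=p'\circ t_X\circ s$ directly (using only $\lambda_B\circ p'\simeq p\circ\lambda_X$ and $p\circ s\simeq 1_B$), which sidesteps your reliance on $p'\circ\chi'\simeq \mathrm{pr}_{B'}$ and the slightly misleading quotient $X'/\mathrm{im}(s')$; the fibre case is handled exactly as you indicate, via the retraction $\pi_2\circ\chi'^{-1}$ and the inverted square.
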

\begin{proof}
(1). Consider the diagram 
\[
\xymatrix{
B'\times F'  \ar[d]^{\lambda_B\times \lambda_F}  \ar[r]^{s'\times j'} &  X'\times X' \ar[r]^<<<{\mu} \ar[d]^{\lambda_X\times \lambda_X} &  X'\ar[d]^{\lambda_X}\\
B\times F \ar[r]^{s\times j} &  X\times X \ar[r]^{\mu} &  X,
}
\]
where the maps $\mu$ are the $H$-multiplications of $X'$ and $X$, respectively. The $H$-map $\lambda_X$ implies that the right square of the diagram homotopy commutes. As the left square of the diagram is the product of (\ref{s'sdiag0}) and the left square of (\ref{F'Ffibdiag}), it also homotopy commutes. It follows that the outer rectangle of the above diagram homotopy commutes. Let $\chi'=\mu\circ (s'\times j')$ and $\chi=\mu\circ (s\times j)$. Then both 
$\chi'$ and $\chi$ are weak homotopy equivalences, and hence are homotopy equivalences by Whitehead's theorem. 

(2). Let $t_B: B\stackrel{}{\longrightarrow} B'$ and $t_F: F\stackrel{}{\longrightarrow} F'$ be right homotopy inverses of $\lambda_B$ and $\lambda_F$, respectively. Consider the homotopy commutative diagram
\[
\xymatrix{
X\ar[r]^{\chi^{-1}} & B\times F \ar[r]^{t_B\times t_F} \ar@{=}[dr]  & B'\times F' \ar[r]^{\chi'} \ar[d]^{\lambda_B\times \lambda_F} &  X' \ar[d]^{\lambda_X}\\
&& B\times F \ar[r]^{\chi}  & X, 
}
\]
where the right square is from (1) and $\chi^{-1}$ is a homotopy inverse of $\chi$. Let $t_X=\chi'\circ (t_B\times t_F)\circ \chi^{-1}$. The diagram implies that $\lambda_X\circ t_X\simeq \chi\circ (\lambda_B\times \lambda_F)\circ (t_B\times t_F)\circ \chi^{-1} \simeq \chi\circ\chi^{-1}\simeq 1_X$, that is, $t_X$ is a right homotopy inverse of $\lambda_X$.

(3). Let $t_X: X\stackrel{}{\longrightarrow} X'$ be a right homotopy inverse of $\lambda_X$. There is the homotopy commutative diagram
\[
\xymatrix{
B\ar[r]^{s} & X \ar[r]^{t_X} \ar@{=}[dr] & X'\ar[r]^{p'} \ar[d]^{\lambda_X} & B\ar[d]^{\lambda_B} \\
&&X \ar[r]^{p} & B,
}
\]
where the right square is the right square of (\ref{F'Ffibdiag}). Let $t_B=p'\circ t_X\circ s$. The diagram implies that $\lambda_B\circ t_B\simeq p\circ \lambda_X\circ t_X\circ s\simeq p\circ s\simeq 1_{B}$, that is, $t_B$ is a right homotopy inverse of $\lambda_B$.

To prove the conclusion for $\lambda_F$, consider the diagram
\[
\xymatrix{
F\ar[r]^{j} & X \ar@{=}[dr] \ar[r]^{t_X} & X' \ar[d]^{\lambda_X} \ar[r]^{\chi'^{-1}} & B'\times F' \ar[r]^<<<{\pi_2}\ar[d]^{\lambda_B\times \lambda_F} & F'\ar[d]^{\lambda_F}\\
&& X\ar[r]^{\chi^{-1}} & B\times F \ar[r]^<<<<{\pi_2}& F,
}
\]
where $\pi_2$ are the canonical projections to the second factors and $\chi'^{-1}$ and $\chi^{-1}$ are homotopy inverses of $\chi'$ and $\chi$, respectively. 
By Lemma \ref{eqnaturallemma}, the homotopy commutative square in (1) implies that the left square of the diagram homotopy commutes. Also, the naturality of $\pi_2$ implies that the right square commutes. Therefore, the above diagram homotopy commutes. Since the composition in the lower direction around the diagram is a homotopy equivalence, so is the composition in the upper direction. This implies that $\lambda_F$ has a right homotopy inverse.
\end{proof}

\begin{proposition}\label{LHIfibprop} 
Suppose that there is a homotopy fibration diagram of the form (\ref{F'Ffibdiag}) 
and both $j'$ and $j$ have left homotopy inverses $r'$ and $r$, 
respectively, such that there is a homotopy commutative diagram 
\begin{equation}\label{r'rdiag0}
\diagram 
      X'\rto^-{r'}\dto^{\lambda_X} &F'\dto^{\lambda_F} \\ 
      X\rto^-{r} &F.
  \enddiagram
  \end{equation} 
Then the following hold:
\begin{itemize}
\item[(1).] there are compatible splittings
\[
\xymatrix{
X' \ar[d]^{\lambda_X} \ar[r]^{\vartheta' \ \ \ }_{\simeq \ \ \ } & B'\times F' \ar[d]^{\lambda_B\times \lambda_F}   \\
X\ar[r]^{\vartheta\ \ \ }_{\simeq \ \ \ } & B\times F;
}
\]
\item[(2).] if both $\lambda_B$ and $\lambda_F$ have right homotopy inverses, then so does $\lambda_X$;
\item[(3).] if $\lambda_X$ has a right homotopy inverse, then so do $\lambda_B$ and $\lambda_F$.
\end{itemize}
\end{proposition}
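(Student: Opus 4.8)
The plan is to mirror the proof of Proposition~\ref{RHIfibprop} but in the dual direction, using the left homotopy inverses $r'$ and $r$ together with the maps $p'$, $p$ of the fibrations to build a splitting $X'\simeq B'\times F'$ compatible with $\lambda_X$ and $\lambda_B\times\lambda_F$, and then extract the right homotopy inverses formally. Unlike in Proposition~\ref{RHIfibprop}, here we should not need any $H$-space hypothesis: the point is that a fibration with a retraction of the fibre inclusion splits via the \emph{product} map $\vartheta'=(p',r')$ rather than via a multiplication $\mu\circ(s'\times j')$, and the product of two maps is always natural. So the first step is to set $\vartheta'=(p',r')\colon X'\to B'\times F'$ and $\vartheta=(p,r)\colon X\to B\times F$. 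The square in (1) homotopy commutes because its two coordinate projections do: post-composing with $\pi_1$ gives $\lambda_B\circ p'\simeq p\circ\lambda_X$, which is the right square of (\ref{F'Ffibdiag}), and post-composing with $\pi_2$ gives $\lambda_F\circ r'\simeq r\circ\lambda_X$, which is exactly (\ref{r'rdiag0}). For $\vartheta'$ to be a homotopy equivalence we invoke the standard fact that a homotopy fibration $F'\to X'\xrightarrow{p'} B'$ in which $j'$ admits a left homotopy inverse is fibre-homotopy trivial, hence $(p',r')$ is a weak equivalence and so a homotopy equivalence by Whitehead's theorem; the same applies to $\vartheta$. This establishes (1).

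For (2), given right homotopy inverses $t_B\colon B\to B'$ of $\lambda_B$ and $t_F\colon F\to F'$ of $\lambda_F$, I would set $t_X=\vartheta'^{-1}\circ(t_B\times t_F)\circ\vartheta$, where $\vartheta'^{-1}$, $\vartheta^{-1}$ are chosen homotopy inverses. Reading the homotopy commutative diagram
\[
\diagram
X\rto^-{\vartheta}_-{\simeq} & B\times F\rto^-{t_B\times t_F}\drdouble & B'\times F'\rto^-{\vartheta'^{-1}}_-{\simeq}\dto^{\lambda_B\times\lambda_F} & X'\dto^{\lambda_X}\\
& & B\times F\rto^-{\vartheta^{-1}}_-{\simeq} & X,
\enddiagram
\]
where the right-hand square is the compatibility square of (1) after applying Lemma~\ref{eqnaturallemma} to pass to $\vartheta'^{-1}$ and $\vartheta^{-1}$, one reads off $\lambda_X\circ t_X\simeq \vartheta^{-1}\circ(\lambda_B\times\lambda_F)\circ(t_B\times t_F)\circ\vartheta\simeq\vartheta^{-1}\circ\vartheta\simeq 1_X$. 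This gives (2).

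For (3), given a right homotopy inverse $t_X\colon X\to X'$ of $\lambda_X$, I would produce right homotopy inverses of $\lambda_B$ and $\lambda_F$ by projecting. Concretely, $t_B:=p'\circ t_X\circ s$ will not be available since there is no $s$ here; instead I use the splitting directly: set $t_B=\pi_1\circ\vartheta'\circ t_X\circ\vartheta^{-1}\circ \iota_B$ where $\iota_B\colon B\to B\times F$ is the inclusion of the first factor, and similarly $t_F=\pi_2\circ\vartheta'\circ t_X\circ\vartheta^{-1}\circ\iota_F$. Using the compatibility square of (1) and the naturality of $\pi_1$, $\pi_2$, $\iota_B$, $\iota_F$, one checks $\lambda_B\circ t_B\simeq \pi_1\circ(\lambda_B\times\lambda_F)\circ\vartheta'\circ t_X\circ\vartheta^{-1}\circ\iota_B\simeq\pi_1\circ\vartheta\circ\lambda_X\circ t_X\circ\vartheta^{-1}\circ\iota_B\simeq\pi_1\circ\iota_B\simeq 1_B$, and similarly for $\lambda_F$.

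The main obstacle I anticipate is purely bookkeeping: keeping the homotopies coherent when passing between $\vartheta'$ and its inverse (Lemma~\ref{eqnaturallemma} handles this, but one must be careful that the square in (1) is stated for $\vartheta'$, $\vartheta$ and not their inverses) and making sure the fibre-splitting fact — that a fibration with a retracted fibre inclusion is fibre-homotopy trivial — is applied in a form that yields genuine homotopy equivalences $\vartheta'$, $\vartheta$ rather than just weak ones. Since all spaces have the homotopy type of $CW$-complexes, Whitehead's theorem closes that gap, exactly as in Proposition~\ref{RHIfibprop}(1). No $H$-space or $H$-map hypothesis enters, which is the structural reason this proposition is cleaner than its predecessor.
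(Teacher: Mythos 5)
Your proof is correct and follows essentially the same route as the paper: the splitting $\vartheta'=(p',r')$ (the paper writes it as $(p'\times r')\circ\Delta$), checked coordinatewise against the right square of (\ref{F'Ffibdiag}) and against (\ref{r'rdiag0}), and upgraded from a weak equivalence to a homotopy equivalence by Whitehead's theorem, is exactly the paper's construction for (1). For (2) and (3) the paper extracts compatible right homotopy inverses $\chi'\circ i_1$ and $\chi\circ i_1$ of $p'$ and $p$ and then invokes the arguments of Proposition \ref{RHIfibprop}; your explicit formulas $t_X=\vartheta'^{-1}\circ(t_B\times t_F)\circ\vartheta$ and $t_B=\pi_1\circ\vartheta'\circ t_X\circ\vartheta^{-1}\circ\iota_B$ are precisely that reduction unwound, so the content is the same.
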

\begin{proof}
(1). Consider the diagram 
\[
\xymatrix{
X' \ar[r]^<<<<{\Delta} \ar[d]^{\lambda_X} & X'\times X' \ar[r]^{p'\times r'} \ar[d]^{\lambda_X\times \lambda_X} & B'\times F' \ar[d]^{\lambda_B\times \lambda_F} \\
X \ar[r]^<<<<{\Delta} & X\times X \ar[r]^{p\times r}  & B\times F, 
}
\]
where $\Delta$ are the diagonal maps. The naturality of $\Delta$ implies that the left square commutes, while the right square is the product of the right square of (\ref{F'Ffibdiag}) and (\ref{r'rdiag0}), and hence homotopy commutes. It follows that the outer rectangle of the above diagram homotopy commutes. Let $\vartheta'=(p'\times r')\circ \Delta$ and $\vartheta=(p\times r)\circ \Delta$. Then both $\vartheta'$ and $\vartheta$ are weak homotopy equivalences, and hence are homotopy equivalences by Whitehead's theorem. 

(2) \& (3). Let $\chi'$ and $\chi$ be homotopy inverses of $\vartheta'$ and $\vartheta$, respectively. Consider the diagram
\[
\xymatrix{
B'\ar[r]^<<<<{i_1}   \ar[d]^{\lambda_B}  & B'\times F'\ar[r]^<<<{\chi'}_<<<{\simeq} \ar[d]^{\lambda_B\times \lambda_F} &  X' \ar[d]^{\lambda_X} \\
B  \ar[r]^<<<<{i_1}   & B\times F\ar[r]^{\chi}_{\simeq}   & X,
}
\]
where $i_1$ are the canonical inclusions into the first factors. By Lemma \ref{eqnaturallemma}, the homotopy commutative square in statement (1) implies that the right square of the diagram homotopy commutes. Also, the naturality of $i_1$ implies that the left square commutes. Therefore, the above diagram homotopy commutes. Furthermore, the sequence of homotopies 
\[
p'\circ (\chi'\circ i_1)= (\pi_1\circ (p'\times r') \circ \Delta) \circ (\chi'\circ i_1)\simeq \pi_1\circ \vartheta'\circ \chi'\circ i_1\simeq \pi_1\circ i_1\simeq 1_{B'}
\]
implies that $\chi'\circ i_1$ is a right homotopy inverse of $p'$. Similarly, $\chi\circ i_1$ is a right homotopy inverse of $p$. Accordingly, the conditions of Proposition \ref{RHIfibprop} are satisfied, and the statements (2) and (3) follows. 
\end{proof}

\begin{remark}\label{HIfibrmk}
Lemma \ref{eqnaturallemma} implies that the statements (1) of Proposition \ref{RHIfibprop} and \ref{LHIfibprop} are equivalent. Furthermore, if either of these two holds both Diagrams (\ref{s'sdiag0}) and (\ref{r'rdiag0}) can be constructed. 
\end{remark}

As the loop functor behaves well with respect to homotopy fibrations, Proposition \ref{RHIfibprop} and \ref{LHIfibprop} can be used to study the existence of right homotopy inverses after looping around homotopy fibrations. 

\begin{corollary}\label{RHIfibcor} 
Suppose that there is a homotopy fibration diagram 
\begin{equation}\label{F'FfibdiagE}
\diagram 
      F'\rto^-{j'}\dto^{\lambda_F} & E'\rto^-{p'}\dto^{\lambda_E} & B'\dto^{\lambda_B}  \\ 
      F\rto^-{j} & E\rto^-{p} & B 
  \enddiagram
  \end{equation}
and both $\Omega p'$ and $\Omega p$ have right homotopy inverses $s'$ and $s$, 
respectively, such that there is a homotopy commutative diagram 
\begin{equation}\label{loops'sdiag0}
\diagram 
     \Omega  B'\rto^-{s'}\dto^{\Omega \lambda_B} &\Omega E'\dto^{\Omega \lambda_E} \\ 
     \Omega  B\rto^-{s} &\Omega E.
  \enddiagram
  \end{equation} 
Then the following hold:
\begin{itemize}
\item[(1).] there are compatible splittings
\[
\xymatrix{
\Omega B'\times \Omega F'\ar[r]^{\ \ \ \chi' }_{\ \ \ \simeq} \ar[d]^{\Omega \lambda_B\times \Omega \lambda_F} &  \Omega E' \ar[d]^{\Omega \lambda_E} \\
\Omega B\times \Omega F\ar[r]^{\ \ \chi }_{\ \ \simeq}   & \Omega E;
}
\]
\item[(2).] if both $\Omega \lambda_B$ and $\Omega \lambda_F$ have right homotopy inverses, then so does $\Omega \lambda_E$;
\item[(3).] if $\Omega \lambda_E$ has a right homotopy inverse, then so do $\Omega \lambda_B$ and $\Omega \lambda_F$.
\end{itemize}
\end{corollary}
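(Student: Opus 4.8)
The plan is to obtain the corollary as a direct consequence of Proposition \ref{RHIfibprop}, applied after looping. First I would invoke the classical fact that the loop functor carries a homotopy fibration to a homotopy fibration: applying $\Omega$ to the diagram (\ref{F'FfibdiagE}) yields a homotopy commutative diagram
\[
\diagram
      \Omega F'\rto^-{\Omega j'}\dto^{\Omega\lambda_F} & \Omega E'\rto^-{\Omega p'}\dto^{\Omega\lambda_E} & \Omega B'\dto^{\Omega\lambda_B}  \\
      \Omega F\rto^-{\Omega j} & \Omega E\rto^-{\Omega p} & \Omega B
  \enddiagram
\]
in which both rows are again homotopy fibrations, with $\Omega j'$ and $\Omega j$ identifying $\Omega F'$ and $\Omega F$ with the homotopy fibres of $\Omega p'$ and $\Omega p$, respectively. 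This is exactly a diagram of the form (\ref{F'Ffibdiag}), with the role of $X'$, $X$ played by $\Omega E'$, $\Omega E$, the role of the base spaces by $\Omega B'$, $\Omega B$, the fibres by $\Omega F'$, $\Omega F$, and the vertical maps by $\Omega\lambda_F$, $\Omega\lambda_E$, $\Omega\lambda_B$.

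Next I would verify the remaining hypotheses of Proposition \ref{RHIfibprop} for this looped diagram. The spaces $\Omega E'$ and $\Omega E$ are loop spaces, hence $H$-spaces, and $\Omega\lambda_E$, being the loop of a map, is automatically an $H$-map with respect to the loop multiplications. The maps $\Omega p'$ and $\Omega p$ come equipped with the prescribed right homotopy inverses $s'$ and $s$, and the compatibility square (\ref{loops'sdiag0}) is precisely the square (\ref{s'sdiag0}) required in the proposition. With all hypotheses in place, conclusions (1), (2) and (3) of Proposition \ref{RHIfibprop}, applied to the diagram above, are exactly conclusions (1), (2) and (3) of the corollary.

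The only point that deserves a line of justification rather than a bare citation is the identification, compatibly in the primed and unprimed rows, of the homotopy fibre of $\Omega p'$ with $\Omega F'$ through $\Omega j'$, so that the induced map of fibres is homotopic to $\Omega\lambda_F$. This is immediate from the naturality of the homotopy equivalence between $\Omega$ of the homotopy fibre of a map and the homotopy fibre of its loop; consequently no real obstacle arises, and the proof reduces entirely to quoting Proposition \ref{RHIfibprop}.
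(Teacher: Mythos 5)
Your proposal is correct and is essentially identical to the paper's own proof, which simply applies Proposition \ref{RHIfibprop} to the loop of Diagram (\ref{F'FfibdiagE}) and notes that $\Omega\lambda_E$ is an $H$-map. Your extra remark on the natural identification of the homotopy fibre of $\Omega p'$ with $\Omega F'$ is a reasonable elaboration of a point the paper leaves implicit.
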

\begin{proof}
Apply Proposition \ref{RHIfibprop} to the loop of Diagram (\ref{F'FfibdiagE}), and note that $\Omega \lambda_E$ is an $H$-map.
\end{proof}

\begin{remark}\label{LHIfibcorrmk}
As in Remark \ref{HIfibrmk}, 
for the homotopy fibration diagram (\ref{F'FfibdiagE}), the condition (\ref{loops'sdiag0}) is equivalent to that both $\Omega j'$ and $\Omega j$ have left homotopy inverses $r'$ and $r$, 
respectively, such that there is a homotopy commutative diagram 
\[\label{loopr'rdiag0}
\diagram 
     \Omega E'\rto^-{r'}\dto^{\Omega \lambda_E} &\Omega F'\dto^{\Omega \lambda_F} \\ 
      \Omega E\rto^-{r} &\Omega F.
  \enddiagram
  \]
Hence, in this case the three statements of Corollary \ref{RHIfibcor} hold immediately.  
\end{remark}

We can apply Proposition \ref{RHIfibprop} and \ref{LHIfibprop} and Corollary \ref{RHIfibcor} for homotopy fibrations to the following theorem for homotopy pullback.

\begin{theorem}\label{pullbackthm}
Let
\begin{equation}\label{pullbackdiag}
\diagram 
      A'\rto^-{\psi'}\dto^-{a} & B'\dto^-{b} \\ 
      A\rto^-{\psi} & B
  \enddiagram
  \end{equation}
be a homotopy pullback. Then the following hold:
\begin{itemize}
\item[(1).] if $b$ has a right homotopy inverse then so does $a$;
\item[(2).] suppose that $\psi$ has a right homotopy inverse, and $A$ is an $H$-space. Then $b$ has a right homotopy inverse if and only if $a$ has a right homotopy inverse; 
\item[(3).] suppose that $\Omega \psi'$ has a left homotopy inverse. Then $\Omega b$ has a right homotopy inverse if and only if $\Omega a$ has a right homotopy inverse;
\item[(4).] suppose that $\Omega \psi$ has a right homotopy inverse. Then $\Omega b$ has a right homotopy inverse if and only if $\Omega a$ has a right homotopy inverse.
\end{itemize}
\end{theorem}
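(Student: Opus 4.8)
The common thread through all four parts is that in the homotopy pullback (\ref{pullbackdiag}) the homotopy fibres of the two parallel maps coincide, so that $\mathrm{hofib}(a)\simeq\mathrm{hofib}(b)$ and $\mathrm{hofib}(\psi')\simeq\mathrm{hofib}(\psi)$; together with the fact that the loop functor preserves homotopy pullbacks, this is what will let me reduce the statements to the fibration lemmas of Propositions~\ref{RHIfibprop} and~\ref{LHIfibprop}. The two elementary parts I would dispatch directly from the universal property of the homotopy pullback.

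For (1): if $s$ is a right homotopy inverse of $b$, then the maps $1_A\colon A\to A$ and $s\circ\psi\colon A\to B'$ agree over $B$ (since $b\circ s\circ\psi\simeq\psi$), so the universal property of (\ref{pullbackdiag}) produces $t\colon A\to A'$ with $a\circ t\simeq 1_A$, and $t$ is the desired right homotopy inverse. For (2), the implication from $b$ to $a$ is (1); conversely, given right homotopy inverses $t$ of $a$ and $\sigma$ of $\psi$, I would use the homotopy $b\circ\psi'\simeq\psi\circ a$ from (\ref{pullbackdiag}) to see that $\psi'\circ t\circ\sigma\colon B\to B'$ is a right homotopy inverse of $b$, as $b\circ\psi'\circ t\circ\sigma\simeq\psi\circ\sigma\simeq 1_B$. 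Part (4) I would then obtain by running the same argument on the looped square, which is again a homotopy pullback and for which $\Omega A$ is automatically an $H$-space.

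The substantial part is (3). Here I plan to loop (\ref{pullbackdiag}), set $G:=\mathrm{hofib}(\psi')\simeq\mathrm{hofib}(\psi)$, and extract from the Puppe sequences of $\psi'$ and $\psi$ the fibrations $\Omega A'\xrightarrow{\Omega\psi'}\Omega B'\xrightarrow{\partial'}G$ and $\Omega A\xrightarrow{\Omega\psi}\Omega B\xrightarrow{\partial}G$; naturality of the connecting map for the evident map of fibration sequences gives $\partial'\simeq\partial\circ\Omega b$, so these fit into a fibration diagram of the shape (\ref{F'Ffibdiag}) with vertical maps $\Omega a$ on fibres, $\Omega b$ on total spaces, and $1_G$ on bases. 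Now the point is that $\Omega\psi'$ is the fibre inclusion of $\partial'$, so a left homotopy inverse $\ell$ of $\Omega\psi'$ makes $(\ell,\partial')\colon\Omega B'\to\Omega A'\times G$ a map over $G$ that restricts to the identity on the fibre, hence a homotopy equivalence; consequently $\partial'$ has a right homotopy inverse $s'$, and $s:=\Omega b\circ s'$ is a right homotopy inverse of $\partial$ compatible with $s'$ in the sense of diagram (\ref{s'sdiag0}). Since $\Omega B'$ and $\Omega B$ are $H$-spaces and $\Omega b$ is an $H$-map, Proposition~\ref{RHIfibprop} applies; because its base map $1_G$ trivially has a right homotopy inverse, parts (2) and (3) of that proposition yield precisely that $\Omega b$ has a right homotopy inverse if and only if $\Omega a$ does.

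I expect the only real obstacle to be this manoeuvre in (3): recognising that the lone, seemingly asymmetric hypothesis ``$\Omega\psi'$ has a left homotopy inverse'' can be upgraded --- via its reinterpretation as a fibre inclusion of $\partial'$ and the naturality relation $\partial'\simeq\partial\circ\Omega b$ --- into the symmetric pair of compatible sections of $\partial'$ and $\partial$ needed to invoke Proposition~\ref{RHIfibprop}. Everything else is routine bookkeeping with the universal property and with the fibration lemmas already in hand.
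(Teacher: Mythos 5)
Your proposal is correct, and for parts (1), (3) and (4) it follows essentially the same route as the paper: (1) via the universal property of the homotopy pullback, (3) by extending the looped square to the fibration sequences over the common homotopy fibre, upgrading the left inverse of $\Omega\psi'$ to compatible right homotopy inverses of the connecting maps $\partial'$ and $\partial=\partial'\circ(\text{inverse data})$ exactly as in diagram (\ref{s'sdiag0}), and then invoking Proposition \ref{RHIfibprop}; and (4) by applying (2) to the looped square. The one place you genuinely diverge is part (2): for the direction ``$a$ has a right homotopy inverse $\Rightarrow$ $b$ does'' you simply compose, checking $b\circ\psi'\circ t\circ\sigma\simeq\psi\circ a\circ t\circ\sigma\simeq\psi\circ\sigma\simeq 1_B$, which uses only the homotopy commutativity of the square. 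The paper instead uses the $H$-structure of $A$ to split $A\simeq B\times F$, deduces that the fibre inclusion $j'$ has a left homotopy inverse, and applies Proposition \ref{LHIfibprop}. Your argument is shorter, avoids the fibration machinery entirely, and shows that the $H$-space hypothesis on $A$ is not actually needed for statement (2) (it is the analogue of the trivial observation in Remark \ref{pushoutthm3}); what the paper's heavier route buys is the compatible product splittings of Proposition \ref{LHIfibprop}(1), which are reused elsewhere but are not needed for the bare if-and-only-if claimed here.
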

\begin{proof}
(1). Let $c$ be a right homotopy inverse of $b$. Consider the diagram 
\[\xymatrix{ 
A         \ar@{.>}[dr]^(0.6){c'}       \ar@/^0.7pc/[drr]^{c\circ \psi}    \ar@/_0.7pc/[ddr]_{1_A}  &&\\
&A'\rto^-{\psi'}\dto^-{a} & B'\dto^-{b} \\ 
&      A\rto^-{\psi} & B, 
}\]
where the inner square is a homotopy pullback by assumption, and the outer square homotopy commutes as $\psi\simeq b\circ c\circ \psi$.  Hence the universal property of homotopy pullback implies that there is a map $c'$ that makes the two triangular regions homotopy commute. In particular $a$ has a right homotopy inverse $c'$.

(2). Let $F$ be the homotopy fibre of $\psi'$. By assumption it is also the homotopy fibre of $\psi$, and there is a diagram of homotopy fibrations
\[
\diagram 
      F \ddouble  \rto^-{ j'} &  A'\rto^-{ \psi'}\dto^-{ a} &  B'\dto^-{ b} \\ 
        F  \rto^-{ j} &  A\rto^-{ \psi} &  B 
  \enddiagram
\]
which defines the maps $j'$ and $j$. Let $s$ be a right homotopy inverse of $\psi$ and $\mu$ be the $H$-multiplication of $A$. The composite 
\[
\chi: B\times F\stackrel{s\times j}{\longrightarrow} A\times A\stackrel{\mu}{\longrightarrow} A
\]
is a weak homotopy equivalence, and hence is a homotopy equivalence by the Whitehead theorem. In particular, the composite
\[
r': A\stackrel{\chi^{-1}}{\longrightarrow} B\times F\stackrel{\pi_2}{\longrightarrow} F
\]
satisfies that $r'\circ j$ is a homotopy equivalence, where $\chi^{-1}$ is a homotopy inverse of $\chi$ and $\pi_2$ is the projection to the second summand. It follows that $j$ has a left homotopy inverse, say $r$. Then $r\circ  a\circ  j'\simeq r\circ  j\simeq 1_{ F}$, that is, $r\circ  a$ is a left homotopy inverse of $ j'$. Hence, we can apply Proposition \ref{LHIfibprop} to the above homotopy fibration diagram and show that $b$ has a right homotopy inverse if and only if $a$ has a right homotopy inverse.

(3). Extending the homotopy fibration diagram in (2) two steps to the left, we have the homotopy fibration diagram
\[
\diagram 
      \Omega A'\rto^-{\Omega \psi'}\dto^-{\Omega a} & \Omega B'\dto^-{\Omega b} \rto^-{\delta'}&  F\ddouble \\ 
      \Omega A\rto^-{\Omega \psi} & \Omega B \rto^-{\delta} & F
  \enddiagram
\]
with the connecting maps $\delta'$ and $\delta$. 
Since $\Omega \psi'$ has a left homotopy inverse, it follows that the homotopy fibration in the first row splits and in particular $\delta'$ has a right homotopy inverse, say $t'$. Then $\delta\circ \Omega b\circ t'\simeq \delta'\circ t'\simeq 1_{F}$, that is, $\Omega b\circ t'$ is a right homotopy inverse of $\delta$. Hence, we can apply Proposition \ref{RHIfibprop} to the above homotopy fibration diagram, and show that $\Omega b$ has a right homotopy inverse if and only if $\Omega a$ has a right homotopy inverse.

(4). Consider the loop of Diagram (\ref{pullbackdiag}), which is also a homotopy pullback. Since $\Omega A$ is an $H$-space and $\Omega \psi$ has a right homotopy inverse, statement (4) follows from (2) immediately. 
\end{proof}

\begin{remark}\label{couexpullbackthm}
The converse of Theorem \ref{pullbackthm} (1) does not hold. For instance, a homotopy fibration $F\stackrel{j}{\longrightarrow} E\stackrel{p}{\longrightarrow} B$ is equivalent to a homotopy pullback 
\[
\diagram 
      F\rto^-{j}\dto^-{} & E\dto^-{p} \\ 
      \ast\rto^-{} & B.
  \enddiagram
  \]
Then the map $F\stackrel{}{\longrightarrow} \ast$ has a right homotopy inverse, while $E\stackrel{p}{\longrightarrow} B$ does not have a right homotopy inverse in general. 
\end{remark}

\newpage

\section{Two decompositions and Mather's Cube Lemma}
\label{sec: BT}
This section presents preliminary results that will be used to study the existence of right homotopy inverses around homotopy cofibrations and homotopy pushouts.

The first result is the classical Ganea theorem \cite{Gan65}.
As notation, let $I=[0,1]$ be the unit interval with basepoint at $0$. For 
pointed spaces $X$ and $Y$ the (reduced) \emph{join} $X\ast Y$ is defined as the quotient space 
\[X\ast Y=(X\times I\times Y)/\sim\] 
where $(x,0,y)\sim (x',0,y)$, $(x,1,y)\sim (x,1,y')$ and $(\ast,t,\ast)\sim (\ast,0,\ast)$ for all 
$x$, $x'\in X$, $y$, $y'\in Y$ and $t\in I$. Equivalently, if $CX$ and $CY$ are the reduced cones on $X$ 
and $Y$, respectively, then there is a pushout 
\[\diagram 
       X\times Y\rto\dto & X\times CY\dto \\ 
       CX\times Y\rto & X\ast Y. 
  \enddiagram\] 
As $CX$ and $CY$ are contractible, this implies that there is a homotopy pushout 
\[ 
  \label{joinpo} 
  \diagram 
        X\times Y\rto^-{\pi_{1}}\dto^{\pi_{2}} & X\dto \\ 
        Y\rto & X\ast Y 
  \enddiagram 
\]
where $\pi_{1}$ and $\pi_{2}$ are the projections. It is well-known that there is a homotopy equivalence 
$X\ast Y\simeq\Sigma X\wedge Y$. 

\begin{theorem}\label{Ganeathm}
 Let 
   \(\nameddright{F}{i}{E}{p}{B}\) 
   be a homotopy fibration and let $Y$ be the homotopy cofibre of $i$. Then 
   there is a homotopy pullback 
   \[\diagram 
         F\rto^-{i}\dto^{\xi} & E\rto^-{p}\dto & B\ddouble \\ 
         F\ast \Omega B\rto & Y\rto^{p'} & B 
     \enddiagram\] 
   where $\xi$ is null homotopic. Furthermore, the homotopy fibration 
   \(\nameddright{F\ast \Omega B}{}{Y}{p'}{B}\) 
   splits after looping to give a homotopy equivalence    
   \[\hspace{5.6cm}\Omega Y\simeq\Omega B\times \Omega (F\ast \Omega B).\hspace{5.6cm}\Box\] 
\end{theorem}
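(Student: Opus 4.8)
\emph{Proof proposal.} The plan is to realise $Y$ as a homotopy pushout, compute the homotopy fibre of the resulting map $p'\colon Y\to B$ by means of Mather's cube lemma, and then read off the pullback square, the nullity of $\xi$, and the loop space splitting from that computation. (Alternatively one may simply cite \cite{Gan65}.) Since $\nameddright{F}{i}{E}{p}{B}$ is a homotopy fibration, the composite $p\circ i$ carries a preferred null homotopy, so $p$ extends over the cone on $F$ to a map $p'\colon Y\to B$ with $p'\circ q\simeq p$, where $q\colon E\to Y$ is the cofibre map. Equivalently, $Y$ is the homotopy pushout of $\nameddright{\ast}{}{F}{i}{E}$, and all four corners $F,\ast,E,Y$ of this square come equipped with compatible maps to $B$: namely $p\circ i\simeq\ast$, the basepoint $\ast\to B$, the map $p$, and the map $p'$.

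First I would form the cube whose bottom face is this pushout square and whose four vertical edges are the maps to the homotopy fibres over $B$ of the respective corners. Each of the four side faces has the shape
\[
\diagram
\mathrm{hofib}(V\to B)\rto\dto & \mathrm{hofib}(W\to B)\dto\\
V\rto & W
\enddiagram
\]
for an edge $V\to W$ of the bottom square, and such a square is a homotopy pullback because a homotopy pullback of a homotopy fibre is again a homotopy fibre: pulling $\mathrm{hofib}(W\to B)$ back along $V\to W$ gives $\mathrm{hofib}(V\to W\to B)=\mathrm{hofib}(V\to B)$. Mather's cube lemma then shows that the top face is a homotopy pushout. Its corners are $\mathrm{hofib}(\ast\to B)=\Omega B$, $\mathrm{hofib}(p)=F$, $\mathrm{hofib}(p\circ i)\simeq F\times\Omega B$ (using the chosen null homotopy), and $\mathrm{hofib}(p')$. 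Tracking the two maps out of $F\times\Omega B$ through that null homotopy, the map to $\Omega B$ is the projection $\pi_2$ and the map to $F$ is the holonomy action of $\Omega B$ on $F$; the latter differs from the projection $\pi_1$ by a shear self-equivalence of $F\times\Omega B$ commuting with $\pi_2$, so the top face is, up to equivalence, exactly the join pushout square of $F$ and $\Omega B$ recalled earlier in this section. Hence $\mathrm{hofib}(p')\simeq F\ast\Omega B$.

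Next I would extract the remaining assertions from the same square. The left-hand square of the statement is a homotopy pullback because $\mathrm{hofib}(E\xrightarrow{q}Y\xrightarrow{p'}B)=\mathrm{hofib}(p)=F$, and its top map is $i$; the bottom map $F\ast\Omega B\to Y$ is the fibre inclusion of $p'$, and $\xi\colon F\to F\ast\Omega B$ is precisely the map $F\to F\ast\Omega B$ appearing in the top-face pushout, i.e.\ the canonical map of a join out of one factor. Since $\pi_1$ admits the section $f\mapsto(f,\ast)$ and $\pi_2$ composed with it is constant, this map is null homotopic, so $\xi\simeq\ast$. By the symmetric argument the other map $\Omega B\to F\ast\Omega B$ of the top-face pushout is null homotopic as well; but that map is the connecting map $\delta$ of the homotopy fibration $\nameddright{F\ast\Omega B}{}{Y}{p'}{B}$, since it is induced by $\ast\to Y$. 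Finally, from the fibre sequence $\nameddright{\Omega Y}{\Omega p'}{\Omega B}{\delta}{F\ast\Omega B}$ together with $\delta\simeq\ast$ one obtains a homotopy equivalence $\Omega Y\simeq\Omega B\times\Omega(F\ast\Omega B)$ under which $\Omega p'$ is the projection; in particular the fibration splits after looping.

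The conceptual content is entirely in the cube lemma step; the main obstacle I anticipate is purely bookkeeping, namely verifying that the cube genuinely commutes over $B$ for the chosen null homotopy of $p\circ i$, and that in the top face the two maps out of $\mathrm{hofib}(p\circ i)\simeq F\times\Omega B$ are the projections up to the harmless shear. Once these identifications are pinned down, the remainder is formal manipulation of homotopy pullbacks and Puppe sequences.
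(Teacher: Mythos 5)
Your proof is correct and follows exactly the route the paper itself points to: the paper states Theorem \ref{Ganeathm} without proof, citing \cite{Gan65} and remarking just before Theorem \ref{cubethm} that it can be proved by Mather's Cube Lemma as in \cite[Chapter 5]{HT24c}, and your cube over $B$ built from the cofibre pushout $\ast\leftarrow F\xrightarrow{i}E$ is precisely that argument. The only delicate points are the ones you already flag — identifying the two maps out of $\mathrm{hofib}(p\circ i)\simeq F\times\Omega B$ with the projections up to the shear self-equivalence coming from the holonomy action — and these are standard bookkeeping.
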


The second result is another elegant decomposition of Ganea type, established by Beben and Theriault \cite{BT22}. 
Let $X\ltimes Y$ be the {\it left half-smash} of $X$ and~$Y$, defined as the quotient space $(X\times Y)/(X\times \ast)$.

Dual to a homotopy fibration, a homotopy cofibration refers to a map $f: A\stackrel{}{\longrightarrow} Y$ of $CW$-complexes with its homotopy cofibre $Z$. This is typically organized into a sequence
\[
A\stackrel{f}{\longrightarrow} Y\stackrel{}{\longrightarrow} Z
\]
similar to strict cofibrations. The terminology is reasonable and conventional, based on the classical fact that every homotopy cofibration is homotopy equivalent to a strict cofibration.

\begin{theorem} 
   \label{GTcofib} 
   Let 
   \(\nameddright{ A}{f}{Y}{h}{Z}\) 
   be a homotopy cofibration. Suppose that $f$ is inert, that is, the map $\Omega h$ has a right 
   homotopy inverse. 
   Then there is a homotopy fibration 
   \[\llnameddright{\Omega Z\ltimes A}{\widetilde{\Gamma}} 
           {Y}{h}{Z}\] 
           for some map $\widetilde{\Gamma}$ whose restriction to $A$ is $f$. Moreover, the homotopy fibration splits after looping to give a homotopy equivalence 
   \[\hspace{5.4cm}\Omega Z\times\Omega(\Omega Z\ltimes A)\stackrel{\simeq}{\longrightarrow}\Omega Y. 
         \hspace{5.4cm}\Box\] 
\end{theorem}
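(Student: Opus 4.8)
The plan is to identify the homotopy fibre of $h$ with $\Omega Z\ltimes A$ and then loop. Let $F$ denote the homotopy fibre of $h$, so that there is a homotopy fibration $\nameddright{F}{\partial}{Y}{h}{Z}$ with connecting map $\namedright{\Omega Z}{\delta}{F}$. Since $\nameddright{A}{f}{Y}{h}{Z}$ is a homotopy cofibration, $h\circ f$ is null homotopic, so $f$ lifts to $\bar f\colon A\to F$ with $\partial\circ\bar f\simeq f$. Inertness says $\Omega h$ has a right homotopy inverse $s$; since $\delta\circ\Omega h\simeq\ast$ in the looped fibration sequence, this forces $\delta\simeq\delta\circ\Omega h\circ s\simeq\ast$. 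The theorem will follow once we (i) produce $\widetilde{\Gamma}\colon\Omega Z\ltimes A\to Y$ restricting to $f$ on $A$ with $h\circ\widetilde{\Gamma}\simeq\ast$ — equivalently a lift $\Theta\colon\Omega Z\ltimes A\to F$ of $\widetilde{\Gamma}$ — and (ii) show $\Theta$ is a homotopy equivalence. Then $\nameddright{\Omega Z\ltimes A}{\widetilde{\Gamma}}{Y}{h}{Z}$ is a homotopy fibration, and looping it, the section $s$ together with $\Omega\widetilde{\Gamma}$ and the multiplication on $\Omega Y$ assemble into the asserted homotopy equivalence $\Omega Z\times\Omega(\Omega Z\ltimes A)\stackrel{\simeq}{\longrightarrow}\Omega Y$ — the standard splitting of a looped fibration whose base map admits a section.

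For (i) and the bulk of (ii) I would run everything through Mather's Cube Lemma. Apply it to the homotopy pushout
\[
\diagram
A\rto^-{f}\dto & Y\dto^-{h}\\
\ast\rto & Z
\enddiagram
\]
defining $Z$: regard all four corners as mapped to $Z$ and pull the path fibration $PZ\to Z$ back over the square. The four ``fibre comparison'' side faces are homotopy pullbacks, so Mather's Cube Lemma gives that the top face of homotopy fibres
\[
\diagram
A\times\Omega Z\rto^-{\bar\phi}\dto & F\dto\\
\Omega Z\rto & \ast
\enddiagram
\]
is a homotopy pushout, where $A\times_Z PZ\simeq A\times\Omega Z$, the left vertical map is the projection, and $\bar\phi$ restricts to $\bar f$ on $A\times\{\ast\}$ and to $\delta$ on $\{\ast\}\times\Omega Z$. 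Since $\delta\simeq\ast$, the map $\bar\phi$ is homotopic to one constant on $\{\ast\}\times\Omega Z$, hence factors through $(A\times\Omega Z)/(\{\ast\}\times\Omega Z)\cong\Omega Z\ltimes A$, producing $\Theta\colon\Omega Z\ltimes A\to F$ with $\Theta|_A\simeq\bar f$; set $\widetilde{\Gamma}=\partial\circ\Theta$, which then restricts to $f$ on $A$ and satisfies $h\circ\widetilde{\Gamma}\simeq\ast$. (Alternatively $\widetilde{\Gamma}$ can be built from the holonomy action of $\Omega Z$ on $F$, and the two constructions agree.)

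To finish (ii), factor $\bar\phi$ as $\namedright{A\times\Omega Z}{q}{\Omega Z\ltimes A}$ followed by $\Theta$, and repaste the top-face homotopy pushout as $\big(\Omega Z\cup_{A\times\Omega Z}(\Omega Z\ltimes A)\big)\cup_{\Omega Z\ltimes A}F$. A short mapping-cylinder computation shows the inner homotopy pushout $\Omega Z\cup_{A\times\Omega Z}(\Omega Z\ltimes A)$ is contractible — it is the mapping cylinder of the projection $A\times\Omega Z\to\Omega Z$ with the subspace $\{\ast\}\times\Omega Z$, which includes as a homotopy equivalence, collapsed to a point — so the contractibility of the whole pushout forces ${\rm hocofib}(\Theta)\simeq\ast$. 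Hence $\Theta$ induces an isomorphism on integral homology. When the spaces involved are simply connected (as in the applications to Poincar\'{e} duality complexes — then $\Omega Z\ltimes A$ is simply connected, which forces $\pi_1 F$, a quotient of the abelian group $\pi_2 Z$, to vanish as well), Whitehead's theorem upgrades this to a homotopy equivalence.

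The main obstacle is exactly this last step: the cube argument only delivers a homology isomorphism, and promoting it to a genuine homotopy equivalence — which is what is needed for $\Omega Z\ltimes A$ to literally be the homotopy fibre of $h$ — requires controlling fundamental groups, handled in \cite{BT22} via the appropriate simple-connectivity/nilpotency hypotheses. The other ingredients are either formal (the looped-fibration splitting, the mapping-cylinder computation) or routine bookkeeping (verifying that the side faces of the cube are homotopy pullbacks and that $\bar\phi$ restricts to $\bar f$ and $\delta$ as claimed, and that $\widetilde{\Gamma}|_A\simeq f$).
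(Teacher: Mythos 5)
The paper does not actually prove Theorem \ref{GTcofib}: it is quoted from \cite{BT22} with a $\Box$, and the only methodological comment is the remark after Theorem \ref{cubethm} that it can be derived from Mather's Cube Lemma as in \cite[Chapter 5]{HT24c}. Your argument is precisely that derivation, and most of it is sound: the null-homotopy of the connecting map $\delta$ forced by the right homotopy inverse of $\Omega h$; the cube over the cofibration pushout with the four fibre-comparison faces being homotopy pullbacks (this is exactly the paper's Theorem \ref{cubethm}); the factorization of $\bar\phi$ through the quotient $\Omega Z\ltimes A$ via the homotopy extension property; the contractibility of the intermediate pushout $\Omega Z\cup_{A\times\Omega Z}(\Omega Z\ltimes A)$ (pasting with the pushout defining the half-smash reduces it to the pushout of $\ast\leftarrow\Omega Z\xrightarrow{=}\Omega Z$); hence $\mathrm{hocofib}(\Theta)\simeq\ast$. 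The looped splitting at the end is formal once the fibration is identified.

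The genuine gap is the one you flag yourself, and it is not mere bookkeeping. A map with contractible homotopy cofibre is an integral homology isomorphism but need not be a homotopy equivalence without control of fundamental groups: the degree-one map from the Poincar\'{e} homology sphere to $S^{3}$ has contractible cofibre. The theorem is stated under the paper's standing convention that spaces are merely connected, and it is applied in that generality — Theorem \ref{exsumthm} on connected sums and the $3$-manifold and aspherical examples in Subsection \ref{subsec: lowfolds} all pass through Theorem \ref{GTcofib} via Theorem \ref{pushoutthm2} — so restricting to the simply connected case does not recover the statement as used. Note also that your parenthetical "$\pi_{1}F$ is a quotient of $\pi_{2}Z$" already presupposes $\pi_{1}Y=0$; in general the exact sequence only gives $\pi_{2}Z\to\pi_{1}F\to\pi_{1}Y$. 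The way \cite{BT22} and \cite{The24a} close this is to exhibit $\Theta$ as the composite $\Omega Z\ltimes A\xrightarrow{1\ltimes\bar f}\Omega Z\ltimes F\xrightarrow{\overline{\theta}}F$ of the reduced holonomy action (available because $\delta\simeq\ast$) and prove directly that this composite is an equivalence — the present paper quotes exactly that fact inside the proof of Theorem \ref{pushoutthm2}. So what you have is a complete proof in the simply connected case and a correct skeleton in general, with the $\pi_{1}$ analysis you defer to \cite{BT22} being a substantive missing ingredient rather than a routine one.
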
 

We may also need a naturality property of Theorem~\ref{GTcofib}.
\begin{proposition}\label{GTcofibnat} 
If there is a homotopy cofibration diagram 
\[\diagram 
      A\rto^-{f}\dto^{\lambda_A} & Y\rto^-{h}\dto^{\lambda_Y} & Z\dto^{\lambda_Z}  \\ 
      A'\rto^-{f'} & Y'\rto^-{h'} & Z' 
  \enddiagram\] 
and both $\Omega h$ and $\Omega h'$ have right homotopy inverses $s$ and $s'$, 
respectively, such that there is a homotopy commutative diagram 
\begin{equation}\label{ss'diag}
\diagram 
      \Omega Z\rto^-{s}\dto^{\Omega\lambda_Z} & \Omega Y\dto^{\Omega \lambda_Y} \\ 
      \Omega Z'\rto^-{s'} & \Omega Y', 
  \enddiagram
  \end{equation}
then there is a homotopy fibration diagram
\[\diagram 
      \Omega Z\ltimes A \rto^-{\widetilde{\Gamma}} \dto^{\Omega \lambda_Z\ltimes \lambda_A} & Y\rto^-{h}\dto^{\lambda_Y} & Z\dto^{\lambda_Z}  \\ 
      \Omega Z'\ltimes A' \rto^-{\widetilde{\Gamma}'}  & Y'\rto^-{h'} & Z', 
  \enddiagram\] 
in which the rows splits after looping to give compatible homotopy equivalences
\[
\xymatrix{
\Omega Z\times \Omega (\Omega Z\ltimes A)\ar[r]^<<<<{\simeq} \ar[d]^{\Omega \lambda_Z\times \Omega (\Omega \lambda_Z\ltimes \lambda_A)} &  \Omega Y \ar[d]^{\Omega \lambda_E} \\
\Omega Z'\times \Omega (\Omega Z'\ltimes A')\ar[r]^<<<{\simeq} &  \Omega Y'. 
}
\]
\end{proposition}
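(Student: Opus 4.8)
The plan is to upgrade the construction behind Theorem~\ref{GTcofib} to a morphism of homotopy cofibrations, which produces the desired homotopy fibration diagram, and then to read off the compatible loop space decompositions by a direct appeal to Corollary~\ref{RHIfibcor}.

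First I would recall how the homotopy fibration $\nameddright{\Omega Z\ltimes A}{\widetilde{\Gamma}}{Y}{h}{Z}$ of Theorem~\ref{GTcofib} is produced. In the manner of Ganea's theorem (Theorem~\ref{Ganeathm}), one passes to the homotopy fibre $F$ of $h$, feeds the homotopy pushout that expresses the cofibration $\nameddright{A}{f}{Y}{h}{Z}$ into Mather's cube lemma to identify $F$ with a quotient assembled from $A$ and $\Omega Z$, and then uses the right homotopy inverse $s$ of $\Omega h$ to trivialize the remaining fibrewise data and recognize this quotient as $\Omega Z\ltimes A$. I would run this construction in parallel for the top cofibration $\nameddright{A}{f}{Y}{h}{Z}$ and the bottom cofibration $\nameddright{A'}{f'}{Y'}{h'}{Z'}$. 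Each intermediate homotopy pushout and pullback, and each use of Ganea's theorem and of Mather's cube lemma, is natural with respect to the morphism $(\lambda_A,\lambda_Y,\lambda_Z)$, so each contributes a homotopy commutative square; in particular the map on homotopy fibres of $h$ and $h'$ induced by $(\lambda_Y,\lambda_Z)$ is compatible with $\lambda_A$ on the $A$-coordinate and with $\Omega\lambda_Z$ on the $\Omega Z$-coordinate. The only choice entering the construction is the trivialization by $s$ (resp.\ $s'$), and this is exactly the step that uses hypothesis~(\ref{ss'diag}): the relation $\Omega\lambda_Y\circ s\simeq s'\circ\Omega\lambda_Z$ guarantees that the two recognitions $F\simeq\Omega Z\ltimes A$ and $F'\simeq\Omega Z'\ltimes A'$ fit into a homotopy commutative square with vertical map $\Omega\lambda_Z\ltimes\lambda_A$; Lemma~\ref{eqnaturallemma} is used throughout to transport homotopy inverses of the equivalences appearing through the squares. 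This yields the required homotopy fibration diagram
\[\diagram
      \Omega Z\ltimes A \rto^-{\widetilde{\Gamma}} \dto^{\Omega \lambda_Z\ltimes \lambda_A} & Y\rto^-{h}\dto^{\lambda_Y} & Z\dto^{\lambda_Z}  \\
      \Omega Z'\ltimes A' \rto^-{\widetilde{\Gamma}'}  & Y'\rto^-{h'} & Z',
  \enddiagram\]
with $\widetilde{\Gamma}$ restricting to $f$ on $A$ and $\widetilde{\Gamma}'$ restricting to $f'$ on $A'$.

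With the fibration diagram in hand, the compatible loop space decompositions are formal. I would apply Corollary~\ref{RHIfibcor} to this diagram, with $h$ and $h'$ playing the roles of $p$ and $p'$: by hypothesis $\Omega h$ and $\Omega h'$ have right homotopy inverses $s$ and $s'$, and diagram~(\ref{ss'diag}) is precisely the compatibility condition required there. Corollary~\ref{RHIfibcor}(1) then produces compatible homotopy equivalences $\Omega Z\times\Omega(\Omega Z\ltimes A)\stackrel{\simeq}{\longrightarrow}\Omega Y$ and $\Omega Z'\times\Omega(\Omega Z'\ltimes A')\stackrel{\simeq}{\longrightarrow}\Omega Y'$ that are compatible via $\Omega\lambda_Z\times\Omega(\Omega\lambda_Z\ltimes\lambda_A)$ and $\Omega\lambda_Y$, and these are the asserted decompositions; one checks against Theorem~\ref{GTcofib} that for each row individually they agree with the splittings produced there.

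The main obstacle is the first step: carrying out the internal construction of Theorem~\ref{GTcofib} in families and checking that the left-hand square of the fibration diagram homotopy commutes \emph{with the specific vertical map $\Omega\lambda_Z\ltimes\lambda_A$}. This is delicate precisely because $\widetilde{\Gamma}$ depends on the choice of $s$: without~(\ref{ss'diag}) one can only conclude that the induced map on homotopy fibres is \emph{some} map over $\lambda_Y$, and pinning it down to $\Omega\lambda_Z\ltimes\lambda_A$ requires the careful homotopy bookkeeping through Ganea's theorem and Mather's cube lemma indicated above, with~(\ref{ss'diag}) supplying the one missing piece of compatibility. Everything after that is formal, via Corollary~\ref{RHIfibcor}.
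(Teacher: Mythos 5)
Your proposal is correct and follows essentially the same route as the paper: the compatible splittings are obtained exactly as you say, by applying Corollary \ref{RHIfibcor} (1) to the natural fibration diagram, with (\ref{ss'diag}) supplying the required compatibility of $s$ and $s'$. For the naturality of the fibration itself the paper does not redo the construction of Theorem \ref{GTcofib} in families but simply cites \cite[Remark 2.2]{HT22} and \cite[Remark 2.7]{The24a}, which is precisely the content your (correctly outlined but not fully executed) Ganea/Mather bookkeeping would establish.
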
 
\begin{proof}
By \cite[Remark 2.2]{HT22} or \cite[Remark 2.7]{The24a}, the homotopy fibration in Theorem~\ref{GTcofib} is natural. Then by Corollary \ref{RHIfibcor} (1) the splitting in Theorem~\ref{GTcofib} is also natural.  
\end{proof}

The following classical result can be proved from Beben-Theriault's decomposition. 
\begin{lemma}\label{wedgeqlemma}
Let $Z$ and $A$ be path connected spaces. Then there is a natural homotopy fibration
\[
\Omega Z\ltimes A \stackrel{}{\longrightarrow} Z\vee A\stackrel{q_1}{\longrightarrow} Z,
\]
where $q_1$ is the projection onto the first wedge summand. Moreover, the homotopy fibration splits after looping to give a natural homotopy equivalence
\[
\Omega (Z\vee A) \simeq \Omega Z\times \Omega (\Omega Z\ltimes A).
\]
\end{lemma}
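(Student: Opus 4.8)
The plan is to deduce Lemma~\ref{wedgeqlemma} directly from Theorem~\ref{GTcofib} by exhibiting the wedge $Z\vee A$ as the mapping cone of the constant map $\ast\colon A\to Z$, and then observing that this constant map is trivially inert. First I would recall that for any path connected $Z$ and $A$ there is a homotopy cofibration
\[
\nameddright{A}{f}{Z}{h}{Z\vee A},
\]
where $f$ is the constant map onto the basepoint of $Z$ and $h=q_1^{\,c}$ is the inclusion of the first wedge summand; indeed the mapping cone of the null map $A\to Z$ is $Z\vee CA\simeq Z\vee\Sigma^{0}A$... more precisely $Z\cup_f CA = Z\vee CA/(\ast)$, and since $CA$ is contractible this collapses canonically to $Z\vee A$ only after noting $CA/(\text{point})\simeq\Sigma A$; so in fact the correct cofibre of the null map is $Z\vee\Sigma A$. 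To get $Z\vee A$ on the nose I instead use the standard cofibration $\nameddright{A}{}{Z\vee A}{q_1}{Z}$ read the other way: the homotopy cofibre of the wedge inclusion $i_2\colon A\hookrightarrow Z\vee A$ is precisely $Z$, and this is a homotopy cofibration with the projection $q_1$ as the cofibre map. However $i_2$ is not the map whose inertness I need; rather I want $q_1$ to be the cofibre map $h$ in Theorem~\ref{GTcofib}, so I should apply the theorem to the cofibration $\nameddright{A}{i_2}{Z\vee A}{q_1}{Z}$, whose attaching-type map is $i_2$.

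So the key step is to verify that $i_2\colon A\to Z\vee A$ is inert, i.e.\ that $\Omega q_1$ has a right homotopy inverse. This is immediate: the wedge inclusion $i_1\colon Z\to Z\vee A$ is a section of $q_1$ (i.e.\ $q_1\circ i_1\simeq 1_Z$), so $\Omega i_1$ is a right homotopy inverse of $\Omega q_1$. Hence Theorem~\ref{GTcofib} applies and yields a homotopy fibration
\[
\llnameddright{\Omega Z\ltimes A}{\widetilde\Gamma}{Z\vee A}{q_1}{Z}
\]
whose restriction to $A$ is $i_2$, together with the looped splitting $\Omega Z\times\Omega(\Omega Z\ltimes A)\stackrel{\simeq}{\to}\Omega(Z\vee A)$. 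This is exactly the asserted fibration and decomposition.

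It remains to address naturality. For this I would invoke Proposition~\ref{GTcofibnat}: given a map of pairs realized by $\lambda_Z\colon Z\to Z'$ and $\lambda_A\colon A\to A'$, one gets a homotopy cofibration diagram with vertical maps $\lambda_A$ (on $A$), $\lambda_Z\vee\lambda_A$ (on the wedge), and $\lambda_Z$ (on $Z$), and the compatibility square~(\ref{ss'diag}) for the canonical sections holds because $\Omega i_1$ is natural in $Z$. Proposition~\ref{GTcofibnat} then produces the natural homotopy fibration diagram and the natural looped splitting $\Omega(Z\vee A)\simeq\Omega Z\times\Omega(\Omega Z\ltimes A)$, completing the proof. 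The only real subtlety — and the place I would be most careful — is bookkeeping which wedge-summand map plays the role of $f$ versus $h$ in Theorem~\ref{GTcofib}: it is the inclusion $i_2$ of the \emph{second} summand that is the (inert) attaching map, while the projection $q_1$ onto the \emph{first} summand is the cofibre map, and the section used to witness inertness is the inclusion $i_1$ of the first summand.
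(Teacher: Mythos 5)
Your proposal is correct and is essentially the paper's own proof: apply Theorem \ref{GTcofib} and Proposition \ref{GTcofibnat} to the natural homotopy cofibration $A\stackrel{i_2}{\longrightarrow}Z\vee A\stackrel{q_1}{\longrightarrow}Z$, with inertness of $i_2$ witnessed by the section $i_1$ of $q_1$. The initial detour through the constant map $A\to Z$ (whose cofibre is $Z\vee\Sigma A$, as you correctly note) is unnecessary but harmless, since you land on the right cofibration.
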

\begin{proof}
There is the natural homotopy cofibration
\[
A\stackrel{i_2}{\longrightarrow}Z\vee A\stackrel{q_1}{\longrightarrow} Z,
\]
such that $q_1\circ i_1=1_{Z}$, where $i_k$ is the natural inclusion of the $k$-the wedge summand with $k=1$, $2$. Therefore, the lemma follows from Theorem \ref{GTcofib} and Proposition \ref{GTcofibnat} immediately. 
\end{proof}

The two decompositions, Theorems \ref{Ganeathm} and \ref{GTcofib}, were proved by different methods originally. However, they both can be proved by Mather's Cube Lemma \cite{Mat76} as illustrated in \cite[Chapter 5]{HT24c}. The latter is also a crucial tool in the sequel.

\begin{theorem}
   \label{cubethm} 
  Suppose that there is a homotopy pushout 
\[\diagram 
     A\rto\dto &  B\dto \\ 
     C\rto & D 
  \enddiagram\] 
and a homotopy fibration 
\(\nameddright{D'}{}{D}{h}{Z}\). 
Composing all the maps in the homotopy pushout with~$h$ and taking homotopy fibres over 
the common base $Z$ gives a homotopy commutative cube
\begin{gather*}
\begin{aligned}
\xymatrixcolsep{1.5pc}
\xymatrixrowsep{1.5pc}
\xymatrix{
 & A^\prime \ar[dl]  \ar[rr]  \ar[dd]|!{[d];[d]}\hole && 
 B^\prime \ar[dl]  \ar[dd]  \\
 C^\prime \ar[dd]  \ar[rr]  &&
 D^\prime \ar[dd] \\
  & A \ar[dl]  \ar[rr]|!{[r];[r]}\hole  && 
 B\ar[dl]    \\
 C \ar[rr]  &&
 D 
 }
\end{aligned}
\end{gather*}
that defines $A'$, $B'$ and $C'$. Then the four sides are all homotopy pullbacks and the top face of the cube is a homotopy pushout.~$\qqed$ 
\end{theorem}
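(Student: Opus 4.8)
The plan is to treat the two assertions separately. That the four side faces are homotopy pullbacks is formal, depending only on the composition (``pasting'') law for homotopy pullbacks, whereas the statement that the top face is a homotopy pushout is the substantial part: it is exactly the assertion that homotopy pushouts of spaces are stable under base change.

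For the side faces, recall that by construction $D'$ is the homotopy fibre of $h$, i.e.\ the homotopy pullback of $D\xrightarrow{h}Z\longleftarrow\ast$, while $A'$, $B'$, $C'$ are the homotopy fibres of the composites $A\to D\xrightarrow{h}Z$, $B\to D\xrightarrow{h}Z$ and $C\to D\xrightarrow{h}Z$. Pasting the defining pullback square of $D'$ under the map $C\to D$ gives a rectangle whose outer square is the defining pullback square of $C'$ and whose right-hand square is that of $D'$; by the pasting law the left-hand square, which has corners $C'$, $D'$, $C$, $D$, is a homotopy pullback, and the same reasoning applies to the face with corners $B'$, $D'$, $B$, $D$. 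Pasting once more shows $A'\simeq A\times^{h}_{D}D'$, so the face with corners $A'$, $D'$, $A$, $D$ is a homotopy pullback; pasting this to the right of the square with corners $A'$, $B'$, $A$, $B$ against the already established pullback with corners $B'$, $D'$, $B$, $D$ exhibits the former as a homotopy pullback, and symmetrically for $A'$, $C'$, $A$, $C$. Thus all four side faces are homotopy pullbacks, with no choice of model involved.

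For the top face, the preceding step shows that the top square is the base change of the bottom (homotopy pushout) square along the map $D'\to D$. The assertion that this base change is again a homotopy pushout is Mather's cube theorem \cite{Mat76}; I would prove it by a point-set model: replace $h\colon D\to Z$ and the induced maps $A,B,C\to Z$ by the Hurewicz fibrations $\widetilde X=X\times_Z Z^{I}\to Z$, noting that $\widetilde X\simeq X$ compatibly so that $\widetilde D\simeq\mathrm{hocolim}(\widetilde C\leftarrow\widetilde A\to\widetilde B)$, model the latter by the double mapping cylinder $\widehat D$ equipped with its natural map to $Z$, and then observe that the preimage of the basepoint $z_0\in Z$ in $\widehat D$ is on the one hand the double mapping cylinder of the fibres $\widetilde C_{z_0}\leftarrow\widetilde A_{z_0}\to\widetilde B_{z_0}$, hence the homotopy pushout of $C'\leftarrow A'\to B'$, and on the other hand is weakly equivalent to $\widetilde D_{z_0}=\mathrm{hofib}(h)=D'$. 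This identifies $D'$ with $\mathrm{hocolim}(C'\leftarrow A'\to B')$, which is precisely the claim that the top face is a homotopy pushout.

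The main obstacle is the point-set bookkeeping in the last paragraph: carrying out the fibration replacements and the cylinder model compatibly with all four maps to $Z$ at once, and verifying that passing to the preimage of $z_0$ both commutes with the double mapping cylinder and carries the weak equivalence $\widehat D\simeq\widetilde D$ over $Z$ to a weak equivalence on fibres. If one prefers to avoid this, it suffices to quote Mather's cube theorem \cite{Mat76} (equivalently, the universality of homotopy colimits of spaces) for the top-face assertion and retain only the formal pasting argument of the second paragraph for the side faces.
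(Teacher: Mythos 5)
The paper does not actually prove this statement: it is recorded as Mather's Cube Lemma (the statement ends with a box rather than a proof), and Remark~\ref{cubethmrmk} explains that the hypothesis that the cube arises by taking fibres over $Z$ lets one bypass the homotopy-coherence conditions of Mather's general version via \cite[Lemma 3.1]{PT19}, deferring a rigorous treatment to \cite[Section 5.1]{HT24c}. So your fallback — establish the four side faces by pasting of homotopy pullbacks (which is correct and standard) and quote \cite{Mat76} for the top face — is in effect what the paper does, and is acceptable.

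Your attempted direct proof of the top-face assertion, however, has a genuine gap at its decisive step, and that step is not mere point-set bookkeeping. After replacing $A$, $B$, $C$ by Hurewicz fibrations $\widetilde A,\widetilde B,\widetilde C$ over $Z$ and forming the double mapping cylinder $\widehat D$ with its induced map $p\colon\widehat D\to Z$, it is indeed immediate that the strict preimage $p^{-1}(z_0)$ is the double mapping cylinder of the strict fibres, hence a model for the homotopy pushout of $C'\leftarrow A'\to B'$. But to conclude you must identify $p^{-1}(z_0)$ with the \emph{homotopy} fibre of $p$, i.e.\ with $D'$, and $p$ is not a fibration (or even obviously a quasi-fibration) just because its three constituent pieces are. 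Nor does the weak equivalence $\widehat D\simeq\widetilde D$ over $Z$ automatically induce a weak equivalence on strict fibres: a weak equivalence over $Z$ from a non-fibration to a fibration can fail completely on point-set fibres (consider $\{0\}\hookrightarrow[0,1]\stackrel{=}{\longrightarrow}[0,1]$, whose strict fibres over $1$ are empty and a point, respectively). Closing this gap requires a genuine theorem — for instance that the fibrewise double mapping cylinder of Hurewicz fibrations over $Z$, with one leg converted to a fibrewise cofibration, is again a Hurewicz fibration over $Z$, or a quasi-fibration gluing criterion in the style of Dold--Thom or V.~Puppe — and this is precisely the substantive content of Mather's theorem. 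As written, your sketch assumes the conclusion at this step; either supply that fibrewise input explicitly or retain only the citation route.
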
 

\begin{remark}\label{cubethmrmk}
As remarked in \cite[Appendix]{HT24a} and \cite[Section 3]{The24b}, the original statement of Mather's Cube Lemma is more general. It assumes that there is a homotopy commutative cube, in which the bottom face is a homotopy pushout, the four sides are homotopy pullbacks, and there are compatibilities among the homotopies, and concludes that the top face is a homotopy pushout. In our case, the stronger hypothesis that the homotopy commutativity of the cube is due to it being induced by taking fibres over a map 
\(\namedright{D}{h}{Z}\)  
lets one apply \cite[Lemma 3.1]{PT19} to bypass the compatibilities, and then establish Theorem \ref{cubethm}. For a rigorous proof of Theorem \ref{cubethm}, one may refer to \cite[Section 5.1]{HT24c}. 
\end{remark}

\newpage
\section{Half-smash products}
\label{sec: halfsmash}
In this section, we investigate the existence of right homotopy inverses around half-smash products. 
The material will be used to study the existence of right homotopy inverses around homotopy cofibrations and homotopy pushouts in Section \ref{sec: pushout}. 

Recall the {\it left half-smash} of two pointed spaces $X$ and~$A$ is defined as the quotient space $(X\times A)/(X\times \ast)$. It determines a natural cofibration
\[
X\stackrel{i_1}{\longrightarrow} X\times A \stackrel{q_1}{\longrightarrow} X\ltimes A,
\] 
where $i_1$ and $q_1$ are the inclusion and projection maps, respectively. Similarly we can define the right half-smash $X\rtimes A$ . However, as the constructions are symmetric we only need to study one of them. 
\begin{lemma}\label{ltimeslemma0}
There is a natural inclusion $j_2: A \stackrel{}{\longrightarrow} X\ltimes A$ and a natural projection $p_2: X\ltimes A \stackrel{}{\longrightarrow} A$ such that $p_2\circ j_2=1_A$.
\end{lemma}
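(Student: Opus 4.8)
The plan is to produce $j_2$ and $p_2$ directly from the defining quotient $X\ltimes A=(X\times A)/(X\times \ast)$, and then check the retraction identity $p_2\circ j_2=1_A$ by evaluating on representatives. There is essentially no obstacle here; the only points needing (routine) care are that the relevant map of products kills the subspace $X\times\ast$ so that it descends to the quotient, and that everything stays in the pointed category, consistent with the cofibration $X\stackrel{i_1}{\longrightarrow} X\times A\stackrel{q_1}{\longrightarrow} X\ltimes A$ recalled above.

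First I would construct $p_2$. The second-factor projection $\pi_2\colon X\times A\stackrel{}{\longrightarrow} A$ sends every point of $X\times \ast$ to the basepoint of $A$, so by the universal property of the quotient it factors uniquely through $q_1$ as a continuous pointed map $p_2\colon X\ltimes A\stackrel{}{\longrightarrow} A$ with $p_2\circ q_1=\pi_2$. Naturality of $p_2$ (with respect to pointed maps $X\to X'$ and $A\to A'$ and the induced map $X\ltimes A\to X'\ltimes A'$) is inherited from the naturality of $\pi_2$ and of the half-smash construction.

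Next I would construct $j_2$. Writing $x_0$ for the basepoint of $X$, the map $A\stackrel{}{\longrightarrow} X\times A$, $a\mapsto (x_0,a)$, is continuous and pointed, and post-composing with $q_1$ gives the desired natural inclusion $j_2:=q_1\circ(x_0,-)\colon A\stackrel{}{\longrightarrow} X\ltimes A$; naturality is again immediate. Finally, for each $a\in A$ one computes
\[
p_2(j_2(a))=p_2(q_1(x_0,a))=\pi_2(x_0,a)=a,
\]
so $p_2\circ j_2=1_A$, which completes the argument. The hardest part is merely being careful that $\pi_2$ annihilates $X\times\ast$, which is what allows $p_2$ to be defined on the quotient in the first place.
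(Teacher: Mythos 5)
Your proposal is correct and follows essentially the same route as the paper: the paper also defines $j_2=q_1\circ i_2$ (with $i_2$ the inclusion $a\mapsto(\ast,a)$) and obtains $p_2$ by extending $\pi_2$ across the quotient $q_1$, which is possible precisely because $\pi_2\circ i_1$ is trivial. The only cosmetic difference is that the paper packages the verification into a single commutative diagram rather than evaluating on representatives.
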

\begin{proof}
Consider the diagram
\[
\xymatrix{
&                        A \ar@{=}[r] \ar[d]^{i_2}  & A \ar[d]^{j_2}\\
X\ar[r]^<<<<{i_1}   & X\times   A \ar[r]^{q_1} \ar[d]^{\pi_2} & X\ltimes A \ar[d]^{p_2}\\
                  &             A \ar@{=}[r] & A,
}
\]
where $i_2$ and $\pi_2$ are the inclusion and projection maps, respectively, $j_2=q_1\circ i_2$, and $\pi_2$ extends across $q_1$ to a map $p_2$ as $\pi_2\circ i_1$ is trivial. From the diagram it is clear that $p_2\circ j_2=1_A$, and all the involved maps are natural.
\end{proof}

\begin{lemma}\label{ltimeslemma}
A map $c: A\stackrel{}{\longrightarrow} B$ has a right homotopy inverse if and only if $1_X \ltimes c: X\ltimes A \longrightarrow X\ltimes B$ has a right homotopy inverse.
\end{lemma}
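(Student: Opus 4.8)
The plan is to establish both implications using the natural section--retraction pair from Lemma~\ref{ltimeslemma0} together with naturality of the half-smash construction in its second variable. First suppose $c\colon A\to B$ has a right homotopy inverse $d\colon B\to A$, so $c\circ d\simeq 1_B$. Applying the functor $X\ltimes(-)$, which is clearly (homotopy) functorial in the second variable, gives $(1_X\ltimes c)\circ(1_X\ltimes d)\simeq 1_X\ltimes(c\circ d)\simeq 1_X\ltimes 1_B=1_{X\ltimes B}$, so $1_X\ltimes d$ is a right homotopy inverse of $1_X\ltimes c$. This direction is immediate and requires only functoriality.

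For the converse, suppose $e\colon X\ltimes B\to X\ltimes A$ satisfies $(1_X\ltimes c)\circ e\simeq 1_{X\ltimes B}$. I would build a right homotopy inverse of $c$ by pre- and post-composing with the natural maps of Lemma~\ref{ltimeslemma0}: set $d=p_2^A\circ e\circ j_2^B\colon B\to X\ltimes B\to X\ltimes A\to A$, where $j_2$ and $p_2$ denote the natural inclusion and projection attached to the appropriate half-smash. The key step is to check $c\circ d\simeq 1_B$, and for this I would use the naturality square: applying $X\ltimes(-)$ to $c\colon A\to B$ gives a commuting square relating $j_2^A,j_2^B$ (and another relating $p_2^A,p_2^B$) to $1_X\ltimes c$, namely $(1_X\ltimes c)\circ j_2^A\simeq j_2^B\circ c$ and $p_2^B\circ(1_X\ltimes c)\simeq c\circ p_2^A$. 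Then
\[
c\circ d = c\circ p_2^A\circ e\circ j_2^B\simeq p_2^B\circ(1_X\ltimes c)\circ e\circ j_2^B\simeq p_2^B\circ j_2^B\simeq 1_B,
\]
using the naturality of $p_2$ in the first homotopy, the hypothesis $(1_X\ltimes c)\circ e\simeq 1$ in the second, and $p_2\circ j_2=1$ from Lemma~\ref{ltimeslemma0} in the last.

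The only point requiring care—and the place I expect the main (if modest) obstacle—is verifying that the two naturality squares for $j_2$ and $p_2$ genuinely commute. The square for $j_2$ is on the nose commutative from the construction $j_2=q_1\circ i_2$ and naturality of $q_1$ and $i_2$. The square for $p_2$ commutes because $p_2$ was defined by extending $\pi_2$ across the cofibration $q_1$, and one must observe that $c\circ p_2^A$ and $p_2^B\circ(1_X\ltimes c)$ both restrict compatibly along $q_1^A$ to $c\circ\pi_2^A\simeq\pi_2^B\circ(1_X\times c)$; since $q_1^A$ is a cofibration and both maps agree after precomposition with it, they are homotopic (one may also invoke the universal property used to define $p_2$). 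Once these two squares are in hand, the computation above closes the argument, and the localized statements follow verbatim since all constructions are natural and compatible with localization.
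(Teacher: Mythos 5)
Your proposal is correct and follows essentially the same route as the paper: the forward direction by functoriality of $X\ltimes(-)$, and the converse by conjugating the given right homotopy inverse with the natural maps $j_2$ and $p_2$ of Lemma~\ref{ltimeslemma0} and using the naturality square $p_2\circ(1_X\ltimes c)= c\circ p_2$. The naturality of $p_2$ that you verify by hand is exactly what Lemma~\ref{ltimeslemma0} already records, so no extra argument is needed there.
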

\begin{proof}
If $c$ has a right homotopy inverse $r$, it is clear that $1_X \ltimes r$ is a right homotopy inverse of $1_X \ltimes c$. Conversely, suppose that $1_X \ltimes c$ has a right homotopy inverse $t$. Then by Lemma \ref{ltimeslemma0} there is a homotopy commutative diagram
\[
\xymatrix{
B \ar[r]^<<<<{j_2} & X\ltimes B \ar[r]^{t} \ar@{=}[dr]  &X\ltimes A \ar[r]^{p_2} \ar[d]^{1_X \ltimes c}  & A \ar[d]^{c}   \\
&& X\ltimes B \ar[r]^{p_2} & B.
}
\]
Define $t'=p_2\circ t\circ j_2$. Then $c\circ t'=c\circ p_2\circ t\circ j_2 =p_2\circ (1_X \ltimes c)\circ t\circ j_2\simeq p_2 \circ j_2=1_B$, that is, $t'$ is a right homotopy inverse of $c$.
\end{proof}

Lemma \ref{ltimeslemma} addresses the existence of right homotopy inverses around left half-smash products without looping. The proof is straightforward and standard. However, addressing the corresponding problem after looping requires additional effort. 

By Ganea's theorem (Theorem \ref{Ganeathm}), there is a diagram of homotopy fibrations
\begin{equation}\label{ltimesfibdiag}
\begin{aligned}
\xymatrix{
X\ar[r]^<<<<<{i_1}  \ar[d]^{} & X\times   A \ar[d]^{q_1} \ar[r]^<<<{\pi_2} &  A \ar@{=}[d] \\
X\ast \Omega A        \ar[r]^{}          &     X\ltimes A \ar[r]^<<<{p_2}       & A,
}
\end{aligned}
\end{equation}
and the bottom homotopy fibration splits after looping. In order to investigate right homotopy inverses around left half-smash products after looping, a key step is to show that specific model of the bottom homotopy fibration in (\ref{ltimesfibdiag}) is natural in $X$ and $A$.

Let $\Omega A\stackrel{i}{\longrightarrow} PA\stackrel{{\rm ev}}{\longrightarrow} A$ be the standard path fibration of $A$.   
The three fibrations 
\[
\begin{split}
X\times \Omega A\stackrel{1_X\times i}{\longrightarrow} X\times PA\stackrel{{\rm ev}\circ \pi_2}{\longrightarrow} A &,  \\
CX\times \Omega A\stackrel{1_{CX}\times i}{\longrightarrow} CX\times PA\stackrel{{\rm ev}\circ \pi_2}{\longrightarrow} A &,  \\
X\stackrel{i_1}{\longrightarrow} X\times A\stackrel{\pi_2}{\longrightarrow} A &
\end{split}
\]
are all natural in $X$ and $A$. Define the spaces $H$ and $Q$ to be the strict pushouts of 
\[
X\stackrel{\pi_1}{\longleftarrow} X\times \Omega A \stackrel{j\times 1_{\Omega A}}{\longrightarrow} CX\times \Omega A \ \  {\rm and} \ \ 
X\times A\stackrel{1_X\times {\rm ev}}{\longleftarrow} X\times P A \stackrel{j\times 1_{P A}}{\longrightarrow} CX\times P A,
\]
respectively, where $X\stackrel{j}{\hookrightarrow} CX$ is the natural inclusion and $\pi$ is the natural projection. 
The two pushouts, together with the three fibrations, determine a strictly commutative cube
 \[
  \label{} 
   \diagram
      X\times \Omega A \rrto^-{ j\times 1_{\Omega A}}\drto^-{\pi_1}\ddto^-(0.33){1_X\times i} & & CX\times \Omega A\dline^(0.65){1_{CX}\times i}\drto^{} & \\
      & X\rrto^(0.35){}\ddto^(0.25){i_1} & \dto & H \ddto^{h} \\
      X\times PA \rline^(0.6){ j\times 1_{P A}}\drto^(0.6){1_X\times {\rm ev}} & \rto & CX\times PA\drto^{} & \\
      & X\times A\rrto^{} & &  Q.
  \enddiagram 
\] 
It is clear that the rear and left faces are pullbacks. The cube diagram illustrates how the three fibrations are glued together to form a fibration
\[
H\stackrel{h}{\longrightarrow} Q\stackrel{p}{\longrightarrow} A. 
\]
As each involved map is natural, the universal property of strict pushout implies that the structural maps $h$ and $p$ are unique and natural in $X$ and $A$. As we will see later, this strict fibration is a model of the bottom homotopy fibration in Diagram (\ref{ltimesfibdiag}). 

Similarly, the strictly commutative diagram
\[
\xymatrix{
X\times A           \ar@{=}[d] &  X  \ar[d]^{i_1}   \ar[l]_<<<<<{i_1}  \ar[r]^{j}  &  CX \ar[d]^{i_1} \\
X\times A         &  X\times PA  \ar[l]_{1_{X}\times {\rm ev}} \ar[r]^{j\times 1_{PA}}    &  CX\times PA  
}
\]
induces a natural map of pushouts
\[
\varphi_Q:  (X\times A)\cup_X CX \stackrel{\simeq}{\longrightarrow} Q.
\]
Since $j$ and $j\times 1_{PA}$ are cofibrations and $PA$ is contractible, the map $\varphi_Q$ is a homotopy equivalence by the following classical {\it gluing lemma}. 
\begin{lemma}{\cite[Lemma 2.1.3]{MP12}, \cite[Theorem 1.13]{FHT01}}\label{gluinglemma}
Suppose that there is a commutative diagram
\[
\diagram
C \dto^{\lambda_C} & B \lto_{f}  \rto^{g} \dto^{\lambda_B} & D \dto^{\lambda_D}\\
C'                            & B' \lto_{f'}  \rto^{g'}                           & D'
\enddiagram
\]
in which $g$ and $g'$ are cofibrations. If $\lambda_B$, $\lambda_C$ and $\lambda_D$ are homotopy equivalences, then the induced map of pushouts is a homotopy equivalence. $\qqed$
\end{lemma}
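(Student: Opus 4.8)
The plan is to deduce this classical gluing lemma from the homotopy invariance of the homotopy pushout, using the hypotheses that $g$ and $g'$ are cofibrations only to compare strict and homotopy pushouts. Write $P=C\cup_B D$, $P'=C'\cup_{B'}D'$, and let $Z(f,g)=\bigl(C\sqcup(B\times I)\sqcup D\bigr)/{\sim}$ and $Z(f',g')$ denote the corresponding double mapping cylinders, with their canonical collapsing maps $\nu\colon Z(f,g)\to P$ and $\nu'\colon Z(f',g')\to P'$. The map of spans $(\lambda_C,\lambda_B,\lambda_D)$ induces $\Lambda\colon Z(f,g)\to Z(f',g')$ sitting in a commutative square $\nu'\circ\Lambda=\Lambda_P\circ\nu$, where $\Lambda_P\colon P\to P'$ is the induced map of pushouts. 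It therefore suffices to show that $\nu$, $\nu'$ and $\Lambda$ are homotopy equivalences, after which $\Lambda_P$ is one by the two-out-of-three property.

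First I would treat $\nu$ and $\nu'$; this is the only place the cofibration hypothesis is used. Since $g$ is a Hurewicz cofibration, the pushout square defining $P$ is a homotopy pushout square. Concretely, writing $M_g$ for the mapping cylinder of $g$, there is an identification $Z(f,g)\simeq C\cup_B M_g$, and the deformation retraction $M_g\to D$ passes to the pushout along the cofibration (the homotopy extension property for $g$ being exactly what makes this possible), identifying $Z(f,g)$ with $P$ up to homotopy; the same applies to $\nu'$.

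Next I would prove that $\Lambda$ is a homotopy equivalence --- the technical heart, which no longer uses the cofibration hypothesis --- by showing that the double mapping cylinder is a homotopy functor of spans. Choosing homotopy inverses $\mu_B,\mu_C,\mu_D$ of $\lambda_B,\lambda_C,\lambda_D$, the maps $\mu_C$ on $C'$, $\mu_D$ on $D'$ and $\mu_B\times\mathrm{id}_I$ on $B'\times I$ agree with the gluing data only up to homotopy, since $\mu_C\circ f'\simeq f\circ\mu_B$ and $\mu_D\circ g'\simeq g\circ\mu_B$ rather than on the nose. Because $B'\times\{0,1\}\hookrightarrow B'\times I$ is a cofibration, one can reparametrize the cylinder near its two ends so as to absorb these homotopies and obtain an honest map $\Psi\colon Z(f',g')\to Z(f,g)$; a parallel reparametrization on each of $C$, $D$ and $B\times I$ then shows $\Psi\circ\Lambda\simeq\mathrm{id}$ and $\Lambda\circ\Psi\simeq\mathrm{id}$. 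Alternatively this step is immediate from left properness of Str\o m's model structure on topological spaces, in which the weak equivalences are precisely the homotopy equivalences and every object is cofibrant.

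I expect the main obstacle to be this last step, and within it the genuinely non-formal point is that varying $\lambda_B$ changes not merely attaching maps but the very space $B\times I$ that is being glued in --- handling the apex of the span is what forces the reparametrization-and-HEP bookkeeping. If one is content with a weak equivalence the whole of this step can instead be bypassed via Whitehead's theorem, since $C$, $D$, $C'$ and $D'$ have the homotopy type of $CW$-complexes, but the argument above yields a genuine homotopy equivalence.
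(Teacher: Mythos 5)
The paper does not prove this lemma at all: it is quoted as a classical result with references to May--Ponto and F\'{e}lix--Halperin--Thomas and closed with a box, so there is no in-paper argument to compare against. Your proposal is a genuine proof along the standard lines, and its architecture is sound: factor the induced map of strict pushouts through the double mapping cylinders, use the cofibration hypotheses on $g$ and $g'$ only to show the collapse maps $\nu,\nu'$ are homotopy equivalences, and prove homotopy invariance of the double mapping cylinder separately. This is essentially the proof one finds in tom Dieck or in the Str{\o}m-model-structure formulation, and it correctly isolates where each hypothesis is used.

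Two steps are thinner than your prose suggests. First, the deformation retraction $M_g\to D$ does not literally ``pass to the pushout'': it is not a homotopy rel $B\times\{0\}$, so to apply $C\cup_B(-)$ you need Dold's theorem (a homotopy equivalence under $B$ between cofibrant objects under $B$ is a homotopy equivalence \emph{under} $B$, with homotopies under $B$); the HEP enters through that, not directly. Second, the claim that ``a parallel reparametrization'' yields $\Psi\circ\Lambda\simeq\mathrm{id}$ hides the same coherence problem one level up: the chosen homotopies $\mu_C\lambda_C\simeq 1_C$ and $\mu_D\lambda_D\simeq 1_D$ need not be compatible over $B$ with what happens on the cylinder, and the usual clean fix is to prove the two special cases (vary only the apex $B$; vary only the feet $C,D$) and compose, or to invoke Dold's theorem again. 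Finally, the model-categorical shortcut is legitimate because every space is cofibrant in Str{\o}m's structure, but the relevant fact is the cube/gluing lemma for Reedy-cofibrant spans rather than left properness per se. None of these is a fatal gap, but each needs to be filled for a complete proof.
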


\begin{lemma}\label{ltimesplemma}
There is a strictly commutative diagram
\[
\xymatrix{
 (X\times A)\cup_X CX \ar[r]^<<<<{p_{2}'} \ar[d]^{\varphi_Q}_{\simeq} & A\ar@{=}[d]\\
Q\ar[r]^{p}  & A,
}
\]
where the natural map $p_2'$ restricts to $\pi_2$ on $X\times A$ and is the constant map on $CX$.
\end{lemma}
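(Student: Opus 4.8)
The statement asks for a strictly commutative square relating the pushout $(X\times A)\cup_X CX$ to the fibration base $A$, via the equivalence $\varphi_Q$ and the projection $p$. My plan is to construct the map $p_2'$ by the universal property of the strict pushout and then verify commutativity of the square on each of the two pieces of the pushout, using the fact that every map in sight was already constructed by a universal property and is therefore determined by its restrictions.

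First I would construct $p_2'$. The pushout $(X\times A)\cup_X CX$ is formed from the span $X\times A \xleftarrow{i_1} X \xrightarrow{j} CX$. To define a map out of it to $A$, by the universal property of the strict pushout it suffices to give maps $X\times A \to A$ and $CX \to A$ that agree after precomposition with $i_1$ and $j$ respectively. Take $\pi_2 : X\times A \to A$ on the first piece and the constant map $CX \to A$ on the second. On $X$ these compose to $\pi_2\circ i_1$, which is the constant map, and the constant map $CX \to A$ precomposed with $j$ is also constant; so the two agree on $X$, and $p_2'$ is well-defined and — since both constituent maps and the inclusions are natural in $X$ and $A$ — so is $p_2'$.

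Next I would verify $p \circ \varphi_Q \simeq p_2'$, in fact strict commutativity. Recall $\varphi_Q$ was itself induced on pushouts by the strictly commutative diagram with vertical maps $i_1 : X \to X\times PA$, $1_X : X\to X$ (wait — the diagram has middle column $X \xrightarrow{i_1} X\times PA$), $j\mapsto j\times 1_{PA}$ on $CX \to CX\times PA$, and identity/$1_X\times\mathrm{ev}$ on $X\times A$. So to check $p\circ\varphi_Q = p_2'$ it suffices, again by the universal property of the source pushout, to check equality after restricting along the two structure maps into $(X\times A)\cup_X CX$. On the $X\times A$ piece: $\varphi_Q$ restricts to the map $X\times A \xrightarrow{1_X\times\mathrm{ev}}$ — no, it restricts so that the square with $1_X\times\mathrm{ev}$ commutes; chasing, $p\circ\varphi_Q|_{X\times A}$ equals $p$ composed with the image of $X\times A$ in $Q$, which by the cube's construction is the composite along the bottom-front of the cube, and $p$ on that image is $\mathrm{ev}\circ\pi_2$ restricted appropriately, reducing to $\pi_2 : X\times A \to A$. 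That matches $p_2'|_{X\times A}$. On the $CX$ piece: $\varphi_Q$ sends $CX$ into $Q$ through $CX\times PA$, and $p$ restricted to the image of $CX\times PA$ is $\mathrm{ev}\circ\pi_2 \circ (\text{inclusion of } CX\times PA)$; but $p$ is the structure map out of $Q$ assembled from the three fibrations, and on the $CX\times PA$ corner it is $\mathrm{ev}\circ\pi_2 : CX\times PA \to A$; precomposing with the inclusion $CX \hookrightarrow CX\times PA$ at the basepoint of $PA$ gives $\mathrm{ev}$ at the constant path, which is the constant map to the basepoint of $A$. That matches $p_2'|_{CX}$, the constant map. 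Hence the two agree on both pieces, so $p\circ\varphi_Q = p_2'$ strictly, and the square commutes.

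The main obstacle — really the only nontrivial point — is bookkeeping: tracking precisely which corner of the cube the image of $CX$ lands in under $\varphi_Q$ and confirming that the assembled structure map $p$ genuinely restricts to $\mathrm{ev}\circ\pi_2$ on each corner, so that the $CX$-piece maps to the basepoint rather than somewhere else in $A$. This is purely a matter of unwinding the strict-pushout definitions of $Q$, $p$, and $\varphi_Q$; once the diagrams are drawn it is routine, with no homotopies needed beyond the already-established equivalence $\varphi_Q\simeq$, since $PA$ is contractible (via Lemma~\ref{gluinglemma}).
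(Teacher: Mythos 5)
Your proposal is correct and follows essentially the same route as the paper: the paper stacks the strictly commutative diagram defining $\varphi_Q$ on top of the one defining $p$ and reads off that the composite of induced pushout maps is $\pi_2$ on the $X\times A$ piece and constant on the $CX$ piece (since $\pi_2\circ i_1$ is trivial and $\mathrm{ev}$ of the constant path is the basepoint), which is exactly your verification on each piece of the pushout. The only cosmetic difference is that you first build $p_2'$ by the universal property and then check $p\circ\varphi_Q=p_2'$, whereas the paper simply defines $p_2'$ as that composite; the content is identical.
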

\begin{proof}
Consider the strictly commutative diagram 
\[
\xymatrix{
X\times A           \ar@{=}[d] &  X  \ar[d]^{i_1}   \ar[l]_<<<<<{i_1}  \ar[r]^{j}  &  CX \ar[d]^{i_1} \\
X\times A        \ar[d]^{\pi_2}   &  X\times PA \ar[d]^{{\rm ev}\circ \pi_2}   \ar[l]_{1_{X}\times {\rm ev}} \ar[r]^{j\times 1_{PA}}    &  CX\times PA  \ar[d]^{{\rm ev}\circ \pi_2}\\
A \ar@{=}[r]  & A \ar@{=}[r]  &A.
}
\]
By construction, the pushouts of the three rows are $(X\times A)\cup_X CX$, $Q$ and $A$, respectively, and the diagram induces a natural composite of pushouts
\[
(X\times A)\cup_X CX \stackrel{\varphi_Q}{\longrightarrow} Q\stackrel{p}{\longrightarrow} A.
\]
Note that the composite is $\pi_2$ on $X\times A$, and is the constant map on $CX$ as $\pi_2\circ i_1$ is trivial. 
\end{proof}

\begin{lemma}\label{ltimeshlemma}
There is a homotopy commutative diagram
\[
\xymatrix{
X\ast \Omega A \ar[r]^<<<<{\pi'} \ar[d]^{\varphi_H}_{\simeq} & (X\times A)\cup_X CX \ar[d]^{\varphi_Q}_{\simeq}\\
H\ar[r]^{h}  & Q,
}
\]
where $\varphi_H$ and $\varphi_Q$ are homotopy equivalences, and all the maps are natural in $X$ and $A$. 
\end{lemma}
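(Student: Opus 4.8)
The plan is to identify all the spaces and maps in the square with homotopy fibres of the fibration $H\xrightarrow{h}Q\xrightarrow{p}A$ already constructed, and to compute those fibres by Mather's Cube Lemma (Theorem \ref{cubethm}). By Lemma \ref{ltimesplemma}, $p\circ\varphi_Q=p_2'$, so $\varphi_Q$ is a homotopy equivalence over $A$ from $p_2'\colon(X\times A)\cup_X CX\to A$ to $p\colon Q\to A$. Since $p$ is a strict fibration with fibre $H$, this identifies the homotopy fibre $F$ of $p_2'$ with $H$: writing $\bar\pi\colon F\to(X\times A)\cup_X CX$ for the fibre inclusion of $p_2'$, there is a natural homotopy equivalence $\mu\colon F\xrightarrow{\simeq}H$ with $h\circ\mu\simeq\varphi_Q\circ\bar\pi$. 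It then suffices to produce a natural homotopy equivalence $\nu\colon X\ast\Omega A\xrightarrow{\simeq}F$, for then $\varphi_H:=\mu\circ\nu$ and $\pi':=\bar\pi\circ\nu$ are natural, $\varphi_H$ is a homotopy equivalence, and $\varphi_Q\circ\pi'=\varphi_Q\circ\bar\pi\circ\nu\simeq h\circ\mu\circ\nu=h\circ\varphi_H$, which is the homotopy commutativity required.

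To construct $\nu$, apply Theorem \ref{cubethm} to the homotopy pushout $\ast\longleftarrow X\xrightarrow{i_1}X\times A$, whose total space is $(X\times A)\cup_X CX$ since $i_1$ is a cofibration, together with the homotopy fibration $F\longrightarrow(X\times A)\cup_X CX\xrightarrow{p_2'}A$. By Lemma \ref{ltimesplemma} again, $p_2'$ restricts to $\pi_2$ on $X\times A$ and is the constant map on $CX$, so the maps from the three corners $\ast$, $X\times A$, $X$ of the pushout to $A$ are the constant map, the projection $\pi_2$, and the null composite $X\xrightarrow{i_1}X\times A\xrightarrow{\pi_2}A$, respectively; taking homotopy fibres over $A$ yields $\Omega A$, $X$, and $X\times\Omega A$. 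The Cube Lemma now gives that the top face of the resulting cube is a homotopy pushout, so $F$ is the homotopy pushout of a span $\Omega A\longleftarrow X\times\Omega A\longrightarrow X$. A routine inspection of the maps induced on homotopy fibres identifies the two maps out of $X\times\Omega A$ with the projections $\pi_1$ and $\pi_2$, so $F$ is precisely the homotopy pushout model of the reduced join recalled in Section \ref{sec: BT}; this defines $\nu$ and, by naturality of the Cube Lemma and of the homotopy-fibre construction, $\nu$ is natural in $X$ and $A$. Hence $\varphi_H$, $\pi'$ are natural, and $h$, $\varphi_Q$ are already natural by construction. (It also follows that $\pi'$ is the homotopy fibre inclusion of $p_2'$, so that $p_2'\circ\pi'$ is null homotopic, consistent with Ganea's theorem (Theorem \ref{Ganeathm}) and with the subsequent use of $\pi'$.)

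The step I expect to be the main obstacle is the ``routine inspection'': verifying that, under the identifications of the three homotopy fibres of the pushout corners with $\Omega A$, $X$ and $X\times\Omega A$, the map induced by $X\to\ast$ becomes the projection $\pi_2\colon X\times\Omega A\to\Omega A$ and the map induced by $i_1\colon X\to X\times A$ becomes the projection $\pi_1\colon X\times\Omega A\to X$. This is a direct but somewhat fussy unwinding of the effect of ``taking homotopy fibres over $A$'' on these two maps, using that the ambient structure maps to $A$ are constant maps and $\pi_2$. A secondary point needing care is that $\mu$, the comparison of homotopy fibres induced by the equivalence $\varphi_Q$ over $A$, is natural and satisfies $h\circ\mu\simeq\varphi_Q\circ\bar\pi$; this is the standard behaviour of homotopy fibres under an equivalence over a fixed base, but should be recorded explicitly. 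Everything else is formal given Lemma \ref{ltimesplemma} and the constructions already set up.
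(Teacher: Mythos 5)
Your proof is correct, but it takes a genuinely different route from the paper's. The paper constructs $\varphi_H$ and $\pi'$ explicitly as maps of pushouts induced by two strictly commutative three-row diagrams of spans (the top rows being the span defining the join), and then — since $h\circ\varphi_H$ and $\varphi_Q\circ\pi'$ are not equal on the nose — writes down three explicit homotopies $H^l$, $H^m$, $H^r$ on the pieces of the spans, checks their compatibility, and glues them to a homotopy of maps of pushouts. You instead work entirely over the base $A$: Lemma \ref{ltimesplemma} lets you view $\varphi_Q$ as an equivalence over $A$, you identify $H$ with the homotopy fibre of $p_2'$, and you compute that fibre by Mather's Cube Lemma applied to the homotopy pushout $\ast\leftarrow X\to X\times A$, recovering the join via its pushout description from Section \ref{sec: BT}. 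This is in effect a re-derivation of Ganea's theorem (Theorem \ref{Ganeathm}) for this particular cofibration, which the paper invokes only informally in diagram (\ref{ltimesfibdiag}). The paper's explicit construction buys strict naturality of $\varphi_H$ and $\pi'$ and a concrete model of $\pi'$, at the cost of formula-checking; your approach is conceptually shorter but defers the work to the two points you flag: the identification of the induced maps on fibres with $\pi_2$ and $\pi_1$ (which does come out on the nose with the standard path-space model, up to the contractible factor $PA$ in the fibre of $\pi_2$), and the comparison $\mu\colon F\to H$, which requires inverting the natural equivalence $H\to F_p$ and hence, via Lemma \ref{eqnaturallemma}, yields naturality only up to homotopy. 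The latter is sufficient for the subsequent uses in Proposition \ref{ltimesfibrelemma} and Proposition \ref{loopltimeslemma}, where only homotopy-commuting naturality squares are required, and your construction has the incidental advantage that the top row of the square is a homotopy fibration by definition, which is exactly what Proposition \ref{ltimesfibrelemma} needs.
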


\begin{proof}
Recall that the unit interval $I=[0,1]$ has $0$ as basepoint. Hence, the inclusion $j: X\stackrel{}{\hookrightarrow} CX$ is defined by $j(x)=[1, x]$, and the evaluation map $j: PA\stackrel{}{\longrightarrow} A$ is defined by ${\rm ev}(\lambda)=\lambda(1)$. Additionally, define a map $c: C\Omega A \stackrel{}{\longrightarrow} A$ by $c([t', \omega])=\omega(1-t')$. Then the composite
\[
\Omega A\stackrel{j}{\longrightarrow} C\Omega A \stackrel{c}{\longrightarrow} A
\]
is the constant map, as $c\circ j (\omega)=c([1, \omega])=\omega (0)=\ast$. 

Consider the strictly commutative diagram
\[
\xymatrix{
X\times C\Omega A  \ar[d]^{\pi_1}  &  X\times \Omega A \ar@{=}[d]  \ar[l]_{1_X\times j}  \ar[r]^{j\times 1_{\Omega A}}  &  CX\times \Omega A \ar@{=}[d]  \\
X             \ar[d]^{i_1} &  X\times \Omega A  \ar[d]^{1_X\times i}   \ar[l]_{\pi_1}  \ar[r]^{j\times 1_{\Omega A}}  &  CX\times \Omega A \ar[d]^{1_{CX}\times i} \\
X\times A           &  X\times PA   \ar[l]_{1_{X}\times {\rm ev}} \ar[r]^{j\times 1_{PA}}    &  CX\times PA.  
}
\]
The pushout of the first row is the join $X\ast\Omega A$ by definition, and the pushouts of the second and third rows are $H$ and $Q$, respectively. Therefore, the diagram induces a composition of pushouts 
\[
X\ast \Omega A\stackrel{\varphi_H}{\longrightarrow} H\stackrel{h}{\longrightarrow} Q.
\] 
Since the maps $j\times 1_{\Omega A}$ are cofibrations and $C\Omega A$ is contractible, the map $\varphi_H$ is a homotopy equivalence by Lemma \ref{gluinglemma}. Since the diagram is natural in $X$ and $A$, so are $\varphi_H$ and $h$. 
Similarly, the strictly commutative diagram
\[
\xymatrix{
X\times C\Omega A  \ar[d]^{1_X\times c}  &  X\times \Omega A \ar[d]^{\pi_1} \ar[l]_{1_X\times j}  \ar[r]^{j\times 1_{\Omega A}}  &  CX\times \Omega A \ar[d]^{\pi_1}  \\
X\times A           \ar@{=}[d] &  X  \ar[d]^{i_1}   \ar[l]_<<<<<{i_1}  \ar[r]^{j}  &  CX \ar[d]^{i_1} \\
X\times A         &  X\times PA   \ar[l]_{1_{X}\times {\rm ev}} \ar[r]^{j\times 1_{PA}}    &  CX\times PA 
}
\]
induces a composition of pushouts 
\[
X\ast \Omega A\stackrel{\pi'}{\longrightarrow} (X\times A)\cup_X CX \stackrel{\varphi_Q}{\longrightarrow} Q
,\] 
in which $\varphi_Q$ is a homotopy equivalence. Since the diagram is natural in $X$ and $A$, so are $\pi'$ and $\varphi_H$.

The two composites $h\circ \varphi_H$ and $\varphi_Q\circ \pi'$ are not equal, but an explicit homotopy between them can be constructed. Indeed, defines three homotopies 
\[
\begin{split}
H^l:   X\times C\Omega A\times I \stackrel{}{\longrightarrow} X\times A, \ \ 
& H^l(x, [t',\omega], s)=(x, \omega(1-(1-s)t')),\\
H^m: X\times \Omega A\times I \stackrel{}{\longrightarrow} X\times PA, \ \ 
&H^m(x, \omega, s)=(x, \omega_s), \ \  \omega_s(t)=\omega(st), \\
H^r: CX\times \Omega A\times I \stackrel{}{\longrightarrow} CX\times PA, \ \ 
&H^r([t', x], \omega, s)=([t', x], \omega_s), \ \  \omega_s(t)=\omega(st).
\end{split}
\]
It is straightforward to check that 
\[
\begin{split}
H^l_0=1_X\times c, \ \ H^m_0=i_1\circ \pi_1, \ \ H^r_0=i_1\circ \pi_1,\\
H^l_1=i_1\circ \pi_1, \ \ H^m_1=1_X\times i, \ \  H^r_1=1_{CX}\times i, 
\end{split}
\]
and 
the diagram 
\[
\xymatrix{
X\times C\Omega A  \times I \ar[d]^{H^l}  &  X\times \Omega A \times I \ar[d]^{H^m} \ar[l]_{1_X\times j\times 1_I}  \ar[r]^{j\times 1_{\Omega A}\times 1_I}  &  CX\times \Omega A \times I\ar[d]^{H^r}  \\
X\times A         &  X\times PA   \ar[l]_{1_{X}\times {\rm ev}} \ar[r]^{j\times 1_{PA}}    &  CX\times PA 
}
\]
strictly commutes. 
Therefore, it induces a morphism of pushouts
\[
H: (X\ast \Omega A) \times I\stackrel{}{\longrightarrow} Q
\]
such that $H_0=\varphi_Q\circ \pi'$ and $H_1=h\circ \varphi_H$. In particular, $\varphi_Q\circ \pi'$ and $h\circ \varphi_H$ are homotopic.
\end{proof}

\begin{proposition}\label{ltimesfibrelemma}
There is a natural homotopy fibration
\[
\Sigma X\wedge \Omega A \stackrel{ }{\longrightarrow} X\ltimes A \stackrel{p_2}{\longrightarrow} A,
\]
which splits after looping to give a natural homotopy equivalence
\[
\Omega A \times \Omega (\Sigma X\wedge \Omega A )\stackrel{\simeq}{\longrightarrow}\Omega (X\ltimes A). 
\]
\end{proposition}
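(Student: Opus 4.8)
The plan is to assemble the natural homotopy fibration from the pieces constructed in Lemmas \ref{ltimeslemma0}--\ref{ltimeshlemma}, and then to deduce the loop-space splitting from Ganea's theorem together with the naturality of half-smash products established in Lemma \ref{ltimeslemma}. First I would recall the diagram of homotopy fibrations (\ref{ltimesfibdiag}) coming from Ganea's theorem applied to the trivial fibration $X \stackrel{i_1}{\longrightarrow} X\times A \stackrel{\pi_2}{\longrightarrow} A$: its homotopy fibre is $X$, its homotopy cofibre is $X\ltimes A$, and Ganea's theorem produces a homotopy fibration $X\ast\Omega A \longrightarrow X\ltimes A \stackrel{p_2}{\longrightarrow} A$ that splits after looping. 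Since $X\ast\Omega A \simeq \Sigma X\wedge\Omega A$, this already gives the statement \emph{up to naturality}; the whole point of the preceding lemmas is to pin down a natural model.

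Next I would identify the strict fibration $H \stackrel{h}{\longrightarrow} Q \stackrel{p}{\longrightarrow} A$, built from the three natural fibrations and the two strict pushouts, with the Ganea fibration. By Lemma \ref{ltimesplemma} there is a strictly commutative square identifying $p\colon Q\to A$ (via the homotopy equivalence $\varphi_Q\colon (X\times A)\cup_X CX \xrightarrow{\simeq} Q$) with the map $p_2'$ that is $\pi_2$ on $X\times A$ and constant on $CX$; and $(X\times A)\cup_X CX$ is exactly the homotopy cofibre of $i_1\colon X\to X\times A$, i.e.\ a model for $X\ltimes A$, under which $p_2'$ becomes $p_2$. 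Likewise Lemma \ref{ltimeshlemma} identifies the fibre inclusion $h\colon H\to Q$ (via $\varphi_H\colon X\ast\Omega A \xrightarrow{\simeq} H$ and $\varphi_Q$) with the natural map $\pi'\colon X\ast\Omega A \to (X\times A)\cup_X CX$. Feeding the strictly commutative cube defining $H$ and $Q$ into the fact that its rear and left faces are pullbacks, and comparing with (\ref{ltimesfibdiag}), shows $H\to Q\to A$ \emph{is} the Ganea homotopy fibration; hence the desired natural homotopy fibration is
\[
\Sigma X\wedge\Omega A \stackrel{}{\longrightarrow} X\ltimes A \stackrel{p_2}{\longrightarrow} A,
\]
all of whose maps are natural in $X$ and $A$ by the running naturality in Lemmas \ref{ltimesplemma} and \ref{ltimeshlemma}.

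Finally I would establish the looped splitting naturally. Ganea's theorem tells us the fibration $X\ast\Omega A \to X\ltimes A \to A$ splits after looping; to get naturality of the splitting I would invoke Lemma \ref{ltimeslemma0}, which provides the natural section $j_2\colon A\to X\ltimes A$ of $p_2$, so that $\Omega p_2$ has the natural right homotopy inverse $\Omega j_2$. Then the standard argument (loop the fibration, use the $H$-multiplication of $\Omega(X\ltimes A)$ together with $\Omega j_2$ and the fibre inclusion $\Omega(\Sigma X\wedge\Omega A)\to\Omega(X\ltimes A)$, exactly as in the proof of Proposition \ref{RHIfibprop}(1)) yields a homotopy equivalence
\[
\Omega A \times \Omega(\Sigma X\wedge\Omega A) \stackrel{\simeq}{\longrightarrow} \Omega(X\ltimes A),
\]
and each map in sight is natural in $X$ and $A$ because $j_2$, $p_2$, the fibre inclusion and the $H$-structure are. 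The main obstacle here is not any single deep step but the bookkeeping: verifying that the several homotopy equivalences $\varphi_H$, $\varphi_Q$ and the map $\pi'$ genuinely fit together — up to a specified homotopy, as carefully checked in Lemma \ref{ltimeshlemma} — so that the composite identification of $H\to Q\to A$ with the Ganea fibration is both correct and natural; once that is in hand, the loop-space decomposition is formal.
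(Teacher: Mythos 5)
Your proposal is correct and follows essentially the same route as the paper: it uses Lemmas \ref{ltimesplemma} and \ref{ltimeshlemma} to transport the strict fibration $H\to Q\to A$ to the sequence $X\ast\Omega A\to (X\times A)\cup_X CX\to A$, replaces the middle term by $X\ltimes A$ via the pinch map, and then obtains the natural looped splitting from the natural section $j_2$ of Lemma \ref{ltimeslemma0} together with the argument of Proposition \ref{RHIfibprop}/Corollary \ref{RHIfibcor}. The only cosmetic difference is that you phrase the middle step as an identification with the Ganea fibration, whereas the paper only needs that $H\to Q\to A$ is a fibration by construction; this does not affect correctness.
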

\begin{proof}
Consider the diagram 
\[
\xymatrix{
X\ast \Omega A \ar[r]^<<<<{\pi'} \ar[d]^{\varphi_H}_{\simeq} & (X\times A)\cup_X CX \ar[r]^<<<<{p_{2}'} \ar[d]^{\varphi_Q}_{\simeq} & A\ar@{=}[d]\\
H\ar[r]^{h}  & Q\ar[r]^{p}  & A,
}
\]
where the bottom row is a fibration by construction. 
The left square homotopy commutes by Lemma \ref{ltimeshlemma}, and the right square commutes by Lemma \ref{ltimesplemma}. Accordingly, the top row 
\[
X\ast \Omega A \stackrel{\pi'}{\longrightarrow}  (X\times A)\cup_X CX \stackrel{p_2'}{\longrightarrow} A
\] 
is a homotopy fibration.

By the definition of $p_2'$ in Lemma \ref{ltimesplemma}, it factors as a composite
\[
p_2': (X\times A)\cup_X CX  \stackrel{\psi}{\longrightarrow}   X\ltimes A \stackrel{p_2}{\longrightarrow} A,
\]
where the map $\psi$ pinches the cone $CX$ to a point. It is clear that $\psi$ is a natural homotopy equivalence. Further, it is known that $X\ast \Omega A$ is naturally homotopy equivalent to $\Sigma X\wedge \Omega A$. Therefore, we can replace $(X\times A)\cup_X CX \stackrel{p_2'}{\longrightarrow} A$ and $X\ast \Omega A$ by $X\ltimes A \stackrel{p_2}{\longrightarrow} A$ and $\Sigma X\wedge \Omega A$, respectively, to obtain a natural homotopy fibration
\[
\Sigma X\wedge \Omega A \stackrel{ }{\longrightarrow} X\ltimes A \stackrel{p_2}{\longrightarrow} A.
\]
By Lemma \ref{ltimeslemma0}, the map $p_2$ has a natural section $j_2: A\stackrel{}{\longrightarrow} X\ltimes A$. Therefore, the homotopy fibration splits after looping and 
Corollary \ref{RHIfibcor} implies that the splitting is natural. 
\end{proof}

\begin{remark}
Recall Lemma \ref{eqnaturallemma} implies that if a map $ X\stackrel{\simeq}{\longrightarrow} Y$ is a natural homotopy equivalence, then its homotopy inverse is also natural. Therefore, Lemma \ref{ltimeshlemma} is not entirely necessary for the proof of Proposition \ref{ltimesfibrelemma}, except for the part stating that $\varphi_H$ is a natural homotopy equivalence. However, Lemma \ref{ltimeshlemma} is an interesting fact about related maps and provides an explicitly constructed homotopy fibration $X\ast \Omega A \stackrel{\pi'}{\longrightarrow}  (X\times A)\cup_X CX \stackrel{p_2'}{\longrightarrow} A$. Therefore, it is worth including. 
\end{remark}

We can now prove a loop version of Lemma \ref{ltimeslemma}.

\begin{proposition}\label{loopltimeslemma}
Let $c: A\stackrel{}{\longrightarrow} B$ be a map and $X$ be a space. Then $\Omega c$ has a right homotopy inverse if and only if $\Omega (1_X \ltimes c): \Omega (X\ltimes A) \longrightarrow \Omega (X\ltimes B)$ has a right homotopy inverse.
\end{proposition}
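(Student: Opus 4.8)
The plan is to deduce both implications from the naturality of the splitting in Proposition~\ref{ltimesfibrelemma}. Applying that proposition's naturality to the map $c: A\to B$ produces a homotopy fibration diagram
\[
\diagram
\Sigma X\wedge \Omega A \rto \dto^{1_{\Sigma X}\wedge \Omega c} & X\ltimes A \rto^-{p_2} \dto^{1_X\ltimes c} & A \dto^{c} \\
\Sigma X\wedge \Omega B \rto & X\ltimes B \rto^-{p_2} & B,
\enddiagram
\]
whose fibre map is the one induced by $c$; a routine inspection of the construction in the proof of Proposition~\ref{ltimesfibrelemma} (where the fibre appears as $X\ast\Omega A$ and $c$ induces $1_X\ast\Omega c$ on it) identifies this fibre map with $1_{\Sigma X}\wedge \Omega c$. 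By Lemma~\ref{ltimeslemma0} the projection $p_2$ admits the natural section $j_2$, so $\Omega p_2$ has the right homotopy inverse $\Omega j_2$ over both $A$ and $B$, and the naturality of $j_2$ makes the square obtained by looping $(1_X\ltimes c)\circ j_2= j_2\circ c$ homotopy commute. This is exactly the compatibility hypothesis~(\ref{loops'sdiag0}) needed to apply Corollary~\ref{RHIfibcor} to the displayed diagram, with $\lambda_B=c$, $\lambda_E=1_X\ltimes c$ and $\lambda_F=1_{\Sigma X}\wedge \Omega c$.

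For the direction in which $\Omega(1_X\ltimes c)$ is assumed to have a right homotopy inverse, Corollary~\ref{RHIfibcor}(3) immediately gives a right homotopy inverse for $\Omega \lambda_B=\Omega c$. One can also argue directly as in the proof of Lemma~\ref{ltimeslemma}: if $t$ is a right homotopy inverse of $\Omega(1_X\ltimes c)$, then $\Omega p_2\circ t\circ \Omega j_2$ is a right homotopy inverse of $\Omega c$, using $\Omega c\circ \Omega p_2\simeq \Omega p_2\circ \Omega(1_X\ltimes c)$ and $\Omega p_2\circ \Omega j_2= 1$.

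For the converse direction, assume $\Omega c$ has a right homotopy inverse $r$. The additional input is that $\Omega \lambda_F=\Omega(1_{\Sigma X}\wedge \Omega c)$ then also has one: since $\Sigma X\wedge(-)$ is a functor that preserves homotopies, $1_{\Sigma X}\wedge r$ is a right homotopy inverse of $1_{\Sigma X}\wedge \Omega c$, and applying the (also homotopy-preserving) functor $\Omega(-)$ shows $\Omega(1_{\Sigma X}\wedge r)$ is a right homotopy inverse of $\Omega(1_{\Sigma X}\wedge \Omega c)$. Now both $\Omega \lambda_B$ and $\Omega \lambda_F$ admit right homotopy inverses, so Corollary~\ref{RHIfibcor}(2) produces one for $\Omega \lambda_E=\Omega(1_X\ltimes c)$. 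Equivalently, under the natural equivalence $\Omega(X\ltimes A)\simeq \Omega A\times \Omega(\Sigma X\wedge\Omega A)$ of Proposition~\ref{ltimesfibrelemma}, the map $\Omega(1_X\ltimes c)$ is identified with the product $\Omega c\times \Omega(1_{\Sigma X}\wedge\Omega c)$, whose right homotopy inverse is the product of the two factors' right homotopy inverses.

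The only step that is not purely formal is the identification of the induced fibre map with $1_{\Sigma X}\wedge\Omega c$; this is the point I would write out most carefully, but it follows directly from the explicit pushout description of the fibration given in the proof of Proposition~\ref{ltimesfibrelemma}. Everything else is bookkeeping with Corollary~\ref{RHIfibcor} and the functoriality of $\Omega(-)$ and $\Sigma X\wedge(-)$.
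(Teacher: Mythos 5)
Your proof is correct and follows essentially the same route as the paper: the paper also applies the natural homotopy fibration $\Sigma X\wedge\Omega A\to X\ltimes A\stackrel{p_2}{\to}A$ from Proposition~\ref{ltimesfibrelemma}, uses the naturality of the section $j_2$ to invoke Corollary~\ref{RHIfibcor}, and concludes that $\Omega(1_X\ltimes c)$ has a right homotopy inverse if and only if both $\Omega(\Sigma 1_X\wedge\Omega c)$ and $\Omega c$ do, which reduces to the single condition on $\Omega c$ exactly as you argue. The extra details you supply (the identification of the induced fibre map and the observation that $1_{\Sigma X}\wedge r$ inverts $1_{\Sigma X}\wedge\Omega c$) are points the paper leaves implicit, and your treatment of them is sound.
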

\begin{proof}
Proposition \ref{ltimesfibrelemma} implies that there is a diagram of homotopy fibrations
\[
\xymatrix{
\Sigma X\wedge \Omega A\ar[r] \ar[d]^{\Sigma 1_{X}\wedge \Omega c}  & X\ltimes A \ar[d]^{1_X\ltimes c} \ar[r]^<<<{p_2} & A \ar[d]^{c} \\
\Sigma X\wedge \Omega B \ar[r]  & X\ltimes B \ar[r]^<<<{p_2} & B. 
}
\] 
Since the right homotopy inverse $j_2$ of $p_2$ is natural, Corollary \ref{RHIfibcor} implies that $\Omega (1_X \ltimes c)$ has a right homotopy inverse if and only if both $\Omega (\Sigma 1_{X}\wedge \Omega c)$ and $\Omega c$ have right homotopy inverses, and then if and only if $\Omega c$ has a right homotopy inverse.
\end{proof}

\newpage
\section{Homotopy cofibrations and homotopy pushouts}
\label{sec: pushout}

In this section, we study the existence of right homotopy inverses around homotopy cofibrations and homotopy pushouts. It is well-known that the loop functor does not behave well with respect to homotopy cofibrations. To overcome the difficulty, we apply the powerful decomposition result of Beben-Theriault in Section \ref{sec: BT} to transform the problem into one around homotopy fibrations or homotopy pullbacks, and then employ the results of Section \ref{sec: pullback} and \ref{sec: halfsmash}. 

In the following, Proposition \ref{RHIcofprop} studies the existence of right homotopy inverses under compatible conditions between homotopy cofibrations. 

\begin{proposition}\label{RHIcofprop}
Let
\[\diagram 
      A\rto^-{f}\dto^{\lambda_A} & Y\rto^-{h}\dto^{\lambda_Y} & Z\dto^{\lambda_Z}  \\ 
      A'\rto^-{f'} & Y'\rto^-{h'} & Z' 
  \enddiagram\] 
 be a diagram of homotopy cofibrations. Suppose that $\Omega h$ and $\Omega h'$ have right homotopy inverses $s$ and $s'$, respectively, such that there is a homotopy commutative diagram 
\[
\diagram 
      \Omega Z\rto^-{s}\dto^{\Omega\lambda_Z} & \Omega Y\dto^{\Omega \lambda_Y} \\ 
      \Omega Z'\rto^-{s'} & \Omega Y'.
  \enddiagram
  \]
  Then $\Omega \lambda_Y$ has a right homotopy inverse if and only if both $\Omega \lambda_A$ and $\Omega\lambda_Z$ have right homotopy inverses. 
 \end{proposition}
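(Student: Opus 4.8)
The plan is to bootstrap the statement from the naturality of the two decompositions of Section~\ref{sec: BT} together with the half-smash fibration of Section~\ref{sec: halfsmash}, so that it ultimately reduces to the fact that a product of maps has a right homotopy inverse precisely when each factor does.

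First I would observe that the hypotheses on $\Omega h$ and $\Omega h'$ say exactly that $f$ and $f'$ are inert, so Proposition~\ref{GTcofibnat} applies to the given cofibration diagram and the compatibility square~(\ref{ss'diag}). It produces a homotopy fibration diagram with rows $\Omega Z\ltimes A\stackrel{\widetilde{\Gamma}}{\longrightarrow}Y\stackrel{h}{\longrightarrow}Z$ and $\Omega Z'\ltimes A'\stackrel{\widetilde{\Gamma}'}{\longrightarrow}Y'\stackrel{h'}{\longrightarrow}Z'$ together with compatible looped splittings; under the resulting homotopy equivalences $\Omega Y\simeq\Omega Z\times\Omega(\Omega Z\ltimes A)$ and $\Omega Y'\simeq\Omega Z'\times\Omega(\Omega Z'\ltimes A')$, the map $\Omega\lambda_Y$ is carried to the product map $\Omega\lambda_Z\times\Omega(\Omega\lambda_Z\ltimes\lambda_A)$. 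Conjugating by homotopy equivalences preserves the property of having a right homotopy inverse, and a product $g_1\times g_2$ has a right homotopy inverse if and only if both $g_1$ and $g_2$ do (compose with the projections and the inclusions of the product). Hence $\Omega\lambda_Y$ has a right homotopy inverse if and only if both $\Omega\lambda_Z$ and $\Omega(\Omega\lambda_Z\ltimes\lambda_A)$ do, and it remains only to show that, assuming $\Omega\lambda_Z$ has a right homotopy inverse, $\Omega(\Omega\lambda_Z\ltimes\lambda_A)$ has one if and only if $\Omega\lambda_A$ does.

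For this I would feed the pair $\Omega\lambda_Z\colon\Omega Z\to\Omega Z'$ and $\lambda_A\colon A\to A'$ into the natural homotopy fibration $\Sigma X\wedge\Omega A\to X\ltimes A\to A$ of Proposition~\ref{ltimesfibrelemma}, which is natural in both $X$ and $A$. This yields a homotopy fibration diagram with total map $\Omega\lambda_Z\ltimes\lambda_A$, base map $\lambda_A$ and fibre map $\Sigma\Omega\lambda_Z\wedge\Omega\lambda_A$; since the section $j_2$ of $p_2$ from Lemma~\ref{ltimeslemma0} is natural, its loop provides the compatible right homotopy inverses needed to invoke Corollary~\ref{RHIfibcor}. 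Consequently $\Omega(\Omega\lambda_Z\ltimes\lambda_A)$ has a right homotopy inverse if and only if both $\Omega\lambda_A$ and $\Omega(\Sigma\Omega\lambda_Z\wedge\Omega\lambda_A)$ do. It therefore suffices to check that, granted $\Omega\lambda_Z$ has a right homotopy inverse, the existence of a right homotopy inverse for $\Omega\lambda_A$ is equivalent to its existence for both $\Omega\lambda_A$ and $\Omega(\Sigma\Omega\lambda_Z\wedge\Omega\lambda_A)$. One implication is immediate. For the other, I would factor $\Sigma\Omega\lambda_Z\wedge\Omega\lambda_A=(\Sigma\Omega\lambda_Z\wedge 1_{\Omega A'})\circ(1_{\Sigma\Omega Z}\wedge\Omega\lambda_A)$: a right homotopy inverse $r$ of $\Omega\lambda_A$ gives the right homotopy inverse $1_{\Sigma\Omega Z}\wedge r$ of $1_{\Sigma\Omega Z}\wedge\Omega\lambda_A$, a right homotopy inverse $r'$ of $\Omega\lambda_Z$ gives the right homotopy inverse $\Sigma r'\wedge 1_{\Omega A'}$ of $\Sigma\Omega\lambda_Z\wedge 1_{\Omega A'}$, so the composite $\Sigma\Omega\lambda_Z\wedge\Omega\lambda_A$ has a right homotopy inverse, and hence so does its loop since looping preserves right homotopy inverses. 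Assembling these observations proves the Proposition.

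The argument is essentially an assembly of results already established, and no single step is genuinely hard. The one point that needs care is that the half-smash map $\Omega\lambda_Z\ltimes\lambda_A$ is nontrivial on \emph{both} coordinates, so Proposition~\ref{loopltimeslemma}, which handles only maps of the form $1_X\ltimes c$, does not apply directly; this is why I would route the argument through the full two-variable naturality of Proposition~\ref{ltimesfibrelemma} and Corollary~\ref{RHIfibcor}, and then recover a right homotopy inverse on the fibre $\Sigma\Omega\lambda_Z\wedge\Omega\lambda_A$ from those on $\Omega\lambda_Z$ and $\Omega\lambda_A$ by elementary smash and suspension manipulations.
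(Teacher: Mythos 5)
Your proof is correct, and its skeleton is the same as the paper's: invoke Proposition~\ref{GTcofibnat} to get the compatible Beben--Theriault splittings, conclude that $\Omega\lambda_Y$ has a right homotopy inverse if and only if $\Omega\lambda_Z$ and $\Omega(\Omega\lambda_Z\ltimes\lambda_A)$ do, and then reduce the half-smash condition to $\Omega\lambda_A$. The only place you diverge is in that last reduction. The paper factors the half-smash map as $\Omega\lambda_Z\ltimes\lambda_A=(1_{\Omega Z'}\ltimes\lambda_A)\circ(\Omega\lambda_Z\ltimes 1_A)$, so that the two-variable problem collapses onto the already-proved one-variable statement of Proposition~\ref{loopltimeslemma} (the $\Omega\lambda_Z\ltimes 1_A$ factor is handled by directly half-smashing a right inverse of $\Omega\lambda_Z$ with $1_A$, and for the converse one uses that a composite with a right inverse forces its second factor to have one). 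You instead apply the two-variable naturality of Proposition~\ref{ltimesfibrelemma} and Corollary~\ref{RHIfibcor} directly to $\Omega\lambda_Z\ltimes\lambda_A$, which shifts the factorization trick from the half-smash to the fibre $\Sigma\Omega\lambda_Z\wedge\Omega\lambda_A$; your smash-level manipulations there are all valid. In effect you re-prove inline the two-variable generalization of Proposition~\ref{loopltimeslemma} rather than reducing to its one-variable form; both routes lean on exactly the same machinery, and your observation that Proposition~\ref{loopltimeslemma} does not apply verbatim to a map nontrivial in both coordinates is precisely the point the paper's factorization is designed to circumvent.
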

 \begin{proof}
 By Theorem \ref{GTcofib} and Proposition \ref{GTcofibnat}, there is a homotopy fibration diagram
   \[
  \begin{aligned}
  \xymatrix{
  \Omega Z\ltimes A \ar[r]^<<<<{\widetilde{\Gamma}} \ar[d]^{\Omega \lambda_Z\ltimes \lambda_A}  & Y \ar[r]^{h} \ar[d]^{\lambda_Y} & Z \ar[d]^{\lambda_Z}\\
   \Omega Z'\ltimes A' \ar[r]^<<<<{\widetilde{\Gamma}'}                           & Y' \ar[r]^{h'} & Z'.
  }
  \end{aligned}
  \]
 Applying Corollary \ref{RHIfibcor} to this diagram, we see that $\Omega \lambda_Y$ has a right homotopy inverse if and only if $\Omega \lambda_Z$ and $\Omega (\Omega \lambda_Z\ltimes \lambda_A)$ have right homotopy inverses.
It remains to show that the latter holds if and only if both $\Omega\lambda_Z$ and $\Omega \lambda_A$ have right homotopy inverses. 

Suppose that $\Omega\lambda_Z$ and $\Omega \lambda_A$ have right homotopy inverses. Then $\Omega \lambda_Z\ltimes 1_{A}$ has a right homotopy inverse, and $\Omega (1_{\Omega Z'}\ltimes \lambda_A)$ has a right homotopy inverse by Proposition \ref{loopltimeslemma}. 
As $\Omega \lambda_Z\ltimes \lambda_A=(1_{\Omega Z'}\ltimes \lambda_A)\circ (\Omega \lambda_Z\ltimes 1_{A})$, it follows that $\Omega (\Omega \lambda_Z\ltimes \lambda_A)$ has a right homotopy inverse. 

Conversely, suppose that $\Omega \lambda_Z$ and $\Omega (\Omega \lambda_Z\ltimes \lambda_A)$ have right homotopy inverses. As $\Omega \lambda_Z\ltimes \lambda_A=(1_{\Omega Z'}\ltimes \lambda_A)\circ (\Omega \lambda_Z\ltimes 1_{A})$, it follows that $\Omega (1_{\Omega Z'}\ltimes \lambda_A)$ has a right homotopy inverse, and then so does $\Omega \lambda_A$ by Proposition \ref{loopltimeslemma}. 

Hence, $\Omega\lambda_Z$ and $\Omega \lambda_A$ have right homotopy inverses if and only if both $\Omega \lambda_Z$ and $\Omega (\Omega \lambda_Z\ltimes \lambda_A)$ have right homotopy inverses, and then if and only if $\Omega \lambda_Y$ has a right homotopy inverse. 
 \end{proof}

 \begin{corollary}\label{RHIwedgecoro}
 Let $A\stackrel{\lambda_A}{\longrightarrow} A'$ and $B\stackrel{\lambda_B}{\longrightarrow} B'$ be two maps. Then $\Omega (\lambda_A\vee \lambda_B)$ has a right homotopy inverse if and only if both $\Omega \lambda_A$ and $\Omega \lambda_B$ have right homotopy inverses.
\end{corollary}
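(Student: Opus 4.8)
The plan is to realize the wedge projection as a homotopy cofibration admitting a section, and then quote Proposition \ref{RHIcofprop} essentially verbatim. First I would recall the natural homotopy cofibration
\[
B\stackrel{i_2}{\longrightarrow} A\vee B\stackrel{q_1}{\longrightarrow} A,
\]
together with its primed analogue $B'\stackrel{i_2}{\longrightarrow} A'\vee B'\stackrel{q_1}{\longrightarrow} A'$, and the morphism of cofibrations whose three vertical maps are $\lambda_B$ on the cofibre term, $\lambda_A\vee\lambda_B$ on the total term, and $\lambda_A$ on the base term; the two squares strictly commute since $q_1\circ(\lambda_A\vee\lambda_B)=\lambda_A\circ q_1$ and $(\lambda_A\vee\lambda_B)\circ i_2=i_2\circ\lambda_B$.

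Next I would check the hypotheses of Proposition \ref{RHIcofprop}. The projection $q_1$ has the natural section $i_1\colon A\longrightarrow A\vee B$ with $q_1\circ i_1=1_A$, so $\Omega i_1$ is a right homotopy inverse of $\Omega q_1$, and likewise in the primed row. The required compatibility square
\[
\diagram
\Omega A\rto^-{\Omega i_1}\dto^{\Omega\lambda_A} & \Omega(A\vee B)\dto^{\Omega(\lambda_A\vee\lambda_B)} \\
\Omega A'\rto^-{\Omega i_1} & \Omega(A'\vee B')
\enddiagram
\]
homotopy commutes because $(\lambda_A\vee\lambda_B)\circ i_1=i_1\circ\lambda_A$. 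Hence Proposition \ref{RHIcofprop} applies with its $A$, $Y$, $Z$ replaced here by $B$, $A\vee B$, $A$, and yields that $\Omega(\lambda_A\vee\lambda_B)$ has a right homotopy inverse if and only if both $\Omega\lambda_B$ and $\Omega\lambda_A$ do, which is the claim. (Interchanging the two wedge summands shows the statement is symmetric in $A$ and $B$, as it should be.)

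There is essentially no obstacle: all the real content sits in Proposition \ref{RHIcofprop} (hence in the Beben--Theriault decomposition, Theorem \ref{GTcofib}, together with Proposition \ref{loopltimeslemma}). If one preferred to bypass Proposition \ref{RHIcofprop}, one could argue directly from Lemma \ref{wedgeqlemma}, which provides the looped-split homotopy fibration $\Omega A\ltimes B\longrightarrow A\vee B\longrightarrow A$ and its naturality, feed the induced morphism of fibrations into Corollary \ref{RHIfibcor}, and then use Proposition \ref{loopltimeslemma} to trade $\Omega(\Omega\lambda_A\ltimes\lambda_B)$ for $\Omega\lambda_B$ via the factorization $\Omega\lambda_A\ltimes\lambda_B=(1_{\Omega A'}\ltimes\lambda_B)\circ(\Omega\lambda_A\ltimes 1_B)$; the only mild point to watch is this factorization, exactly as in the proof of Proposition \ref{RHIcofprop}. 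Either route is routine.
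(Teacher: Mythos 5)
Your proof is correct and takes essentially the same route as the paper: both realize the wedge as a trivial homotopy cofibration with compatible sections and invoke Proposition \ref{RHIcofprop}, the only (cosmetic) difference being that you use $B\stackrel{i_2}{\to}A\vee B\stackrel{q_1}{\to}A$ with section $i_1$, whereas the paper uses $A\stackrel{i_1}{\to}A\vee B\stackrel{q_2}{\to}B$ with section $i_2$. The symmetry remark and the optional unpacking via Lemma \ref{wedgeqlemma} and Corollary \ref{RHIfibcor} are both fine.
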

\begin{proof}
The condition gives a diagram of trivial homotopy cofibrations
\[\diagram 
      A\rto^-{i_1}\dto^{\lambda_A} & A\vee B\rto^-{q_2}\dto^{\lambda_A \vee \lambda_B} & B\dto^{\lambda_B}  \\ 
      A'\rto^-{i_1} & A'\vee B'\rto^-{q_2} & B', 
  \enddiagram\] 
  where $i_1$ and $q_2$ are the inclusions and projections. Note that $A\vee B\stackrel{q_2}{\longrightarrow} B$ and $A'\vee B'\stackrel{q_2}{\longrightarrow} B'$ have the inclusions $B\stackrel{i_2}{\longrightarrow} A\vee B$ and $B'\stackrel{i_2}{\longrightarrow} A'\vee B'$ as right homotopy inverses, respectively, such that $(\lambda_A \vee \lambda_B)\circ i_2 \simeq i_2\circ \lambda_B$. Hence, the homotopy cofibration diagram satisfies the condition of Proposition \ref{RHIcofprop}, and then the corollary follows. 
\end{proof}

We turn to study the existence of right homotopy inverses around homotopy pushouts. The related results are summarized in Theorem \ref{pushoutthm}, Theorem \ref{pushoutthm2} and Remark \ref{pushoutthm3}.

\begin{theorem}\label{pushoutthm}
Let
\begin{equation}\label{pushoutdiag}
\diagram 
      X'\rto^-{\varphi'}\dto^-{f} & Y'\dto^-{g} \\ 
      X\rto^-{\varphi} & Y
  \enddiagram
  \end{equation}
be a homotopy pushout. Suppose that the map $\varphi'$ is inert. Then $\Omega f$ has a right homotopy inverse if and only if $\Omega g$ has a right homotopy inverse.
\end{theorem}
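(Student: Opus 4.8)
The plan is to reduce the assertion, via the naturality of the Beben--Theriault decomposition (Proposition~\ref{GTcofibnat}), to Proposition~\ref{loopltimeslemma} on left half-smash products; the only step that is not pure formalism is the observation that $\varphi$ inherits inertness from $\varphi'$.

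First I would set up the cofibration data. Since~(\ref{pushoutdiag}) is a homotopy pushout, the homotopy cofibre of $\varphi$ coincides with that of $\varphi'$; write $Z$ for this common cofibre and $h\colon Y\to Z$, $h'\colon Y'\to Z$ for the cofibre projections. Composing~(\ref{pushoutdiag}) with $h$ produces a homotopy cofibration diagram
\[
\diagram
X'\rto^-{\varphi'}\dto^-{f} & Y'\rto^-{h'}\dto^-{g} & Z\ddouble \\
X\rto^-{\varphi} & Y\rto^-{h} & Z
\enddiagram
\]
with $h\circ g\simeq h'$. Because $\varphi$ is the cobase change of $\varphi'$ along $f$, it is again inert: if $s'$ is a right homotopy inverse of $\Omega h'$ then $\Omega h\circ(\Omega g\circ s')\simeq\Omega(h\circ g)\circ s'\simeq\Omega h'\circ s'\simeq 1_{\Omega Z}$, so $s:=\Omega g\circ s'$ is a right homotopy inverse of $\Omega h$. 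By construction $s$ and $s'$ satisfy the compatibility required in Diagram~(\ref{ss'diag}) (with $\lambda_Z=1_Z$).

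Next I would apply Proposition~\ref{GTcofibnat} to the above diagram with the compatible right homotopy inverses $s',s$. It yields a homotopy fibration diagram whose rows are the Beben--Theriault fibrations associated with $\varphi'$ and $\varphi$, with left vertical map $1_{\Omega Z}\ltimes f$, together with a compatible pair of loop space decompositions
\[
\xymatrix{
\Omega Z\times\Omega(\Omega Z\ltimes X')\ar[r]^-{\simeq}\ar[d]^-{1_{\Omega Z}\times\Omega(1_{\Omega Z}\ltimes f)} & \Omega Y'\ar[d]^-{\Omega g} \\
\Omega Z\times\Omega(\Omega Z\ltimes X)\ar[r]^-{\simeq} & \Omega Y .
}
\]
Thus $\Omega g$ is conjugate, through homotopy equivalences on source and target, to the product map $1_{\Omega Z}\times\Omega(1_{\Omega Z}\ltimes f)$.

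To finish, I would note that a product of two maps admits a right homotopy inverse if and only if each factor does; as $1_{\Omega Z}$ trivially does, $\Omega g$ has a right homotopy inverse precisely when $\Omega(1_{\Omega Z}\ltimes f)$ does, which by Proposition~\ref{loopltimeslemma} happens precisely when $\Omega f$ has a right homotopy inverse. The step I expect to require the most care is verifying the hypotheses of Proposition~\ref{GTcofibnat}: one must check inertness of $\varphi$ (done above) and that the two Beben--Theriault splittings can be chosen compatibly, which is exactly where the pushout identity $h\circ g\simeq h'$ is used; everything else is formal bookkeeping with right homotopy inverses.
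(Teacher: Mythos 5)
Your proposal is correct and follows essentially the same route as the paper: the paper extends the pushout to the cofibration diagram over the common cofibre $C$, observes exactly as you do that $\Omega g\circ s'$ gives a compatible right homotopy inverse for $\Omega p$, and then cites Proposition~\ref{RHIcofprop}. The only difference is that you inline the proof of Proposition~\ref{RHIcofprop} in the special case $\lambda_Z=1_Z$ (via Proposition~\ref{GTcofibnat}, the product splitting, and Proposition~\ref{loopltimeslemma}) rather than quoting it, which is the same argument.
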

\begin{proof}
Let $C$ be the homotopy cofibre of $\varphi$. As (\ref{pushoutdiag}) is a homotopy pushout, $C$ is also the homotopy cofibre of $\varphi$, and there is a diagram of homotopy cofibrations
\[
\diagram 
      X'\rto^-{\varphi'}\dto^-{f} & Y'\dto^-{g} \rto^-{p'} & C\ddouble  \\ 
      X\rto^-{\varphi} & Y\rto^-{p} & C,
  \enddiagram
  \]
  which defines the maps $p'$ and $p$. Since $\varphi'$ is inert, the map $\Omega p'$ admits a right homotopy inverse $s: \Omega C\longrightarrow \Omega Y'$. As $p\circ g\simeq p'$, it implies that $\Omega g\circ s$ is a right homotopy inverse of $\Omega p$. In particular, the maps $p'$ and $p$ have compatible right homotopy inverses after looping. Therefore, we can apply Proposition \ref{RHIcofprop} to the above homotopy cofibration diagram, and show that $\Omega g$ has a right homotopy inverse if and only if $\Omega f$ has a right homotopy inverse.
\end{proof}

In Theorem \ref{pushoutthm} we suppose that the map $\varphi'$ is inert. 
In the following we may consider another situation that $\Omega \varphi'$ has a right homotopy inverse. To study this case, we may need some standard results of homotopy actions.  

Recall a homotopy fibration $F\stackrel{}{\longrightarrow} E\stackrel{}{\longrightarrow} B$ can be extended to the left to give a connecting map $\delta: \Omega B\stackrel{}{\longrightarrow} F$. Further, there is a natural {\it homotopy principal action}
\[
\theta: \Omega B\times F\stackrel{}{\longrightarrow} F
\]
that extends the wedge sum map $\Omega B \vee F\stackrel{\delta\vee 1_F}{\longrightarrow} F \vee F \stackrel{\nabla}{\longrightarrow}F$, where $\nabla$ is the folding map. In particular, if $\delta$ is null homotopic, then the action reduces to a map 
\[
\overline{\theta}: \Omega B\ltimes F\stackrel{}{\longrightarrow} F
\]
through the natural projection $\Omega B\times F \stackrel{q_1}{\longrightarrow} \Omega B\ltimes F$. This fact will be used freely in the proof of Theorem \ref{pushoutthm2}. 

\begin{theorem}\label{pushoutthm2}
Let
\begin{equation}\label{Apushoutdiag}
\diagram 
      A \ddouble  \rto^-{a'} & X'\rto^-{\varphi'}\dto^-{f} & Y'\dto^-{g} \\ 
     A \rto^-{a}          & X\rto^-{\varphi} & Y
  \enddiagram
  \end{equation}
be a diagram of homotopy cofibrations. Suppose that the map $a'$ is inert. If $\Omega g$ has a right homotopy inverse, then $\Omega f$ also has a right homotopy inverse.
\end{theorem}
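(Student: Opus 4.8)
The plan is to reduce the statement to the Beben--Theriault decomposition of Theorem~\ref{GTcofib} and its naturality, after first upgrading the hypotheses. The opening observation will be that the bottom attaching map $a$ is automatically inert. Indeed, the homotopy commutativity of~(\ref{Apushoutdiag}) gives $\varphi\circ f\simeq g\circ\varphi'$, hence $\Omega\varphi\circ\Omega f\simeq\Omega g\circ\Omega\varphi'$; since $a'$ is inert $\Omega\varphi'$ has a right homotopy inverse $s$, and by hypothesis $\Omega g$ has a right homotopy inverse $t$, so $s\circ t$ is a right homotopy inverse of $\Omega g\circ\Omega\varphi'$, and therefore of $\Omega\varphi\circ\Omega f$. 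Thus $u:=\Omega f\circ s\circ t$ is a right homotopy inverse of $\Omega\varphi$, i.e. $a$ is inert, and Theorem~\ref{GTcofib} applies to both rows of~(\ref{Apushoutdiag}).

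The next step will be to feed~(\ref{Apushoutdiag}), with vertical maps $1_A,f,g$, into the naturality of the homotopy fibration of Theorem~\ref{GTcofib} (established in the course of proving Proposition~\ref{GTcofibnat}); this produces a homotopy fibration diagram
\[
\diagram
      \Omega Y'\ltimes A \rto^-{\widetilde{\Gamma}'}\dto^{\Omega g\ltimes 1_A} & X'\rto^-{\varphi'}\dto^{f} & Y'\dto^{g}  \\
      \Omega Y\ltimes A \rto^-{\widetilde{\Gamma}} & X\rto^-{\varphi} & Y
  \enddiagram
\]
in which both rows split after looping. The decisive point is that $\Omega g\ltimes 1_A$ has a right homotopy inverse \emph{before} looping, namely $t\ltimes 1_A$, by functoriality of the left half-smash; hence $\Omega(\Omega g\ltimes 1_A)$ has the right homotopy inverse $\Omega(t\ltimes 1_A)$.

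The final step will be to build a right homotopy inverse of $\Omega f$ directly from the two loop-space splittings. For any right homotopy inverse $w$ of a looped fibration projection, the composite of the loop multiplication with $w$ and the loop of the fibre inclusion induces an isomorphism on homotopy groups (the relevant long exact sequence splits) and is therefore a homotopy equivalence; let $\chi':=\mu_{\Omega X'}\circ(s\times\Omega\widetilde\Gamma')$ and $\chi:=\mu_{\Omega X}\circ(u\times\Omega\widetilde\Gamma)$ be the resulting equivalences out of $\Omega Y'\times\Omega(\Omega Y'\ltimes A)$ and $\Omega Y\times\Omega(\Omega Y\ltimes A)$. I would then take
\[
v:=\chi'\circ\bigl(t\times\Omega(t\ltimes 1_A)\bigr)\circ\chi^{-1}\colon\ \Omega X\longrightarrow\Omega X'
\]
and verify $\Omega f\circ v\simeq 1_{\Omega X}$ from three facts: $\Omega f$ is an $H$-map; $\Omega f\circ s\circ t=u$ by the choice of $u$; and, by commutativity of the left square above together with $(\Omega g\ltimes 1_A)\circ(t\ltimes 1_A)=1_{\Omega Y\ltimes A}$, one has $\Omega f\circ\Omega\widetilde\Gamma'\circ\Omega(t\ltimes 1_A)\simeq\Omega\widetilde\Gamma\circ\Omega(\Omega g\ltimes 1_A)\circ\Omega(t\ltimes 1_A)=\Omega\widetilde\Gamma$. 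Expanding $\chi'$ and using the $H$-map property then collapses $\Omega f\circ\chi'\circ\bigl(t\times\Omega(t\ltimes 1_A)\bigr)$ to $\mu_{\Omega X}\circ(u\times\Omega\widetilde\Gamma)=\chi$, so $\Omega f\circ v\simeq\chi\circ\chi^{-1}\simeq 1_{\Omega X}$. (Alternatively, the same ingredients allow one to correct $s$ by a term factoring through $\Omega\widetilde\Gamma'$ so that the corrected section together with $u$ form a homotopy commutative square over $f$ and $g$; Proposition~\ref{RHIcofprop} applied to~(\ref{Apushoutdiag}) then yields the conclusion at once.)

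The step I expect to be the main obstacle is this compatibility. The naive choices---$s$ on $\Omega\varphi'$ and $\Omega f\circ s\circ t$ on $\Omega\varphi$---do not in general fill in the homotopy commutative square required to apply Proposition~\ref{RHIcofprop} or Corollary~\ref{RHIfibcor} directly, because $t\circ\Omega g$ is only a homotopy idempotent, not the identity. Circumventing this is exactly what forces either the explicit recombination of the splittings above or the correction of the section, both of which hinge on the half-smash functoriality that promotes a right homotopy inverse of $\Omega g$ to one for $\Omega(\Omega g\ltimes 1_A)$.
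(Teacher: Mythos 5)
Your opening step (producing the right homotopy inverse $u=\Omega f\circ s\circ t$ of $\Omega\varphi$, hence the inertness of $a$) is exactly the paper's first step. The gap is in your second step: you assert that naturality of the Beben--Theriault fibration yields a homotopy fibration diagram whose left square is $f\circ\widetilde{\Gamma}'\simeq\widetilde{\Gamma}\circ(\Omega g\ltimes 1_A)$, and your verification that $\Omega f\circ v\simeq 1_{\Omega X}$ depends essentially on that square (it is your fact that $\Omega f\circ\Omega\widetilde{\Gamma}'\circ\Omega(t\ltimes 1_A)\simeq\Omega\widetilde{\Gamma}$). But the map $\widetilde{\Gamma}$ of Theorem~\ref{GTcofib} is constructed from a chosen right homotopy inverse of the looped cofibre map, and the only naturality statement available, Proposition~\ref{GTcofibnat}, has as a hypothesis that the chosen inverses of $\Omega\varphi'$ and $\Omega\varphi$ fit into a homotopy commutative square over $\Omega g$ and $\Omega f$. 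That is precisely the compatibility you correctly identify as unavailable: the only inverse of $\Omega\varphi$ at hand is $u$, and $u\circ\Omega g\simeq\Omega f\circ s\circ(t\circ\Omega g)$ differs from $\Omega f\circ s$ because $t\circ\Omega g$ is only a homotopy idempotent. The paper flags this explicitly in its proof (``neither Propositions~\ref{GTcofibnat}, \ref{RHIcofprop} nor Corollary~\ref{RHIfibcor} can apply since the homotopy right inverses \ldots are not necessarily compatible''). Your ``explicit recombination'' therefore does not circumvent the obstacle you name at the end --- it presupposes the unproved left square --- and your parenthetical correction of $s$ is not carried out and amounts to a nontrivial lifting problem into the fibre of $\Omega\varphi'$.

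What the paper actually does is change the base of the top fibration so that compatibility becomes automatic: it forms the homotopy fibration $F\to X'\xrightarrow{g\circ\varphi'}Y$ of the composite, maps it to $\Omega Y\ltimes A\to X\xrightarrow{\varphi}Y$ over $1_Y$, and observes that $s\circ t$ and $u=\Omega f\circ s\circ t$ are compatible right homotopy inverses of $\Omega(g\circ\varphi')$ and $\Omega\varphi$ by construction. Corollary~\ref{RHIfibcor} then reduces the theorem to showing that the induced map of fibres $\tau\colon F\to\Omega Y\ltimes A$ has a right homotopy inverse, and this is proved using the reduced homotopy actions of $\Omega Y$ on $F$ and on $\Omega Y\ltimes A$, a lift $j'\colon A\to F$ of $a'$ with $\tau\circ j'\simeq j_2$ coming from the homotopy pullback property, and the fact from \cite{BT22} that $\overline{\theta}\circ(1_{\Omega Y}\ltimes j_2)$ is a homotopy equivalence. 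None of this machinery appears in your outline, so as written the proposal does not constitute a proof.
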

\begin{proof}
By assumption, let $s:\Omega Y'\stackrel{}{\longrightarrow} \Omega X'$ and $r: \Omega Y\stackrel{}{\longrightarrow} \Omega Y'$ be right homotopy inverses of $\Omega \varphi'$ and $\Omega g$, respectively. Let $t=\Omega f\circ s\circ r$. Then $\Omega \varphi\circ t=\Omega \varphi\circ \Omega f\circ s\circ r\simeq \Omega g \circ \Omega \varphi'\circ s\circ r\simeq \Omega g \circ 1_{\Omega Y'}\circ r\simeq 1_{\Omega Y}$, that is, $t$ is a right homotopy inverse of $\Omega \varphi$. 
In particular, Theorem \ref{GTcofib} applies to give two homotopy fibrations
\[
\Omega Y'\ltimes A \stackrel{}{\longrightarrow} X'\stackrel{\varphi'}{\longrightarrow} Y', \ \ 
\Omega Y\ltimes A \stackrel{\widetilde{\Gamma}}{\longrightarrow} X\stackrel{\varphi}{\longrightarrow} Y,
\]
both of which split after looping. Unfortunately, neither Propositions \ref{GTcofibnat}, \ref{RHIcofprop} nor Corollary \ref{RHIfibcor} can apply since the homotopy right inverses $s$ and $t$ are not necessarily compatible. 

In order to overcome the naturality issue, consider the diagram of homotopy fibrations
\begin{equation}\label{Apushoutpfdiag1}
\diagram 
      F \dto^-{\tau}  \rto^-{i} & X'\rto^-{g\circ\varphi'}\dto^-{f} & Y\ddouble \\ 
     \Omega Y\ltimes A \rto^-{\widetilde{\Gamma}}          & X\rto^-{\varphi} & Y,
  \enddiagram
\end{equation}
where $F$ is the homotopy fibre of the composite $g\circ\varphi'$, and $i$ and $\tau$ are the induced maps. Note that $s\circ r$ and $t=\Omega f \circ s\circ r$ are right homotopy inverses of $\Omega(g\circ\varphi')$ and $\Omega\varphi$, respectively, and they are compatible. Therefore, Corollary \ref{RHIfibcor} can be applied to show that there are compatible splittings 
\begin{equation}\label{Apushoutpfdiag1.5}
\begin{aligned}
\xymatrix{
\Omega Y \times \Omega F \ar[r]^>>>>>>{\chi'}_>>>>>>{\simeq} \ar[d]^{1_{\Omega Y}\times \Omega \tau} & \Omega X'\ar[d]^{\Omega f} \\
\Omega Y \times \Omega (\Omega Y\ltimes A)\ar[r]^<<<{\chi}_<<<{\simeq}  & \Omega X,
}
\end{aligned}
\end{equation}
and moreover $\Omega f$ has a right homotopy inverse if and only if $\Omega\tau$ has a right homotopy inverse. Hence, to prove the theorem it remains to show that $\Omega\tau$ has a right homotopy inverse.

From (\ref{Apushoutpfdiag1.5}) it is clear that the connecting maps of the two homotopy fibrations in Diagram (\ref{Apushoutpfdiag1}) are null homotopic, and therefore there is a diagram of reduced homotopy actions
\begin{equation}\label{Apushoutpfdiag2}
\begin{aligned}
\xymatrix{
\Omega Y\ltimes F \ar[r]^<<<<<<<{\overline{\theta}} \ar[d]^{\Omega 1_{Y} \ltimes \tau} &  F \ar[d]^{\tau} \\
\Omega Y\ltimes (\Omega Y\ltimes A) \ar[r]^<<<{\overline{\theta}}  &  \Omega Y\ltimes A.
}
\end{aligned}
\end{equation}
Further, By Theorem \ref{GTcofib} and Diagram (\ref{Apushoutdiag}), the map $\Omega Y\ltimes A\stackrel{\widetilde{\Gamma}}{\longrightarrow} X$ in Diagram (\ref{Apushoutpfdiag1}) satisfies that $\widetilde{\Gamma}\circ j_2\simeq a \simeq f\circ a'$, where $j_2: A\stackrel{}{\longrightarrow}\Omega Y\ltimes A$ is the natural inclusion in Lemma \ref{ltimeslemma0}. Then as the left square of Diagram (\ref{Apushoutpfdiag1}) is a homotopy pullback, there is a map $j': A\stackrel{}{\longrightarrow } F$ such that the two triangular regions of the following diagram homotopy commute
\[\xymatrix{ 
A         \ar[dr]^(0.6){j'}       \ar@/^0.8pc/[drr]^{a'}    \ar@/_0.8pc/[ddr]_{j_2}  &&\\
&F\rto^-{i}\dto^-{\tau} & X'\dto^-{f} \\ 
&      \Omega Y\ltimes A\rto^-{\widetilde{\Gamma}} & X.
}\]
Combining Diagram (\ref{Apushoutpfdiag2}) and the left triangle $\tau\circ j'\simeq j_2$ gives a homotopy commutative diagram
\[
\xymatrix{
\Omega Y\ltimes A \ar@{=}[d] \ar[r]^{1_{\Omega Y}\ltimes j'} &\Omega Y\ltimes F \ar[r]^<<<<<<<{\overline{\theta}} \ar[d]^{\Omega 1_{Y} \ltimes \tau} &  F \ar[d]^{\tau} \\
\Omega Y\ltimes A  \ar[r]^<<<<{1_{\Omega Y}\ltimes j_2}  &\Omega Y\ltimes (\Omega Y\ltimes A) \ar[r]^<<<{\overline{\theta}}  &  \Omega Y\ltimes A.
}
\]
The proof of Theorem \ref{GTcofib} in \cite{BT22} shows that the composite $\overline{\theta}\circ (1_{\Omega Y}\ltimes j_2)$ in the bottom row is a homotopy equivalence. Hence, the diagram implies that $\tau$ has a right homotopy inverse. In particular, $\Omega \tau$ has a right homotopy inverse and so does $\Omega f$.
\end{proof}

\begin{remark}\label{pushoutthm2rmk}
The converse of Theorem \ref{pushoutthm2} does not hold in general. 
That is, for the homotopy cofibration diagram (\ref{Apushoutdiag}) such that $a'$ is inert, it is possible that $\Omega f$ has a right homotopy inverse, but $\Omega g$ does not have a right homotopy inverse.  

For instance, consider the homotopy cofibration diagram
\[
\diagram 
      S^{m+n-1}\rto^-{[i_1, i_2]}\ddouble & S^m\vee S^n\rto^-{}\dto^{q_1} & S^{m}\times S^{n}\dto^{}  \\ 
      S^{m+n-1}\rto^-{\ast} & S^m \rto^-{} &   S^m\vee S^{m+n},
  \enddiagram
\]
where $[i_1, i_2]$ is the Whitehead product of the two wedge summand inclusions $S^m\stackrel{i_1}{\hookrightarrow} S^m\vee S^n$ and $S^n\stackrel{i_2}{\hookrightarrow} S^m\vee S^n$, and $q_1$ is the projection map. Note that the map $[i_1, i_2]$ is inert by the Hilton-Milnor theorem, and the map $q_1$ clearly has a right homotopy inverse. However, the map $S^{m}\times S^{n}\stackrel{}{\longrightarrow} S^m\vee S^{m+n}$ does not have a right homotopy inverse after looping. This can be seen from the fact that the homomorphism of homotopy groups
\[
\pi_{\ast}(S^m\times S^n)\stackrel{}{\longrightarrow} \pi_{\ast}(S^m \vee S^{m+n})
\]
is neither injective nor surjective.
\end{remark}

\begin{remark}\label{pushoutthm3}
A dual result of Theorem \ref{pushoutthm2} is as follows. Consider the homotopy pushout (\ref{pushoutdiag}). Suppose that the map $\Omega \varphi$ has a right homotopy inverse. If $\Omega f$ has a right homotopy inverse, then $\Omega g$ has a right homotopy inverse. 
Note that this result follows from the homotopy commutativity of the square (\ref{pushoutdiag}) immediately.

The converse of this result does not hold in general. That is, for the homotopy pushout (\ref{pushoutdiag}) such that $\Omega \varphi$ has a right homotopy inverse, it is possible that $\Omega g$ has a right homotopy inverse, but $\Omega f$ does not have a right homotopy inverse.  

For instance, as $A\ltimes  \Sigma B\simeq \Sigma B\vee (A\wedge \Sigma B)$ there is a homotopy cofibration diagram 
\[
\diagram 
 & \Sigma Y\rdouble \dto^{} & \Sigma Y \dto^{} \\
      \Sigma X\rto^-{}\ddouble & \Sigma X\times \Sigma Y\rto^-{}\dto^{} & \Sigma Y\vee (\Sigma X\wedge \Sigma Y) \dto^{}  \\ 
      \Sigma X\rto^-{} & \Sigma X\vee (\Sigma X\wedge \Sigma Y) \rto^-{} &   \Sigma X\wedge \Sigma Y,
  \enddiagram
\]
in which the maps are the obvious inclusions and projections. It is clear that both $\Sigma X\vee (\Sigma X\wedge \Sigma Y)\stackrel{}{\longrightarrow} \Sigma X\wedge \Sigma Y$ and $\Sigma Y\vee (\Sigma X\wedge \Sigma Y)\stackrel{}{\longrightarrow} \Sigma X\wedge \Sigma Y$ have right homotopy inverses. However, the maps $\Sigma X\times \Sigma Y\stackrel{}{\longrightarrow}\Sigma X\vee (\Sigma X\wedge \Sigma Y)$ and $\Sigma X\times \Sigma Y\stackrel{}{\longrightarrow}\Sigma Y\vee (\Sigma X\wedge \Sigma Y)$ do not have right homotopy inverses after looping in general. For example, for $X=S^{m-1}$ and $Y=S^{n-1}$, the map $\Sigma X\times \Sigma Y\stackrel{}{\longrightarrow}\Sigma X\vee (\Sigma X\wedge \Sigma Y)$ is the map $S^m\times S^n \stackrel{}{\longrightarrow}S^m \vee S^{m+n}$ in Remark \ref{pushoutthm2rmk}, which does not have a right homotopy inverse.
\end{remark}

\newpage
\section{Nonzero degree maps}
\label{sec: deg1}
In this section, we apply the results for homotopy pushouts in Section \ref{sec: pushout} to analyze nonzero degree maps between Poincar\'{e} duality complexes and prove Theorem \ref{inertdeg1thmintro}. It provides fundamental criteria for inertness by comparing two Poincar\'{e} duality complexes of the same dimension. Additionally, since mapping degree can be defined for maps between suitable finite $CW$-complexes that are not necessarily Poincar\'{e} duality complexes, we can work in a slightly broader context.

Let $X$ be a connected $CW$-complex. Its {\it formal dimension} $fd(X)$ is defined to be the largest integer $n$ such that $H_n(X;\mathbb{Z})$ is nontrivial.
Suppose that $f: X\stackrel{}{\longrightarrow} Y$ is a map such that $fd(Y)=fd(X)=n$ and $H_n(X;\mathbb{Z})\cong H_n(Y;\mathbb{Z})\cong \mathbb{Z}$. Then the induced homomorphism
\[
f_\ast: H_n(X;\mathbb{Z})\stackrel{}{\longrightarrow} H_n(Y;\mathbb{Z})
\]
determines a unique integer $k$ up to sign by $f_\ast([X])=k\cdot [Y]$, where {\it the fundamental classes} $[X]$ and $[Y]$ are generators of $H_n(X;\mathbb{Z})$ and $H_n(Y;\mathbb{Z})$, respectively. We call $k$ the {\it degree} of the map $f$, and denote it by ${\rm deg}(f)$. Since the sign of the fundamental classes can be freely changed, the sign of the degree is irrelevant.

Suppose further that both $X$ and $Y$ each have a single $n$-cell. There are homotopy cofibre sequences
\begin{equation}\label{XYcofieq}
S^{n-1}\stackrel{h_X}{\longrightarrow} X_0\stackrel{i_X}{\longrightarrow}X\stackrel{q_X}{\longrightarrow}S^n, \ \  \ \ 
S^{n-1}\stackrel{h_Y}{\longrightarrow} Y_0\stackrel{i_Y}{\longrightarrow}Y\stackrel{q_Y}{\longrightarrow}S^n,
\end{equation}
where $X_0$ and $Y_0$ are the $(n-1)$-skeletons of $X$ and $Y$ with the inclusions $i_X$ and $i_Y$, respectively, the map $h_X$ and $h_Y$ are the attaching map of the top cells, and the maps $q_X$ and $q_Y$ are the pinch maps to the top cells.   

\begin{lemma}\label{degk=pushoutlemma}
There exists a degree $k$ map $f: X\stackrel{}{\longrightarrow} Y$ if and only if there is a diagram of homotopy cofibrations
\begin{equation}\label{degk=pushouteq}
\diagram 
      X_0\rto^-{i_X}\dto^{f_0} & X\dto^{f}   \rto^-{q_X}&  S^{n}\dto^{k} \\ 
     Y_0\rto^-{i_Y}  &   Y \rto^-{q_Y} & S^{n} ,
  \enddiagram
\end{equation}
where $f_0$ is the restriction of $f$ up to homotopy, and $k$ is a degree $k$ self-map of the sphere $S^n$.
\end{lemma}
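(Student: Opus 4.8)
The plan is to prove the two implications separately; the forward direction carries essentially all of the content, while the converse is a one-line homology computation.

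For the forward direction, suppose $f\colon X\to Y$ has degree $k$. First I would invoke cellular approximation to replace $f$ by a homotopic cellular map, which then carries $X_0$ into $Y_0$ and hence restricts to a map $f_0\colon X_0\to Y_0$ making the left-hand square of (\ref{degk=pushouteq}) strictly commute. Since the skeletal inclusions $X_0\hookrightarrow X$ and $Y_0\hookrightarrow Y$ are cofibrations with strict cofibres $X/X_0\cong S^n$ and $Y/Y_0\cong S^n$, the cellular representative of $f$ descends to a self-map $\bar f\colon S^n\to S^n$ filling in the right-hand square, so that the rows of (\ref{XYcofieq}) become a commutative ladder. The one substantive point is to identify $\deg(\bar f)$ with $k$. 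For this I would pass to cellular chains: because $X$ has a single $n$-cell and $H_n(X;\mathbb{Z})\cong\mathbb{Z}$, the cellular boundary out of the top cell must vanish, so the pinch map induces an isomorphism $q_{X\ast}\colon H_n(X;\mathbb{Z})\xrightarrow{\cong} H_n(S^n;\mathbb{Z})$ sending the fundamental class to a generator, and likewise for $q_Y$. Applying $H_n(-;\mathbb{Z})$ to the right-hand square together with $f_\ast[X]=k[Y]$ then forces $\bar f$ to have degree $k$, up to the irrelevant sign. Finally I would replace $\bar f$, up to homotopy, by a standard degree $k$ self-map of $S^n$; this alters the right-hand square only up to homotopy and produces the desired diagram (\ref{degk=pushouteq}) of homotopy cofibrations.

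For the converse, given the diagram (\ref{degk=pushouteq}) I would simply apply $H_n(-;\mathbb{Z})$ to its right-hand square, obtaining $q_{Y\ast}\circ f_\ast=k_\ast\circ q_{X\ast}$. Since $q_{X\ast}$ and $q_{Y\ast}$ are isomorphisms carrying fundamental classes to generators (by the same chain-level argument as above) and $k_\ast$ is multiplication by $k$ on $H_n(S^n;\mathbb{Z})$, this yields $f_\ast[X]=k[Y]$, i.e. $\deg(f)=k$.

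The only genuinely delicate points are bookkeeping: rectifying the homotopy-commutative left square to a strictly commutative one so that the induced map on strict cofibres is well defined, and tracking orientations so that the integer labelled $k$ on the cofibres is literally $\deg(f)$. Both are dispatched by the cellular chain computation identifying $q_X$ and $q_Y$ as isomorphisms on top homology; once that is in hand the rest is the formal naturality of cofibre sequences. I would also record, for use in the sequel, that when $k=\pm1$ the left square of (\ref{degk=pushouteq}) is in fact a homotopy pushout, namely the attachment of the top cell.
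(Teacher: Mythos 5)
Your proposal is correct and follows essentially the same route as the paper: cellular (CW) approximation to produce $f_0$ and the induced map on cofibres, then the identification of $q_{X\ast}$ and $q_{Y\ast}$ as isomorphisms on $H_n$ to match the degree of the induced self-map of $S^n$ with $\deg(f)$, with the converse being the same homology computation read backwards. Your extra observation that the left square is a homotopy pushout when $k=\pm1$ is also exactly how the lemma gets used later in the paper.
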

\begin{proof}
Suppose that there is a diagram of homotopy cofibrations of the form (\ref{degk=pushouteq}). 
As $q_X$ and $q_Y$ induce isomorphisms on the top homology, the homotopy commutativity of the right square of (\ref{degk=pushouteq}) implies that ${\rm deg}(f)$ is equal to the degree of the map $k$, and hence is equal to $k$.

Conversely, suppose that a map $f: X\stackrel{}{\longrightarrow} Y$ is of degree $k$. Then up to homotopy its restriction on the $(n-1)$-skeleton $X_0\stackrel{i_X}{\longrightarrow} X\stackrel{f}{\longrightarrow} Y$ factors through $Y_0$ by the $CW$-approximation, that is, the left square of Diagram (\ref{degk=pushouteq}) exists. Therefore, there is a diagram of homotopy cofibration
\[
\diagram 
      X_0\rto^-{i_X}\dto^{f_0} & X\dto^{f}\rto^-{q_X} & S^{n}\dto^{f_s}  \\ 
      Y_0\rto^-{i_Y} &   Y \rto^-{q_Y}  & S^{n},
  \enddiagram
\]
where $f_s$ is the induced map. Since $f$ is of degree $k$, the map $f_s$ is of degree $k$ and therefore homotopic to the map $k$. Accordingly, the above diagram is the required Diagram (\ref{degk=pushouteq}).
\end{proof}

Degree one maps are of special interest. In this case Lemma \ref{degk=pushoutlemma} can be slightly strengthened. 
\begin{lemma}\label{deg1=pushoutlemma}
Suppose that $Y$ is simply connected.  
There exists a degree one map $f: X\stackrel{}{\longrightarrow} Y$ if and only if there is a diagram of homotopy cofibrations
\begin{equation}\label{deg1=pushouteq}
\diagram 
      S^{n-1}\rto^-{h_X}\ddouble & X_0\rto^-{i_X}\dto^{f_0} & X\dto^{f}  \\ 
      S^{n-1}\rto^-{h_Y} & Y_0\rto^-{i_Y} &   Y.
  \enddiagram
\end{equation}
\end{lemma}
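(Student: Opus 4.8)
The plan is to prove the two implications separately, isolating the use of the simple connectivity of $Y$ entirely in the forward (``only if'') direction; the reverse direction is essentially the mapping-cone construction together with the degree computation already used in the proof of Lemma~\ref{degk=pushoutlemma}.

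\emph{Direction $(\Leftarrow)$.} Given a diagram of the form (\ref{deg1=pushouteq}), the homotopy commuting left square exhibits $(1_{S^{n-1}},f_0)$ as a map between the mapping-cone data $X=X_0\cup_{h_X}CS^{n-1}$ and $Y=Y_0\cup_{h_Y}CS^{n-1}$, hence induces a canonical map $\bar f\colon X\to Y$ of cofibres with $i_Y\circ f_0\simeq\bar f\circ i_X$. Prolonging this morphism of cofibre sequences one step to the right produces a homotopy commuting square whose horizontal maps are $q_X$ and $q_Y$ and whose induced self-map of $S^n$ is $\Sigma 1_{S^{n-1}}=1_{S^n}$; since $q_X$ and $q_Y$ induce isomorphisms on $H_n(-;\mathbb{Z})$, exactly as in the proof of Lemma~\ref{degk=pushoutlemma} this forces ${\rm deg}(\bar f)=1$. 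Thus a degree one map exists (only the existence of such an $f_0$ is used here).

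\emph{Direction $(\Rightarrow)$.} Let $f\colon X\to Y$ have degree one. Lemma~\ref{degk=pushoutlemma} with $k=1$ supplies $f_0\colon X_0\to Y_0$ with $i_Y\circ f_0\simeq f\circ i_X$, and the task is to upgrade this to a morphism of cofibre sequences realising $1_{S^{n-1}}$ on the attaching spheres. Since $i_Y\circ(f_0\circ h_X)\simeq f\circ i_X\circ h_X$ is null homotopic, $f_0\circ h_X$ lies in $\ker\!\big(i_{Y\ast}\colon\pi_{n-1}(Y_0)\to\pi_{n-1}(Y)\big)$. Here the hypothesis enters: because $Y$ is simply connected, $Y_0$ is simply connected (attaching an $n$-cell does not change $\pi_1$ when $n\ge 3$, and the cases $n\le 2$ are immediate), so the relative Hurewicz theorem gives $\pi_n(Y,Y_0)\cong H_n(Y,Y_0)\cong\mathbb{Z}$, generated by the characteristic map of the top cell, whose boundary is $[h_Y]$; the homotopy exact sequence of the pair $(Y,Y_0)$ together with $\pi_{n-1}(Y,Y_0)=0$ then yields $\ker(i_{Y\ast})=\langle[h_Y]\rangle$. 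Hence $f_0\circ h_X\simeq h_Y\circ\delta$ for some self-map $\delta\colon S^{n-1}\to S^{n-1}$, of some degree $m$. Now $(\delta,f_0,f)$ is a morphism of cofibre sequences; prolonging one step to the right, the induced self-map of $S^n$ is $\Sigma\delta$ (degree $m$) and fits into a homotopy commuting square with $q_X$ and $q_Y$. Applying $H_n(-;\mathbb{Z})$ and using ${\rm deg}(f)=1$ while $q_X$, $q_Y$ are top-homology isomorphisms forces $m=\pm 1$, so $\delta\simeq 1_{S^{n-1}}$ and $f_0\circ h_X\simeq h_Y$. Together with $i_Y\circ f_0\simeq f\circ i_X$ this gives the desired diagram (\ref{deg1=pushouteq}).

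The main obstacle is the identification $\ker(i_{Y\ast})=\langle[h_Y]\rangle$: this is precisely the step that requires $Y$ (hence $Y_0$) to be simply connected, via relative Hurewicz for $(Y,Y_0)$. Once this is in hand, pinning down $m=\pm 1$ from the degree hypothesis is a routine chase through the Puppe prolongation and $H_n$, identical in spirit to the degree bookkeeping in Lemma~\ref{degk=pushoutlemma}.
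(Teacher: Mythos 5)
Your reverse direction is the same as the paper's (extend the morphism of cofibrations one step to the right and read off the degree). Your forward direction, however, takes a genuinely different route: the paper forms the homotopy cofibre $Y'$ of $f_0\circ h_X$, compares $Y'$ with $Y$ via the universal property of the homotopy pushout, and then invokes the Whitehead theorem together with the uniqueness of minimal $CW$-models, whereas you work with the homotopy exact sequence of the pair $(Y,Y_0)$ and the relative Hurewicz theorem. Your identification $\ker(i_{Y\ast})=\langle[h_Y]\rangle$ is correct and is exactly where simple connectivity enters. But there is a genuine gap in the step that pins down $m$. Having shown $f_0\circ h_X\simeq h_Y\circ\delta$ with $\deg\delta=m$, you assert that $(\delta,f_0,f)$ is a morphism of cofibre sequences whose Puppe prolongation realizes $\Sigma\delta$ between the quotient spheres. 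What is true is that the square $(\delta,f_0)$, with a chosen homotopy, induces \emph{some} map $\bar f\colon X\to Y$ on mapping cones satisfying $q_Y\circ\bar f\simeq\Sigma\delta\circ q_X$, and that map has degree $m$; but $\bar f$ need not be homotopic to the given $f$. Both $f$ and $\bar f$ extend $i_Y\circ f_0$ over $X$, and two such extensions differ by the coaction of an element $a\in\pi_n(Y)$, so their degrees differ by the Hurewicz image of $a$ in $H_n(Y)\cong\mathbb{Z}$. When that Hurewicz map is nonzero (already for $Y=S^n$, where the extensions of the constant map realize every degree), $\deg f=1$ does not force $m=1$. There is also a minor slip at the end: $m=\pm1$ gives $\delta\simeq\pm1_{S^{n-1}}$, not $1_{S^{n-1}}$.

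The fix stays entirely within your framework and actually shortens the argument: take $f$ cellular, so it is a map of pairs $(X,X_0)\to(Y,Y_0)$, and compute $[f_0\circ h_X]=f_{0\ast}(\partial\chi_X)=\partial(f_\ast\chi_X)$, where $\chi_X\in\pi_n(X,X_0)$ is the characteristic map of the top cell. By naturality of the relative Hurewicz homomorphism and your identification $\pi_n(Y,Y_0)\cong H_n(Y,Y_0)\cong\mathbb{Z}\langle\chi_Y\rangle$, the class $f_\ast\chi_X$ corresponds to $f_\ast[e^n_X]=\deg(f)\cdot[e^n_Y]$, hence $f_\ast\chi_X=\deg(f)\cdot\chi_Y=\chi_Y$ and $[f_0\circ h_X]=\partial\chi_Y=[h_Y]$. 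This determines the multiple directly from $\deg(f)$ without ever choosing an induced map on cofibres, and avoids the sign ambiguity as well.
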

\begin{proof}
Suppose that there is a diagram of the form (\ref{deg1=pushouteq}). By extending the row homotopy cofibrations in Diagram (\ref{deg1=pushouteq}) one step to the right, we obtain a diagram of the form (\ref{degk=pushouteq}) with $k=1$. Then Lemma \ref{degk=pushoutlemma} implies that $f$ is of degree one. 

Conversely, suppose that $f$ is of degree one. By Lemma \ref{degk=pushoutlemma} there is a diagram of the form (\ref{degk=pushouteq}) with $k=1$. Let $Y'$ be the homotopy cofibre of the composite $f_0\circ h_X$. There is a diagram of homotopy cofibrations
\begin{equation}\label{deg1=pushouteq0}
\diagram 
      S^{n-1}\rto^-{h_X}\ddouble & X_0\rto^-{i_X}\dto^{f_0} & X\dto^{f'}  \\ 
      S^{n-1}\rto^-{f_0\circ h_X} & Y_0\rto^-{i_Y'} &   Y'. 
  \enddiagram
\end{equation}
In particular, the right square is a homotopy pushout and $f'$ is of degree one by the previous argument. 
Consider the homotopy commutative diagram
\[
\xymatrix{ 
X_0 \ar[d]^{f_0} \ar[r]^{i_X} & X \ar[d]^{f'}  \ar@/^0.7pc/[ddr]^{f} \\
Y_0\ar[r]^{i_Y'}  \ar@/_0.7pc/[drr]_{i_Y}  &  Y' \ar@{.>}[dr]^(0.3){y} \\
&&Y,
}\]
where the inner square is the right square of Diagram \eqref{deg1=pushouteq0} and the outer square is the left square of Diagram \eqref{degk=pushouteq}. The universal property of homotopy pushout implies that there is a map $Y'\stackrel{y}{\longrightarrow} Y$ such that the two triangular regions homotopy commute. 
In particular, $y\circ i_Y'\simeq i_Y$ implies that $y$ restricts to the identity map on the lower skeletons, and $y\circ f'\simeq f$ implies that $y$ is of degree one as both $f$ and $f'$ are of degree one. It follows that $y$ induces an isomorphism on homology and hence is a homotopy equivalence by the Whitehead theorem. 
It is known that the minimal $CW$-model of a simply connected $CW$-complex is unique up to homotopy equivalence. Therefore, $h_Y\simeq f_0\circ h_X$ and Diagram (\ref{deg1=pushouteq0}) gives the required Diagram (\ref{deg1=pushouteq}).   
\end{proof}

A technical lemma is needed to prove the main result, Theorem \ref{inertdeg1thm}, of this section. 
\begin{lemma}\label{cone+pushoutlemma}
Suppose that there is a homotopy commutative diagram
\[
\diagram
A \rto^{a'} \ddouble & X' \rto^{f'} \dto^{\lambda_X} & Y'\dto^{\lambda_Y}\\
A \rto^{a}               & X \rto^{f}                              & Y.
\enddiagram
\]
If the right square a homotopy pushout and the top row is a homotopy cofibration, then the bottom row is a homotopy cofibration.
\end{lemma}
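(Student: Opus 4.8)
The plan is to realize the bottom row as the outer rectangle of a pasting of two homotopy pushout squares. I would first recall the standard translation: a sequence $\nameddright{A}{a}{X}{f}{Y}$ is a homotopy cofibration precisely when the square with corners $A$, $X$, $\ast$, $Y$, top map $a$, right-hand map $f$, and left-hand map the constant $A\rightarrow\ast$, is a homotopy pushout. So the hypothesis that $\nameddright{A}{a'}{X'}{f'}{Y'}$ is a homotopy cofibration supplies one homotopy pushout square, and the goal becomes to show that the analogous square built from $a$ and $f$ is a homotopy pushout.

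Next I would assemble the diagram
\[
\diagram
A\rto^-{a'}\dto & X'\rto^-{\lambda_X}\dto^{f'} & X\dto^{f} \\
\ast\rto^-{} & Y'\rto^-{\lambda_Y} & Y,
\enddiagram
\]
whose left-hand square is the homotopy pushout coming from the top-row cofibration, and whose right-hand square is the given homotopy pushout square transposed about its diagonal (transposing a homotopy pushout square again gives a homotopy pushout square, since a pushout is symmetric in the two legs of its span). Both inner squares are homotopy pushouts, so by the pasting property of homotopy pushouts the outer rectangle is a homotopy pushout. Its top edge is $\lambda_X\circ a'$, which is homotopic to $a$ because the left square of the given diagram homotopy commutes and its left vertical map is $1_A$. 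Changing the top edge to $a$ by this homotopy, the outer rectangle is exactly the square (corners $A$, $X$, $\ast$, $Y$, right-hand map $\namedright{X}{f}{Y}$) whose being a homotopy pushout was wanted, so $\nameddright{A}{a}{X}{f}{Y}$ is a homotopy cofibration.

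The step needing the most care is the homotopy-coherence bookkeeping in the pasting: to conclude rigorously that the outer rectangle is a homotopy pushout one must check that the homotopies witnessing commutativity of the two inner squares, together with the homotopy $\lambda_X\circ a'\simeq a$, can be chosen compatibly. This is the usual pasting lemma for homotopy pushouts (and is also an instance of the kind of cube manipulations used elsewhere in the paper), so I would simply invoke it. If one prefers to bypass this point, an equivalent model-level argument runs as follows: the top cofibration gives $Y'\simeq X'\cup_{a'}CA$, and then the right-hand homotopy pushout yields $Y\simeq X\cup_{X'}(X'\cup_{a'}CA)\simeq X\cup_{\lambda_X\circ a'}CA\simeq X\cup_{a}CA$, which is precisely the statement that the bottom row is a homotopy cofibration.
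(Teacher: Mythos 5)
Your proposal is correct, and in fact your closing ``model-level argument'' is essentially word-for-word the paper's own proof: the paper replaces the data by strict models, takes $\widetilde{Y}'=X'\cup_{a'}CA$, forms the strict pushout of $\widetilde{f}'$ and $\lambda_X$ (a homotopy pushout since $\widetilde{f}'$ is a cofibration), and reads off $\widetilde{Y}=X\cup_{X'}(X'\cup_{a'}CA)=X\cup_{\lambda_X\circ a'}CA$, so the bottom row is the mapping-cone cofibration of $\widetilde{a}\simeq a$. Your primary route — encoding each cofibration as a homotopy pushout over the point, transposing the given square, and horizontally pasting two homotopy pushouts — is a genuinely different and equally valid argument, but it leans on the pasting lemma for homotopy-commutative squares, where the coherence of the witnessing homotopies is exactly the kind of subtlety the paper is careful about elsewhere (see its Remark on Mather's Cube Lemma). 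The paper's choice to rectify to strict cofibrations and pushouts buys a coherence-free computation at the cost of a brief model-replacement step; your pasting argument is more conceptual and generalizes more readily, provided one is willing to cite a carefully stated pasting lemma. Either way the proof is complete, and your flagged caveat is precisely the right one.
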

\begin{proof}
To prove the lemma, we may replace homotopy cofibration and homotopy pushout by strict cofibration and pushout, respectively. Consider the commutative diagram 
\[
\diagram
A \rto^{a'} \ddouble & X' \rto^{\widetilde{f}'} \dto^{\lambda_X} & \widetilde{Y}'\dto^{\lambda_{\widetilde{Y}}}\\
A \rto^{\widetilde{a}}               & X \rto^{\widetilde{f}}                              & \widetilde{Y},
\enddiagram
\]
where $\widetilde{Y}':=X'\cup_{a'} CA$ is the mapping cone of $a'$ with the structural map $\widetilde{f}'$, $\widetilde{a}:=\lambda_X\circ a'$, and $\widetilde{Y}$ is the pushout of $\widetilde{f}'$ and $\lambda_X$ with the structural maps $\widetilde{f}$ and $\lambda_{\widetilde{Y}}$. Since $\widetilde{f}'$ is a cofibration, the pushout $\widetilde{Y}$ is also a homotopy pushout. 
The assumption of the lemma implies that $\widetilde{Y}'\simeq Y'$, $\widetilde{a}\simeq a$, and $\widetilde{Y}\simeq Y$, and the commutative diagram is a rigorous model of the original homotopy commutative diagram. 
Further, the definition of pushout implies that $\widetilde{Y}=X\cup_{X'} (X'\cup_{a'} CA)=
X\cup_{\widetilde{a}} CA$ is the mapping cone of $\widetilde{a}$. Therefore, the sequence $A\stackrel{\widetilde{a}}{\longrightarrow} X\stackrel{\widetilde{f}}{\longrightarrow} Y$ is a cofibration, and then the sequence $A\stackrel{a}{\longrightarrow} X\stackrel{f}{\longrightarrow} Y$ is a homotopy cofibration.
\end{proof}

The following theorem illustrates the basic relations between degree one maps and the inertness of top cell attachments.

\begin{theorem}\label{inertdeg1thm}
Let $X$ and $Y$ be two $CW$-complexes such that $fd(X)=fd(Y)=n$ and both $X$ and $Y$ have a single $n$-cell. Let $h_X$ and $h_Y$ be the attaching maps of the top cells of $X$ and $Y$, respectively. 
Suppose that $f: X\stackrel{}{\longrightarrow} Y$ is a degree one map. Then the following hold:
\begin{itemize}
\item[(1).] suppose that the restriction $f_0: X_0\stackrel{}{\longrightarrow} Y_0$ is inert. Then $h_X$ is inert if and only if $h_Y$ is inert; 
\item[(2).] suppose that $\Omega f$ has a right homotopy inverse. Then if $h_X$ is inert so is $h_Y$;
\item[(3).] suppose that there is a homotopy cofibration $A\stackrel{a_0}{\longrightarrow} X_0 \stackrel{f_0}{\longrightarrow} Y_0$ for some space $A$ and some inert map $a_0$. Then if $h_Y$ is inert so is $h_X$.
\end{itemize}
\end{theorem}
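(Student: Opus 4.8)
The plan is to recast the degree one map $f$ as a homotopy pushout square relating the lower skeleta to the total spaces, and then to appeal to the homotopy pushout results of Section~\ref{sec: pushout}. By Lemma~\ref{degk=pushoutlemma} applied with $k=1$, the map $f$ sits in a diagram of homotopy cofibrations
\[\diagram
X_0\rto^-{i_X}\dto^{f_0} & X\rto^-{q_X}\dto^{f} & S^n\ddouble\\
Y_0\rto^-{i_Y} & Y\rto^-{q_Y} & S^n
\enddiagram\]
in which the induced map on the horizontal cofibres is the identity of $S^n$, hence a homotopy equivalence; the left-hand square is therefore a homotopy pushout, and, since a homotopy pushout square is unchanged under transposition, so is the transposed square, in which $i_X$ and $i_Y$ are the vertical maps and $f_0$, $f$ the horizontal ones. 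Throughout I will use that $\nameddright{S^{n-1}}{h_X}{X_0}{i_X}{X}$ and $\nameddright{S^{n-1}}{h_Y}{Y_0}{i_Y}{Y}$ are homotopy cofibrations, so that inertness of $h_X$ means precisely that $\Omega i_X$ has a right homotopy inverse, and likewise for $h_Y$.

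For statement~(1), the hypothesis is that $f_0$ is inert, so Theorem~\ref{pushoutthm} applies to the transposed homotopy pushout square, with $f_0$ playing the role of the inert horizontal map $\varphi'$ and $i_X$, $i_Y$ the two vertical maps; its conclusion is that $\Omega i_X$ has a right homotopy inverse if and only if $\Omega i_Y$ does, that is, $h_X$ is inert if and only if $h_Y$ is inert. For statement~(2), only the homotopy commutativity of the square is needed: if $s$ is a right homotopy inverse of $\Omega i_X$ and $r$ a right homotopy inverse of $\Omega f$, then $\Omega f_0\circ s\circ r$ is a right homotopy inverse of $\Omega i_Y$, because $\Omega i_Y\circ\Omega f_0\circ s\circ r\simeq\Omega f\circ\Omega i_X\circ s\circ r\simeq\Omega f\circ r\simeq 1_{\Omega Y}$; this is precisely the observation recorded in Remark~\ref{pushoutthm3}, and it shows that $h_X$ inert implies $h_Y$ inert.

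For statement~(3), I would splice the given homotopy cofibration $\nameddright{A}{a_0}{X_0}{f_0}{Y_0}$ on top of the transposed homotopy pushout square. Since that square is a homotopy pushout and the top row $\nameddright{A}{a_0}{X_0}{f_0}{Y_0}$ is a homotopy cofibration, Lemma~\ref{cone+pushoutlemma} produces a diagram of homotopy cofibrations
\[\diagram
A\rto^-{a_0}\ddouble & X_0\rto^-{f_0}\dto^{i_X} & Y_0\dto^{i_Y}\\
A\rto^-{i_X\circ a_0} & X\rto^-{f} & Y.
\enddiagram\]
Since $a_0$ is inert, Theorem~\ref{pushoutthm2} now applies, with $a_0$ in the role of $a'$, the maps $f_0$, $f$ horizontal and $i_X$, $i_Y$ vertical, and shows that if $\Omega i_Y$ has a right homotopy inverse then so does $\Omega i_X$; equivalently, $h_Y$ inert implies $h_X$ inert.

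The main obstacle is not really internal to this argument: the substantial content --- reducing inertness statements about homotopy pushouts to inertness statements about homotopy pullbacks (which behave well under looping) via the Beben--Theriault decomposition and its naturality --- is already packaged in Theorems~\ref{pushoutthm} and~\ref{pushoutthm2}. The one point here that requires some care is producing the homotopy pushout square from the degree one map without assuming simple connectivity: one must check, as in the proof of Lemma~\ref{deg1=pushoutlemma} but with homology replaced by homology with local coefficients when the spaces are not simply connected, that the canonical comparison map from the homotopy pushout of $X\leftarrow X_0\rightarrow Y_0$ to $Y$ is a homotopy equivalence. After that, it only remains to transpose the square so that $i_X$ and $i_Y$ occupy the vertical positions demanded by Theorems~\ref{pushoutthm} and~\ref{pushoutthm2}.
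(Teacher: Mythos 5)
Your proposal is correct and follows essentially the same route as the paper: recast the degree one map as the homotopy pushout $X_0\to Y_0$, $X\to Y$ via Lemma~\ref{degk=pushoutlemma}, then apply Theorem~\ref{pushoutthm} for (1), the homotopy commutativity of the square for (2), and Lemma~\ref{cone+pushoutlemma} together with Theorem~\ref{pushoutthm2} for (3). The one delicate point you flag --- that identifying the induced map on horizontal cofibres as an equivalence does not by itself make the left square a homotopy pushout outside the simply connected setting --- is real, but it is present equally in the paper's own proof, which simply asserts the pushout property as a consequence of Lemma~\ref{degk=pushoutlemma}; your suggestion to run the argument of Lemma~\ref{deg1=pushoutlemma} with local coefficients is a reasonable way to close it.
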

\begin{proof}
By Lemma \ref{degk=pushoutlemma}, the left square of Diagram (\ref{degk=pushouteq}) is a homotopy pushout 
\[
\diagram 
     X_0\rto^-{f_0}\dto^-{i_X} & Y_0\dto^-{i_Y} \\ 
     X\rto^-{f} & Y. 
  \enddiagram
\]
By definition, the inertness of the attaching maps $h_X$ and $h_Y$ is equivalent to the existence of right homotopy inverses of $\Omega i_X$ and $\Omega i_Y$, respectively. Accordingly, Theorem \ref{pushoutthm} can be applied to the homotopy pushout to prove statement (1), while the homotopy commutativity of the square implies statement (2) immediately. 
For statement (3), the assumption implies that there is a homotopy commutative diagram
\[
\diagram 
      A \ddouble  \rto^-{a_0} & X_0\rto^-{f_0}\dto^-{i_X} & Y_0\dto^-{i_Y} \\ 
     A \rto^-{a}          & X\rto^-{f} & Y,
  \enddiagram
\]
with $a=:i_X\circ a_0$. Since the right square is a homotopy pushout and the top row is a homotopy cofibration, Lemma \ref{cone+pushoutlemma} implies that the bottom row is also a homotopy cofibration. Then Theorem \ref{pushoutthm2} can be applied to the diagram to show statement (3).
\end{proof}

For the general case when $f$ is not necessarily of degree one, we can show a local version of Theorem \ref{inertdeg1thm}.

\begin{theorem}\label{inertdegkthm}
Let $X$ and $Y$ be two $CW$-complexes in Theorem \ref{inertdeg1thm} with further assumption that they are nilpotent and at least one of them is simply connected. 
Suppose that $f: X\stackrel{}{\longrightarrow} Y$ is a degree $k$ map with $k\neq 0$. Then the three conclusions in Theorem \ref{inertdeg1thm} hold after localization away from all primes $p$ that divide $k$.
\end{theorem}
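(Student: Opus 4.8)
The plan is to obtain this by applying Theorem \ref{inertdeg1thm} in the category of nilpotent spaces localized away from $P:=\{\,p\text{ prime}:p\mid k\,\}$; write $(-)_L$ for this localization. The point is that in the proof of Theorem \ref{inertdeg1thm} the hypothesis $\deg(f)=1$ was used only to force the self-map $k\colon S^n\to S^n$ on top cells in Diagram (\ref{degk=pushouteq}) to be a homotopy equivalence, so that the left square of (\ref{degk=pushouteq}) becomes a homotopy pushout; once every prime dividing $k$ has been inverted, $k$ acts invertibly, so the localized self-map $k_L\colon S^n_L\to S^n_L$ is automatically a homotopy equivalence. Since $X$ and $Y$ are nilpotent, $X_L$ and $Y_L$ are defined and have the homotopy type of $CW$-complexes, and localization commutes with the loop functor; hence ``$h_X$ is inert after localization away from $P$'' means exactly that $\Omega\big((i_X)_L\big)$, equivalently $(\Omega i_X)_L$, admits a right homotopy inverse, and likewise for $h_Y$ and for the inertness of $f_0$.

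First I would localize the diagram of Lemma \ref{degk=pushoutlemma}. Localization away from $P$ preserves its two homotopy cofibrations (the cofibres being spheres, hence nilpotent), and the induced map on cofibres is $k_L$, a homology isomorphism between simply connected spaces and therefore a homotopy equivalence. Forming the homotopy pushout $Q$ of $(Y_0)_L\xleftarrow{(f_0)_L}(X_0)_L\xrightarrow{(i_X)_L}X_L$ and comparing the homotopy cofibration $(Y_0)_L\to Q\to S^n_L$ it produces with $(Y_0)_L\to Y_L\to S^n_L$ via the canonical map $Q\to Y_L$ --- which is the identity on $(Y_0)_L$ and $k_L$ on cofibres --- one sees $Q\to Y_L$ is a homology isomorphism; the hypothesis that $X$ or $Y$ is simply connected keeps the fundamental groups under control, so the Whitehead theorem for nilpotent spaces gives $Q\simeq Y_L$, i.e.\ the localization of the left square of (\ref{degk=pushouteq}) is a homotopy pushout. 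With this in hand I would re-run the proof of Theorem \ref{inertdeg1thm} verbatim, localized away from $P$: for (1), the localized $f_0$ being inert means $\Omega\big((f_0)_L\big)$ has a right homotopy inverse, so Theorem \ref{pushoutthm} applied to the homotopy pushout above shows $\Omega\big((i_X)_L\big)$ has a right homotopy inverse if and only if $\Omega\big((i_Y)_L\big)$ does; for (2) the homotopy commutativity of that square alone suffices; for (3) one localizes the homotopy cofibration $A\stackrel{a_0}{\longrightarrow}X_0\stackrel{f_0}{\longrightarrow}Y_0$ (with $A$ nilpotent, so that it is defined and preserved), uses Lemma \ref{cone+pushoutlemma} to produce the homotopy cofibration $A_L\to X_L\to Y_L$, and applies Theorem \ref{pushoutthm2}. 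All of Theorems \ref{pushoutthm} and \ref{pushoutthm2}, together with the decomposition results on which they rest (Theorems \ref{GTcofib} and \ref{cubethm}, Proposition \ref{GTcofibnat}, and the half-smash lemmas of Section \ref{sec: halfsmash}), remain valid for localized nilpotent spaces, since every construction in their proofs is homotopy-theoretic and commutes with localization.

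The main obstacle is the localization-and-nilpotence bookkeeping rather than any new homotopical idea: one must verify that localization away from $P$ commutes with the specific homotopy cofibrations and pushouts in play --- which forces the skeleta $X_0,Y_0$, the auxiliary space $A$, and the various cofibres to be nilpotent --- and that the fundamental groups behave well enough to invoke the nilpotent Whitehead theorem at the single point where it is needed (this is exactly what ``at least one of $X$, $Y$ is simply connected'' provides). These are standard facts from the theory of localization of nilpotent spaces (see, e.g., Hilton--Mislin--Roitberg or Bousfield--Kan), but they are where all the care must go; granting them, the three conclusions follow formally from Theorem \ref{inertdeg1thm} read locally.
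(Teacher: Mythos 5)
Your proposal is correct and follows essentially the same strategy as the paper: after inverting the primes dividing $k$, the degree map becomes invertible and everything reduces to the degree one case of Theorem \ref{inertdeg1thm}. The paper packages this slightly differently: rather than arguing directly that the localized left square of (\ref{degk=pushouteq}) is a homotopy pushout, it rescales the attaching map of the top cell of the simply connected one of $X$, $Y$ by $k$ (using a local degree $\tfrac{1}{k}$ self-map of $S^{n-1}$) to build an auxiliary complex $Y'$ (resp.\ $X'$) that is locally equivalent to $Y$ (resp.\ $X$), so that $\tau_k\circ f$ (resp.\ $f\circ\tau_k^{-1}$) is an honest degree one map to which Theorem \ref{inertdeg1thm} applies verbatim. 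This choice is not just cosmetic: it confines the Whitehead-theorem step to a map between simply connected complexes, whereas your comparison $Q\to Y_L$ requires the nilpotent Whitehead theorem applied to the pushout $Q$, whose nilpotence is not automatic in the case where only $X$ is simply connected --- exactly the bookkeeping you flag as "where all the care must go." If you adopt the paper's device of always modifying the simply connected factor, that residual care evaporates and your argument closes cleanly.
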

\begin{proof}
We work in the homotopy category away from the primes $p$ that divide $k$. 

Suppose that $Y$ is simply connected. 
Consider the diagram of homotopy cofibre sequences
\begin{equation}\label{1/kYdiag}
\diagram 
      S^{n-1}\rto^-{h_Y}\dto^{\frac{1}{k}} & Y_0\rto^-{i_Y}\ddouble & Y\dto^{\tau_k} \rto^{q_Y} & S^n \dto^{\frac{1}{k}} \\ 
      S^{n-1}\rto^-{h_Y'} & Y_0\rto^-{i_Y'} &   Y'  \rto^{q_Y'}  & S^n,
  \enddiagram
\end{equation}
where $h_Y'=k\cdot h_Y$ with the homotopy cofibre $Y'$, the maps $\frac{1}{k}$ are self-maps of degree $\frac{1}{k}$, and $\tau_k$ and $q_Y'$ are the induced maps. In particular, the map $\tau_k$ is of degree $1/k$. 
Further, the five lemma implies that the map $Y\stackrel{\tau_k}{\longrightarrow} Y'$ induces an isomorphism on local homology, and hence a local homotopy equivalence by the Whitehead theorem. In particular, $h_Y$ is inert if and only if $h_Y'$ is inert. 

Combining Diagram (\ref{degk=pushouteq}) and (\ref{1/kYdiag}) we have the homotopy cofibration diagram 
\[
\diagram 
      X_0\rto^-{i_X}\dto^{f_0} & X\dto^{\tau_k\circ f}  \rto^-{q_X}& S^{n}\ddouble &  \\ 
      Y_0\rto^-{i_Y'} &   Y' \rto^-{q_Y'}  &S^{n} ,
  \enddiagram
\]
or equivalently, the map $\tau_k\circ f: X\stackrel{}{\longrightarrow} Y'$ is of degree one and $(\tau_k\circ f)_0\simeq f_0$. Hence, the three conclusions of Theorem \ref{inertdeg1thm} hold for the map $\tau_k\circ f$. 

To summarize, we have showed that the three conclusions of Theorem \ref{inertdeg1thm} hold for the map $\tau_k\circ f$, $(\tau_k\circ f)_0\simeq f_0$, the map $\tau_k$ is a homotopy equivalence, and $h_Y$ is inert if and only if $h_Y'$ is inert. Then it is easy to check that the hypotheses in the three conclusions of Theorem \ref{inertdeg1thm} are equivalent for the maps $f$ and $\tau_k\circ f$. Therefore, the three conclusions of Theorem \ref{inertdeg1thm} also hold for the map $f$. This shows the theorem when $Y$ is simply connected.

Suppose that $X$ is simply connected. We can apply the above trick to $X$ and produce a degree one map $X'\stackrel{\simeq}{\longrightarrow} X\stackrel{f}{\longrightarrow} Y$. A similar argument then will prove the theorem in this case. 
\end{proof}

We can now prove Theorem \ref{inertdeg1thmintro}. 

\begin{proof}[Proof of Theorem \ref{inertdeg1thmintro}]
By assumption, the two Poincar\'{e} duality complexes $M$ and $N$ satisfy that $fd(M)=fd(N)$, both $M$ and $N$ have a single top cell, and $f: M\stackrel{}{\longrightarrow} N$ is a degree one or $k$ map depending on the two cases in the statement. Then the theorem follows from Theorems \ref{inertdeg1thm} and \ref{inertdegkthm} immediately. 
\end{proof}

Theorem \ref{inertdeg1thmintro} can be strengthened when the map $f$ is a covering map. In this case, $f$ has to be a finite cover, for an infinite cover over a closed manifold is an open manifold.  

We begin with some basics on loop spaces. 
For a space $X$ and a loop $\omega: S^1\stackrel{}{\longrightarrow} X$, denote by $\Omega_\omega X$ the path component of $\Omega X$ containing the loop $\omega$. When $\omega$ is null homotopic, denote $\Omega_0 X:=\Omega_\omega X$, which is the path component of $\Omega X$ containing the constant loop.
It is clear that any map $g: X\stackrel{}{\longrightarrow} Y$ induces a map 
\[
\Omega_\omega g: \Omega_\omega X\stackrel{}{\longrightarrow} \Omega_{g\circ \omega} Y,
\]
for any $\omega \in \Omega X$. Further, the loop product operation defines a map
\[
\omega: \Omega X\stackrel{}{\longrightarrow}\Omega X
\]
by $\omega (\omega')=\omega\ast \omega'$. Its restriction
\[
\omega: \Omega_{\omega'} X\stackrel{}{\longrightarrow}\Omega_{\omega\ast \omega'} X 
\]
on each path component $\Omega_{\omega'} X$ is a homotopy equivalence. 

\begin{lemma}\label{loop0lemma}
For a map $g: X\stackrel{}{\longrightarrow} Y$, 
the map $\Omega g: \Omega X\stackrel{}{\longrightarrow} \Omega Y$ has a right homotopy inverse if and only if $g_\ast: \pi_1(X)\stackrel{}{\longrightarrow} \pi_1(Y)$ is an epimorphism and 
$\Omega_0 g: \Omega_0 X\stackrel{}{\longrightarrow} \Omega_0 Y$ has a right homotopy inverse.  
\end{lemma}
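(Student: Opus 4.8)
The plan is to reduce everything to the path-component decomposition of loop spaces. First I would recall that $\pi_0(\Omega X)\cong\pi_1(X)$, so that $\Omega X=\bigsqcup_{p\in\pi_1(X)}\Omega_p X$, where $\Omega_p X$ is the path component of any loop of class $p$, and similarly for $Y$; by the loop-product equivalences recalled just before the statement, for every $\omega\in\Omega X$ the map $\omega\ast(-)\colon\Omega_0 X\stackrel{\simeq}{\longrightarrow}\Omega_\omega X$ is a homotopy equivalence, and likewise in $Y$. Since $\Omega g$ carries $\Omega_p X$ into $\Omega_{g_\ast p}Y$, on $\pi_0$ it realizes $g_\ast\colon\pi_1(X)\to\pi_1(Y)$. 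The one identity I would isolate and use repeatedly is that $g$ commutes strictly with concatenation, $g\circ(\omega\ast\omega')=(g\circ\omega)\ast(g\circ\omega')$, so that for each $\omega$ the square with top edge $\Omega_0 g$, bottom edge $\Omega_\omega g$, and vertical maps $\omega\ast(-)$ and $(g\circ\omega)\ast(-)$ commutes on the nose; informally, every restriction of $\Omega g$ to a component ``is'' $\Omega_0 g$ up to these equivalences.

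For the forward implication, suppose $s\colon\Omega Y\to\Omega X$ is a right homotopy inverse of $\Omega g$. Applying $\pi_0$ to $\Omega g\circ s\simeq 1_{\Omega Y}$ gives $g_\ast\circ\pi_0(s)=1_{\pi_1(Y)}$, hence $g_\ast$ is an epimorphism. To produce a right homotopy inverse of $\Omega_0 g$, I would restrict $s$ to $\Omega_0 Y$; its image lies in a single component $\Omega_p X$, and since $\Omega g\circ s\simeq 1_{\Omega Y}$ restricts on $\Omega_0 Y$ to a map homotopic to the inclusion $\Omega_0 Y\hookrightarrow\Omega Y$ (a path-component argument), one gets $\Omega_{g_\ast p}Y=\Omega_0 Y$, i.e.\ $g_\ast p=e$. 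Choosing a loop $\ell\in\Omega_p X$ and setting $s_0:=\big(\overline{\ell}\ast(-)\big)\circ s\vert_{\Omega_0 Y}\colon\Omega_0 Y\to\Omega_0 X$, the strict square together with the based nullhomotopy of $g\circ\ell$ (available since $g_\ast p=e$) and the restricted homotopy $\Omega g\circ s\simeq 1$ yields $\Omega_0 g\circ s_0\simeq 1_{\Omega_0 Y}$.

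For the converse, I would choose a set-theoretic section $\sigma$ of $g_\ast$ together with loops $\ell_q\in\Omega_{\sigma(q)}X$ for each $q\in\pi_1(Y)$, and let $s_0$ be a right homotopy inverse of $\Omega_0 g$. Define $s\colon\Omega Y\to\Omega X$ componentwise by $s(\omega)=\ell_q\ast s_0\big(\overline{g\circ\ell_q}\ast\omega\big)$ for $\omega\in\Omega_q Y$, noting that the argument of $s_0$ lies in $\Omega_0 Y$. Then the computation $\Omega g\circ s(\omega)=(g\circ\ell_q)\ast(\Omega_0 g\circ s_0)\big(\overline{g\circ\ell_q}\ast\omega\big)\simeq(g\circ\ell_q)\ast\overline{g\circ\ell_q}\ast\omega\simeq\omega$, using $\Omega_0 g\circ s_0\simeq 1$, homotopy-associativity of concatenation, and cancellation of a loop with its reverse, shows $\Omega g\circ s\simeq 1_{\Omega Y}$ once one checks that the per-component homotopies assemble over the disjoint union.

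I do not expect a genuine obstacle: the content is entirely formal. The only care required is bookkeeping --- tracking which path component each loop and each homotopy inhabits, and observing that homotopies defined separately on the (simultaneously open and closed) components of $\Omega X$ and $\Omega Y$ automatically glue into homotopies on the whole loop spaces.
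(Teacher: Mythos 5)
Your proposal is correct and follows essentially the same route as the paper: decompose $\Omega X$ and $\Omega Y$ into path components, use surjectivity of $g_\ast$ to pick a component $\Omega_\omega X$ over each $\Omega_\lambda Y$, and transport the right inverse $s_0$ of $\Omega_0 g$ by the loop-multiplication equivalences $\omega\ast(-)$ and $(g\circ\omega)\ast(-)$. The only difference is that you write out the forward implication in detail where the paper dismisses it as clear; your argument there is also fine.
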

\begin{proof}
The necessity is clear. For sufficiency, suppose that $\Omega_0 X\stackrel{\Omega_0 g}{\longrightarrow} \Omega_0 Y$ has a right homotopy inverse $\Omega_0 Y\stackrel{s_0}{\longrightarrow} \Omega_0 X$. 
Consider the homotopy commutative diagram
\[
\diagram
\Omega_0 Y \rto^{s_0} \dto^{\lambda} &  \Omega_0 X \rto^{\Omega_0 g} \dto^{\omega} & \Omega_0 Y \dto^{\lambda} \\
\Omega_\lambda Y \rto^{s_\lambda}  &  \Omega_\omega X \rto^{\Omega_\omega g} & \Omega_\lambda Y,
\enddiagram
\]
where $\lambda= g\circ \omega$ and $s_\lambda:=\omega \circ s_0\circ \lambda^{-1}$. The right square commutes automatically, while the left square homotopy commutes as $\lambda^{-1}$ is a homotopy inverse of $\lambda$. Since $\lambda$ is a homotopy equivalence and the top composition $\Omega_0 g\circ s_0$ is homotopic to the identity map, the diagram implies that the bottom composition $\Omega_\omega g\circ s_\lambda$ is homotopic to the identity map, that is, $\Omega_\omega g$ has a right homotopy inverse. Since by assumption $g_\ast: \pi_1(X)\stackrel{}{\longrightarrow} \pi_1(Y)$ is an epimorphism, for each path component $\Omega_\lambda Y$ there is a loop $\omega \in \Omega X$ such that $g_\ast([\omega])=[\lambda]$, and then $\Omega_{g\circ \omega} Y=\Omega_\lambda Y$. Combining the previous argument, we see that for each path component $\Omega_\lambda Y$ of $\Omega Y$, there exists a path component $\Omega_\omega X$ of $\Omega X$ such that the map $\Omega g$ restricts to $\Omega_\omega X\stackrel{\Omega_\omega g}{\longrightarrow}\Omega_\lambda Y$, and this restriction has a right homotopy inverse. It follows that the map $\Omega X\stackrel{\Omega g}{\longrightarrow}\Omega Y$ has a right homotopy inverse. 
\end{proof}

Let $M$ and $N$ be two connected $n$-dimensional Poincar\'{e} duality complexes with a single top cell. There are homotopy cofibrations
\[
S^{n-1}\stackrel{h_M}{\longrightarrow} M_0\stackrel{i_M}{\longrightarrow} M, \ \ 
S^{n-1}\stackrel{h_N}{\longrightarrow} N_0\stackrel{i_N}{\longrightarrow} N,
\]
where $M_0$ and $N_0$ are the $(n-1)$-skeletons of $M$ and $N$ with the indicated structural maps, respectively. In the following we study the inertness property around a finite covering map $f: M\stackrel{}{\longrightarrow}N$. Note that $f$ is clearly not cellular, while the existence of the finite cover $f$ implies that the cellular finite cover of the $CW$-complex $N$ is homotopy equivalent to a $CW$-complex $M$ with a single top cell. 

\begin{lemma}\label{coverinertlemma1}
Let $f: M\stackrel{}{\longrightarrow} N$ be a finite cover of connected $n$-dimensional Poincar\'{e} duality complexes with a single top cell and $n\geq 2$. 
If the attachment map for the top cell of $M$ is inert, then the attachment map for the top cell of $N$ is inert.
\end{lemma}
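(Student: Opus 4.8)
The plan is to peel off, via Lemma \ref{loop0lemma}, the $\pi_1$-part of the problem and reduce to producing a right homotopy inverse for $\Omega_0 i_N$, and then to build such an inverse from the hypothesis on $M$ together with the one special feature of a covering map: it is a $\pi_k$-isomorphism in all degrees $\geq 2$.

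First I would set up the comparison square. A finite covering has nonzero degree (equal to $\pm$ the number of sheets), so Lemma \ref{degk=pushoutlemma} applies to $f$; in particular its left-hand square homotopy commutes, giving $f\circ i_M\simeq i_N\circ f_0$, where $f_0\colon M_0\to N_0$ is the restriction to the lower skeletons. Looping this square and restricting every term to its basepoint path component yields a homotopy commutative square relating $\Omega_0 i_M$, $\Omega_0 f_0$, $\Omega_0 i_N$ and $\Omega_0 f$. Next I would observe that $\Omega_0 f\colon\Omega_0 M\to\Omega_0 N$ is a homotopy equivalence: the homotopy fibre of a covering map is discrete, so $\pi_k(f)$ is an isomorphism for all $k\geq 2$, whence $\Omega_0 f$ induces an isomorphism on $\pi_j$ for every $j$ (trivially on $\pi_0$, and via $\pi_j(\Omega_0 X)\cong\pi_{j+1}(X)$ for $j\geq 1$); since $\Omega M$ and $\Omega N$ have the homotopy type of $CW$-complexes, Whitehead's theorem finishes this step.

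Now I would run the main argument. Since $h_M$ is inert, $\Omega i_M$ has a right homotopy inverse, so by the (easy) necessity direction of Lemma \ref{loop0lemma} the restricted map $\Omega_0 i_M$ has a right homotopy inverse $s$. Feeding $s$ into the comparison square gives
\[
\Omega_0 i_N\circ(\Omega_0 f_0\circ s)\simeq\Omega_0 f\circ\Omega_0 i_M\circ s\simeq\Omega_0 f,
\]
and composing with a homotopy inverse of $\Omega_0 f$ exhibits a right homotopy inverse of $\Omega_0 i_N$. Finally, since $n\geq 2$ the skeleton $N_0$ contains the $1$-skeleton of $N$, so $(i_N)_\ast\colon\pi_1(N_0)\to\pi_1(N)$ is surjective; the sufficiency direction of Lemma \ref{loop0lemma} then upgrades the right homotopy inverse of $\Omega_0 i_N$ to a right homotopy inverse of $\Omega i_N$, i.e. $h_N$ is inert.

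The one step carrying real content is the assertion that $\Omega_0 f$ is a homotopy equivalence; this is precisely where the covering hypothesis enters essentially, and it is what removes the restriction to primes not dividing the degree that appears in the general nonzero-degree result (Theorem \ref{inertdegkthm}). The remaining ingredients are formal manipulations with path components of loop spaces, cleanly packaged by Lemma \ref{loop0lemma}.
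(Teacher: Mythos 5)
Your proof is correct and follows essentially the same route as the paper: loop the comparison square of skeletal inclusions, use Lemma \ref{loop0lemma} to reduce to basepoint components, exploit that $\Omega_0 f$ is a homotopy equivalence for a covering map, and use surjectivity of $\pi_1(N_0)\to\pi_1(N)$ to upgrade back to $\Omega i_N$. The only cosmetic difference is that you invoke Lemma \ref{degk=pushoutlemma} to obtain the square and spell out why $\Omega_0 f$ is an equivalence, whereas the paper gets the square directly from $CW$-approximation and leaves that step as "clear."
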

\begin{proof}
By the $CW$-approximation there is the homotopy commutative square 
\[
\diagram
M_0 \rto^{f_0} \dto^{i_M}  & N_0 \dto^{i_N}\\
M\rto^{f}                       &  N,
\enddiagram
\]
where $f_0$ is the restriction of $f$ on the lower skeletons. Applying the loop functor to the square and then   restricting to the path components containing the constant loop, we obtain the homotopy commutative square  
\[
\diagram
\Omega_0 M_0 \rto^{\Omega_0 f_0} \dto^{\Omega_0 i_M}  & \Omega_0 N_0 \dto^{\Omega_0 i_N}\\
\Omega_0 M\rto^{\Omega_0 f}                       &  \Omega_0 N.
\enddiagram
\]
By Lemma \ref{loop0lemma}, the condition that the attachment map for the top cell of $M$ is inert implies that $\Omega_0 i_M$ has a right homotopy inverse. 
Since $f$ is a cover, it is clear that $\Omega_0 f$ is a homotopy equivalence, and then the diagram implies that $\Omega_0 i_N$ has a right homotopy inverse. Also, since the dimension $n\geq 2$, the lower skeleton inclusion $i_N: N_0\stackrel{}{\longrightarrow} N$ induces an epimorphism on the fundamental groups. Therefore, Lemma \ref{loop0lemma} implies that $\Omega i_N$ has a right homotopy inverse, that is, the attachment map for the top cell of $N$ is inert.
\end{proof}

\begin{lemma}\label{coverinertlemma2}
Let $f: M\stackrel{}{\longrightarrow} N$ be a finite cover of connected $n$-dimensional Poincar\'{e} duality complexes with a single top cell. 
If the attachment map for the top cell of $N$ is inert, then the attachment map for the top cell of $M$ is inert.
\end{lemma}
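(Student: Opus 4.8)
The plan is to exhibit the covering $f$ inside a homotopy pullback square whose remaining maps are the lower-skeleton inclusions, and then feed this square into Theorem~\ref{pullbackthm}~(3).

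First I would set $M_0':=f^{-1}(N_0)$. Since $f$ is a covering, its restriction $f'\colon M_0'\to N_0$ is again a finite covering, $M_0'$ carries an $(n-1)$-dimensional $CW$-structure, and the square
\[
\diagram
M_0'\rto^-{f'}\dto^-{j} & N_0\dto^-{i_N}  \\
M\rto^-{f} & N
\enddiagram
\]
(with $j$ the inclusion) is a strict pullback; as covering maps are fibrations it is a homotopy pullback. The crucial preliminary point is that $\Omega f'$ admits a \emph{left} homotopy inverse. Indeed, since $f'$ is a finite cover, $\pi_1(M_0')\to\pi_1(N_0)$ is injective while $\pi_k(M_0')\to\pi_k(N_0)$ is an isomorphism for $k\ge 2$; hence $\Omega f'$ carries every path component of $\Omega M_0'$ by a homotopy equivalence onto a path component of $\Omega N_0$, and the induced map on $\pi_0$ is injective. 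Therefore $\Omega f'$ is a homotopy equivalence of $\Omega M_0'$ onto the (clopen) union of those path components of $\Omega N_0$ lying in its image, and a left homotopy inverse of $\Omega f'$ is obtained by composing a homotopy inverse of this equivalence with the retraction of $\Omega N_0$ onto that union (collapsing the remaining components to the basepoint).

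Next I would apply Theorem~\ref{pullbackthm}~(3) to the displayed homotopy pullback, read with $\psi'=f'$, $\psi=f$, $a=j$ and $b=i_N$. The hypothesis of that part holds because $\Omega f'=\Omega\psi'$ has a left homotopy inverse, and by assumption $h_N$ is inert, i.e.\ $\Omega i_N$ has a right homotopy inverse; hence the theorem yields that $\Omega j$ has a right homotopy inverse. It remains to transfer this to $i_M$. Since $\dim M_0'\le n-1$ while $M$ has the minimal $CW$-structure $M=M_0\cup_{h_M}e^n$ with $(n-1)$-skeleton $M_0$, cellular approximation factors the inclusion $j$ up to homotopy through the $(n-1)$-skeleton, say $j\simeq i_M\circ\iota'$ for some $\iota'\colon M_0'\to M_0$. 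Then $\Omega j\simeq \Omega i_M\circ\Omega\iota'$, and if $t$ is a right homotopy inverse of $\Omega j$ we get $\Omega i_M\circ(\Omega\iota'\circ t)\simeq\Omega j\circ t\simeq 1_{\Omega M}$; thus $\Omega i_M$ has a right homotopy inverse, which is exactly the inertness of $h_M$.

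The step I expect to require the most care is the left-homotopy-inverse claim for $\Omega f'$: the idea is transparent, but one must carry out the componentwise bookkeeping over $\pi_1$ and then assemble the componentwise homotopy inverses and homotopies into genuine maps on the disjoint unions. Everything else is formal once Theorem~\ref{pullbackthm}~(3) and cellular approximation are in hand. One should also record that the low-dimensional degenerate cases are harmless; for instance when $n\le 1$ the complex $N$ is $S^1$ and $h_N$ is not inert, so the statement is vacuous.
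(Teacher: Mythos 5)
Your proof is correct and follows essentially the same route as the paper: form the pullback of $i_N$ along the covering $f$, feed the resulting homotopy pullback into Theorem~\ref{pullbackthm}, and then factor the inclusion $M_0'\to M$ through the $(n-1)$-skeleton by cellular approximation. The one place you diverge is in invoking part~(3) of Theorem~\ref{pullbackthm}, which forces you to establish that $\Omega f'$ has a left homotopy inverse --- the componentwise argument you sketch (injectivity on $\pi_0=\pi_1$, equivalence on each component, assembly over the clopen decomposition) does go through, but it is exactly the step you flag as delicate, and it can be avoided entirely: since looping preserves homotopy pullbacks, one may apply part~(1) of Theorem~\ref{pullbackthm} directly to the looped square, where the hypothesis that $\Omega i_N$ has a right homotopy inverse immediately yields one for $\Omega j$ with no condition on $\Omega f'$ at all. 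This is what the paper does, and it shortens the argument to its formal core.
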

\begin{proof}
Consider the pullback diagram 
\[\xymatrix{ 
M|_0\rto^-{f|_0}\dto^-{i|_0} & N_0\dto^-{i_N} \\ 
      M\rto^-{f} & N
}\]
defining the space $M|_0$ with the structural maps $f|_0$ and $i|_0$. Since $f$ is a finite cover, so is $f|_0$ and the pullback square is also a homotopy pullback. By assumption, the attachment map for the top cell of $N$ is inert, that is, the map $\Omega i_N$ has a right homotopy inverse. Then as the loop functor preserves homotopy pullback, Theorem \ref{pullbackthm} (1) implies that $\Omega i|_0$ has a right homotopy inverse. 
Furthermore, since the lower skeleton $N_0$ is homotopy equivalent to a $CW$-complex of dimension strictly less than $n$, so is its finite cover $M|_0$, and then by the $CW$-approximation the map $M|_0\stackrel{i|_0}{\longrightarrow} M$ factors as 
\[
M|_0\stackrel{\mathfrak{i}|_0}{\longrightarrow} M_0\stackrel{i_M}{\longrightarrow} M
\]
for some map $\mathfrak{i}|_0$. As we have shown that $\Omega i|_0$ has a right homotopy inverse, so does the map $\Omega i_M$. This means that the attachment map for the top cell of $M$ is inert.
\end{proof}

Combining Lemma \ref{coverinertlemma1} and \ref{coverinertlemma2}, we obtain the following proposition immediately.
\begin{proposition}\label{coverprop}
Let $f: M\stackrel{}{\longrightarrow} N$ be a finite cover of connected $n$-dimensional Poincar\'{e} duality complexes with a single top cell and $n\geq 2$. 
Then the attachment map for the top cell of $M$ is inert if and only if the attachment map for the top cell of $N$ is inert.       $\qqed$
\end{proposition}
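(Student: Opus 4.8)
The plan is to obtain the proposition by simply assembling the two preceding lemmas: Lemma \ref{coverinertlemma1} supplies the implication ``top cell of $M$ inert $\Rightarrow$ top cell of $N$ inert'', and Lemma \ref{coverinertlemma2} supplies the converse, so together they yield the desired equivalence. Thus the substance lies entirely in those two lemmas, and I would present the argument as the two separate implications.

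For the forward direction I would start from the $CW$-approximation square relating the lower-skeleton inclusions $i_M$, $i_N$ and the restricted map $f_0$, loop it, and then restrict to the path components of the constant loops. Since $f$ is a finite covering map it is a homotopy equivalence on universal covers, hence $\Omega_0 f$ is a homotopy equivalence; this transports a right homotopy inverse of $\Omega_0 i_M$ (which exists by inertness of $h_M$ together with the necessity part of Lemma \ref{loop0lemma}) to one of $\Omega_0 i_N$. To upgrade this to a genuine right homotopy inverse of the full loop map $\Omega i_N$, I would invoke Lemma \ref{loop0lemma} once more: the only extra input needed is surjectivity of $(i_N)_\ast$ on $\pi_1$, which holds because $n\geq 2$ and attaching a single $n$-cell with $n\geq 2$ does not change the fundamental group.

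For the reverse direction I would form the homotopy pullback of $i_N\colon N_0\to N$ along $f\colon M\to N$, producing a finite cover $M|_0\to N_0$ and a map $i|_0\colon M|_0\to M$. Looping preserves homotopy pullbacks, so a right homotopy inverse of $\Omega i_N$ (which exists by inertness of $h_N$) pulls back, by Theorem \ref{pullbackthm}(1), to a right homotopy inverse of $\Omega i|_0$. Finally, since $N_0$ has the homotopy type of a complex of dimension $<n$, so does its finite cover $M|_0$, hence $i|_0$ factors up to homotopy through the $(n-1)$-skeleton $M_0$ via $CW$-approximation; therefore $\Omega i_M$ acquires a right homotopy inverse, i.e.\ $h_M$ is inert.

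The proposition then follows at once. There is no genuine obstacle remaining at this stage; the real work has already been done in Theorem \ref{pullbackthm} and in Lemmas \ref{loop0lemma}, \ref{coverinertlemma1} and \ref{coverinertlemma2}. The one point warranting care is the bookkeeping with path components of loop spaces — specifically the observation, packaged in Lemma \ref{loop0lemma}, that over a $\pi_1$-surjective map it suffices to produce a right homotopy inverse component by component — but this has already been isolated as a lemma, so here it is invoked as a black box.
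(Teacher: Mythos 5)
Your proposal is correct and follows exactly the paper's route: the proposition is obtained by combining Lemma \ref{coverinertlemma1} and Lemma \ref{coverinertlemma2}, whose proofs proceed just as you sketch (the looped $CW$-approximation square restricted to the components of constant loops plus Lemma \ref{loop0lemma} for the forward direction, and the homotopy pullback of $i_N$ along $f$ together with Theorem \ref{pullbackthm}(1) and the dimension argument for the reverse). No gaps.
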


\newpage


\section{Algebraic and geometric intersections}
\label{sec: int}
In this section, we study the inertness of the top cell attachment of a Poincar\'{e} duality complex by intersection theory. 

The original idea is geometric. Let $M$ be an oriented closed smooth manifold. Let $A$ and $B$ be two closed submanifolds of $M$ with complementary dimensions. In differential topology, there is a classical notion of the intersection number of $A$ and $B$ in $M$. If the intersection number is nonzero, it is possible to construct a nonzero degree map $f: A\times B\stackrel{}{\longrightarrow} M$. Then Theorem \ref{inertdeg1thmintro} (1) can be applied to study the inertness of the top cell attachment of $M$. 

It turns out that this idea can be generalized by an algebraic version of the intersection number for general Poincar\'{e} duality complexes, based on the classical fact that intersection number can be characterized by cup product and Poincar\'{e} duality. Accordingly, we first introduce a notion of algebraic intersection number for Poincar\'{e} duality complexes, and then show the equality of intersection number and mapping degree under mild conditions. Next, we work in the context of algebraic intersection and prove Theorem \ref{pdtintthm}, which addresses the inertness of the top cell attachment of a Poincaré duality complex. We then specify the result to various cases, including Poincar\'{e} duality complexes with homotopy actions, topological homogeneous spaces and smooth intersections of submanifolds. Finally, we discuss Steenrod's realization problem of cohomology classes by submanifolds, as it is closely related. The last two subsections, which are in the smooth context, explain the geometric ideas for this section.  
\subsection{Algebraic intersection}
\label{subsec: alg-int}

$\, $

Let $M$ be an $n$-dimensional connected Poincar\'{e} duality complex. Denote by $[-]$ the {\it fundamental class} of a Poincar\'{e} duality complex. Recall that the classical Poincar\'{e} duality is an isomorphism
\[
[M]\cap (-): H^{i}(M;\mathbb{Z})\stackrel{}{\longrightarrow} H_{n-i}(M;\mathbb{Z})
\]
for any $i\in \mathbb{Z}$, where $\cap$ is the cap product. For any $w\in  H_{n-i}(M;\mathbb{Z})$, denote its unique preimage by $w^\ast$, called the {\it Poincar\'{e} dual} of $w$. In particular, $[M]\cap w^\ast=w$.

Let $A$ and $B$ be two connected Poincar\'{e} duality complexes of dimension $m$ and $n-m$, respectively, and $n\geq m$. Denote by 
\[
\langle- ,- \rangle: H^\ast(M;\mathbb{Z})\times H_\ast(M;\mathbb{Z})\longrightarrow \mathbb{Z}
\]
the {\it Kronecker pairing} between cohomology and homology. The following definition is an algebraic version of the corresponding geometric concept which will be explained in Subsection \ref{subsec: geo-int}. 
\begin{definition}\label{alg-intdef}
Let $A\stackrel{f_A}{\longrightarrow} M$ and $B\stackrel{f_B}{\longrightarrow} M$ be two maps. 
The integer
\begin{equation}\label{int=1eq}
A \testcap B:= \langle (f_{A\ast}([A]))^\ast\cup (f_{B\ast}([B]))^\ast, [M]\rangle \in \mathbb{Z}
\end{equation}
is called the {\it algebraic intersection number} of $f_A$ and $f_B$, or simply the {\it intersection number} of $A$ and $B$ in $M$. 

Further, 
if $A \testcap B\neq 0$, we say that $A$ and $B$ {\it essentially intersect in $M$}; 
if $A \testcap B=1$ we say that $A$ and $B$ are {\it dual to each other in $M$}.
\end{definition}

For convenience, the Poincar\'{e} duals $(f_{A\ast}([A]))^\ast$ and $(f_{B\ast}([B]))^\ast$ are usually referred to as the {\it Poincar\'{e} duals} of $A$ and $B$ in $M$. 
\begin{remark}\label{ordxrmk}
Note that if $A$ and $B$ essentially intersect in $M$, the Poincar\'{e} duals of $A$ and $B$ are of infinite order. 
\end{remark}

\subsection{Intersection number vs degree}
\label{subsec: int-deg} 

$\, $

Let $M$ be an $n$-dimensional connected Poincar\'{e} duality complex. Let $A$ and $B$ be two connected Poincar\'{e} duality complexes of dimension $m$ and $n-m$, respectively, and $n\geq m$. 
Suppose that there is a map 
\begin{equation}\label{fABMeq}
f: A\times B\stackrel{}{\longrightarrow} M.
\end{equation}
Denote by 
\[
f_A: A\stackrel{i_1}{\longrightarrow} A\times B\stackrel{f}{\longrightarrow} M ,\ \ \ 
f_B: B\stackrel{i_2}{\longrightarrow} A\times B\stackrel{f}{\longrightarrow} M 
\]
the restrictions of $f$ to the two factors of $A\times B$, where $i_1$ and $i_2$ are the inclusions. On the one hand, from Definition \ref{alg-intdef} there is the intersection number  $A\testcap B$ of $A$ and $B$ in $M$ defined through the two maps $f_A$ and $f_B$. On the other hand, as $A\times B$ and $M$ are Poincar\'{e} duality complexes of the same dimension there is the degree ${\rm deg}(f)$ of the map $f$. We want to show that they are equal under a primitive condition.  

\begin{definition}\label{primdef}
A cohomology class $z\in H^\ast(M;\mathbb{Z})$ is {\it $f$-primitive} if its pullback $f^\ast(z)$ satisfies 
\[
f^\ast(z)=f_A^\ast(z)\otimes 1 + 1\otimes f_B^\ast(z)\in H^\ast(A;\mathbb{Z})\otimes H^\ast(B;\mathbb{Z})
\]  
where $H^\ast(A;\mathbb{Z})\otimes H^\ast(B;\mathbb{Z})$ has been identified as a direct summand of $H^\ast(A\times B;\mathbb{Z})$ by the K\"{u}nneth formula. 
\end{definition}

\begin{remark}\label{prim-rmk}
The notion of primitivity in Definition (\ref{primdef}) can de defined for any map $f:X\times Y\stackrel{}{\longrightarrow} Z$ without further restriction. Further, when $X=Y=Z$ is an $H$-space and $f$ is the multiplication map of $Z$, the notion of $f$-primitivity is the usual one for the primitive elements of the Hopf algebra $H^\ast(Z;\mathbb{Z})$.
\end{remark}

For the map $f$ (\ref{fABMeq}) with its restrictions $f_A$ and $f_B$, denote by 
\[
x:=(f_\ast([A]))^\ast=(f_{A\ast}([A]))^\ast\in H^{n-m}(M;\mathbb{Z}), \ \ \ \ y:=(f_\ast([B]))^\ast=(f_{B\ast}([B]))^\ast\in H^{m}(M;\mathbb{Z})
\]
the Poincare duals of $A$ and $B$ in $M$. 
 
\begin{lemma}\label{int=deglemma}
Suppose that $A$ is $k$-connected with its dimension $m\geq k+1$. Suppose that $A$ and $B$ essentially intersect in $M$, and the self-intersection number of $A$ or $B$ is zero, that is, $A \testcap A=0$ or $B \testcap B=0$. 

If $x$ is $f$-primitive and $n>2m-k-1$, then up to sign the degree of the map $f$ is equal to the intersection number of $A$ and $B$ in $M$
\[
{\rm deg}(f)=\pm A\testcap B.
\]
\end{lemma}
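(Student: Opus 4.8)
The plan is to compute $\deg(f)$ cohomologically and identify it, up to sign, with $A\testcap B$, using Poincar\'e duality in $M$, $A$ and $B$, the K\"unneth formula for $A\times B$, and the $f$-primitivity of $x$. By Poincar\'e duality the groups $H^n(M;\mathbb Z)$, $H^m(A;\mathbb Z)$ and $H^{n-m}(B;\mathbb Z)$ are infinite cyclic; fix generators $u_M,u_A,u_B$ evaluating to $1$ on the fundamental classes. Since $[A\times B]=\pm[A]\times[B]$, one has $\deg(f)=\pm\langle f^\ast u_M,[A]\times[B]\rangle$, so it suffices to pin down this Kronecker pairing. (The hypothesis $m\ge k+1$ only serves as the non-degeneracy condition ensuring $A$ genuinely has dimension $m$; and since the conclusion is asserted only up to sign, all K\"unneth/Koszul signs below may be suppressed.)

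First I would record the duality identities coming from the cap--cup adjunction. From $f_{B\ast}[B]=[M]\cap y$ one gets $\langle f_B^\ast x,[B]\rangle=\langle x,f_{B\ast}[B]\rangle=\langle x\cup y,[M]\rangle=A\testcap B$, hence $f_B^\ast x=(A\testcap B)u_B$ in $H^{n-m}(B;\mathbb Z)=\mathbb Z u_B$; symmetrically $f_A^\ast y=(A\testcap B)u_A$; and, in the borderline case $n=2m$, $f_A^\ast x=(A\testcap A)u_A$ and $f_B^\ast y=(B\testcap B)u_B$. Also $x\cup y=(A\testcap B)u_M$ in $H^n(M;\mathbb Z)=\mathbb Z u_M$, so applying $f^\ast$ gives $f^\ast x\cup f^\ast y=(A\testcap B)f^\ast u_M$. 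Now invoke $f$-primitivity, $f^\ast x=f_A^\ast x\otimes 1+1\otimes f_B^\ast x$, to expand
\[
f^\ast x\cup f^\ast y=(f_A^\ast x\otimes 1)\cup f^\ast y+(1\otimes f_B^\ast x)\cup f^\ast y .
\]
Pairing against $[A]\times[B]$ via the K\"unneth cross-product formula, the second summand sees only the $H^m(A)\otimes H^0(B)$-component of $f^\ast y$, namely $f_A^\ast y\otimes 1$; it therefore contributes $f_A^\ast y\otimes f_B^\ast x$, which evaluates to $\pm(A\testcap B)^2$.

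The crux is that the first summand contributes $0$. Pairing it against $[A]\times[B]$ extracts the $H^{2m-n}(A)\otimes H^{n-m}(B)$-component of $f^\ast y$, cupped with $f_A^\ast x\in H^{n-m}(A)$. When $2m-n<0$ this component vanishes trivially; when $2m-n\ge 1$, the hypothesis $n>2m-k-1$ gives $2m-n\le k$, so $H^{2m-n}(A)=0$ by $k$-connectivity and the component again vanishes; the only remaining case is $n=2m$ (i.e. $2m-n=0$), where the summand equals $f_A^\ast x\otimes f_B^\ast y=(A\testcap A)(B\testcap B)u_A\otimes u_B$, which is $0$ since $A\testcap A=0$ or $B\testcap B=0$. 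Hence $\langle f^\ast x\cup f^\ast y,[A]\times[B]\rangle=\pm(A\testcap B)^2$. Comparing with $(A\testcap B)\langle f^\ast u_M,[A]\times[B]\rangle$ and cancelling the nonzero integer $A\testcap B$ (using $A\testcap B\ne0$ and that $\mathbb Z$ is a domain) yields $\deg(f)=\pm\langle f^\ast u_M,[A]\times[B]\rangle=\pm\,A\testcap B$. The main obstacle --- and the one place where all the numerical and connectivity hypotheses are consumed --- is precisely this vanishing of the $(f_A^\ast x\otimes 1)$-term, handled by the case split on $2m-n$ dictated by $\dim A=m$, the $k$-connectivity, and the dimension bound; tracking the K\"unneth signs, which we have suppressed, is routine and in any case immaterial to a statement given only up to sign.
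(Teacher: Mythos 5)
Your proof is correct and follows essentially the same route as the paper's: both compute $\langle f^\ast(x\cup y),[A\times B]\rangle$ in two ways, use the $f$-primitivity of $x$ together with the $k$-connectivity of $A$ and the dimension bound $n>2m-k-1$ to kill the cross terms, use the vanishing self-intersection to kill the $f_A^\ast x\otimes f_B^\ast y$ term, and then cancel the nonzero integer $A\testcap B$. The only cosmetic difference is that you extract the relevant K\"unneth bidegree components of $f^\ast y$ at the pairing stage, whereas the paper writes out the full expansion of $f^\ast y$ with correction terms in $H^{\geq k+1}(A)\otimes H^{+}(B)$ and discards them by degree reasons; these are the same observation.
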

\begin{proof}
Write $x=x_{n-m}$, $y=y_{m}$, $A=A^{m}$ and $B=B^{n-m}$ to indicate their dimensions. 
The arithmetic conditions imply that
\[
m\geq k+1\geq 1, \ \ \  n>2m-k-1\geq m.
\] 
By the assumptions that $x_{n-m}$ is $f$-primitive and $A$ is $k$-connected, we obtain 
\[\begin{split}
f^\ast(y_{m})&=f_A^\ast(y_{m})\otimes 1+ 1\otimes f_B^\ast(y_{m})+\sum\limits_{l}a_l\otimes b_l,\\
f^\ast(x_{n-m})&=f_A^\ast(x_{n-m})\otimes 1+1\otimes f_B^\ast(x_{n-m}),
\end{split}\]
for some $a_l\in H^{\geq k+1}(A^{m};\mathbb{Z})$ and $b_l\in H^{+}(B^{n-m};\mathbb{Z})$, where possible torsion part of $f^\ast(y_{m})$ can be omitted since we will take Kronecker pairing with fundamental class eventually. 
Then we have 
\[
\begin{split}
f^\ast(y_{m})\cup f^\ast(x_{n-m}) 
&= (f_A^\ast(y_{m}\cup x_{n-m})\otimes 1 ) + (f_A^\ast(y_{m})\otimes f_B^\ast(x_{n-m}))\\
&\ \ \ \pm (f_A^\ast(x_{n-m})\otimes f_B^\ast(y_{m}))+ (1\otimes f_B^\ast(y_{m}\cup x_{n-m}))\\
&\ \ \ \pm \sum\limits_{l}(f_A^\ast(x_{n-m})\cup a_l)\otimes b_l +\sum\limits_{l}a_l\otimes (b_l\cup f_B^\ast(x_{n-m}))\\ 
&=  (f_A^\ast(y_{m})\otimes f_B^\ast(x_{n-m})) \pm (f_A^\ast(x_{n-m})\otimes f_B^\ast(y_{m})),\\
\end{split}
\]
where the last equality holds by degree reason. 
Denote $A\testcap B=k$. It follows that 
\begin{equation}\label{kdegf1eq}
\begin{split}
k{\rm deg}(f)&= {\rm deg}(f) \langle x_{n-m}\cup y_{m}, [M]\rangle\\
 &=\pm\langle y_{m}\cup x_{n-m}, f_\ast ([A^{m}\times B^{n-m}])\\
&=\pm\langle f^\ast(y_{m})\cup f^\ast(x_{n-m}), [A^{m}\times B^{n-m}]\rangle \\
&=\pm \langle f_A^\ast(y_{m})\otimes f_B^\ast(x_{n-m}), [A^{m}\times B^{n-m}]\rangle \\
&\ \ \ \pm \langle f_A^\ast(x_{n-m})\otimes f_B^\ast(y_{m}), [A^{m}\times B^{n-m}]\rangle.
\end{split}
\end{equation}
In general, for any $z$, $z'\in H^\ast(M;\mathbb{Z})$ 
\[
\begin{split}
\langle f_A^\ast(z)\otimes f_B^\ast(z'), [A^m\times B^{n-m}]\rangle 
&=\pm\langle f_A^\ast(z), [A^{m}]\rangle \cdot \langle f_B^\ast(z'), [B^{n-m}]\rangle\\
&=\pm\langle z, f_{A\ast}([A^{m}])\rangle \cdot \langle z', f_{B\ast}([B^{n-m}])\rangle\\
&=\pm\langle z, [M]\cap x_{n-m}\rangle \cdot  \langle z', [M]\cap y_{m}\rangle \\
&=\pm\langle x_{n-m}\cup z, [M]\rangle \cdot \langle y_{m}\cup z', [M]\rangle.
\end{split}
\]
In particular, 
\[
\begin{split}
\langle f_A^\ast(y_{m})\otimes f_B^\ast(x_{n-m}), [A^{m}\times B^{n-m}]\rangle 
&=\pm\langle x_{n-m}\cup y_m, [M]\rangle \cdot \langle y_{m}\cup x_{n-m}, [M]\rangle=\pm k^2, \\
\langle f_A^\ast(x_{n-m})\otimes f_B^\ast(y_{m}), [A^{m}\times B^{n-m}]\rangle 
&= \pm \langle x_{n-m}^2, [M]\rangle \cdot \langle y_{m}^2, [M]\rangle =0,
\end{split}
\]
where the last equality follows from the assumption $A\testcap A=0$ or $B\testcap B=0$.
Combining these equalities with (\ref{kdegf1eq}) we have $k{\rm deg}(f)=\pm k^2$. 
Since $A$ and $B$ essentially intersect, that is, $k=A\testcap B\neq 0$, it follows that ${\rm deg}(f)=\pm k$.
\end{proof}

\begin{remark}\label{int=degrmk}
In Lemma \ref{int=deglemma}, if $n\neq 2m$, the condition that $A\testcap A=0$ or $B\testcap B=0$ is automatically satisfied by degree reason. 
\end{remark}
\subsection{Inertness via intersection number}
\label{subsec: iner-int} 

$\, $

Let $f: A\times B\stackrel{}{\longrightarrow} M$ be a map between simply connected Poincar\'{e} duality complexes of the same dimension. By the $CW$-approximation its restriction on the lower skeletons gives a map $f_0: (A\times B)_0\stackrel{}{\longrightarrow} M_0$. 
Recall that $A$ and $B$ essentially intersect in $M$ if $A\testcap B\neq 0$ through the restriction maps $A\stackrel{f_A}{\longrightarrow} M$ and $B\stackrel{f_B}{\longrightarrow} M$ of $f$; similarly, the self-intersection number of $A$ or $B$ is zero if $A\testcap A=0$ or  $B\testcap B=0$, and it holds when ${\rm dim}(A)\neq {\rm dim}(B)$. 

The following theorem indicates a situation in which the inertness of the top cell attachment of $M$ can be deduced from the map $f$ with additional assumptions. The context of the theorem is motivated from the context of transversal intersections of smooth submanifolds, which will be explained in Subsection \ref{subsec: geo-int}. Denote by ${\rm conn}(X)$ the {\it connectivity} of a $CW$-complex $X$. 
  
\begin{theorem}\label{pdtintthm}
Let $f: A\times B\stackrel{}{\longrightarrow} M$ be a map between simply connected Poincar\'{e} duality complexes. Suppose that $A$ and $B$ essentially intersect in $M$, and the self-intersection number of $A$ or $B$ is zero. Suppose that the Poincar\'{e} dual of $A$ in $M$ is $f$-primitive and ${\rm dim}(M)>2{\rm dim}(A)-{\rm conn}(A)-1>0$. 

If the restriction map 
$
f_0: (A\times B)_0\stackrel{}{\longrightarrow} M_0
$ 
is inert, then the attaching map for the top cell of $M$ is inert after localization away from all primes $p$ that divide $A\testcap B$.

In particular, if $A\testcap B=\pm 1$ then the attaching map for the top cell of $M$ is inert. 
\end{theorem}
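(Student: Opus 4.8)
The plan is to reduce to the comparison Theorem~\ref{inertdeg1thm} (and its local version Theorem~\ref{inertdegkthm}) applied to the map $f$, which by Lemma~\ref{int=deglemma} has nonzero degree $\pm(A\testcap B)$, once we know that the attaching map for the top cell of the product $A\times B$ is inert. So the proof has three movements: identify the degree of $f$, check the product's top cell is inert, and feed both into the comparison theorem.

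\emph{Matching the hypotheses.} Write $m={\rm dim}(A)$, $n={\rm dim}(M)$ and $k={\rm conn}(A)$, so ${\rm dim}(B)=n-m$. Since $A$ is simply connected, $k\geq 1$, and since $A$ is a Poincar\'e duality complex of formal dimension $m$ (so $H_m(A;\mathbb{Z})\neq 0$) one has $m\geq k+1$. The assumed inequality ${\rm dim}(M)>2{\rm dim}(A)-{\rm conn}(A)-1$ is exactly $n>2m-k-1$, and it forces $n>m$, hence ${\rm dim}(B)\geq 1$. All remaining hypotheses of Lemma~\ref{int=deglemma} are precisely those of the theorem: $A$ and $B$ essentially intersect, the self-intersection of $A$ or of $B$ is zero, and the Poincar\'e dual of $A$ in $M$ is $f$-primitive. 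Hence Lemma~\ref{int=deglemma} gives ${\rm deg}(f)=\pm(A\testcap B)$, which is nonzero because $A$ and $B$ essentially intersect, and whose prime divisors coincide with those of $A\testcap B$.

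\emph{Inertness of the top cell of $A\times B$.} Since $A$ and $B$ each have a single top cell, so does $A\times B$, and its $(n-1)$-skeleton $(A\times B)_0$ contains the subcomplex $A\vee B$; the standard inclusion $A\vee B\hookrightarrow A\times B$ therefore factors through $(A\times B)_0$. Identifying $\Omega(A\times B)=\Omega A\times\Omega B$, writing $i_1\colon A\to A\vee B$, $i_2\colon B\to A\vee B$ for the wedge inclusions and $\mu$ for the loop multiplication on $\Omega(A\vee B)$, the composite $\mu\circ(\Omega i_1\times\Omega i_2)\colon\Omega A\times\Omega B\to\Omega(A\vee B)$ is a right homotopy inverse of $\Omega(A\vee B\hookrightarrow A\times B)$: composing the two maps in this order gives the self-map $(\omega,\eta)\mapsto(\omega\cdot c,\,c\cdot\eta)$ of $\Omega A\times\Omega B$, with $c$ the constant loop, which is homotopic to the identity (here one uses that loop maps are $H$-maps). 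Since this composite factors through $\Omega((A\times B)_0)\to\Omega(A\times B)$, the latter map also has a right homotopy inverse, i.e.\ the attaching map for the top cell of $A\times B$ is inert.

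\emph{Comparison, and where the work lies.} Now $f\colon A\times B\to M$ is a map between simply connected Poincar\'e duality complexes of the same dimension, each with a single top cell, of degree $\pm(A\testcap B)\neq 0$, and $f_0\colon(A\times B)_0\to M_0$ is inert by hypothesis. When $A\testcap B=\pm1$, Theorem~\ref{inertdeg1thm}(1) shows that the top cell attachment of $A\times B$ is inert if and only if that of $M$ is, so the previous step gives that the top cell attachment of $M$ is inert outright. In general, Theorem~\ref{inertdegkthm}(1) gives the same equivalence after localization away from the primes dividing ${\rm deg}(f)$, equivalently away from the primes dividing $A\testcap B$, and again the previous step yields the inertness of the top cell attachment of $M$ after that localization. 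The genuinely substantial inputs --- Lemma~\ref{int=deglemma} identifying intersection number with degree, and Theorem~\ref{inertdeg1thm} reducing inertness around a pushout to inertness around a pullback --- are already in hand, so the only care needed within this proof is the hypothesis bookkeeping above and the short loop-space splitting for the product, the latter being the folklore fact that the top cell attachment of a product of Poincar\'e duality complexes is inert; I expect this bookkeeping (rather than any new homotopy-theoretic difficulty) to be the main obstacle.
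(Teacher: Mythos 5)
Your proposal is correct and follows essentially the same route as the paper: identify ${\rm deg}(f)=\pm\,A\testcap B$ via Lemma~\ref{int=deglemma}, observe that the top cell attachment of $A\times B$ is inert, and conclude with Theorem~\ref{inertdeg1thm}(1) and its local version Theorem~\ref{inertdegkthm}. The only cosmetic difference is that you inline a (valid) loop-multiplication argument for the inertness of the product's top cell, whereas the paper isolates this as Lemma~\ref{pdt-inert-lemma} proved by the same idea.
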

\begin{proof} 
As $A$ is simply connected, $2{\rm dim}(A)>{\rm conn}(A)+1\geq 2$. Then the Poincar\'{e} duality complex $A$ is non-contractible and ${\rm dim}(A)\geq {\rm conn}(A)+1$. Also, ${\rm dim}(B)={\rm dim}(M)-{\rm dim}(A)>{\rm dim}(A)-{\rm conn}(A)-1\geq 0$, and then the Poincar\'{e} duality complex $B$ is non-contractible.

Since both $A$ and $B$ are non-contractible, Lemma \ref{pdt-inert-lemma} below implies that  the attaching map for the top cell of $A\times B$ is inert. Together with the assumption that $f_0$ is inert, Theorem \ref{inertdeg1thmintro} (1) with its local counterpart implies that the attaching map for the top cell of $M$ is inert after localization away from all primes $p$ that divide ${\rm deg}(f)$ if ${\rm deg}(f)\neq 0$. However, Lemma \ref{int=deglemma} implies that ${\rm deg}(f)=\pm A\testcap B\neq 0$, and then the theorem follows .
\end{proof}

\begin{remark}
Theorem \ref{pdtintthm} excludes the case when $A$ or $B$ is contractible. 
Without loss of generality let us assume that $B$ is contractible. In this case, suppose that $A$ and $B$ essentially intersect in $M$, and the attaching map for the top cell of $A$ is inert. If the restriction map $f_0: (A\times B)_0\stackrel{}{\longrightarrow} M_0$ is inert, then the attaching map for the top cell of $M$ is inert after localization away from all primes $p$ that divide $A\testcap B$.

Indeed, in this case the Poincar\'{e} duality complex $B$ is a point, and the map $f$ becomes $A\stackrel{f}{\longrightarrow} M$. Since a Poincar\'{e} dual of a point is the top generator of $H^\ast(M;\mathbb{Z})$, the condition $A$ and $B=\ast$ essentially intersect implies that the Poincar\'{e} dual of $f_\ast([A])$ is the nonzero integer $k=A\testcap B\in \mathbb{Z}\cong H^0(M;\mathbb{Z})$. By Poincar\'{e} duality it follows that $f_\ast([A])=k[M]$. Hence, ${\rm deg}(f)= k=A\testcap B\neq 0$. Combining this with the assumptions that $f_0$ is inert and the attaching map for the top cell of $A$ is inert, Theorem \ref{inertdeg1thmintro} (1) with its local counterpart implies that the attaching map for the top cell of $M$ is inert after localization away from all primes $p$ that divide $A\testcap B$. 

From this viewpoint, we see that the necessity part of Theorem \ref{inertdeg1thmintro} (1) is a degenerate case of Theorem \ref{pdtintthm}.
\end{remark}

\begin{remark}
We call a map $h: X\stackrel{}{\longrightarrow} Z$ a {\it wedge summand inclusion} if there exists a homotopy equivalence $Z\simeq X\vee Y$ for some complex $Y$ such that $h$ is homotopic to the composite $X\stackrel{}{\hookrightarrow} X\vee Y\stackrel{\simeq}{\longrightarrow}Z$. In this case, there is a homotopy cofibration
\[
X\stackrel{h}{\longrightarrow}Z\stackrel{q}{\longrightarrow} Y,
\]
where the projection $q$ has a right homotopy inverse. Accordingly, the map $h$ is inert. 

In particular, if the restriction map $
f_0: (A\times B)_0\stackrel{}{\longrightarrow} M_0
$ in Theorem \ref{pdtintthm} 
is a wedge summand inclusion, then it is inert. 
\end{remark}

\begin{lemma}\label{pdt-inert-lemma}
Let $A$ and $B$ be two non-contractible connected $CW$-complexes with a single top cell. Then the  attaching map for the top cell of $A\times B$ is inert.
\end{lemma}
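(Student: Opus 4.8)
The plan is to exhibit an explicit right homotopy inverse of $\Omega i$, where $i\colon (A\times B)_0\hookrightarrow A\times B$ denotes the inclusion of the lower skeleton; by the conventions recalled in the introduction this is precisely the inclusion of $A\times B$ into the mapping cone of the top cell attaching map $h_{A\times B}$, so inertness of $h_{A\times B}$ is equivalent to $\Omega i$ having a right homotopy inverse. First I would fix the product $CW$-structure on $A\times B$. Since $A$ and $B$ are non-contractible and each has a single top cell, their formal dimensions $a$ and $b$ are at least $1$; hence $A\times B$ has a single top cell $e^{a}\times e^{b}$ in dimension $a+b$, the lower skeleton $(A\times B)_0$ is its $(a+b-1)$-skeleton, and the subcomplex $A\vee B=(A\times\ast)\cup(\ast\times B)$, of dimension $\max(a,b)\le a+b-1$, is contained in $(A\times B)_0$.

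Next I would write $\ell_A\colon A\hookrightarrow (A\times B)_0$ and $\ell_B\colon B\hookrightarrow (A\times B)_0$ for the two slice inclusions into the lower skeleton, so that $i\circ\ell_A$ and $i\circ\ell_B$ are the standard inclusions $A\hookrightarrow A\times B$ and $B\hookrightarrow A\times B$, and define
\[
s\colon \Omega A\times\Omega B\xrightarrow{\ \Omega\ell_A\times\Omega\ell_B\ }\Omega\big((A\times B)_0\big)\times\Omega\big((A\times B)_0\big)\xrightarrow{\ \mu\ }\Omega\big((A\times B)_0\big),
\]
where $\mu$ is loop concatenation. Identifying $\Omega(A\times B)$ with $\Omega A\times\Omega B$ in the canonical way, I would then check the routine points: $\Omega i$ is a map of loop monoids; $\Omega(i\circ\ell_A)$ sends a loop $\alpha$ to $(\alpha,c)$ and $\Omega(i\circ\ell_B)$ sends $\beta$ to $(c,\beta)$, where $c$ is the constant loop; and loop multiplication on $\Omega(A\times B)$ is concatenation in each coordinate. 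Composing these, $\Omega i\circ s(\alpha,\beta)=(\alpha\ast c,\,c\ast\beta)$, which is homotopic to $(\alpha,\beta)$ via the standard reparametrisation homotopies $\alpha\ast c\simeq\alpha$ and $c\ast\beta\simeq\beta$, chosen continuously in $(\alpha,\beta)$. Hence $\Omega i\circ s\simeq \mathrm{id}$, so $\Omega i$ has a right homotopy inverse and $h_{A\times B}$ is inert.

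I do not anticipate a genuine obstacle here: the hypotheses that $A$ and $B$ be non-contractible with a single top cell are used only to force $a,b\ge 1$, which is what puts $A\vee B$ inside the lower skeleton and makes the top cell of $A\times B$ unique; the construction of $s$ is formal, relying only on $\Omega(A\times B)=\Omega A\times\Omega B$ and the unit homotopies for the loop-space $H$-structure. The sole point requiring a line of care is that the homotopy $\Omega i\circ s\simeq\mathrm{id}$ be a homotopy of maps rather than merely a pointwise statement, i.e. that the reparametrisations can be performed continuously in the loop parameter; this is standard. Equivalently, one can phrase the whole argument as: the wedge inclusion $A\vee B\hookrightarrow A\times B$ admits a looped retraction $\sigma\colon\Omega A\times\Omega B\to\Omega(A\vee B)$ given by the same multiplicative formula, and then the composite of $\sigma$ with $\Omega(A\vee B\hookrightarrow(A\times B)_0)$ is a right homotopy inverse of $\Omega i$.
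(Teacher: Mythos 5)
Your proof is correct and is essentially the paper's argument: both loop the two factor inclusions (which land in the lower skeleton because $A$ and $B$ have positive dimension) and multiply them in the loop space to produce a right homotopy inverse of $\Omega i_{(A\times B)}$. The only cosmetic differences are that you factor the inclusions through $A\vee B\subseteq (A\times B)_0$ and verify directly that the composite is homotopic to the identity of $\Omega A\times\Omega B$, whereas the paper factors them through $A\times B_0$ and $A_0\times B$ and concludes via the Whitehead theorem that the composite is a homotopy equivalence.
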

\begin{proof}
For the product space $A\times B$, there is the homotopy commutative diagram
\[
\xymatrix{ 
 A_0\times B_0   \ar[r]^{1_{A_0}\times i_B} \ar[d]^{i_A\times 1_{B_0}} & A_0\times B \ar[d]^{}  \ar@/^1.20pc/[ddr]^{i_A\times 1_B} \\
 A\times B_0 \ar[r]^{}  \ar@/_1.20pc/[drr]_{1_A\times i_B}  &    (A\times B)_0 \ar[dr]^(0.45){i_{(A\times B)}} \\
&&A\times B,
}\]
where the inner square is a homotopy pushout and the map $X_0\stackrel{i_X}{\longrightarrow} X$ denotes the lower skeleton inclusion for $X=A$, $B$ or $A\times B$. 
Note that the composites
\[
\begin{split}
&A\stackrel{i_1}{\longrightarrow} A\times B_0\stackrel{}{\longrightarrow} (A\times B)_0\stackrel{i_{(A\times B)}}{\longrightarrow} A\times B  \\
&B\stackrel{i_2}{\longrightarrow} A_0\times B\stackrel{}{\longrightarrow} (A\times B)_0\stackrel{i_{(A\times B)}}{\longrightarrow} A\times B  
\end{split}
\]
are homotopic to the inclusions of the two factors $A\stackrel{i_1}{\longrightarrow} A\times B$ and $B\stackrel{i_2}{\longrightarrow} A\times B$, respectively. Consider the homotopy commutative diagram
\[
\diagram
\Omega A \times \Omega B  \rto^<<<<{\Omega i_1\times \Omega i_2} &
 \Omega (A\times B_0)\times \Omega  (A_0\times B ) \rto  \drto_<<<<<{\Omega (1_A\times i_B)\times \Omega (i_A\times 1_B)} & 
 \Omega (A\times B)_0\times \Omega (A\times B)_0\rto^<<<{\mu} \dto^{\Omega i_{(A\times B)}\times \Omega i_{(A\times B)}}  & 
 \Omega (A\times B)_0 \dto^{\Omega i_{(A\times B)}}  \\
  && \Omega (A\times B)\times \Omega (A\times B)  \rto^<<<<{\mu} & \Omega( A\times B  ),
\enddiagram
\]
where the left triangle homotopy commutes as it is the product of the two triangles in the previous diagram, the maps $\mu$ are the loop multiplications and the right square homotopy commutes by the naturality of loop map. Since the lower direction composition around the diagram restricts to $\Omega A\stackrel{i_1}{\longrightarrow}\Omega A\times \Omega  B$ and $\Omega B\stackrel{i_2}{\longrightarrow} \Omega A\times \Omega B$ on the two factors, respectively, it is a weak equivalence and then is a homotopy equivalence by the Whitehead theorem. Accordingly, the upper direction composition around the diagram implies that $\Omega i_{(A\times B)}$ has a right homotopy inverse, that is, the  attaching map for the top cell of $A\times B$ is inert.
\end{proof}

Theorem \ref{pdtintthmintro} is a special case of Theorem \ref{pdtintthm}. 

\begin{proof}[Proof of Theorem \ref{pdtintthmintro}]
From the condition ${\rm dim}(B)>{\rm dim}(A)>0$, we see that the Poincar\'{e} duality complex $A$ is non-contractible with ${\rm dim}(A)\geq {\rm conn}(A)+1$, and 
\[
{\rm dim}(M)={\rm dim}(A)+{\rm dim}(B)>2{\rm dim}(A)>2{\rm dim}(A)-{\rm conn}(A)-1>0.
\] 
Also, since ${\rm dim}(M)\neq 2{\rm dim}(A)$ or $2{\rm dim}(B)$, the self-intersection number $A\testcap A=B\testcap B=0$ by degree reason. 

Additionally, by assumption the restriction map $f_0: (A\times B)_0\longrightarrow M_0$ is inert, $A\testcap B\neq 0$ and the Poincar\'{e} dual $[A]^\ast$ is $f$-primitive. Therefore, all the conditions of Theorem \ref{pdtintthm} are satisfied, and the theorem follows immediately. 
\end{proof}
\subsection{Poincar\'{e} duality complexes with homotopy actions}
\label{subsec: GPoin}

$\, $

If $M$ has a homotopy action from an $H$-complex, it is possible to define a nonzero degree map from $A\times B$ to $M$ when $A$ and $B$ essentially intersect. Recall an {\it $H$-complex} $G$ is a $CW$-complex with a multiplication and a unit up to homotopy, and a {\it $G$-complex} $Z$ is a $CW$-complex equipped with a {\it homotopy action} 
\[G\times Z\stackrel{\mu}{\longrightarrow} Z.\] In particular, there is a restriction map 
\begin{equation}\label{muleq}
\mu_l:=\mu(-,\ast): G\stackrel{}{\longrightarrow} Z
\end{equation}
by acting on the basepoint of $Z$. For the $G$-complex $Z$, a cohomology class $z\in H^\ast(Z;\mathbb{Z})$ is called {\it primitive} if 
\[
\mu^\ast(z)=  \mu_l^\ast(z)\otimes 1+1\otimes z\in H^\ast(G;\mathbb{Z})\otimes H^\ast(Z;\mathbb{Z}),
\]
where $H^\ast(G;\mathbb{Z})\otimes H^\ast(Z;\mathbb{Z})$ has been identified as a direct summand of $H^\ast(G\times Z;\mathbb{Z})$ by the K\"{u}nneth formula. Note that in the sense of Definition \ref{primdef}, this is equivalent to that the class $z$ is $\mu$-primitive, and by Remark \ref{prim-rmk} when $Z$ itself is an $H$-complex, the notion of primitivity is the usual one for the primitive elements of the Hopf algebra $H^\ast(Z;\mathbb{Z})$. 

Suppose $\mu: G\times M\stackrel{}{\longrightarrow} M$ is a homotopy action of $G$ on $M$. 
Let $f_A: A\stackrel{\mathfrak{j}_A}{\longrightarrow}G\stackrel{\mu_l}{\longrightarrow} M$ and $f_B: B\longrightarrow M$ be two maps. They determine a composite
\begin{equation}\label{intABfeq}
f: A\times B \stackrel{\mathfrak{j}_A\times f_B}{\longrightarrow} G\times M\stackrel{\mu}{\longrightarrow} M.
\end{equation}
It is clear that if a cohomology class $z\in H^\ast(M;\mathbb{Z})$ is primitive, then it is $f$-primitive. 
Then Theorem \ref{pdtintthm} can be applied to the map $f$ to prove the following theorem.  Its smooth version will be stated in Theorem \ref{smintthm}. 
  
\begin{theorem}\label{intthm}
Let $G$ be an $H$-complex. 
Let $M$ be a simply connected Poincar\'{e} duality $G$-complex. 
Let $A$ and $B$ be two simply connected Poincar\'{e} duality complexes with complementary  positive dimensions which essentially intersect in $M$ through two maps $f_A: A\stackrel{\mathfrak{j}_A}{\longrightarrow} G\stackrel{\mu_l}{\longrightarrow}M$ and $f_B: B\stackrel{}{\longrightarrow} M$. 
Suppose either the Poincar\'{e} dual of $A$ in $M$ is primitive and ${\rm dim}(B)>{\rm dim}(A)$ or the Poincar\'{e} dual of $B$ in $M$ is primitive and ${\rm dim}(A)>{\rm dim}(B)$. 

If the restriction of the product map $f$ (\ref{intABfeq}) of $f_A$ and $f_B$ on the lower skeletons 
$
f_0: (A\times B)_0\stackrel{}{\longrightarrow} M_0
$ 
is inert, 
then the attaching map for the top cell of $M$ is inert after localization away from all primes $p$ that divide $A\testcap B$.

In particular, if $A\testcap B=\pm 1$ then the attaching map for the top cell of $M$ is inert. 
\end{theorem}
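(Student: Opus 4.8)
The plan is to reduce Theorem~\ref{intthm} to Theorem~\ref{pdtintthm} by verifying that the composite map $f$ in (\ref{intABfeq}) satisfies all the hypotheses of the latter. The only nontrivial items to check are the $f$-primitivity of the relevant Poincar\'{e} dual and the arithmetic connectivity inequality; everything else is either an assumption of the present theorem or transfers mechanically.

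\textbf{Step 1: primitivity transfers.} First I would treat the case ${\rm dim}(B)>{\rm dim}(A)$, in which the Poincar\'{e} dual $[A]^\ast$ of $A$ in $M$ is assumed to be primitive with respect to the homotopy action $\mu\colon G\times M\longrightarrow M$. I claim $[A]^\ast$ is then $f$-primitive in the sense of Definition~\ref{primdef}. Indeed, $f=\mu\circ(\mathfrak{j}_A\times f_B)$, so $f^\ast=(\mathfrak{j}_A\times f_B)^\ast\circ\mu^\ast$. Applying $\mu^\ast$ to $[A]^\ast$ and using primitivity gives $\mu^\ast([A]^\ast)=\mu_l^\ast([A]^\ast)\otimes 1 + 1\otimes [A]^\ast$ in $H^\ast(G;\mathbb{Z})\otimes H^\ast(M;\mathbb{Z})$; pulling back along $\mathfrak{j}_A\times f_B$ sends the first tensor factor into $H^\ast(A;\mathbb{Z})$ and the second into $H^\ast(B;\mathbb{Z})$, yielding exactly the form $f^\ast([A]^\ast)=f_A^\ast([A]^\ast)\otimes 1 + 1\otimes f_B^\ast([A]^\ast)$ required by Definition~\ref{primdef} (here $f_A=\mu_l\circ\mathfrak{j}_A$, $f_B$ as given). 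This is precisely the remark made just before the theorem statement, and I would spell out the brief diagram chase. The symmetric case ${\rm dim}(A)>{\rm dim}(B)$ is handled by swapping the roles of $A$ and $B$ in Theorem~\ref{pdtintthm}, which is symmetric in the two factors except for the labelling of which dual is primitive and which dimension is larger.

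\textbf{Step 2: the arithmetic hypothesis.} Since $A$ is simply connected and non-contractible (being a positive-dimensional Poincar\'{e} duality complex), ${\rm conn}(A)\geq 1$ and ${\rm dim}(A)\geq {\rm conn}(A)+1$. In the case ${\rm dim}(B)>{\rm dim}(A)$ we get
\[
{\rm dim}(M)={\rm dim}(A)+{\rm dim}(B)>2\,{\rm dim}(A)>2\,{\rm dim}(A)-{\rm conn}(A)-1>0,
\]
so the inequality ${\rm dim}(M)>2\,{\rm dim}(A)-{\rm conn}(A)-1>0$ of Theorem~\ref{pdtintthm} holds; the mirror computation works in the other case. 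I would also note that the self-intersection hypothesis $A\testcap A=0$ or $B\testcap B=0$ is automatic here by degree reasons, since ${\rm dim}(A)\neq{\rm dim}(B)$ forces ${\rm dim}(M)\neq 2\,{\rm dim}(A)$ (cf.\ Remark~\ref{int=degrmk}). The essential intersection hypothesis $A\testcap B\neq 0$ and the inertness of $f_0$ are carried over verbatim from the assumptions of Theorem~\ref{intthm}.

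\textbf{Step 3: conclude.} With all hypotheses of Theorem~\ref{pdtintthm} verified for the map $f$ of (\ref{intABfeq}), that theorem gives directly that the attaching map for the top cell of $M$ is inert after localization away from all primes dividing $A\testcap B$, and is inert integrally when $A\testcap B=\pm 1$. I do not expect a genuine obstacle: the content of the proof is entirely the bookkeeping in Step~1 showing that action-primitivity implies product-primitivity for the specific composite $f$, together with the two elementary arithmetic checks in Step~2. The mild subtlety worth stating carefully is that Theorem~\ref{pdtintthm} privileges $A$ (it requires $[A]^\ast$ to be $f$-primitive and ${\rm dim}(M)>2\,{\rm dim}(A)-{\rm conn}(A)-1$), so in the case ${\rm dim}(A)>{\rm dim}(B)$ one must apply it with the factors interchanged — which is legitimate because the hypotheses and conclusion of Theorem~\ref{pdtintthm}, apart from these labels, are symmetric in $A$ and $B$, and $f_0$ being inert does not depend on the order of the factors of $(A\times B)_0$.
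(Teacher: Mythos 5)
Your proposal is correct and follows essentially the same route as the paper: the paper's proof of Theorem \ref{intthm} simply says it is "similar to that of Theorem \ref{pdtintthmintro}," i.e., one checks that action-primitivity implies $f$-primitivity for the composite $\mu\circ(\mathfrak{j}_A\times f_B)$, that the dimension inequality and vanishing of the self-intersection numbers are automatic from ${\rm dim}(A)\neq{\rm dim}(B)$, and then applies Theorem \ref{pdtintthm}. Your write-up just makes explicit the primitivity diagram chase and the factor-swapping in the case ${\rm dim}(A)>{\rm dim}(B)$, both of which the paper leaves implicit.
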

\begin{proof}
The proof is similar to that of Theorem \ref{pdtintthmintro}.
\end{proof}

When $M$ itself is an $H$-complex, any two maps $f_A: A\stackrel{}{\longrightarrow} M$ and $f_B: B\longrightarrow M$ can be multiplied to produce a map
\begin{equation}\label{MHintABfeq}
f: A\times B \stackrel{f_A\times f_B}{\longrightarrow} M\times M\stackrel{\mu}{\longrightarrow} M,
\end{equation}
where $\mu$ is the multiplication of $M$. 
In this case, Theorem \ref{intthm} reduces to the following theorem. 
\begin{theorem}\label{MHintthm}
Let $M$ be a simply connected Poincar\'{e} duality $H$-complex. 
Let $A$ and $B$ be two simply connected Poincar\'{e} duality complexes with complementary dimensions which essentially intersect in $M$ through two maps $A\stackrel{f_A}{\longrightarrow}M$ and $B\stackrel{f_B}{\longrightarrow} M$. Suppose that the Poincar\'{e} dual of $A$ in $M$ is primitive and ${\rm dim}(B)>{\rm dim}(A)>0$.

If the restriction of the product map $f$ (\ref{MHintABfeq}) of $f_A$ and $f_B$ on the lower skeletons 
$
f_0: (A\times B)_0\stackrel{}{\longrightarrow} M_0
$ 
is inert, 
then the attaching map for the top cell of $M$ is inert after localization away from all primes $p$ that divide $A\testcap B$.

In particular, if $A\testcap B=\pm 1$ then the attaching map for the top cell of $M$ is inert.  ~$\qqed$
\end{theorem}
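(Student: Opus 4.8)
The plan is to obtain Theorem \ref{MHintthm} as the special case of Theorem \ref{intthm} in which $M$ acts on itself by its own multiplication. So I would take $G:=M$, with homotopy action $\mu\colon G\times M=M\times M\stackrel{}{\longrightarrow} M$ the $H$-multiplication of $M$. Since $\ast$ is a homotopy unit for $M$, the restricted map $\mu_l=\mu(-,\ast)\colon M\stackrel{}{\longrightarrow} M$ of (\ref{muleq}) is homotopic to $1_M$. Setting $\mathfrak{j}_A:=f_A$ then yields the factorization $f_A\simeq \mu_l\circ \mathfrak{j}_A\colon A\stackrel{\mathfrak{j}_A}{\longrightarrow} G\stackrel{\mu_l}{\longrightarrow} M$ required in Theorem \ref{intthm}, and the resulting product map $f$ of (\ref{intABfeq}), namely $A\times B\stackrel{\mathfrak{j}_A\times f_B}{\longrightarrow} M\times M\stackrel{\mu}{\longrightarrow} M$, is precisely the map $f$ of (\ref{MHintABfeq}). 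In particular the maps $f_0\colon (A\times B)_0\stackrel{}{\longrightarrow} M_0$ in the two theorems coincide, so the inertness hypothesis on $f_0$ is the same in both.

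Next I would reconcile the two notions of primitivity under this identification. For the action of $G=M$ on itself, a class $z\in H^\ast(M;\mathbb{Z})$ is primitive in the sense of Subsection \ref{subsec: GPoin} exactly when $\mu^\ast(z)=\mu_l^\ast(z)\otimes 1+1\otimes z$; since $\mu_l\simeq 1_M$ this reads $\mu^\ast(z)=z\otimes 1+1\otimes z$, which is primitivity of $z$ in the Hopf algebra $H^\ast(M;\mathbb{Z})$, exactly as noted in Remark \ref{prim-rmk}. Hence the hypothesis that the Poincar\'{e} dual of $A$ in $M$ is primitive, together with ${\rm dim}(B)>{\rm dim}(A)>0$, is precisely the first alternative in the disjunction of Theorem \ref{intthm}.

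The remaining hypotheses of Theorem \ref{intthm} are immediate: $M$ is a simply connected Poincar\'{e} duality $G$-complex for $G=M$; $A$ and $B$ are simply connected Poincar\'{e} duality complexes with complementary positive dimensions essentially intersecting in $M$ through the chosen $f_A$ and $f_B$ (the number $A\testcap B$ depends only on $f_A$, $f_B$ and is unchanged); and $f_0$ is inert by assumption. Applying Theorem \ref{intthm} then gives that the attaching map for the top cell of $M$ is inert after localization away from all primes $p$ dividing $A\testcap B$, and in particular when $A\testcap B=\pm 1$. There is no genuine obstacle here; the only point requiring a moment of care is the homotopy $\mu_l\simeq 1_M$ and the resulting coincidence of $G$-action primitivity with Hopf-algebra primitivity.
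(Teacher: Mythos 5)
Your proposal is correct and is exactly the paper's argument: Theorem \ref{MHintthm} is stated as the specialization of Theorem \ref{intthm} to $G=M$ acting on itself by its $H$-multiplication, with $\mu_l\simeq 1_M$ and the reconciliation of the two primitivity notions already recorded in Remark \ref{prim-rmk}. Nothing further is needed.
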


\subsection{Topological homogeneous spaces}
\label{subsec: top-hom}

$\, $

Let $G$ be a connected topological group. Let $K$ be a connected topological subgroup of $G$. Denote by $K\lvertneqq G$ if $K$ is proper. The orbit space $G/K$ is called a {\it topological homogeneous space}, or simply {\it homogeneous space}. There is the canonical action of $G$ on $G/K$
\[
\mu: G\times G/K\stackrel{}{\longrightarrow} G/K
\]
defined by the multiplication of $G$. 
Note that $G$ has the identity element $e$ as the basepoint and then $G/K$ has $eK$ as the basepoint. It follows that the restriction map $\mu_l$ (\ref{muleq}) of $\mu$ is the standard quotient 
\[
\pi_K: G\stackrel{}{\longrightarrow} G/K.
\]
Let $K\lvertneqq H\lvertneqq G$ be a sequence of topological groups. There is the canonical fibration
\[\label{KHGeq}
H/K\stackrel{j_{KH}}{\hookrightarrow} G/K\stackrel{\pi_{KH}}{\longrightarrow} G/H.
\]

\begin{theorem}\label{tophomthm}
Let $K\lvertneqq H\lvertneqq G$ be a sequence of nontrivial connected topological groups such that $G$ and $H$ are simply connected. 
Suppose that there is a simply connected Poincar\'{e} duality complex $N$ with a map $N\stackrel{f_N}{\longrightarrow} G\stackrel{\pi_K}{\longrightarrow}G/K$ such that $N$ and $H/K$ essentially intersect in $G/K$. 
Suppose either the Poincar\'{e} dual of $N$ in $G/K$ is primitive and ${\rm dim}(H/K)>{\rm dim}(N)$ or the Poincar\'{e} dual of $H/K$ in $G/K$ is primitive and ${\rm dim}(N)>{\rm dim}(H/K)$.

If the restriction of the product map $f: N\times H/K\stackrel{f_N\times j_{KH}}{\longrightarrow} G\times G/K\stackrel{\mu}{\longrightarrow} G/K$ on the lower skeletons 
$
f_0: (N\times H/K)_0\stackrel{}{\longrightarrow} (G/K)_0
$ 
is inert, 
then the attaching map for the top cell of $G/K$ is inert after localization away from all primes $p$ that divide $N\testcap (H/K)$.

In particular, if $N\testcap (H/K)=\pm 1$ then the attaching map for the top cell of $G/K$ is inert. 
\end{theorem}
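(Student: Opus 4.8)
The plan is to obtain the statement as a direct specialization of Theorem \ref{intthm}. First I would unpack the structure supplied by the hypotheses. The topological group $G$ has the homotopy type of a $CW$-complex and carries a strictly associative, strictly unital multiplication, so it is an $H$-complex; the canonical action $\mu\colon G\times G/K\stackrel{}{\longrightarrow} G/K$ makes $G/K$ into a Poincar\'{e} duality $G$-complex, and its restriction to the basepoint $eK$ is, as recalled above, exactly the quotient $\pi_K\colon G\stackrel{}{\longrightarrow} G/K$. Hence $f_N\colon N\stackrel{}{\longrightarrow} G\stackrel{\pi_K}{\longrightarrow} G/K$ has the form $N\stackrel{\mathfrak{j}_N}{\longrightarrow} G\stackrel{\mu_l}{\longrightarrow} G/K$ required in Theorem \ref{intthm}, with $\mathfrak{j}_N=f_N$, while $j_{KH}\colon H/K\stackrel{}{\longrightarrow} G/K$ serves as an unrestricted map into $G/K$. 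Taking $A=N$ and $B=H/K$, the map $f\colon N\times H/K\stackrel{}{\longrightarrow} G/K$ of the statement is exactly the product map built from $\mathfrak{j}_N$ and $j_{KH}$ in the construction of $f$ in Theorem \ref{intthm}, and $f_0$ is its restriction to the lower skeletons.

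Next I would check the remaining hypotheses of Theorem \ref{intthm} with $M=G/K$, $A=N$, $B=H/K$. That $N$, $H/K$ and $G/K$ are connected Poincar\'{e} duality complexes with $\dim(N)+\dim(H/K)=\dim(G/K)$ and $N\testcap(H/K)\neq 0$ is built into the assertion that $N$ and $H/K$ essentially intersect in $G/K$, in the sense of Definition \ref{alg-intdef}. For simple connectivity: $N$ is simply connected by hypothesis, and applying $\pi_1$ to the canonical fibrations $K\stackrel{}{\longrightarrow} G\stackrel{}{\longrightarrow} G/K$ and $K\stackrel{}{\longrightarrow} H\stackrel{}{\longrightarrow} H/K$ --- using that $G$ and $H$ are simply connected and $K$ is connected --- gives $\pi_1(G/K)=\pi_1(H/K)=0$. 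For positivity of dimensions: $K$ being a proper connected subgroup of $H$ forces $\dim(H/K)>0$, and $H$ being a proper subgroup of $G$ gives $\dim(N)=\dim(G/K)-\dim(H/K)>0$, so $N$ and $H/K$ have complementary positive dimensions; moreover the two arithmetic alternatives in the present statement already force $\dim(N)\neq\dim(H/K)$. Finally, since $G/K$ is a $G$-complex, any class in $H^{*}(G/K;\mathbb{Z})$ that is primitive for the $G$-action is $\mu$-primitive, hence $f$-primitive in the sense of Definition \ref{primdef}; so the two alternatives here --- the Poincar\'{e} dual of $N$ primitive with $\dim(H/K)>\dim(N)$, or the Poincar\'{e} dual of $H/K$ primitive with $\dim(N)>\dim(H/K)$ --- correspond exactly to the two alternatives in Theorem \ref{intthm} under the assignment $A=N$, $B=H/K$.

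With all hypotheses in place, I would then invoke Theorem \ref{intthm}: from the inertness of $f_0$ it follows that the attaching map for the top cell of $G/K$ is inert after localization away from all primes dividing $N\testcap(H/K)$, and is inert without localization when $N\testcap(H/K)=\pm 1$. I do not anticipate a substantial obstacle, since the real content sits in Theorem \ref{intthm} --- and behind it in Theorem \ref{inertdeg1thmintro}, Lemma \ref{int=deglemma} and Lemma \ref{pdt-inert-lemma}. The only points requiring a little care are the two bookkeeping matters above: identifying the basepoint restriction of $\mu$ with $\pi_K$, and confirming that the distinguished factor in Theorem \ref{intthm} --- the one whose defining map factors through $G$ --- can consistently be chosen to be $N$ in both primitivity cases; both are immediate from the definitions.
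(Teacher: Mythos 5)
Your proposal is correct and follows essentially the same route as the paper: the paper's proof likewise verifies the positivity of dimensions from $K\lvertneqq H\lvertneqq G$, deduces simple connectivity of $G/K$ and $H/K$ from the fibrations $K\to G\to G/K$ and $K\to H\to H/K$, and then applies Theorem \ref{intthm} with $M=G/K$, $A=N$, $B=H/K$. Your additional bookkeeping about identifying $\mu_l$ with $\pi_K$ and matching the primitivity alternatives is consistent with the paper's (more terse) argument.
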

\begin{proof}
Since $N$ and $H/K$ essentially intersect in $G/K$, ${\rm dim}(N)={\rm dim}(G/K)-{\rm dim}(H/K)={\rm dim}(G/H)$. Since $K\lvertneqq H\lvertneqq G$, the dimension ${\rm dim}(H/K)>0$ and ${\rm dim}(N)={\rm dim}(G/H)>0$. 
Further, consider the canonical fibrations 
\[
K\stackrel{}{\longrightarrow} G\stackrel{}{\longrightarrow} G/K, \ \ \ 
K\stackrel{}{\longrightarrow} H\stackrel{}{\longrightarrow} H/K.
\]
Since $G$ and $H$ are simply connected and $K$ is connected, the homogeneous spaces $G/K$ and $H/K$ are simply connected by the long exact sequences of the homotopy groups of a fibration. 
Therefore, the conditions of Theorem \ref{intthm} are satisfied for $M=G/K$, $A=N$ and $B=H/K$. Then the theorem follows immediately. 
\end{proof}

\begin{remark}\label{tophom-rmk}
In favorable cases, it is possible to choose $N$ to be $G/H$. For instance, see Theorem \ref{VCHthm} and its proof for Stiefel manifolds. 
\end{remark}
\subsection{Smooth intersection}
\label{subsec: geo-int}

$\, $

In this subsection, we apply the results of Subsections \ref{subsec: alg-int} and \ref{subsec: GPoin} to the smooth case. The material in this and the next subsection partly explains the ideas in this entire section. 

Let $M$ be an $n$-dimensional connected oriented closed smooth manifold. 
Let $M_0$ be the manifold $M$ with a small open disk removed. By Morse theory and Poincar\'{e} duality, a connected oriented closed smooth manifold is homotopy equivalent to a Poincar\'{e} duality complex with a single top cell, and the deleted manifold $M_0$ is homotopy equivalent to the $(n-1)$-skeleton of $M$. Hence, the choice of notations are agreed upon.  

Let $A\stackrel{j_A}{\hookrightarrow} M$ and $B\stackrel{j_B}{\hookrightarrow} M$ be two embeddings of oriented closed smooth submanifolds with complementary positive dimensions. Denote ${\rm dim}(A)=m$. Then ${\rm dim}(B)=n-m$. Suppose that $A$ and $B$ are transversal to each other in $M$. In differential topology, there is a well-defined {\it geometric intersection number} of $A$ and $B$ in $M$, given by 
\[
\sum\limits_{p\in A\cap B} {\rm sgn}(p),
\]
where the {\it local intersection number} ${\rm sgn}(p)=\pm 1$ is determined by the orientations of $A$, $B$ and $M$. It is a classical result that the geometric intersection number is equal to the algebraic intersection number in the sense of Definition (\ref{alg-intdef})
\[
A\testcap B=\sum\limits_{p\in A\cap B} {\rm sgn}(p).
\]
As before we say that $A$ and $B$ {\it essentially intersect} if their intersection number is nonzero, while they are {\it dual} if their intersection number is one. 

Now suppose that there is a compact Lie group $G$ acting on $M$ by 
\[
G\times M\stackrel{\mu}{\longrightarrow} M
\]
such that the image of the embedding $A\stackrel{j_A}{\hookrightarrow} M$ lies in an orbit of $G$, say $Gx$ for some $x\in M$, and the isotropy group of $x$ is trivial. Note that as the isotropy groups of points in the same orbit are conjugate, the latter assumption does not depend on the choice of $x$ and is equivalent to the orbit $Gx$ being {\it free}: $Gx\cong G$.  
Then by choosing a basepoint $\ast\in Gx$, the embedding $j_A$ factors through $G$ as
\[
j_A: A\stackrel{\mathfrak{j}_A}{\longrightarrow} G\stackrel{\mu_l}{\longrightarrow} M,
\]  
where $\mu_l(g)=\mu(g, \ast)$ for any $g\in G$, and $\mathfrak{j}_A(a)\in G$ is the unique element such that $\mu(\mathfrak{j}_A(a), \ast)=j_A(a)$ for any $a\in A$. Note that the definition of $\mathfrak{j}_A$ depends on the choice of the basepoint $\ast\in Gx$, but it is irrelevant to our application here. 

As before we multiply the two maps $A\stackrel{\mathfrak{j}_A}{\longrightarrow} G$ and $B\stackrel{j_B}{\hookrightarrow} M$ together to define a composite 
\begin{equation}\label{smintABfeq}
f: A\times B \stackrel{\mathfrak{j}_A\times j_B}{\longrightarrow} G\times M\stackrel{\mu}{\longrightarrow} M.
\end{equation}
We call $f$ the {\it product of $A$ and $B$ in $M$}. It is clear that if a cohomology class $z\in H^\ast(M;\mathbb{Z})$ is primitive, then it is $f$-primitive. 
By Lemma \ref{int=deglemma} if $A$ and $B$ essentially intersect and either the Poincar\'{e} dual of $A$ in $M$ is primitive with $n>2m$ or the Poincar\'{e} dual of $B$ in $M$ is primitive with $n<2m$, we have that 
\[
{\rm deg}(f)=\pm A\testcap B=\pm \sum\limits_{p\in A\cap B} {\rm sgn}(p).
\]

With the above discussions, the geometric version of Theorem \ref{intthm} can be formulated as follows. 
\begin{theorem}\label{smintthm}
Let $G$ be a compact Lie group. 
Let $M$ be an $n$-dimensional simply connected closed smooth $G$-manifold. 
Let $A$ and $B$ be two non-contractible simply connected oriented closed smooth submanifolds of $M$ with dimension $m$ and $n-m$, respectively. 
Suppose that $A$ and $B$ essentially intersect in $M$, and $A$ lies in a free orbit of $G$. 
Suppose that either the Poincar\'{e} dual of $A$ in $M$ is primitive with $n>2m$ or the Poincar\'{e} dual of $B$ in $M$ is primitive with $n<2m$. 

If the restriction of the product $f$ (\ref{smintABfeq}) on the $(n-1)$-skeletons 
$
f_0: (A\times B)_0\stackrel{}{\longrightarrow} M_0
$ 
is inert, 
then the attaching map for the top cell of $M$ is inert after localization away from all primes $p$ that divide 
\[
A\testcap B=\sum\limits_{p\in A\cap B} {\rm sgn}(p).
\]
In particular, if $A$ and $B$ are dual up to sign in $M$, then the attaching map for the top cell of $M$ is inert.  ~$\qqed$
\end{theorem}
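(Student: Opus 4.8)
The plan is to derive Theorem~\ref{smintthm} directly from Theorem~\ref{intthm} by translating the smooth data into the homotopy-theoretic framework of Subsection~\ref{subsec: GPoin}. First I would pass from manifolds to Poincar\'{e} duality complexes: as recalled at the beginning of Subsection~\ref{subsec: geo-int}, Morse theory together with Poincar\'{e} duality shows that a connected oriented closed smooth manifold is homotopy equivalent to a Poincar\'{e} duality complex with a single top cell, and that the manifold with a small open disk removed is homotopy equivalent to the $(n-1)$-skeleton. Applying this to $M$, $A$, $B$ and to $A\times B$, the simply connected closed smooth manifolds $M$, $A$, $B$ become simply connected Poincar\'{e} duality complexes. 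Since a compact Lie group $G$ is in particular a topological group, hence an $H$-complex, and since $A$ lies in a free orbit $Gx\cong G$, after choosing a basepoint $\ast\in Gx$ the embedding $j_A$ factors as $A\stackrel{\mathfrak{j}_A}{\longrightarrow}G\stackrel{\mu_l}{\longrightarrow}M$ with $\mu_l$ as in~(\ref{muleq}); thus $M$ is a simply connected Poincar\'{e} duality $G$-complex in the sense of Subsection~\ref{subsec: GPoin}, and the smooth product $f$ of~(\ref{smintABfeq}), built from $f_A=j_A$ and $f_B=j_B$, is literally the map~(\ref{intABfeq}).

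Next I would check the remaining hypotheses of Theorem~\ref{intthm}. By the classical identity between algebraic and geometric intersection numbers, $A\testcap B=\sum_{p\in A\cap B}{\rm sgn}(p)$ in the sense of Definition~\ref{alg-intdef}, so ``$A$ and $B$ essentially intersect in $M$'' is exactly the hypothesis $A\testcap B\neq 0$; recall $A$ and $B$ have complementary positive dimensions. If a cohomology class of $M$ is primitive for the $G$-action then it is $f$-primitive, so the primitivity hypothesis transfers. The two cases $n>2m$ and $n<2m$ translate respectively into $\dim(B)>\dim(A)$ and $\dim(A)>\dim(B)$, which is the ``either\,/\,or'' dimension hypothesis of Theorem~\ref{intthm}; moreover $n\neq 2m$ forces the self-intersection numbers $A\testcap A=B\testcap B=0$ by a degree count (a square of a class in $H^{n-m}$, resp. $H^{m}$, lands in $H^{2(n-m)}$, resp. $H^{2m}$, which pairs nontrivially with $[M]$ only when $n=2m$), exactly as in Remark~\ref{int=degrmk}. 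Finally $A$ and $B$ are non-contractible by assumption, so all hypotheses of Theorem~\ref{intthm}, and hence via Lemma~\ref{int=deglemma} of Theorem~\ref{pdtintthm}, are in force.

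With the translation in place, Theorem~\ref{intthm} applies: since $f_0$ is inert, the attaching map for the top cell of $M$ is inert after localization away from all primes dividing $A\testcap B=\sum_{p\in A\cap B}{\rm sgn}(p)$, and when this number is $\pm 1$ the localization is vacuous, yielding integral inertness. The homotopy-theoretic substance --- reducing inertness around the nonzero degree map $f$ to the inertness of $f_0$ through Beben--Theriault's decomposition, and matching ${\rm deg}(f)$ with $A\testcap B$ --- is already packaged in Theorem~\ref{inertdeg1thmintro}, Lemma~\ref{int=deglemma} and Theorem~\ref{intthm}, so no new machinery is needed.

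The only genuine point of care, and the step I would write out carefully, is the compatibility of the homotopy equivalence ``deleted manifold $\simeq$ $(n-1)$-skeleton'' with the map $f$: one must verify that the smooth restriction $f_0$ of~(\ref{smintABfeq}) on $(n-1)$-skeletons corresponds, under these equivalences and $CW$-approximation, to the skeleton-restriction used in Theorem~\ref{intthm}, so that ``$f_0$ inert'' in the smooth statement really is the hypothesis invoked upstream. Everything else is routine bookkeeping.
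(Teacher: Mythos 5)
Your proposal is correct and matches the paper exactly: Theorem~\ref{smintthm} is stated with a $\qqed$ precisely because its proof is the discussion preceding it in Subsection~\ref{subsec: geo-int} (smooth manifolds become Poincar\'{e} duality complexes with deleted manifolds as lower skeletons, the free orbit gives the factorization $j_A=\mu_l\circ\mathfrak{j}_A$, geometric equals algebraic intersection number, primitivity implies $f$-primitivity, and Lemma~\ref{int=deglemma} identifies ${\rm deg}(f)$ with $A\testcap B$), after which Theorem~\ref{intthm} applies verbatim. Your flagged point about compatibility of the smooth restriction $f_0$ with the $CW$-approximation is the same convention the paper adopts implicitly.
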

 
 \begin{remark}\label{smintthm-cond-rmk}
In Subsection \ref{subsec: steenq} below we will see that the geometric conditions of Theorem \ref{smintthm} is reasonable and not strong in certain sense. Accordingly, only the homotopy condition that $f_0$ is inert is tricky and may need to be checked on a case-by-case basis. 
 \end{remark}
 
\begin{remark}\label{smintthm-result-rmk}
It is clear that the result of Theorem \ref{smintthm} holds in a more general context; that is, it can be stated for general maps $A\stackrel{}{\longrightarrow} M$ and $B\stackrel{}{\longrightarrow} M$ rather than just embeddings. However, based on Remark \ref{smintthm-cond-rmk} and Subsection \ref{subsec: steenq} in the sequel, embedding is a reasonable  geometric context. 
\end{remark}
 
When $M=G$ itself is a Lie group, the $G$-action on itself is free, and 
any two maps $j_A: A\stackrel{}{\longrightarrow} G$ and $j_B: B\longrightarrow G$ can be multiplied to produce a map
\begin{equation}\label{MGintABfeq}
f: A\times B \stackrel{j_A\times j_B}{\longrightarrow} G\times G\stackrel{\mu}{\longrightarrow} G,
\end{equation}
where $\mu$ is the multiplication of $G$. 
In this case, Theorem \ref{smintthm} reduces to the following theorem. 
\begin{theorem}\label{smMHintthm}
Let $G$ be a simply connected closed Lie group. 
Let $A\stackrel{}{\hookrightarrow} G$ and $B\stackrel{}{\hookrightarrow} G$ be two embeddings of simply connected closed smooth submanifolds with complementary dimensions which essentially intersect. 
Suppose that the Poincar\'{e} dual of $A$ in $M$ is primitive and ${\rm dim}(B)>{\rm dim}(A)>0$.

If the restriction of the product map $f$ (\ref{MGintABfeq}) on the lower skeletons 
$
f_0: (A\times B)_0\stackrel{}{\longrightarrow} G_0
$ 
is inert, 
then the attaching map for the top cell of $G$ is inert after localization away from all primes $p$ that divide 
\[
A\testcap B=\sum\limits_{p\in A\cap B} {\rm sgn}(p).
\]
In particular, if $A$ and $B$ are dual up to sign in $G$ then the attaching map for the top cell of $G$ is inert.  ~$\qqed$
\end{theorem}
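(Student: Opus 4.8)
The plan is to recognize Theorem~\ref{smMHintthm} as the special case $M=G$ of Theorem~\ref{smintthm} --- equivalently, as the smooth counterpart of Theorem~\ref{MHintthm} --- and to verify that all hypotheses of Theorem~\ref{smintthm} are automatically met when the ambient manifold is a Lie group acting on itself by left translation. No new homotopy-theoretic input is needed beyond what is already assembled in Subsections~\ref{subsec: GPoin}--\ref{subsec: geo-int}; the task is purely one of fitting the structure present in $G$ into the template of Theorem~\ref{smintthm}.

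First I would record the geometric and algebraic set-up. A simply connected closed Lie group $G$ is homotopy equivalent to a Poincar\'{e} duality complex with a single top cell (Morse theory together with Poincar\'{e} duality, as recalled in Subsection~\ref{subsec: geo-int}), and the multiplication $\mu\colon G\times G\longrightarrow G$ is a smooth left action of $G$ on itself. This action is free and transitive, so all of $G$ is a single free orbit; in particular the embedding $A\hookrightarrow G$ trivially lies in a free orbit, the restriction map $\mu_l$ of~(\ref{muleq}) is $1_G$, the factorization $j_A\colon A\stackrel{\mathfrak{j}_A}{\longrightarrow}G\stackrel{\mu_l}{\longrightarrow}G$ has $\mathfrak{j}_A=j_A$, and the product map $f$ of~(\ref{MGintABfeq}) agrees with the map $f$ of~(\ref{smintABfeq}). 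Since $G$ is an $H$-complex, Remark~\ref{prim-rmk} identifies $\mu$-primitivity with the usual Hopf-algebra primitivity of $H^\ast(G;\mathbb{Z})$, so the hypothesis that the Poincar\'{e} dual of $A$ in $G$ is primitive is exactly the branch of the dichotomy in Theorem~\ref{smintthm} relevant here.

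Next I would dispose of the arithmetic conditions. The submanifolds $A$ and $B$ are closed, oriented, of positive dimension, hence non-contractible, and simply connected by hypothesis, while $G$ is simply connected by hypothesis. Writing $m={\rm dim}(A)$ and $n={\rm dim}(G)={\rm dim}(A)+{\rm dim}(B)$, the assumption ${\rm dim}(B)>{\rm dim}(A)>0$ gives $n>2m$; thus $n\neq 2m$, which by Remark~\ref{int=degrmk} forces $A\testcap A=B\testcap B=0$, and simultaneously $n>2m$ places us in the primitivity-of-$[A]^\ast$ case of Theorem~\ref{smintthm} (equivalently, the hypotheses of Lemma~\ref{int=deglemma}, which supply ${\rm deg}(f)=\pm A\testcap B$). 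Recalling that after a small perturbation to transversality the algebraic intersection number equals the geometric one, $A\testcap B=\sum_{p\in A\cap B}{\rm sgn}(p)$, and that ``essentially intersect'' means this integer is nonzero, all hypotheses of Theorem~\ref{smintthm} with $M=G$ are in force. Applying that theorem, together with the standing hypothesis that $f_0\colon(A\times B)_0\longrightarrow G_0$ is inert, yields that the attaching map for the top cell of $G$ is inert after localization away from the primes dividing $A\testcap B$; when $A$ and $B$ are dual up to sign, i.e.\ $A\testcap B=\pm 1$, the set of such primes is empty and the conclusion is integral.

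I do not expect a genuine obstacle in this deduction: all the substance has been front-loaded into Theorems~\ref{pdtintthm}, \ref{intthm} and~\ref{smintthm}, and the passage to $M=G$ only streamlines their hypotheses (compare the one-line proof of Theorem~\ref{MHintthm}). The only nontrivial hypothesis left to the user --- as flagged in Remark~\ref{smintthm-cond-rmk} --- is the inertness of the restriction $f_0\colon(A\times B)_0\longrightarrow G_0$ on the lower skeletons, which cannot be read off formally and must be checked for each concrete $G$; this is precisely the work carried out later for Stiefel manifolds and complete flag manifolds.
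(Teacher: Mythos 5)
Your proposal is correct and follows exactly the route the paper intends: the paper states Theorem \ref{smMHintthm} with a $\Box$ precisely because it is the specialization of Theorem \ref{smintthm} to $M=G$ acting on itself by left translation, where the action is free, $\mu_l=1_G$, the product maps (\ref{MGintABfeq}) and (\ref{smintABfeq}) coincide, and ${\rm dim}(B)>{\rm dim}(A)$ forces $n>2m$ and hence the vanishing self-intersections by degree reasons. Your write-up just makes these verifications explicit; there is nothing to add or correct.
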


\subsection{Steenrod's realization problem}
\label{subsec: steenq}

$\, $

In 1949, Steenrod \cite{Eil49} raised the following problem: for a given homology class $w\in H_i(K)$ of a finite polyhedron $K$, does there exist an orientable closed manifold $A$ and a map $A\stackrel{f_w}{\longrightarrow} K$ such that $f_{w\ast}[A]=w$? A cohomological version of this problem is closely related to our results in this section. 
\begin{problem}\label{steenq}
Let $M$ be an $n$-dimensional oriented closed smooth manifold. For a cohomology class $x\in H^{n-m}(M;\mathbb{Z})$, does there exist an embedding $A\stackrel{j_A}{\hookrightarrow} M$ such that the Poincar\'{e} dual of $j_{A\ast}([A])$ is $x$? 
\end{problem}
Thom \cite{Tho54} studied this problem in depth. Among others, he provided a positive answer to this problem for rational cohomology classes.

\begin{theorem}\label{thomqthm}
Let $M$ be an $n$-dimensional oriented closed smooth manifold. 
For any cohomology class $x\in H^{n-m}(M;\mathbb{Z})$ with $n\geq m\geq 0$, there exists a nonzero integer $d$ and an embedding $A\stackrel{j_A}{\hookrightarrow} M$ such that the Poincar\'{e} dual of $j_{A\ast}([A])$ is $dx$.  ~$\qqed$
\end{theorem}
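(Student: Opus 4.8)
The plan is to reduce this to the classical unoriented/oriented bordism computation of Thom via the Thom space of the normal bundle of an embedding, combined with the fact that rationally (or after multiplying by a suitable integer) every cohomology class pulls back from the relevant Thom spectrum. First I would recall the standard dictionary: an embedding $A\stackrel{j_A}{\hookrightarrow} M$ of a closed oriented submanifold of codimension $n-m$ has a tubular neighborhood $\nu$, and collapsing the complement of $\nu$ gives a Pontryagin--Thom map $M\stackrel{c}{\longrightarrow} \mathrm{Th}(\nu)$. Composing with the Thom class $u\in H^{n-m}(\mathrm{Th}(\nu);\mathbb{Z})$ produces a class $c^\ast(u)\in H^{n-m}(M;\mathbb{Z})$, and it is a classical fact that this class is precisely the Poincar\'{e} dual of $j_{A\ast}([A])$. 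So the task becomes: given $x\in H^{n-m}(M;\mathbb{Z})$, find a nonzero integer $d$ so that $dx$ arises this way.

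The key step is to realize $dx$ by a map into $MSO(n-m)$ (or $MSO$). A class $x\in H^{n-m}(M;\mathbb{Z})$ is represented, up to sign, by a map $M\stackrel{x}{\longrightarrow} K(\mathbb{Z},n-m)$. One uses the standard fact that the natural map $MSO(n-m)\stackrel{}{\longrightarrow} K(\mathbb{Z},n-m)$ given by the Thom class is a rational homotopy equivalence through a range, or more precisely that $\pi_i(MSO)$ is finite in positive degrees modulo the polynomial part; consequently the map $[M, MSO(n-m)]\stackrel{}{\longrightarrow} H^{n-m}(M;\mathbb{Z})$ has finite cokernel on the relevant piece, so some nonzero multiple $dx$ lifts to a map $\widetilde{x}\colon M\stackrel{}{\longrightarrow} MSO(n-m)$. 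By Thom transversality applied to this map and the zero section $BSO(n-m)\hookrightarrow MSO(n-m)$ (after a CW/smooth approximation and a stabilization so that the dimension/codimension hypotheses for transversality hold), the preimage $A:=\widetilde{x}^{-1}(BSO(n-m))$ is a closed oriented submanifold of $M$ of codimension $n-m$ whose normal bundle is the pullback of the universal bundle, and by construction the Poincar\'{e} dual of $[A]$ in $M$ is $dx$. One then upgrades the immersion/map to an embedding using general position, which is automatic for codimension $n-m\geq 1$ when $A$ is obtained as a transverse preimage inside $M$ itself; if instead one only gets a map $A\stackrel{}{\longrightarrow} M$, one composes with a smooth approximation and uses Whitney's embedding theorem together with the fact that a generic perturbation keeps the homology class.

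The main obstacle is the control of the integer $d$ and the careful handling of the low-dimensional / unstable range. Thom's theorem is clean in the stable range, but here $n-m$ may be small relative to $n$, so one must either work in $MSO$ stably and then destabilize the submanifold (using that an $m$-manifold embeds in $M^n$ generically once $n\geq m$, with self-intersections removable in the stated dimension ranges), or invoke Serre's finiteness theorem to bound the order of the obstruction cocycle to lifting $x$ through $MSO(n-m)\to K(\mathbb{Z},n-m)$. Either way, the existence of \emph{some} nonzero $d$ follows from finiteness of the relevant homotopy and cohomology groups with $\mathbb{Z}$ coefficients in the fiber, since rationally the Thom class map is an equivalence in the needed degree. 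The orientability of $A$ is ensured by using $MSO$ rather than $MO$, and connectedness (if desired) can be arranged by a connected-sum/surgery modification that does not change the homology class; the statement as given does not demand $A$ be connected, so this last point is not strictly needed.
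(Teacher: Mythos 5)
The paper does not prove Theorem \ref{thomqthm}: it is stated with a $\Box$ and attributed directly to Thom \cite{Tho54}, so there is no internal argument to compare yours against. Your sketch reproduces the architecture of Thom's original proof --- collapse onto the Thom space of the normal bundle identifies the Poincar\'{e} dual of an embedded submanifold with the pullback of the Thom class, one lifts a multiple of $x$ through the Thom-class map $MSO(k)\to K(\mathbb{Z},k)$ with $k=n-m$, and transversality to the zero section produces the submanifold. Two small remarks on the easy parts: the transverse preimage is already an embedded submanifold of $M$, so your closing discussion about upgrading a map to an embedding via Whitney is unnecessary; and orientability of $A$ indeed comes for free from orientability of $M$ together with the induced orientation of the normal bundle.

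The step whose justification does not hold up as written is the lifting of $dx$. The map $MSO(k)\to K(\mathbb{Z},k)$ is \emph{not} a rational homotopy equivalence through the range needed when $n$ is large compared with $k$: by the Thom isomorphism, $H^{\ast}(MSO(k);\mathbb{Q})\cong u\cdot H^{\ast-k}(BSO(k);\mathbb{Q})$ contains classes such as $u\cdot p_{1}$ in degree $k+4$ with no counterpart in $H^{\ast}(K(\mathbb{Z},k);\mathbb{Q})$, so the homotopy fibre $F$ of the Thom-class map has infinite homotopy groups in the relevant degrees. Consequently the obstruction groups $H^{i+1}(M;\pi_{i}F)$ for lifting a map out of $M$ need not be finite, and even when they are, higher obstructions are not linear in $x$, so one cannot simply multiply the obstruction cocycle by $d$. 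The input that actually makes the argument work --- and the substance of Thom's proof --- is his comparison theorem identifying $MSO(k)$, modulo the Serre class of finite groups, with a product of Eilenberg--MacLane spaces having $K(\mathbb{Z},k)$ as a factor; this yields a map from a skeleton of $K(\mathbb{Z},k)$ to $MSO(k)$ whose composite with the Thom class is $N\cdot\iota_{k}$ for some $N\neq 0$ depending only on the dimensions, and precomposing with $M\stackrel{x}{\longrightarrow}K(\mathbb{Z},k)$ gives the required lift of $Nx$. With that replacement (and Thom's separate treatment of the codimensions falling outside the stable range) your outline becomes the standard proof.
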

In this case, we say $dx$ can be {\it realized} by an embedding. Additionally, Thom proved the following concrete result.
\begin{theorem}\label{thom8thm}
For any oriented closed smooth manifold of dimension less than $9$, all integral cohomology classes can be realized by submanifold embeddings. ~$\qqed$
\end{theorem}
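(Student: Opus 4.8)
The plan is to follow Thom's original argument \cite{Tho54} and thereby sharpen Theorem \ref{thomqthm}, where a possibly nonzero multiple $dx$ had to be realized, to the case $d=1$ in low dimensions. The first move is to recast \emph{realizability} of a class $x\in H^{k}(M^{n};\mathbb{Z})$ as a lifting problem. By the Thom--Pontryagin construction and transversality, $x$ is Poincar\'{e} dual to an embedded closed oriented codimension-$k$ submanifold of $M$ if and only if the classifying map $f_{x}\colon M\longrightarrow K(\mathbb{Z},k)$ of $x$ lifts, up to homotopy, through the map $u\colon MSO(k)\longrightarrow K(\mathbb{Z},k)$ representing the Thom class $U\in H^{k}(MSO(k);\mathbb{Z})$: given a lift $g\colon M\longrightarrow MSO(k)$, one homotopes $g$ to be transverse to the zero section $BSO(k)\subset MSO(k)$, and then $A:=g^{-1}(BSO(k))$ is a closed oriented submanifold with normal bundle $g^{*}\gamma_{k}$ and Poincar\'{e} dual $g^{*}U=x$; conversely, collapsing $M$ onto a tubular neighbourhood of a submanifold realizing $x$ produces such a lift.

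Next I would run obstruction theory for this lift. Let $F$ be the homotopy fibre of $u$. Since $MSO(k)$ is $(k-1)$-connected and $u$ is an isomorphism on $\pi_{k}\cong\mathbb{Z}$, the fibre $F$ is $k$-connected with $\pi_{k+i}(F)\cong\pi_{k+i}(MSO(k))$ for $i\geq 1$. By Freudenthal suspension together with Thom's computation of low-dimensional oriented cobordism, $\Omega^{SO}_{0}=\mathbb{Z}$ and $\Omega^{SO}_{1}=\Omega^{SO}_{2}=\Omega^{SO}_{3}=0$, one gets $\pi_{k+i}(MSO(k))=0$ for $i=1,2,3$, so $F$ is $(k+3)$-connected. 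The successive obstructions to lifting $f_{x}$ lie in the groups $H^{j+1}(M;\pi_{j}F)$, which vanish automatically as soon as $j+1>\dim M=n$; combined with the $(k+3)$-connectivity of $F$ this leaves only obstructions with $k+4\leq j\leq n-1$. For $n<9$ and codimension $k\geq 4$ there are none, and such classes are realized; the small-codimension cases are handled elementarily --- $k=0$ by a component of $M$, $k=1$ by a regular fibre of a map $M\longrightarrow S^{1}$, and $k=2$ by the transverse zero locus of a section of the complex line bundle with first Chern class $x$ --- while the borderline case $k=3$, $n=8$ is settled by checking that the one remaining potential obstruction, a fixed integral cohomology operation of degree $3$ applied to $x$ and landing in $H^{8}(M^{8};\mathbb{Z})$ (governed by the $2$-torsion class $W_{3}$ in $H^{3}(BSO;\mathbb{Z})$, i.e.\ essentially $Sq^{3}$), necessarily vanishes here for dimensional and decomposability reasons. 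Assembling these cases gives realizability for every integral cohomology class of a manifold of dimension less than $9$; note in particular that the mod-$2$ analogue of Problem \ref{steenq} holds in all dimensions since $MO(k)\longrightarrow K(\mathbb{Z}/2,k)$ splits, and it is exactly the integral torsion in $MSO(k)$ that forces the dimension restriction.

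The main obstacle is the connectivity estimate for $F$ in the \emph{unstable} range. For small codimension $k$ one cannot simply invoke the stable cobordism groups, so controlling $\pi_{k+i}(MSO(k))$ for $i$ up to roughly $n-k$ --- equivalently, pinning down the first $k$-invariant of $u$ --- is the substantive content, and it rests on Thom's computation of the cohomology of the Thom spaces $MSO(k)$ and of the oriented cobordism ring (rational polynomiality on even-dimensional generators, together with the $Sq^{3}$/$2$-torsion analysis). Once those inputs are granted, the remainder is formal obstruction theory and transversality.
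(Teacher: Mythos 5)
First, a point of comparison: the paper does not actually prove Theorem \ref{thom8thm} --- it is quoted from \cite{Tho54} and closed with a $\Box$ --- so there is no in-paper argument to measure yours against. What you have written is a reconstruction of Thom's own proof, and its architecture is correct: realizability of $x\in H^{k}(M^{n};\mathbb{Z})$ is equivalent, via transversality and the Thom--Pontryagin collapse, to lifting $f_{x}\colon M\to K(\mathbb{Z},k)$ through $u\colon MSO(k)\to K(\mathbb{Z},k)$; the successive obstructions live in $H^{j+1}(M;\pi_{j}F)$ and die above $\dim M$; and codimensions $k\leq 2$ are handled by elementary geometry (maps to $S^{1}$, zero loci of sections of line bundles). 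This is exactly Thom's route.

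Two points in the execution are not right as written, and they are precisely where the weight of the theorem sits. (i) The $(k+3)$-connectivity of $F$ does not follow from Freudenthal plus $\Omega^{SO}_{1}=\Omega^{SO}_{2}=\Omega^{SO}_{3}=0$ for the codimensions that matter here ($k=3,4,5$): for instance $\pi_{7}(MSO(4))$ lies outside the stable range, so the vanishing must come from Thom's direct comparison of $H^{*}(MSO(k))$ with $H^{*}(K(\mathbb{Z},k))$ through degree $k+3$ (where $Sq^{3}\iota\mapsto W_{3}U$ is an isomorphism). You flag this, but it means the cases $(k,n)=(4,8)$ and $(5,8)$ are not ``obstruction-free for free''; they rest on the same unstable computation as your borderline case. (ii) Your account of the residual case $k=3$, $n=8$ is garbled. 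The first possibly nonzero obstruction is the image of the $k$-invariant in $H^{k+5}(K(\mathbb{Z},k);\pi_{k+4}F)$, i.e.\ a degree-\emph{five} operation $H^{3}(M)\to H^{8}(M;\pi_{7}(MSO(3)))$; a degree-three operation applied to $x\in H^{3}$ would land in $H^{6}$, not $H^{8}$. Identifying $\pi_{7}(MSO(3))$ and showing this operation vanishes on every $8$-manifold (via Thom's computation of the Steenrod action on $H^{*}(MSO(3))$ together with Wu-type relations near the top dimension) is the substantive content of the theorem, not a one-line ``dimensional and decomposability'' check. A last, pedantic caveat: in codimension $0$ the class $2\in H^{0}(M;\mathbb{Z})$ is dual to $2[M]$ and is not realizable by an embedded submanifold, so that degenerate case should be excluded from the statement rather than ``handled by a component of $M$''.
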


In Theorems \ref{smintthm} and \ref{smMHintthm} two embeddings with complementary dimensions are given as a condition for a $G$-manifold $M$. With Thom's results we see that this condition is not strong. For instance, we may choose two generators $x$, $y\in H^{\ast}(M;\mathbb{Z})$ such that $\langle x\cup y, [M]\rangle=1$, and either $x$ is primitive with ${\rm deg}(x)>{\rm deg}(y)$ or $y$ is primitive with ${\rm deg}(y)>{\rm deg}(x)$. Then there exist two nonzero integers $d(x)$ and $d(y)$ such that $d(x)x$ and $d(y)y$ can be realized by some embeddings $A\stackrel{j_A}{\hookrightarrow} M$ and $B\stackrel{j_B}{\hookrightarrow} M$, respectively. Note that their intersection number is $d(x)d(y)\neq 0$. This satisfies all the geometric conditions of Theorems \ref{smintthm} and \ref{smMHintthm} except that the embedding of $A$ is required to lie in a free orbit in Theorem \ref{smintthm}. For the latter condition, it is satisfied particularly when the action is free and $A$ is a subspace of $G$. Indeed, if the action of $G$ on $M$ is free, the restriction map $\mu_l: G\longrightarrow M$ (\ref{muleq}) is injective, and any subspace $A$ of $G$ gives an embedding $A\stackrel{}{\hookrightarrow} G\stackrel{\mu_l}{\hookrightarrow} M$ which realizes a certain cohomology class of $M$.

\newpage

\section{Stiefel manifolds}
\label{sec: degen}
In this section, we prove results regarding the inertness of the top cell attachments of Stiefel manifolds. These results can be viewed as special examples of the results in Section \ref{sec: int}. To this end, we first prove a more general result, which is Part (2) of Theorem \ref{FEBthmintro} for strict fibrations.

\begin{proposition}\label{FEBprop}
Let $F\stackrel{j}{\longrightarrow} E\stackrel{\pi}{\longrightarrow} B$ be a homotopy fibration of connected Poincar\'{e} duality complexes of positive dimensions. 
If the homotopy fibration has a homotopy section, then the attaching map for the top cell of $E$ is inert.

Additionally, the assertion holds after localization at any set of primes.
\end{proposition}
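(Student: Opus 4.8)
The plan is to prove the stronger statement that $\Omega i_E$ has a right homotopy inverse, where $i_E\colon E_0\hookrightarrow E$ is the inclusion of the lower skeleton; by the convention recalled in the introduction this is exactly the inertness of the top cell attachment $h_E$. Write $\pi\colon E\to B$ for the projection, $j\colon F\to E$ for the fibre inclusion, $s\colon B\to E$ for the homotopy section, and $n=\dim E$.

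The first step is the loop space splitting $\Omega E\simeq\Omega B\times\Omega F$. Looping the fibration gives a homotopy fibration $\Omega F\to\Omega E\xrightarrow{\Omega\pi}\Omega B$ in which $\Omega\pi$ has the right homotopy inverse $\Omega s$. Since $\Omega E$ is an $H$-space, the composite $g\colon\Omega B\times\Omega F\xrightarrow{\Omega s\times\Omega j}\Omega E\times\Omega E\xrightarrow{\mu}\Omega E$, with $\mu$ the loop multiplication, is a homotopy equivalence by the same reasoning used in the proof of Theorem~\ref{pullbackthm}(2): $g$ is a map of homotopy fibrations over $\Omega B$ inducing the identity on fibres $\Omega F$ and on the base $\Omega B$, hence a weak equivalence by the five lemma, hence a homotopy equivalence by Whitehead's theorem. (This is also an instance of Proposition~\ref{RHIfibprop}(1), applied to the fibration $\Omega F\to\Omega E\to\Omega B$ over the trivial fibration.)

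The second step is to lift $g$ through $\Omega i_E$. Since $F\to E\to B$ is a homotopy fibration of Poincar\'{e} duality complexes, the formal dimensions add: $\dim E=\dim F+\dim B$; as $F$ and $B$ have positive dimension, both $\dim F\le n-1$ and $\dim B\le n-1$, so by cellular approximation $j$ and $s$ factor through $E_0$ up to homotopy, say $j\simeq i_E\circ\bar{j}$ and $s\simeq i_E\circ\bar{s}$ with $\bar{j}\colon F\to E_0$ and $\bar{s}\colon B\to E_0$. Set $\Phi\colon\Omega B\times\Omega F\xrightarrow{\Omega\bar{s}\times\Omega\bar{j}}\Omega E_0\times\Omega E_0\xrightarrow{\mu_0}\Omega E_0$, where $\mu_0$ is the loop multiplication of $\Omega E_0$. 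Since $\Omega i_E$ is an $H$-map and $\Omega$ is a functor, $\Omega i_E\circ\Phi\simeq\mu\circ\bigl(\Omega(i_E\bar{s})\times\Omega(i_E\bar{j})\bigr)\simeq\mu\circ(\Omega s\times\Omega j)=g$. Hence $\Phi\circ g^{-1}$ is a right homotopy inverse of $\Omega i_E$, so $h_E$ is inert. The final assertion follows by localizing this conclusion at any set of primes $P$ (equivalently, by rerunning the argument with all spaces and maps localized, noting that looping, cellular approximation and Whitehead's theorem localize).

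I do not expect a genuine obstacle here: the argument is essentially a formal consequence of the existence of the section. The two points that need care are exactly the two inputs highlighted above — the identification $\Omega E\simeq\Omega B\times\Omega F$ \emph{compatibly} with the fibre inclusion and the section (so that the equivalence is of the form $g=\mu\circ(\Omega s\times\Omega j)$, already packaged in Section~\ref{sec: pullback}), and the dimension bookkeeping guaranteeing that $\bar{j}$ and $\bar{s}$ exist (the classical additivity of formal dimension in a Poincar\'{e} duality fibration). Once one observes that the very same splitting map lifts to $\Omega E_0$ for these dimension reasons, the right homotopy inverse of $\Omega i_E$ is immediate.
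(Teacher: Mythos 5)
Your argument is correct and is essentially the paper's own proof: factor $j$ and $s$ through $E_0$ by CW-approximation (using that $F$ and $B$ have positive, hence non-top, dimension), and observe that the loop-multiplied map $\Omega B\times\Omega F\to\Omega E_0\to\Omega E$ is a homotopy equivalence, giving a right homotopy inverse of $\Omega i_E$. The local case is handled identically in the paper, so nothing further is needed.
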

\begin{proof}
Let $B\stackrel{s}{\longrightarrow} E$ be a homotopy section of the homotopy fibration, that is, $\pi\circ s\simeq 1_B$. Since $F$ and $B$ are of positive dimensions, by the $CW$-approximation the maps $F\stackrel{j}{\longrightarrow} E$ and $B\stackrel{s}{\longrightarrow} E$ factor through the lower skeleton $E_0$ of $E$
\[
F\stackrel{j_0}{\longrightarrow} E_0 \stackrel{i_E}{\longrightarrow} E \ \ {\rm and} \ \ 
B\stackrel{s_0}{\longrightarrow} E_0 \stackrel{i_E}{\longrightarrow} E,
\]
respectively, where $i_E$ is the lower skeleton inclusion. Consider the homotopy commutative diagram
\[
\diagram
\Omega F\times \Omega B \rto^{\Omega j_0\times  \Omega s_0} \drto_{\Omega j\times  \Omega s}  &  \Omega E_0\times \Omega E_0   \rto^<<<{\mu} \dto^{\Omega i_E\times  \Omega i_E}  & \Omega E_0   \dto^{\Omega i_E}\\
& \Omega E\times \Omega E   \rto^<<<<{\mu}  & \Omega E,
\enddiagram
\]
where the maps $\mu$ are the loop multiplications and the right square homotopy commutes by the naturality of loop maps. By the previous discussion, the lower direction composition around the diagram is a weak homotopy equivalence, and then is a homotopy equivalence by the Whitehead theorem. Therefore, the upper direction composition around the diagram implies that $\Omega i_E$ has a right homotopy inverse, that is, the attaching map for the top cell of $E$ is inert. This proves the proposition in the integral case, while the proposition in the local case follows by the same argument. 
\end{proof}

For application we are interested in a special case of Proposition \ref{FEBprop}. 
Let 
\begin{equation}\label{FMSeq}
F\stackrel{j}{\longrightarrow} M\stackrel{\pi}{\longrightarrow} S^m
\end{equation}
be a homotopy fibration of connected nilpotent Poincar\'{e} duality complexes. Suppose that $M$ has a homotopy action from an $H$-complex $G$
\[
\mu: G\times M\stackrel{}{\longrightarrow} M.
\]
As in (\ref{muleq}) the action on the basepoint of $M$ gives a map $\mu_l: G\stackrel{}{\longrightarrow} M$.  
 
\begin{corollary}\label{FMScor}
For the $G$-complex $M$ in the homotopy fibration (\ref{FMSeq}) with $m\geq 1$, suppose that there exists a map 
\[
s: S^m\stackrel{\mathfrak{s}}{\longrightarrow} G\stackrel{\mu_l}{\longrightarrow} M
\]
such that $\pi\circ s$ is homotopic to a degree $k$ self-map of $S^m$. If $F$ is non-contractible, then the attaching map for the top cell of $M$ is inert after localization away from all primes $p$ that divide $k$.  
\end{corollary}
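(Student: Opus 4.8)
The plan is to reduce the statement, after inverting the primes dividing $k$, directly to Proposition \ref{FEBprop}. The key point is that a degree $k$ self-map of $S^m$ becomes a homotopy equivalence once every prime dividing $k$ is inverted, so the hypothesis that $\pi\circ s$ is a degree $k$ self-map of $S^m$ upgrades, after such a localization, to the statement that $\pi$ admits a homotopy section; the inertness of the top cell of $M$ then follows from the local form of Proposition \ref{FEBprop}.

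In detail, I would let $P$ be the set of primes not dividing $k$ and pass to localization at $P$. Since $F$, $M$ and $S^m$ are connected nilpotent Poincar\'{e} duality complexes, $P$-localization carries the homotopy fibration $F\to M\xrightarrow{\pi}S^m$ to a homotopy fibration $F_{(P)}\to M_{(P)}\xrightarrow{\pi_{(P)}}S^m_{(P)}$ of connected local Poincar\'{e} duality complexes, and these have positive formal dimension: $\dim S^m=m\geq 1$, and $F$, being a non-contractible Poincar\'{e} duality complex (a connected one of formal dimension $0$ is a point), has $\dim F\geq 1$. Next, since $\pi\circ s$ is a degree $k$ self-map of $S^m$, its localization $(\pi\circ s)_{(P)}=\pi_{(P)}\circ s_{(P)}$ induces multiplication by the unit $k\in\mathbb{Z}_{(P)}$ on $H_m(S^m_{(P)};\mathbb{Z}_{(P)})\cong\mathbb{Z}_{(P)}$, hence an isomorphism on homology, and is therefore a homotopy equivalence of $S^m_{(P)}$. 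Consequently $\sigma:=s_{(P)}\circ\big((\pi\circ s)_{(P)}\big)^{-1}$ satisfies $\pi_{(P)}\circ\sigma\simeq 1_{S^m_{(P)}}$, so the localized fibration admits a homotopy section. Applying Proposition \ref{FEBprop} to $F_{(P)}\to M_{(P)}\xrightarrow{\pi_{(P)}}S^m_{(P)}$ then yields that the attaching map for the top cell of $M$ is inert after localization away from every prime dividing $k$, which is the assertion.

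I do not anticipate a real obstacle here. The argument uses only that $\pi\circ s$ has degree $k$, so the factorization $s=\mu_l\circ\mathfrak{s}$ through the $H$-complex $G$ — which is simply what is naturally available in the intended applications, e.g. Stiefel manifolds — plays no logical role in the proof itself, and the hypothesis that $F$ be non-contractible enters only to guarantee $\dim F>0$ so that Proposition \ref{FEBprop} applies. The one mildly delicate point is the routine bookkeeping that $P$-localization of nilpotent spaces preserves the homotopy fibration, connectedness, and the Poincar\'{e} duality property over $\mathbb{Z}_{(P)}$, which is standard. (One could instead attempt to run the argument through Theorem \ref{inertdeg1thmintro} applied to the product map $S^m\times F\xrightarrow{\mathfrak{s}\times j}G\times M\xrightarrow{\mu}M$, but that route would additionally require control of the restriction $f_0$ on lower skeletons together with a primitivity hypothesis, so the fibration-theoretic approach above is cleaner.)
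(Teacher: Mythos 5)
Your argument is correct and is essentially the paper's own proof: the paper likewise works away from the primes dividing $k$ and observes that precomposing $s$ with a degree $1/k$ self-map of the localized sphere produces a homotopy section of $\pi$, then invokes Proposition \ref{FEBprop}. Your use of the inverse of the localized homotopy equivalence $(\pi\circ s)_{(P)}$ is just a slightly more explicit form of the same section, and your remark that non-contractibility of $F$ is what guarantees positive dimension so that Proposition \ref{FEBprop} applies is a point the paper leaves implicit.
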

\begin{proof}
We work in the homotopy category away from all primes $p$ that divide $k$. The composite  
\[
s': S^m\stackrel{1/k}{\longrightarrow} S^m\stackrel{s}{\longrightarrow} M
\]
is a homotopy section of the homotopy fibration (\ref{FMSeq}). Then the corollary follows from Proposition \ref{FEBprop} immediately. 
\end{proof}
\begin{remark}
Alternatively, Corollary \ref{FMScor} can be proved by showing that the composite
\[
S^m\times F\stackrel{\mathfrak{s}\times j}{\longrightarrow} G\times M\stackrel{\mu}{\longrightarrow} M
\]
is a local homotopy equivalence. 
\end{remark}
When $m=1$ Corollary \ref{FMScor} can be strengthened.  
\begin{corollary}\label{FMS1cor}
Let 
\[
F\stackrel{}{\longrightarrow} M\stackrel{\pi}{\longrightarrow} S^1
\]
be a homotopy fibration of connected Poincar\'{e} duality complexes over the circle. If $F$ is non-contractible, then the attaching map for the top cell of $M$ is inert.
\end{corollary}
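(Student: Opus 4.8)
The plan is to deduce Corollary~\ref{FMS1cor} from Proposition~\ref{FEBprop} by producing a homotopy section of the fibration $F\longrightarrow M\stackrel{\pi}{\longrightarrow} S^1$. The section comes for free from the fundamental group. In the long exact sequence of homotopy groups of the homotopy fibration,
\[
\pi_2(S^1)\longrightarrow \pi_1(F)\longrightarrow \pi_1(M)\stackrel{\pi_\ast}{\longrightarrow}\pi_1(S^1)\longrightarrow \pi_0(F),
\]
one has $\pi_2(S^1)=0$ since the universal cover $\mathbb{R}$ of $S^1$ is contractible, and $\pi_0(F)$ is a single point since $F$ is connected. Exactness at $\pi_1(S^1)$ therefore forces $\pi_\ast\colon \pi_1(M)\longrightarrow\pi_1(S^1)\cong\mathbb{Z}$ to be an epimorphism.

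Next I would upgrade this algebraic fact to an honest homotopy section. Since $\mathbb{Z}$ is free, the epimorphism $\pi_\ast$ splits; choosing a class in $\pi_1(M)$ mapping to a generator of $\pi_1(S^1)$ and representing it by a pointed map $s_0\colon S^1\longrightarrow M$, the composite $\pi\circ s_0$ represents a generator of $\pi_1(S^1)$, hence is a degree one self-map of $S^1$ and so is homotopic to $1_{S^1}$. Replacing $\pi$ by a homotopy equivalent strict fibration (as recalled at the start of Section~\ref{sec: pullback}) and lifting a homotopy from $\pi\circ s_0$ to $1_{S^1}$ with initial lift $s_0$ via the homotopy lifting property, one obtains a map $s\colon S^1\longrightarrow M$ with $\pi\circ s\simeq 1_{S^1}$. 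Thus the homotopy fibration $F\longrightarrow M\stackrel{\pi}{\longrightarrow} S^1$ admits a homotopy section.

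Finally I would verify the hypotheses of Proposition~\ref{FEBprop}. The complex $S^1$ is a connected Poincar\'{e} duality complex of dimension $1$; $F$ is a connected Poincar\'{e} duality complex, and being non-contractible it has positive dimension; and $M$ has positive dimension because $\pi_1(M)$ surjects onto $\mathbb{Z}$, so $M$ is not a point. With a homotopy section in hand, Proposition~\ref{FEBprop} applies and shows that the attaching map for the top cell of $M$ is inert, which is the assertion. The argument is almost entirely formal; the only step requiring a little care is passing from the surjection on $\pi_1$ to a homotopy section genuinely compatible with $\pi$, and this is precisely what the homotopy lifting property provides once $\pi$ is modelled by a strict fibration. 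I do not anticipate any serious obstacle.
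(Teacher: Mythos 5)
Your argument is correct and follows the same route as the paper: exactness of $\pi_1(M)\stackrel{\pi_\ast}{\longrightarrow}\pi_1(S^1)\longrightarrow\pi_0(F)=\ast$ gives surjectivity of $\pi_\ast$, hence a homotopy section, and Proposition~\ref{FEBprop} finishes. (Your lifting step is superfluous: since a homotopy section only requires $\pi\circ s\simeq 1_{S^1}$, the map $s_0$ already qualifies.)
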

\begin{proof}
By the long exact sequence of the homotopy groups of a fibration, the sequence 
\[
\pi_1(M)\stackrel{\pi_\ast}{\longrightarrow} \pi_1(S^1)\cong\mathbb{Z}\stackrel{}{\longrightarrow} \pi_0(F)=0
\]
is exact. Therefore, $\pi_\ast$ is surjective and the homotopy fibration has a homotopy section. Then Proposition \ref{FEBprop} implies that the attaching map for the top cell of $M$ is inert.
\end{proof}
Corollary \ref{FMS1cor} could be useful to study inertness of the top cell attachments for certain manifolds by various so-called {\it fibering theorem} in geometric topology. In this context, a manifold $M$ is {\it fibered over a circle} if there is a fibre bundle
\[
N\stackrel{}{\longrightarrow} M\stackrel{\pi}{\longrightarrow} S^1
\] 
such that the fibre $N$ is a manifold. Furthermore, a manifold $M$ is {\it virtually fibered over a circle} if some finite cover of $M$ is fibered over a circle. 
The following corollary follows from Corollary \ref{FMS1cor} and Lemma \ref{coverinertlemma1} immediately.     
\begin{corollary}\label{fibering-inert-cor}
If a connected orientable closed manifold $M$, with a single top cell, is virtually fibered over a circle, then the attaching map for the top cell of $M$ is inert. $\qqed$
\end{corollary}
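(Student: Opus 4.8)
The plan is to obtain the statement by concatenating Corollary \ref{FMS1cor} (ascending a circle fibration) with Lemma \ref{coverinertlemma1} (descending a finite cover). Throughout I would take $n:=\dim(M)\geq 2$, as in Lemma \ref{coverinertlemma1}.

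First I would unpack the hypothesis. By definition there is a finite cover $p: \widetilde{M}\stackrel{}{\longrightarrow} M$ together with a fibre bundle $N\stackrel{}{\longrightarrow}\widetilde{M}\stackrel{\pi}{\longrightarrow}S^{1}$ in which the fibre $N$ is a manifold. A finite cover of a connected orientable closed manifold is again a connected orientable closed manifold, hence is homotopy equivalent to a Poincar\'{e} duality complex with a single top cell; moreover, as in the discussion preceding Lemma \ref{coverinertlemma1}, $p$ may be taken to be a cellular finite cover of such complexes. The fibre $N$ is a closed manifold of dimension $n-1\geq 1$, and since $\widetilde{M}\stackrel{\pi}{\longrightarrow}S^{1}$ is a fibre bundle it is in particular a homotopy fibration.

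Next I would reduce to a connected fibre, which is needed because Corollary \ref{FMS1cor} is phrased for connected Poincar\'{e} duality complexes. If $N$ is disconnected, then connectedness of $\widetilde{M}$ forces the monodromy homeomorphism $\phi$ to cyclically permute the components $N=N_{0}\sqcup\cdots\sqcup N_{c-1}$, and the mapping-torus model identifies $\widetilde{M}$, over a degree-$c$ self-cover of $S^{1}$, with the mapping torus of $\phi^{c}|_{N_{0}}: N_{0}\stackrel{}{\longrightarrow} N_{0}$; thus $\widetilde{M}$ fibres over $S^{1}$ with connected fibre $N_{0}$, again an orientable closed manifold of dimension $n-1\geq 1$. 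In every case we arrive at a homotopy fibration $N_{0}\stackrel{}{\longrightarrow}\widetilde{M}\stackrel{}{\longrightarrow}S^{1}$ of connected Poincar\'{e} duality complexes whose fibre $N_{0}$ is non-contractible, being a positive-dimensional closed manifold. Corollary \ref{FMS1cor} then shows the attaching map for the top cell of $\widetilde{M}$ is inert, and Lemma \ref{coverinertlemma1}, applied to the finite cover $p: \widetilde{M}\stackrel{}{\longrightarrow} M$ of connected $n$-dimensional Poincar\'{e} duality complexes with a single top cell, shows the attaching map for the top cell of $M$ is inert.

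Since the argument is a formal composition of two already established results, I do not anticipate a real obstacle; the only point deserving attention is the bookkeeping that puts the circle fibration of $\widetilde{M}$ into the connected, non-contractible fibre form required by Corollary \ref{FMS1cor} (together with the harmless standing assumption $n\geq 2$).
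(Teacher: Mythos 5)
Your proposal is correct and follows exactly the route the paper takes: the paper derives this corollary immediately from Corollary \ref{FMS1cor} applied to the fibered finite cover $\widetilde{M}$ and then Lemma \ref{coverinertlemma1} applied to the cover $\widetilde{M}\to M$. The extra bookkeeping you supply (passing to a connected fibre via the monodromy's cyclic permutation of components, and noting a positive-dimensional closed manifold is non-contractible) is accurate and fills in details the paper leaves implicit, but it is not a different argument.
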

For instance, a classical result of Stallings \cite{Sta61} characterizes closed $3$-manifolds that  are surface bundles over $S^1$. Roughly, a $3$-manifold $M$ is fibered over a circle if and only if its fundamental group is an extension of $\mathbb{Z}$ by a surface group: 
\[
1\stackrel{}{\longrightarrow}G\stackrel{}{\longrightarrow}\pi_1(M)\stackrel{}{\longrightarrow} \mathbb{Z}\stackrel{}{\longrightarrow} 1,
\]
where $G\cong \pi_1(S)$ is the fundamental group of a surface $S$. 
There are generalizations to higher dimensions; see \cite{BL66, Far71, Sie70, FS14, QSW23} for instance. 
Furthermore, the affirmative solution to Thurston's virtually fibered conjecture \cite{Ago13, PW18} implies that any closed hyperbolic $3$-manifold is virtually fibered over a circle. 
Corollary \ref{fibering-inert-cor} then implies that the attaching maps for the top cells of these manifolds are inert. 
The inertness of $3$-manifolds will be further discussed in Subsection \ref{subsec: lowfolds}, where a definitive result, Theorem \ref{3inertthm}, will be proved.

\subsection{Complex and quaternionic Stiefel manifolds}
\label{subsec: stiefel}

$\, $

As Stiefel manifolds are quotients of classical Lie groups, we start with some necessary information of corresponding Lie groups. 

Let $U(n)$ and $Sp(n)$ be the $n$-th unitary and symplectic groups, respectively. 
Recall that the cohomology ring of $U(n)$ and $Sp(n)$ are the exterior algebras
\[
H^\ast(U(n);\mathbb{Z})\cong \Lambda (e_1, e_3,\ldots, e_{2n-1}), \ \ \ 
H^\ast(Sp(n);\mathbb{Z})\cong \Lambda (e_3, e_7,\ldots, e_{4n-1}),
\]
where ${\rm deg}(e_l)=l$. By Bott periodicity the homotopy groups of $U(n)$ and $Sp(n)$ satisfy 
\[
\pi_{2i-1}(U(n))\cong \pi_{2i}(BU(n))\cong \mathbb{Z}, \ \ \ \pi_{4i-1}(Sp(n))\cong \pi_{4i}(BSp(n))\cong\mathbb{Z}, 
\]
for each $1\leq i\leq n$, where $B$ is the classifying functor. 
For a class $w\in H_l(X;\mathbb{Z})$, denote by $w^\#\in H^l(X;\mathbb{Z})$ its dual under the Kronecker pairing. 
There is the remarkable Borel-Hirzebruch divisibility theorem \cite[Section 26.10]{BH59} in manifold topology; an equivalent homotopical formulation can be found in \cite[Theorem 6.13]{MT91}.

\begin{lemma}\label{BHlemma}
Let $1\leq i\leq n$. 
\begin{itemize}
\item For the complex bundle over $S^{2i}$ classified by a generator of $\pi_{2i}(BU(n))$, its top Chern class is $\pm (i-1)! [S^{2i}]^{\#}$. Equivalently, for the map 
\[
s_i^{\mathbb{C}}: S^{2i-1}\stackrel{}{\longrightarrow}  U(n)
\] 
representing a generator of $\pi_{2i-1}(U(n))$, $s_i^{\mathbb{C}\ast}(e_{2i-1})=\pm (i-1)! [S^{2i-1}]^{\#}$; 
\item For the symplectic bundle over $S^{4i}$ classified by a generator of $\pi_{4i}(BSp(n))$, its top symplectic Pontryagin class is $\pm (2i-1)! [S^{4i}]^{\#}$ if $i$ is odd, and is $\pm (2i-1)! \cdot 2[S^{4i}]^{\#}$ if $i$ is even. Equivalently, for the map 
\[
s_i^{\mathbb{H}}: S^{4i-1}\stackrel{}{\longrightarrow}  Sp(n)
\] 
representing a generator of $\pi_{4i-1}(Sp(n))$, 
\[
s_i^{\mathbb{H}\ast}(e_{4i-1})=
\left\{\begin{array}{cc}
\pm (2i-1)!\cdot  [S^{4i-1}]^{\#}  & ~i~{\rm is}~{\rm odd} \\
\pm (2i-1)! \cdot 2[S^{4i-1}]^{\#} & ~i~{\rm is}~{\rm even}.
\end{array}\right.
\]
~$\qqed$
\end{itemize}
\end{lemma}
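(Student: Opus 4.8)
The plan is to deduce the homotopical statements from the classical Borel--Hirzebruch divisibility theorem \cite[Section 26.10]{BH59} by translating between characteristic classes of bundles over spheres and pullbacks of the exterior generators, using the universal bundle and transgression. Alternatively one may simply invoke the homotopical formulation \cite[Theorem 6.13]{MT91} verbatim; the argument sketched below explains why the two forms are equivalent. Throughout, signs are irrelevant since they are absorbed into the $\pm$ in the statement, so only the integral divisibility factors matter.

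First I would recall the classical input and set up the dictionary. For $G=U(n)$ (resp. $G=Sp(n)$), take the universal bundle $G\stackrel{}{\longrightarrow} EG\stackrel{}{\longrightarrow} BG$ with $EG$ contractible, so the fibre inclusion is a homotopy equivalence $G\simeq \Omega BG$ and the connecting homomorphism $\partial\colon \pi_m(BG)\stackrel{}{\longrightarrow} \pi_{m-1}(G)$ is an isomorphism for every $m$; the hypothesis $1\leq i\leq n$ enters only through Bott periodicity, giving $\pi_{2i}(BU(n))\cong \mathbb{Z}$ and $\pi_{4i}(BSp(n))\cong \mathbb{Z}$. The classical computation of $H^\ast(BG;\mathbb{Z})$ from $H^\ast(G;\mathbb{Z})$ (Borel) shows that in the Serre spectral sequence of this fibration the exterior generator $e_{2i-1}\in H^{2i-1}(U(n);\mathbb{Z})$ (resp. $e_{4i-1}\in H^{4i-1}(Sp(n);\mathbb{Z})$) is transgressive and transgresses, up to sign, to the Chern class $c_i$ (resp. the symplectic Pontryagin class $p_i$); equivalently, the cohomology suspension $\sigma\colon \widetilde{H}^\ast(BG)\stackrel{}{\longrightarrow} \widetilde{H}^{\ast-1}(G)$ carries $c_i$ (resp. $p_i$) to $\pm e_{2i-1}$ (resp. $\pm e_{4i-1}$). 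The only point worth a remark is that in the relevant degree the exterior generator is the unique transgressive class up to sign, which is immediate because every other class in that degree is decomposable and decomposables die under $\sigma$.

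Next I would combine this with the Borel--Hirzebruch divisibility theorem, which computes the value of the top characteristic class of the bundle over a sphere classified by a generator of $\pi_{2i}(BU(n))$ or $\pi_{4i}(BSp(n))$: it is $\pm(i-1)!$ times the fundamental cohomology class in the unitary case, and $\pm(2i-1)!$ (resp. $\pm2(2i-1)!$) for $i$ odd (resp. even) in the symplectic case. Writing $s=s_i^{\mathbb{C}}\colon S^{2i-1}\stackrel{}{\longrightarrow} U(n)$ for a generator of $\pi_{2i-1}(U(n))$ and letting $\hat{s}\colon S^{2i}\stackrel{}{\longrightarrow} BU(n)$ be its image under $\partial^{-1}$, equivalently its adjoint under $\pi_{2i-1}(\Omega BU(n))\cong\pi_{2i}(BU(n))$, the map $\hat{s}$ generates $\pi_{2i}(BU(n))$ and classifies the corresponding bundle. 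By naturality of the cohomology suspension, $s^\ast\circ\sigma$ equals the suspension isomorphism composed with $\hat{s}^\ast$ up to sign, so
\[
\langle s^\ast(e_{2i-1}),[S^{2i-1}]\rangle=\pm\langle s^\ast(\sigma(c_i)),[S^{2i-1}]\rangle=\pm\langle \hat{s}^\ast(c_i),[S^{2i}]\rangle=\pm(i-1)!,
\]
which yields $s_i^{\mathbb{C}\ast}(e_{2i-1})=\pm(i-1)![S^{2i-1}]^{\#}$. The symplectic case is identical with $U(n),c_i,2i$ replaced by $Sp(n),p_i,4i$, producing the two displayed formulas for $s_i^{\mathbb{H}\ast}(e_{4i-1})$.

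The main obstacle is bookkeeping rather than conceptual: one must pin down the sign-and-order compatibility between the transgression (equivalently the cohomology suspension) and the loop-space adjunction, and confirm that the standard polynomial generators of $H^\ast(BU(n);\mathbb{Z})$ and $H^\ast(BSp(n);\mathbb{Z})$ are precisely the characteristic classes normalized in \cite{BH59}. Since all signs are swallowed by the $\pm$, the only genuine content is the divisibility factor, which is supplied by \cite[Section 26.10]{BH59}; a reader who prefers to avoid the translation altogether may instead cite \cite[Theorem 6.13]{MT91}, which records exactly the homotopical form asserted in the lemma.
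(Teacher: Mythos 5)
Your proposal is correct and takes essentially the same route as the paper, which states this lemma without proof, citing the Borel--Hirzebruch divisibility theorem \cite[Section 26.10]{BH59} and its homotopical formulation \cite[Theorem 6.13]{MT91}. The transgression/cohomology-suspension argument you sketch to pass between the bundle-theoretic and loop-space formulations is the standard one and is sound; the paper simply leaves that translation implicit in the word ``Equivalently.''
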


Let $V_{n,k}(\mathbb{C})=U(n)/U(n-k)$ and $V_{n,k}(\mathbb{H})=Sp(n)/Sp(n-k)$ be the complex and quaternionic Stiefel manifolds, respectively. There are the canonical fibre bundles
\begin{equation}\label{V=Uqeq}
\begin{split}
U(n-k)\stackrel{}{\longrightarrow} U(n)\stackrel{p}{\longrightarrow}  V_{n,k}(\mathbb{C})&, \\
Sp(n-k)\stackrel{}{\longrightarrow} Sp(n)\stackrel{p}{\longrightarrow}  V_{n,k}(\mathbb{H})&,
\end{split}
\end{equation}
such that their Serre spectral sequences collapse at the $E_2$-terms and then there are isomorphisms of cohomology groups
\[
\begin{split}
H^\ast(U(n);\mathbb{Z})&\cong H^\ast(U(n-k);\mathbb{Z})\otimes H^\ast(V_{n,k}(\mathbb{C});\mathbb{Z}), \\
H^\ast(Sp(n);\mathbb{Z})&\cong H^\ast(Sp(n-k);\mathbb{Z})\otimes H^\ast(V_{n,k}(\mathbb{H});\mathbb{Z}).
\end{split}
\] 
Moreover, the cohomology rings of $V_{n,k}(\mathbb{C})$ and $V_{n,k}(\mathbb{H})$ are the  exterior algebras 
\[
\begin{split}
H^\ast(V_{n,k}(\mathbb{C});\mathbb{Z})&\cong \Lambda (e_{2(n-k+1)-1}, e_{2(n-k+2)-1},\ldots, e_{2n-1}),\\
H^\ast(V_{n,k}(\mathbb{H});\mathbb{Z})&\cong \Lambda (e_{4(n-k+1)-1}, e_{4(n-k+2)-1},\ldots, e_{4n-1}).
\end{split}
\]

\begin{theorem}\label{VCHthm}
Let $k\geq 2$. For the complex and quaternionic Stiefel manifolds, 
\begin{itemize}
\item
the attaching map for the top cell of $V_{n,k}(\mathbb{C})$ is inert after localization at any prime $p$ with $p>n-1$;
\item
 the attaching map for the top cell of $V_{n,k}(\mathbb{H})$ is inert after localization at any prime $p$ with $p>2n-1$.
 \end{itemize}
\end{theorem}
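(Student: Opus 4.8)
The plan is to realize each Stiefel manifold as a twisted product of spheres fibred over a sphere, and then apply Corollary \ref{FMScor} with the Lie group action. Consider $V_{n,k}(\mathbb{C})$; the quaternionic case is entirely analogous, with $2i-1$ replaced by $4i-1$ and the divisibility from Lemma \ref{BHlemma} adjusted accordingly. The projection onto the bottom generator gives a homotopy fibration
\[
F\stackrel{j}{\longrightarrow} V_{n,k}(\mathbb{C})\stackrel{\pi}{\longrightarrow} S^{2(n-k+1)-1},
\]
where, by the cohomology computation recalled above, $F$ is the Stiefel manifold $V_{n,k-1}(\mathbb{C})$ (equivalently $U(n-1)/U(n-k)$ after reindexing), which is a connected Poincaré duality complex that is non-contractible since $k\geq 2$ forces at least one remaining exterior generator. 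The group $G=U(n)$ acts on $V_{n,k}(\mathbb{C})$ via left multiplication, and the orbit of the basepoint through $\mu_l:U(n)\to V_{n,k}(\mathbb{C})$ is exactly the quotient map $p$ in \eqref{V=Uqeq}.

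Next I would produce the section up to degree. Let $s_{n-k+1}^{\mathbb{C}}:S^{2(n-k+1)-1}\to U(n)$ represent a generator of $\pi_{2(n-k+1)-1}(U(n))\cong\mathbb{Z}$, and set $s=\mu_l\circ s_{n-k+1}^{\mathbb{C}}$. The composite $\pi\circ s:S^{2(n-k+1)-1}\to S^{2(n-k+1)-1}$ is detected on the bottom cohomology class $e_{2(n-k+1)-1}$, and by Lemma \ref{BHlemma} we have $(s_{n-k+1}^{\mathbb{C}})^\ast(e_{2(n-k+1)-1})=\pm((n-k)!)[S^{2(n-k+1)-1}]^{\#}$; chasing through $p^\ast$, which is injective on this generator by the collapse of the spectral sequence, shows $\pi\circ s$ has degree $k_0:=\pm(n-k)!$. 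Hence Corollary \ref{FMScor} applies: the attaching map for the top cell of $V_{n,k}(\mathbb{C})$ is inert after localization away from all primes dividing $(n-k)!$, i.e.\ inert after localization at any prime $p>n-k$. Since $n-k\leq n-1$, inverting only primes $p\le n-1$ suffices, so the statement $p>n-1$ is certainly covered; in fact one gets slightly more. For $V_{n,k}(\mathbb{H})$ the analogous section uses $s_{n-k+1}^{\mathbb{H}}$ with $(s_{n-k+1}^{\mathbb{H}})^\ast(e_{4(n-k+1)-1})$ divisible by $(2(n-k)-1)!$ (times possibly an extra $2$ when $n-k$ is even), so the attaching map is inert after localization at any prime $p>2(n-k)-1$, hence certainly for $p>2n-1$.

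The main obstacle I anticipate is not the homotopy-theoretic machinery — Corollary \ref{FMScor} does all the work once the inputs are in place — but rather the precise bookkeeping of indices and the verification that $\pi\circ s$ has the asserted degree. Specifically, one must (i) correctly identify which exterior generator of $H^\ast(V_{n,k}(\mathbb{C});\mathbb{Z})$ is the bottom one and confirm that $\pi$ is the rationalized projection onto the associated bottom sphere, (ii) confirm that the generator of $\pi_{2(n-k+1)-1}(U(n))$ maps under $p_\ast$ onto a generator of $\pi_{2(n-k+1)-1}(V_{n,k}(\mathbb{C}))$ — this follows from the long exact homotopy sequence of the bundle $U(n-k)\to U(n)\to V_{n,k}(\mathbb{C})$ together with the range in which $U(n-k)$ is highly connected — and (iii) track the Borel–Hirzebruch divisibility constant through the composite $\pi\circ s$ without losing or gaining a factor. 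Once these are handled, one also checks that $V_{n,k}(\mathbb{C})$ and $V_{n,k}(\mathbb{H})$ are nilpotent (indeed simply connected for $k<n$, and $\pi_1$ acts trivially in general since they are homogeneous spaces of connected groups) so that Corollary \ref{FMScor}, which rests on localization, is legitimately applicable.
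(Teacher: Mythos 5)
Your overall strategy --- fibre $V_{n,k}$ over a sphere, produce a section up to degree using the Bott generator and Lemma \ref{BHlemma}, then invoke Corollary \ref{FMScor} --- is exactly the paper's, but you have the geometry of the Stiefel bundles backwards, and this is a genuine gap rather than bookkeeping. There is no ``projection onto the bottom generator'' $V_{n,k}(\mathbb{C})\to S^{2(n-k+1)-1}$: the bottom sphere $S^{2(n-k)+1}=V_{n-k+1,1}(\mathbb{C})$ is the \emph{fibre} of the standard bundle $S^{2(n-k)+1}\to V_{n,k}(\mathbb{C})\to V_{n,k-1}(\mathbb{C})$, not the base of any natural fibration (outside the degenerate case $k=n$, where the determinant maps $U(n)$ to $S^1$). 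The only sphere that $V_{n,k}(\mathbb{C})$ naturally fibres over is the \emph{top} sphere $S^{2n-1}=V_{n,1}(\mathbb{C})$, with fibre $V_{n-1,k-1}(\mathbb{C})$, and that is the bundle the paper uses. Your identification of the fibre is also internally inconsistent: $V_{n,k-1}(\mathbb{C})=U(n)/U(n-k+1)$ and $U(n-1)/U(n-k)=V_{n-1,k-1}(\mathbb{C})$ are different manifolds with different cohomology, and neither is the fibre of a map to the bottom sphere.

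The error is visible in your conclusion: you obtain inertness for $p>n-k$, strictly stronger than the stated $p>n-1$. The bound $p>n-1$ comes precisely from the Borel--Hirzebruch divisibility $(n-1)!$ of the \emph{top} Bott generator $s_n^{\mathbb{C}}\colon S^{2n-1}\to U(n)$, which is what produces the section up to degree of $\pi\colon V_{n,k}(\mathbb{C})\to S^{2n-1}$ (the paper then runs an induction on $k$ with $n-k$ fixed, applying Corollary \ref{FMScor} at each stage). Your bound would assert, for instance, that the top cell attachment of $SU(n)=V_{n,n-1}(\mathbb{C})$ is inert at \emph{every} prime, which this method cannot deliver. (There is also a small arithmetic slip in the quaternionic case: Lemma \ref{BHlemma} with $i=n-k+1$ gives divisibility $(2(n-k)+1)!$, not $(2(n-k)-1)!$, but this is moot since the fibration you start from does not exist.) To repair the proof, replace your fibration by $V_{n-1,k-1}(\mathbb{C})\to V_{n,k}(\mathbb{C})\stackrel{\pi}{\to} S^{2n-1}$, take $s=p\circ s_n^{\mathbb{C}}$ so that $\pi\circ s$ has degree $\pm(n-1)!$, and apply Corollary \ref{FMScor}; the rest of your reasoning (nilpotence, non-contractibility of the fibre, injectivity of $\pi^\ast$ on the top generator) then goes through.
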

\begin{proof}
Consider the complex Stiefel manifold $V_{n,k}(\mathbb{C})$. 
There is the canonical fibre bundle 
\begin{equation}\label{VCn-1k-1eq}
V_{n-1,k-1}(\mathbb{C})\stackrel{i}{\longrightarrow}V_{n,k}(\mathbb{C}) \stackrel{\pi}{\longrightarrow} S^{2n-1}
\end{equation}
such that its Serre Spectral sequence collapses at the $E_2$-term. In particular, $\pi^\ast ([S^{2n-1}]^{\#})=e_{2n-1}$. By Lemma \ref{BHlemma}, there is a map $s_n^{\mathbb{C}}: S^{2n-1}\stackrel{}{\longrightarrow}  U(n)$ such that $s_n^{\mathbb{C}\ast}(e_{2n-1})=\pm (n-1)! [S^{2n-1}]^{\#}$. Let $s$ be the composite 
\[
s: S^{2n-1}\stackrel{s_n^{\mathbb{C}}}{\longrightarrow}U(n)\stackrel{p}{\longrightarrow}  V_{n,k}(\mathbb{C}),
\]
where $p$ is the canonical projection in (\ref{V=Uqeq}).  
It satisfies that
\[
(\pi \circ s)^\ast([S^{2n-1}]^{\#})=s^\ast(e_{2n-1})=s_n^{\mathbb{C}\ast}(e_{2n-1})=\pm (n-1)! [S^{2n-1}]^{\#},
\]
or equivalently, the composite $\pi \circ s$ is of degree $\pm (n-1)!$. This means that the fibre bundle (\ref{VCn-1k-1eq}) has a homotopy section up to degree $\pm (n-1)!$. 

We prove the theorem by induction on $k$, keeping the difference $n-k$ fixed.  
When $k=2$, we have $V_{n-k+1, 1}\cong S^{2n-2k+1}$ and the fibre bundle (\ref{VCn-1k-1eq}) becomes
\[
S^{2n-2k+1}\stackrel{i}{\longrightarrow}V_{n-k+2,2}(\mathbb{C}) \stackrel{\pi}{\longrightarrow} S^{2n-2k+3}.
\]
Since $\pi\circ s$ is homotopic to a degree $\pm (n-k+1)!$ self-map of $S^{2n-2k+3}$, Corollary \ref{FMScor} implies that the attaching map for the top cell of $V_{n-k+2,2}(\mathbb{C})$ is inert after localization at any prime $p$ with $p>n-k+1$.

Suppose by induction that the attaching map for the top cell of $V_{n-1,k-1}(\mathbb{C})$ is inert after localization at any prime $p$ with $p>n-2$. We have showed that the fibre bundle (\ref{VCn-1k-1eq}) satisfies that $\pi\circ s$ is homotopic to a degree $\pm (n-1)!$ self-map of $S^{2n-1}$. Then Corollary \ref{FMScor} implies that the attaching map for the top cell of $V_{n,k}(\mathbb{C})$ is inert after localization at any prime $p$ with $p>n-1$. 
This completes the induction and the theorem is proved for $V_{n,k}(\mathbb{C})$.

The proof of the theorem for $V_{n,k}(\mathbb{H})$ is similar by considering the canonical fibre bundle 
\[
V_{n-1,k-1}(\mathbb{H})\stackrel{i}{\longrightarrow}V_{n,k}(\mathbb{H}) \stackrel{\pi}{\longrightarrow} S^{4n-1}.
\]
\end{proof}

\subsection{Real Stiefel manifolds}
\label{subsec: stiefel2}

$\, $

Let $SO(n)$ and $Spin(n)$ be the $n$-th special orthogonal group and spin group, respectively. Let $V_{n,k}(\mathbb{R})=SO(n)/SO(n-k)$ be the real Stiefel manifold. At any odd prime $p$, there are homotopy equivalences of Harris \cite{Har61}
\[
\begin{split}
SO(2n+1)&\simeq_{p} Spin(2n+1)\simeq_{p} Sp(n), \\
SO(2n)&\simeq_{p} Spin(2n)\simeq_{p} Spin(2n-1)\times S^{2n-1}.
\end{split}
\]

\begin{lemma}\label{VHarlemma}
At any odd prime $p$, there are homotopy equivalences 
\[
\begin{split}
V_{2n+1,2k}(\mathbb{R})&\simeq_{p}V_{n,k}(\mathbb{H}),\\
V_{2n+2, 2k+1}(\mathbb{R})  &\simeq_{p}S^{2n+1} \times V_{n,k}(\mathbb{H}),\\
V_{2n+2, 2k}(\mathbb{R}) & \simeq_{p} S^{2n+1}\times V_{2n+1,2k-1}(\mathbb{R}).
\end{split}
\]
\end{lemma}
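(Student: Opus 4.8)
The plan is to derive each of the three homotopy equivalences from Harris' equivalences together with the canonical fibre bundles defining the real Stiefel manifolds, using the fact that a localized fibre bundle with contractible (rationally, or mod $p$, trivial) structure behaves like a product when the total space of the ambient group bundle splits. First I would recall the two canonical fibre bundles
\[
SO(n-k)\longrightarrow SO(n)\stackrel{p}{\longrightarrow} V_{n,k}(\mathbb{R}),\qquad
V_{n-1,k-1}(\mathbb{R})\stackrel{i}{\longrightarrow} V_{n,k}(\mathbb{R})\stackrel{\pi}{\longrightarrow} S^{n-1},
\]
and note that localizing at an odd prime $p$ preserves fibre bundles, so all structural maps and Serre spectral sequences remain valid after $p$-localization.

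For the first equivalence, $V_{2n+1,2k}(\mathbb{R})\simeq_p V_{n,k}(\mathbb{H})$, I would start from the Harris equivalence $SO(2n+1)\simeq_p Sp(n)$ and observe that under it the subgroup $SO(2n-2k+1)$ is carried (up to $p$-local homotopy) to $Sp(n-k)$, since Harris' equivalences are natural with respect to the standard inclusions $SO(2m+1)\hookrightarrow SO(2m+3)$ and $Sp(m)\hookrightarrow Sp(m+1)$. Taking quotients (equivalently, comparing the two fibre bundles $SO(2n-2k+1)\to SO(2n+1)\to V_{2n+1,2k}(\mathbb{R})$ and $Sp(n-k)\to Sp(n)\to V_{n,k}(\mathbb{H})$ via the map of fibrations induced by Harris) and applying the five lemma to the resulting map on $p$-local homology, then Whitehead's theorem, yields the $p$-local homotopy equivalence. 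For the second, $V_{2n+2,2k+1}(\mathbb{R})\simeq_p S^{2n+1}\times V_{n,k}(\mathbb{H})$, I would use instead $SO(2n+2)\simeq_p Spin(2n-1)\times S^{2n+1}$ together with $Spin(2n+1)\simeq_p Sp(n)$; writing $V_{2n+2,2k+1}(\mathbb{R})=SO(2n+2)/SO(2n-2k+1)$ and tracking the subgroup $SO(2n-2k+1)$ through the splitting (it sits inside the $Spin(2n-1)$-type factor up to $p$-localization, being disjoint from the top sphere factor), the quotient becomes $S^{2n+1}\times\bigl(Spin(2n+1)/Spin(2n-2k+1)\bigr)\simeq_p S^{2n+1}\times V_{n,k}(\mathbb{H})$ after one more application of Harris. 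The third equivalence, $V_{2n+2,2k}(\mathbb{R})\simeq_p S^{2n+1}\times V_{2n+1,2k-1}(\mathbb{R})$, is proved the same way: apply $SO(2n+2)\simeq_p SO(2n+1)\times S^{2n+1}$ and note that $SO(2n-2k+2)\subseteq SO(2n+1)$ after $p$-localization, so the quotient splits off the $S^{2n+1}$ factor, leaving $SO(2n+1)/SO(2n-2k+2)=V_{2n+1,2k-1}(\mathbb{R})$.

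The main obstacle I anticipate is making precise the claim that Harris' equivalences are compatible with the standard subgroup inclusions, so that passing to quotients is legitimate; this requires either citing the naturality built into Harris' construction in \cite{Har61} or, more carefully, producing an explicit map of fibre bundles realizing the compatibility and then invoking the five lemma on $p$-local homology (all spaces here being simply connected or at worst having abelian fundamental group acting trivially, so that the comparison of fibrations is homologically controlled). A secondary technical point is to check the degree/dimension bookkeeping — that $2(n-k)+1 = 2n-2k+1$ matches $SO(2(n-k)+1)$ on one side and $Sp(n-k)$ on the other — but this is routine arithmetic. Once the compatibility of Harris' maps with inclusions is granted, each of the three statements follows from a short diagram chase plus Whitehead's theorem, exactly as in the proofs of the earlier splitting lemmas in this section.
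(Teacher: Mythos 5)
Your proposal is correct and takes essentially the same route as the paper: the paper's proof simply writes out the chain of identifications $V_{2n+1,2k}(\mathbb{R})=SO(2n+1)/SO(2(n-k)+1)\simeq_p Sp(n)/Sp(n-k)$, etc., using Harris' equivalences and their compatibility with the standard subgroup inclusions, exactly as you outline. The only difference is that you make explicit the naturality/five-lemma verification that the paper leaves implicit.
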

\begin{proof}
By the homotopy equivalences of Harris, there are local homotopy equivalences
\[
\begin{split}
V_{2n+1,2k}(\mathbb{R})&=SO(2n+1)/SO(2(n-k)+1) \\ &\simeq_{p} Sp(n)/Sp(n-k)\\
            &=V_{n,k}(\mathbb{H}),\\
V_{2n+2, 2k+1}(\mathbb{R})&=SO(2n+2)/SO(2(n-k)+1)\\ 
&\simeq_{p} S^{2n+1}\times Spin(2n+1)/Spin(2(n-k)+1) \\
&\simeq_{p} S^{2n+1}\times Sp(n)/Sp(n-k) \\
&= S^{2n+1} \times V_{n,k}(\mathbb{H}).
\end{split}
\] 
This proves the first two homotopy equivalences in the lemma. Further, by the homotopy equivalence
\[
\begin{split}
V_{2n+1, 2k-1}(\mathbb{R})&=SO(2n+1)/SO(2(n-k)+2)\\
&\simeq_{p} Spin(2n+1)/Spin(2(n-k)+2),
\end{split}
\]
it follows that 
\[
\begin{split}
V_{2n+2, 2k}(\mathbb{R})&=SO(2n+2)/SO(2(n-k)+2)\\ 
&\simeq_{p} S^{2n+1}\times Spin(2n+1)/Spin(2(n-k)+2) \\
&\simeq_{p}  S^{2n+1}\times V_{2n+1, 2k-1}(\mathbb{R}).
\end{split}
\] 
This proves the last homotopy equivalence in the lemma. 
\end{proof}

Combining Theorem \ref{VCHthm}, Lemmas \ref{VHarlemma} and \ref{pdt-inert-lemma} we obtain the following theorem immediately. 
\begin{theorem}\label{VRthm}
Let $k\geq 2$. The following hold: 
\begin{itemize}
\item the attaching map for the top cell of $V_{2n+1,2k}(\mathbb{R})$ is inert after localization at any prime $p$ with $p>2n-1$;
\item the attaching map for the top cell of $V_{2n+2, 2k-1}(\mathbb{R})$ is inert after localization at any odd prime;
\item the attaching map for the top cell of $V_{2n+2, 2k-2}(\mathbb{R})$ is inert after localization at any odd prime.
\end{itemize}
 ~$\qqed$
\end{theorem}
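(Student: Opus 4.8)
The plan is to run each case through the Harris splittings of Lemma~\ref{VHarlemma} and then invoke either Theorem~\ref{VCHthm} or Lemma~\ref{pdt-inert-lemma}; the odd-prime hypothesis enters only as the range in which those splittings are valid. The one general fact I will use repeatedly is that inertness of the top cell attachment depends only on the (local) homotopy type of the Poincar\'{e} duality complex: a (local) homotopy equivalence $g\colon M\to N$ of complexes with a single top cell has degree $\pm1$ and restricts to a homotopy equivalence $g_0\colon M_0\to N_0$ of lower skeleta, so $\Omega g_0$ is a homotopy equivalence and in particular inert, whence Theorem~\ref{inertdeg1thm}~(1), or its local version Theorem~\ref{inertdegkthm}, gives that $h_M$ is inert if and only if $h_N$ is. I also use that a right homotopy inverse of $\Omega i$, if it exists integrally, localizes, so integral inertness passes to every localization.

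For $V_{2n+1,2k}(\mathbb{R})$ the group-theoretic constraint forces $n\ge k\ge2$, so every prime $p>2n-1$ is odd, and Lemma~\ref{VHarlemma} furnishes a local homotopy equivalence $V_{2n+1,2k}(\mathbb{R})\simeq_{p}V_{n,k}(\mathbb{H})$ there. By Theorem~\ref{VCHthm} the attaching map for the top cell of $V_{n,k}(\mathbb{H})$ is inert after localization at $p$ whenever $p>2n-1$, and the invariance above carries this conclusion to $V_{2n+1,2k}(\mathbb{R})$.

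For $V_{2n+2,2k-1}(\mathbb{R})$ and $V_{2n+2,2k-2}(\mathbb{R})$ I would apply the second and third splittings of Lemma~\ref{VHarlemma} with $k$ replaced by $k-1$, obtaining at every odd prime $p$ the local homotopy equivalences $V_{2n+2,2k-1}(\mathbb{R})\simeq_{p}S^{2n+1}\times V_{n,k-1}(\mathbb{H})$ and $V_{2n+2,2k-2}(\mathbb{R})\simeq_{p}S^{2n+1}\times V_{2n+1,2k-3}(\mathbb{R})$. Since $k\ge2$, the remaining Stiefel factor has second index $k-1\ge1$, respectively $2k-3\ge1$, so it is a sphere or a genuine Stiefel manifold; together with $S^{2n+1}$ it is a non-contractible connected complex with a single top cell, and the product is again a Poincar\'{e} duality complex with a single top cell. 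Hence Lemma~\ref{pdt-inert-lemma} shows that the attaching map for the top cell of each product is inert integrally, hence after localization at any odd prime, and transporting along the Harris equivalences yields the inertness of the top cell attachments of $V_{2n+2,2k-1}(\mathbb{R})$ and $V_{2n+2,2k-2}(\mathbb{R})$ after localization at any odd prime.

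I expect no genuine obstacle: as the paper indicates, the statement follows \emph{immediately} from Lemma~\ref{VHarlemma}, Theorem~\ref{VCHthm} and Lemma~\ref{pdt-inert-lemma}. The only points needing a moment's attention are the index bookkeeping --- checking that the reindexing $k\mapsto k-1$ leaves every second index $\ge1$, so that no Stiefel factor degenerates to a point --- and the explicit recording, carried out above, that inertness of a top cell attachment is stable under local homotopy equivalence and propagates from the integral to the local setting.
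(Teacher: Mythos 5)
Your proposal is correct and is exactly the argument the paper intends: it transports Theorem \ref{VCHthm} and Lemma \ref{pdt-inert-lemma} along the Harris equivalences of Lemma \ref{VHarlemma}, with the reindexing $k\mapsto k-1$ handled correctly and the factors checked to be non-contractible. The only content you add beyond the paper's one-line proof is the (correct) explicit justification that inertness of the top cell attachment is invariant under local homotopy equivalence and that integral inertness localizes.
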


\newpage


\section{A comparison theorem}
\label{sec: det}
In this section, we study the inertness of the top cell attachment of a Poincar\'{e} duality complex by comparing it with a twisted product of spheres. The main result is Theorem \ref{detthm}, which implies Theorem \ref{detthmintro} as a special case. 

Let $M$ be an $n$-dimensional simply connected Poincar\'{e} duality complex.  Let $M_0$ be the $(n-1)$-skeleton of $M$. There is a homotopy cofibre sequence
\begin{equation}\label{Mcofibeq}
S^{n-1}\stackrel{h_M}{\longrightarrow} M_0\stackrel{i_M}{\longrightarrow}M \stackrel{q_M}{\longrightarrow}S^n
\end{equation}
where $i_M$ is the inclusion map, $h_M$ is the attaching map for the top cell, and $q_M$ is the pinch map to the top cell. 

Let $D$ be an $n$-dimensional Poincar\'{e} duality complex with a homotopy fibration
\begin{equation}\label{Dfibeq}
S^{n-m}\stackrel{\iota_D}{\longrightarrow} D\stackrel{\psi}{\longrightarrow} S^m
\end{equation}
such that it admits a homotopy section and $n>m+1> 2$. 
\begin{lemma}\label{Dlemma}
The Poincar\'{e} duality complex $D$ satisfies the following:
\begin{itemize}
\item[(1).] 
the $(n-1)$-skeleton $D_0\simeq S^{m}\vee S^{n-m}$ and there is a homotopy cofibre sequence 
\begin{equation}\label{Dcofibeq}
S^{n-1}\stackrel{h_D}{\longrightarrow} S^{m}\vee S^{n-m}\stackrel{i_D}{\longrightarrow} D \stackrel{q_D}{\longrightarrow} S^n
\end{equation}
such that $\psi\circ i_D\simeq q_1$, where $q_D$ is the pinch map to the top cell and $q_1$ is the projection onto the first wedge summand;  
\item[(2).] 
$H^\ast(D;\mathbb{Z})\cong H^\ast(S^m\times S^{n-m};\mathbb{Z})$ as graded rings; and  
\item[(3).] 
$\Omega D\simeq \Omega (S^m\times S^{n-m})$;
\item[(4).] 
the attaching map for the top cell of $D$ is inert.
\end{itemize}
\end{lemma}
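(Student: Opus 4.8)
The plan is to prove (1) first and then deduce (2)--(4) with little further work; the identification of the lower skeleton $D_0$ in (1) is the substance of the lemma.

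\textbf{Part (1).} Since $m>1$ and $n-m>1$, the base $S^m$ and fibre $S^{n-m}$ of (\ref{Dfibeq}) are simply connected, so $D$ is simply connected and the Serre spectral sequence of (\ref{Dfibeq}) has $E_2=H^\ast(S^m)\otimes H^\ast(S^{n-m})$, concentrated in bidegrees $(0,0),(m,0),(0,n-m),(m,n-m)$. The homotopy section of $\psi$ makes $\psi^\ast$ a split monomorphism, so $E_\infty^{\ast,0}=H^\ast(S^m)$; the only differential that could have nonzero target is the one from $E^{0,n-m}$, and it must vanish, so the spectral sequence collapses and $H^\ast(D)\cong H^\ast(S^m\times S^{n-m})$ additively. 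From the cofibre sequence (\ref{Mcofibeq}) for $M=D$, together with the fact that the pinch map $q_D$ has degree one (as $D$ is a Poincar\'e duality complex with a single top cell), one reads off that $i_{D}$ induces isomorphisms $H_m(D_0)\stackrel{\cong}{\longrightarrow} H_m(D)$ and $H_{n-m}(D_0)\stackrel{\cong}{\longrightarrow} H_{n-m}(D)$, that $H_{n-1}(D_0)=0$, and that $\widetilde H_\ast(D_0)$ agrees with $\widetilde H_\ast(S^m\vee S^{n-m})$. Now the section $s\colon S^m\to D$ and the fibre inclusion $\iota_D\colon S^{n-m}\to D$ have domains of dimension $\le n-1$, so by cellular approximation the wedge map $\phi=(s,\iota_D)\colon S^m\vee S^{n-m}\to D$ factors as $i_D\circ\phi_0$. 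Using $\psi\circ s\simeq 1_{S^m}$ one checks that $\phi_{0}$ carries a generator of $H_m(S^m)$ to a generator of $H_m(D_0)$, and since $\iota_{D\ast}$ is the (collapsed) edge homomorphism onto the fibre class, $\phi_0$ likewise carries a generator of $H_{n-m}(S^{n-m})$ to a generator of $H_{n-m}(D_0)$ (the degenerate case $m=n-m$ is handled in the same way, using that $s_\ast$ and $\iota_{D\ast}$ land in complementary summands of $H_m(D)\cong\mathbb Z^2$). Hence $\phi_{0\ast}$ is an isomorphism and $\phi_0$ is a homotopy equivalence by Whitehead's theorem, giving $D_0\simeq S^m\vee S^{n-m}$; transporting (\ref{Mcofibeq}) across $\phi_0$ yields (\ref{Dcofibeq}). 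Finally $\psi\circ i_D$ restricts to $\psi\circ s\simeq 1_{S^m}$ on the first wedge summand and to $\psi\circ\iota_D\simeq\ast$ on the second, so $\psi\circ i_D\simeq q_1$.

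\textbf{Part (2).} Put $x=\psi^\ast\sigma_m\in H^m(D)$ and let $y\in H^{n-m}(D)$ be the generator restricting to a generator of the $S^{n-m}$-summand of $H^\ast(D_0)$. Then $x^2=\psi^\ast(\sigma_m^2)=0$, and $y^2=0$ either by degree or because $i_D^\ast(y^2)=(i_D^\ast y)^2=0$ (all products of positive-degree classes vanish in the wedge $D_0$) while $i_D^\ast$ is injective in the relevant degree by Part (1). By Poincar\'e duality the cup-product pairing $H^m(D)\times H^{n-m}(D)\to H^n(D)$ is unimodular, so $x\cup y$ generates $H^n(D)$; hence $H^\ast(D)\cong H^\ast(S^m\times S^{n-m})$ as graded rings (in the case $n=2m$ one argues instead with the unimodular symmetric or skew form on $H^m(D)\cong\mathbb Z^2$ for which $x$ is isotropic, which again reproduces the product ring).

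\textbf{Parts (3) and (4).} Looping (\ref{Dfibeq}) gives a homotopy fibration $\Omega S^{n-m}\longrightarrow\Omega D\stackrel{\Omega\psi}{\longrightarrow}\Omega S^m$ with section $\Omega s$; since all three spaces are loop spaces, the map $\Omega S^m\times\Omega S^{n-m}\to\Omega D$ sending $(b,f)$ to $\Omega s(b)\cdot\Omega\iota_D(f)$ is a homotopy equivalence (the section splits the looped fibration, as in the splitting arguments of Section~\ref{sec: pullback}), so $\Omega D\simeq\Omega S^m\times\Omega S^{n-m}=\Omega(S^m\times S^{n-m})$, which is (3). For (4), the homotopy fibration (\ref{Dfibeq}) is a homotopy fibration of connected Poincar\'e duality complexes of positive dimensions ($m\ge 2$ and $n-m\ge 2$) admitting a homotopy section, so Proposition~\ref{FEBprop} applies directly and the attaching map for the top cell of $D$ is inert. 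Alternatively, combining (1) and (3): writing $\bar s$ and $\bar\iota$ for the two wedge-summand inclusions into $D_0\simeq S^m\vee S^{n-m}$, the composite $\theta=\mathrm{mult}_{\Omega D_0}\circ(\Omega\bar s\times\Omega\bar\iota)$ satisfies $\Omega i_D\circ\theta\simeq\Omega s\cdot\Omega\iota_D$, the equivalence of (3), so $\theta$ followed by its homotopy inverse is a right homotopy inverse of $\Omega i_D$.

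\textbf{Main obstacle.} The one nontrivial step is Part (1): proving $D_0\simeq S^m\vee S^{n-m}$ requires the spectral-sequence collapse and a careful verification that the comparison map $\phi_0$ is a homology isomorphism, including the degenerate configurations where $m=n-m$ or where an intermediate degree coincides with $m$ or $n-m$. Once this identification is in place, (2) is a short Poincar\'e-duality argument and (3)--(4) reduce to the splitting of a sectioned fibration after looping together with a citation of Proposition~\ref{FEBprop}.
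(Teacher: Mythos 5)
Your proof is correct and, for parts (1), (2) and (4), follows essentially the same route as the paper: build the wedge map out of the homotopy section and the fibre inclusion, factor it through the lower skeleton by CW-approximation, check that it is a homology isomorphism, and apply Whitehead's theorem; then read off the ring structure from Poincar\'{e} duality and quote Proposition \ref{FEBprop}. The one genuine divergence is part (3): the paper deduces $\Omega D\simeq\Omega(S^m\times S^{n-m})$ by citing \cite[Lemma 2.3]{BT14}, which applies to any Poincar\'{e} duality complex with this cohomology ring and lower skeleton, whether or not it fibres over a sphere, whereas you exploit the sectioned fibration (\ref{Dfibeq}) directly and split it after looping via $(b,f)\mapsto \Omega s(b)\cdot\Omega\iota_D(f)$. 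Your argument is more self-contained and elementary for the case at hand; the paper's citation buys the extra generality that is used elsewhere (for instance in Theorem \ref{exJthm}, where the target complex need not fibre over a sphere), but for this lemma both routes are equally valid.
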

\begin{proof}
(1). Let $S^m\stackrel{s_D}{\longrightarrow} D$ be a homotopy section, that is, $\psi\circ s_D\simeq 1_{S^m}$. By the $CW$-approximation, both $s_D$ and $\iota_D$ factor through $D_0$ up to homotopy as
\[
S^m\stackrel{s_0}{\longrightarrow} D_0 \stackrel{i_D'}{\longrightarrow} D \ \ {\rm and} \ \  
S^{n-m} \stackrel{\iota_0}{\longrightarrow} D_0 \stackrel{i_D'}{\longrightarrow} D 
\]
for some maps $s_0$ and $\iota_0$, respectively, where $i_D'$ is the inclusion map.  
Then the composite 
\[
e: S^m\vee S^{n-m} \stackrel{s_0\vee \iota_0}{\longrightarrow} D_0\vee D_0 \stackrel{\nabla}{\longrightarrow} D_0
\] 
induces an isomorphism on homology and hence is a homotopy equivalence by the Whitehead theorem, where $\nabla$ is the folding map. 
Let $i_D$ be the composite
\[
i_D: S^m\vee S^{n-m} \stackrel{s_D\vee \iota_D}{\longrightarrow} D\vee D \stackrel{\nabla}{\longrightarrow} D.
\] 
By construction, $i_D$ is homotopic to the composite $S^m\vee S^{n-m} \stackrel{e}{\longrightarrow}D_0\stackrel{i_D'}{\hookrightarrow} D$. 
The cell structure of $D$ implies that there is a homotopy cofibre sequence of the form (\ref{Dcofibeq}). 
Also, since $\psi\circ s_D\simeq 1_{S^m}$ and $\psi\circ\iota_D$ is null homotopic, it follows that $\psi \circ i_D\simeq q_1$. This shows statement (1). 

(2). Since $D$ is a Poincar\'{e} duality complex with exactly one cell in each dimension $m$, $n-m$ and $n$, respectively, statement (2) follows immediately. 

(3). With the first two statements, statement (3) follows from \cite[Lemma 2.3]{BT14}.

(4). Proposition \ref{FEBprop} implies statement (4) immediately. 
\end{proof}

Suppose that there is a degree one map 
\[
f: M\stackrel{}{\longrightarrow} D.
\]
By the $CW$-approximation, the restriction map of $f$ on the $(n-1)$-skeleton $M_0$ factors through $D_0$ up to homotopy, and we denote the map by $f_0: M_0\stackrel{}{\longrightarrow} D_0$. 
By Lemma \ref{degk=pushoutlemma} and the homotopy cofibre sequences (\ref{Mcofibeq}) and (\ref{Dcofibeq}), there is a homotopy cofibration diagram  
\begin{equation}\label{MDdiag}
\diagram 
       M_0\rto^-{i_M}\dto^{f_0} & M\dto^{f}  \rto^-{q_M}& S^{n}\ddouble   \\ 
       S^m\vee S^{n-m}\rto^-{i_D} &  \rto^-{q_D}  D  &  S^{n} .
  \enddiagram
\end{equation}
By Lemma \ref{Dlemma} (4) the attaching map $h_D$ is inert, that is, $\Omega i_D$ has a right homotopy inverse. Our aim is to use Diagram (\ref{MDdiag}) to derive the inertness of $h_M$ from that of $h_D$ under certain reasonable conditions. For this, we adopt a cubic method of Theriault in \cite{The24b}, rooted in the remarkable work \cite{BT14} of Beben-Theriault.

Consider the composites
\begin{equation}\label{phieq}
\varphi: M\stackrel{f}{\longrightarrow} D\stackrel{\psi}{\longrightarrow} S^m, \ \ \ 
\varphi_0:M_0\stackrel{i_M}{\longrightarrow} M\stackrel{\varphi}{\longrightarrow} S^m.
\end{equation}
Note that $\varphi_0$ is the restriction of $\varphi$ on $M_0$. 
Let $E$ and $E_0$ be the homotopy fibres of $\varphi$ and $\varphi_0$, respectively. We have the homotopy fibrations
\begin{equation}\label{4fibeq}
\begin{split}
E_0\stackrel{}{\longrightarrow}M_0\stackrel{\varphi_0}{\longrightarrow} S^m,&\\
E\stackrel{}{\longrightarrow}M\stackrel{\varphi}{\longrightarrow} S^m,&\\
S^{n-m}\stackrel{\iota_D}{\longrightarrow} D\stackrel{\psi}{\longrightarrow} S^m, \\
\Omega S^m\ltimes S^{n-m}\stackrel{}{\longrightarrow}S^{m}\vee S^{n-m}\stackrel{q_1}{\longrightarrow} S^m, 
\end{split}
\end{equation}
where the third homotopy fibration is (\ref{Dfibeq}), and the last homotopy fibration is by Lemma \ref{wedgeqlemma}. 
Since 
\[
\psi\circ f=\varphi \ \  ~{\rm and}~ ~\ \ \psi\circ i_D\simeq q_1
\]
by Lemma \ref{Dlemma} (1), the above four homotopy fibrations form a homotopy cube 
\begin{equation} 
  \label{E0Ecube} 
   \diagram
      E_0 \rrto^-{g_0}\drto^-{i_E}\ddto^-(0.33){} & & \Omega S^m\ltimes S^{n-m}\dline^-{}\drto^{i_S} & \\
      & E\rrto^(0.35){g}\ddto^(0.25){} & \dto & S^{n-m}\ddto^{\iota_D} \\
      M_0\rline^(0.6){f_0}\drto^(0.6){i_M} & \rto & S^{m}\vee S^{n-m}\drto^{i_D} & \\
      & M\rrto^{f} & &  D
  \enddiagram 
\end{equation}  
over the same base $S^m$, where the bottom face is the left square of (\ref{MDdiag}) and hence a homotopy pushout, the four vertical faces are homotopy pullbacks, and the maps $i_E$, $i_S$, $g_0$ and $g$ are the induced maps. Mather's Cube Lemma (Theorem \ref{cubethm}) implies that the top face is a homotopy pushout. From the construction, it is clear that the map $i_S$ is homotopic to the canonical projection $p_2$ in Lemma \ref{ltimeslemma0}.

Recall our aim is to study the inertness of the attaching map $h_M$ by comparing it to that of $h_D$, or equivalently, to study the existence of right homotopy inverse of $\Omega i_M$ from the existence of right homotopy inverse of $\Omega i_D$. Diagram (\ref{E0Ecube}), along with Lemma \ref{loopphilemma} below, allows us to transfer this problem relying on the bottom face to the corresponding problem relying on the top face; that is, to study the existence of right homotopy inverse of $\Omega i_E$ from the existence of right homotopy inverse of $\Omega i_S$.

\begin{lemma}\label{loopphilemma}
Suppose that $\Omega M_0\stackrel{\Omega \varphi_0}{\longrightarrow} \Omega S^m$ has a right homotopy inverse. Then 
\begin{itemize}
\item[(1).] the map $\Omega i_M$ has a right homotopy inverse if and only if $\Omega i_E$  has a right homotopy inverse; and
\item[(2).] the map $\Omega f_0$ has a right homotopy inverse if and only if $\Omega g_0$  has a right homotopy inverse.
\end{itemize}
\end{lemma}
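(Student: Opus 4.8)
The plan is to play the two homotopy fibrations from \eqref{4fibeq} over the common base $S^m$ against each other, using the cube \eqref{E0Ecube} together with Theorem \ref{pushoutthm} (for statement (1)) and the basic naturality of loop fibrations (for statement (2)). The hypothesis that $\Omega\varphi_0$ has a right homotopy inverse is exactly what lets us split the fibres off after looping, which is the engine for both parts.

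First I would deal with statement (1). The right-hand vertical face of the cube \eqref{E0Ecube} is a homotopy pullback with the map $i_M : M_0 \to M$ on the front and some map on the back; concretely, applying the Ganea-type splitting is not even needed here, since we already have the homotopy fibration $E_0 \to M_0 \xrightarrow{\varphi_0} S^m$ and $E \to M \xrightarrow{\varphi} S^m$ with $\varphi\circ i_M = \varphi_0$, so the square
\[
\diagram
E_0 \rto^{i_E} \dto & E \dto \\
M_0 \rto^{i_M} & M
\enddiagram
\]
is a homotopy pullback (both rows are fibre inclusions over $S^m$, and $i_M$ covers $\varphi_0 \to \varphi$; this is the left vertical face of the cube). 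Looping preserves homotopy pullbacks, so we get a homotopy pullback of loop spaces. Now the hypothesis says $\Omega\varphi_0$ has a right homotopy inverse; since the square relating $\Omega M_0 \to \Omega S^m$ and $\Omega M \to \Omega S^m$ commutes via $\Omega i_M$, the map $\Omega\varphi = \Omega\varphi\circ(\text{nothing})$ — more carefully, a right homotopy inverse of $\Omega\varphi_0$ postcomposed with $\Omega i_M$ gives a right homotopy inverse of $\Omega\varphi$. Thus both $\Omega\varphi_0$ and $\Omega\varphi$ have right homotopy inverses, and the looped fibrations split: $\Omega M_0 \simeq \Omega S^m \times \Omega E_0$ and $\Omega M \simeq \Omega S^m \times \Omega E$, compatibly (Corollary \ref{RHIfibcor} applied to the loop of the left vertical face of the cube, noting $\Omega$ of an $H$-space is an $H$-map automatically and $\Omega M$, $\Omega M_0$ are $H$-spaces). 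Under these compatible splittings, $\Omega i_M$ corresponds to $1_{\Omega S^m}\times \Omega i_E$, so $\Omega i_M$ has a right homotopy inverse if and only if $\Omega i_E$ does. This is precisely Corollary \ref{RHIfibcor}(2)--(3) applied to the diagram of homotopy fibrations $E_0 \to M_0 \to S^m$ over $E \to M \to S^m$, after checking that the sections are compatible — which they are, because the section of $\Omega\varphi$ is obtained from that of $\Omega\varphi_0$ by composing with $\Omega i_M$.

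For statement (2), I would run the identical argument one level up in the cube, on the top and bottom faces. The maps $f_0 : M_0 \to S^m \vee S^{n-m}$ and $g_0 : E_0 \to \Omega S^m \ltimes S^{n-m}$ sit in the back-left vertical face of \eqref{E0Ecube}, which is a homotopy pullback over $S^m$: indeed $E_0$ is the fibre of $\varphi_0 = q_1\circ f_0$ and $\Omega S^m \ltimes S^{n-m}$ is the fibre of $q_1$ (last fibration of \eqref{4fibeq}, via Lemma \ref{wedgeqlemma}), and $g_0$ is the induced map of fibres over the identity of $S^m$. Looping, we again get a homotopy pullback; the fibration $\Omega S^m \ltimes S^{n-m} \to S^m\vee S^{n-m}\xrightarrow{q_1} S^m$ has a right homotopy inverse after looping (it splits naturally by Lemma \ref{wedgeqlemma}), and $\Omega\varphi_0 = \Omega q_1 \circ \Omega f_0$ has a right homotopy inverse by hypothesis; a right homotopy inverse of $\Omega q_1$ together with the splitting data produces compatible sections, so Corollary \ref{RHIfibcor}(2)--(3) applied to the looped back-left face gives: under compatible product decompositions $\Omega M_0 \simeq \Omega S^m\times \Omega E_0$ and $\Omega(S^m\vee S^{n-m})\simeq \Omega S^m\times \Omega(\Omega S^m\ltimes S^{n-m})$, the map $\Omega f_0$ corresponds to $1_{\Omega S^m}\times \Omega g_0$. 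Hence $\Omega f_0$ has a right homotopy inverse iff $\Omega g_0$ does.

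The main obstacle is bookkeeping the compatibility of the two sections: Corollary \ref{RHIfibcor} requires that the right homotopy inverses $s'$ of $\Omega p'$ and $s$ of $\Omega p$ fit into the homotopy commutative square \eqref{loops'sdiag0}. In both parts the resolution is the same observation — the section of the ``larger'' looped fibration is manufactured from the section of $\Omega\varphi_0$ (resp. the naturally split section of $\Omega q_1$) by the appropriate looped map, so the compatibility square commutes essentially by construction. Once that is in place, everything reduces to applying Corollary \ref{RHIfibcor} twice, and no genuine computation is needed. I would also remark that statement (1) combined with statement (2) and the cube is exactly what will be fed into the proof of Theorem \ref{detthm}, so it is worth isolating here.
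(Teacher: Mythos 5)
Your argument is correct and is essentially the paper's: the paper simply packages your two applications of Corollary \ref{RHIfibcor} as Theorem \ref{pullbackthm} (3), applied to the left and rear faces of the cube (\ref{E0Ecube}), and that theorem is itself proved by exactly the looped-fibration splitting you spell out. The one point to tighten in part (2) is that the compatible right homotopy inverse of $\Omega q_1$ must be taken to be $\Omega f_0\circ s'$, where $s'$ is the given right homotopy inverse of $\Omega\varphi_0$ (this works because $q_1\circ f_0\simeq\varphi_0$), rather than the canonical section $\Omega i_1$, which need not make the compatibility square (\ref{loops'sdiag0}) homotopy commute.
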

\begin{proof}
The condition that $\Omega \varphi_0$ has a right homotopy inverse is equivalent to that the homotopy fibration $E_0\stackrel{}{\longrightarrow}M_0\stackrel{\varphi_0}{\longrightarrow} S^m$ splits after looping, and hence is equivalent to that the looped map $\Omega E_0\longrightarrow \Omega M_0$ has a left homotopy inverse. Then Theorem \ref{pullbackthm} (3) can be applied to the left face of Diagram (\ref{E0Ecube}) to show that the map $\Omega i_M$ has a right homotopy inverse if and only if $\Omega i_E$  has a right homotopy inverse, and to the rear face of Diagram (\ref{E0Ecube}) to show that the map $\Omega f_0$ has a right homotopy inverse if and only if $\Omega g_0$  has a right homotopy inverse.
\end{proof}

The following proposition gives a criterion on the inertness of the top cell attachment of a Poincar\'{e} duality complex by comparing it to a twisted product of spheres. 

\begin{proposition}\label{detprop}
Let $M$ be an $n$-dimensional simply connected Poincar\'{e} duality complex. Let $D$ be an $n$-dimensional Poincar\'{e} duality complex determined by a homotopy fibration (\ref{Dfibeq}) with a homotopy section and $n>m+1>2$. Suppose that there is a degree one map 
\[
f: M\stackrel{}{\longrightarrow} D
\]
such that its looped restriction $\Omega f_0: \Omega M_0\stackrel{}{\longrightarrow} \Omega (S^m\vee S^{n-m})$ has a right homotopy inverse. If there is a homotopy pushout 
\begin{equation}\label{YM0diag}
\diagram 
      Y\rto^-{}\dto^{} & M_0\dto^{f_0}  \\ 
    S^{m}\rto^-{i_1} & S^m\vee S^{n-m}
  \enddiagram
\end{equation}
for some complex $Y$ with $i_1$ the inclusion into the first wedge summand, then the following hold
\begin{itemize}
\item[(1).]
the attaching map for the top cell of $M$ is inert;
\item[(2).]
the map $\Omega f$ has a right homotopy inverse.
\end{itemize}

Additionally, if the map $f$ is of degree $k$ with $k\neq 0$ and the conditions hold after localization away from the primes $p$ that divide $k$, then the two conclusions hold after localization away from the primes $p$ that divide $k$.
\end{proposition}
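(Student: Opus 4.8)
The plan is to feed the homotopy cube \eqref{E0Ecube} into the machinery of Sections \ref{sec: pullback}--\ref{sec: pushout}, using the extra input \eqref{YM0diag} to recognize the top horizontal map $g_0\colon E_0\to W$, where $W:=\Omega S^m\ltimes S^{n-m}$, as the homotopy cofibre map of an \emph{inert} map; this is exactly what makes Theorem \ref{pushoutthm2} applicable on the top face of \eqref{E0Ecube}, which a priori carries no cofibration data.

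First I would install the looping hypothesis required by Lemma \ref{loopphilemma}. Since $f\circ i_M\simeq i_D\circ f_0$ by the left square of \eqref{MDdiag} and $\psi\circ i_D\simeq q_1$ by Lemma \ref{Dlemma}(1), the map $\varphi_0=\psi\circ f\circ i_M$ equals $q_1\circ f_0$; as $q_1$ is split by $i_1$, composing a right homotopy inverse of $\Omega f_0$ with $\Omega i_1$ yields one for $\Omega\varphi_0=\Omega q_1\circ\Omega f_0$. Then Lemma \ref{loopphilemma} applies: $\Omega i_M$ has a right homotopy inverse if and only if $\Omega i_E$ does, and moreover $\Omega g_0$ has a right homotopy inverse (because $\Omega f_0$ does). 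Thus for (1) it suffices to produce a right homotopy inverse of $\Omega i_E$.

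The heart of the argument is a second application of Mather's Cube Lemma (Theorem \ref{cubethm}), now to the homotopy pushout \eqref{YM0diag} together with the homotopy fibration $W\to S^m\vee S^{n-m}\xrightarrow{q_1}S^m$ of Lemma \ref{wedgeqlemma}. Composing the four maps of \eqref{YM0diag} with $q_1$ and passing to homotopy fibres over $S^m$, the corner over $S^m$ becomes contractible since $q_1\circ i_1\simeq 1_{S^m}$, so the resulting top face (a homotopy pushout by Theorem \ref{cubethm}) identifies $W$ as the homotopy cofibre of the induced map $Y'\to E_0$, where $Y'$ is the homotopy fibre of $Y\to S^m$; equivalently, there is a homotopy cofibration $Y'\to E_0\xrightarrow{g_0}W$. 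As $\Omega g_0$ has a right homotopy inverse, the map $Y'\to E_0$ is inert, i.e.\ $g_0$ is the cofibre map of an inert map. I would then push this homotopy cofibration forward along $i_E$ via the top face of \eqref{E0Ecube} and Lemma \ref{cone+pushoutlemma}, obtaining a homotopy cofibration $Y'\to E\xrightarrow{g}S^{n-m}$ and hence a diagram of homotopy cofibrations whose left column is the identity on $Y'$, whose middle column is $i_E$, and whose right column is $i_S$. Since $i_S\simeq p_2$ is split by $j_2$ (Lemma \ref{ltimeslemma0}), $\Omega i_S$ has a right homotopy inverse, so Theorem \ref{pushoutthm2} delivers a right homotopy inverse of $\Omega i_E$, completing (1).

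Part (2) is comparatively soft: $\Omega i_D$ has a right homotopy inverse by Lemma \ref{Dlemma}(4) and $\Omega f_0$ by hypothesis, so $\Omega(i_D\circ f_0)=\Omega i_D\circ\Omega f_0$ does; since $f\circ i_M\simeq i_D\circ f_0$, the composite $\Omega f\circ\Omega i_M$ has a right homotopy inverse, and therefore so does $\Omega f$. The degree $k$ statement then follows by running the same argument in the homotopy category localized away from the primes dividing $k$, after first replacing $D$ by the homotopy cofibre of $k\cdot h_D$ to reduce to the degree one case exactly as in the proof of Theorem \ref{inertdegkthm}; every tool invoked (Mather's cube, Lemma \ref{loopphilemma}, Theorem \ref{pushoutthm2}, Lemma \ref{Dlemma}) remains valid after localization. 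The step I expect to be the main obstacle is the middle one: seeing that hypothesis \eqref{YM0diag} is precisely the data needed to turn $g_0$ into the cofibre map of an inert map, and then correctly matching the two cubes so that Theorem \ref{pushoutthm2} is applied with the inert map in the right slot; the remainder is bookkeeping with already-proven facts.
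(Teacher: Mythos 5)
Your proposal is correct and follows essentially the same route as the paper's proof: reduce to degree one, split $\Omega\varphi_0$ via $\varphi_0\simeq q_1\circ f_0$ so that Lemma \ref{loopphilemma} applies, run a second Mather cube over the pushout \eqref{YM0diag} to exhibit $g_0$ as the cofibre map of an inert map $E_Y\to E_0$, push that cofibration along the top face of \eqref{E0Ecube} with Lemma \ref{cone+pushoutlemma}, and conclude with Theorem \ref{pushoutthm2} using the splitting $i_S\simeq p_2$. Part (2) and the degree $k$ reduction are handled exactly as in the paper.
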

\begin{proof}
If the map $f$ is of degree $k$ with $k\neq 0$ we may work in the local category away from  after localization away from the primes $p$ that divide $k$. 
By the same argument in the proof of Theorem \ref{inertdegkthm}, we can replace the map $f$ by a map $f'$ of degree one without changing involved homotopy types and homotopy classes. Hence, it suffices to prove the proposition for the case when $f$ is of degree one.  

Let $s: \Omega (S^m\vee S^{n-m}) \stackrel{}{\longrightarrow} \Omega M_0$ be a right homotopy inverse of $\Omega f_0$. By Lemma \ref{Dlemma}, Diagrams (\ref{MDdiag}) and (\ref{phieq}) we have 
\[
\varphi_0=\varphi\circ i_M=\psi\circ f\circ i_M\simeq \psi\circ i_D\circ f_0\simeq q_1\circ f_0: M_0\stackrel{}{\longrightarrow} S^m.
\]
Then the composite
\[
\Omega S^m\stackrel{\Omega i_1}{\longrightarrow} \Omega (S^m\vee S^{n-m}) \stackrel{s}{\longrightarrow} \Omega M_0
\]
satisfies that $\Omega \varphi_0\circ s\circ \Omega i_1\simeq  \Omega q_1\circ \Omega f_0\circ s\circ \Omega i_1\simeq \Omega q_1\circ \Omega i_1\simeq 1_{\Omega S^m}$, that is, $\Omega \varphi_0$ has a right homotopy inverse. In particular, Lemma \ref{loopphilemma} is allowed to be applied.

(1). We want to apply Theorem \ref{pushoutthm2} to the top homotopy pushout in Diagram (\ref{E0Ecube}). To this end, we need to show that the map $g_0$ is a homotopy cofibre quotient. 
Let $E_{Y}$ be the homotopy fibre of the map $Y\stackrel{ }{\longrightarrow} S^m$ in \eqref{YM0diag}. We have the homotopy fibrations
\[
\begin{split}
E_{Y} \stackrel{}{\longrightarrow}Y\stackrel{ }{\longrightarrow} S^m&,\\
\ast  \stackrel{}{\longrightarrow} S^m  \stackrel{1_{S^m}}{\longrightarrow} S^m&.
\end{split}
\]
Combining these homotopy fibrations with the first and the last homotopy fibrations in \eqref{4fibeq} gives a homotopy cube
\[
  \label{E1/2E0cube} 
   \diagram
      E_{Y} \rrto^-{ \mathfrak{j}}\drto^-{}\ddto^-(0.33){} & & E_0\dline^-{}\drto^{g_0} & \\
      & \ast\rrto^(0.35){}\ddto^(0.25){} & \dto & \Omega S^m\ltimes S^{n-m}\ddto^{} \\
      Y\rline^(0.6){\mathfrak{i}}\drto^(0.6){ } & \rto & M_0\drto^{f_0} & \\
      & S^m\rrto^{i_1} & &  S^m\vee S^{n-m}
  \enddiagram 
\] 
over the same base $S^m$, where the bottom face is the homotopy pushout (\ref{YM0diag}), the four vertical faces are homotopy pullbacks by construction, and $ \mathfrak{j}$ is the induced map. 
Note that the right face of the diagram is the rear face of Diagram (\ref{E0Ecube}). 
Mather's Cube Lemma (Theorem \ref{cubethm}) implies that the top face of the diagram is a homotopy pushout, that is, there is the homotopy cofibration
\[
E_{Y} \stackrel{ \mathfrak{j}}{\longrightarrow} E_0\stackrel{g_0}{\longrightarrow} \Omega S^m\ltimes S^{n-m}.
\]

Consider the homotopy cofibration diagram
\[
\diagram 
      E_{Y}  \ddouble  \rto^-{\mathfrak{j}} & E_0\rto^-{g_0}\dto^-{i_E} & \Omega S^m\ltimes S^{n-m} \dto^-{i_S} \\ 
     E_{Y} \rto^-{\mathfrak{l}}          & E\rto^-{g} & S^{n-m},
  \enddiagram
 \]
where the right square is the top face of Diagram (\ref{E0Ecube}), and $\mathfrak{l}:=\mathfrak{i}_E\circ \mathfrak{j}$. Since the top row is a homotopy cofibration and the right square is a homotopy pushout, Lemma \ref{cone+pushoutlemma} implies that the bottom row is a homotopy cofibration. 
By  Lemma \ref{loopphilemma} (2), $\Omega f_0$ has a right homotopy inverse implies that $\Omega g_0$ has a right homotopy inverse, that is, the map $\mathfrak{j}$ is inert. Also, the construction of the right face of Diagram (\ref{E0Ecube}) indicates that the map $i_S$ is homotopic to the canonical projection $p_2$ in Lemma \ref{ltimeslemma0}, and then it has the inclusion $S^{n-m}\stackrel{j_2}{\hookrightarrow} \Omega S^m\ltimes S^{n-m}$ as a right homotopy inverse. Therefore, Theorem \ref{pushoutthm2} can be applied to show that $\Omega i_E$ has a right homotopy inverse. From Lemma \ref{loopphilemma} (1), it implies that $\Omega i_M$ has a right homotopy inverse, that is, the attaching map $h_M$ of the top cell of $M$ is inert.
 
(2). Consider the left square of Diagram (\ref{MDdiag}). By Lemma \ref{Dlemma} (4) the map $\Omega (S^m\vee S^{n-m})\stackrel{\Omega i_D}{\longrightarrow} \Omega D$ has a right homotopy inverse, and by assumption the map $\Omega M_0\stackrel{\Omega f_0}{\longrightarrow} \Omega (S^m\vee S^{n-m})$ has a right homotopy inverse. Hence, the homotopy commutativity of the left square of Diagram (\ref{MDdiag}) implies that $\Omega f$ has a right homotopy inverse. 
\end{proof}

In Proposition \ref{detprop}, the condition that there exists a homotopy pushout (\ref{YM0diag}) roughly means that $S^{n-m}$ can be subtracted from $M_0$. This condition can be satisfied when $M_0$ has certain nice cell structures and then several variations of Proposition \ref{detprop} can be proved. We start with a lemma that characterizes the structure of a twisted product of spheres.  

\begin{lemma}\label{Klemma}
Let $K$ be a simply connected Poincar\'{e} duality complex determined by a homotopy cofibration
\[
S^{n-1}\stackrel{h_K}{\longrightarrow} K_0\stackrel{i_K}{\longrightarrow} K
\]
such that $H^\ast(K)\cong H^\ast(S^m\times S^{n-m})$. 
Then $K$ is the total complex of a homotopy fibration 
\[
S^{n-m}\stackrel{}{\longrightarrow} K\stackrel{\psi_K}{\longrightarrow} S^m
\]
with a homotopy section if and only if there are maps $S^m\stackrel{i}{\longrightarrow} K_0$ and $K_0\stackrel{q}{\longrightarrow} S^m$ such that $q\circ i\simeq 1_{S^m}$ and the composite  
\[
S^{n-1}\stackrel{h_K}{\longrightarrow} K_0\stackrel{q}{\longrightarrow} S^m
\] 
is null homotopic. 

Furthermore, if the above statements hold then $q\simeq \psi_K\circ i_K$. 
\end{lemma}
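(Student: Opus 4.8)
The plan is to prove the two directions of the equivalence separately and then verify the ``furthermore'' clause, which is really the crux and should be extracted as a byproduct of the harder direction.

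\medskip

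\textbf{The forward direction.} Suppose $K$ is the total space of a homotopy fibration $S^{n-m}\stackrel{\iota}{\longrightarrow} K\stackrel{\psi_K}{\longrightarrow} S^m$ with a homotopy section $\sigma\colon S^m\to K$. First I would apply exactly the argument of Lemma \ref{Dlemma} (1): since $m$ and $n-m$ are both positive, the $CW$-approximation lets $\sigma$ and $\iota$ factor through $K_0$ up to homotopy as $S^m\xrightarrow{s_0}K_0\xrightarrow{i_K}K$ and $S^{n-m}\xrightarrow{\iota_0}K_0\xrightarrow{i_K}K$, and the folding composite $S^m\vee S^{n-m}\xrightarrow{s_0\vee\iota_0}K_0\vee K_0\xrightarrow{\nabla}K_0$ induces an isomorphism on homology (using $H^\ast(K)\cong H^\ast(S^m\times S^{n-m})$, so $K_0$ has exactly the homology of $S^m\vee S^{n-m}$), hence is a homotopy equivalence by Whitehead's theorem. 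Define $i\colon S^m\to K_0$ to be $s_0$ (equivalently the composite into the first wedge summand) and define $q\colon K_0\to S^m$ to be $\psi_K\circ i_K$. Then $q\circ i=\psi_K\circ i_K\circ s_0\simeq\psi_K\circ\sigma\simeq 1_{S^m}$. Finally, $q\circ h_K=\psi_K\circ i_K\circ h_K$ is null homotopic because $i_K\circ h_K$ is null homotopic (it is the attaching map composed with the skeleton inclusion, which bounds the top cell). This already gives $q\simeq\psi_K\circ i_K$, proving the furthermore clause in this direction.

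\medskip

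\textbf{The reverse direction.} Suppose we are given $i\colon S^m\to K_0$ and $q\colon K_0\to S^m$ with $q\circ i\simeq 1_{S^m}$ and $q\circ h_K\simeq\ast$. The plan is to construct $\psi_K\colon K\to S^m$ extending $q$, show it fibers appropriately, and exhibit a section. Since $q\circ h_K$ is null homotopic, the homotopy cofibration $S^{n-1}\xrightarrow{h_K}K_0\xrightarrow{i_K}K$ and the cofibre property produce a map $\psi_K\colon K\to S^m$ with $\psi_K\circ i_K\simeq q$ (this again gives the furthermore clause). For the section: the composite $S^m\xrightarrow{i}K_0\xrightarrow{i_K}K$ satisfies $\psi_K\circ i_K\circ i\simeq q\circ i\simeq 1_{S^m}$, so $\sigma:=i_K\circ i$ is a homotopy section of $\psi_K$. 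It remains to identify the homotopy fibre of $\psi_K$ as $S^{n-m}$. Here I would take the homotopy fibre $F$ of $\psi_K$ and analyze it via the homotopy fibration $F\to K\xrightarrow{\psi_K}S^m$: the existence of the section forces a loop space splitting $\Omega K\simeq\Omega S^m\times\Omega F$, and comparing with the cohomology $H^\ast(K)\cong H^\ast(S^m\times S^{n-m})$ — more precisely running the Serre spectral sequence of $F\to K\to S^m$, which collapses because of the section, so that $H^\ast(K)\cong H^\ast(S^m)\otimes H^\ast(F)$ as modules — one reads off that $F$ has the integral cohomology of $S^{n-m}$. Since $F$ is simply connected (as $m\geq 2$ and $K$ is simply connected, from the long exact homotopy sequence) and has the homology of a sphere, $F\simeq S^{n-m}$ by Whitehead's theorem applied to a degree one map $S^{n-m}\to F$ representing a generator of $\pi_{n-m}(F)\cong H_{n-m}(F)$. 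This yields the desired homotopy fibration $S^{n-m}\to K\xrightarrow{\psi_K}S^m$ with a homotopy section.

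\medskip

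\textbf{Main obstacle.} The routine parts are the skeletal factorizations and the construction of $\psi_K$ from the cofibre property. The delicate step is the reverse direction's identification of the homotopy fibre $F$ with $S^{n-m}$: one must be careful that the Serre spectral sequence argument genuinely pins down $H^\ast(F)$ and not merely $H^\ast(K)$ as an abstract module, and that simple-connectivity of $F$ is in hand before invoking Whitehead. An alternative, perhaps cleaner, route is to build the candidate fibration directly by first constructing a map $S^m\vee S^{n-m}\to K_0$ that is a homotopy equivalence (using $i$ and the fact that $q$ together with the cell structure determine the complementary $S^{n-m}$ as the homotopy fibre of $q$ restricted suitably), transporting the problem to the model studied in Lemma \ref{Dlemma}, and then quoting that $K$ with $H^\ast(K)\cong H^\ast(S^m\times S^{n-m})$ and such a skeleton fibers over $S^m$; I would keep both approaches in mind and present whichever makes the fibre identification most transparent.
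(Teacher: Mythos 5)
Your proposal is correct and follows essentially the same route as the paper: the forward direction via the argument of Lemma \ref{Dlemma}(1) with $q=\psi_K\circ i_K$, and the reverse direction by extending $q$ over the cofibre of $h_K$ to get $\psi_K$, taking $\sigma=i_K\circ i$ as the section, and identifying the homotopy fibre with $S^{n-m}$ via the Serre spectral sequence and Whitehead's theorem. If anything, you are slightly more careful than the paper in justifying the collapse of the spectral sequence and the simple connectivity of the fibre before invoking Whitehead.
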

\begin{proof}
Suppose that there is a homotopy fibration 
$
S^{n-m}\stackrel{}{\longrightarrow} K\stackrel{\psi_K}{\longrightarrow} S^m
$
with a homotopy section. Then by Lemma \ref{Dlemma} (1), $K_0\simeq S^m\vee S^{n-m}$, and the composite $S^m\vee S^{n-m}\stackrel{i_K}{\longrightarrow} K\stackrel{\psi_K}{\longrightarrow} S^m$ is homotopic to the canonical projection $S^m\vee S^{n-m}\stackrel{q_1}{\longrightarrow} S^m$. 
We may not distinguish $K_0 $ and $S^m\vee S^{n-m}$ and let $q=q_1$. It is clear that the composite 
\[
S^m\stackrel{i_1}{\longrightarrow} S^m\vee S^{n-m} \stackrel{q_1}{\longrightarrow} S^m
\]
is the identity map.
Further, the composite $q\circ h_K$ is homotopic to the composite
\[
S^{n-1}\stackrel{h_K}{\longrightarrow} S^m\vee S^{n-m}\stackrel{i_K}{\longrightarrow} K\stackrel{\psi_K}{\longrightarrow} S^m
\]
which is null homotopic as $i_K\circ h_K$ is. 

Conversely, suppose that there are maps $S^m\stackrel{i}{\longrightarrow} K_0$ and $K_0\stackrel{q}{\longrightarrow} S^m$ such that $q\circ i\simeq 1_{S^m}$ and the composite  
$
S^{n-1}\stackrel{h_K}{\longrightarrow} K_0\stackrel{q}{\longrightarrow} S^m
$
is null homotopic. Then there is a diagram of homotopy cofibrations
\[
\diagram
S^{n-1}\rto^{h_K} \ddouble  & K_0 \rto^{i_K} \dto^{q}  & K \dto^{q'}\\
S^{n-1}      \rto^{\ast}         & S^m       \rto^<<<<{i_1}   & S^m\vee S^n\\
\enddiagram
\]
where $q'$ is the induced map. Consider the composite  
\[
\psi_K: K\stackrel{q'}{\longrightarrow}  S^{m}\vee S^{n}\stackrel{q_1}{\longrightarrow} S^{m}.
\]
Let $F$ be the homotopy fibre of $\psi_K$. Since by assumption $H^\ast(K;\mathbb{Z})\cong H^\ast(S^m\times S^{n-m};\mathbb{Z})$, an easy argument on the Serre spectral sequence shows that $H^\ast(F)\cong H^\ast(S^{n-m})$. Then the bottom cell inclusion $S^{n-m}\longrightarrow F$ induces an isomorphism on homology and therefore is a homotopy equivalence by the Whitehead theorem. Hence, there a homotopy fibration
\[
S^{n-m}\stackrel{}{\longrightarrow} K\stackrel{\psi_K}{\longrightarrow} S^m.
\]
Moreover, as the composite $S^m\stackrel{i}{\longrightarrow} K_0\stackrel{q}{\longrightarrow} S^m$ is homotopic to the identity map and $\psi_K=q_1\circ q'$ by definition, there is the homotopy commutative diagram
\[
\diagram
S^m \rto^{i}  \drdouble    & K_0 \rto^{i_K} \dto^{q} & K \dto^{q'}  \drto^{\psi_K}\\
 & S^m   \rto^<<<<{i_1}            & S^{m}\vee S^{n}  \rto^{q_1}  & S^m,
\enddiagram
\]
where the middle square is the right square of the previous diagram. It follows that $\psi_K\circ i_K\simeq q_1 \circ i_1\circ q \simeq q$, and $\psi_K \circ i_K \circ i\simeq q\circ i$ is homotopic to the identity map, that is, $\psi_K$ has a homotopy section.
\end{proof}

The following theorem is a variation of Proposition \ref{detprop}. 
Denote by $X_1\vee X_2\stackrel{q_i}{\longrightarrow} X_i$ the canonical projection onto the $i$-th wedge summand with $i=1$ or $2$. 

\begin{theorem}\label{detthm}
Let $M$ be an $n$-dimensional simply connected Poincar\'{e} duality complex. Let $D$ be an $n$-dimensional Poincar\'{e} duality complex determined by a homotopy fibration (\ref{Dfibeq}) with a homotopy section and $n>m+1>2$. Suppose that there is a degree one map 
\[
f: M\stackrel{}{\longrightarrow} D 
\]
satisfying the following:
\begin{itemize}
\item
the composite $M_0\stackrel{f_0}{\longrightarrow} S^{m}\vee S^{n-m}\stackrel{q_2}{\longrightarrow}  S^{n-m}$ has a right homotopy inverse after looping, and it can be extended to a homotopy cofibration 
\[
Y\stackrel{}{\longrightarrow}M_0\stackrel{q_2\circ f_0}{\longrightarrow} S^{n-m}
\]
for some complex $Y$;
\item the composite $Y\stackrel{}{\longrightarrow}M_0\stackrel{f_0}{\longrightarrow} S^{m}\vee S^{n-m}\stackrel{q_1}{\longrightarrow} S^{m}$ has a right homotopy inverse after looping. 
\end{itemize}
Then the attaching map for the top cell of $M$ is inert.

Additionally, if the map $f$ is of degree $k$ with $k\neq 0$ and the conditions hold after localization away from the primes $p$ that divide $k$, then the attaching map for the top cell of $M$ is inert after localization away from the primes $p$ that divide $k$.
\end{theorem}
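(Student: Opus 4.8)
The plan is to deduce Theorem~\ref{detthm} from Proposition~\ref{detprop}. Write $a_0\colon Y\to M_0$ for the first map of the homotopy cofibration appearing in the first bullet, and set $b_0:=q_1\circ f_0\circ a_0\colon Y\to S^m$. Then the two bullets say exactly that $a_0$ is inert (equivalently, $\Omega(q_2\circ f_0)$ has a right homotopy inverse) and that $\Omega b_0$ has a right homotopy inverse. So it is enough to realise the homotopy pushout~(\ref{YM0diag}) with left vertical map $b_0$: granting this, Theorem~\ref{pushoutthm} applied to~(\ref{YM0diag}) (with $a_0$ inert) gives that $\Omega f_0$ has a right homotopy inverse, so every hypothesis of Proposition~\ref{detprop} holds, and hence $h_M$ is inert. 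For the last sentence of the theorem one runs the same argument in the homotopy category localised away from the primes dividing $k$, first replacing $f$ by a degree one map exactly as in the proof of Theorem~\ref{inertdegkthm}; the conclusion is then the local clause of Proposition~\ref{detprop}.

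Everything therefore reduces to constructing~(\ref{YM0diag}), and the crucial preliminary is a dimension estimate on $Y$. Since $M$ is a simply connected $n$-dimensional Poincar\'e duality complex with a single top cell, its top cell is a cellular cycle (because $H_n(M;\mathbb{Z})\cong\mathbb{Z}$), so $M$ and $M_0$ have identical cellular chain complexes through degree $n-1$ and $H_{n-1}(M_0;\mathbb{Z})\cong H_{n-1}(M;\mathbb{Z})\cong H^1(M;\mathbb{Z})=0$. Feeding this into the long exact homology sequence of $Y\stackrel{a_0}{\longrightarrow}M_0\stackrel{q_2\circ f_0}{\longrightarrow}S^{n-m}$ gives $H_{n-1}(Y;\mathbb{Z})=0$; and since $H_{n-2}(M;\mathbb{Z})\cong H^2(M;\mathbb{Z})$ is torsion free, the same sequence exhibits $H_{n-2}(Y;\mathbb{Z})$ as a subgroup of a torsion free group, hence torsion free. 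As $Y$ is simply connected, it follows that $Y$ has the homotopy type of a CW-complex of dimension at most $n-2$.

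Next I would establish the commutativity of~(\ref{YM0diag}), i.e.\ $f_0\circ a_0\simeq i_1\circ b_0$. The composite $q_2\circ f_0\circ a_0$ is null homotopic (consecutive maps in a homotopy cofibration), so $f_0\circ a_0$ lifts through the homotopy fibre inclusion $\Omega S^{n-m}\ltimes S^m\to S^m\vee S^{n-m}$ of $q_2$ furnished by Lemma~\ref{wedgeqlemma}. In the natural CW-structure on $\Omega S^{n-m}\ltimes S^m$ the cells occur in dimension $m$ and then in dimensions $\ge n-1$ (Proposition~\ref{ltimesfibrelemma}), so its $(n-2)$-skeleton is $S^m$; since $\dim Y\le n-2$, cellular approximation forces the lift to factor as $j_2\circ c_0$ for some $c_0\colon Y\to S^m$, where $j_2$ is the section of Lemma~\ref{ltimeslemma0}. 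Because this fibre inclusion restricted along $j_2$ is $i_1$, we get $f_0\circ a_0\simeq i_1\circ c_0$, and applying $q_1$ gives $c_0\simeq q_1\circ f_0\circ a_0=b_0$. With commutativity in hand, the left-hand squares of the two homotopy cofibrations
\[
Y\stackrel{a_0}{\longrightarrow}M_0\stackrel{q_2\circ f_0}{\longrightarrow}S^{n-m},\qquad S^m\stackrel{i_1}{\longrightarrow}S^m\vee S^{n-m}\stackrel{q_2}{\longrightarrow}S^{n-m}
\]
form a map of homotopy cofibrations whose induced map on cofibres is a degree one self-map of $S^{n-m}$ — here one uses that $(q_2\circ f_0)_\ast$ is onto on $H_{n-m}(-;\mathbb{Z})$, which follows from $\Omega(q_2\circ f_0)$ having a right homotopy inverse — hence the left-hand square is a homotopy pushout, and this is precisely~(\ref{YM0diag}). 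Theorem~\ref{pushoutthm} and Proposition~\ref{detprop} then finish the proof as indicated above.

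The main obstacle is the step $f_0\circ a_0\simeq i_1\circ b_0$: in general the lift of $f_0\circ a_0$ into $\Omega S^{n-m}\ltimes S^m$ can have a nonzero component in the Whitehead-product piece $\Sigma\Omega S^{n-m}\wedge\Omega S^m$, which is only $(n-2)$-connected, so this component need not vanish for an arbitrary $Y$. The Poincar\'e duality dimension/torsion estimate on $Y$ is exactly what annihilates it, and carrying out that estimate carefully — in particular in the small cases $m=2$ and $n=m+2$ — is the technical heart of the argument.
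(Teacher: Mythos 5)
Your proposal takes a genuinely different route from the paper, and it aims at precisely the point the paper declines to claim. The paper's proof never tries to verify the hypotheses of Proposition~\ref{detprop} for the given map $f$. Instead it manufactures a new target: it forms $K_0$ as the homotopy pushout of $j_Y$ and $f_Y$ (Diagram~(\ref{YM0Kdiag})), defines $K$ as the cofibre of $g_0\circ h_M$, proves in three steps that $K$ is a Poincar\'e duality complex, that it is a twisted product of spheres, and that $\Omega g_0$ has a right homotopy inverse, and then applies Proposition~\ref{detprop} to the new degree one map $g\colon M\to K$ --- for which the pushout~(\ref{YM0diag}) holds by construction. Remark~\ref{detprop=thm-rmk} then states explicitly that it is \emph{unknown} whether $f$ itself satisfies the hypotheses of Proposition~\ref{detprop}; your argument, if correct, would settle that question affirmatively, so it needs particular scrutiny at its crux.

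The gap is the sentence ``As $Y$ is simply connected, it follows that $Y$ has the homotopy type of a CW-complex of dimension at most $n-2$.'' Nothing in the hypotheses makes $Y$ simply connected. A homotopy cofibration $Y\to M_0\to S^{n-m}$ determines neither $Y$ nor $\pi_1(Y)$: if $A$ is a two-dimensional acyclic complex with nontrivial perfect fundamental group, then $Y\vee A\to M_0$, taken trivial on $A$, has cofibre $S^{n-m}\vee\Sigma A\simeq S^{n-m}$, and both bullets of the theorem continue to hold for $Y\vee A$. Your homological computation ($H_k(Y;\mathbb{Z})=0$ for $k\ge n-1$ and $H_{n-2}(Y;\mathbb{Z})$ torsion free) is correct, but without simple connectivity it does not produce a CW model of dimension $\le n-2$, and that bound is exactly what the compression of the lift $L\colon Y\to\Omega S^{n-m}\ltimes S^m$ into the $(n-2)$-skeleton $S^m$ requires. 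As you observe yourself, the obstruction lies in the $(n-2)$-connected piece $\Sigma^m\Omega S^{n-m}$, so the step $f_0\circ a_0\simeq i_1\circ b_0$ --- on which everything else rests --- is not established in the stated generality. The remainder of the argument (the cofibre comparison showing the square is a homotopy pushout, the use of Theorem~\ref{pushoutthm} to get a right homotopy inverse for $\Omega f_0$, and the reduction of the degree $k$ case via the trick of Theorem~\ref{inertdegkthm}) is sound granted that step.

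If you add the hypothesis that $Y$ is simply connected --- which holds in all of the paper's applications, where $Y$ is a wedge summand of the simply connected complex $M_0$ --- your argument appears to go through, gives a shorter proof than the paper's, and answers the question of Remark~\ref{detprop=thm-rmk} affirmatively in that case; that would be worth recording. As written, however, it proves a statement strictly weaker than Theorem~\ref{detthm}.
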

\begin{proof}
As discussed in the beginning of the proof of Proposition \ref{detprop}, we only need to consider the case when $f: M\stackrel{}{\longrightarrow} D$ is a degree one map. Instead of studying $f$ itself, we would like to construct a new degree one map $g: M\stackrel{}{\longrightarrow} K$ from $f$, and prove that it satisfies all the conditions of Proposition \ref{detprop}.

By assumption there is a homotopy cofibration
\[
Y\stackrel{j_Y}{\longrightarrow} M_0\stackrel{q_2\circ f_0}{\longrightarrow} S^{n-m}. 
\]
Denote by $Y\stackrel{f_Y}{\longrightarrow} S^m$ the composite 
\begin{equation}\label{fYeq}
f_Y: Y\stackrel{j_Y}{\longrightarrow}M_0\stackrel{f_0}{\longrightarrow} S^{m}\vee S^{n-m}\stackrel{q_1}{\longrightarrow} S^{m}.
\end{equation}
Then the map $j_Y$ is inert and the loop map $\Omega f_Y$ has a right homotopy inverse by assumption.

Let $K_0$ be the homotopy pushout of $j_Y$ and $f_Y$. Then there is the diagram of homotopy cofibrations 
\begin{equation}\label{YM0Kdiag}
\diagram 
      Y\rto^-{j_Y}\dto^{f_Y} & M_0\dto^{g_0}  \rto^{q_2\circ f_0} & S^{n-m}   \ddouble \\ 
    S^{m}\rto^-{i_{K_0}} & {K_0} \rto^{q_{K_0}} & S^{n-m}   
  \enddiagram
\end{equation}
where $i_{K_0}$, $g_0$, and $q_{K_0}$ are the induced maps. 
Further, the diagram of homotopy cofibrations
\begin{equation}\label{MKdiag}
\diagram 
      S^{n-1}\rto^{h_M}\ddouble & M_0\dto^{g_0}  \rto^{i_M} & M   \dto^{g}\\ 
    S^{n-1}\rto^{h_{K}} & {K_0} \rto^{i_{K}} & K   
  \enddiagram
\end{equation}
defines the $CW$-complex $K$ as the homotopy cofibre of $h_{K}:=g_0\circ h_M$, where $i_K$ is the inclusion map and $g$ is the induced map. We want to show that the degree one map 
\[
g: M\stackrel{}{\longrightarrow} K
\]
satisfies all the conditions of Proposition \ref{detprop}. This is divided into three steps.

{\it (I). $K$ is a Poincar\'{e} duality complex with $H^\ast(K;\mathbb{Z})\cong H^\ast(S^m\times S^{n-m};\mathbb{Z})$.}

Compare $K$ with the Poincar\'{e} duality complex $D$ through the homotopy commutative diagram
\[\label{DMKdiag}
\diagram
D_0 \dto^{i_D} & M_0 \rto^{g_0} \lto_{f_0}  \dto^{i_M}& K_0 \dto^{i_K} \\
D                   & M \rto^{g} \lto_{f}  & K,  
\enddiagram
\]
where the left and right squares are derived from the restrictions of $f$ and $g$ on the lower skeletons, respectively. 
Consider the bottom homotopy cofibration of Diagram \eqref{YM0Kdiag}. 
From the constructions, it is easy to check that the long exact sequence in cohomology of the homotopy cofibration splits such that  
\[
H^\ast(K_0;\mathbb{Z})\cong H^\ast(D_0;\mathbb{Z})\cong H^\ast(S^m\vee S^{n-m};\mathbb{Z}),
\]
and the injective morphisms $g_0^\ast$ and $f_0^\ast$ have the same image in $H^\ast(M_0;\mathbb{Z})$. It follows that $g^\ast$ and $f^\ast$ have the same image in $H^\ast(M;\mathbb{Z})$ up to dimension $n-1$. 
Since the maps $g$ and $f$ are of degree one, a straightforward argument on cup product structure shows that $g^\ast$ is injective and has the same image as $f^\ast$ in $H^\ast(M;\mathbb{Z})$. In particular, $K$ is a Poincar\'{e} duality complex such that 
\[
H^\ast(K;\mathbb{Z})\cong H^\ast(D;\mathbb{Z})\cong H^\ast(S^m\times S^{n-m};\mathbb{Z}).
\]

{\it (II). There is a homotopy fibration
\[
S^{n-m}\stackrel{}{\longrightarrow} K\stackrel{}{\longrightarrow} S^m
\]
with a homotopy section.
}

Consider the homotopy commutative diagram
\begin{equation}\label{K0qSmdiag}
\begin{aligned}
\xymatrix{ 
Y \ar[r]^{j_Y} \ar[d]^{f_Y} & M_0 \ar[d]^{g_0}  \ar@/^0.7pc/[ddr]^{q_1\circ f_0} \\
S^m\ar[r]^{i_{K_0}}  \ar@/_0.7pc/[drr]_{=}  &  {K_0} \ar@{.>}[dr]^(0.3){q} \\
&&S^m,
}
\end{aligned}
\end{equation}
where the inner square is a homotopy pushout by Diagram (\ref{YM0Kdiag}) and the outer square homotopy commutes by the definition of $f_Y$ in (\ref{fYeq}). The universal property of homotopy pushout implies that there is a map $q:{K_0}\stackrel{}{\longrightarrow}S^m$ such that the two triangular regions homotopy commute. In particular, there is a homotopy commutative diagram 
\[
\diagram 
      S^{n-1}\rto^{h_M}\ddouble & M_0\dto^{g_0}  \rto^<<<{f_0}  &  S^m\vee S^{n-m} \rto^<<<<{q_1}& S^m   \ddouble\\ 
    S^{n-1}\rto^{h_{K}} & {K_0} \rrto^{q} && S^m  
  \enddiagram
\]
where the left square is the left square of (\ref{MKdiag}). 
Since the map $f$ is of degree one, Lemma \ref{deg1=pushoutlemma} implies that the composite $f_0\circ h_M$ is homotopic to the attaching map $S^{n-1}\stackrel{h_D}{\longrightarrow} S^{m}\vee S^{n-m}$ for the top cell of $D$. Then by Lemma \ref{Klemma} and its proof the top row composition $q_1\circ f_0\circ h_M \simeq q_1\circ h_D$ is null homotopic. It follows that $q\circ h_K$ is null homotopic by the homotopy commutativity of the above diagram. Combining this with the homotopy $q\circ i_{K_0}\simeq 1_{S^m}$ in Diagram \eqref{K0qSmdiag}, we can apply Lemma \ref{Klemma} to show that there is a homotopy fibration 
$
S^{n-m}\stackrel{}{\longrightarrow} K\stackrel{}{\longrightarrow} S^m
$
with a homotopy section. In particular, we have $K_0\simeq S^m\vee S^{n-m}$ by Lemma \ref{Dlemma} (1). 

{\it (III). The restriction map $g_0: M_0\stackrel{}{\longrightarrow} S^m\vee S^{n-m}$ has a right homotopy inverse after looping.}

Consider the left square of Diagram (\ref{YM0Kdiag}) which is a homotopy pushout. By assumption the map $j_Y$ is inert and the loop map $\Omega f_Y$ has a right homotopy inverse. Then Theorem \ref{pushoutthm} implies that the loop map $\Omega g_0$ has a right homotopy inverse.

To summarize, we have showed that $g: M\stackrel{}{\longrightarrow} K$ is a degree one map between simply connected Poincar\'{e} duality complexes such that $K$ is a twisted product of spheres and the looped restriction $\Omega g_0: \Omega M_0\stackrel{}{\longrightarrow} \Omega (S^m\vee S^{n-m})$ has a right homotopy inverse. Further, we have the left square of Diagram (\ref{YM0Kdiag}) for free. Therefore, all the conditions of Proposition \ref{detprop} are satisfied, and it follows that the attaching map for the top cell of $M$ is inert.
\end{proof}

\begin{remark}\label{detprop=thm-rmk}
Theorem \ref{detthm} was proved from Proposition \ref{detprop} (1). Conversely, we can also prove Proposition \ref{detprop} (1) from Theorem \ref{detthm}. 

Indeed, suppose that a degree one map $f: M\stackrel{}{\longrightarrow} D$ satisfies the conditions of Proposition \ref{detprop}. Then Diagram (\ref{YM0diag}) implies the diagram of homotopy cofibrations
\[
\diagram 
      Y\rto^-{}\dto^{} & M_0\dto^{f_0}  \rto^{q_2\circ f_0} & S^{n-m}   \ddouble \\ 
    S^{m}\rto^-{i_{1}} & S^m\vee S^{n-m}\rto^<<<<{q_2} & S^{n-m}.   
  \enddiagram
\]
Since $\Omega f_0: \Omega M_0\stackrel{}{\longrightarrow} \Omega (S^m\vee S^{n-m})$ has a right homotopy inverse by assumption, then both the composites
\[
\Omega M_0\stackrel{\Omega f_0}{\longrightarrow} \Omega (S^m\vee S^{n-m})\stackrel{\Omega q_1}{\longrightarrow} \Omega S^m \ \ \ {\rm and} \ \  
\Omega M_0\stackrel{\Omega f_0}{\longrightarrow} \Omega (S^m\vee S^{n-m})\stackrel{\Omega q_2}{\longrightarrow} \Omega S^{n-m}
\]
have right homotopy inverses. In particular, the first condition of Theorem \ref{detthm} is satisfied. Further, applying Theorem \ref{pushoutthm} to the left square in the above diagram, it follows that the map $Y\stackrel{}{\longrightarrow} S^m$ has a right homotopy inverse after looping. It is clear that $Y\stackrel{}{\longrightarrow} S^m$ is homotopic to the composite 
\[
Y\stackrel{}{\longrightarrow}M_0\stackrel{f_0}{\longrightarrow} S^{m}\vee S^{n-m}\stackrel{q_1}{\longrightarrow} S^{m}.
\]
Hence, the latter composite has a right homotopy inverse after looping, and the second condition of Theorem \ref{detthm} is satisfied. 
Then all the conditions of Theorem \ref{detthm} are satisfied and it follows that the attaching map for the top cell of $M$ is inert.  

In conclusion, Theorem \ref{detthm} and Proposition \ref{detprop} (1) are equivalent. However, it is unknown that whether the map $f$ in Theorem \ref{detthm} satisfies the conditions of Proposition \ref{detprop}. 
\end{remark}

The conditions of Theorem \ref{detthm} can be satisfied when $M_0\simeq Y\vee S^{n-m}$ for some complex $Y$, and we are led to the special case stated in the introduction.  

\begin{proof}[Proof of Theorem \ref{detthmintro}]
As discussed in the beginning of the proof of Proposition \ref{detprop}, we only need to consider the case when $f: M\stackrel{}{\longrightarrow} D$ is a degree one map. We want to show that $f$ satisfies the conditions of Theorem \ref{detthm}. 

By assumption $M_0\simeq Y\vee S^{n-m}$ and $f_0\simeq f_Y\vee 1_{S^{n-m}}: Y\vee S^{n-m}\stackrel{}{\longrightarrow}S^m\vee S^{n-m}$. Then there is the homotopy cofibration diagram
\[
\diagram 
      Y\rto^-{i_1}\dto^-{f_Y} & Y\vee S^{n-m}\dto^-{f_0} \rto^-{q_2} & S^{n-m}\ddouble  \\ 
      S^m\rto^-{i_1} & S^m\vee S^{n-m}\rto^-{q_2} & S^{n-m},
  \enddiagram
  \]
  where $i_1$ and $q_2$ are the natural injections and projections. Since the inclusion map $S^{n-m}\stackrel{i_2}{\longrightarrow} Y\vee S^{n-m}$ is a right homotopy inverse of $q_2$, the first condition of Theorem \ref{detthm} is satisfied. Further, it is clear that $Y\stackrel{f_Y}{\longrightarrow} S^m$ is homotopic to the composite 
 \[
Y\stackrel{i_1}{\longrightarrow}Y\vee S^{n-m}\stackrel{f_Y\vee 1_{S^{n-m}}}{\longrightarrow} S^{m}\vee S^{n-m}\stackrel{q_1}{\longrightarrow} S^{m}.
\]
Then it has a right homotopy inverse after looping by the assumption that $\Omega f_Y$ has a right homotopy inverse. This means that the second condition of Theorem \ref{detthm} is satisfied. 
Therefore, the two conditions of Theorem \ref{detthm} for $f$ are satisfied, and we can apply Theorem \ref{detthm} to conclude that the attaching map for the top cell of $M$ is inert.
\end{proof}

A further special case can be formulated in the following corollary. Since the restriction morphism $i_M^\ast: H^{<n}(M;\mathbb{Z})\stackrel{\cong}{\longrightarrow}H^{<n}(M_0;\mathbb{Z})$ is an isomorphism, we may use same notation to denote a class in $H^{<n}(M;\mathbb{Z})$ and its restriction in $H^{<n}(M_0;\mathbb{Z})$. Denote by $s_l\in H^l(S^l;\mathbb{Z})$ the generator such that $\langle s_l, [S^l]\rangle=1$. 
\begin{corollary}\label{detcor}
Let $n>m+1> 2$ such that $\pi_{n-1}(S^m)$ is a torsion group. 
Let $M$ be an $n$-dimensional simply connected Poincar\'{e} duality complex with $M_0\simeq Y\vee S^{n-m}$ for some complex $Y$. Suppose that there is a map $f_Y: Y\stackrel{}{\longrightarrow} S^{m}$ such that $\Omega f_Y$ has a right homotopy inverse and the pairing number $\ell:=\langle f_Y^\ast (s_m) \cup s_{n-m}, [M] \rangle$ is nonzero. 
Then the following hold:
\begin{itemize}
\item[(1).]
if $\pi_{n-1}(S^m)$ is trivial and $\ell=\pm 1$, then the attaching map for the top cell of $M$ is inert;
\item[(2).]
if $\pi_{n-1}(S^m)$ is nontrivial and its order is divisible by $\ell$, then the attaching map for the top cell of $M$ is inert after localization away from all primes $p$ that divide the order of $\pi_{n-1}(S^m)$.
\end{itemize}
\end{corollary}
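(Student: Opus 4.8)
The plan is to exhibit $M$ as the source of a degree one map onto a twisted product of spheres and then apply Theorem \ref{detthmintro} (equivalently Proposition \ref{detprop}), carrying out the argument in the homotopy category localized away from the set $\mathcal{P}$ of primes dividing $N:=|\pi_{n-1}(S^m)|$; in case (1) one has $N=1$ and $\mathcal{P}=\varnothing$, so there the argument runs integrally, and (1) is the special case $\ell=\pm1$ of (2). Since $M_0\simeq Y\vee S^{n-m}$, I would put $f_0:=f_Y\vee 1_{S^{n-m}}\colon M_0\to S^m\vee S^{n-m}$ and let $D$ be the homotopy cofibre of $f_0\circ h_M\colon S^{n-1}\to S^m\vee S^{n-m}$, where $h_M$ is the attaching map of (\ref{Mcofibeq}). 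Then $D$ is simply connected with $(n-1)$-skeleton $D_0\simeq S^m\vee S^{n-m}$, and since $h_D=f_0\circ h_M$ by construction, $f_0$ extends to a degree one map $f\colon M\to D$ restricting to $f_0$ on lower skeletons (Lemma \ref{deg1=pushoutlemma}, or directly since $D$ is the mapping cone of $f_0\circ h_M$).

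The first substantive step is to pin down the homotopy type of $D$ after localization at $\mathcal{P}$. Its cup product structure is governed by the single product $s_m\cup s_{n-m}\in H^n(D)$ (writing $s_m$, $s_{n-m}$ also for the evident generators of $H^\ast(D)$), and pulling back along the degree one map $f$ and using $f_0=f_Y\vee 1_{S^{n-m}}$ gives
\[
\langle s_m\cup s_{n-m},\,[D]\rangle \;=\; \langle f^\ast(s_m)\cup f^\ast(s_{n-m}),\,[M]\rangle \;=\; \langle f_Y^\ast(s_m)\cup s_{n-m},\,[M]\rangle \;=\; \ell .
\]
As $\ell\neq0$ and $\ell\mid N$, the integer $\ell$ is a unit in the coefficient ring after localization at $\mathcal{P}$, so $s_m\cup s_{n-m}$ generates $H^n(D)$ and $H^\ast(D)\cong H^\ast(S^m\times S^{n-m})$ locally; in particular $D$ is a local Poincar\'{e} duality complex. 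Next, $\pi_{n-1}(S^m)$ is finite of order $N$, so its localization away from $\mathcal{P}$ is trivial; hence the class $q_1\circ f_0\circ h_M\in\pi_{n-1}(S^m)$, with $q_1$ the projection onto the first wedge summand of $D_0$, is null homotopic locally. Taking $S^m\stackrel{i_1}{\hookrightarrow}D_0$ and $D_0\stackrel{q_1}{\longrightarrow}S^m$, for which $q_1\circ i_1\simeq 1_{S^m}$ and $q_1\circ h_D\simeq\ast$, Lemma \ref{Klemma} then shows that $D$ is, in the local category, the total complex of a homotopy fibration $S^{n-m}\to D\to S^m$ with a homotopy section, i.e. a twisted product $S^m\widetilde{\times}S^{n-m}$.

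It remains to check the hypotheses of Proposition \ref{detprop} for $f\colon M\to D$. Corollary \ref{RHIwedgecoro} together with the assumption that $\Omega f_Y$ has a right homotopy inverse shows that $\Omega f_0=\Omega(f_Y\vee 1_{S^{n-m}})$ has a right homotopy inverse. Moreover the required homotopy pushout (\ref{YM0diag}) is the homotopy pushout of $S^m\stackrel{f_Y}{\longleftarrow}Y\stackrel{i_1}{\hookrightarrow}Y\vee S^{n-m}\simeq M_0$, whose total space is $S^m\vee S^{n-m}$ and whose induced map $M_0\to S^m\vee S^{n-m}$ is exactly $f_0$. Proposition \ref{detprop}, read in the homotopy category localized away from $\mathcal{P}$, then yields that the attaching map for the top cell of $M$ is inert after localization away from the primes dividing $N$, which gives both (1) and (2).

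The main obstacle is the identification of the local homotopy type of $D$ in the second step: one must make sure that the complex obtained from $S^m\vee S^{n-m}$ by a single $n$-cell attachment has precisely the cohomology ring of $S^m\times S^{n-m}$ once $\ell$ is inverted, i.e. that no extra products or cohomology appear — routine but requiring some care in the degenerate case $n=2m$, where the cells in degrees $m$ and $n-m$ coincide. It should also be stressed that in case (2) one genuinely must localize from the outset: the $\pi_{n-1}(S^m)$-component of $f_0\circ h_M$ may be nonzero, which obstructs building an \emph{integral} twisted product together with a nonzero degree map out of $M$, so the degree $k$ addendum of Theorem \ref{detthmintro} cannot be invoked directly and one instead passes to Proposition \ref{detprop} after localization.
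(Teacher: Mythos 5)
Your proposal is correct and follows essentially the same route as the paper: construct $D$ as the mapping cone of $(f_Y\vee 1_{S^{n-m}})\circ h_M$, use $\ell$ being a unit (after inverting the primes dividing $|\pi_{n-1}(S^m)|$) to identify $H^\ast(D)$ with $H^\ast(S^m\times S^{n-m})$, invoke Lemma \ref{Klemma} to see $D$ is a twisted product of spheres, and then apply Proposition \ref{detprop}/Theorem \ref{detthmintro} locally. Your direct verification of the pushout (\ref{YM0diag}) rather than passing through Theorem \ref{detthm} is an immaterial difference (cf.\ Remark \ref{detprop=thm-rmk}), and your remark that the torsion hypothesis on $\pi_{n-1}(S^m)$ disposes of the potentially delicate case $n=2m$ is a correct and slightly more careful observation than the paper makes.
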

\begin{proof}
(1). Consider the diagram of homotopy cofibrations
\[
\diagram
S^{n-1} \ddouble \rto^<<<{h_M}  & Y \vee S^{n-m} \rto^<<<{i_M} \dto^{f_Y\vee 1_{S^{n-m}}} & M \dto^{f} \\
S^{n-1}               \rto^<<<{h_D}  &  S^m\vee S^{n-m}  \rto^<<<{i_D}                                       & D
\enddiagram
\]
defining the $CW$-complex $D$ as the homotopy cofibre of $h_D:= (f_Y\vee 1_{S^{n-m}})\circ h_M$ with $i_D$ the inclusion map, where $f$ is the induced degree one map. By the condition $\langle f_Y^\ast (s_m) \cup s_{n-m}, [M] \rangle=\ell=\pm 1$, it is easy to see that $D$ is a Poincar\'{e} duality complex. Since $\pi_{n-1}(S^m)$ is trivial, the composite
\[
S^{n-1}\stackrel{h_D}{\longrightarrow} S^m\vee S^{n-m} \stackrel{q_1}{\longrightarrow} S^m
\]
is null homotopic, and then Lemma \ref{Klemma} implies that there is a homotopy fibration 
\[
S^{n-m}\stackrel{}{\longrightarrow} D\stackrel{}{\longrightarrow} S^m
\]
with a homotopy section. Therefore, the degree one map $M\stackrel{f}{\longrightarrow} D$ satisfies that its restriction $f_0\simeq f_Y\vee 1_{S^{n-m}}$, $\Omega f_Y$ has a right homotopy inverse by assumption, and $D$ is a twisted product of spheres. Then we can applied Theorem \ref{detthmintro} to show that the attaching map for the top cell of $M$ is inert.

(2). In this case we may work in the local category away from all primes $p$ that divide the order of $\pi_{n-1}(S^m)$. Then locally $\pi_{n-1}(S^m)$ is trivial and $\ell$ is a unit element. Hence, by the same argument as in (1) we can apply a local version of Theorem \ref{detthmintro} to prove that the attaching map for the top cell of $M$ is locally inert.
\end{proof}

\begin{remark}\label{det-rmk}
If for a cohomology class $x\in H^m(X;\mathbb{Z})$ there exists a map $g: X\stackrel{}{\longrightarrow} S^m$ such that $g^\ast(s_m)=x$, we say that the map $g$ {\it detects} the class $x$. Then the condition $\langle f_Y^\ast (s_m) \cup s_{n-m}, [M] \rangle=1$ in Corollary \ref{detcor} means that the map $f_Y$ detects the Poincar\'{e} dual of $S^{n-m}$ in $M$. 
\end{remark}

\newpage
\section{A further comparison}
\label{sec: ex1}
In Section \ref{sec: det}, we studied the inertness property by comparing a Poincar\'{e} duality complex with a twisted product of spheres. Building on this approach, we extend our investigation to compare a Poincar\'{e} duality complex with other candidates. In this section, we provide another such comparison and apply it to revisit a family of Poincar\'{e} duality complexes introduced by Beben-Theriault \cite{BT14}, as well as reprove a result of Theriault \cite{The24a} concerning connected sums. We also discuss certain low dimensional manifolds as concrete examples. 

Let $M$ be an $n$-dimensional Poincar\'{e} duality complex with a single top cell such that its $(n-1)$-skeleton satisfies
\[
M_0\simeq A\vee B
\] 
for some complexes $A$ and $B$. Then there is a homotopy cofibration
\[
S^{n-1}\stackrel{h_M}{\longrightarrow} A \vee B\stackrel{i_M}{\longrightarrow}M,
\] 
where $h_M$ is the attaching map and $i_M$ is the inclusion map. 
Let $N$ be the homotopy cofibre of the composite 
\[
A\stackrel{i_A}{\hookrightarrow}A\vee B\stackrel{i_M}{\longrightarrow}M,
\]
where $i_A$ is the inclusion map. Then there is a homotopy cofibration
\[
S^{n-1}\stackrel{q_B\circ h_M}{\longrightarrow} B\stackrel{i_N}{\longrightarrow} N,
\]
where $A\vee B\stackrel{q_B}{\longrightarrow} B$ is the projection map, and $i_N$ is the inclusion map. Note that $N$ is not necessarily a Poincar\'{e} duality complex. 
\begin{proposition}\label{M0=A+Bprop}
Let $M$ as above. If the map $q_B\circ h_M$ is inert, then the following hold:
\begin{itemize}
\item[(1).] the attaching map $h_M$ is inert;
\item[(2).] there is a homotopy equivalence
\[
\Omega M \simeq \Omega N\times \Omega(\Omega N\ltimes A).
\]
\end{itemize}
\end{proposition}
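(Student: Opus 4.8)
The plan is to set up a diagram of homotopy cofibrations comparing $M$ with $N$, exhibit a degree one map $f\colon M\to N$ relating the two, and then invoke the machinery of Section~\ref{sec: deg1}. First I would record the two relevant homotopy cofibrations. We have $A\vee B\xrightarrow{i_M} M$ with attaching map $h_M$, and the composite $A\hookrightarrow A\vee B\xrightarrow{i_M} M$ has homotopy cofibre $N$, giving the cofibration $S^{n-1}\xrightarrow{q_B\circ h_M} B\xrightarrow{i_N} N$. Composing the inclusion $B\xrightarrow{i_B} A\vee B$ with the cofibre projection $M\to N$ defines a map $f_0\colon A\vee B\to B$ which is homotopic to $q_B$ (the projection onto the second wedge summand), hence inert as a wedge summand projection, and moreover $f_0\circ h_M\simeq q_B\circ h_M$.

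Next I would produce the degree one comparison map. Since $A\vee B\xrightarrow{q_B} B$ kills the attaching information landing in $A$ but respects $q_B\circ h_M$, there is an induced map $f\colon M\to N$ fitting into a homotopy cofibration diagram
\[
\diagram
S^{n-1}\rto^-{h_M}\ddouble & A\vee B\rto^-{i_M}\dto^{q_B} & M\dto^{f} \\
S^{n-1}\rto^-{q_B\circ h_M} & B\rto^-{i_N} & N,
\enddiagram
\]
and since $q_B$ and $i_N$ induce the relevant isomorphisms on the top homology class, $f$ has degree one (here $N$ plays the role of the ``target'' even though it may not be a Poincar\'e duality complex; Theorem~\ref{inertdeg1thm} is stated for $CW$-complexes with a single top cell of matching formal dimension, which applies since $fd(N)=fd(M)=n$ and both have a single $n$-cell). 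Now the restriction of $f$ on the $(n-1)$-skeletons is $f_0\simeq q_B\colon A\vee B\to B$, which fits into a homotopy cofibration $A\xrightarrow{i_A} A\vee B\xrightarrow{q_B} B$ with $i_A$ a wedge summand inclusion, hence inert. By hypothesis $q_B\circ h_M$ — which is $h_N$, the attaching map of the top cell of $N$ — is inert. Applying Theorem~\ref{inertdeg1thm}(3) to $f$ (with $A$ in the role of the space in the cofibration and $i_A$ the inert map) gives that $h_M$ is inert, proving (1).

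For (2), I would feed statement (1) into Beben--Theriault's decomposition. Since $h_N = q_B\circ h_M$ is inert, Theorem~\ref{GTcofib} applied to the homotopy cofibration $B\xrightarrow{i_N} N$ is not directly what we want; rather, I apply it to the homotopy cofibration $S^{n-1}\xrightarrow{h_M} A\vee B\xrightarrow{i_M} M$ whose inertness we just established, which would give $\Omega M\simeq \Omega(S^n)\times \Omega(\Omega S^n\ltimes S^{n-1})$ — not the desired form. Instead the correct route is to observe that $f\colon M\to N$ is a degree one map whose looped skeleton restriction $\Omega f_0 = \Omega q_B$ has a right homotopy inverse (namely $\Omega i_B$), and that the square
\[
\diagram
A\vee B\rto^-{q_B}\dto^-{i_M} & B\dto^-{i_N} \\
M\rto^-{f} & N
\enddiagram
\]
is a homotopy pushout by Lemma~\ref{degk=pushoutlemma}. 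The homotopy cofibre of $q_B$ is $\Sigma A$... more precisely, $q_B$ is a wedge summand projection so its homotopy fibre, by Lemma~\ref{wedgeqlemma}, is $\Omega B\ltimes A$. Taking homotopy fibres of the two vertical maps $i_M, i_N$ over... The cleanest argument: apply Theorem~\ref{GTcofib} to $S^{n-1}\xrightarrow{h_N} B\xrightarrow{i_N} N$ to get $\Omega N\simeq \Omega S^n\times\Omega(\Omega S^n\ltimes S^{n-1})$ — still wrong shape. The hard part, and the main obstacle, is organizing the cube: I would build a Mather cube with bottom face the homotopy pushout above and right-hand column the homotopy fibration $\Omega B\ltimes A\to A\vee B\xrightarrow{q_B} B$; Theorem~\ref{cubethm} then identifies the homotopy fibre of $i_N\colon B\to N$ pulled back appropriately, and the top face becomes a homotopy pushout expressing the homotopy fibre $F_M$ of $i_M$ as built from $\Omega B\ltimes A$ and the homotopy fibre $F_N$ of $i_N$; combined with the loop splittings from Theorem~\ref{GTcofib} (applied to the inert $h_N$ and $h_M$) and the naturality in Proposition~\ref{GTcofibnat}, together with Lemma~\ref{wedgeqlemma}, this assembles into the asserted homotopy equivalence $\Omega M\simeq \Omega N\times\Omega(\Omega N\ltimes A)$. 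I expect the delicate point to be checking that the relevant connecting maps are null homotopic so that the half-smash (rather than a twisted product) appears, which should follow from the existence of the section coming from the wedge summand structure of $M_0$.
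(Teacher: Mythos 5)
Your proof of part (1) is correct and follows essentially the same route as the paper: you route it through Theorem \ref{inertdeg1thm}(3), which is itself just a packaged application of Lemma \ref{cone+pushoutlemma} and Theorem \ref{pushoutthm2}, while the paper applies Theorem \ref{pushoutthm2} directly to the cofibration diagram with rows $A\to A\vee B\to B$ and $A\to M\to N$; the content is the same.

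Part (2), however, has a genuine gap: you never complete the argument, and the route you sketch is not set up correctly. To apply Mather's Cube Lemma (Theorem \ref{cubethm}) you must take homotopy fibres of all four corners of the pushout over a map out of the \emph{terminal} corner $N$; the fibration $\Omega B\ltimes A\to A\vee B\xrightarrow{q_B} B$ you propose as the ``right-hand column'' lives over $B$, not over $N$, so the cube you describe does not exist in the form required by Theorem \ref{cubethm}. You also misquote Theorem \ref{GTcofib} in your first discarded attempt (applied to $S^{n-1}\xrightarrow{h_M}M_0\xrightarrow{i_M}M$ it decomposes $\Omega M_0$, not $\Omega M$), and your closing sentence explicitly defers the key verification. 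The missing idea is much simpler and requires no cube at all: the bottom row of the diagram you already built in part (1) of the paper's form, namely $A\xrightarrow{i_M\circ i_A}M\xrightarrow{f}N$, is itself a homotopy cofibration. Since $f\circ i_M\simeq i_N\circ q_B$ and both $\Omega i_N$ (by hypothesis) and $\Omega q_B$ (wedge projection) have right homotopy inverses, $\Omega f$ has a right homotopy inverse, i.e.\ $i_M\circ i_A$ is inert. Applying Theorem \ref{GTcofib} to \emph{this} cofibration, with $Z=N$ and $A$ the cofibration's source, yields directly
\[
\Omega M\simeq \Omega N\times\Omega(\Omega N\ltimes A),
\]
with no connecting-map or half-smash-versus-twisted-product issue to check, since that is built into the statement of Theorem \ref{GTcofib}.
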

\begin{proof}
(1). Notice that there is a homotopy cofibration 
\[
A\stackrel{i_M\circ i_A}{\longrightarrow} M\stackrel{f}{\longrightarrow} N
\]
for a degree one map $f$ such that it restricts to $q_B$ on the $(n-1)$-skeletons. It follows that there is the homotopy cofibration diagram 
\begin{equation}\label{AMNdiag}
\diagram 
& S^{n-1} \dto^{h_M} \rdouble & S^{n-1} \dto^{q_B\circ h_M}\\
      A\rto^-{i_A}\ddouble & A\vee B\rto^-{q_B}\dto^{i_M} & B\dto^{i_N}  \\ 
      A\rto^-{i_M\circ i_A} & M\rto^-{f} &   N.
  \enddiagram
\end{equation} 
It is clear that $q_B$ has a right homotopy inverse, and then $i_A$ is inert. Since $\Omega i_N$ has right homotopy inverse by assumption, Theorem \ref{pushoutthm2} implies that $\Omega i_M$ has a right homotopy inverse, that is, the attaching map $h_M$ is inert.

(2). Recall both $\Omega i_N$ and $\Omega q_B$ have right homotopy inverses, and so does their composite. Then the homotopy commutativity of the right lower square of (\ref{AMNdiag}) implies that $\Omega f: \Omega M\stackrel{}{\longrightarrow} \Omega N$ has a right homotopy inverse. Therefore, applying Theorem \ref{GTcofib} to the bottom homotopy cofibration of Diagram (\ref{AMNdiag}), we obtain a homotopy equivalence
\[
\Omega M\simeq \Omega N\times \Omega(\Omega N\ltimes A).
\]
\end{proof}
From the proof it is clear that the local version of Proposition \ref{M0=A+Bprop} holds automatically, that is, if the attaching map for the top cell of $N$ is locally inert then the attaching map for the top cell of  $M$ is locally inert. 
\subsection{Beben-Theriault's complexes}
\label{subsec: BT}

$\, $

Let $M$ be an $n$-dimensional Poincar\'{e} duality complex with a single top cell such that its $(n-1)$-skeleton satisfies
\[
M_0\simeq J\vee S^m\vee S^{n-m}
\]  
with $n>m\geq 1$. We will not distinguish these two complexes. 
Then there is a homotopy cofibration
\[
S^{n-1}\stackrel{h_M}{\longrightarrow} J\vee S^m\vee S^{n-m}\stackrel{i_M}{\longrightarrow}M,
\]
where $i_M$ is the inclusion map and $h_M$ is the attaching map for the top cell. Let $x\in H^m(M;\mathbb{Z})$ and $y\in H^{n-m}(M;\mathbb{Z})$ be the cohomology classes dual to $i_{M\ast}([S^m])$ and $i_{M\ast}([S^{n-m}])$, respectively, where $[-]$ stands for the fundamental class of a Poincar\'{e} duality complex. In particular, $x$ and $y$ are generator elements. Suppose that 
\begin{equation}\label{xy=1eq}
\langle x\cup y, [M]\rangle =1.
\end{equation}

\begin{theorem}\label{exJthm}
Let $M$ as above. Then the following hold:
\begin{itemize}
\item
the attaching map for the top cell of $M$ is inert;
\item 
there is a homotopy equivalence
\[
\Omega M\simeq \Omega (S^m\times S^{n-m})\times \Omega(\Omega (S^m\times S^{n-m})\ltimes J).
\]
\end{itemize}
\end{theorem}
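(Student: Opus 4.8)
The plan is to apply Proposition \ref{M0=A+Bprop} with the splitting $M_0 \simeq A \vee B$ given by $A = J$ and $B = S^m \vee S^{n-m}$. Under this choice, the complex $N$ appearing in Proposition \ref{M0=A+Bprop} is the homotopy cofibre of the composite $J \hookrightarrow J \vee S^m \vee S^{n-m} \xrightarrow{i_M} M$, so $N$ is an $n$-dimensional Poincar\'{e} duality complex with $N_0 \simeq S^m \vee S^{n-m}$ and with cohomology ring carrying exactly the generators $x$, $y$ and their product; condition \eqref{xy=1eq} makes $N$ a Poincar\'{e} duality complex with $H^\ast(N;\mathbb{Z}) \cong H^\ast(S^m \times S^{n-m};\mathbb{Z})$. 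So the whole argument reduces to one thing: showing that the map $q_B \circ h_M : S^{n-1} \to S^m \vee S^{n-m}$ is inert, equivalently (by Proposition \ref{M0=A+Bprop}) that the attaching map for the top cell of $N$ is inert.

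First I would observe that $N$ is precisely a Poincar\'{e} duality complex of the kind studied by Beben-Theriault in \cite{BT14}: its lower skeleton is a wedge of two spheres $S^m \vee S^{n-m}$ with $n > m \geq 1$, and the intersection pairing of the two sphere classes is $\pm 1$ by \eqref{xy=1eq}. The main result of \cite{BT14} asserts that the attaching map for the top cell of such a complex is inert; alternatively, as remarked in the introduction (see the discussion surrounding Theorem \ref{exJthm} and the reference to Theorem \ref{detthm}), one can reprove this by the comparison method of Section \ref{sec: det} — namely, construct a degree one map $N \to D$ onto a Poincar\'{e} duality complex $D$ with $H^\ast(D;\mathbb{Z}) \cong H^\ast(S^m \times S^{n-m};\mathbb{Z})$ and apply Proposition \ref{detprop} or Theorem \ref{detthm} (here $D$ need not itself fibre over a sphere, which is exactly the generality of \cite{BT14}). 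Either route gives that $q_B \circ h_M$ is inert.

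Once inertness of $q_B \circ h_M$ is in hand, Proposition \ref{M0=A+Bprop} (1) immediately yields that $h_M$ is inert, which is the first claim. For the second claim, Proposition \ref{M0=A+Bprop} (2) gives the homotopy equivalence
\[
\Omega M \simeq \Omega N \times \Omega(\Omega N \ltimes J).
\]
It then remains to identify $\Omega N$ with $\Omega(S^m \times S^{n-m})$. This follows from Lemma \ref{Dlemma} (3) together with Lemma \ref{Klemma}: having $N_0 \simeq S^m \vee S^{n-m}$ and $H^\ast(N;\mathbb{Z}) \cong H^\ast(S^m \times S^{n-m};\mathbb{Z})$, and using that the comparison construction produces a homotopy fibration $S^{n-m} \to N \to S^m$ with a homotopy section (or directly invoking \cite[Lemma 2.3]{BT14}), one gets $\Omega N \simeq \Omega(S^m \times S^{n-m})$. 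Substituting this into the displayed equivalence gives
\[
\Omega M \simeq \Omega(S^m \times S^{n-m}) \times \Omega\bigl(\Omega(S^m \times S^{n-m}) \ltimes J\bigr),
\]
as required.

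\textbf{Main obstacle.} The substantive step is establishing inertness of the top cell attachment of $N$ (equivalently of $q_B \circ h_M$), since this is where the nontrivial input from \cite{BT14} or from the cubic comparison machinery of Section \ref{sec: det} enters; everything else is bookkeeping via Proposition \ref{M0=A+Bprop} and the identification of $\Omega N$. A minor point to check carefully is that $N$ really is a Poincar\'{e} duality complex with the claimed cohomology ring — this needs the hypothesis \eqref{xy=1eq} and a short argument on the cup product structure of $M$ restricted through the cofibration $J \to M \to N$, analogous to Step (I) in the proof of Theorem \ref{detthm}.
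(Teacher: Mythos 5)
Your proposal is correct and follows essentially the same route as the paper: split off $J$ via Proposition \ref{M0=A+Bprop}, observe that the resulting cofibre (your $N$, the paper's $D$) is a Poincar\'{e} duality complex with $H^\ast \cong H^\ast(S^m\times S^{n-m};\mathbb{Z})$ thanks to \eqref{xy=1eq}, and invoke \cite[Lemma 2.3]{BT14} for both the inertness of its top cell attachment and the identification $\Omega N\simeq \Omega(S^m\times S^{n-m})$. The only minor caveat is that the relevant citation is specifically \cite[Lemma 2.3]{BT14} rather than the main theorem of \cite{BT14} (whose connectivity hypotheses need not hold here), and the alternative route via Theorem \ref{detthm} would require $N$ to fibre over a sphere, which is not guaranteed — but your primary argument does not rely on either of these.
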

\begin{proof}
Let $D$ be the homotopy cofibre of the composite 
\[J\stackrel{i_J}{\hookrightarrow}J\vee S^m\vee S^{n-m}\stackrel{i_M}{\longrightarrow}M,
\]
where $i_J$ is the inclusion map. 
Then there is a homotopy cofibration 
\[
J\stackrel{i_M\circ i_J}{\longrightarrow} M\stackrel{f}{\longrightarrow} D
\]
for a degree one map $f$. From the assumption (\ref{xy=1eq}) it is clear that $D$ is a Poincar\'{e} duality complex determined by a homotopy cofibration 
\[
S^{n-1}\stackrel{}{\longrightarrow} S^{m}\vee S^{n-m}\stackrel{i_D}{\longrightarrow} D
\]
such that $H^\ast(D;\mathbb{Z})\cong H^\ast(S^m\times S^{n-m};\mathbb{Z})$. By \cite[Lemma 2.3]{BT14} $\Omega D\simeq \Omega (S^m\times S^{n-m})$ and $\Omega i_D$ has a right homotopy inverse.
Hence, $M$ satisfies the condition of Proposition \ref{M0=A+Bprop}, and it implies that $h_M$ is inert, and there is a homotopy equivalence
\[
\begin{split}
\Omega M 
&\simeq  \Omega D\times \Omega(\Omega D\ltimes J)\\
&\simeq\Omega (S^m\times S^{n-m})\times \Omega(\Omega (S^m\times S^{n-m})\ltimes J).
\end{split}
\]
\end{proof}

\begin{remark}\label{BT14rmk}
Suppose that $M$ is $(m-1)$-connected with torsion free homology and $n-m\geq m\geq 2$. The corresponding result in this case was proved by Beben-Theriault in \cite{BT14}. Accordingly, we may call the Poincar\'{e} duality complex $M$ in Theorem \ref{exJthm} a {\it Beben-Theriault complex}. 
\end{remark}

\begin{example}\label{BTex1}
Let $M$ be an $(n-1)$-connected $2n$-dimensional Poincar\'{e} duality complex with $n\geq 2$. It is clear that 
\[
H^n(M)\cong \bigoplus_{d}\mathbb{Z}
\]
for some integer $d\geq 0$, and there is a homotopy cofibration
\[
S^{2n-1}\stackrel{}{\longrightarrow} \bigvee_{d} S^n\simeq M_0\stackrel{}{\longrightarrow} M.
\]
Suppose that $d\geq 2$. By the arguments of \cite[Example 4.2]{BT22}, \cite[Lemma 3.1]{Hua22} and \cite[Lemma 3.2]{Hua24}, it can be shown that $M$ is a Beben-Theriault complex. Then Theorem \ref{exJthm} implies that the attaching map for the top cell of M is inert.
\end{example}

\begin{example}\label{BTex2}
Let $M$ be an $(n-1)$-connected $(2n+1)$-dimensional Poincar\'{e} duality complex with $n\geq 2$. It is clear that 
\[
H^n(M)\cong \bigoplus_{d}\mathbb{Z}
\]
for some integer $d\geq 0$. Suppose that $d\geq 1$. By the argument of \cite[Example 4.4]{BT22}, it can be shown that there is a homotopy cofibration
\[
S^{2n}\stackrel{}{\longrightarrow}  S^n\vee S^{n+1}\vee \Sigma X\simeq M_0\stackrel{}{\longrightarrow} M
\]
for some complex $X$ and $M$ is a Beben-Theriault complex. Then Theorem \ref{exJthm} implies that the attaching map for the top cell of M is inert.
\end{example}
\subsection{Connected sum}
\label{subsec: sum}

$\, $

In this subsection, we reproduce a result of Theriault \cite{The24a} by our method.

\begin{theorem}{\cite[Theorem 1.4]{The24a}}\label{exsumthm}
Let $M$ and $N$ be two $n$-dimensional Poincar\'{e} duality complexes with a single top cell. Suppose that the attaching map for the top cell of $M$ is inert. Then the following hold:
\begin{itemize}
\item[(1).] the attaching map for the top cell of $M\# N$ is inert;
\item[(2).] there is a homotopy equivalence
\[
\Omega (M\# N) \simeq \Omega M\times \Omega(\Omega M\ltimes N_0).
\]
\end{itemize}
\end{theorem}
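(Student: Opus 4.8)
The plan is to realize $M\# N$ as a Poincar\'{e} duality complex whose lower skeleton splits as a wedge, and then apply Proposition \ref{M0=A+Bprop}. Recall that the connected sum $M\# N$ is formed by removing an open top disk from each of $M$ and $N$ and gluing along the boundary sphere $S^{n-1}$. Consequently its $(n-1)$-skeleton is $(M\# N)_0\simeq M_0\vee N_0$, and the attaching map for the top cell is the sum of the attaching maps: there is a homotopy cofibration
\[
S^{n-1}\stackrel{h_M+h_N}{\longrightarrow} M_0\vee N_0\stackrel{i_{M\# N}}{\longrightarrow} M\# N,
\]
where $h_M+h_N$ denotes the composite $S^{n-1}\stackrel{\sigma}{\longrightarrow} S^{n-1}\vee S^{n-1}\stackrel{h_M\vee h_N}{\longrightarrow} M_0\vee N_0$ with $\sigma$ the comultiplication (pinch) map. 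First I would set $A=N_0$ and $B=M_0$ in the notation of Proposition \ref{M0=A+Bprop}. Then $N$ in that proposition's statement — the homotopy cofibre of $N_0\hookrightarrow M_0\vee N_0\stackrel{i_{M\#N}}{\to} M\#N$ — is computed by collapsing $N_0$, which kills the $h_N$ summand and leaves precisely the cofibration $S^{n-1}\stackrel{h_M}{\to} M_0\to M$; that is, the cofibre is $M$ itself (up to homotopy). So the relevant projected attaching map $q_{M_0}\circ(h_M+h_N)$ is homotopic to $h_M$, which is inert by hypothesis.

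With this identification, Proposition \ref{M0=A+Bprop} applies directly: part (1) gives that $h_M+h_N$, the attaching map for the top cell of $M\# N$, is inert, proving statement (1); and part (2) gives the homotopy equivalence
\[
\Omega(M\# N)\simeq \Omega M\times \Omega(\Omega M\ltimes N_0),
\]
which is exactly statement (2). The only genuine content beyond invoking the proposition is the geometric/homotopy-theoretic identification of $(M\# N)_0\simeq M_0\vee N_0$ together with the description of the attaching map as $h_M+h_N$, and the observation that collapsing one wedge summand recovers $M$ with $h_M$ as its attaching map.

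The main obstacle, and the step I would spell out most carefully, is justifying the splitting $(M\# N)_0\simeq M_0\vee N_0$ and the formula for the attaching map at the level of homotopy cofibrations — this is standard but needs the hypothesis that both $M$ and $N$ have a single top cell (so that the connected sum operation is performed along a top-dimensional cell and the resulting complex again has a single top cell with $(n-1)$-skeleton the wedge). One should also verify that the homotopy cofibre of $i_A=i_{N_0}$ genuinely recovers $M$: since collapsing $N_0$ inside $M_0\vee N_0$ leaves $M_0$, and the induced attaching map is $q_{M_0}\circ(h_M+h_N)\simeq h_M$ (the comultiplication followed by folding $N_0$ to a point sends $h_M+h_N$ to $h_M$), the cofibre is $M_0\cup_{h_M}e^n= M$. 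Everything else is a direct citation of Proposition \ref{M0=A+Bprop} and its local version, so no further calculation is required; in particular the local statement follows verbatim since, as remarked after Proposition \ref{M0=A+Bprop}, its local version holds automatically.
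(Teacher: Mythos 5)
Your proposal is correct and follows essentially the same route as the paper: both identify $(M\#N)_0\simeq M_0\vee N_0$, observe that collapsing the $N_0$ wedge summand recovers $M$ with its inert attaching map $h_M\simeq q_{M_0}\circ h_{M\#N}$, and then invoke Proposition \ref{M0=A+Bprop} (with $A=N_0$, $B=M_0$) for both conclusions. No gaps.
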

\begin{proof}
From the definition of connected sum, there is a canonical homotopy cofibration diagram
\[
\diagram 
 & S^{n-1} \rdouble \dto^{h} & S^{n-1} \dto^{h_M} \\
      N_0\rto^-{}\ddouble & M_0\vee N_0\rto^-{q}\dto^{} & M_0\dto^{}  \\ 
      N_0\rto^-{} & M\# N\rto^-{p} &   M,
  \enddiagram
\]
where $h$ and $h_M$ are the attaching maps for the top cells, and $q$ and $p$ are the projection maps. Since $h_M=q\circ h$ is inert by assumption, the theorem follows immediately from Proposition \ref{M0=A+Bprop}.
\end{proof}
The following corollary follows from Theorem \ref{exsumthm} and Lemma \ref{pdt-inert-lemma} immediately.
\begin{corollary}\label{SSsumcor}
Let $n>m>0$. 
Let $N$ be an $n$-dimensional Poincar\'{e} duality complex with a single top cell. 
Then the following hold:
\begin{itemize}
\item[(1).] the attaching map for the top cell of $(S^{m}\times S^{n-m})\# N$ is inert;
\item[(2).] there is a homotopy equivalence
\[\hspace{2.5cm}
\Omega ((S^{m}\times S^{n-m})\# N) \simeq \Omega S^{m}\times  \Omega  S^{n-m}\times \Omega(\Omega (S^{m}\times S^{n-m})\ltimes N_0).   \hspace{2.5cm}\Box
\]  
\end{itemize}
\end{corollary}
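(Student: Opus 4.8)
The statement to prove is Corollary~\ref{SSsumcor}, which asserts that for an $n$-dimensional Poincar\'{e} duality complex $N$ with a single top cell and $n > m > 0$, the attaching map for the top cell of $(S^m \times S^{n-m}) \# N$ is inert, and that $\Omega((S^m \times S^{n-m})\# N) \simeq \Omega S^m \times \Omega S^{n-m} \times \Omega(\Omega(S^m\times S^{n-m})\ltimes N_0)$.

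The plan is to invoke Theorem~\ref{exsumthm} with the roles set so that the ``known inert'' summand is $M = S^m \times S^{n-m}$ and the other summand is $N$. First I would verify that $S^m \times S^{n-m}$ is indeed an $n$-dimensional Poincar\'{e} duality complex with a single top cell: its top cell is attached to $(S^m \times S^{n-m})_0 \simeq S^m \vee S^{n-m}$ via the Whitehead product $[i_1, i_2]$, and the condition $n > m > 0$ ensures both sphere factors are positive-dimensional so that $S^m \vee S^{n-m}$ is genuinely the $(n-1)$-skeleton. Next I would observe that the attaching map for the top cell of $S^m \times S^{n-m}$ is inert: this is exactly the content of Lemma~\ref{pdt-inert-lemma} applied with $A = S^m$ and $B = S^{n-m}$, both of which are non-contractible connected $CW$-complexes with a single top cell. (Alternatively one could cite the Hilton--Milnor theorem directly, but Lemma~\ref{pdt-inert-lemma} is already available in the text and gives precisely this.)

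With these two verifications in hand, Theorem~\ref{exsumthm} applies verbatim with $M = S^m \times S^{n-m}$: part (1) of that theorem gives that the attaching map for the top cell of $(S^m \times S^{n-m}) \# N$ is inert, which is part (1) of the corollary; and part (2) gives the homotopy equivalence
\[
\Omega\big((S^m \times S^{n-m}) \# N\big) \simeq \Omega(S^m \times S^{n-m}) \times \Omega\big(\Omega(S^m \times S^{n-m}) \ltimes N_0\big).
\]
To reach the stated form of part (2) it then remains only to substitute the standard natural homotopy equivalence $\Omega(S^m \times S^{n-m}) \simeq \Omega S^m \times \Omega S^{n-m}$ into the first factor; this is just the fact that the loop functor preserves finite products.

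There is essentially no obstacle here: the corollary is a direct specialization of Theorem~\ref{exsumthm}, and the only content beyond citing that theorem is the (routine) observation that the top cell of a product of spheres is inert, which Lemma~\ref{pdt-inert-lemma} supplies. The mildest point of care is making sure the $(n-1)$-skeleton of $S^m \times S^{n-m}$ is identified correctly as $S^m \vee S^{n-m}$ and that $N_0$ in the output denotes the $(n-1)$-skeleton of $N$, matching the conventions set in the introduction; these are bookkeeping matters rather than real difficulties. I would therefore present the proof as a two-line deduction: cite Lemma~\ref{pdt-inert-lemma} for inertness of the top cell of $S^m \times S^{n-m}$, apply Theorem~\ref{exsumthm}, and rewrite $\Omega(S^m \times S^{n-m})$ as $\Omega S^m \times \Omega S^{n-m}$.
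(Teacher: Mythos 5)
Your proposal is correct and matches the paper exactly: the paper derives Corollary \ref{SSsumcor} immediately from Theorem \ref{exsumthm} together with Lemma \ref{pdt-inert-lemma}, which is precisely your argument. The final rewriting of $\Omega(S^m\times S^{n-m})$ as $\Omega S^m\times\Omega S^{n-m}$ is the same routine step the paper leaves implicit.
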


\subsection{Low dimensional manifolds}
\label{subsec: lowfolds}

$\, $

The results in this section are useful to study inertness for orientable low dimensional manifolds. In Examples \ref{BTex1} and \ref{BTex2} we have seen that the top cell attachments for most simply connected $4$- and $5$-manifolds are inert. Here, we study the inertness property for $2$-, $3$- and $6$-dimensional manifolds. 

We start with a general observation. Recall a closed manifold $M$ is called {\it aspherical} if it is path connected and all its higher homotopy groups vanish. In other word, the manifold $M$ is a Eilenberg-MacLane space 
\[
M\simeq K(\pi_1(M), 1).
\] 
\begin{lemma}\label{asph-inert-lemma}
Let $M$ be an $n$-dimensional orientable closed manifold with $n\geq 2$. If $M$ is aspherical, then the attaching map for the top cell of $M$ is inert.
\end{lemma}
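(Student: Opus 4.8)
The plan is to reduce the inertness of the top cell attachment of an aspherical $M$ to the already-established Theorem~\ref{pushoutthm2} by exhibiting a homotopy cofibration whose first map is inert. Write the cell structure of $M$ as a homotopy cofibration
\[
S^{n-1}\stackrel{h_M}{\longrightarrow} M_0\stackrel{i_M}{\longrightarrow} M,
\]
where $M_0$ is the $(n-1)$-skeleton. Since $M\simeq K(\pi_1(M),1)$, the inclusion $i_M$ classifies a class in $H^n(M;\pi_{n-1}(M_0))$-type data; more usefully, the universal property of Eilenberg--MacLane spaces gives a canonical map $M\stackrel{}{\longrightarrow} K(\pi_1(M),1)\simeq M$ which is the identity, and the point is that $i_M$ induces an isomorphism on $\pi_1$ (for $n\geq 2$ the lower skeleton contains the $1$-skeleton, so $\pi_1(M_0)\twoheadrightarrow \pi_1(M)$, and asphericity forces this to be an iso up to the relations killed only in dimension $\geq 2$). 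First I would set up the classifying map $c\colon M_0\stackrel{}{\longrightarrow} K(\pi_1(M),1)=M$ for the fundamental group of $M_0$, factoring $i_M$; the composite $c\circ h_M\colon S^{n-1}\stackrel{}{\longrightarrow} M$ is null homotopic because $\pi_{n-1}(M)=0$ when $M$ is aspherical and $n\geq 2$ (for $n=2$ one checks separately, but $S^1\to M$ is null only if $\pi_1(M)$ is trivial, so the $n=2$ case needs $M=T^2$ essentially — I would handle $n=2$ by the classification of surfaces, noting an orientable closed surface of positive genus is a connected sum and invoking Theorem~\ref{exsumthm}, or directly that $T^2=S^1\times S^1$ is a product of spheres).

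For $n\geq 3$ the cleaner route: consider the composite $S^{n-1}\stackrel{h_M}{\longrightarrow} M_0\stackrel{i_M}{\longrightarrow} M$, which is null homotopic by definition of the cofibration, and instead build a degree one comparison map. The key step is to produce a homotopy cofibration $A\stackrel{a_0}{\longrightarrow} M_0\stackrel{f_0}{\longrightarrow} N_0$ with $a_0$ inert and $N$ a Poincar\'e duality complex with known inert top cell, then apply Theorem~\ref{inertdeg1thm}~(3); but a more direct approach avoids choosing $N$. Namely, I would show directly that $\Omega i_M$ has a right homotopy inverse by exploiting that $\Omega M$ is homotopy discrete (it is $\pi_1(M)$, a discrete group up to homotopy, since $M$ is a $K(\pi,1)$), so the path components $\Omega_0 M\simeq \ast$. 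By Lemma~\ref{loop0lemma}, $\Omega i_M$ has a right homotopy inverse if and only if $i_{M\ast}\colon\pi_1(M_0)\to\pi_1(M)$ is surjective and $\Omega_0 i_M\colon\Omega_0 M_0\to\Omega_0 M=\ast$ has a right homotopy inverse. The surjectivity on $\pi_1$ holds because $M_0$ contains the $1$-skeleton of $M$ (as $n\geq 2$), and the second condition is automatic since the target is contractible. Hence $\Omega i_M$ has a right homotopy inverse, i.e.\ $h_M$ is inert.

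The main obstacle I anticipate is the low-dimensional edge case $n=2$, where $\pi_1(S^1)\neq 0$ obstructs the naive ``$\pi_{n-1}(M)=0$'' argument; there $\Omega M$ is still homotopy discrete (it is $\pi_1(\Sigma_g)$), and Lemma~\ref{loop0lemma} still applies verbatim — $\Omega_0 i_M\colon \Omega_0 M_0\to \ast$ trivially has a right inverse, and $\pi_1(M_0)\to\pi_1(M)$ is surjective since $M_0$ is $M$ with an open disk removed, which deformation retracts onto a wedge of circles carrying all of $\pi_1$. So in fact the Lemma~\ref{loop0lemma} argument is uniform in $n\geq 2$, and no case split is needed. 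The only genuine content to verify carefully is: (i) $\Omega_0 M\simeq\ast$ for $M$ aspherical (immediate from $\pi_k(M)=0$ for $k\geq 2$, since $\pi_k(M)=\pi_{k-1}(\Omega_0 M)$); and (ii) $i_{M\ast}$ surjective on $\pi_1$, which follows from cellular approximation since $M_0$ contains the $1$-skeleton. Therefore the proof is short: invoke Lemma~\ref{loop0lemma}, observe the target of the looped skeleton inclusion on basepoint components is contractible, and conclude. I would expect the write-up to be no more than a paragraph once Lemma~\ref{loop0lemma} is cited.
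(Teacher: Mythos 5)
Your final argument is correct and is essentially the paper's own proof: both rest on the two facts that $\Omega M$ is homotopy discrete (because $M$ is a $K(\pi_1(M),1)$) and that $i_{M\ast}\colon \pi_1(M_0)\to\pi_1(M)$ is surjective for $n\geq 2$; the paper writes this directly by factoring $\Omega i_M$ through $\pi_0(\Omega M_0)\to\pi_0(\Omega M)$, while you package the same content by citing Lemma~\ref{loop0lemma} together with the contractibility of $\Omega_0 M$, which is an equally valid routing. The detours in your first two paragraphs (classifying maps, degree one comparisons, the $n=2$ case split) are unnecessary, as you yourself conclude, and should be cut from a final write-up.
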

\begin{proof}
Consider the lower skeleton inclusion $i_M: M_0\stackrel{}{\longrightarrow} M$. Since $n\geq 2$, it is clear that 
\[
i_{M\ast}: \pi_1(M_0)\stackrel{}{\longrightarrow} \pi_1(M)
\]
is surjective. The looped map $\Omega i_M: \Omega M_0\stackrel{}{\longrightarrow} \Omega M\simeq K(\pi_1(M), 0)$ is homotopic to the composite
\[
\Omega M_0\stackrel{}{\longrightarrow}\pi_0(\Omega M_0)\stackrel{(\Omega i_M)_\ast}{\longrightarrow}\pi_0(\Omega M),
\]
where the first map is defined by sending each point to its path component, and $\pi_0(\Omega M_0)$ and $\pi_0(\Omega M)$ are endowed with discrete topology. Since $i_{M\ast}$ is surjective, this composite has a right homotopy inverse, and then so does $\Omega i_M$. This shows that the attaching map for the top cell of $M$ is inert.
\end{proof}

It is well-known that the smooth category and topology category of $n$-manifolds are equivalent for $n\leq 3$. Hence we will not distinguish them in the sequel. 
When $n=2$, since an orientable closed surface of positive genus is aspherical, the attaching map for its top cell is inert. When $n=3$, recall an orientable $3$-manifold $M$ is {\it irreducible} if any embedded $2$-sphere $S^2$ in $M$ bounds a $3$-ball $D^3$, while $M$ is {\it prime} if it can not be decomposed as a nontrivial connected sum of two manifolds, that is, if $M\cong M_1\# M_2$ then $M_1$ or $M_2$ is the $3$-sphere $S^3$. An irreducible manifold is clearly prime and the converse is almost true except that $S^1\times S^2$ is prime but not irreducible. 
There is the fundamental {\it prime decomposition theorem} for $3$-manifolds.
\begin{theorem}\label{pdecthm}
Every orientable closed $3$-manifold is a connected sum of finitely many $3$-manifolds that are either $S^1\times S^2$ or irreducible. Moreover, the connected summands are unique up to the ordering and orientation preserving homeomorphism. $\qqed$
\end{theorem}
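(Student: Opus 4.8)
The plan is to prove existence (Kneser) and uniqueness (Milnor) separately; both are classical, but let me indicate the route I would take. Throughout, triangulability of $3$-manifolds is available since the smooth and topological categories agree in dimension $\leq 3$.

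\emph{Existence.} Fix a triangulation of $M$ with $t$ tetrahedra. The crucial finiteness input is Kneser's lemma: there is a bound $N(t)$ so that any collection of pairwise disjoint embedded $2$-spheres in $M$, no two of which cobound a region $S^2\times I$ and none of which bounds an embedded $3$-ball, has at most $N(t)$ components. To prove this one isotopes the spheres into normal form with respect to the triangulation and runs a combinatorial count on the normal pieces inside each tetrahedron: once the number of spheres exceeds $N(t)$, some tetrahedron must contain a long stack of parallel normal disks, which forces two of the spheres to cobound a product region, a contradiction. Hence one may choose a \emph{maximal} system $\mathcal S=S_1\cup\cdots\cup S_k$ of disjoint, pairwise non-parallel, non-ball-bounding spheres. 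Cut $M$ along $\mathcal S$ and cap each resulting boundary sphere with a $3$-ball; this produces closed orientable $3$-manifolds $M_1,\dots,M_r$ with $M\cong M_1\#\cdots\# M_r$ (reading the caps-and-cuts backwards). Maximality of $\mathcal S$ forces each $M_i$ to be prime: an embedded sphere in $M_i$ not bounding a ball, pushed off the capping balls into $M$, would either be non-parallel to every $S_j$ (contradicting maximality) or show that some $S_j$ was already inessential.

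\emph{Prime implies irreducible or $S^1\times S^2$.} Let $P$ be closed, orientable and prime but not irreducible, so $P$ contains an embedded sphere $\Sigma$ not bounding a ball. If $\Sigma$ separates, then $P$ is the connected sum of the two capped pieces; primeness forces one of them to be $S^3$, so $\Sigma$ bounds a ball, a contradiction. Thus $\Sigma$ is non-separating. A regular neighbourhood of $\Sigma$ is $S^2\times I$ by orientability, and collapsing the complement along this product structure exhibits $P$ as the total space of a sphere bundle over $S^1$; orientability rules out the twisted bundle, so $P\cong S^1\times S^2$. (Equivalently: the non-separating $\Sigma$ gives a nonzero class in $H^1(P;\mathbb{Z})$, and after verifying irreducibility of the cut-open piece one identifies the infinite cyclic cover with $\mathbb{R}\times S^2$.)

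\emph{Uniqueness.} Suppose $M=P_1\#\cdots\# P_m=Q_1\#\cdots\# Q_n$ with all summands prime and not $S^3$. First peel off the $S^1\times S^2$ factors: these are exactly the summands with infinite cyclic first homology, and the number occurring in either decomposition equals the obvious rank contribution to $H_1(M;\mathbb{Z})$, so these counts already agree and one is reduced to the irreducible summands. For those, run Milnor's innermost-disk argument: realize the first decomposition by a disjoint system of spheres $\mathcal A$ and the second by $\mathcal B$, make them transverse, and eliminate the circles of $\mathcal A\cap\mathcal B$ one at a time. An innermost circle on a sphere of $\mathcal B$ bounds a disk $D$ whose interior misses $\mathcal A$; its boundary also bounds a disk $D'$ on a sphere of $\mathcal A$, and $D\cup D'$ bounds a ball by irreducibility of the piece it lies in, which permits an isotopy of $\mathcal B$ lowering $\lvert\mathcal A\cap\mathcal B\rvert$. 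Iterating makes $\mathcal A$ and $\mathcal B$ disjoint, after which a combinatorial comparison of the complementary regions of $\mathcal A\cup\mathcal B$ matches the $P_i$ with the $Q_j$ up to reordering and orientation-preserving homeomorphism.

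\emph{Main obstacle.} The two genuinely substantial ingredients are Kneser's finiteness bound, which requires setting up enough normal surface theory to make the parallel-stack counting rigorous, and Milnor's uniqueness argument, where the innermost-disk surgeries must be organized so that they strictly decrease the intersection complexity without introducing new intersection circles, all while keeping the $S^1\times S^2$ bookkeeping consistent with the irreducible part. The sphere-bundle identification in the second step is the other slightly delicate point and is cleanest when phrased through the infinite cyclic cover.
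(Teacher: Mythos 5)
The paper does not prove this statement at all: it is quoted (with a \qqed) as the classical Kneser--Milnor prime decomposition theorem, so there is no in-paper argument to compare against, and your sketch follows the standard classical route (Kneser's normal-surface finiteness for existence, Milnor's argument for uniqueness). The existence half and the ``prime but not irreducible implies $S^1\times S^2$'' step are essentially right in outline, though the latter is cleanest when phrased as: a non-separating sphere $\Sigma$ together with a transverse circle has a regular neighbourhood $N$ homeomorphic to $S^1\times S^2$ minus a ball, and primeness forces the complement of $N$ to be a ball, so $P\cong S^1\times S^2$; ``collapsing the complement along the product structure'' is not justified as written, since the complement of a neighbourhood of $\Sigma$ need not be a product.

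There is, however, a genuine error in your uniqueness step. You propose to ``peel off the $S^1\times S^2$ factors'' by declaring them to be exactly the summands with infinite cyclic first homology, with their number read off from the rank of $H_1(M;\mathbb{Z})$. This is false: closed orientable \emph{irreducible} $3$-manifolds can have positive first Betti number and even $H_1\cong\mathbb{Z}$ (the $3$-torus is irreducible with $b_1=3$; $0$-framed surgery on a nontrivial knot is irreducible with $H_1\cong\mathbb{Z}$), so irreducible summands also contribute to the rank of $H_1(M)$ and your reduction to the irreducible part collapses. The correct bookkeeping is one of the delicate points Milnor treats explicitly: either run his inductive argument, which matches summands one at a time and handles the non-separating (handle) spheres directly, or count the $S^1\times S^2$ summands via the free-product decomposition $\pi_1(M)\cong \pi_1(P_1)\ast\cdots\ast\pi_1(P_m)$ together with Grushko/Kurosh uniqueness, using that $S^1\times S^2$ is the only closed orientable prime $3$-manifold with $\pi_1\cong\mathbb{Z}$. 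Relatedly, in your innermost-disk step the sphere $D\cup D'$ lies in $M$, which is not irreducible, so ``bounds a ball by irreducibility of the piece it lies in'' needs the argument to be organized (as in Milnor or Hatcher) so that the relevant complementary piece is already known to be irreducible; as stated the justification is circular.
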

From Theorem \ref{pdecthm} it is known that for the $3$-manifold $M$ its fundamental group is a free product
\[
\pi_1(M)\cong F \ast G_1\ast \cdots \ast G_k,
\]
where $F$ is a free group of certain rank and each $G_i$ is the fundamental group of an irreducible $3$-manifold. Conversely, by Stallings' solution \cite{Sta71} on the Kneser conjecture, any free product decomposition of the fundamental group $\pi_1(M)$ can be realized as a connected sum decomposition of $M$.

We also need the following lemma which is a corollary of the famous sphere theorem of Papakyriakopoulos \cite{Pap57}.
\begin{lemma}\label{asphlemma}
Let $M$ be an irreducible $3$-manifold with infinite fundamental group. Then $M$ is aspherical. ~$\qqed$
\end{lemma}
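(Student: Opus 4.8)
The plan is to prove the stronger statement that the universal cover $\widetilde M$ of $M$ is contractible, which immediately yields $M\simeq K(\pi_1(M),1)$. Working with the standing assumption of this subsection that $M$ is an orientable closed $3$-manifold, the first step is to show $\pi_2(M)=0$. If $\pi_2(M)\neq 0$, then by the sphere theorem of Papakyriakopoulos \cite{Pap57} there is an embedding $S^2\hookrightarrow M$ representing a nonzero class of $\pi_2(M)$. Since $M$ is orientable this embedded sphere is two-sided, and since $M$ is irreducible it bounds an embedded $3$-ball in $M$; contracting the sphere across that ball shows it is null-homotopic, contradicting the choice of class. Hence $\pi_2(M)=0$.

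Next I would pass to the universal covering $p\colon \widetilde M\to M$. Since $\pi_1(M)$ is infinite the covering has infinitely many sheets, so $\widetilde M$ is a connected, non-compact $3$-manifold without boundary; moreover $\widetilde M$ is simply connected and $\pi_2(\widetilde M)\cong\pi_2(M)=0$. The third step is a homology computation for $\widetilde M$: the Hurewicz theorem, applied after $\pi_1=\pi_2=0$, gives $H_1(\widetilde M;\mathbb Z)=H_2(\widetilde M;\mathbb Z)=0$, while $H_i(\widetilde M;\mathbb Z)=0$ for all $i\geq 3$ because a connected non-compact $n$-manifold without boundary has vanishing integral homology in degrees $\geq n$. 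Therefore $\widetilde M$ has the integral homology of a point.

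Finally, $\widetilde M$ is simply connected with trivial reduced homology and has the homotopy type of a $CW$-complex, so Whitehead's theorem shows it is contractible. Consequently $\pi_i(M)\cong\pi_i(\widetilde M)=0$ for all $i\geq 2$, and $M$ is aspherical.

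The only nontrivial input is the sphere theorem, and the subtle point it supplies is precisely the upgrade from a nonzero element of $\pi_2(M)$ to an \emph{embedded} essential sphere; without this one could not invoke the definition of irreducibility, which concerns embedded spheres, and the two-sidedness needed to make ``bounds a $3$-ball'' applicable is automatic in the orientable case. Everything after that step — Hurewicz, the homology of open manifolds, and Whitehead's theorem — is standard and requires no further ideas.
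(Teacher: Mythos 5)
Your proof is correct and is exactly the standard argument the paper is alluding to: the lemma is stated there without proof as a known corollary of the sphere theorem, and your write-up supplies the usual details (sphere theorem plus irreducibility kills $\pi_2$, infiniteness of $\pi_1$ makes the universal cover a non-compact $3$-manifold with trivial homology, and Hurewicz--Whitehead finishes). No gaps.
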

We can now prove the following result on the inertness of the top cell attachments for $3$-manifolds. 
\begin{theorem}\label{3inertthm}
Let $M$ be an orientable closed $3$-manifold. If the fundamental group of $M$ is not isomorphic to a free product of finite groups, then the attaching map for the top cell of $M$ is inert.
\end{theorem}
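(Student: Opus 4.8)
The plan is to reduce to the two cases that dominate the prime decomposition and then feed each into results already developed in the paper. First I would invoke the prime decomposition theorem (Theorem \ref{pdecthm}) to write $M\cong P_1\#\cdots\#P_k$ where each $P_i$ is either $S^1\times S^2$ or irreducible. If $k=0$ then $M\cong S^3$, whose fundamental group is trivial, hence a (vacuous) free product of finite groups, so that case is excluded by hypothesis and need not be treated. If some summand is $S^1\times S^2$ or $k\ge 2$ with at least one summand having an inert top cell attachment, then Theorem \ref{exsumthm} (connected sums preserve inertness, in the form that $M\#N$ has an inert top cell whenever $M$ does) finishes the argument --- so the whole problem collapses to showing that \emph{each} prime summand $P$ has an inert top cell attachment, \emph{except} possibly those $P$ for which $\pi_1(P)$ is finite. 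The hypothesis that $\pi_1(M)$ is not a free product of finite groups is exactly the statement that at least one $P_i$ has infinite fundamental group (using Stallings/Kneser so that the free product splitting of $\pi_1(M)$ mirrors the connected sum, and noting $\pi_1(S^1\times S^2)=\mathbb{Z}$ is infinite); this is the summand I will exploit.

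Next I would handle an irreducible summand $P$ with infinite $\pi_1$. By Lemma \ref{asphlemma} (corollary of Papakyriakopoulos' sphere theorem) such a $P$ is aspherical, and then Lemma \ref{asph-inert-lemma} applies directly: an aspherical orientable closed manifold of dimension $\ge 2$ has an inert top cell attachment. The summand $S^1\times S^2$ is a product of spheres, so its top cell attachment is inert by Lemma \ref{pdt-inert-lemma} (or by the Hilton--Milnor theorem as noted in the introduction). So every prime summand of the ``bad'' type --- irreducible with infinite $\pi_1$, or $S^1\times S^2$ --- has an inert top cell attachment. Now reorder the connected sum decomposition so that such a summand $P_0$ appears; write $M\cong P_0\#(P_1\#\cdots\#\widehat{P_0}\cdots\#P_k)$, and apply Theorem \ref{exsumthm}(1) with $M=P_0$ and $N$ the remaining connected sum: since $P_0$ has an inert top cell attachment, so does $M$.

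One technical point to be careful about: $S^3$ has no interior structure and the connected sum $M\#S^3\cong M$, so I must make sure the reordering genuinely produces a summand with inert top cell and not a degenerate $S^3$ factor; this is guaranteed precisely because $\pi_1(M)$ is not a free product of finite groups, which forces a genuine summand with infinite fundamental group. I also want to double-check that the dimension hypothesis $n\ge 2$ in Lemmas \ref{asph-inert-lemma} and \ref{pdt-inert-lemma} is met --- here $n=3$, so fine --- and that the connected summands I'm calling ``single top cell Poincar\'e duality complexes'' genuinely have a CW structure with a single top cell, which holds for closed orientable $3$-manifolds by Morse theory. The main obstacle is not any single deep step but rather the bookkeeping of the exclusion: translating ``$\pi_1(M)$ is not a free product of finite groups'' into ``some prime summand has infinite $\pi_1$'' cleanly, using the uniqueness in Theorem \ref{pdecthm} together with Stallings' theorem on the Kneser conjecture, and making sure irreducible summands with \emph{finite} $\pi_1$ (which need not be aspherical and whose inertness is not addressed) are simply absorbed as part of the complementary summand $N$ in the final application of Theorem \ref{exsumthm}, rather than needing to be analyzed individually.
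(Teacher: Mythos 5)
Your proposal is correct and follows essentially the same route as the paper: prime decomposition, Stallings' realization of the Kneser conjecture to locate a prime summand with infinite fundamental group, asphericity via the sphere theorem (Lemma \ref{asphlemma}) combined with Lemma \ref{asph-inert-lemma} (or Lemma \ref{pdt-inert-lemma} for $S^1\times S^2$), and finally Theorem \ref{exsumthm} to propagate inertness to the connected sum. The only cosmetic difference is that the paper first treats the prime case in full and then reduces the general case to it, whereas you decompose immediately and isolate the good summand.
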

\begin{proof}
Suppose that $M$ is prime. Then $\pi_1(M)$ can not be expressed as a nontrivial free product by Stallings' realization theorem. Therefore, the assumption implies that $\pi_1(M)$ is an infinite group. If $M\cong S^1\times S^2$, then the attaching map for the top cell of $M$ is inert by Lemma \ref{pdt-inert-lemma}. Otherwise, $M$ is irreducible with infinite fundamental group and hence is aspherical by Lemma \ref{asphlemma}. It follows from Lemma \ref{asph-inert-lemma} that the attaching map for the top cell of $M$ is inert. This proves the theorem when $M$ is prime.

Otherwise $M$ is not prime. Then by Theorem \ref{pdecthm} there is a prime decomposition 
\[
M\cong M_1\#\cdots \# M_k,
\]
where each $M_i$ is prime. If the fundamental group of each $M_i$ is a free product of finite groups, then so is the fundamental group of $M$, contradicting the assumption that $M$ is not a free product of finite groups. Hence, there exists a connected summand $M_i$ such that $\pi_1(M_i)$ is not a free product of finite groups. The discussion in the first paragraph then implies that the attaching map for the top cell of $M_i$ is inert. Accordingly, Theorem \ref{exsumthm} implies that the attaching map for the top cell of $M$ is inert. This completes the proof of theorem. 
\end{proof}

Theorem \ref{3inertthm} indicates that most $3$-manifolds satisfy the inertness property. Indeed, by Lemma \ref{asphlemma} an irreducible $3$-manifold $M$ is not aspherical only when $\pi_1(M)$ is finite. In this case, the universal covering of $M$ is a homotopy $3$-sphere, hence is the $3$-sphere $S^3$ by Perelman's solution to the Poincar\'{e} conjecture \cite{Per02, Per03a, Per03b}. Indeed, such a $3$-manifold $M$ is a quotient $S^3/\Gamma$ with $\Gamma$ a finite subgroup of $SO(4)$, and is called a {\it spherical $3$-manifold}. 
It is clear that the attaching map for the top cell of $S^3$ is not inert. Then Proposition \ref{coverprop} implies that the attaching map for the top cell of a spherical $3$-manifold is not inert as well.

Let us turn to $6$-manifolds. 
The following well-known splitting theorem for $6$-manifolds was proved by Wall \cite{Wal66} in the smooth category, while Jupp \cite{Jup73} pointed out that the theorem holds in the topological category by the same argument.
\begin{theorem}\cite[Theorem 1]{Wal66}\label{wallsplitthm}
Let $M$ be a simply connected closed $6$-manifold with third Betti number $b_3(M)=2m$. Then there exists a $6$-manifold $M_1$ such that
\[
 \hspace{5.8cm}
M\cong M_1 \# \mathop{\#}\limits_{m} (S^3\times S^3).
 \hspace{5.8cm}\Box
\]
\end{theorem}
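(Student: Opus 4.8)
The plan is to prove this by classical differential (and, following Jupp, topological) handle theory, entirely independently of the inertness machinery developed above: I would split off one copy of $S^{3}\times S^{3}$ at a time and induct on the third Betti number. The two geometric inputs are a minimal handle decomposition of a simply connected closed $6$-manifold and the Whitney trick; everything else is bookkeeping with van Kampen and Mayer--Vietoris.

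First, the algebra. Since $M$ is simply connected and closed of dimension $6$, Poincar\'{e} duality together with the universal coefficient theorem shows that $H_{2}(M;\mathbb{Z})$ and $H_{4}(M;\mathbb{Z})$ are dual, and that the free part of $H_{3}(M;\mathbb{Z})$ carries a unimodular intersection pairing which is alternating because the middle dimension $3$ is odd. A unimodular alternating form over $\mathbb{Z}$ is hyperbolic, so $b_{3}(M)$ is even, say $2m$, and the free part of $H_{3}$ admits a symplectic basis $e_{1},f_{1},\dots,e_{m},f_{m}$ with $e_{i}\cdot f_{j}=\delta_{ij}$ and $e_{i}\cdot e_{j}=f_{i}\cdot f_{j}=0$.

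Next, the geometric splitting. Using the Smale--Wall handle calculus one may equip $M$ with a handle decomposition having a single $0$-handle, a single $6$-handle, and otherwise handles only of index $2,3,4$ (the $1$- and $5$-handles being eliminated with $\pi_{1}(M)=1$ and $\dim M\ge 5$), so that the middle intersection data of the $3$-handles realizes the symplectic form above. The key point is that a symplectically dual pair $(e_{1},f_{1})$ is realized geometrically: one represents $e_{1}$ and $f_{1}$ by smoothly embedded $3$-spheres meeting transversally, and since the normal bundle of an embedded $S^{3}$ in a $6$-manifold is classified by $\pi_{2}(SO(3))=0$ and hence trivial, while the Whitney trick (available as $\pi_{1}(M)=1$ and the complementary codimension is $3$) reduces the excess intersections to the single point forced by $e_{1}\cdot f_{1}=1$, a closed regular neighborhood of their union is the plumbing of two trivial $D^{3}$-bundles over $S^{3}$ at one point, i.e. $(S^{3}\times S^{3})\setminus\mathrm{int}\,D^{6}$, with boundary $S^{5}$. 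Cutting $M$ along that separating $S^{5}$ and capping off the complementary side with a $6$-disk produces a closed $6$-manifold $M'$ with $M\cong M'\#(S^{3}\times S^{3})$; van Kampen (all pieces, and the separating $S^{5}$, are simply connected) gives $\pi_{1}(M')=1$, and Mayer--Vietoris shows $H_{*}(M')$ agrees with $H_{*}(M)$ except that the hyperbolic summand $\langle e_{1},f_{1}\rangle$ has been removed, so $b_{3}(M')=2m-2$. Iterating $m$ times yields a simply connected closed $6$-manifold $M_{1}$ with $b_{3}(M_{1})=0$ and a diffeomorphism $M\cong M_{1}\,\#\,\mathop{\#}\limits_{m}(S^{3}\times S^{3})$; running the identical argument with $\mathrm{TOP}$ handle theory and the topological Whitney trick, both valid in dimension $6$, gives the topological statement observed by Jupp.

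The main obstacle is the borderline dimension $6=2\cdot 3$: realizing a hyperbolic pair by an honestly embedded punctured $S^{3}\times S^{3}$ forces one to put two embedded $3$-spheres in a $6$-manifold into general position with geometric intersection equal to the algebraic intersection number $1$, and to control the self-intersection and normal data of the immersed representatives that arise in the process — precisely the setting in which the Whitney trick sits at the edge of its applicable range, its validity resting essentially on $\pi_{1}(M)=1$. Equivalently, in the handle picture one must verify that the $3$-handles can be slid and isotoped so that a dual pair genuinely bounds a standard $S^{3}\times S^{3}$ summand. This is where the real work (that of Wall) is concentrated; the homological accounting that tracks $b_{3}$ down to $0$ is routine.
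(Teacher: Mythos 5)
This theorem is imported verbatim from Wall \cite{Wal66} (with Jupp's remark for the topological category) and the paper supplies no proof of its own, so the only meaningful comparison is with Wall's original argument --- and your sketch is essentially that argument: hyperbolic splitting of the unimodular skew form on $H_3(M)/\mathrm{tors}$, realization of a symplectic pair by embedded $3$-spheres with trivial normal bundle (via $\pi_2(SO(3))=0$) meeting transversally in a single point, identification of a regular neighborhood of their union with $(S^3\times S^3)\setminus \mathrm{int}\,D^6$, and induction on $b_3$. The one step you assert rather than argue is that the classes $e_1,f_1$ are represented by embedded $3$-spheres in the first place: sphericity of $H_3$ of a simply connected space is not the Hurewicz theorem (that would need $2$-connectivity) but follows from Whitehead's certain exact sequence, since the relevant $\Gamma$-term vanishes, and embeddability within the homotopy class is Haefliger's theorem, which for $S^3$ in a $6$-manifold sits exactly at the borderline $2n=3(k+1)$. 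That, rather than the Whitney cancellation of excess intersection points (which in ambient dimension $6$ with $2$-dimensional Whitney disks and simply connected codimension-$3$ complements is comparatively routine), is where the sketch is thinnest, though you have correctly localized the substantive work to Wall.
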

With Theorem \ref{wallsplitthm}, Corollary \ref{SSsumcor} implies the following proposition immediately.
\begin{proposition}\label{wallinertprop}
Let $M$ be a simply connected closed $6$-manifold. If $H^3(M;\mathbb{Q})\neq 0$, then the attaching map for the top cell of $M$ is inert.   $\qqed$ 
\end{proposition}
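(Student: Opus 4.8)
\textbf{Proof plan for Proposition \ref{wallinertprop}.}

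The plan is to reduce directly to Corollary \ref{SSsumcor} via Wall's splitting theorem. First I would invoke Theorem \ref{wallsplitthm}. The hypothesis $H^3(M;\mathbb{Q})\neq 0$ means the third Betti number $b_3(M)$ is positive; since Poincar\'e duality forces the intersection form on $H^3(M;\mathbb{Z})/\mathrm{torsion}$ to be skew-symmetric and unimodular, $b_3(M)$ is even, say $b_3(M)=2m$ with $m\geq 1$. Theorem \ref{wallsplitthm} then yields a homeomorphism
\[
M\cong M_1\#\mathop{\#}\limits_{m}(S^3\times S^3)
\]
for some simply connected closed $6$-manifold $M_1$. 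Since $m\geq 1$, we may peel off one copy of $S^3\times S^3$ and write $M\cong (S^3\times S^3)\#N$ where $N=M_1\#\mathop{\#}\limits_{m-1}(S^3\times S^3)$ is a $6$-dimensional Poincar\'e duality complex with a single top cell (when $m=1$ we simply take $N=M_1$).

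Second, I would apply Corollary \ref{SSsumcor} with $n=6$ and $m=3$ (so $n-m=3$, and indeed $n>m>0$), taking the role of ``$N$'' there to be the complex $N$ just described. Corollary \ref{SSsumcor} (1) asserts that the attaching map for the top cell of $(S^3\times S^3)\#N$ is inert, which is exactly the claim for $M$. Strictly speaking one should note that ``$M$ is a $CW$-complex with a single top cell having the homotopy type of'' this connected sum, so that the statement about the top cell attachment is well-defined; this is handled by the standing conventions that a closed orientable manifold has the homotopy type of a Poincar\'e duality complex with a single top cell, as recalled at the start of Subsection \ref{subsec: geo-int}.

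There is essentially no obstacle here: the content of the proposition is entirely carried by Theorem \ref{wallsplitthm} and Corollary \ref{SSsumcor}, and the only thing to verify is the parity of $b_3(M)$ and the bookkeeping that $N$ again has a single top cell. The proof is therefore a two-line deduction, and I would write it as such.
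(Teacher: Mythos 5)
Your proposal is correct and follows exactly the paper's argument: the paper's proof is precisely "Wall's splitting theorem (Theorem \ref{wallsplitthm}) plus Corollary \ref{SSsumcor}," and your additional remarks (evenness of $b_3$ from the skew-symmetric unimodular pairing on $H^3$, peeling off one $S^3\times S^3$ summand, and the single-top-cell convention) are just the routine justifications the paper leaves implicit.
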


\newpage
\section{Two geometric comparison theorems}
\label{sec: det2}
In the previous sections, we studied inertness by comparing two Poincar\'{e} duality complexes of the same dimension through a nonzero degree map. We now extend this comparison idea further. 

In this section, we consider two geometric contexts of inert cell attachments and prove Part (1) of Theorems \ref{FEBthmintro} and \ref{geoinertthmintro}. The first context concerns strict fibrations of Poincar\'{e} duality complexes, while the second focuses on fibre bundles of manifolds. 
Both contexts demonstrate approaches to studying inertness by comparing Poincar\'{e} duality complexes of different dimensions through a fibration. 

\subsection{Strict fibrations}
\label{subsec: fibsec}

$\, $

Let 
\begin{equation}\label{FEBeq}
F\stackrel{j}{\longrightarrow} E\stackrel{\pi}{\longrightarrow} B
\end{equation}
 be a strict fibration of connected Poincar\'{e} duality complexes. Suppose that both $B$ and $F$ are of positive dimensions and have a single top cell. In general, denote by $e(X)$ the top closed cell of a $CW$-complex $X$ with a single top cell. 
 Then the product $e(F)\times e(B)$ of the two top cells of $F$ and $B$ is the single top cell $e(E)$ of $E$. Therefore, in this situation we can talk about top cell attachments for the three Poincar\'{e} duality complexes in the fibration (\ref{FEBeq}). 

In general, for an $n$-dimensional Poincar\'{e} duality complex $X$ with a single top cell, denote by $X_0$ its $(n-1)$-skeleton obtained by deleting the top open cell $\accentset{\circ}{e}(X)$ of $X$, and by
\[
S^{n-1}\stackrel{h_X}{\longrightarrow} X_0\stackrel{i_X}{\longrightarrow} X
\] 
the homotopy cofibration associated to the top cell attachment of $X$. 

Restricting the fibration (\ref{FEBeq}) to the lower skeleton $B_0$ of $B$ gives a strictly commutative diagram of fibrations
\begin{equation}\label{EFBB0diag}
\diagram
F\ddouble  \rto^{j|_{0}}   & E|_{0}  \dto^{i|_{0}}  \rto^{ \pi|_{0}}  & B_0 \dto^{i_B}  \\
F              \rto^{j}   & E                           \rto^{\pi}       & B,
\enddiagram
\end{equation}
where $E|_{0}$ is the pullback with the structural maps $\pi|_{0}$ and $i|_{0}$, and $j|_{0}$ is the induced map.

The following theorem is Part (1) of Theorem \ref{FEBthmintro}.

\begin{theorem}\label{FEBthm}
Let 
\[\label{FEBeqintro}
F\stackrel{}{\longrightarrow} E\stackrel{\pi}{\longrightarrow} B
\]
 be a strict fibration of connected Poincar\'{e} duality complexes with a single top cell. 
 If the attaching map for the top cell of $B$ is inert, then the attaching map for the top cell of $E$ is inert.
 
Additionally, if the attaching map for the top cell of $B$ is inert after localization away from a prime $p$, then the attaching map for the top cell of $E$ is inert after localization away from a prime $p$.
 \end{theorem}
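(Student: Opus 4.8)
The strategy is to feed the fibration restriction diagram \eqref{EFBB0diag} into the pushout machinery of Section \ref{sec: pushout}, exploiting that a fibration restricted to a subcomplex of the base is still a fibration and that the top cell of $E$ is the product of the top cells of $F$ and $B$. First I would record that, since $B$ has a single top cell with attaching map $h_B$, deleting the top open cell gives $B=B_0\cup_{h_B} e^{\dim B}$, and hence a homotopy pushout
\[
\diagram
S^{\dim B-1}\rto^-{h_B}\dto & B_0\dto^{i_B}\\
D^{\dim B}\rto & B.
\enddiagram
\]
Pulling this pushout back along $\pi$ (equivalently, using that $\pi$ is a fibration so that the total space is built by attaching $F\times e^{\dim B}$ along the restriction of $\pi$ over $\partial e^{\dim B}$) produces a homotopy pushout
\[
\diagram
S^{\dim B-1}\times F\rto\dto & E|_0\dto^{i|_0}\\
D^{\dim B}\times F\rto & E,
\enddiagram
\]
in which the left vertical map is, up to homotopy, $(h_B)^{*}$ of the fibration, i.e. the composite $S^{\dim B-1}\times F\to E|_0$ obtained by pulling the fibration back over the attaching sphere. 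The key point is that the top cell attachment $h_E\colon S^{\dim E-1}\to E_0$ factors through this picture: $E_0\simeq E|_0\cup_{S^{\dim B-1}\times F}(D^{\dim B}\times F_0)$, so that $i_E$ fits into a pushout square whose other maps I can control.

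Next I would bring in the hypothesis that $h_B$ is inert, i.e. $\Omega i_B$ has a right homotopy inverse. I want to apply Theorem \ref{pushoutthm} to a homotopy pushout of the form \eqref{pushoutdiag} with one leg being $i_E$ and with the ``$\varphi'$'' leg being inert. The natural candidate is the pushout
\[
\diagram
E|_0\rto^-{\pi|_0}\dto^{i|_0} & B_0\dto^{i_B}\\
E\rto^-{\pi} & B,
\enddiagram
\]
which is the right-hand square of \eqref{EFBB0diag}; this square is a homotopy pullback by construction (it is a pullback of fibrations), and because $\pi|_0$ is a fibration with fibre the Poincar\'e duality complex $F$ of positive dimension which is also the fibre of $\pi$, the square is in fact also a homotopy pushout — this is the standard fact that a pullback square of fibrations along a cofibration $B_0\hookrightarrow B$ is a pushout, since $E\simeq E|_0\cup_{\pi^{-1}(\text{sphere})}(\text{fibre}\times\text{cell})$ and the missing piece $\pi^{-1}(\accentset{\circ}{e}(B))\simeq \accentset{\circ}{e}(B)\times F$ is exactly the cofibre data. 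Granting that, Theorem \ref{pushoutthm} applies provided $\pi|_0$ is inert; so the real work is to show $\Omega\pi|_0$ has a right homotopy inverse, and then to translate between inertness of $i|_0$ versus $i_E$ and inertness of the skeletal inclusion $E_0\hookrightarrow E$.

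The cleanest route for the last translation: $F$ has positive dimension, so $E|_0$ itself is not the lower skeleton of $E$, but the lower skeleton inclusion $i_E\colon E_0\to E$ factors as $E_0\to E|_0\xrightarrow{i|_0}E$ up to homotopy via $CW$-approximation (exactly as in the proof of Lemma \ref{coverinertlemma2}), because $E_0$ has dimension $<\dim E$ while $i|_0$ carries the ``$F$-direction'' top cell contributions. Hence if $\Omega i|_0$ has a right homotopy inverse then so does $\Omega i_E$, i.e. $h_E$ is inert. So it suffices to prove $\Omega i|_0$ has a right homotopy inverse, and Theorem \ref{pushoutthm} reduces this to: (i) the square \eqref{EFBB0diag}-right is a homotopy pushout, (ii) $\pi|_0\colon E|_0\to B_0$ is inert. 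For (ii), I would argue that $\Omega\pi|_0$ has a right homotopy inverse by combining the fibration $F\to E|_0\to B_0$ with the facts that $B_0$ is a suspension-like wedge only in special cases — instead, better: use Ganea's theorem or the known loop decomposition to see that $\Omega\pi|_0$ splits once $B_0$ has the homotopy type making the connecting map manageable. \emph{This is the main obstacle}: establishing that $\Omega\pi|_0$ has a right homotopy inverse in general. I expect this to follow from the observation that $B_0$ is a co-H-type comparison is unnecessary; rather, one uses that $\pi|_0$ is a fibration over the lower skeleton $B_0$ whose total space $E|_0$ deformation retracts appropriately, together with Proposition \ref{RHIfibprop}/Corollary \ref{RHIfibcor} applied to the map of fibrations relating $\pi|_0$ to the trivial fibration $F\to F\to \ast$ over the contractible pieces — concretely, one shows $\Omega\pi|_0$ admits a section by building it cell-by-cell over $B_0$, or, most efficiently, by invoking that the inertness of $h_B$ forces the needed splitting via Theorem \ref{pushoutthm} applied first to the base-level pushout $S^{\dim B-1}\to B_0\to B$ and then transported up the fibration using Mather's Cube Lemma (Theorem \ref{cubethm}) with the fibration $F\to E\to B$. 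I would organize the final write-up around the cube: take the homotopy pushout defining $B$ from $B_0$ and its top cell, take homotopy fibres of all maps composed with $\pi\colon E\to B$... wait, the cube wants a fibration over the \emph{common base}, so instead apply the cube to the pushout $S^{\dim B-1}\times F \to E|_0,\ D^{\dim B}\times F\to E$ sitting over $B$ via $\pi$, obtaining that $i_E$-related square is a pushout with controlled fibres, and then read off the right homotopy inverse from inertness of $h_B$ exactly as in Theorem \ref{pushoutthm}. The localized statement is immediate: every step — $CW$-approximation, homotopy pushouts, Theorem \ref{pushoutthm}, the cube lemma — is preserved under localization at any set of primes, so the identical argument run in the $p$-local (or away-from-$p$) homotopy category gives the final sentence.
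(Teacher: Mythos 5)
There is a genuine gap, and it stems from working with the wrong universal property. The right-hand square of \eqref{EFBB0diag} is a strict pullback of a fibration, hence a homotopy pullback, but it is \emph{not} a homotopy pushout when $F$ is non-contractible: the horizontal cofibre of $E|_0\to E$ is $(e(B)\times F)/(\partial e(B)\times F)\simeq S^{\dim B}\vee\Sigma^{\dim B}F$, while the cofibre of $B_0\to B$ is just $S^{\dim B}$, so the ``standard fact'' you invoke (pullback of a fibration along a cofibration is a pushout) is false. Consequently the whole route through Theorem \ref{pushoutthm} is unavailable; and even if the square were a pushout, Theorem \ref{pushoutthm} would still require the inertness of $\pi|_0$, which you explicitly identify as ``the main obstacle'' and never establish. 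The subsequent appeals to Ganea, cell-by-cell sections, and the cube lemma do not close this hole.

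The fix is much simpler and avoids the pushout machinery entirely. Since the square is a homotopy pullback and the loop functor preserves homotopy pullbacks, Theorem \ref{pullbackthm}~(1) applies directly: a right homotopy inverse of $\Omega i_B$ yields a right homotopy inverse $s$ of $\Omega i|_0$, with no hypothesis on $\pi|_0$ whatsoever. The remaining step is the skeletal factorization, but note you have it backwards: since $E|_0=\pi^{-1}(B_0)$ misses the top cell $e(E)=e(B)\times e(F)\subseteq \pi^{-1}(e(B))$, the inclusion $i|_0$ factors as $E|_0\xrightarrow{\mathfrak{i}|_0}E_0\xrightarrow{i_E}E$ (the restricted total space includes \emph{into} the lower skeleton, exactly as in Lemma \ref{coverinertlemma2}, not the other way around). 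With your direction $i_E=i|_0\circ(E_0\to E|_0)$, a right inverse of $\Omega i|_0$ gives nothing for $\Omega i_E$; with the correct direction, $1_{\Omega E}\simeq\Omega i|_0\circ s\simeq\Omega i_E\circ(\Omega\mathfrak{i}|_0\circ s)$ finishes the proof. The localized statement then follows verbatim, as you say.
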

\begin{proof}
If $F$ is contractible, the theorem is trivial. Suppose that $F$ is of positive dimension. We follow the notations and constructions in this subsection. Consider Diagram (\ref{EFBB0diag}). Since the fibration $\pi|_{0}$ is the restriction of the fibration $\pi$ on the lower skeletons, it is clear that $E|_{0}=\pi^{-1}(B_0)$ and the map $i|_{0}: E|_{0}\stackrel{}{\longrightarrow} E$ is an inclusion. Also, the restriction of the fibration $\pi$ over the top cell $e(B)$ of $B$ is trivial; that is, it is fibrewisely isomorphic to $e(B)\times F$. Therefore, the total complex $E$ is the union of $E|_{0}$ and $e(B)\times F$ along their boundaries through a homeomorphism $\partial E|_{0}\cong \partial e(B)\times F$. In particular, as the top cell $e(E)=e(B)\times e(F)\subseteq e(B)\times F$, the inclusion $i|_{0}: E|_{0}\stackrel{}{\longrightarrow} E$ factors as a composition
\[
E|_{0}\stackrel{\mathfrak{i}|_0}{\longrightarrow} E_0 \stackrel{i_E}{\longrightarrow} E,
\] 
for an inclusion map $\mathfrak{i}|_0$. 

Consider the integral case. By assumption the attaching map for the top cell of $B$ is inert, that is, $\Omega i_B$ has a right homotopy inverse. 
Since the right square of Diagram (\ref{EFBB0diag}) is a pullback and $\pi$ is a fibration, the right square is also a homotopy pullback, so is the loop of the right square. 
By Theorem \ref{pullbackthm} (1), the assumption that $\Omega i_B$ has a right homotopy inverse implies that $\Omega i|_{0}$ also has a right homotopy inverse, say $s: \Omega E\stackrel{}{\longrightarrow} \Omega E|_{0}$. Then $1_{\Omega E}\simeq \Omega i|_{0}\circ  s\simeq  \Omega i_E\circ \Omega \mathfrak{i}|_0\circ  s$. Therefore $\Omega \mathfrak{i}|_0\circ  s: \Omega E\stackrel{}{\longrightarrow} \Omega E_0$ is a right homotopy inverse of $\Omega E_0 \stackrel{\Omega i_E}{\longrightarrow} \Omega E$, which means that the attaching map for the top cell of $E$ is inert.

The statement for the local case follows by the same argument. 
\end{proof}

\begin{example}\label{Huafibex}
Let $n\geq 2$. 
Let $N$ be an $(n-1)$-connected $2n$-dimensional Poincar\'{e} duality complex with $H^n(N;\mathbb{Z})\cong \mathbb{Z}^{\oplus d}$ and $d\geq 2$, or an $(n-1)$-connected $(2n+1)$-dimensional Poincar\'{e} duality complex with $H^n(N;\mathbb{Z})\cong \mathbb{Z}^{\oplus d}$ and $d\geq 1$. 
By Example \ref{BTex1} and \ref{BTex2}, the attaching map for the top cell of $N$ is inert. 
Let 
\[
S^{k-1}\stackrel{}{\longrightarrow} M\stackrel{\pi}{\longrightarrow} N
\]
be the sphere bundle of a rank $k$ real vector bundle over $N$ with $k\geq 2$. Then Theorem \ref{FEBthm} implies that the attaching map for the top cell of $M$ is inert.
\end{example}

\begin{example}\label{DVex}
Let $M$ be a simply connected closed smooth manifold such that it can be decomposed into a union of two same disk bundles along their boundaries
\[
M\cong D\xi\cup_g D\xi,
\]
where $\xi$ is a rank $k$ real vector bundle over a simply connected closed manifolds $B$, and $S\xi\stackrel{g}{\rightarrow} S\xi$ is a self diffeomorphism of the sphere bundle of $\xi$. The manifold $M$ is so-called a {\it twisted double} and also a {\it manifold of focal genus $2$} following \cite{Dua11}. 

Let $\mathcal{G}(\xi)$ be the gauge group of the principal bundle associated to the vector bundle $\xi$. If $g\in \mathcal{G}(\xi)$, the decomposition of $M$ implies a spherical fibre bundle
\[
S^k\stackrel{}{\longrightarrow} M\stackrel{\pi}{\longrightarrow} B
\]
where $\pi$ is the union of the bundle projections of the two copies of $D\xi$ through $g$. By Theorem \ref{FEBthm}, if the attaching map for the top cell of $B$ is inert, then the attaching map for the top cell of $M$ is inert.

We remark that, in differential geometry, there are interesting examples of focal genus $2$ manifolds that appear in the study of Dupin hypersurfaces \cite{Tho83}, including isoparametric hypersurfaces \cite{Mun80, Heb81} and the principal orbits of strict cohomogeneity one actions \cite{Mos57}. 
In algebraic topology, a focal genus $2$ manifold is a special case of a double mapping cylinder, the rational homotopy of which was thoroughly studied by Grove and Halperin \cite{GH87}.
\end{example}

\subsection{Spherical fibre bundles}
\label{subsec: Sfib}

$\, $

The converse statement of Theorem \ref{FEBthm} can be proved for special fibre bundles. 
Let $k=2, 4$, or $8$.  
Let 
\begin{equation}\label{SNMeq}
S^{k-1}\stackrel{j}{\longrightarrow} N\stackrel{p}{\longrightarrow} M
\end{equation}
be a fibre bundle of connected oriented closed smooth manifolds. Denote ${\rm dim}(M)=n$. 
Let $M_0$ be the manifold $M$ with a small open disk removed. By Morse theory and Poincar\'{e} duality, a connected oriented closed smooth manifold is homotopy equivalent to a Poincar\'{e} duality complex with a single top cell, and the deleted manifold $M_0$ is homotopy equivalent to the $(n-1)$-skeleton of $M$. This justifies the choice of notation.  

The following lemma borrows the argument of \cite[Lemma 3.1]{HT23}. 
Recall that by standard results in differential topology, the choice of the removed open disk is irrelevant for defining the deleted manifold $M_0$, and the same notation $M_0$ will be used for all specific choices of the delete manifold. Similarly for the deleted manifold $N_0$. 

In particular, since the inclusion $p^{-1}(M_0)\stackrel{i}{\hookrightarrow} N$ of the restriction of the fibre bundle (\ref{SNMeq}) on $M_0$ is not surjective, it factors through the deleted manifold $N_0$:
\[
p^{-1}(M_0)\stackrel{i_0}{\hookrightarrow}N_0\stackrel{i_N}{\longrightarrow} N,
\]
where $i_0$ and $i_N$ are the inclusion maps.
\begin{lemma}\label{emblemma}
If $j$ is null homotopic and $n\geq k+2$. Then the following hold:
\begin{itemize}
\item[(1).]
the fibre inclusion $j$ can be extended to an embedding $D^k\stackrel{}{\hookrightarrow} N$ such that up to isotopy it factors through $p^{-1}(M_0)$:
\[
D^k\stackrel{}{\hookrightarrow}  p^{-1}(M_0)\stackrel{i}{\hookrightarrow} N;
\]
\item[(2).]
there is a smooth map $N_0\stackrel{\kappa}{\longrightarrow} p^{-1}(M_0)$ such that the composite
\[
N_0\stackrel{\kappa}{\longrightarrow} p^{-1}(M_0)\stackrel{i_0}{\longrightarrow} N_0
\]
is isotopic to the identity map.
\end{itemize}
\end{lemma}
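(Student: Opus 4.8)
The plan is to work entirely in the smooth category and use the bundle structure of \eqref{SNMeq} together with the triviality of $j$ to do an explicit cut-and-paste argument. First I would note that since $j: S^{k-1}\to N$ is null homotopic and $N$ is a smooth manifold of dimension $n$, the fibre sphere $S^{k-1}$ sits inside a single fibre $p^{-1}(x)\cong S^{k-1}$ over a point $x\in M$; because $n\geq k+2$ the normal bundle of this embedded $S^{k-1}$ has rank $\geq 2$, and since $j$ is null homotopic the embedded sphere bounds. More precisely, the null homotopy lets one push the embedded $S^{k-1}$ across a disk in $N$; by general position (again using $n-k\geq 2$ so that there is enough room) this disk can be taken to be embedded, giving an embedded $D^k\hookrightarrow N$ whose boundary is the original fibre sphere. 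This establishes the first half of (1).

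Next I would arrange that this embedded $D^k$ avoids a small disk neighborhood used to form $M_0$ and $N_0$. Choose the removed open disk $D^n\subset M$ so that $x\notin \overline{D^n}$; then the whole fibre $p^{-1}(x)$ and, by an isotopy of the disk $D^k$ shrinking it toward that fibre, the embedded $D^k$ as well, lie inside $p^{-1}(M_0)=p^{-1}(M\setminus \accentset{\circ}{D^n})$. Since the bundle restricted over $M_0$ is an honest codimension-$0$ submanifold-with-boundary of $N$ (its complement is $p^{-1}(D^n)\cong D^n\times S^{k-1}$, a collar-type piece), and $D^k$ lies in the interior of $p^{-1}(M_0)$, the inclusion factors up to isotopy as $D^k\hookrightarrow p^{-1}(M_0)\stackrel{i}{\hookrightarrow} N$, which completes (1). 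One should be a little careful here that the removed open disk in $N$ (used to define $N_0$) is chosen inside $p^{-1}(D^n)$, i.e. away from $p^{-1}(M_0)$ and away from $D^k$, so that all three pieces — $D^k$, the collar $p^{-1}(D^n)$ minus the removed $N$-disk, and $p^{-1}(M_0)$ — are compatibly positioned; this is exactly the bookkeeping done in \cite[Lemma 3.1]{HT23}.

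For (2) I would observe that, with the disks chosen as above, the complement $N\setminus \accentset{\circ}{D^n_N}=N_0$ deformation retracts onto $p^{-1}(M_0)\subset N_0$. Indeed $N_0$ is obtained from $p^{-1}(M_0)$ by gluing on $\bigl(p^{-1}(D^n)\setminus \accentset{\circ}{D^n_N}\bigr)$, and since $p^{-1}(D^n)\cong D^n\times S^{k-1}$ with the removed $N$-disk a standard disk in its interior, this added piece is (a thickening of) a collar on $\partial p^{-1}(M_0)$; hence there is a smooth deformation retraction $N_0\to p^{-1}(M_0)$. Taking $\kappa: N_0\to p^{-1}(M_0)$ to be (a smoothing of) this retraction, the composite $N_0\stackrel{\kappa}{\to} p^{-1}(M_0)\stackrel{i_0}{\hookrightarrow} N_0$ is homotopic, hence — being a homotopy equivalence of manifolds-with-boundary of the same dimension that is the identity on the complement of a collar — isotopic to the identity. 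This gives (2).

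The main obstacle I expect is making the general-position/isotopy arguments in (1) genuinely rigorous in the smooth category: one must verify that the null homotopy of $j$ can be promoted to an embedded spanning disk $D^k$ in $N$ (this uses the codimension hypothesis $n\geq k+2$ in an essential way, via the Whitney-type trick and transversality), and that this disk can be isotoped into $p^{-1}(M_0)$ disjointly from the disk used to form $N_0$. The bundle structure is what makes this tractable — over $M_0$ the bundle is unchanged and over the removed disk it is a product — so the argument is essentially the one in \cite[Lemma 3.1]{HT23}, which I would cite for the technical details rather than reprove in full.
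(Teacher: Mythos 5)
Your first step (extending the null homotopy to a map of $D^k$ and promoting it to an embedding rel boundary via Whitney/general position, using $n\geq k+2$ so that $\dim N=n+k-1\geq 2k+1$) agrees with the paper. After that the proposal goes wrong in two places. First, in part (1) you claim the embedded spanning disk can be isotoped into $p^{-1}(M_0)$ ``by an isotopy of the disk $D^k$ shrinking it toward that fibre.'' This is impossible: the fibre sphere $p^{-1}(x)=\{0\}\times S^{k-1}$ is essential in any tubular neighborhood $D^n\times S^{k-1}$ of the fibre (it generates $\pi_{k-1}$ of that neighborhood), so the disk it bounds cannot be pushed into such a neighborhood; it must genuinely leave $p^{-1}(D(x))$. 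What the paper actually proves is a localization statement: the spanning disk can be thickened to $D^{n-1}\times D^k$ whose union with $p^{-1}(D(x))\cong D^n\times S^{k-1}$ is an embedded $D^{n+k-1}$, giving $N\cong N\#S^{n+k-1}$ with the disk sitting in the $S^{n-1}\times D^k$ half of the sphere; the factorization through $p^{-1}(M_0)\cong N\#(S^{n-1}\times D^k)$ then follows. Your sketch omits this localization entirely.

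Second, and more seriously, your argument for (2) rests on the assertion that $\bigl(D^n\times S^{k-1}\bigr)\setminus\accentset{\circ}{D}{}^{n+k-1}$ is ``(a thickening of) a collar on $\partial p^{-1}(M_0)$,'' so that $N_0$ deformation retracts onto $p^{-1}(M_0)$. That piece is not a collar: it is a nontrivial cobordism from $S^{n-1}\times S^{k-1}$ to $S^{n+k-2}$ (homotopy equivalent to $S^{k-1}\vee S^{n+k-2}$), and $i_0\colon p^{-1}(M_0)\to N_0$ is in general not a homotopy equivalence, so no such retraction can exist. A concrete counterexample is the Hopf bundle $S^1\to S^{2m+1}\to\mathbb{C}P^m$ with $m\geq 2$: here $N_0\cong D^{2m+1}$ is contractible while $p^{-1}(M_0)\simeq S^{2m-1}$, and a contractible space cannot deformation retract onto a noncontractible subspace. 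Note also that your argument for (2) never uses the null homotopy of $j$, yet the conclusion fails for, say, trivial bundles ($H_n(N_0)\neq 0$ need not factor through $H_n(p^{-1}(M_0))$), which shows the hypothesis must enter. The paper's $\kappa$ is not a retraction at all: it is the inclusion $N_0=\bigl(N\#S^{n+k-1}\bigr)\setminus\accentset{\circ}{D}{}^{n+k-1}\hookrightarrow N\#(S^{n-1}\times D^k)\cong p^{-1}(M_0)$ furnished by the localization above, and only the one-sided composite $i_0\circ\kappa$ is isotopic to the identity (by pushing the boundary sphere inward along a collar of $N_0$). Citing \cite[Lemma 3.1]{HT23} for ``bookkeeping'' does not repair this, because the missing ingredient is precisely the main geometric content of that lemma, not a technical detail.
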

\begin{proof}
Statement (1) will be proved in two parts. 

(1a). Since $S^{k-1}\stackrel{j}{\longrightarrow} N$ is null homotopic, it can be extended to a map $D^k\stackrel{J}{\longrightarrow} N$. Then the condition $n\geq k+2$, or equivalently $n+k-1\geq 2k+1$, implies that $J$ is homotopic to a smooth embedding $D^k\stackrel{}{\hookrightarrow} N$ relative to the boundary by the classical Whitney embedding theorem. 

With this embedding we may first prove statement (2) and then prove the rest part of statement (1).

(2). By the argument in (1a), a fibre inclusion $S^{k-1}\hookrightarrow N$ over a point $x\in M$ can be extended to a disk embedding $D^k\stackrel{}{\hookrightarrow}N$. As in the proof of \cite[Lemma 3.1]{HT23}, a key step is to prove that the embedding $D^k\stackrel{}{\hookrightarrow} N$ is local, that is, there is a diffeomorphism 
\[
N\cong N\# S^{n+k-1}
\] 
such that up to isotopy $D^k$ embeds into the $S^{n+k-1}$-factor of $N\# S^{n+k-1}$. The proof is similar, and we include it here for completeness.  

Choose a small closed disk neighborhood $x \in D(x) \subseteq M$ with $(D^n, 0)\cong (D(x), x)$. The restriction of the fibre bundle $p$ (\ref{SNMeq}) on $D(x)$ is trivial, that is, there is a fibrewise diffeomorphism
\[
D^n\times  S^{k-1}\stackrel{\cong}{\longrightarrow} p^{-1}(D(x)),
\]
which restricts to $\{0\}\times S^{k-1}\cong p^{-1}(x)$ on the core spheres. Then the total manifold $N$ can be fibrewisely decomposed as
\[
N\cong p^{-1}(M_0)\cup p^{-1}(D(x))\cong p^{-1}(M_0)\cup (D^n\times  S^{k-1}),
\]
where $M_0=M-\accentset{\circ}{D}(x)$.

As depicted in Figure \ref{torusfigure}, we may shrink the embedded disk $D^k\hookrightarrow N$ such that the image of its boundary sphere $S^{k-1}$ lies on the boundary of the image of the torus embedding  $D^n\times S^{k-1}\stackrel{\cong}{\longrightarrow} p^{-1}(D(x))\subseteq N$. It is a fact that the diffeomorphism
\[
(S^{n-1}\times D^k)\cup_{S^{n-1}\times S^{k-1}}(D^n\times S^{k-1})\cong S^{n+k-1}
\]
restricts to 
\[
(D^{n-1}\times D^k)\cup_{S^{n-1}\times S^{k-1}}(D^n\times S^{k-1})\cong D^{n+k-1}.
\]
Therefore, we can further thicken the modified embedding $D^k\hookrightarrow N$ to an embedding $D^{n-1}\times D^k \hookrightarrow N$ so that its union with the embedding  $D^n\times  S^{k-1}\stackrel{\cong}{\longrightarrow} p^{-1}(D(x))\subseteq N$ gives an embedding $D^{n+k-1}\stackrel{}{\hookrightarrow} N$.

To summarize, there is a series of diffeomorphisms 
\[
\begin{split}
N&\cong (N-\accentset{\circ}{D}^{n+k-1})\cup D^{n+k-1}\\
&\cong N\# S^{n+k-1} \\ 
&\cong N\# (S^{n-1}\times D^k)\cup_{S^{n-1}\times S^{k-1}}(D^n\times S^{k-1}),
\end{split}
\] 
such that the disk embedding of $D^k$ is isotopic to the embedding $D^k\stackrel{}{\hookrightarrow}S^{n-1}\times D^k\subseteq N\# S^{n+k-1}$ into the second factor, and $D^n\times S^{k-1}\subseteq N\# S^{n+k-1}$ is the restriction of the fibre bundle $N\# S^{n+k-1}\stackrel{p}{\longrightarrow} M$ over a small disk of $M$. Therefore, there is a series of diffeomorphisms 
\begin{equation}\label{toruseq}
\begin{split}
p^{-1}(M_0)
& \cong N\# S^{n+k-1} -(\accentset{\circ}{D}^{n}\times S^{k-1})\\
& \cong N\# (S^{n-1}\times D^{k}).
\end{split}
\end{equation}
In particular, the canonical inclusion map $N_0\stackrel{}{\hookrightarrow}N\# (S^{n-1}\times D^{k})$ gives a map $\kappa: N_0\stackrel{}{\hookrightarrow}N\# (S^{n-1}\times D^{k})\stackrel{\cong}{\longrightarrow}p^{-1}(M_0)$. 
Recall that the inclusion map $p^{-1}(M_0)\stackrel{i}{\hookrightarrow} N$ factors through an inclusion $p^{-1}(M_0)\stackrel{i_0}{\hookrightarrow} N_0$, where $N_0$ should be viewed as the manifold $N$ with a small open disk $\accentset{\circ}{D}^{n+k-1}\subseteq \accentset{\circ}{D}^{n}\times S^{k-1}$ removed. Then it is clear that image of the composite
\[
i_0\circ \kappa: N_0\stackrel{}{\hookrightarrow}N\# (S^{n-1}\times D^{k})\cong p^{-1}(M_0)\stackrel{i_0}{\hookrightarrow} N_0
\]
can be obtained by pushing the boundary $S^{n+k-1}$ of the codomain $N_0$ inwards along the collar of $S^{n+k-1}\subseteq N_0$, and hence is isotopic to the identity map. 

\begin{figure}[!htb]
\centering
\includegraphics[width=2.7in]{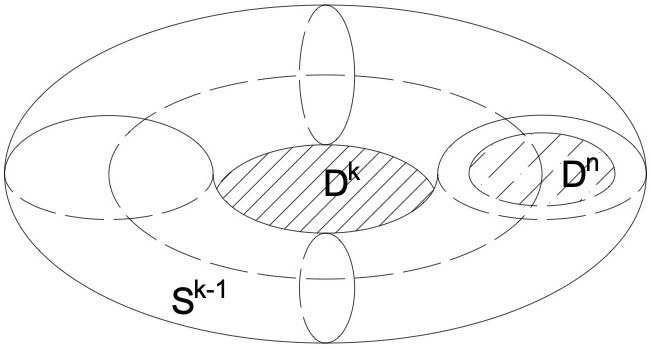}
\caption{The core sphere of $D^n\times S^{k-1}\hookrightarrow N$ bounds a disk;  \cite[Figure1]{HT23}}\label{torusfigure}
\end{figure}

(1b). Let us return to prove statement (1). Recall up to isotopy the embedded disk 
\[
D^k\stackrel{}{\hookrightarrow} N\# S^{n+k-1}\cong N\# (S^{n-1}\times D^k)\cup_{S^{n-1}\times S^{k-1}}(D^n\times S^{k-1}),
\]
in the proof of statement (2), lies in the second factor of $S^{n-1}\times D^k$. Along the identifications (\ref{toruseq}), this implies that the embedding of the disk $D^k$ factors through $p^{-1}(M_0)$ up to isotopy. 
Indeed, we can shrink the embedded disk further so that it contained entirely in the interior of the $(S^{n-1}\times D^{k})$-factor of $N\# (S^{n-1}\times D^{k})\cong p^{-1}(M_0)$, as displayed in Figure \ref{torusfigure}. 
\end{proof}

\begin{remark}\label{krmk}
As in \cite[Remark 3.2]{HT23}, under the condition of Lemma \ref{emblemma} the connecting map of the fibre bundle (\ref{SNMeq}) is null homotopic, and then there is a homotopy equivalence
\[
\Omega M\simeq S^{k-1}\times \Omega N.
\]
In particular, $S^{k-1}$ is an $H$-space, and hence $k=2$, $4$ or $8$ by Adams's solution to the famous Hopf invariant one problem \cite{Ada60}. This justifies the choice of $k$ at the beginning of the subsection. 
\end{remark}

The following theorem is Part (1) of Theorem \ref{geoinertthmintro} and is a geometric version of \cite[Theorem 1.2]{The24b} with partial generalizations. 

\begin{theorem}\label{geoinertthm}
Let $k=2$, $4$, or $8$. Let 
\begin{equation}\label{SNMeqintro}
S^{k-1}\stackrel{j}{\longrightarrow} N\stackrel{}{\longrightarrow} M
\end{equation}
be a fibre bundle of connected oriented closed smooth manifolds such that the fibre inclusion $j$ is null homotopic and ${\rm dim}(M)\geq k+2$. Then the attaching map for the top cell of $N$ is inert if and only if the attaching map for the top cell of $M$ is inert.

Additionally, the assertion holds after localization at any set of primes.
\end{theorem}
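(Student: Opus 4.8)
\textbf{Proof proposal for Theorem \ref{geoinertthm}.}

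The plan is to set up a diagram of fibre bundles over the restriction $M_0$ of $M$ and apply the comparison machinery developed in Sections \ref{sec: pullback}, \ref{sec: pushout} and \ref{sec: deg1}. By Remark \ref{krmk} the hypotheses force the connecting map of the bundle (\ref{SNMeqintro}) to be null homotopic, so $k$ is $2$, $4$ or $8$; this is already built into the statement. First I would observe that restricting the bundle $S^{k-1}\to N\to M$ over the lower skeleton $M_0$ gives a fibre bundle $S^{k-1}\to p^{-1}(M_0)\to M_0$, and that by Lemma \ref{emblemma} the inclusion $p^{-1}(M_0)\stackrel{i}{\hookrightarrow} N$ factors up to homotopy as $p^{-1}(M_0)\stackrel{i_0}{\hookrightarrow} N_0\stackrel{i_N}{\longrightarrow} N$ with a map $\kappa\colon N_0\to p^{-1}(M_0)$ satisfying $i_0\circ\kappa\simeq 1_{N_0}$. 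In particular $\Omega i_0$ has $\Omega\kappa$ as a right homotopy inverse, so the inertness of the top cell of $N$ is equivalent to the existence of a right homotopy inverse for $\Omega(N_0\hookrightarrow N)$, which by the factorization is equivalent to the existence of a right homotopy inverse for the looped inclusion $\Omega(p^{-1}(M_0)\stackrel{i}{\hookrightarrow} N)$.

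Next I would compare $p^{-1}(M_0)\to N$ with $M_0\to M$ via the morphism of fibre bundles
\[
\diagram
S^{k-1}\rto\ddouble & p^{-1}(M_0)\rto\dto^{i} & M_0\dto^{i_M} \\
S^{k-1}\rto & N\rto^-{p} & M,
\enddiagram
\]
in which the right square is a pullback and hence a homotopy pullback (since $p$ is a fibre bundle, hence a fibration), and this stays a homotopy pullback after looping. This is exactly the situation of Theorem \ref{pullbackthm}: one implication is Theorem \ref{pullbackthm} (1), namely if $\Omega i_M$ has a right homotopy inverse then so does $\Omega i$, giving that inertness of the top cell of $M$ implies inertness of the top cell of $N$; this is already the content of Theorem \ref{FEBthm} applied to the strict bundle, so no new work is needed there. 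For the converse direction I would use that the fibre inclusion $S^{k-1}\to p^{-1}(M_0)$ is still null homotopic (it factors through the null homotopic $S^{k-1}\to N$ composed with $\kappa$, or directly: its composite to $M_0\hookrightarrow M$ extends the null homotopic fibre in $N$), so the bundle $S^{k-1}\to p^{-1}(M_0)\to M_0$ splits after looping, giving $\Omega p^{-1}(M_0)\simeq \Omega S^{k-1}\times\Omega M_0$, and in particular $\Omega(S^{k-1}\to p^{-1}(M_0))$ has a left homotopy inverse. Dually—this is the point—the same holds for the whole bundle over $M$ too, but more usefully the hypothesis that the attaching map for the top cell of $N$ is inert translates (via the factorization of the previous paragraph) into $\Omega i$ having a right homotopy inverse. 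With $\Omega(S^{k-1}\hookrightarrow p^{-1}(M_0))$ having a left homotopy inverse, Theorem \ref{pullbackthm} (3) applied to the homotopy pullback square above yields that $\Omega i_M$ has a right homotopy inverse, i.e. the top cell attachment of $M$ is inert.

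The step I expect to be the main obstacle is checking that the factorization supplied by Lemma \ref{emblemma} interacts correctly with the homotopy pullback square, i.e. that the map $\Omega i$ whose right inverse we extract from "top cell of $N$ inert'' genuinely is the looped right vertical map of the displayed square and not merely homotopic to it after some unwinding; this requires tracking the identifications $i\simeq i_N\circ i_0$ and $i_0\circ\kappa\simeq 1_{N_0}$ carefully, but it is bookkeeping rather than a genuine difficulty. The local statement follows verbatim: every tool invoked—Lemma \ref{emblemma}, the homotopy pullback, Theorem \ref{pullbackthm}, the Hilton--Milnor-type splitting of the sphere bundle after looping—is valid after localization at any set of primes, and none of the arguments use any integral input beyond the existence of the $CW$/handle structures, which localize. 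Hence the equivalence of the two inertness conditions holds both integrally and $p$-locally for every set of primes.
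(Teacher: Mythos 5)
Your strategy is the same as the paper's: use Lemma \ref{emblemma} to show that inertness of the top cell attachment of $N$ is equivalent to $\Omega i$ having a right homotopy inverse, where $i\colon p^{-1}(M_0)\hookrightarrow N$, and then transfer the question across the homotopy pullback square relating $p^{-1}(M_0)\to N$ to $M_0\to M$ via Theorem \ref{pullbackthm}. Your first paragraph and the easy direction are correct. However, the verification of the hypothesis of Theorem \ref{pullbackthm} (3) contains a real error. From $j_0\simeq \ast$ one gets that the homotopy fibre of $j_0$, which is $\Omega M_0$, is equivalent to the homotopy fibre of a constant map, namely $S^{k-1}\times \Omega p^{-1}(M_0)$; so the splitting runs $\Omega M_0\simeq S^{k-1}\times \Omega p^{-1}(M_0)$ (compare Remark \ref{krmk}), \emph{not} $\Omega p^{-1}(M_0)\simeq \Omega S^{k-1}\times \Omega M_0$ as you assert --- the latter would require a section of the bundle and already fails for the Hopf bundle $S^1\to S^3\to S^2$ (compare $\pi_0$ of the two sides). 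Worse, the conclusion you extract, that $\Omega\bigl(S^{k-1}\to p^{-1}(M_0)\bigr)$ has a left homotopy inverse, is false --- $\Omega j_0$ is null homotopic while $\Omega S^{k-1}$ is not contractible --- and in any case it is not the hypothesis Theorem \ref{pullbackthm} (3) requires: that theorem asks for a left homotopy inverse of the looped \emph{top horizontal} map of the pullback square, i.e.\ of $\Omega\bigl(p^{-1}(M_0)\to M_0\bigr)$.

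The gap is local and fixable, and the fix is exactly the paper's argument: the correct splitting $\Omega M_0\simeq S^{k-1}\times \Omega p^{-1}(M_0)$ exhibits the looped restricted projection $\Omega\bigl(p^{-1}(M_0)\to M_0\bigr)$ as the inclusion of a factor up to homotopy, hence it has a left homotopy inverse, and Theorem \ref{pullbackthm} (3) then applies as you intend to give the converse direction. A secondary caution: the null homotopy of $j_0$ is not formal from $j\simeq\ast$ together with $\kappa$; it is the content of Lemma \ref{emblemma} (1), which uses the Whitney embedding theorem and ${\rm dim}(M)\geq k+2$ to compress the null homotopy of $j$ into $p^{-1}(M_0)$ up to isotopy, so your parenthetical justification should be replaced by a citation of that statement. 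With these repairs your proof is correct and coincides with the one in the paper, including the local version.
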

\begin{proof}
Let $M_0\stackrel{i_M}{\longrightarrow} M$ and $N_0\stackrel{i_N}{\longrightarrow} N$ be the inclusions of the deleted manifolds. Recall that the canonical inclusion $p^{-1}(M_0)\stackrel{i}{\longrightarrow} N$ factors as
\[
p^{-1}(M_0)\stackrel{i_0}{\longrightarrow} N_0 \stackrel{i_N}{\longrightarrow} N
\]
for an inclusion map $i_0$. 
By Lemma \ref{emblemma} (2), there is a smooth map $N_0\stackrel{\kappa}{\longrightarrow} p^{-1}(M_0)$ such that the composite 
\[
N_0\stackrel{\kappa}{\longrightarrow} p^{-1}(M_0)\stackrel{i_0}{\longrightarrow} N_0 \stackrel{i_N}{\longrightarrow} N
\] 
is homotopic to the inclusion $i_N$. If $\Omega i_N$ has a right homotopy inverse $s$, then $\Omega i\circ \Omega  \kappa\circ s\simeq \Omega i_N\circ \Omega i_0 \circ \Omega \kappa\circ s\simeq \Omega i_N\circ s\simeq 1_{\Omega N}$, that is, $\Omega\kappa\circ s$ is a right homotopy inverse of $\Omega i$. Conversely, if $\Omega i$ has a right homotopy inverse $t$, then $\Omega i_N\circ \Omega i_0 \circ t= \Omega i\circ t\simeq 1_{\Omega N}$, that is, $\Omega i_0\circ t$ is a right homotopy inverse of $\Omega i_N$. Therefore, $\Omega i_N$ has a right homotopy inverse if and only if $\Omega i$ has a right homotopy inverse.

Further, consider the pullback of fibre bundles
\begin{equation}\label{SNM0diag}
\begin{aligned}
\xymatrix{
S^{k-1} \ar@{=}[d] \ar[r]^<<<<{j_0} & p^{-1}(M_0) \ar[r]^<<<{p} \ar[d]^{i} & M_0 \ar[d]^{i_M} \\ 
S^{k-1}\ar[r]^{j}                   &            N               \ar[r]^{p}    &  M ,                
}
\end{aligned}
\end{equation}
where $j_0$ is the induced fibre inclusion, and the right square is also a homotopy pullback. By Lemma \ref{emblemma} (1) $j_0$ is null homotopic, and therefore $\Omega p$ admits a left homotopy inverse. Then applying Theorem \ref{pullbackthm} (3) to the right square of (\ref{SNM0diag}), we see that $\Omega i_M$ has a right homotopy inverse if and only if $\Omega i$ has a right homotopy inverse.

Combining the above together, it follows that $\Omega i_N$ has a right homotopy inverse if and only if $\Omega i$ has a right homotopy inverse, and hence if and only if $\Omega i_M$ has a right homotopy inverse. This proves the integral conclusion in the theorem. 

The same proof can show that the conclusion holds after localization at any set of primes.
\end{proof}

In Examples \ref{BTex1} and \ref{BTex2} we have seen that most highly connected closed manifolds satisfy the inertness property. Here we can apply Theorem \ref{geoinertthm} to give an alternative proof for certain highly connected closed smooth manifolds. More essential examples will be given in Sections \ref{sec: surgery} and \ref{sec: flag}. 

\begin{example}\label{DLex}
Let $M$ be a simply connected $4$-manifold such that $H_2(M;\mathbb{Z})\cong \mathbb{Z}^{\oplus d}$ with $d\geq 2$. By \cite[Corollary 1]{DL05} there is a principal bundle
\[
S^1\stackrel{}{\longrightarrow} \mathop{\#}\limits_{d-1} (S^2\times S^3) \stackrel{}{\longrightarrow} M.
\]
By Corollary \ref{SSsumcor} the attaching map for the top cell of $\mathop{\#}\limits_{d-1} (S^2\times S^3)$ is inert. 
It is clear that the fibre inclusion is null homotopic by connectivity. Therefore, Theorem \ref{geoinertthm} implies that the attaching map for the top cell of $M$ is inert.
\end{example}

\begin{example}\label{BGSex}
Let $M$ be a $3$-connected $8$-manifold such that $H_4(M;\mathbb{Z})\cong \mathbb{Z}^{\oplus d}$ with $d\geq 2$. If the attaching map for the top cell of $M$ is stably trivial, by \cite[Theorem B]{BGS24} there is a principal bundle
\[
S^3\stackrel{}{\longrightarrow} N \stackrel{}{\longrightarrow} M
\]
such that $N$ is homeomorphic to $\mathop{\#}\limits_{d-1} (S^4\times S^7)$. 
By Corollary \ref{SSsumcor} the attaching map for the top cell of $\mathop{\#}\limits_{d-1} (S^4\times S^7)$ is inert, and then so is the attaching map for the top cell of $N$. 
It is clear that the fibre inclusion is null homotopic by connectivity. Therefore, Theorem \ref{geoinertthm} implies that the attaching map for the top cell of $M$ is inert.
\end{example}

\newpage
\section{A specific surgery}
\label{sec: surgery}
In this section, we explore inertness around a specific surgery. The result is an application of Theorem \ref{geoinertthm}. It illustrates a method of constructing new examples of manifolds with inert top cell attachments via surgery. 

Let $N$ be an $n$-dimensional connected oriented closed smooth manifold. To define a surgery on $N\times S^{k-1}$ with $k\geq 2$, let 
\[
\tau: S^{k-1}\stackrel{}{\longrightarrow} SO(n)
\]
be a map. 
It determines a self diffeomorphism $
f_\tau: D^n \times S^{k-1}\stackrel{}{\longrightarrow}D^n \times S^{k-1}
$ 
by $f_\tau (a, t)=(\tau(t)a, t)$, where the special orthogonal group $SO(n)$ acts canonically on the disk $D^n$ by matrix multiplication. The composite 
\begin{equation}\label{ftaueq}
\bar{f}_\tau: D^n \times S^{k-1}\stackrel{f_\tau}{\longrightarrow}D^n \times S^{k-1}\stackrel{i\times 1_{S^{k-1}}}{\longrightarrow}N\times S^{k-1}
\end{equation}
is called a {\it framed $(k-1)$-embedding}, where $D^n\stackrel{i}{\hookrightarrow} N$ is an embedding of a small disk. A {\it $(k-1)$-surgery} on the product manifold $N\times S^{k-1}$ along the factor $S^{k-1}=\ast\times S^{k-1}$ with {\it the framing} $\tau$ is an operation removing the framed $(k-1)$-embedding $\bar{f}_\tau$ and replacing it with $S^{n-1}\times D^k$, with {\it effect} the $(n+k-1)$-dimensional connected closed smooth manifold
\[
\mathcal{G}^{\tau}(N):=\big((N\times S^{k-1})\backslash f_\tau(\accentset{\circ}{D}^{n}\times  S^{k-1})\big)\cup_{(S^{n-1}\times S^{k-1})} (S^{n-1}\times D^{k}).
\]
Since $N$ is oriented and $\tau$ is valued in $SO(n)$, the effect $\mathcal{G}^{\tau}(N)$ inherits an orientation from that of $N\times S^{k-1}$ through the surgery. When $\tau$ is trivial, we write $\tau=0$ and denote the manifold $\mathcal{G}^{\tau}(N)$ by $\mathcal{G}^{0}(N)$. 

Before exploring the inertness of the top cell attachment of $\mathcal{G}^{\tau}(N)$, we introduce some basic facts about $\mathcal{G}^{\tau}(N)$ and then study its local homotopy type. 
 
By abuse of notation, denote by 
\[
f_\tau: S^{n-1}\times S^{k-1}\stackrel{}{\longrightarrow} S^{n-1}\times S^{k-1}
\]
the restriction of $f_\tau$ (\ref{ftaueq}) on the boundary of $D^n\times S^{k-1}$. Then $f_\tau$ is a diffeomorphism such that $f_\tau(a, t)=(\tau(t)a, t)$. Define the composition and inverse operations 
\[
\tau\circ \tau': S^{k-1}\stackrel{}{\longrightarrow} SO(n) \  \ {\rm and}  \   \ 
 \tau^{-1}: S^{k-1}\stackrel{}{\longrightarrow} SO(n)
\]
by $(\tau\circ \tau')(t)=\tau(t)\cdot \tau'(t)$ and $\tau^{-1}(t)= (\tau(t))^{-1}$, respectively. 
Then $f_{\tau\circ \tau'}=f_{\tau}\circ f_{\tau'}$, and $f_\tau\circ f_{\tau^{-1}}= f_{\tau^{-1}}\circ f_\tau=f_0$ is the identity map. 

The construction of $\mathcal{G}^{\tau}(N)$ implies the following lemma immediately. 
\begin{lemma}\label{gyration-pushout-lemma}
There is a homotopy pushout
\[
  \label{gyrationpo} 
  \diagram 
      S^{n-1}\times S^{k-1}\rto^-{\pi_1}\dto^{(i\times 1_{S^{k-1}})\circ f_\tau} &  S^{n-1}\dto \\ 
      N_0\times S^{k-1}\rto & \mathcal{G}^{\tau}(N), 
  \enddiagram 
\]
where $\pi_1$ is the standard projection, $N_0$ is the manifold $N$ with a small disk removed, and $S^{n-1}\stackrel{i}{\longrightarrow}N_0$ is the inclusion of the boundary. ~$\qqed$
\end{lemma}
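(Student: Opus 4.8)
\textbf{Proof plan for Lemma \ref{gyration-pushout-lemma}.}

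The plan is to reduce the claimed homotopy pushout to a strict (topological) pushout arising directly from the gluing description of $\mathcal{G}^{\tau}(N)$. By definition,
\[
\mathcal{G}^{\tau}(N) = \big((N\times S^{k-1})\setminus f_\tau(\accentset{\circ}{D}^{n}\times S^{k-1})\big) \cup_{S^{n-1}\times S^{k-1}} (S^{n-1}\times D^{k}),
\]
so the first step is to identify the left-hand piece $(N\times S^{k-1})\setminus f_\tau(\accentset{\circ}{D}^{n}\times S^{k-1})$ up to diffeomorphism. Since $f_\tau$ is a diffeomorphism of $D^n\times S^{k-1}$ (preserving the $S^{k-1}$-factor) and $D^n\hookrightarrow N$ is the chosen embedded disk, this removed region is exactly $f_\tau$ applied to a tubular neighborhood of $\ast\times S^{k-1}$, and its complement in $N\times S^{k-1}$ is diffeomorphic to $N_0\times S^{k-1}$, where $N_0 = N\setminus\accentset{\circ}{D}^n$; the diffeomorphism is the restriction of $(i\times 1_{S^{k-1}})\circ f_\tau$ on the boundary torus. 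Thus $\mathcal{G}^{\tau}(N)$ is the strict pushout of
\[
N_0\times S^{k-1} \longleftarrow S^{n-1}\times S^{k-1} \longrightarrow S^{n-1}\times D^{k},
\]
where the left map is the boundary inclusion precomposed with $f_\tau$ (that is, $(i\times 1_{S^{k-1}})\circ f_\tau$ restricted to the boundary), and the right map is the boundary inclusion $S^{n-1}\times S^{k-1}\hookrightarrow S^{n-1}\times D^k$.

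The second step is to replace this strict pushout by the homotopy pushout in the statement. Since $D^k$ is contractible, the projection $S^{n-1}\times D^k \to S^{n-1}$ is a homotopy equivalence, and under it the composite $S^{n-1}\times S^{k-1}\hookrightarrow S^{n-1}\times D^k \xrightarrow{\simeq} S^{n-1}$ is homotopic to the projection $\pi_1\colon S^{n-1}\times S^{k-1}\to S^{n-1}$. Because the inclusion $S^{n-1}\times S^{k-1}\hookrightarrow S^{n-1}\times D^k$ is a cofibration (it is the inclusion of the boundary of a smooth manifold-with-boundary, hence a CW-pair up to homotopy), the strict pushout is also a homotopy pushout, and one may replace the top-right corner $S^{n-1}\times D^k$ by $S^{n-1}$ and the top map by $\pi_1$ without changing the homotopy type of the pushout; this is a direct application of the gluing lemma (Lemma \ref{gluinglemma}) to the evident map of diagrams whose vertical arrows are the identity on $N_0\times S^{k-1}$ and on $S^{n-1}\times S^{k-1}$, and the homotopy equivalence $S^{n-1}\times D^k\xrightarrow{\simeq}S^{n-1}$ on the remaining corner. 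This yields precisely the square
\[
\diagram
S^{n-1}\times S^{k-1}\rto^-{\pi_1}\dto^{(i\times 1_{S^{k-1}})\circ f_\tau} & S^{n-1}\dto \\
N_0\times S^{k-1}\rto & \mathcal{G}^{\tau}(N)
\enddiagram
\]
as a homotopy pushout.

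There is essentially no hard step here; the only point requiring a little care is the bookkeeping in the first step, namely verifying that the complement of the removed tube is genuinely $N_0\times S^{k-1}$ with the gluing map given by $(i\times 1_{S^{k-1}})\circ f_\tau$ on the boundary. This is immediate once one observes that $f_\tau$ is fibrewise over $S^{k-1}$ and that removing $f_\tau(\accentset{\circ}{D}^n\times S^{k-1})$ from $N\times S^{k-1}$ is the same as first removing $\accentset{\circ}{D}^n\times S^{k-1}$ (leaving $N_0\times S^{k-1}$) and then reinterpreting the boundary identification through $f_\tau$. Once this identification is recorded, the remainder is the formal manipulation of pushouts along cofibrations described above.
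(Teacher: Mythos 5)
Your argument is correct and is exactly the unwinding of what the paper treats as immediate from the construction: the complement of the removed tube is $N_0\times S^{k-1}$, the surgery effect is the strict pushout along the boundary cofibrations, and collapsing the contractible $D^k$-factor via the gluing lemma yields the stated homotopy pushout. The only cosmetic point is that Lemma \ref{gluinglemma} as stated puts the cofibration hypothesis on the right-hand legs, so you should apply it with the roles of the two legs swapped (the cofibrations being the boundary inclusions $(i\times 1_{S^{k-1}})\circ f_\tau$), which is harmless by the symmetry of pushouts.
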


To study the local homotopy type of $\mathcal{G}^{\tau}(N)$, we need the following lemma  which is contained in the proof of \cite[Lemma 2.1]{HT23}.  For an integer $k$, let $\mathcal{P}_{k}$ be a set of prime numbers such that
\[\label{pkdefeq}
\mathcal{P}_{k}= \left\{\begin{array}{cc}
\emptyset & k\equiv 3, 5, 6, 7 \ {\rm mod} \ 8, \\
\{2\} & k\equiv 1, 2\ {\rm mod} \ 8, k\neq 1, \\
\{ {\rm prime}~p~|~(p-1)~{\rm divides}~2s\} & k=4s.
\end{array}\right.
\]
Note that in \cite[Lemma 2.1]{HT23} the definition of $\mathcal{P}_k$ looks different for $k=4s$. However, the two definitions are equivalent by a classical fact about Bernoulli numbers; for instance, see \cite[Lemma 7.3]{HT24d} or \cite[Theorem B.3 and B.4]{MS74}.
\begin{lemma}\label{Jlemma}
After localization away from $\mathcal{P}_k$ there is a homotopy commutative diagram 
\[
\begin{aligned}
  \label{gammadgrm} 
  \xymatrix{ 
     S^{n-1}\times S^{k-1}\ar[r]^-{\pi_{1}}\ar[d]^{1\times\tau} & S^{n-1}\ar@{=}[d] \\ 
     S^{n-1}\times SO(n)\ar[r]^-{\theta} & S^{n-1},
     }
     \end{aligned}
\]
where $\theta$ is the standard action of $SO(n)$ on $\mathbb{R}^n$.  $\qqed$
\end{lemma}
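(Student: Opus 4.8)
\textbf{Proof proposal for Lemma \ref{Jlemma}.}

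The plan is to realize the desired homotopy between $\pi_1$ and $\theta\circ(1\times\tau)$ by reducing everything to a statement about the adjoint map $\widetilde{\tau}\colon S^{k-1}\to SO(n)$ and the evaluation fibration $SO(n)\to S^{n-1}$. First I would observe that the two composites $S^{n-1}\times S^{k-1}\to S^{n-1}$ in question agree on $S^{n-1}\times\ast$ (since $\theta(a,\tau(\ast))=a=\pi_1(a,\ast)$ after choosing the basepoint of $S^{k-1}$ to map to the identity of $SO(n)$), and also agree up to homotopy on $\ast\times S^{k-1}$ (both are nullhomotopic there: $\pi_1$ collapses that factor, while $\theta\circ(1\times\tau)$ restricted to $\ast\times S^{k-1}$ is the composite $S^{k-1}\xrightarrow{\tau}SO(n)\xrightarrow{\mathrm{ev}}S^{n-1}$, where $\mathrm{ev}$ evaluates on the basepoint of $S^{n-1}$). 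So the difference of the two maps is detected by a single obstruction class living in $[\,S^{n-1}\wedge S^{k-1}\,,\,S^{n-1}\,]=\pi_{n+k-2}(S^{n-1})$, namely the image of $[\tau]\in\pi_{k-1}(SO(n))$ under the composite with the evaluation map $J\colon\pi_{k-1}(SO(n))\to\pi_{k-1}(\Omega^{n-1}S^{n-1})\cong\pi_{n+k-2}(S^{n-1})$ — in other words, essentially the (unstable) $J$-homomorphism applied to $\tau$.

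The heart of the argument, then, is to show this obstruction class vanishes after localization away from $\mathcal{P}_k$. Here I would invoke the classical computation of the order of the image of the $J$-homomorphism: the stable $J$-homomorphism $J\colon\pi_{k-1}(SO)\to\pi_{k-1}^s$ has image a cyclic group whose order is $2$ for $k\equiv1,2\bmod 8$, is trivial for $k\equiv3,5,6,7\bmod 8$, and for $k=4s$ has order the denominator of $B_{2s}/4s$ (Adams, Quillen), whose prime divisors are exactly the primes $p$ with $(p-1)\mid 2s$ by the von Staudt--Clausen theorem. This is precisely the set $\mathcal{P}_k$ as defined in the excerpt (the two descriptions being reconciled by the Bernoulli number fact cited there, \cite[Lemma 7.3]{HT24d} or \cite[Theorem B.3 and B.4]{MS74}). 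Since $n-1$ is large relative to $k$ in the range where this surgery is of interest — or, more carefully, one passes through the stable value and uses that the unstable $J$-homomorphism in the relevant degree factors through the stable one in this metastable range — the obstruction class, being in the image of $J$ on $\tau$, is annihilated by an integer all of whose prime factors lie in $\mathcal{P}_k$. Therefore after inverting the primes outside $\mathcal{P}_k$ it is zero, and the two maps become homotopic. One then assembles this nullhomotopy with the strict equalities on the two axes to produce the homotopy commutative square, using that $S^{n-1}\times S^{k-1}$ is built from $S^{n-1}\vee S^{k-1}$ by attaching a single top cell along the Whitehead product, so that a map out of it is determined up to homotopy by its restrictions to the axes together with a choice of nullhomotopy of the difference on the top cell.

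I expect the main obstacle to be the bookkeeping around \emph{unstable} versus \emph{stable} $J$: one needs that in the precise degree $\pi_{n+k-2}(S^{n-1})$ the relevant part of the image of $[\tau]$ is controlled by the stable $J$-homomorphism (so that the order statement applies verbatim), which requires either a metastable-range argument or an appeal to the explicit structure of $\pi_{k-1}(SO(n))$ for the specific $k=2,4,8$ and the dimension constraints present in the surgery construction. Fortunately this lemma is quoted as being contained in the proof of \cite[Lemma 2.1]{HT23}, so in practice I would reduce to that reference once the square and obstruction class have been set up, rather than reprove the $J$-homomorphism order computation from scratch. The remaining steps — identifying the obstruction, checking the behavior on the axes, and reassembling the square — are routine once the localization statement is granted.
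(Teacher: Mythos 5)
Your proposal is correct and follows essentially the same route as the source: the paper gives no independent proof of Lemma \ref{Jlemma}, quoting it directly from the proof of \cite[Lemma 2.1]{HT23}, and that proof likewise identifies the separation element of $\pi_1$ and $\theta\circ(1\times\tau)$ in $\pi_{n+k-2}(S^{n-1})$ with the image of $[\tau]$ under the $J$-homomorphism and invokes the Adams--Quillen computation of the order of $\mathrm{im}(J)$, exactly as you do. The stable-versus-unstable issue you flag is not a real obstacle: the standing hypothesis $n\geq k+2$ (in force wherever the lemma is applied, e.g.\ Proposition \ref{localgrationprop}) places both $\pi_{k-1}(SO(n))\cong\pi_{k-1}(SO)$ and $\pi_{n+k-2}(S^{n-1})\cong\pi^{s}_{k-1}$ in the stable range, so the unstable $J$-homomorphism coincides with the stable one and the order statement applies verbatim.
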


The following proposition indicates that there is only one homotopy type for $\mathcal{G}^{\tau}(N)$ with various framings in the local category.

\begin{proposition}\label{localgrationprop}
Let $n\geq k+2\geq 4$. Suppose that $N$ is nilpotent. Then for any framing $\tau$ there is a homotopy equivalence
\[
\mathcal{G}^{\tau}(N)\simeq \mathcal{G}^{0}(N)
\]
after localization away from $\mathcal{P}_k$.
\end{proposition}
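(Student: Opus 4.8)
The plan is to compare the two homotopy pushouts describing $\mathcal{G}^{\tau}(N)$ and $\mathcal{G}^{0}(N)$ (Lemma \ref{gyration-pushout-lemma}) and to show that, after localization away from $\mathcal{P}_k$, the attaching data agree up to self-homotopy equivalences of the pieces. Concretely, Lemma \ref{gyration-pushout-lemma} gives homotopy pushouts with corner $S^{n-1}\times S^{k-1}$, one leg the projection $\pi_1$ to $S^{n-1}$, the other leg the map $(i\times 1_{S^{k-1}})\circ f_\tau$ into $N_0\times S^{k-1}$ (and, for $\tau=0$, the map $(i\times 1_{S^{k-1}})\circ f_0 = i\times 1_{S^{k-1}}$). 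Since $f_\tau$ is a diffeomorphism of $S^{n-1}\times S^{k-1}$ with homotopy inverse $f_{\tau^{-1}}$, the left-hand vertical map in both pushouts becomes identifiable once we show $\pi_1\circ f_\tau \simeq \pi_1$ after localization away from $\mathcal{P}_k$. This is exactly Lemma \ref{Jlemma}: the homotopy $1\times\tau$ followed by $\theta$ gives $\theta\circ(1\times\tau)\simeq \pi_1$ on $S^{n-1}\times S^{k-1}$, and $\theta\circ(1\times\tau)$ is by definition $\pi_1\circ f_\tau$ since $f_\tau(a,t)=(\tau(t)a,t)$ and $\theta$ is the standard $SO(n)$-action.

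The first step I would carry out is to set up, after localizing away from $\mathcal{P}_k$, the diagram of two homotopy pushout squares and to record that $f_\tau\colon S^{n-1}\times S^{k-1}\to S^{n-1}\times S^{k-1}$ is a homotopy equivalence. The second step is to invoke Lemma \ref{Jlemma} to obtain the homotopy $\pi_1\circ f_\tau\simeq\pi_1$ in the local category; here I would be careful that the homotopy is of maps out of $S^{n-1}\times S^{k-1}$ and that it is compatible with the basepoints, so that it can be fed into a comparison of pushouts. The third step is the comparison itself: consider the commutative (up to homotopy) diagram
\[
\diagram
S^{n-1}\times S^{k-1} \rto^-{\pi_1}\dto^{f_\tau}_{\simeq} & S^{n-1}\ddouble \\
S^{n-1}\times S^{k-1}\rto^-{\pi_1} & S^{n-1}
\enddiagram
\]
together with the vertical map $N_0\times S^{k-1}\stackrel{=}{\longrightarrow} N_0\times S^{k-1}$ and the relation $(i\times 1)\circ f_\tau$ versus $i\times 1$; gluing these along the homotopy $\pi_1\circ f_\tau\simeq\pi_1$ and applying the gluing lemma (Lemma \ref{gluinglemma}) to appropriate cofibrant replacements of the two pushout diagrams yields a homotopy equivalence $\mathcal{G}^{\tau}(N)\simeq\mathcal{G}^{0}(N)$. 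Since the pushout legs $\pi_1$ and $N_0\times S^{k-1}\hookrightarrow \mathcal{G}$ can each be arranged to be cofibrations (by standard $CW$- or NDR-pair arguments, using that $S^{n-1}\hookrightarrow N_0$ and $S^{n-1}\times S^{k-1}\hookrightarrow N_0\times S^{k-1}$ are cofibrations), the gluing lemma applies with all three comparison maps — identity on $S^{n-1}$, identity on $N_0\times S^{k-1}$, and $f_\tau$ on the corner — being homotopy equivalences.

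The main obstacle I anticipate is the bookkeeping needed to pass from the \emph{homotopy}-commutative square above to a strictly commutative comparison of \emph{strict} pushouts to which Lemma \ref{gluinglemma} applies: one must absorb the homotopy $\pi_1\circ f_\tau\simeq\pi_1$ into a deformation of one of the maps, replace the pushout diagrams by cofibration diagrams without disturbing the identification of the legs with $\pi_1$ and the disk-removal inclusion, and track that the orientations induced on the two effects match (which they do, since $\tau$ is valued in $SO(n)$, so $f_\tau$ is orientation preserving on $S^{n-1}\times S^{k-1}$). This is the same style of argument used in Lemma \ref{ltimeshlemma} and in Theriault--Huang's work \cite{HT23}, so the techniques are available; the condition $n\geq k+2\geq 4$ is used to guarantee $\mathcal{P}_k$ is as in Lemma \ref{Jlemma} and that the relevant low-dimensional coincidences do not interfere, and the nilpotence of $N$ ensures the localization is well behaved and Whitehead's theorem applies to the resulting homology isomorphism if one prefers to finish that way instead of via the gluing lemma.
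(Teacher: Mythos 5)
Your proposal is correct and follows essentially the same route as the paper: both use Lemma \ref{gyration-pushout-lemma} to present the two surgeries as homotopy pushouts over the corner $S^{n-1}\times S^{k-1}$, apply Lemma \ref{Jlemma} to get $\pi_1\circ f_\tau\simeq\pi_1$ locally, and compare the pushouts via $f_\tau$ on the corner and identities on the other pieces. Be aware that the ``bookkeeping'' you defer at the end — rectifying the homotopy-commutative square $\pi_1\circ f_\tau\simeq\pi_1$ so that a strict-pushout comparison (Lemma \ref{gluinglemma}) applies — is precisely where the paper expends its effort, building explicit double mapping cylinder maps $\widetilde{\varphi}$, $\widetilde{\psi}$ in both directions (the inverse using $f_{\tau^{-1}}$) and explicit homotopies showing they are mutually inverse; also note that the Whitehead-theorem shortcut you mention requires knowing $\mathcal{G}^{\tau}(N)$ itself is nilpotent (the paper's remark restricts that shortcut to $k\geq 3$ and $N$ simply connected).
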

\begin{proof}
We work in the homotopy category after localization away from $\mathcal{P}_k$. 
Since 
\[
(\pi_1\circ f_\tau)(a,t)=\pi_1(\tau(t)a, t)=\tau(t)a=(\theta\circ (1\times \tau))(a,t)
\] 
for any $(a, t)\in S^{n-1}\times S^{k-1}$, we have $\pi_1\circ f_\tau= \theta\circ (1\times \tau) \simeq \pi_1$ by Lemma \ref{Jlemma}. Then there is a homotopy commutative diagram 
\begin{equation}\label{ftaudiag}
\diagram
N_0 \times S^{k-1} \ddouble   & S^{n-1}\times S^{k-1} \lto_{(i\times 1_{S^{k-1}})\circ f_\tau}  \rto^<<<{\pi_1}  \dto^{f_\tau} & S^{n-1} \ddouble \\
N_0 \times S^{k-1}  & S^{n-1}\times S^{k-1} \lto_{i\times 1_{S^{k-1}}}  \rto^<<<{\pi_1}  & S^{n-1}. 
\enddiagram
\end{equation}
Lemma \ref{gyration-pushout-lemma} implies that the homotopy pushouts of the first and second rows are the manifolds $\mathcal{G}^{\tau}(N)$ and $\mathcal{G}^{0}(N)$, respectively. 
By the universal property of homotopy pushout, the diagram induces a map of homotopy pushouts  
\[
\varphi: \mathcal{G}^{\tau}(N)\stackrel{}{\longrightarrow}\mathcal{G}^{0}(N).
\]

By a similar argument with the fact $f_{\tau}\circ f_{\tau^{-1}}=1_{S^{n-1}\times S^{k-1}}$,  we can construct a map of homotopy pushouts
\[
\psi: \mathcal{G}^{0}(N)\stackrel{}{\longrightarrow}\mathcal{G}^{\tau}(N).
\]
We want to show that the composites $\psi\circ \varphi$ and  $\varphi\circ \psi$ are homotopic to the identity maps. This technical part is divided into two steps. 

{\it (I). Give explicit constructions of $\varphi$ and $\psi$ by using double mapping cylinders.}

Denote by 
\[
\widetilde{\mathcal{G}}^\tau(N):=(N_0\times S^{k-1})\mathop{\cup}\limits_{(i\times 1_{S^{k-1}})\circ f_\tau} (S^{n-1}\times S^{k-1}\times [0,1])\mathop{\cup}\limits_{\pi_1} S^{n-1}
\]
the double mapping cylinder. It is a homotopy pushout of $(i\times 1_{S^{k-1}})\circ f_\tau$ and $\pi_1$, and then is homotopy equivalent to $\mathcal{G}^{\tau}(N)$ by Lemma \ref{gyration-pushout-lemma}. 

By $\pi_1\circ f_\tau\simeq \pi_1$ from Diagram (\ref{ftaudiag}), there exists a homotopy
\[
H: S^{n-1}\times S^{k-1}\times [\frac{1}{2}, 1] \stackrel{}{\longrightarrow} S^{n-1}
\]
such that $H(\frac{1}{2})=\pi_1\circ f_\tau$ and $H(1)=\pi_1$. Then it is straightforward to check that the maps
\[
\begin{split}
1_{N_0\times S^{k-1}}& :N_0\times S^{k-1}\stackrel{=}{\longrightarrow} N_0\times S^{k-1},   \\
F  & :      S^{n-1}\times S^{k-1}\times [0, \frac{1}{2}]\longrightarrow  S^{n-1}\times S^{k-1}\times [0, 1],  \ \ \    F(a, t, s)= (f_\tau(a, t), 2s)
      \\ 
H& :S^{n-1}\times S^{k-1}\times [\frac{1}{2}, 1] \stackrel{}{\longrightarrow} S^{n-1} \\ 
1_{S^{n-1}}& :S^{n-1} \stackrel{=}{\longrightarrow} S^{n-1}
\end{split}
\]
determine a map of double mapping cylinders 
\[
\widetilde{\varphi}: \widetilde{\mathcal{G}}^\tau(N)
\stackrel{}{\longrightarrow} \widetilde{\mathcal{G}}^0(N).
\]
Comparing the construction with Diagram (\ref{ftaudiag}), we see that $\widetilde{\varphi}$ is an explicit model of $\varphi$. 

Similarly, let $\nu: [\frac{1}{2}, 1]\stackrel{}{\longrightarrow}[\frac{1}{2}, 1]$ be the map defined by $\nu(s)=\frac{3}{2}-s$. Since $f_{\tau}\circ f_{\tau^{-1}}=1_{S^{n-1}\times S^{k-1}}$, the composite 
\[
\overline{H}: S^{n-1}\times S^{k-1}\times [\frac{1}{2}, 1] \stackrel{f_{\tau^{-1}}\times \nu}{\longrightarrow}S^{n-1}\times S^{k-1}\times [\frac{1}{2}, 1] \stackrel{H}{\longrightarrow} S^{n-1}
\]
satisfies that $\overline{H}(\frac{1}{2})=\pi_1\circ f_{\tau^{-1}}$ and $\overline{H}(1)= \pi_1$; that is, $\overline{H}$ is a homotopy from $\pi_1\circ f_{\tau^{-1}}$ to $\pi_1$. 
Then it is straightforward to check that the maps
\[
\begin{split}
1_{N_0\times S^{k-1}}& :N_0\times S^{k-1}\stackrel{=}{\longrightarrow} N_0\times S^{k-1},   \\
\overline{F}    & :       S^{n-1}\times S^{k-1}\times [0, \frac{1}{2}]\longrightarrow  S^{n-1}\times S^{k-1}\times [0, 1],  \ \ \    \overline{F}(a, t, s)= (f_{\tau^{-1}}(a, t), 2s)
\\ 
\overline{H}& :S^{n-1}\times S^{k-1}\times [\frac{1}{2}, 1] \stackrel{}{\longrightarrow} S^{n-1} \\ 
1_{S^{n-1}}& :S^{n-1} \stackrel{=}{\longrightarrow} S^{n-1}
\end{split}
\]
determine a map of double mapping cylinders 
\[
\widetilde{\psi}: \widetilde{\mathcal{G}}^0(N)
\stackrel{}{\longrightarrow} \widetilde{\mathcal{G}}^\tau (N),
\]
which is an explicit model of $\psi$. 

{\it (II). Show that the composites $ \widetilde{\psi}\circ  \widetilde{\varphi}$ and  $ \widetilde{\varphi}\circ  \widetilde{\psi}$ are homotopic to the identity maps.}

The composite $\widetilde{\mathcal{G}}^\tau(N)
\stackrel{\widetilde{\varphi}}{\longrightarrow} \widetilde{\mathcal{G}}^0(N) \stackrel{\widetilde{\psi}}{\longrightarrow} \widetilde{\mathcal{G}}^\tau (N)$ is determined by its restrictions on the components  
\[
\begin{split}
1_{N_0\times S^{k-1}}& :N_0\times S^{k-1}\stackrel{=}{\longrightarrow} N_0\times S^{k-1},   \\
\overline{F}\circ F& : S^{n-1}\times S^{k-1}\times [0, \frac{1}{4}]\longrightarrow  S^{n-1}\times S^{k-1}\times [0, 1], \ \ \   \\
\overline{H} \circ F   & :    S^{n-1}\times S^{k-1}\times [\frac{1}{4},\frac{1}{2}]\longrightarrow  S^{n-1},  \ \ \         \\ 
H& :S^{n-1}\times S^{k-1}\times [\frac{1}{2}, 1] \stackrel{}{\longrightarrow} S^{n-1} \\ 
1_{S^{n-1}}& : S^{n-1} \stackrel{=}{\longrightarrow} S^{n-1},
\end{split}
\]
where 
\[
\begin{split}
(\overline{F}\circ F)(a, t, s) &= (a, t, 4s), \\
(\overline{H} \circ F)(a,t,s) &=(H\circ (f_{\tau^{-1}}\times \nu)\circ F)(a,t,s)=H(a, t, \frac{3}{2}-2s).
\end{split}
\]
Note that the homotopies $H(-,-, \frac{3}{2}-2s)$ and $H(-,-,s)$ are reverse to each other (up to a scalar). 
Then by Lemma \ref{H+Hv=1lemma} below, the concatenation of the two homotopies is homotopic to the trivial homotopy 
$
H_1=\pi_1: S^{n-1}\times S^{k-1}\times [\frac{1}{4},1]\stackrel{}{\longrightarrow}  S^{n-1}$. 
It is clear that this trivial homotopy can be replaced by the projection
\[
S^{n-1}\times S^{k-1}\times [\frac{1}{4},1]\stackrel{1_{S^{n-1}\times S^{k-1}} \times c}{\longrightarrow}  S^{n-1}\times S^{k-1} \times \{1\}, \ \  \ \ \       c(s)=1, 
\] 
without changing the map between the double mapping cylinders. 
Therefore, the composite $\widetilde{\mathcal{G}}^\tau(N)\stackrel{\widetilde{\varphi}}{\longrightarrow} \widetilde{\mathcal{G}}^0(N) \stackrel{\widetilde{\psi}}{\longrightarrow} \widetilde{\mathcal{G}}^\tau (N)$ is homotopic to a map 
$
\mathcal{I}: \widetilde{\mathcal{G}}^\tau(N)\stackrel{}{\longrightarrow} \widetilde{\mathcal{G}}^\tau(N)
$
that is determined by the maps
\[
\begin{split}
1_{N_0\times S^{k-1}}& :N_0\times S^{k-1}\stackrel{=}{\longrightarrow} N_0\times S^{k-1},   \\
1_{S^{n-1}\times S^{k-1}}\times 4& :  S^{n-1}\times S^{k-1}\times [0, \frac{1}{4}]\longrightarrow  S^{n-1}\times S^{k-1}\times [0, 1], \ \ \  4(s)=4s  \\
1_{S^{n-1}\times S^{k-1}} \times c &:S^{n-1}\times S^{k-1}\times [\frac{1}{4},1]\stackrel{}{\longrightarrow}  S^{n-1}\times S^{k-1} \times \{1\}, \ \  \ \ \       c(s)=1   \\ 
1_{S^{n-1}}& :S^{n-1} \stackrel{=}{\longrightarrow} S^{n-1}. 
\end{split}
\]
Using the liner homotopy
\[
[0, 1]\times I \stackrel{}{\longrightarrow} [0,1],   \ \ \ \ 
(s, d)\longmapsto 
\left\{\begin{array}{cc}
\frac{4s}{3d+1}& 0 \leq s \leq \frac{3d+1}{4} \\
1 &\frac{3d+1}{4}\leq s\leq 1,
\end{array}\right.
\]
we can construct a linear homotopy from $\mathcal{I}$ to the identity map. It follows that the composite $\widetilde{\mathcal{G}}^\tau(N)\stackrel{\widetilde{\varphi}}{\longrightarrow} \widetilde{\mathcal{G}}^0(N) \stackrel{\widetilde{\psi}}{\longrightarrow} \widetilde{\mathcal{G}}^\tau (N)$ is homotopic to the identity map. 

Similarly, the composite $\widetilde{\mathcal{G}}^0(N)\stackrel{\widetilde{\psi}}{\longrightarrow} \widetilde{\mathcal{G}}^\tau(N) \stackrel{\widetilde{\varphi}}{\longrightarrow} \widetilde{\mathcal{G}}^0 (N)$ is homotopic to the identity map. Therefore, $\widetilde{\varphi}$ is a homotopy equivalence, and so is $\mathcal{G}^{\tau}(N)\stackrel{\varphi}{\longrightarrow}\mathcal{G}^{0}(N)$.
\end{proof}

\begin{remark}
The proof of Proposition \ref{localgrationprop} can be considerably simplified provided $k\geq 3$ and $N$ is simply connected. 

Indeed, By Diagram \eqref{ftaudiag} the map of homotopy pushouts $\varphi: \mathcal{G}^{\tau}(N)\stackrel{}{\longrightarrow}\mathcal{G}^{0}(N)$ induces an isomorphism on homology. If $k\geq 3$ and $N$ is simply connected, all the spaces in Diagram \eqref{ftaudiag} are simply connected, and so are $\mathcal{G}^{\tau}(N)$ and $\mathcal{G}^{0}(N)$. It follows that $\varphi$ is a homotopy equivalence by the Whitehead theorem.
\end{remark}

\begin{lemma}\label{H+Hv=1lemma}
Let $H: X\times I\stackrel{}{\longrightarrow} Y$ be a homotopy such that $H_0=H_1=f$. 
If $H_s=H_{1-s}$ for each $0\leq s\leq \frac{1}{2}$, then there is a homotopy
\[
\widetilde{H}: X\times I\times I\stackrel{}{\longrightarrow} Y
\]
such that $\widetilde{H}(-, -, 0)=H$, $\widetilde{H}(-, -, 1)= f\times 1_{I}$ and $\widetilde{H}(-, 0, -)=\widetilde{H}(-, 1, -)=f\times 1_{I}$.
\end{lemma}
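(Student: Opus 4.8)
The statement to prove, Lemma \ref{H+Hv=1lemma}, asserts that a homotopy $H\colon X\times I\to Y$ with $H_0=H_1=f$ that is \emph{palindromic}, i.e.\ $H_s=H_{1-s}$ for $0\le s\le \tfrac12$, is homotopic rel endpoints of the $I$-parameter to the constant homotopy $f\times 1_I$. This is a purely formal statement about paths in mapping spaces: $H$ is a loop in $\mathrm{Map}(X,Y)$ based at $f$, obtained by concatenating the path $\gamma\colon s\mapsto H_{s/2}$ (from $f$ to $H_{1/2}$) with its reverse $\bar\gamma$. Any loop of the form $\gamma\cdot\bar\gamma$ is null homotopic rel basepoint, and that null homotopy is exactly the $\widetilde H$ we want.

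The plan is to write down the standard contraction of $\gamma\cdot\bar\gamma$ explicitly at the level of homotopies, so that everything is manifestly continuous. First I would reparametrize: define $\gamma\colon X\times I\to Y$ by $\gamma(x,u)=H(x,u/2)$, so that $\gamma(x,0)=f(x)$, $\gamma(x,1)=H(x,\tfrac12)$, and by the palindromic hypothesis $H(x,t)=\gamma(x,1-|2t-1|)$ for all $t\in I$. Then I would use the classical ``push the turning point back'' homotopy: set
\[
\widetilde H(x,t,d)=\gamma\bigl(x,\ (1-d)\,(1-|2t-1|)\bigr),
\]
or equivalently $\widetilde H(x,t,d)=H\bigl(x,\ \tfrac12(1-d)(1-|2t-1|)\bigr)$. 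One checks directly: at $d=0$ this is $\gamma(x,1-|2t-1|)=H(x,t)$; at $d=1$ it is $\gamma(x,0)=f(x)$, independent of $t$, so $\widetilde H(-,-,1)=f\times 1_I$; and at $t=0$ or $t=1$ we have $1-|2t-1|=0$, hence $\widetilde H(x,0,d)=\widetilde H(x,1,d)=\gamma(x,0)=f(x)$, so both ends are the constant homotopy. Continuity is clear since $H$, the absolute value, and the linear maps involved are all continuous, and the formula is well defined at $t=\tfrac12$ and at $d=0,1$ from either side.

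There is essentially no obstacle here; the only thing to be careful about is the precise normalization so that the boundary conditions stated in the lemma come out on the nose (in particular that $\widetilde H(-,0,-)=\widetilde H(-,1,-)=f\times 1_I$ as written, rather than just being \emph{some} constant homotopy), and that is handled by the explicit formula above. If one prefers a coordinate-free phrasing, the same argument reads: $t\mapsto H(x,t)$ factors through the quotient $I\to I$, $t\mapsto 1-|2t-1|$, followed by $u\mapsto \gamma(x,u)$; since the quotient map is null homotopic rel $\{0,1\}$ in the space of maps $(I,\partial I)\to (I,\{0\})$ (contract along the linear homotopy $u\mapsto (1-d)u$ precomposed with it), composing with $\gamma$ gives $\widetilde H$. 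I would present the explicit formula, verify the four boundary identities in one line each, and conclude.
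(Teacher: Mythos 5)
Your proposal is correct and takes essentially the same approach as the paper: both exhibit an explicit null-homotopy of the palindromic loop $\gamma\cdot\bar\gamma$ rel endpoints, using the hypothesis $H_s=H_{1-s}$ exactly once (you use it to check $\widetilde H(-,-,0)=H$ on $t\ge\tfrac12$; the paper uses it to check well-definedness at the seam $s=\tfrac12(1+t)$ of its piecewise formula). The only difference is cosmetic: the paper freezes an expanding middle interval while you uniformly rescale the distance travelled along the half-path, and your single closed formula $\widetilde H(x,t,d)=H\bigl(x,\tfrac12(1-d)(1-|2t-1|)\bigr)$ is, if anything, slightly cleaner.
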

\begin{proof}
Define 
\[
\widetilde{H}(x, s, t)=
\left\{\begin{array}{cc}
H(x, s) &  0\leq s\leq \frac{1}{2}(1-t)\\
H(x, \frac{1}{2}(1-t)) &\frac{1}{2}(1-t) \leq  s\leq \frac{1}{2}(1+t)\\
H(x, s) & \frac{1}{2}(1+t)\leq s\leq  1.
\end{array}\right.
\]
As $H_s=H_{1-s}$, the map $\widetilde{H}$ is well-defined. It is straightforward to check it is the required homotopy. 
\end{proof}

We want to prove that the top cell attachment of $\mathcal{G}^{\tau}(N)$ is locally inert when $k=2$ or $4$. 
For this the following lemma is crucial. 
\begin{lemma}\label{bundlegyrationlemma}
Suppose that $N$ is even dimensional. The following hold:
\begin{itemize}
\item[(1).]
if $k=2$, $n\equiv 0~{\rm mod}~4$ and $n\geq 4$, there is a principal bundle
\[
S^1\stackrel{}{\longrightarrow} \mathcal{G}^{0}(N) \stackrel{}{\longrightarrow} N\# \mathbb{C}P^{\frac{n}{2}};
\]
\item[(2).]
if $k=2$, $n\equiv 2~{\rm mod}~4$ and $n\geq 6$, there is a principal bundle
\[
S^1\stackrel{}{\longrightarrow} \mathcal{G}^{\tau_1}(N) \stackrel{}{\longrightarrow} N\# \mathbb{C}P^{\frac{n}{2}}
\]
for some $\tau_1: S^1\stackrel{}{\longrightarrow} SO(n)$;
\item[(3).] 
if $k=4$, $n\equiv 0~{\rm mod}~4$ and $n\geq 8$, there is a principal bundle
\[
S^3\stackrel{}{\longrightarrow} \mathcal{G}^{\tau_3}(N) \stackrel{}{\longrightarrow} N\# \mathbb{H}P^{\frac{n}{4}}
\]
for some $\tau_3: S^3\stackrel{}{\longrightarrow} SO(n)$.
\end{itemize}
Additionally, in all the cases the fibre inclusions are null homotopic. 
\end{lemma}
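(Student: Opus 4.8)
The plan is to produce the stated bundles geometrically, by cutting both $\mathcal{G}^\tau(N)$ and the relevant Hopf sphere along the same handle decomposition. First I would fix $k=2$ (the case $k=4$ is identical with $\mathbb{C}$ replaced by $\mathbb{H}$): write $P=\mathbb{C}P^{\frac{n}{2}}$ and let $q\colon S^{n+1}\to P$ be the Hopf fibration, where $S^{n+1}\subseteq\mathbb{C}^{\frac{n}{2}+1}$ and $S^1$ acts by scalar multiplication. Decompose $S^{n+1}=A\cup B$ along $\{|z_0|=\tfrac1{\sqrt2}\}$, where $B=\{|z_0|\ge\tfrac1{\sqrt2}\}\cong S^1\times D^n$ is an $S^1$-invariant tubular neighbourhood of the Hopf fibre $\{z_0=1\}\times\{0\}$ — made equivariantly trivial by the change of coordinates $(\theta,w)\mapsto(\theta,e^{-i\theta}w)$ — and $A=\{|z_0|\le\tfrac1{\sqrt2}\}\cong S^{n-1}\times D^2$ carries the restricted diagonal action. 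Then $q(\accentset{\circ}{B})$ is an open disk in $P$, $P_0:=P\setminus q(\accentset{\circ}{B})$, and $q^{-1}(P_0)=A$ is a principal $S^1$-bundle over $P_0$ with total space $S^{n-1}\times D^2$; for general $k$ this reads $q^{-1}(P_0)=S^{n+k-1}\setminus(\accentset{\circ}{D}^n\times S^{k-1})\cong S^{n-1}\times D^k$.

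Next I would assemble a principal $S^{k-1}$-bundle over $N\#P=N_0\cup_{S^{n-1}}P_0$ by clutching: over $N_0$ take the trivial bundle $N_0\times S^{k-1}$, over $P_0$ take $q^{-1}(P_0)\cong S^{n-1}\times D^k$, and glue along the common boundary $S^{n-1}\times S^{k-1}$ by an $S^{k-1}$-equivariant diffeomorphism $\psi$ covering a diffeomorphism of $S^{n-1}$. By construction the total space is $(N_0\times S^{k-1})\cup_\psi(S^{n-1}\times D^k)$, which, by Lemma \ref{gyration-pushout-lemma} and the definition of the surgery, is precisely $\mathcal{G}^\tau(N)$ for the framing $\tau$ recorded by $\psi$. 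The equivariance constraint pins down $\tau$: writing the surgery gluing in its normal form $\psi(x,t)=(\tau(t)x,t)$ and matching it against the diagonal action on $\partial q^{-1}(P_0)$ forces $\tau(\lambda)=\tau(1)\cdot\bar\lambda\,I$, where $I=I_{n/2}\in U(\tfrac{n}{2})\subseteq SO(n)$ for $k=2$ and $I=I_{n/4}\in Sp(\tfrac{n}{4})\subseteq SO(n)$ for $k=4$; absorbing the constant factor $\tau(1)$, which changes the surgery effect only by a diffeomorphism of $N_0$, gives $\tau_1(\lambda)=\bar\lambda\,I_{n/2}$, respectively $\tau_3(\mu)=\bar\mu\,I_{n/4}$. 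This establishes the bundles of cases (2) and (3).

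For case (1), I would observe that $\tau_1(\lambda)=\bar\lambda\,I_{n/2}$ is a loop in $U(\tfrac{n}{2})$ of winding number $-\tfrac{n}{2}$, so its image in $\pi_1(SO(n))\cong\mathbb{Z}/2$ is $n/2\bmod 2$, which vanishes exactly when $n\equiv 0\bmod 4$; since homotopic framings yield diffeomorphic surgery effects, $\mathcal{G}^{\tau_1}(N)\cong\mathcal{G}^{0}(N)$ in that range, producing the principal $S^1$-bundle over $N\#\mathbb{C}P^{\frac{n}{2}}$. Finally, the fibre inclusion is null homotopic in every case: taking the fibre over an interior point of $P_0$ lying on the zero section $S^{n-1}\times\{0\}\subseteq S^{n-1}\times D^k$, this fibre is a Hopf subsphere $S^{k-1}\subseteq S^{n-1}$, and since $k-1<n-1$ we have $\pi_{k-1}(S^{n-1})=0$, so it bounds a disk in $S^{n-1}\subseteq q^{-1}(P_0)$ and hence in $\mathcal{G}^\tau(N)$; as all fibres over a connected base are freely homotopic, $j\simeq 0$.

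The hard part will be the middle step: writing the equivariant gluing $\psi$ in normal form and running the clutching computation carefully enough to identify the homotopy classes of $\tau_1$ and $\tau_3$, in particular to see that $\tau_1$ is null homotopic precisely when $n\equiv 0\bmod 4$. This forces one to set up the handle decomposition of the Hopf sphere together with the explicit $\mathbb{C}$- (resp.\ $\mathbb{H}$-) scalar action and to invoke the standard fact that homotopic framings give diffeomorphic surgeries; once the framing is identified the remainder is formal clutching and an elementary homotopy-group computation, and the passage from this lemma to the inertness statements of Theorem \ref{gyration-inert-thmintro} is immediate via Theorem \ref{geoinertthm}.
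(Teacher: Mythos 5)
Your proposal is correct, but it takes a more self-contained route than the paper, whose own proof is essentially a proof by citation: the paper forms the pullbacks $E_1,E_2$ of the standard Hopf bundles along the pinch maps $N\#\mathbb{C}P^{\frac{n}{2}}\to\mathbb{C}P^{\frac{n}{2}}$ and $N\#\mathbb{H}P^{\frac{n}{4}}\to\mathbb{H}P^{\frac{n}{4}}$, then quotes \cite[Lemma 3.4, Corollary 3.3]{HT23} and \cite[Theorem B]{Dua22} to identify these total spaces as gyrations, and \cite[Lemma 3.1]{HT23} for the null-homotopy of the fibre inclusions. Your clutching construction produces exactly the same bundle --- a pullback along the pinch map is trivial over $N_0$ and restricts to the Hopf bundle over $P_0$, which is precisely your gluing datum --- but you reprove the cited identifications from scratch. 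The key computations check out: the change of coordinates $(\theta,w)\mapsto(\theta,e^{-i\theta}w)$ does trivialise the tube around the Hopf fibre equivariantly; equivariance of a normal-form gluing does force $\tau$ to be the scalar loop, up to a constant in $SO(n)$ and up to inversion, neither of which changes the surgery effect or the relevant homotopy class; and $\lambda\mapsto\lambda I_{n/2}$ represents $n/2$ times the generator of $\pi_1(U(\tfrac{n}{2}))\cong\mathbb{Z}$, hence $n/2\bmod 2$ in $\pi_1(SO(n))\cong\mathbb{Z}/2$, which vanishes exactly when $n\equiv 0\bmod 4$; combined with the standard fact that homotopic framings yield diffeomorphic surgery effects, this recovers case (1) from case (2). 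What your version buys is an explicit framing and a conceptual explanation of the mod-$4$ dichotomy (the parity of the complex dimension $n/2$), which the citation-based proof leaves opaque; what it costs is the convention-sensitive bookkeeping (side of the scalar action, direction of the gluing, orientation of the connected summand) that you rightly flag as the delicate step, none of which threatens the conclusions since cases (2) and (3) only assert existence of a framing and case (1) only needs the class of $\tau_1$ in $\pi_1(SO(n))$ to vanish.
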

\begin{proof}
Recall there are the standard principal bundles 
\[
S^1\stackrel{}{\longrightarrow} S^{n+1}\stackrel{}{\longrightarrow} \mathbb{C}P^{\frac{n}{2}}\ \ \  {\rm and} \ \ \ 
S^3\stackrel{}{\longrightarrow} S^{n+3}\stackrel{}{\longrightarrow} \mathbb{H}P^{\frac{n}{4}}
\]
over projective spaces for appropriate values of $n$. Taking the pullback of the bundles along the canonical quotient maps $N\# \mathbb{C}P^{\frac{n}{2}} \stackrel{}{\longrightarrow} \mathbb{C}P^{\frac{n}{2}}$ and $N\# \mathbb{H}P^{\frac{n}{4}} \stackrel{}{\longrightarrow} \mathbb{H}P^{\frac{n}{4}}$, we obtain the principal bundles
\[
S^1\stackrel{}{\longrightarrow} E_1\stackrel{}{\longrightarrow} N\# \mathbb{C}P^{\frac{n}{2}}\ \ \  {\rm and} \ \ \ 
S^3\stackrel{}{\longrightarrow} E_2\stackrel{}{\longrightarrow}N\# \mathbb{H}P^{\frac{n}{4}},
\]
respectively. 

(1). Suppose that $k=2$, $n\equiv 0~{\rm mod}~4$ and $n\geq 4$. 
By \cite[Lemma 3.4]{HT23} or \cite[Theorem B]{Dua22}, the total manifold $E_1$ of the bundle $S^1\stackrel{}{\longrightarrow} E_1\stackrel{}{\longrightarrow} N\# \mathbb{C}P^{\frac{n}{2}}$ is diffeomorphic to $\mathcal{G}^{0}(N)$. This proves statement (1).

(2). Suppose that $k=2$, $n\equiv 2~{\rm mod}~4$ and $n\geq 6$. 
By \cite[Corollary 3.3]{HT23}, the total manifold $E_1$ of the bundle $S^1\stackrel{}{\longrightarrow} E_1\stackrel{}{\longrightarrow} N\# \mathbb{C}P^{\frac{n}{2}}$ is diffeomorphic to $\mathcal{G}^{\tau_1}(N)$ for some framing $S^1\stackrel{\tau_1}{\longrightarrow} SO(n)$. This proves statement (2).

(3). Suppose that $k=4$, $n\equiv 0~{\rm mod}~4$ and $n\geq 8$. 
By \cite[Corollary 3.3]{HT23}, the total manifold $E_2$ of the bundle $S^3\stackrel{}{\longrightarrow} E_2\stackrel{}{\longrightarrow}N\# \mathbb{H}P^{\frac{n}{4}}$ is diffeomorphic to $\mathcal{G}^{\tau_3}(N)$ for some framing $S^3\stackrel{\tau_3}{\longrightarrow} SO(n)$. This proves statement (3).

Finally, the fibre inclusions are null homotopic by \cite[Lemma 3.1]{HT23}.
\end{proof}

We can now prove the main theorem of this section.

\begin{proof}[Proof of Theorem \ref{gyration-inert-thmintro}]
For statement (1), by Lemma \ref{bundlegyrationlemma} (1) and (2), there is a principle bundle
\[
S^1\stackrel{}{\longrightarrow} \mathcal{G}^{\tau_1}(N) \stackrel{}{\longrightarrow} N\# \mathbb{C}P^{\frac{n}{2}}
\]
for some framing $\tau_1: S^1\stackrel{}{\longrightarrow} SO(n)$ and the fibre inclusion is null homotopic. Since the attaching map for the top cell of $N$ is inert, so is the attaching map for the top cell of $ N\# \mathbb{C}P^{\frac{n}{2}}$ by Theorem \ref{exsumthm}, and then the attaching map for the top cell of $\mathcal{G}^{\tau_1}(N)$ is inert by Theorem \ref{geoinertthm}. 
As Proposition \ref{localgrationprop} implies that $\mathcal{G}^{\tau}(N)\simeq \mathcal{G}^{\tau_1}(N)$ after localization away from $\mathcal{P}_2=\{2\}$, the attaching map for the top cell of $\mathcal{G}^{\tau}(N)$ is inert away from $2$. This proves statement (1).

For statement (2), in this case Lemma \ref{bundlegyrationlemma} (1) implies that $\mathcal{G}^{\tau_1}(N)=\mathcal{G}^{0}(N)$, and the above argument already shows that the attaching map for the top cell of $\mathcal{G}^{0}(N)$ is inert without localization. 

Statement (3) can be proved by the similar argument with the fact that $\mathcal{P}_4=\{2, 3\}$. 
\end{proof}

We have seen that the knowledge of \cite[Section 3]{HT23} is crucial for the proof of Theorem \ref{gyration-inert-thmintro}. Additionally, we can apply the knowledge of \cite[Section 3]{HT23} to prove inertness results for another family of manifolds and generalize Theorem \ref{geoinertthm}. 

Let 
\begin{equation}\label{SNMeq2}
S^{k-1}\stackrel{j}{\longrightarrow} N\stackrel{}{\longrightarrow} M
\end{equation}
be a fibre bundle of connected oriented closed smooth manifolds. 
For a given connected sum $M'\#M$, let $p: M'\# M\stackrel{}{\longrightarrow} M$ be the map that pinches the factor $M'$ to a point. Taking the pullback of the spherical fibre bundle (\ref{SNMeq2}) along the map $p$ induces a morphism of fibre bundles
\[
   \diagram 
       S^{k-1}\rto^-{j_{N}}\ddouble & E\rto^-{}\dto & M'\# M\dto^{p} \\ 
       S^{k-1}\rto^-{j} & N\rto^-{} & M
   \enddiagram 
\]
that defines the manifold $E$ and the maps $j_{N}$.

The following proposition is Parts (2) and (3) of Theorem \ref{geoinertthmintro}. 
Note that when $M'$ is a sphere, $E=N$ and Proposition \ref{geoinertprop} reduces to Theorem \ref{geoinertthm}. 

\begin{proposition}\label{geoinertprop}
Let $k=2$, $4$, or $8$. Suppose that the fibre inclusion $j$ of the fibre bundle (\ref{SNMeq2}) is null homotopic and ${\rm dim}(M)\geq k+2$. Then the following hold: 
\begin{itemize}
\item
the attaching map for the top cell of $E$ is inert if and only if the attaching map for the top cell of $M'\# M$ is inert;
\item
if the attaching map for the top cell of $M$ or $M'$ is inert, then the attaching map for the top cell of $E$ is inert.
\end{itemize} 
Additionally, the assertions hold after localization at any set of primes.
\end{proposition}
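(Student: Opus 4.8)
\textbf{Proof proposal for Proposition \ref{geoinertprop}.}

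The plan is to reduce Proposition \ref{geoinertprop} to Theorem \ref{geoinertthm} by identifying the pulled-back fibre bundle over $M'\# M$ with a bundle of the special form handled there. First I would observe that, since $j$ is null homotopic and $\mathrm{dim}(M)\geq k+2$, the fibre bundle $S^{k-1}\to N\to M$ restricted over $M'\# M$ via the pinch map $p$ is again a fibre bundle $S^{k-1}\to E\to M'\# M$ whose fibre inclusion $j_N$ is null homotopic: indeed $j_N$ factors through $j$ up to the maps in the morphism of bundles, so $j_N\simeq *$ because $j\simeq *$. Moreover $\mathrm{dim}(M'\# M)=\mathrm{dim}(M)\geq k+2$. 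Hence Theorem \ref{geoinertthm} applies directly to the bundle $S^{k-1}\to E\to M'\# M$, which immediately yields the first bullet: the attaching map for the top cell of $E$ is inert if and only if the attaching map for the top cell of $M'\# M$ is inert. The same argument works after localization at any set of primes, since Theorem \ref{geoinertthm} carries that clause.

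For the second bullet, I would first dispose of the case where the top cell attachment of $M$ is inert: then Theorem \ref{exsumthm} (1), applied with the roles of the two summands arranged so that $M$ is the one with inert top cell, shows that the top cell attachment of $M\# M'\simeq M'\# M$ is inert, and then the first bullet gives that the top cell attachment of $E$ is inert. The symmetric case, where the top cell attachment of $M'$ is inert, is handled identically: Theorem \ref{exsumthm} (1) with $M'$ playing the role of the distinguished summand shows the top cell attachment of $M'\# M$ is inert, and again the first bullet concludes. The localized statements follow by the same reasoning, using the local versions of Theorems \ref{exsumthm} and \ref{geoinertthm} noted in their statements and proofs.

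The only genuinely delicate point is verifying that the pulled-back bundle $S^{k-1}\to E\to M'\# M$ has null homotopic fibre inclusion, so that Theorem \ref{geoinertthm} is legitimately applicable. This is where one must be a little careful: $j_N$ is the fibre inclusion of the pullback, and by the universal property of pullback of fibre bundles the composite of $j_N$ with the bundle map $E\to N$ equals $j$ composed with the identity on $S^{k-1}$, so $j_N$ is a lift of $j$ through $E\to N$; since $j\simeq *$, a null homotopy of $j$ lifts (using that $E\to N$ is a fibration) to a null homotopy of $j_N$, or alternatively one simply notes that the fibre $S^{k-1}$ sits inside a single fibre $p^{-1}(x)\cong N\times\{x'\}$-type chart of the trivialized bundle and the inclusion is already null homotopic there by the corresponding fact for $N\to M$. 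I expect this verification, together with correctly matching the hypothesis ``$M$ or $M'$ has inert top cell'' to the asymmetric hypothesis of Theorem \ref{exsumthm} (which requires the \emph{first} summand to be inert, hence needs the commutativity $M'\# M\simeq M\# M'$), to be the main obstacle; the rest is a direct citation of Theorems \ref{geoinertthm} and \ref{exsumthm}.
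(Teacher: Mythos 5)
Your overall strategy coincides with the paper's: show that the fibre inclusion $j_N$ of the pullback bundle $S^{k-1}\to E\to M'\# M$ is null homotopic, apply Theorem \ref{geoinertthm} to that bundle to get the first bullet, and combine Theorem \ref{exsumthm} with the first bullet for the second. The second bullet and the localization clause are fine. But the step you yourself single out as delicate --- verifying $j_N\simeq \ast$ --- is not established by either of the arguments you offer, and this is a genuine gap.

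Your first argument has the factorization backwards. The morphism of bundles gives $q\circ j_N=j$, where $q\colon E\to N$ is the induced map of total spaces; thus $j_N$ is a lift of $j$ through $q$, not a composite of $j$ with something. Knowing $q\circ j_N\simeq\ast$ does not force $j_N\simeq\ast$: for the trivial bundle $S^1\times S^2\to S^2$ the fibre inclusion composes to a constant map yet is essential in the total space. Moreover $q$ is not a fibration (it is the pullback of the pinch map, whose point preimages jump over the wedge point), and even for a genuine fibration, lifting a null homotopy of $j$ only homotopes $j_N$ into a fibre of $q$, not to a constant. Your fallback argument fails for the same reason: the inclusion of $S^{k-1}$ into a local trivialization chart $U\times S^{k-1}$ is never null homotopic inside that chart. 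The correct verification is geometric and is exactly what Lemma \ref{emblemma} (1) (equivalently \cite[Lemma 3.1]{HT23} together with the Whitney embedding theorem) supplies: since $j\simeq\ast$ and ${\rm dim}(M)\geq k+2$, the fibre inclusion extends to an embedded disk $D^k\hookrightarrow N$ which, up to isotopy, lies over $M_0=M-\accentset{\circ}{D}^{n}$. The pinch map restricts to a diffeomorphism over the copy of $M_0$ inside $M'\# M$, so the restriction of $E$ over it is canonically the part of $N$ lying over $M_0$; the embedded disk therefore transfers to $E$ and bounds the fibre of $E\to M'\# M$ there, giving $j_N\simeq\ast$. With that repair the rest of your proof goes through as written.
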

\begin{proof}
With the assumption that $j$ is null homotopic, \cite[Lemma 3.1]{HT23} and the Whitney embedding theorem imply that the fibre inclusion $j_N$ can be extended to a disk embedding. In particular, $j_N$ is null homotopic. Then applying Theorem \ref{geoinertthm} to the fibre bundle $S^{k-1}\stackrel{j_N}{\longrightarrow} E\stackrel{}{\longrightarrow} M'\# M$, we see that the attaching map for the top cell of $E$ is inert if and only if the attaching map for the top cell of $M'\# M$ is inert. This proves the first statement. 

For the second statement, the attaching map for the top cell of $M$ or $M'$ is inert by assumption. Theorem \ref{exsumthm} implies that the attaching map for the top cell of $M'\# M$ is inert. Then the attaching map for the top cell of $E$ is inert by the first statement.

The local version of the two statements follows by the same argument. 
\end{proof}

For the pullback bundle
\[
S^{k-1}\stackrel{}{\longrightarrow} E\stackrel{}{\longrightarrow} M'\# M,
 \]
by Proposition \ref{geoinertprop}, if the attaching map for the top cell of $M'$ is inert, then the attaching map for the top cell of $E$ is inert. 
For an alternative proof of this fact, recall that Huang-Theriault in \cite[Lemma 3.1]{HT23} shows that there is a diffeomorphism
\[
E\cong \mathcal{G}^{\tau}(M') \# N
\]
for the framing $\tau: S^{k-1}\stackrel{}{\longrightarrow} SO({\rm dim}(M))$ in Lemma \ref{bundlegyrationlemma}. In the proof of Theorem \ref{gyration-inert-thmintro}, we have showed that if the attaching map for the top cell of $M'$ is inert, then the attaching map for the top cell of the manifold $\mathcal{G}^{\tau}(M')$ is inert. 
Therefore, Theorem \ref{exsumthm} implies that the attaching map for the top cell of $ \mathcal{G}^{\tau}(M') \# N\cong E$ is inert. 

\newpage
\section{Complete flag manifolds}
\label{sec: flag}
In this section, we show that the top cell attachments of complete flag manifolds are inert at large primes. The result, similar to Theorem \ref{gyration-inert-thmintro}, is an application of Theorem \ref{geoinertthm}.

Let $T^\ell=S^1\times \cdots \times S^1$ be the standard torus of rank $\ell$. The following result is a geometric version of \cite[Theorem 8.6]{The24b} with partial generalization. 
\begin{proposition}\label{TNMinertthm}
Let 
\[
T^\ell\stackrel{}{\longrightarrow} N\stackrel{}{\longrightarrow} M
\]
be a principal bundle of connected oriented closed smooth manifolds such that $N$ is simply connected and ${\rm dim}(M)\geq 4$. Then the attaching map for the top cell of $N$ is inert if and only if the attaching map for the top cell of $M$ is inert.
\end{proposition}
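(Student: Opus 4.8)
The plan is to reduce Proposition \ref{TNMinertthm} to an iterated application of Theorem \ref{geoinertthm} by splitting the rank-$\ell$ torus bundle into a tower of circle bundles. First I would observe that a principal $T^\ell$-bundle $T^\ell \to N \to M$ is classified by a map $M \to BT^\ell = (\mathbb{C}P^\infty)^\ell$; projecting onto the factors and successively pulling back, one obtains a sequence of principal $S^1$-bundles
\[
S^1 \to N = N_\ell \to N_{\ell-1} \to \cdots \to N_1 \to N_0 = M,
\]
where each $N_i \to N_{i-1}$ is a principal circle bundle, and $N_i$ is the quotient of $N$ by an appropriate $T^{\ell - i}$-subtorus so that each $N_i$ is a connected oriented closed smooth manifold. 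The key point is that each intermediate total space has dimension at least $\dim(M) \geq 4 \geq k+2$ with $k=2$, so the hypothesis on dimensions in Theorem \ref{geoinertthm} is met at every stage of the tower.

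The main obstacle is verifying the nullhomotopy hypothesis of Theorem \ref{geoinertthm}: for each circle bundle $S^1 \xrightarrow{j_i} N_i \to N_{i-1}$, we need the fibre inclusion $j_i$ to be null homotopic, equivalently $\pi_1(N_i)$ must have trivial image of the $S^1$-fibre class, which follows once $\pi_1(N_i) = 0$. Here I would use the hypothesis that the top total space $N$ is simply connected together with the long exact homotopy sequence of the fibration $T^{\ell-i} \to N \to N_i$: since $T^{\ell-i}$ is connected and $\pi_1(N) = 0$, exactness gives $\pi_1(N_i) = 0$ for every $i$ (the sequence $\pi_1(N) \to \pi_1(N_i) \to \pi_0(T^{\ell-i}) = 0$ forces surjectivity onto $\pi_1(N_i)$ from the trivial group, noting also that $\pi_0(N) = 0$ so the preceding term behaves correctly). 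Hence each $N_i$ is simply connected, and in particular $j_i \colon S^1 \to N_i$ is null homotopic because $\pi_1(N_i) = 0$.

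With these two facts in hand, the proof is a downward induction along the tower. By Theorem \ref{geoinertthm} applied to the circle bundle $S^1 \xrightarrow{j_i} N_i \to N_{i-1}$ (valid since $j_i$ is null homotopic and $\dim(N_{i-1}) \geq 4 = 2+2$), the attaching map for the top cell of $N_i$ is inert if and only if the attaching map for the top cell of $N_{i-1}$ is inert. Chaining these equivalences from $i = \ell$ down to $i = 1$ yields that the attaching map for the top cell of $N = N_\ell$ is inert if and only if the attaching map for the top cell of $M = N_0$ is inert, which is the desired conclusion. The localized versions, if needed, follow verbatim since Theorem \ref{geoinertthm} holds after localization at any set of primes. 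I expect the only genuinely delicate point to be the bookkeeping that each $N_i$ is again a connected oriented closed smooth manifold with a single top cell (so that ``the attaching map for the top cell'' makes sense), which is standard since each $N_i$ is the total space of a smooth principal circle bundle over a closed manifold, hence a closed manifold, and orientability is inherited because $S^1$ and the base are oriented.
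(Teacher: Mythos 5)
Your proposal is correct and follows essentially the same route as the paper: the paper also factors the $T^\ell$-bundle into a tower of principal circle bundles $S^1\to N/T^{k-1}\to N/T^{k}$, deduces simple connectivity of each intermediate quotient from $\pi_1(N)=0$ via the exact sequence of $T^{k-1}\to N\to N/T^{k-1}$ (hence null homotopy of each fibre inclusion), and chains the equivalences from Theorem \ref{geoinertthm}. The only cosmetic difference is indexing ($M_k=N/T^{k-1}$ in the paper versus your $N_i=N/T^{\ell-i}$) and your optional mention of the classifying map $M\to BT^\ell$, which the paper does not need.
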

\begin{proof}
For any integer $k$ with $2\leq k\leq \ell+1$, let 
\[
i_{k-1}: T^{k-1}\stackrel{}{\longrightarrow}T^{\ell}
\]
be the inclusion into the first $(k-1)$-factors of $T^\ell$, that is, $i_{k-1}(z_1,\ldots, z_{k-1})=(z_1,\ldots, z_{k-1}, 1,\ldots, 1)$. The free action of $T^\ell$ on $N$ induces a free $T^{k-1}$-action on $N$ through $i_{k-1}$, and there is a principal bundle 
\[
T^{k-1}\stackrel{j}{\longrightarrow} N\stackrel{}{\longrightarrow} M_k,
\]
which defines the orbit manifold $M_k$. For convenience, denote $M_1=N$ and $T^0=\{1\}$. Note that $M_{\ell+1}=M$, and each $M_k$ is simply connected as $N$ is. Since $M_k\cong N/T^{k-1}$ and $S^1\cong T^{k}/T^{k-1}$, the canonical fibre bundle $T^{k}/T^{k-1}\stackrel{}{\longrightarrow} N/T^{k-1}\stackrel{}{\longrightarrow} N/T^{k}$ is isomorphic to a principal circle bundle
\[
S^{1}\stackrel{}{\longrightarrow} M_k\stackrel{}{\longrightarrow} M_{k+1}.
\] 
Then for each $1\leq k\leq \ell$, Theorem \ref{geoinertthm} implies that the attaching map for the top cell of $M_{k}$ is inert if and only if the attaching map for the top cell of $M_{k+1}$ is inert. It follows that the attaching map for the top cell of $N=M_{1}$ is inert if and only if the attaching map for the top cell of $M=M_{\ell+1}$ is inert. 
\end{proof}

Let $G$ be a simply connected compact Lie group with a maximal torus $T$. The homogeneous manifold $G/T$ is called the {\it complete flag manifold} of $G$. To study the inertness of the top cell attachment of $G/T$, we may start from the case when $G$ is simple. 

Recall that the rational cohomology of $G$ is an exterior algebra
\[
H^\ast(G;\mathbb{Q})\cong \Lambda (x_{2n_1+1},\ldots, x_{2n_l+1}),
\]
where each generator $x_{2n_i+1}$ is of degree $2n_i+1$ with $n_1\leq n_2\leq \ldots\leq n_l$. 
The integer $l$, equal to the rank of the maximal torus $T$, is called the {\it rank} of $G$, and the index set $\mathfrak{t}(G)=\{n_1, n_2,\ldots, n_l\}$ is called the {\it type} of $G$. The ranks and types of simply connected simple compact Lie groups are well-known and summarized in Table \ref{tabletypelie}.
\begin{table}[H]
{\begin{tabular}{@{}lll|lll@{}} 
\hline
$G$      & Type & Rank  &   $G_2$        &   $1, 5$  & $2$    \\   
$SU(n)$            & $1, 2, \ldots, n-1$   & $n-1$  &   $F_4$        &   $1, 5, 7, 11$     & $4$  \\
$Sp(n)$            & $1, 3, \ldots, 2n-1$  & $n$        &   $E_6$        &   $1, 4, 5, 7, 8, 11$  & $6$       \\ 
$Spin(2n)$            & $1, 3, \ldots, 2n-3, n-1$ & $n$          &   $E_7$        &   $1, 5, 7, 9, 11, 13, 17$  & $7$     \\ 
$Spin(2n+1)$            & $1, 3, \ldots, 2n-1$ & $n$  &    $E_8$        &   $1, 7, 11, 13, 17, 19, 23, 29$   & $8$ \\      
\hline    
\end{tabular}}
\caption{Ranks and types of simply connected simple compact Lie groups}
\label{tabletypelie}
\end{table}
A classical result of Serre \cite{Ser53} shows that a simple group $G$ can be decomposed into products of spheres at large primes. Any such prime $p$ is called a {\it regular} prime of $G$. More precisely, $p$ is regular for 
\begin{equation}\label{regularpeq}
\begin{array}{llll} 
   SU(n) & \quad\mbox{if}\ p\geq n\geq 3 & \quad F_4 & \quad\mbox{if}\ p\geq 12 \\  
   Sp(n) & \quad\mbox{if}\ p\geq 2n\geq 4 & \quad E_6 & \quad\mbox{if}\ p\geq 12 \\ 
   Spin(n) & \quad\mbox{if}\ p\geq n-1\geq 4 & \quad E_7 & \quad\mbox{if}\ p\geq 18 \\
   G_2 & \quad\mbox{if}\ p\geq 6 & \quad E_8 & \quad\mbox{if}\ p\geq 30.  
\end{array} 
\end{equation} 
\begin{theorem}\label{simpleG/Tthm}
Let $(G/T, \mathcal{P})$ be one of the following pairs:  
\[
\begin{split}
&(SU(n)/T^{n-1}, \{p\geq n\geq 3\}),   \ (Sp(n)/T^{n}, \{p\geq 2n\geq 4\}),   \ (Spin(n)/T^{\lfloor \frac{n}{2} \rfloor}, \{p\geq n-1\geq 4\}),   \ \\
&(G_2/T^{2}, \{p\geq 6\}), \  (F_4/T^{4}, \{p\geq 12\}),   \ 
(E_6/T^{6}, \{p\geq 12\}),  \  (E_7/T^{7}, \{p\geq 18\}),  \ (E_8/T^{8}, \{p\geq 30\}).
\end{split}
\]
Then the attaching map for the top cell of $G/T$ is inert after localization at any prime $p\in \mathcal{P}$.
\end{theorem}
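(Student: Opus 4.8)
The plan is to reduce everything to Proposition \ref{TNMinertthm} together with Serre's decomposition of a simple Lie group at regular primes. Fix a simple compact Lie group $G$ with maximal torus $T=T^\ell$ of rank $\ell$, and fix a prime $p\in\mathcal{P}$, so that $p$ is a regular prime of $G$ in the sense of \eqref{regularpeq}. We work in the homotopy category localized at $p$. By Serre's theorem there is a $p$-local homotopy equivalence
\[
G\simeq_{p} S^{2n_1+1}\times S^{2n_2+1}\times\cdots\times S^{2n_\ell+1},
\]
where $\{n_1,\dots,n_\ell\}=\mathfrak{t}(G)$ is the type of $G$ from Table \ref{tabletypelie}. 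In particular $G$ is, $p$-locally, a product of odd spheres; note that since $n_1=1$ in every row of the table, exactly one of these spheres is $S^3$ and the rest have dimension $\geq 7$ (one checks from the table that $n_i\geq 3$ for $i\geq 2$ in all cases). So $G$ is a simply connected Poincar\'e duality complex which, $p$-locally, is a product of spheres of positive dimension; by Lemma \ref{pdt-inert-lemma} (applied iteratively, or directly to the product decomposition) the attaching map for the top cell of $G$ is inert after localization at $p$.

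Next I would feed this into the torus bundle. There is the standard principal bundle
\[
T^\ell\stackrel{}{\longrightarrow} G\stackrel{}{\longrightarrow} G/T,
\]
a fibre bundle of connected oriented closed smooth manifolds; $G$ is simply connected by hypothesis, and $\dim(G/T)=\dim(G)-\ell\geq 4$ holds in every case on the list (the smallest is $SU(3)/T^2$, of dimension $8-2=6$; more generally $\dim(G/T)$ equals twice the number of positive roots, which is at least $6$ for any simple $G$ of rank $\geq 2$, and all groups on the list have rank $\geq 2$). Therefore Proposition \ref{TNMinertthm} applies $p$-locally: the attaching map for the top cell of $G$ is inert after localization at $p$ if and only if the attaching map for the top cell of $G/T$ is inert after localization at $p$. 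Combining this equivalence with the previous paragraph, the attaching map for the top cell of $G/T$ is inert after localization at $p$, which is exactly the claim for each pair $(G/T,\mathcal{P})$ in the statement.

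I expect the only genuine subtlety to be a bookkeeping one rather than a conceptual one: verifying, case by case from Table \ref{tabletypelie}, that $p$ being in the listed set $\mathcal{P}$ really is the regularity condition \eqref{regularpeq} for $G$, and that the numerical hypotheses of Proposition \ref{TNMinertthm} (namely $\dim(G/T)\geq 4$ and $G$ simply connected) are met — both are immediate from the table, so this is routine. One should also remark that Proposition \ref{TNMinertthm} is stated and proved to hold after localization at any set of primes (it is built from Theorem \ref{geoinertthm} applied to the circle subquotient bundles $S^1\to M_k\to M_{k+1}$, and Theorem \ref{geoinertthm} carries the local clause), so there is no obstruction to running the argument $p$-locally. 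Finally, the general (non-simple) case alluded to before the statement follows formally: a simply connected compact Lie group is a product of simple factors, $G/T$ is then a product of the complete flag manifolds of the factors, and one localizes at any prime regular for all factors and applies Lemma \ref{pdt-inert-lemma} to the product — but that is Theorem \ref{G/Tthm} and lies outside the present statement.
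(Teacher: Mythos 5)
Your proposal is correct and follows essentially the same route as the paper: localize at a regular prime so that $G$ is $p$-locally a product of spheres, apply Lemma \ref{pdt-inert-lemma} to conclude the top cell attachment of $G$ is inert, and then transfer this to $G/T$ via Proposition \ref{TNMinertthm} applied to the principal bundle $T\to G\to G/T$; your extra checks of the hypotheses ($G$ simply connected, $\dim(G/T)\geq 4$, the local clause) are details the paper leaves implicit. The only slip is the parenthetical claim that $n_i\geq 3$ for $i\geq 2$ (false for $SU(n)$, where $n_2=2$ gives $S^5$), but this plays no role in the argument.
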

\begin{proof}
We work in the homotopy category localized at any $p\in \mathcal{P}$. By Lemma \ref{pdt-inert-lemma}, the attaching map for the top cell of a finite product of spheres is inert. Then as $p$ is a regular prime of $G$, the attaching map for the top cell of $G$ is inert. Therefore, with the canonical principal bundle
\[
T\stackrel{}{\longrightarrow} G\stackrel{}{\longrightarrow}G/T, 
\]
Theorem \ref{TNMinertthm} implies that the attaching map for the top cell of $G/T$ is inert.
\end{proof}

The concrete results in Theorem \ref{simpleG/Tthm} for various simple Lie groups can be organized into a unified statement. For any compact Lie group $G$ denote
\[
\mathfrak{l}(G)={\rm max}\{j\in \mathfrak{t}(G)\}. 
\]
When $G$ is simple as listed in (\ref{regularpeq}), a prime $p$ is regular for $G$ if $p\geq  \mathfrak{l}(G)+1$. Also recall for the classical Lie groups of low ranks, $SU(2)\cong Sp(1)\cong Spin(3)\cong S^3$ and $Spin(4)\cong S^3\times S^3$. 
Then the results in Theorem \ref{simpleG/Tthm} are equivalent to that the attaching map for the top cell of $G/T$ is inert after localization at any prime $p\geq  \mathfrak{l}(G)+1$ provided that $G\not\cong S^3$. 
\begin{theorem}\label{G/Tthm}
Let $G$ be a nontrivial simply connected compact Lie group with a maximal torus $T$ such that $G\not\cong S^3$. Then the attaching map for the top cell of $G/T$ is inert after localization at any prime $p\geq  \mathfrak{l}(G)+1$. 
\end{theorem}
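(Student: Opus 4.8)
The plan is to reduce the general statement to the case of simple Lie groups, which is Theorem \ref{simpleG/Tthm}, by exploiting the structure theory of simply connected compact Lie groups together with the product compatibility of inertness already available through Lemma \ref{pdt-inert-lemma}. First I would recall that a simply connected compact Lie group $G$ decomposes as a finite product $G\cong G_1\times \cdots \times G_r$ of simply connected simple compact Lie groups, and correspondingly a maximal torus splits as $T\cong T_1\times \cdots \times T_r$ with each $T_i$ maximal in $G_i$; hence $G/T\cong (G_1/T_1)\times \cdots \times (G_r/T_r)$ as homogeneous spaces. Each factor $G_i/T_i$ is a simply connected Poincar\'{e} duality complex with a single top cell, and so is the product $G/T$.

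The key step is then to localize at a prime $p\geq \mathfrak{l}(G)+1$ and observe that $\mathfrak{l}(G)=\max_i \mathfrak{l}(G_i)$, so $p\geq \mathfrak{l}(G_i)+1$ for every simple factor $G_i$; that is, $p$ is a regular prime for each $G_i$ in the sense of (\ref{regularpeq}), with the sole exception $G_i\cong S^3$ where no such regularity statement is needed since $S^3/T^1\cong S^2$ already fibers nicely. Excluding the degenerate hypothesis $G\not\cong S^3$ rules out the one pathological case where all simple factors are $S^3$ and the flag manifold is a product of $2$-spheres. I would then invoke Theorem \ref{simpleG/Tthm} to conclude that, after localization at $p$, the attaching map for the top cell of each $G_i/T_i$ with $G_i\not\cong S^3$ is inert; for the factors with $G_i\cong S^3$ one has $G_i/T_i\cong S^2=\mathbb{C}P^1$, whose top cell attachment is the Hopf map and is inert by the Hilton--Milnor theorem (this also follows from Lemma \ref{Dlemma}, since $S^2$ is a twisted product $S^1\widetilde{\times}S^1$ in the trivial sense, or directly since it is a product of spheres of positive dimension once one fibers $S^3\to S^3/T^1$). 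Since at least one $G_i$ is not $S^3$ by hypothesis when $G\not\cong S^3$ — actually one must be slightly careful: if $G$ is a product of several copies of $S^3$ then $G\not\cong S^3$ but every flag factor is $S^2$, and the product of $S^2$'s still has an inert top cell attachment by Lemma \ref{pdt-inert-lemma} since each $S^2$ is non-contractible with a single top cell.

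Finally I would assemble the factors: the flag manifold $G/T\simeq_p (G_1/T_1)\times \cdots \times (G_r/T_r)$ is a product of finitely many non-contractible $CW$-complexes each with a single top cell, and by the (iterated) application of Lemma \ref{pdt-inert-lemma} the attaching map for the top cell of this product is inert after localization at $p$. This completes the argument. The main obstacle I anticipate is not conceptual but bookkeeping: one must verify that the bound $p\geq \mathfrak{l}(G)+1$ genuinely implies $p$ is regular for every simple factor simultaneously — i.e. that $\mathfrak{l}$ behaves additively-as-a-maximum under products and that the numerical thresholds in (\ref{regularpeq}) are exactly captured by $p\geq \mathfrak{l}(G_i)+1$, which is the content of the paragraph preceding Theorem \ref{G/Tthm} in the excerpt. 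Once that is granted, and once the $S^3$-factors are handled separately via Lemma \ref{pdt-inert-lemma}, the proof is a short synthesis of Theorem \ref{simpleG/Tthm} and Lemma \ref{pdt-inert-lemma}.
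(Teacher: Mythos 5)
Your proposal follows essentially the same route as the paper: decompose $G/T$ as a product of complete flag manifolds of simple factors, apply Theorem \ref{simpleG/Tthm} to any factor with $G_i\not\cong S^3$ (which also covers the case of a single factor), and invoke Lemma \ref{pdt-inert-lemma} to handle products, in particular the case where every factor is $S^3$ and $G/T$ contains $S^2\times S^2$. The overall argument is correct because, as you note at the end, Lemma \ref{pdt-inert-lemma} only requires the factors to be non-contractible, so the conclusion for $r\geq 2$ never depends on any individual factor having an inert top cell attachment.

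One aside in your write-up is false and should be deleted: the attaching map for the top cell of $S^2=\mathbb{C}P^1$ is the constant map $S^1\to\ast$, not the Hopf map (that is the top cell attachment of $\mathbb{C}P^2$), and it is \emph{not} inert --- the paper itself observes that the top cell attachment of a sphere $S^n$ is not inert, and $S^2$ is neither a twisted product $S^1\widetilde{\times}S^1$ nor a product of positive-dimensional spheres. Since your final assembly goes through Lemma \ref{pdt-inert-lemma} rather than through inertness of each factor, this error is not load-bearing, but as written that sentence is wrong and would undermine the exposition if left in.
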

\begin{proof}
It is well-known that there is a diffeomorphism
\[
G/T\cong G_1/T_1 \times \cdots \times G_k/T_k,
\]
where each $G_i$ is a simply connected simple Lie group with $T_i$ a maximal torus. It is clear that $\mathfrak{l}(G)\geq \mathfrak{l}(G_i)$ for any $1\leq i\leq k$. 

If for some $i$ the simple Lie group $G_i$ is not isomorphic to $S^3$, Theorem \ref{simpleG/Tthm} implies that the attaching map for the top cell of $G_i/T_i$ is inert after localization at any prime $p\geq  \mathfrak{l}(G_i)+1$, and then is inert after localization at any prime $p\geq  \mathfrak{l}(G)+1$. By Lemma \ref{pdt-inert-lemma} the theorem follows in this case. 

Otherwise, $G_i\cong S^3$ for each $1\leq i\leq k$. It follows that $k\geq 2$ as $G\not\cong S^3$. In particular, $G/T$ has a factor $S^3/S^1\times S^3/S^1\cong S^2\times S^2$, and then the theorem follows from Lemma \ref{pdt-inert-lemma} in this case. 
\end{proof}

\newpage
\section{Manifold embeddings}
\label{sec: emb}

In geometry and topology, submanifold embedding is a common context. Let $B\hookrightarrow N$ be a codimension $k$ embedding of connected oriented closed smooth manifolds and ${\rm dim}(N)=n=k+l$. 
Denote by $\nu$ the normal bundle of $B$ in $N$. By the classical tubular neighborhood theorem \cite[Theorem 11.1]{MS74}, there is a closed neighborhood $V$ of $B$ in $N$ and a diffeomorphism between $V$ and the disk bundle $D(\nu)$ of $\nu$. The diffeomorphism restricts to a diffeomorphism on the boundaries between $\partial V$ and $S(\nu)$, the sphere bundle of $\nu$. 
We shall not distinguish between $V$ and $D(\nu)$.
Denote by $N_c$ the closure of the complement of $V$ in $N$. Then $\partial N_c=\partial V$, and there is a pushout
\begin{equation}
\begin{gathered}
\label{Npushoutdiag}
\xymatrix{
\partial V=S(\nu) \ar[r]^{\iota_\nu} \ar[d]^{\iota_c} &
V=D(\nu)\ar[d]^{j_\nu} \\
N_c\ar[r]^{j_c} &
N,
}
\end{gathered}
\end{equation} 
where $\iota_c$ and $\iota_\nu$ are the inclusions of the respective boundaries, $j_c$ and $j_\nu$ are the inclusions of submanifolds. The pushout is clearly a homotopy pushout as well. 
This data of embedding will be used freely in the rest of the section. 

In this section, we study the inertness property around the embedding $B\stackrel{}{\hookrightarrow}N$ with the pushout (\ref{Npushoutdiag}) from three different perspectives. The results are consequences of the criteria for the inertness property around homotopy pushouts in Section \ref{sec: pushout}. Additionally, inspired by the discussions in this section, we will propose a generalization of the inertness problem for manifold embeddings in Subsection \ref{subsec: com-emb-prob}.

\subsection{Interness via embedding}
\label{subsec: compemb}

$\, $

Consider the embedding $B\stackrel{}{\hookrightarrow}N$ with the pushout (\ref{Npushoutdiag}). Let $B_0$ be the manifold $B$ with a small open disk $\accentset{\circ}{D}^{l}$ removed: 
\[
B_0:= B-\accentset{\circ}{D}^{l}.
\]
Then $B_0$ is homotopy equivalent to the $(l-1)$-skeleton of $B$ and there is a cofibration
\[
S^{l-1}\stackrel{h_B}{\longrightarrow} B_0\stackrel{i_B}{\longrightarrow} B, \ \ \ 
\]
where $h_B$ is the inclusion of the boundary of $B_0$ and $i_B$ is the inclusion map. As the disk $D^l$ is contractible, the restriction of the normal bundle $\nu$ over $D^l\subseteq B$ is trivial. In particular, we can remove the restriction $\accentset{\circ}{V}|_{\accentset{\circ}{D}^{l}}=\accentset{\circ}{D}(\nu)|_{\accentset{\circ}{D}^{l}}\cong \accentset{\circ}{D}^{l}\times \accentset{\circ}{D}^{k}\cong \accentset{\circ}{D}^{n}$ from $N$ to obtain the deleted manifold 
\[
N_0:= N- (\accentset{\circ}{V}|_{\accentset{\circ}{D}^{l}})\cong N-\accentset{\circ}{D}^{n}.
\]
Then $N_0$ is homotopy equivalent to the $(n-1)$-skeleton of $N$ and there is a cofibration
\[
S^{n-1}\stackrel{h_N}{\longrightarrow} N_0\stackrel{i_N}{\longrightarrow} N,
\]
where $h_N$ is the inclusion of the boundary of $N_0$ and $i_N$ is the inclusion map. Similarly, let 
\[
V_0:=V-(\accentset{\circ}{V}|_{\accentset{\circ}{D}^{l}})\cong V-\accentset{\circ}{D}^{n}.
\]
From the construction, we see that the inclusion $V\stackrel{j_\nu}{\longrightarrow} N$ restricts to an inclusion $V_0\stackrel{j_{\nu0}}{\longrightarrow} N_0$, and there a commutative square
\begin{equation}\label{VNdiag}
\diagram
V_0 \rto^{j_{\nu0}} \dto^{i_V}  & N_0 \dto^{i_N} \\
V      \rto^{j_{\nu}}                 &  N
\enddiagram
\end{equation}
with $i_V$ the inclusion map. 
\begin{lemma}\label{VNlemma}
The square (\ref{VNdiag}) is a pushout and a homotopy pushout. 
\end{lemma}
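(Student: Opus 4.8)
\textbf{Proof plan for Lemma \ref{VNlemma}.}

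The plan is to reduce the claim to the purely set-theoretic fact that $N_0$ is obtained from $N$ by deleting an open disk sitting inside $\accentset{\circ}{V}$, so that the same open set is removed simultaneously from $N$ and from its closed subset $V$. First I would recall the construction: the deleted piece $\accentset{\circ}{V}|_{\accentset{\circ}{D}^{l}}\cong \accentset{\circ}{D}^n$ is an open subset of $V$, hence $V_0 = V \setminus (\accentset{\circ}{V}|_{\accentset{\circ}{D}^{l}})$ and $N_0 = N \setminus (\accentset{\circ}{V}|_{\accentset{\circ}{D}^{l}})$ both arise by removing literally the same subset. Since $V$ is a closed subset of $N$ and $j_\nu$ is the inclusion of this closed subset, one checks directly that $N_0 = V_0 \cup_{V_0 \cap N_c} N_c$; equivalently, $N_0$ is the union of the closed subsets $V_0$ and $N_c$ (the latter is untouched by the deletion, as the removed open disk lies in $\accentset{\circ}{V}$ and is therefore disjoint from $N_c$), glued along $V_0 \cap N_c = \partial V = S(\nu)$. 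Combining this with the pushout (\ref{Npushoutdiag}) expressing $N$ as $V \cup_{S(\nu)} N_c$, one sees that $N$ is $N_0$ with the open disk glued back via $V_0 \hookrightarrow V$ on the $V$-side, which is exactly the square (\ref{VNdiag}); hence (\ref{VNdiag}) is a pushout.

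More concretely, I would argue as follows. The inclusions in (\ref{VNdiag}) are all closed cofibrations of $CW$-pairs (being inclusions of codimension-zero submanifolds-with-boundary, or homotopy equivalent to such), so it suffices to verify the pushout property set-theoretically: $N$ is covered by the two closed sets $V$ and $N_0$, their intersection is $V \cap N_0$, and I claim $V \cap N_0 = V_0$. Indeed $V \cap N_0 = V \cap (N \setminus \accentset{\circ}{D}^n) = V \setminus \accentset{\circ}{D}^n = V_0$ because $\accentset{\circ}{D}^n \subseteq \accentset{\circ}{V} \subseteq V$. Since $N = V \cup N_0$ with $V \cap N_0 = V_0$ and the maps $i_V$, $j_{\nu 0}$ are the corresponding inclusions, $N$ has the universal property of the pushout of $V \xleftarrow{i_V} V_0 \xrightarrow{j_{\nu 0}} N_0$.

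Finally, for the homotopy-pushout assertion I would invoke the standard fact (used repeatedly in this paper, e.g.\ in the discussion surrounding (\ref{Npushoutdiag})) that a strict pushout along a cofibration is automatically a homotopy pushout; here $j_{\nu 0} : V_0 \hookrightarrow N_0$ (equivalently $i_V : V_0 \hookrightarrow V$) is a cofibration, so (\ref{VNdiag}) being a pushout square along a cofibration makes it a homotopy pushout. I do not anticipate a genuine obstacle: the only mild care needed is the bookkeeping that the removed open disk genuinely lies in the interior of $V$ and hence is disjoint from $N_c$, so that $N_c$ (and the gluing region $S(\nu)$) is unaffected by the deletion; once that is observed, both assertions are formal.
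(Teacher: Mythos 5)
Your proposal is correct and takes essentially the same approach as the paper: the paper's proof simply observes that $N$ is the union of $V$ and $N_0$ along their common subcomplex $V_0$, giving a pushout, which is a homotopy pushout because $i_V$ and $j_{\nu 0}$ are cofibrations. Your version just spells out the set-theoretic bookkeeping ($V\cap N_0=V_0$ since the deleted disk lies in $\accentset{\circ}{V}$) that the paper leaves implicit.
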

\begin{proof}
It is clear that $N$ is the union of $V$ and $N_0$ along their common subcomplex $V_0$. This means that the square (\ref{VNdiag}) is a pushout. It is also a homotopy pushout since $i_V$ and $j_{\nu_0}$ are cofibrations. 
\end{proof}

\begin{theorem}\label{emb-inert-thm}
Let $B\stackrel{}{\hookrightarrow}N$ be the embedding with the pushout (\ref{Npushoutdiag}). Suppose that either of the following holds
\begin{itemize}
\item the embedding $B\stackrel{}{\hookrightarrow} N$ has a right homotopy inverse after looping; 
\item the restricted projection $N_0\stackrel{i_N}{\hookrightarrow} N\stackrel{}{\twoheadrightarrow} N/V$ has a right homotopy inverse after looping.
\end{itemize}
If the attaching map for the top cell of $B$ is inert, then the attaching map for the top cell of $N$ is inert.
\end{theorem}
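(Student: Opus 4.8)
The plan is to exploit the disk-bundle decomposition (\ref{Npushoutdiag}) together with the refined square (\ref{VNdiag}), and then invoke the pushout machinery of Section \ref{sec: pushout}. First I would recall that $V=D(\nu)$ deformation retracts onto the zero section $B$, so the inclusion $B\hookrightarrow V$ is a homotopy equivalence; under this equivalence the embedding $B\hookrightarrow N$ is identified with $j_\nu\colon V\to N$, and the lower skeleton $B_0$ is identified with $V_0$ (both being the total space of $\nu$ restricted to $B_0$, which retracts onto $B_0$). Consequently the attaching map for the top cell of $V$ is inert if and only if that of $B$ is inert, and $\Omega i_V$ has a right homotopy inverse if and only if $\Omega i_B$ does. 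So the hypothesis ``the attaching map for the top cell of $B$ is inert'' translates into ``$\Omega i_V\colon \Omega V_0\to\Omega V$ has a right homotopy inverse'' in the notation of (\ref{VNdiag}).

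Next I would feed the homotopy pushout (\ref{VNdiag}) into the results of Section \ref{sec: pushout}, reading it with $\varphi'=i_V$, $f=j_{\nu0}$, $\varphi=j_\nu$, $g=i_N$. For the first bullet — $\Omega j_\nu$ has a right homotopy inverse — this is exactly the situation of Remark \ref{pushoutthm3} (the dual of Theorem \ref{pushoutthm2}): since (\ref{VNdiag}) homotopy commutes and $\Omega i_V$ has a right homotopy inverse while $\Omega j_\nu$ has a right homotopy inverse, the homotopy commutativity $\Omega i_N\circ \Omega j_{\nu0}\simeq \Omega j_\nu\circ \Omega i_V$ immediately produces a right homotopy inverse of $\Omega i_N$, namely $\Omega j_{\nu0}\circ(\text{r.h.i.\ of }\Omega i_V)\circ(\text{r.h.i.\ of }\Omega j_\nu)$. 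Hence the attaching map for the top cell of $N$ is inert. For the second bullet — the composite $N_0\xrightarrow{i_N}N\twoheadrightarrow N/V$ has a right homotopy inverse after looping — I would instead extend (\ref{VNdiag}) to the right by one step: since (\ref{VNdiag}) is a homotopy pushout, the two horizontal homotopy cofibres agree, giving a homotopy cofibration diagram with rows $V_0\to N_0\to N/V$ and $V\to N\to N/V$ (the top cofibre being $N_0/V_0\simeq N/V$). Now $\varphi'=i_V$ is inert by hypothesis, and the map $\Omega i_N$ followed by $\Omega(N\to N/V)$ is the looped composite in the second bullet, which by assumption has a right homotopy inverse; since this looped composite factors through $\Omega i_N$ and the cofibre projections are compatible, the two horizontal cofibre projections $\Omega(N_0\to N/V)$ and $\Omega(N\to N/V)$ have compatible right homotopy inverses. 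This is precisely the hypothesis of Proposition \ref{RHIcofprop} applied to the cofibration diagram, which then yields: $\Omega i_N$ has a right homotopy inverse if and only if both $\Omega i_V$ and $\Omega 1_{N/V}$ do — and $\Omega i_V$ does by hypothesis while $\Omega 1_{N/V}$ trivially does — so $\Omega i_N$ has a right homotopy inverse, i.e.\ the top cell attachment of $N$ is inert.

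I expect the main obstacle to be organizing the identifications carefully: one must check that under the retraction $V\simeq B$ the \emph{lower skeleton} $V_0$ really corresponds to $B_0$ (and not merely that $V_0\simeq B_0$ abstractly), so that the square (\ref{VNdiag}) literally becomes the ``comparison of lower skeletons'' square one wants, and that the cofibre $N_0/V_0$ is genuinely homotopy equivalent to $N/V$ with the projections compatible. The delicate point is that $N/V$ collapses the full neighborhood $V$ including the part sitting over the deleted disk $D^l\subseteq B$, whereas $N_0/V_0$ collapses $V_0=V\setminus(\accentset{\circ}{V}|_{\accentset{\circ}{D}^{l}})$; since $\accentset{\circ}{V}|_{\accentset{\circ}{D}^{l}}\cong\accentset{\circ}{D}^{n}$ is exactly the open disk removed from $N$ to form $N_0$, the quotient $N_0/V_0$ is obtained from $N/V$ by collapsing the image of an $n$-disk, which is a homotopy equivalence. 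Once these bookkeeping matters are settled, the rest is a direct application of Remark \ref{pushoutthm3} in the first case and Proposition \ref{RHIcofprop} in the second. Finally, I would remark that the local version holds verbatim, since every tool invoked (the retraction $V\simeq B$, Remark \ref{pushoutthm3}, Proposition \ref{RHIcofprop}) is valid after localization at any set of primes.
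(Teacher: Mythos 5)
Your overall strategy coincides with the paper's: translate both hypotheses into statements about the square (\ref{VNdiag}), use Remark \ref{pushoutthm3} for the first bullet and the cofibration machinery for the second (your application of Proposition \ref{RHIcofprop} to the rows $V_0\to N_0\to N/V$ and $V\to N\to N/V$ is just an unwinding of Theorem \ref{pushoutthm} applied to (\ref{VNdiag}) with $\varphi'=j_{\nu0}$, which is what the paper cites). Those two steps are correct. The gap is precisely at the point you flag as the main obstacle: the claim that ``the lower skeleton $B_0$ is identified with $V_0$'' is false, and the check you propose does not go through. By construction $V_0=V-\accentset{\circ}{V}|_{\accentset{\circ}{D}^{l}}=V|_{B_0}\cup S(\nu)|_{D^l}$: besides the part of the disk bundle lying over $B_0$, it contains the entire sphere bundle over the removed disk $D^l$, and the fibrewise retraction $V\to B$ carries that extra piece onto $D^l\not\subseteq B_0$. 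Concretely, for $B=S^1$ with trivial rank-one normal bundle, $V$ is an annulus, $V_0$ is a pair of pants ($\simeq S^1\vee S^1$) while $B_0$ is an arc ($\simeq\ast$); so $V_0\not\simeq B_0$ and (\ref{VNdiag}) is not a ``comparison of lower skeletons'' square. In particular your asserted equivalence ``$\Omega i_V$ has a right homotopy inverse iff $\Omega i_B$ does'' is unjustified (and the ``only if'' direction is false in general).

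Fortunately the only implication you use -- inertness of $h_B$ implies that $\Omega i_V\colon\Omega V_0\to\Omega V$ has a right homotopy inverse -- is true, and it is repaired without ever identifying $V_0$ with $B_0$. Set $V'=V|_{B_0}$, the restriction of the disk bundle to $B_0$. Then $V'\simeq B_0$ compatibly with $V\simeq B$, so $\Omega(V'\hookrightarrow V)$ has a right homotopy inverse; and since $V_0=V'\cup S(\nu)|_{D^l}$, the inclusion $V'\hookrightarrow V$ factors as $V'\xrightarrow{i'}V_0\xrightarrow{i_V}V$. Composing a right homotopy inverse of $\Omega(V'\to V)$ with $\Omega i'$ then gives a right homotopy inverse of $\Omega i_V$. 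This factorization is exactly how the paper argues. With that substitution the rest of your proof goes through; note also that the identification $N_0/V_0\simeq N/V$ you need for the second bullet follows directly from (\ref{VNdiag}) being a pushout (Lemma \ref{VNlemma}), not from collapsing an extra $n$-disk as you describe.
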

\begin{proof}
The two hypotheses are respectively equivalent to the following two hypotheses:
\begin{itemize}
\item the map $V\stackrel{j_\nu}{\hookrightarrow} N$ has a right homotopy inverse after looping; 
\item the restriction map $V_0\stackrel{j_{\nu0}}{\longrightarrow} N_0$ of $j_\nu$ is inert.
\end{itemize}
The equivalence for the first hypothesis follows from the fact that $B\simeq V$ through the zero section, while the equivalence for the second hypothesis follows from the pushout diagram (\ref{VNdiag}) by Lemma \ref{VNlemma}. Consider the homotopy pushout square (\ref{VNdiag}). 
Under either of the hypotheses, Theorem \ref{pushoutthm} and Remark \ref{pushoutthm3} imply that if the map $\Omega i_V$ has a right homotopy inverse, then the map $\Omega i_N$ has a right homotopy inverse. Hence, to prove the theorem it suffices to show that $\Omega i_V$ has a right homotopy inverse. 

To this end, we may restrict the normal bundle $\nu$ over $B$ to $B_0$ through the pullback
\[
\diagram
D^k \rto^{} \ddouble  & V' \rto \dto^{i'_V}   & B_0 \dto^{i_B}\\
D^k \rto^{}                & V   \rto^{}               &B 
\enddiagram
\]
where $V'= V|_{B_0}\subseteq V$ with $i'_V$ the inclusion map. In other words, $V'=V-(V|_{\accentset{\circ}{D}^{l}})$. 
Since $V_0=V-(\accentset{\circ}{V}|_{\accentset{\circ}{D}^{l}})$ by construction, it follows that 
\[
V_0=V' \cup \partial V |_{D^l}=V' \cup S(\nu)|_{D^l}.
\]
In particular, the inclusion $V'\stackrel{i'_V}{\hookrightarrow} V$ factors as
\[
V'\stackrel{i'}{\longrightarrow}  V_0\stackrel{i_V}{\longrightarrow} V
\]
for an inclusion $i'$. By assumption the attaching map for the top cell of $B$ is inert, that is, the map $B_0\stackrel{i_B}{\longrightarrow} B$ has a right homotopy inverse after looping. Since the bundle projection $V\stackrel{}{\longrightarrow} B$ and its restriction $V'\stackrel{}{\longrightarrow} B_0$ are compatible homotopy equivalences by the previous diagram, it follows that $V'\stackrel{i'_V}{\hookrightarrow} V$ has a right homotopy inverse after looping. Then the equality $i'_V=i_V\circ i'$ implies that $V_0\stackrel{i_V}{\longrightarrow} V$ has a right homotopy inverse after looping. 

To summarize, we have showed that under either of the hypotheses in the theorem, if the attaching map for the top cell of $B$ is inert, then the map $\Omega i_V$ has a right homotopy inverse, and then the map $\Omega i_N$ has a right homotopy inverse. This means that the attaching map for the top cell of $N$ is inert. 
\end{proof}

\subsection{Inertness via Thom spaces}
\label{subsec: Thom}

$\, $

Recall for a vector bundle $\xi$ over an $l$-dimensional closed manifold $X$
\[
\mathbb{R}^k\stackrel{}{\longrightarrow} E\stackrel{}{\longrightarrow} X,
\]
the {\it Thom space ${\rm Th}(\xi)$} is the quotient complex $D(\xi)/S(\xi)$ of the disk bundle $D(\xi)$ by the sphere bundle $S(\xi)$, and there is a cofibration
\[
S(\xi)\stackrel{\iota_\xi}{\longrightarrow} D(\xi)\stackrel{q_\xi}{\longrightarrow} {\rm Th}(\xi),
\] 
where $q_\xi$ is the quotient map. 
Therefore, the Thom space ${\rm Th}(\xi)$ can be view as a {\it singular manifold} of dimension ${\rm dim}(E)=n=k+l$, in the sense that it is locally homeomorphic to an Euclidean space at every point except at the {\it singular point} $[S(\xi)]=S(\xi)/S(\xi)$. Choosing a small disk $D^{n}\subseteq D(\xi)/S(\xi)-S(\xi)/S(\xi)$, we get a well-defined singular manifold 
\[
{\rm Th}(\xi)_0:=D(\xi)/S(\xi)-\accentset{\circ}{D}^{n},
\]
with boundary $S^{n-1}$. Then there is a cofibration
\[
S^{n-1}\stackrel{h_\xi}{\longrightarrow} {\rm Th}(\xi)_0\stackrel{i_\xi}{\longrightarrow} {\rm Th}(\xi),
\]
where the attaching map $h_\xi$ is the inclusion of the boundary and $i_\xi$ is the inclusion map. Similar to the case of manifolds, this cofibration is the homotopy cofibration for the top cell attachment of the singular manifold ${\rm Th}(\xi)$. 

Let $B\stackrel{}{\hookrightarrow}N$ be the embedding with the pushout (\ref{Npushoutdiag}) at the beginning of the section. There are the homotopy cofibrations
\[
S^{n-1}\stackrel{h_N}{\longrightarrow} N_0\stackrel{i_N}{\longrightarrow} N \ \ \ {\rm and} \ \ \ 
N_c \stackrel{j_c}{\longrightarrow} N\stackrel{q_c}{\longrightarrow} N/N_c, 
\]
where $h_N$ is the inclusion of the boundary of the deleted manifold $N_0$ with $i_N$ the  inclusion map, and the map $q_c$ is the quotient map. 
It is clear that the inclusion $N_c\stackrel{j_c}{\hookrightarrow} N$ in (\ref{Npushoutdiag}) factors as
\begin{equation}\label{NcN0Neq}
N_c\stackrel{\mathfrak{j}_c}{\hookrightarrow} N_0\stackrel{i_N}{\hookrightarrow} N,
\end{equation}
for an inclusion map $N_c\stackrel{\mathfrak{j}_c}{\hookrightarrow} N_0$. In particular, the inclusion $i_N$ induces a map $i_N/N_c: N_0/N_c \stackrel{}{\hookrightarrow} N/N_c$.

\begin{lemma}\label{thomlemma}
There are compatible homotopy equivalences ${\rm Th}(\nu)\simeq N/N_c$ and ${\rm Th}(\nu)_0\simeq N_0/N_c$: 
\[
\diagram
{\rm Th}(\nu)_0  \rto^{\simeq} \dto^{i_\nu}  &   N_0/N_c \dto^{i_N/N_c} \\
{\rm Th}(\nu)     \rto^{\simeq}                     &  N/N_c.     
\enddiagram
\]
\end{lemma}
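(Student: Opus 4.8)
The statement asks for compatible homotopy equivalences ${\rm Th}(\nu)\simeq N/N_c$ and ${\rm Th}(\nu)_0\simeq N_0/N_c$ fitting into a square with the inclusion maps $i_\nu$ and $i_N/N_c$. The essential point is that the tubular neighbourhood identification $V\cong D(\nu)$ carries $N_c$ onto $S(\nu)$ up to homotopy, or more precisely that the pushout (\ref{Npushoutdiag}) exhibits $N$ as glued from $D(\nu)$ and $N_c$ along $S(\nu)$. The plan is to exploit this pushout together with the pushout description of the Thom space and a suspension/quotient argument via Lemma \ref{gluinglemma} (the gluing lemma). First I would recall that collapsing $N_c$ in the homotopy pushout (\ref{Npushoutdiag}) yields a homotopy pushout in which the left-hand column becomes $S(\nu)\to \ast$ (since $\iota_c$ factors through $N_c$); hence $N/N_c$ is the homotopy pushout of $\ast\leftarrow S(\nu)\xrightarrow{\iota_\nu} D(\nu)$, which is exactly the homotopy cofibre of $\iota_\nu$, i.e. ${\rm Th}(\nu)$. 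This gives the bottom homotopy equivalence ${\rm Th}(\nu)\simeq N/N_c$.

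For the top equivalence, I would restrict the whole picture to the punctured manifolds. Recall $N_0$ is obtained from $N$ by deleting an open disc $\accentset{\circ}{D}^n$ that lies inside $\accentset{\circ}{V}|_{\accentset{\circ}{D}^l}$, hence inside $V=D(\nu)$ and away from $N_c$. Consequently the pushout (\ref{Npushoutdiag}) restricts to a pushout $S(\nu)\leftarrow D(\nu)_0 \to N_0$ with $N_c\hookrightarrow N_0$ via $\mathfrak{j}_c$ from (\ref{NcN0Neq}), where $D(\nu)_0 := D(\nu)-\accentset{\circ}{D}^n = {\rm Th}(\nu)_0$ before collapsing (more precisely ${\rm Th}(\nu)_0 = D(\nu)_0/S(\nu)$). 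Collapsing $N_c$ in this restricted pushout, by the same reasoning as above, identifies $N_0/N_c$ with the homotopy cofibre of $S(\nu)\hookrightarrow D(\nu)_0$, which is ${\rm Th}(\nu)_0$. The compatibility of the two equivalences then follows because all the maps in sight — the quotients by $N_c$, the inclusions $D(\nu)_0\hookrightarrow D(\nu)$, $N_0\hookrightarrow N$, $i_\nu: {\rm Th}(\nu)_0\hookrightarrow {\rm Th}(\nu)$, and $i_N/N_c$ — are induced by honest inclusions of subspaces, and the gluing lemma (Lemma \ref{gluinglemma}) produces the induced map of cofibres (equivalently, of homotopy pushouts) functorially; thus the square of equivalences commutes up to homotopy.

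The step I expect to require the most care is making rigorous the claim that collapsing $N_c$ in the homotopy pushout (\ref{Npushoutdiag}) (and in its punctured version) really does produce the cofibre of $\iota_\nu$, together with the verification that the two such collapses are compatible with the skeletal inclusions. Concretely one passes from homotopy pushouts to strict pushouts by replacing $\iota_\nu$ and its restriction by cofibrations (they already are cofibrations, being inclusions of boundary sphere bundles into disc bundles), and then uses that a strict pushout of a cofibration is preserved by the collapse map and that $S(\nu)\to N_c\to\ast$ composes to the constant map; the gluing lemma then guarantees the induced comparison map is a homotopy equivalence. The geometric input that the deleted disc $\accentset{\circ}{D}^n$ sits inside $\accentset{\circ}{V}|_{\accentset{\circ}{D}^l}$ and disjoint from $N_c$ is what makes everything restrict compatibly; I would state this explicitly at the outset and then the rest is a diagram chase through Lemma \ref{gluinglemma}.
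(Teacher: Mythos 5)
Your proposal is correct and follows essentially the same route as the paper: both identify $N/N_c$ with the cofibre of $\iota_\nu$ by exploiting that the tubular-neighbourhood square (\ref{Npushoutdiag}) is a homotopy pushout, then restrict to the punctured manifolds (using that the deleted disc sits inside $V$ away from $N_c$, so ${\rm Th}(\nu)_0\cong D(\nu)_0/S(\nu)$ and the pushout restricts), and obtain compatibility because the restricted diagram maps into the original by inclusions. The paper phrases the key step as "a homotopy pushout induces an equivalence on cofibres of its rows," which is the same standard fact you invoke via collapsing $N_c$ and the gluing lemma.
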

\begin{proof}
Consider the homotopy cofibration diagram 
\begin{equation}\label{ThNNCdiag}
\diagram
S(\nu) \rto^{\iota_\nu}  \dto^{\iota_c}  & D(\nu) \rto^<<<<{q_\nu} \dto^{j_\nu}  &  {\rm Th}(\nu) \dto^{j_t } \\
N_c \rto^{j_c}  & N \rto^<<<<<{q_c}              &  N/N_c 
\enddiagram
\end{equation}
where the left square is the pushout diagram (\ref{Npushoutdiag}) and $j_t$ is the induced map. Since  the left square is also a homotopy pushout, it follows that the map $j_t: {\rm Th}(\nu)\stackrel{}{\longrightarrow} N/N_c$ is a homotopy equivalence. Restricting the map $j_\nu$ to the deleted manifolds, we obtain a map $j_{\nu 0}: D(\nu)_0\stackrel{}{\longrightarrow} N_0$ where $D(\nu)_0$ is the manifold $D(\nu)$ minus a small open disk in the interior of $D(\nu)$. By abuse of notation, the inclusions $S(\nu)\stackrel{\iota_\nu}{\longrightarrow} D(\nu)$ and $N_c\stackrel{j_c}{\longrightarrow} N$ factor as 
\[
S(\nu)\stackrel{\iota_\nu}{\longrightarrow} D(\nu)_0\stackrel{i_{D(\nu)}}{\longrightarrow} D(\nu), \ \ \ {\rm and} \ \ \ 
N_c\stackrel{j_c}{\longrightarrow} N_0\stackrel{i_N}{\longrightarrow} N,
\]
respectively. 
Therefore, Diagram (\ref{ThNNCdiag}) restricts to a homotopy cofibration diagram 
\begin{equation}\label{ThNNC0diag}
\diagram
S(\nu) \rto^{\iota_\nu}  \dto^{\iota_c}  & D(\nu)_0 \rto^<<<<{q_{\nu_0}} \dto^{j_{\nu0}}  &  {\rm Th}(\nu)_0 \dto^{j_{t0} } \\
N_c \rto^{j_c}  & N_0 \rto^<<<<{q_{c0}}              &  N_0/N_c, 
\enddiagram
\end{equation}
where the maps $q_{\nu_0}$ and $q_{c0}$ are the quotient maps, ${\rm Th}(\nu)_0\cong D(\nu)_0/S(\nu)$ by definition, and $j_{t0}$ is the induced map. It is clear that the left square of Diagram (\ref{ThNNC0diag}) is a pushout and hence a homotopy pushout. Then the induced map $j_{t0}: {\rm Th}(\nu)_0 \stackrel{}{\longrightarrow} N_0/N_c$ is a homotopy equivalence. Since Diagram (\ref{ThNNC0diag}) is the restriction of Diagram (\ref{ThNNCdiag}), the two homotopy equivalences are compatible. 
\end{proof}

\begin{theorem}\label{thom-inert-thm}
Let $B\stackrel{}{\hookrightarrow}N$ be the embedding with the pushout (\ref{Npushoutdiag}). Suppose that the inclusion $N_c\stackrel{\mathfrak{j}_c}{\hookrightarrow} N_0$ is inert. If the attaching map for the top cell of ${\rm Th}(\nu)$ is inert, then the attaching map for the top cell of $N$ is inert.
\end{theorem}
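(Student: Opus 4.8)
The plan is to exhibit a homotopy cofibration involving $N_c$ whose base is the Thom space ${\rm Th}(\nu)$, with the middle term $N_0$, and then apply Theorem~\ref{pushoutthm2}. First I would recall from the factorization~(\ref{NcN0Neq}) that $N_c\stackrel{\mathfrak{j}_c}{\hookrightarrow}N_0\stackrel{i_N}{\hookrightarrow}N$, and from Lemma~\ref{thomlemma} that there are compatible homotopy equivalences ${\rm Th}(\nu)_0\simeq N_0/N_c$ and ${\rm Th}(\nu)\simeq N/N_c$ fitting into the square relating $i_\nu$ and $i_N/N_c$. The key homotopy cofibration diagram will be built from the two vertical cofibre sequences $N_c\to N_0\to N_0/N_c$ and $N_c\to N\to N/N_c$ together with the inclusion $i_N\colon N_0\to N$; since $i_N\circ\mathfrak{j}_c\simeq j_c$, this gives a homotopy cofibration diagram
\[
\diagram
N_c \ddouble \rto^-{\mathfrak{j}_c} & N_0 \rto^-{q_{c0}} \dto^-{i_N} & N_0/N_c \dto^-{i_N/N_c} \\
N_c \rto^-{j_c} & N \rto^-{q_c} & N/N_c,
\enddiagram
\]
in which the right-hand square is a homotopy pushout (because both rows are cofibre sequences along the same map $N_c\to\,\cdot\,$ followed by the quotient).

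Next I would transport this diagram across the homotopy equivalences of Lemma~\ref{thomlemma}, replacing $N_0/N_c$ by ${\rm Th}(\nu)_0$ and $N/N_c$ by ${\rm Th}(\nu)$, so that the right vertical map $i_N/N_c$ becomes (up to homotopy) the skeletal inclusion $i_\nu\colon{\rm Th}(\nu)_0\to{\rm Th}(\nu)$. The hypothesis that the attaching map for the top cell of ${\rm Th}(\nu)$ is inert says precisely that $\Omega i_\nu$ has a right homotopy inverse. Now I want to feed the above diagram into Theorem~\ref{pushoutthm2}: there the role of $(A,X',Y',X,Y)$ is played by $(N_c,\,N_0,\,{\rm Th}(\nu)_0,\,N,\,{\rm Th}(\nu))$, the map $a'$ is $\mathfrak{j}_c$ (inert by assumption), the map $g$ is $i_\nu$ (whose loop has a right homotopy inverse, since the top cell attachment of ${\rm Th}(\nu)$ is inert), and the conclusion is that $\Omega f$ has a right homotopy inverse, where $f\colon N_0\to N$ is the map completing the left-hand column — i.e. $f=i_N$. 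Hence $\Omega i_N$ has a right homotopy inverse, which is exactly the statement that the attaching map for the top cell of $N$ is inert.

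The main point requiring care — and the step I expect to be the chief obstacle — is verifying that Theorem~\ref{pushoutthm2}'s hypotheses are genuinely met, namely that the diagram above is a \emph{diagram of homotopy cofibrations} in the precise sense used there (a $2\times 3$ grid of homotopy cofibrations with the indicated commutativities), that the right-hand square is a homotopy pushout, and that the map $a'=\mathfrak{j}_c$ really is the inert map in the top row rather than something only homotopic to it after the equivalences are applied. Concretely, one must check that after replacing $N_0/N_c$ and $N/N_c$ by their Thom-space models the induced right vertical arrow is homotopic to $i_\nu$ and that the bottom row $N_c\to N\to{\rm Th}(\nu)$ remains a homotopy cofibration; this is where Lemma~\ref{thomlemma}'s \emph{compatibility} of the two homotopy equivalences is essential, since without it the replacement need not respect the cofibre structure. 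Lemma~\ref{eqnaturallemma} (naturality of homotopy inverses) can be invoked to tidy up the homotopies relating $i_N/N_c$ to $i_\nu$. Once these identifications are in place the application of Theorem~\ref{pushoutthm2} is immediate, and I would also remark that the same argument localizes at any set of primes, since all the inputs do.
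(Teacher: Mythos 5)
Your proposal is correct and follows essentially the same route as the paper: the paper's proof assembles exactly the homotopy cofibration diagram you describe (with $N_0/N_c$ and $N/N_c$ already replaced by ${\rm Th}(\nu)_0$ and ${\rm Th}(\nu)$ via the compatible equivalences of Lemma~\ref{thomlemma}) and applies Theorem~\ref{pushoutthm2} with the same identification of roles, concluding that $\Omega i_N$ has a right homotopy inverse. The compatibility issue you flag as the chief obstacle is precisely what Lemma~\ref{thomlemma} and Diagrams~(\ref{ThNNCdiag})--(\ref{ThNNC0diag}) are set up to handle, so no gap remains.
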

\begin{proof}
By Lemma \ref{thomlemma} and Diagrams (\ref{ThNNCdiag}) and (\ref{ThNNC0diag}), there is a diagram of homotopy cofibrations
\[
\diagram
N_c \ddouble \rto^{\mathfrak{j}_c}  & N_0 \dto^{i_N} \rto^<<<<{q_{c0}} & {\rm Th}(\nu)_0 \dto^{i_\nu} \\
N_c \rto^{j_c}                               & N                    \rto^<<<<{q_c}    &  {\rm Th}(\nu),
\enddiagram
\]
where the left square commutes by (\ref{NcN0Neq}). By assumption $N_c\stackrel{\mathfrak{j}_c}{\hookrightarrow} N_0$ is inert. Applying Theorem \ref{pushoutthm2} to the above diagram, we see that if the map $\Omega i_\nu$ has a right homotopy inverse, then the map $\Omega i_N$ has a right homotopy inverse. This proves the theorem by the definition of inertness. 
\end{proof}

\subsection{Inertness via local embedding}
\label{subsec: localemb}

$\, $

Consider the embedding $B\stackrel{}{\hookrightarrow}N$ with the pushout (\ref{Npushoutdiag}) at the beginning of the section. It is called a {\it local embedding} if it factors through a disk embedding $D^n\stackrel{j_D}{\hookrightarrow} N$, that is, the embedding $B\stackrel{}{\hookrightarrow}N$ is a composite 
\[
B\stackrel{}{\hookrightarrow}D^n\stackrel{j_D}{\hookrightarrow} N.
\] 
Equivalently, the embedding $B\stackrel{}{\hookrightarrow}N$ is local if there exists a diffeomorphism $N\cong N\# S^n$ such that $B$ embeds into the connected summand $S^n$. 
In this case, the tubular neighborhood $V$ of $B$ can be chosen such that $V$ is entirely contained in $D^n$ and the embedding $V\stackrel{j\nu}{\hookrightarrow} N$ factors as
\[
V\stackrel{}{\hookrightarrow} D^n\stackrel{j_D}{\hookrightarrow} N\# S^n\cong N.
\]
Let $N_0$ be the manifold $N$ with a small open disk removed. Since any two disk embeddings are isotopic, there is no preference of the choice of the removed open disk. 
In particular, $N_0$ is homotopy equivalent to the $(n-1)$-skeleton of $N$ and there is a cofibration
\[
S^{n-1}\stackrel{h_N}{\longrightarrow} N_0\stackrel{i_N}{\longrightarrow} N,
\]
where $h_N$ is the inclusion of the boundary of $N_0$ and $i_N$ is the inclusion map. 
\begin{lemma}\label{localemblemma}
There is a sequence of embeddings 
\[
N_0\stackrel{i_{0c}}{\hookrightarrow} N_c \stackrel{i_{c0}}{\hookrightarrow} N_0
\]
such that the composite $i_{c0}\circ i_{0c}$ is isotopic to the identity map.
\end{lemma}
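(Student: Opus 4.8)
The plan is to exploit the hypothesis that the embedding $B\hookrightarrow N$ is local, which by definition means there is a diffeomorphism $N\cong N\# S^n$ such that the tubular neighborhood $V$ is contained in the summand $S^n$; equivalently, $V$ sits inside an embedded disk $D^n\subseteq N$. First I would fix such an embedded disk $D^n\hookrightarrow N$ containing $V$, and choose the deleted manifold $N_0$ by removing a small open disk $\accentset{\circ}{D}^n_{\rm sm}$ from the interior of $D^n$ but disjoint from $V$ (possible since $V$ and the small disk are both contained in the large disk $D^n$, which has room to spare). Since any two disk embeddings into a connected manifold are isotopic, this choice of $N_0$ agrees up to diffeomorphism with the standard one, so it is harmless. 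With these choices, the complement $N_c$ = closure of $N\setminus V$ visibly contains $N_0$: indeed, $N_0 = N\setminus \accentset{\circ}{D}^n_{\rm sm}$ and $\accentset{\circ}{D}^n_{\rm sm}\subseteq D^n$ is disjoint from $V$, so removing $\accentset{\circ}{V}$ from $N_0$ still leaves a submanifold of $N_c$; more precisely $N_0\setminus \accentset{\circ}{V}\subseteq N_c$ and we can push $N_0$ off $V$ using the collar of $\partial V$ to get an embedding $i_{0c}\colon N_0\hookrightarrow N_c$ isotopic to the inclusion-minus-a-collar. The reverse inclusion $i_{c0}\colon N_c\hookrightarrow N_0$ is just the obvious inclusion of $N_c = N\setminus\accentset{\circ}{V}$ into $N_0 = N\setminus\accentset{\circ}{D}^n_{\rm sm}$ (valid because $\accentset{\circ}{D}^n_{\rm sm}$ is disjoint from $V$, hence from $N_c$).

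The key point is then to check that $i_{c0}\circ i_{0c}\colon N_0\to N_0$ is isotopic to the identity. This is the same kind of collar-pushing argument as in the proof of Lemma \ref{emblemma}(2): the composite sends $N_0$ into a submanifold of $N_0$ whose complement in $N_0$ is a product region $\partial V\times[0,1]$ (or a disk-bundle-shell region), and shrinking $N_0$ into this smaller copy of itself is realized by a diffeotopy supported near $\partial N_0$ that expands the collar. Concretely, I would pick a bicollar of $\partial V$ inside $N$ and write down the explicit one-parameter family of embeddings that slides $N_0$ across this collar onto its image under the composite, with the parameter running from the inclusion at time $0$ to $i_{c0}\circ i_{0c}$ at time $1$; reversing it gives an isotopy from $i_{c0}\circ i_{0c}$ to $1_{N_0}$.

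The main obstacle — and the step requiring the most care — is keeping the bookkeeping of the three regions $V$, $N_c$, $N_0$ straight and verifying that with the chosen small disk $\accentset{\circ}{D}^n_{\rm sm}$ both inclusions genuinely exist as maps of the stated source and target (rather than merely up to homotopy), and that the composite lands where claimed. Once the regions are set up correctly, the isotopy itself is routine: it is the standard fact that a compact manifold-with-boundary is isotopic within itself to the complement of an external collar, applied with the collar being the shell $N_0\setminus i_{0c}(N_0)$. I would state this collar fact (it follows from the collar neighborhood theorem, \cite[Theorem 11.1]{MS74} or standard differential topology) and apply it, rather than reproving it. No localization or homotopy-theoretic input is needed here; this lemma is purely geometric and sets up the homotopy pushout comparison to be used in the subsequent inertness theorem.
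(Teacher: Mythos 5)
There is a genuine error in your setup of the two embeddings, caused by choosing the removed disk $\accentset{\circ}{D}^{n}_{\mathrm{sm}}$ to be \emph{disjoint} from $V$. With that choice the parenthetical justification for $i_{c0}$ is backwards: a set disjoint from $V$ is \emph{contained in} $N_c=N\setminus\accentset{\circ}{V}$, not disjoint from it, so $\accentset{\circ}{D}^{n}_{\mathrm{sm}}\subseteq N_c$ and consequently $N_c\not\subseteq N\setminus\accentset{\circ}{D}^{n}_{\mathrm{sm}}=N_0$; the ``obvious inclusion'' $i_{c0}$ does not exist. Symmetrically, $V\subseteq N_0$ with your choice, so $N_0\not\subseteq N_c$ either, and the proposed $i_{0c}$ cannot be obtained by ``pushing $N_0$ off $V$ using the collar of $\partial V$'': a collar push only displaces a shell near $\partial V$ and cannot evacuate the entire region $V$ from $N_0$. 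The paper resolves both issues by using two \emph{different} disks, one on each side of $V$. For $i_{c0}$ it chooses a small disk $D^n\hookrightarrow V$ \emph{inside} the tubular neighborhood, so that $N_c=N\setminus\accentset{\circ}{V}\subseteq N\setminus\accentset{\circ}{D}^{n}=N_0$ holds on the nose; for $i_{0c}$ it uses the big disk $D^n\supseteq V$ supplied by locality, so that $N\setminus\accentset{\circ}{D}^{n}\subseteq N\setminus\accentset{\circ}{V}=N_c$. The composite is then the inclusion of complements induced by the nested disks $D^{n}_{\mathrm{small}}\subseteq V\subseteq D^{n}_{\mathrm{big}}$, and since the small and big disk embeddings are isotopic in $N$, this composite is isotopic to the identity; the fact that ``any two disk embeddings are isotopic'' is what identifies the two models of $N_0$ appearing as source and target.

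Your instinct that the final step is a routine disk-isotopy argument is correct, and locality of the embedding is indeed the essential input, exactly as you say. But as written the containments defining both $i_{0c}$ and $i_{c0}$ fail, so the proof does not go through without relocating the small disk into $V$ and taking the source copy of $N_0$ to be the complement of the big disk containing $V$.
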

\begin{proof}
Recall that $N_c= N- \accentset{\circ}{V}$ and $N_0=N-\accentset{\circ}{D}^{n}$. We may choose a small disk embedding $D^n\stackrel{}{\hookrightarrow} V$. Then the sequence of embeddings $D^n\stackrel{}{\hookrightarrow} V\stackrel{j_\nu}{\hookrightarrow} N$ gives an embedding of complements $N_c \stackrel{i_{c0}}{\hookrightarrow} N_0$ (denoted by $\mathfrak{j}_c$ in (\ref{NcN0Neq})). 
Since the local embedding $B\hookrightarrow D^n\stackrel{j_D}{\hookrightarrow} N$ can be extended to the embedding $j_\nu: V\hookrightarrow D^n\stackrel{j_D}{\hookrightarrow} N$, there is an embedding of complements $N_0\stackrel{i_{0c}}{\hookrightarrow} N_c$. Further, since the two disk embeddings 
\[
D^n\stackrel{}{\hookrightarrow} V\hookrightarrow D^n\stackrel{j_D}{\hookrightarrow} N \ \ \ ~{\rm and}~\ \ \  D^n\stackrel{j_D}{\hookrightarrow} N
\] 
are isotopic in $N$, the composite $N_0\stackrel{i_{0c}}{\hookrightarrow} N_c \stackrel{i_{c0}}{\hookrightarrow} N_0$ on the complements is isotopic to the identity map. 
\end{proof}

\begin{proposition}\label{localembprop}
Let $B\stackrel{}{\hookrightarrow}N$ be a local embedding. Then the attaching map for the top cell of $N$ is inert if and only if the inclusion $N\backslash B\stackrel{}{\hookrightarrow} N$ of the complement of the embedding has a right homotopy inverse after looping.
\end{proposition}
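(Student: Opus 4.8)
The plan is to identify the complement inclusion $N\backslash B\hookrightarrow N$ with the inclusion $N_c\stackrel{j_c}{\hookrightarrow} N$ from the pushout \eqref{Npushoutdiag} (up to homotopy $N\backslash B\simeq N_c$, since $N_c$ is a deformation retract of $N\backslash B$), and then to show directly that the looped map $\Omega j_c$ admits a right homotopy inverse precisely when $\Omega i_N$ does, where $S^{n-1}\stackrel{h_N}{\longrightarrow} N_0\stackrel{i_N}{\longrightarrow} N$ is the top-cell cofibration. Recall from \eqref{NcN0Neq} that $j_c$ factors as $N_c\stackrel{\mathfrak{j}_c}{\hookrightarrow} N_0\stackrel{i_N}{\hookrightarrow} N$; this factorization is the hinge of the argument. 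Because $N\hookrightarrow N$ is a local embedding, Lemma \ref{localemblemma} provides embeddings $N_0\stackrel{i_{0c}}{\hookrightarrow} N_c\stackrel{i_{c0}}{\hookrightarrow} N_0$ with $i_{c0}\circ i_{0c}$ isotopic to the identity of $N_0$, and from its proof one has $i_{c0}=\mathfrak{j}_c$. Hence $\mathfrak{j}_c$ has a left homotopy inverse $i_{0c}$, so $\Omega \mathfrak{j}_c$ has a left homotopy inverse; in particular $\Omega \mathfrak{j}_c$ is a retraction onto a factor after looping, and $\Omega N_0$ splits off $\Omega N_c$.

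First I would establish the ``if'' direction: suppose $\Omega j_c$ has a right homotopy inverse $s\colon \Omega N\stackrel{}{\longrightarrow}\Omega N_c$, so $\Omega j_c\circ s\simeq 1_{\Omega N}$. Then $\Omega i_N\circ(\Omega\mathfrak{j}_c\circ s)\simeq \Omega j_c\circ s\simeq 1_{\Omega N}$ since $i_N\circ\mathfrak{j}_c\simeq j_c$, so $\Omega\mathfrak{j}_c\circ s$ is a right homotopy inverse of $\Omega i_N$, i.e.\ the attaching map for the top cell of $N$ is inert. Conversely, suppose $\Omega i_N$ has a right homotopy inverse $t\colon \Omega N\stackrel{}{\longrightarrow}\Omega N_0$ with $\Omega i_N\circ t\simeq 1_{\Omega N}$. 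Using the left homotopy inverse $i_{0c}$ of $\mathfrak{j}_c$ from Lemma \ref{localemblemma}, the composite $\Omega i_{0c}\circ t\colon \Omega N\stackrel{}{\longrightarrow}\Omega N_c$ satisfies
\[
\Omega j_c\circ(\Omega i_{0c}\circ t)\simeq \Omega i_N\circ\Omega\mathfrak{j}_c\circ\Omega i_{0c}\circ t\simeq \Omega i_N\circ t\simeq 1_{\Omega N},
\]
where the middle homotopy uses $\mathfrak{j}_c\circ i_{0c}=i_{c0}\circ i_{0c}\simeq 1_{N_0}$. Hence $\Omega i_{0c}\circ t$ is a right homotopy inverse of $\Omega j_c$, and finally $N\backslash B\hookrightarrow N$ has a right homotopy inverse after looping since it is homotopic to $j_c$. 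This yields the equivalence.

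The only genuinely substantive point is the identification $\mathfrak{j}_c=i_{c0}$ (equivalently, that the complement inclusion of Lemma \ref{localemblemma} is the same map as the one in \eqref{NcN0Neq}), together with the elementary fact that the tubular neighborhood $V$ of a locally embedded $B$ can be taken inside the distinguished disk, so that $N_c\subseteq N_0$ genuinely holds and the removed disk defining $N_0$ can be chosen inside $V$; both are already contained in the setup of Subsection \ref{subsec: localemb} and in the proof of Lemma \ref{localemblemma}, so no new work is required. I would also remark that this proposition is the formal counterpart, for local embeddings, of Theorem \ref{geoinertthm}: a local embedding $B\hookrightarrow N$ replaces the role of a null-homotopic fibre inclusion, and the conclusion is the analogous two-sided equivalence of inertness, now phrased in terms of the complement rather than the base of a bundle. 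The main obstacle, such as it is, is purely bookkeeping: keeping straight the several deleted manifolds $N_0$, $N_c$, $V_0$ and the isotopies relating the various disk embeddings, all of which is routine differential topology already carried out in Lemma \ref{localemblemma}.
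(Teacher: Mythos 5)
Your proof is correct and follows essentially the same route as the paper: both arguments hinge on Lemma \ref{localemblemma} giving $i_{c0}\circ i_{0c}\simeq 1_{N_0}$, the factorization $j_c=i_N\circ\mathfrak{j}_c$ with $\mathfrak{j}_c=i_{c0}$, and the resulting transfer of right homotopy inverses after looping in both directions. (The only blemish is the typo ``$N\hookrightarrow N$'' where you mean $B\hookrightarrow N$.)
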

\begin{proof}
By Lemma \ref{localemblemma}, the composition of the embeddings
\[
N_0\stackrel{i_{0c}}{\hookrightarrow} N_c \stackrel{i_{c0}}{\hookrightarrow} N_0 \stackrel{i_N}{\hookrightarrow} N
\]
is isotopic to $N_0 \stackrel{i_N}{\hookrightarrow} N$. It implies that $\Omega i_N$ has a right homotopy inverse if and only if $\Omega j_c=\Omega (i_N\circ i_{c0})$ has a right homotopy inverse. Also, it is clear that $N_c=N\backslash\accentset{\circ}{V}\simeq N\backslash B$. Then $N_c\stackrel{j_c}{\hookrightarrow} N$ has a right homotopy inverse after looping if and only if $N\backslash B\stackrel{}{\hookrightarrow} N$ has a right homotopy inverse after looping. The proposition follows by combining the above arguments. 
\end{proof}

\newpage
\section{Open problems}
\label{sec: prob}
In this section, we propose eight open problems based on the materials in this paper, and on the inspiring work \cite{The24b} by Theriault as well. 

\subsection{From geometry to homotopy}

$\, $

In this paper there are several results which are only proved in geometric context. It is a natural problem to pursuing their counterparts in homotopy context. 

\begin{problem}\label{FEB-prop}
Let 
\[
F\stackrel{}{\longrightarrow} E\stackrel{}{\longrightarrow} B
\]
 be a homotopy fibration of connected Poincar\'{e} duality complexes with a single top cell. 
 If the attaching map for the top cell of $B$ is inert, is the attaching map for the top cell of $E$ is inert?
\end{problem}
When the homotopy fibration is a strict fibration, Problem \ref{FEB-prop} has a positive answer by Theorem \ref{FEBthm}. 

\begin{problem}\label{geoinert-prop}
Let $k=2$, $4$, or $8$. Let 
\[
S^{k-1}\stackrel{j}{\longrightarrow} N\stackrel{}{\longrightarrow} M
\]
be a homotopy fibration of connected Poincar\'{e} duality complexes with a single top cell such that the map $j$ is null homotopic and ${\rm dim}(M)\geq k+2$. 
Is the inertness of the attaching map for the top cell of $N$ equivalent to the inertness of the attaching map for the top cell of $M$?
\end{problem}
When the homotopy fibration is a fibre bundle of connected oriented closed smooth manifolds, Problem \ref{geoinert-prop} has a positive answer by Theorem \ref{geoinertthm}. Additionally, when $k=2$ and the homotopy fibration is principal, Theriault \cite{The24b} proved that if the attaching map for the top cell of $N$ is inert then the attaching map for the top cell of $M$ is inert. These results suggest that a positive answer to Problem \ref{geoinert-prop} may be possible. 

For a homotopy fibration $S^{k-1}\stackrel{j}{\longrightarrow} N\stackrel{}{\longrightarrow} M$, if the map $j$ is null homotopic then the homotopy fibre $S^{k-1}$ has to be an $H$-space. This is the reason for the values of $k$ in Problem \ref{geoinert-prop} and Theorem \ref{geoinertthm}. Nevertheless, it is well-known that odd dimensional spheres are $H$-spaces at any odd prime. Therefore, it is natural to raise a local version of Problem \ref{geoinert-prop}.   
\begin{problem}\label{kodd-inert-prop}
Work in the homotopy category after localization at an odd prime $p$. 
Let 
\[
S^{2k-1}\stackrel{j}{\longrightarrow} N\stackrel{}{\longrightarrow} M
\]
be a homotopy fibration of connected Poincar\'{e} duality complexes with a single top cell such that the map $j$ is null homotopic and ${\rm dim}(M)\geq 2k+2$. 
Is the inertness of the attaching map for the top cell of $N$ equivalent to the inertness of the attaching map for the top cell of $M$?
\end{problem}
Suppose that $M$ is $(2k-1)$-connected and the homotopy fibration is principal, Theriault in \cite[Theorem 9.1]{The24b} proved that after rationalization if the attaching map for the top cell of $N$ is inert then the attaching map for the top cell of $M$ is inert. Further, he applied this special case to reprove a classical result of Halperin and Lemaire \cite{HL87}, which states that the attaching map for the top cell of a Poincar\'{e} duality complex is rationally inert unless its rational cohomology algebra is generated by a single element. It is an interesting problem to looking for a local refinement of the result of Halperin and Lemaire \cite{HL87}.

\begin{problem}\label{HLrefined-prop}
Let $M$ be a simply connected Poincar\'{e} duality complex of dimension $n\geq 4$.
Suppose that the rational cohomology algebra of $M$ is not generated by a single element. Does there exist a linear function $\ell(n)$ depending only on $n$ such that the attaching map for the top cell of $M$ is inert after localization at any prime $p$ with $p>\ell(n)$? If so, find an explicit $\ell(n)$ satisfying the property.  
\end{problem}
Theriault \cite{The24b} has made an important progress on Problem \ref{HLrefined-prop}. In particular, he proved in \cite[Theorems 6.4 and 7.5]{The24b} that if $M$ is $(m-1)$-connected with $m<n$ and $H^m(M;\mathbb{Z})$ has a $\mathbb{Z}$-summand generated by a class $x$ with $x^2=0$, then the attaching map for the top cell of $M$ is inert after localization at any prime $p$ with $p>n/2+1$. This suggests that a positive answer to Problem \ref{HLrefined-prop} may be possible and the function $\ell(n)$ may be close to $n/2+1$.

\subsection{Surgery}

$\, $

Surgery is a basic operation in geometric topology. A {\it $\mathit{k}$-surgery} on an $n$-dimensional closed smooth manifold $M$ is an operation removing a framed $\mathit{k}$-embedding $f: D^{n-k}\times S^{k}\hookrightarrow  M$ and replacing it with $S^{n-k-1}\times D^{k+1}$, with {\it effect} the $n$-dimensional closed smooth manifold
\[
M^\prime:=(M\backslash f(\accentset{\circ}{D}^{n-k}\times  S^{k}))\cup_{S^{n-k-1}\times S^{k}} (S^{n-k-1}\times D^{k+1}).
\]
It is known that in general the inertness property is not preserved by surgery from the basic example $S^m\times S^{n-m}$. However, we showed in Theorem \ref{gyration-inert-thmintro} that a certain type of surgery preserves the inertness of top cell attachments. It is an interesting problem to looking for other specific surgeries preserving inertness. 

\begin{problem}\label{surgery-prop}
Work in a homotopy category with or without localization at certain primes. Let $M$ be an $n$-dimensional connected oriented closed smooth manifold. Suppose that the attaching map for the top cell of $M$ is inert. For which values of $k$, the attaching map for the top cell of a $k$-surgery effect $M^\prime$ is inert? Moreover, does there exist such $k$ depending only on the dimension and the connectivity of $M$?
\end{problem}
Suppose that $M$ is $(m-1)$-connected. We guess that the possible values of $k$ for Problem \ref{surgery-prop} could be integers less than $m$ or greater than $n-m$, since it is likely that in these cases a $k$-surgery is roughly to produce more cells rather than to kill cells in certain sense. 

\subsection{Blow ups}

$\, $

We recall the description in~\cite[Section 2]{LS08} and \cite[Section 5]{HT24d} of the blow up construction. 
Consider the embedding $B\stackrel{}{\hookrightarrow}N$ with the pushout (\ref{Npushoutdiag}) and follow the notations and constructions in Section \ref{sec: emb}. 
Suppose that the normal bundle $\nu$ of $B$ is of even dimension and supports a complex structure. Let $P\nu \stackrel{}{\longrightarrow} B$ be the canonical projectivization of the complex normal bundle $\nu$. The canonical complex line bundle of $P\nu$ gives a circle bundle projection $S(\nu)\stackrel{q}{\longrightarrow} P\nu$. 
We may replace the inclusion of the boundary $S(\nu)\stackrel{\iota_{\nu}}{\longrightarrow} D(\nu)$ 
in~(\ref{Npushoutdiag}) with the projection $S(\nu)\stackrel{q}{\longrightarrow} P\nu$ and define the {\it blow up $\widetilde{N}$ of $N$ along $B$} by the pushout  
\begin{equation}
\begin{gathered}
\label{blowuppushoutdiag}
\xymatrix{
\partial N_c=S(\nu) \ar[r]^<<<{q} \ar[d]^{\iota_c} &
P\nu\ar[d]^{} \\
N_c\ar[r]^{} &
\widetilde{N}.
}
\end{gathered}
\end{equation}
In particular, when $B$ is a point the blow up construction is topologically a {\it projectively stabilization} of $N$ following \cite{HT24d}, that is, $\widetilde{N}$ is homeomorphic to the connected sum of $N$ with the complex projective space $\mathbb{C}P^{{\rm dim}(N)/2}$. 

We are interested in the inertness property around blow up constructions.
\begin{problem}\label{blowup-inert-prob}
Consider the blow up $\widetilde{N}$ of $N$ along $B$ defined by the pushout (\ref{blowuppushoutdiag}). 
\begin{itemize}
\item[(1).] If the attaching map for the top cell of $N$ is inert, is the attaching map for the top cell of $\widetilde{N}$ is inert?
\item[(2).] If the attaching map for the top cell of $B$ is inert, is the attaching map for the top cell of $\widetilde{N}$ is inert?
\end{itemize}
\end{problem} 
The two problems are reasonable. Problem \ref{blowup-inert-prob} (1) concerns when a blow up construction preserves the inertness property. When $B$ is a point, $\widetilde{N}\cong N\#\mathbb{C}P^{{\rm dim}(N)/2}$ and then Theorem \ref{exsumthm} provides a positive answer to this problem in this case. In general case Huang-Theriault \cite{HT24d} showed that blow up construction is a type of {\it fibrewise connected sum}. Therefore, a positive answer to Problem \ref{blowup-inert-prob} (1) is about to proving a fibrewise version of Theorem \ref{exsumthm}.  

For Problem \ref{blowup-inert-prob} (2), recall that Theorem \ref{FEBthmintro} shows that for a fibre bundle $F\stackrel{}{\longrightarrow} M\stackrel{}{\longrightarrow} B$, the inertness of the top cell attachment for the base manifold $B$ implies the inertness of the top cell attachment for the total manifold $M$. In particular, if the attaching map for the top cell of $B$ is inert, then the attaching map for the top cell of the projectivization $P\nu$ is inert. Moreover, though the blow up construction $\widetilde{N}$ is not fibered, the piece $\widetilde{N}-\accentset{\circ}{N}_c$ of $\widetilde{N}$ is indeed fibered over $P\nu$ with disk fibre. In other words, part of $\widetilde{N}$ is fibered over a manifold with an inert top cell attachment. Then it is a natural question whether this is enough to prove the inertness of the top cell attachment for $\widetilde{N}$. 

\subsection{Complement embeddings}
\label{subsec: com-emb-prob}

$\, $

Consider the embedding $B\stackrel{}{\hookrightarrow}N$ with the pushout (\ref{Npushoutdiag}) and follow the notations and constructions in Section \ref{sec: emb}. By \eqref{NcN0Neq} there is a sequence of embeddings
\[
j_c: N_c\stackrel{\mathfrak{j}_c}{\hookrightarrow} N_0\stackrel{i_N}{\hookrightarrow} N,
\]
where $N_c$ is the complement of an open tubular neighborhood of $B$ in $N$, and $N_0$ is the manifold $N$ with a small disk removed. It is clear that $N_c$ is homotopy equivalent to the complement $N\backslash B$. 
These imply the following lemma immediately, which is the sufficiency part of Proposition \ref{localembprop}. 

\begin{lemma}\label{Nclemma}
If the complement embedding $N\backslash B\stackrel{}{\hookrightarrow} N$ has a right homotopy inverse after looping, then the attaching map for the top cell of $N$ is inert.  ~$\qqed$
\end{lemma}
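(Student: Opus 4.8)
The plan is to reduce the statement to an application of Theorem \ref{pushoutthm} via the pushout \eqref{Npushoutdiag}. First I would recall the structural data from Section \ref{sec: emb}: by \eqref{NcN0Neq} the inclusion $j_c: N_c \hookrightarrow N$ factors as $N_c \xrightarrow{\mathfrak{j}_c} N_0 \xrightarrow{i_N} N$, and $N_c$ is homotopy equivalent to $N\backslash B$ through the deformation retraction of a closed tubular neighborhood onto its zero section (equivalently, $N_c = N - \accentset{\circ}{V}$ retracts to $N\backslash B$). Under this homotopy equivalence the complement embedding $N\backslash B \hookrightarrow N$ corresponds to $j_c$, so the hypothesis says precisely that $\Omega j_c$ has a right homotopy inverse.

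Next I would invoke the homotopy pushout \eqref{Npushoutdiag}:
\[
\diagram
S(\nu) \rto^{\iota_\nu} \dto^{\iota_c} & D(\nu) \dto^{j_\nu} \\
N_c \rto^{j_c} & N,
\enddiagram
\]
which is a homotopy pushout since $\iota_c$ and $\iota_\nu$ are cofibrations. The key observation is that the map $\iota_\nu: S(\nu) \hookrightarrow D(\nu)$ is inert: the disk bundle $D(\nu)$ deformation retracts onto the zero section $B$, so $\iota_\nu$ is, up to homotopy, the inclusion of the sphere bundle $S(\nu)$ into $B$, which is the bottom of a homotopy cofibration $S(\nu) \to D(\nu) \to \mathrm{Th}(\nu)$; but more to the point, $D(\nu) \simeq B$ is a Poincar\'{e} duality complex and $\iota_\nu$ is — wait, this is not automatically inert. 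Instead I would argue directly: since the hypothesis controls the \emph{other} side of the pushout, apply Theorem \ref{pushoutthm} with the roles arranged so that $\varphi' = j_c$ (the inert map) and conclude about $\Omega j_\nu$. Concretely, rewrite \eqref{Npushoutdiag} as the homotopy pushout with left vertical map $\iota_c$ and bottom map $j_c$; then Theorem \ref{pushoutthm} requires the \emph{top} map $\iota_\nu$ to be inert, not $j_c$. So the correct route is: the hypothesis that $\Omega j_c$ has a right homotopy inverse, together with the homotopy-pushout square, does not directly feed Theorem \ref{pushoutthm}, and one should instead use the factorization $j_c = i_N \circ \mathfrak{j}_c$: if $\Omega j_c = \Omega i_N \circ \Omega \mathfrak{j}_c$ has a right homotopy inverse, say $t$, then $\Omega \mathfrak{j}_c \circ t$ is a right homotopy inverse of $\Omega i_N$, which is exactly the statement that the attaching map for the top cell of $N$ is inert.

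So the proof is in fact immediate from the factorization, and the main content is the geometric identification $N\backslash B \simeq N_c$ together with \eqref{NcN0Neq}. I would write: \emph{Proof.} By the homotopy equivalence $N_c \simeq N\backslash B$ and \eqref{NcN0Neq}, the hypothesis gives that $\Omega i_N \circ \Omega \mathfrak{j}_c$ has a right homotopy inverse $t: \Omega N \to \Omega N_c$. Then $\Omega i_N \circ (\Omega \mathfrak{j}_c \circ t) \simeq 1_{\Omega N}$, so $\Omega \mathfrak{j}_c \circ t$ is a right homotopy inverse of $\Omega i_N: \Omega N_0 \to \Omega N$. By definition this means the attaching map for the top cell of $N$ is inert. $\qqed$ The only step requiring care — and the one I would spell out — is verifying that the composite $N_c \hookrightarrow N_0 \hookrightarrow N$ really is homotopic to the complement inclusion $N\backslash B \hookrightarrow N$ under the chosen equivalences; this is standard differential topology (collar neighborhoods, tubular neighborhood theorem) but should be stated, and it is essentially already recorded in the setup of Section \ref{sec: emb}, so no real obstacle remains.
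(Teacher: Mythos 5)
Your final argument is correct and is exactly the paper's (the lemma is stated with $\qqed$ precisely because it follows immediately from the factorization $j_c = i_N\circ\mathfrak{j}_c$ and the identification $N_c\simeq N\backslash B$): a right homotopy inverse $t$ of $\Omega j_c$ yields the right homotopy inverse $\Omega\mathfrak{j}_c\circ t$ of $\Omega i_N$. The initial detour through Theorem \ref{pushoutthm} and the pushout \eqref{Npushoutdiag} is unnecessary (and, as you noticed, its hypotheses point the wrong way), but you correctly abandoned it before writing the actual proof.
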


Lemma \ref{Nclemma} suggests studying a refinement of the inertness problem for manifold embeddings. 

\begin{problem}\label{strongint-prob}
For an embedding $B\stackrel{}{\hookrightarrow}N$, whether the complement embedding $N\backslash B\stackrel{}{\hookrightarrow} N$ has a right homotopy inverse after looping?
\end{problem}
When $B$ is a point, the problem reduces to the usual inertness problem for the top cell attachment of $N$. Hence, Problem \ref{strongint-prob} can be viewed as a {\it fibrewise inertness problem} over $B$ in $N$. 
In particular, Proposition \ref{localembprop} implies that the fibrewise inertness problem is equivalent to the usual inertness problem when the embedding $B\stackrel{}{\hookrightarrow}N$ is local. 

The fibrewise inertness problem should be quite difficult in general. Nevertheless, some preliminary discussions can be made by considering Diagram \eqref{Npushoutdiag}. For the normal bundle $\nu$ of $B$ in $N$, the inclusion of the boundary $S(\nu) \stackrel{}{\hookrightarrow} D(\nu)$ is homotopic to the bundle projection $\partial N_c=S(\nu)\stackrel{s_\nu}{\longrightarrow} B$ of the sphere bundle $S(\nu)$. Therefore, Diagram \eqref{Npushoutdiag} implies a homotopy pushout
\[
\diagram
S(\nu)=\partial N_c \dto^{s_\nu}  \rto^<<<{\iota_c}  &  N_c \dto^{j_c} \\
B \rto^{}                               &   N.
\enddiagram
\]
Applying Theorem \ref{pushoutthm} and Remark \ref{pushoutthm3} to the homotopy pushout, we can prove the following proposition immediately.

\begin{proposition}\label{Ncinert-prop}
Let $B\stackrel{}{\hookrightarrow}N$ be the embedding with the pushout (\ref{Npushoutdiag}). Suppose that either of the following holds:
\begin{itemize}
\item the embedding $B\stackrel{}{\hookrightarrow} N$ has a right homotopy inverse after looping;
\item the inclusion of the boundary $\partial N_c\stackrel{\iota_c}{\hookrightarrow} N_c$ is inert.
\end{itemize} 
If the spherical bundle projection $\partial N_c\stackrel{s_\nu}{\longrightarrow} B$ has a homotopy section after looping, then the complement embedding $N_c\stackrel{j_c}{\hookrightarrow} N$ has a right homotopy inverse after looping. $\qqed$
\end{proposition}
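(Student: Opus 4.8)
\textbf{Proof proposal for Proposition \ref{Ncinert-prop}.}

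The plan is to reduce the statement to the tools already developed for homotopy pushouts in Section \ref{sec: pushout}. The starting point is the homotopy pushout square
\[
\diagram
S(\nu)=\partial N_c \dto^{s_\nu}  \rto^<<<{\iota_c}  &  N_c \dto^{j_c} \\
B \rto^{}                               &   N
\enddiagram
\]
obtained from Diagram (\ref{Npushoutdiag}) by replacing the inclusion of the boundary $S(\nu)\stackrel{\iota_\nu}{\hookrightarrow} D(\nu)$ with the homotopic bundle projection $s_\nu\colon S(\nu)\stackrel{}{\longrightarrow} B$ and using that the zero section $B\stackrel{}{\hookrightarrow} D(\nu)$ is a homotopy equivalence compatible with $j_\nu$. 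So the first step is simply to record this homotopy pushout carefully, noting that the map $B\stackrel{}{\longrightarrow} N$ appearing in the bottom row is (up to homotopy) the embedding $B\stackrel{}{\hookrightarrow} N$, and the map $\partial N_c\stackrel{\iota_c}{\hookrightarrow} N_c$ is the inclusion of the boundary of the complement.

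The second step handles the two hypotheses separately, exactly in parallel with how Section \ref{sec: pushout} treats the ``inert $\varphi'$'' case versus the ``$\Omega\varphi'$ has a right homotopy inverse'' case. In the homotopy pushout above, regard $\partial N_c\stackrel{s_\nu}{\longrightarrow} B$ as the map $\varphi'$, $\partial N_c\stackrel{\iota_c}{\longrightarrow} N_c$ as the map $f$, $B\stackrel{}{\longrightarrow} N$ as the map $\varphi$, and $N_c\stackrel{j_c}{\longrightarrow} N$ as the map $g$. The hypothesis that $\Omega s_\nu$ has a right homotopy inverse (the ``homotopy section after looping'' assumption) is in force throughout. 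If additionally the embedding $B\stackrel{}{\hookrightarrow} N$ has a right homotopy inverse after looping, then $\Omega\varphi$ has a right homotopy inverse, and the dual statement recorded in Remark \ref{pushoutthm3} applies directly: since $\Omega\varphi$ has a right homotopy inverse and we want the conclusion about $\Omega g = \Omega j_c$, we instead run the argument the other way — from $\Omega f = \Omega\iota_c$ having a right homotopy inverse to $\Omega g$ having one. But $\Omega\iota_c$ indeed has a right homotopy inverse, because $\iota_c\colon \partial N_c\stackrel{}{\longrightarrow} N_c$ is (up to homotopy) the pullback of the zero section $B\stackrel{}{\hookrightarrow} D(\nu)$ along $s_\nu$, hence $\Omega\iota_c$ has $\Omega s_\nu$ composed with a section of $s_\nu$-after-looping as a right inverse; more simply, $s_\nu$ itself is a left homotopy inverse of $\iota_c$ after looping, and then the homotopy commutativity of the square forces the conclusion. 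Alternatively, and more cleanly, if the inclusion of the boundary $\partial N_c\stackrel{\iota_c}{\hookrightarrow} N_c$ is inert, then by Theorem \ref{pushoutthm} applied to the homotopy pushout — with $\varphi' = s_\nu$ playing the inert role after we observe that $\Omega s_\nu$ having a right homotopy inverse makes $s_\nu$ play the needed part — we get that $\Omega j_c$ has a right homotopy inverse if and only if $\Omega s_\nu$ does; wait, more precisely, since $\iota_c$ is inert, Theorem \ref{pushoutthm} gives that $\Omega s_\nu$ has a right homotopy inverse if and only if $\Omega j_c$ has one, and the hypothesis supplies the former.

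The main obstacle, and the point requiring the most care, is matching the roles of the maps to the hypotheses of Theorem \ref{pushoutthm} and Remark \ref{pushoutthm3}: Theorem \ref{pushoutthm} requires the \emph{top-left-to-top-right} map $\varphi'$ to be inert, whereas the hypotheses here are about $\iota_c$ (top vertical? no — the left vertical, in the orientation drawn). One must therefore either redraw the square to put the inert map in the correct position, or invoke the version of the pushout statement symmetric in the two non-pushout vertices. Since a homotopy pushout square is symmetric under reflection, this is harmless but must be stated explicitly. The case ``$\iota_c$ inert'' feeds Theorem \ref{pushoutthm} (with $\iota_c$ in the $\varphi'$ slot after transposing), yielding the equivalence $\Omega s_\nu$ right-invertible $\Leftrightarrow$ $\Omega j_c$ right-invertible, and the case ``$B\hookrightarrow N$ right-invertible after looping'' feeds the dual remark. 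In both cases the assumed section-after-looping of $s_\nu$ closes the argument, and since all the inputs (Theorem \ref{pushoutthm}, Remark \ref{pushoutthm3}) are cited rather than reproved, the proof is short. One should also double-check that the identification $\iota_\nu\simeq s_\nu$ (boundary inclusion homotopic to sphere-bundle projection) and the compatibility with the zero section are legitimate; these are standard facts about tubular neighborhoods already used implicitly in Section \ref{sec: emb}, so they may be invoked without fuss.
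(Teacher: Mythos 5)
Your overall strategy is the paper's: form the homotopy pushout with $s_\nu$ replacing $\iota_\nu$, then feed the second hypothesis into Theorem \ref{pushoutthm} and the first into Remark \ref{pushoutthm3}. Your treatment of the second hypothesis, after your own self-correction at the end, is right: with $\varphi'=\iota_c$ inert, Theorem \ref{pushoutthm} gives that $\Omega s_\nu$ has a right homotopy inverse if and only if $\Omega j_c$ does, and the section-after-looping hypothesis supplies the former. But no transposition of the square is needed for this: in the square as drawn, $\iota_c$ \emph{is} the top horizontal map (the $\varphi'$ slot) and $s_\nu$ is the left vertical (the $f$ slot); your repeated relabelling of $s_\nu$ as $\varphi'$ and $\iota_c$ as $f$ is what creates the confusion running through the rest of your argument.

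The genuine error is in the first case. There you try to establish that $\Omega\iota_c$ has a right homotopy inverse, justified by the claims that ``$s_\nu$ is a left homotopy inverse of $\iota_c$ after looping'' and that ``$\iota_c$ is the pullback of the zero section along $s_\nu$.'' Neither claim makes sense: $s_\nu$ lands in $B$ while $\iota_c$ lands in $N_c$, so one cannot be a one-sided inverse of the other, and the tubular neighbourhood theorem identifies $\iota_\nu\colon S(\nu)\to D(\nu)$ with $s_\nu$ up to the equivalence $D(\nu)\simeq B$, not $\iota_c\colon S(\nu)\to N_c$ --- you are conflating the inclusion of $\partial N_c$ into the tubular neighbourhood with its inclusion into the complement. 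In fact $\Omega\iota_c$ need not have a right homotopy inverse under the stated hypotheses (for $B$ a point, $\iota_c$ is the top-cell attaching map $S^{n-1}\to N_0$, e.g.\ the Hopf map for $N=\mathbb{C}P^2$), and nothing about it is required. The correct argument for the first case is the one-line application of Remark \ref{pushoutthm3} with $f=s_\nu$ and $\varphi=(B\to N)$: from $j_c\circ\iota_c\simeq (B\to N)\circ s_\nu$ one gets $\Omega j_c\circ\Omega\iota_c\simeq\Omega(B\to N)\circ\Omega s_\nu$, the right-hand side has a right homotopy inverse because both factors do by hypothesis, and hence so does $\Omega j_c$. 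Deleting your detour through $\Omega\iota_c$ and keeping only ``Remark \ref{pushoutthm3} applies directly'' with this identification repairs the proof.
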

Suppose that $B\stackrel{}{\hookrightarrow} N$ is a {\it framed embedding}, that is, the normal bundle $\nu$ of $B$ in $N$ is trivial. Then the spherical bundle $S(\nu)$ is trivial and the bundle projection $\partial N_c\stackrel{s_\nu}{\longrightarrow} B$ has a section. Accordingly, Proposition \ref{Ncinert-prop} can be applied under either condition stated in the proposition. 

\subsection{Flag manifolds}

$\, $

Let $G$ be a compact connected Lie group. Let $P$ be a parabolic subgroup of $G$. The quotient $G/P$ is called a {\it flag manifold}. In particular, when $P=T$ is a maximal torus of $G$, $G/T$ is called the {\it complete flag manifold} of $G$. By a classical result of Halperin and Lemaire \cite{HL87} it is known that the attaching map for the top cell of a flag manifold is rationally inert. 
For a special case, in Theorem \ref{homthmintro} we showed the inertness property of the top cell attachments for complete flag manifolds at large primes and gave an explicit range of the allowable primes. Therefore, it is a natural guess that similar result could hold for general flag manifolds.
\begin{problem}\label{flag-inert-prob}
Determine for which primes $p$ the attaching map for the top cell of a flag manifold $G/T$ is inert after localization at $p$. 
\end{problem}
One way to study Problem \ref{flag-inert-prob} may be through the intersection theory of flag manifolds. As reviewed in \cite{DZ22} there are fruitful results on Schubert calculus for the intersection theory of flag manifolds. Further, in Section \ref{sec: int} we showed various results for inertness around intersection theory. This illustrates interesting connections between Schubert calculus and the inertness problem for flag manifolds, and it is possible to attack Problem \ref{flag-inert-prob} along the results and ideas of Section \ref{sec: int}.

\newpage

\bibliographystyle{amsalpha}

\end{sloppypar}
\end{document}